 \newtheorem{thm}{Theorem}[section]
 \newtheorem{cor}[thm]{Corollary}
 \newtheorem{lem}[thm]{Lemma}
 \newtheorem{prop}[thm]{Proposition}
 \newtheorem{claim}[thm]{Claim}
 \theoremstyle{definition}
 \newtheorem{defn}[thm]{Definition}
 \newtheorem{exam}[thm]{Example}
 \newtheorem{rem}[thm]{Remark}
 \newtheorem{addendum}[thm]{Addendum}
 \numberwithin{equation}{section}
 \newcommand{\A}{\mathcal{A}}
\def\Zee{\mathbb{Z}}
\def\Ar{\mathbb{R}}
\def\be{\begin{equation}}
\def\ee{\end{equation}}
\def\ba{\begin{eqnarray}}
\def\ea{\end{eqnarray}}
\def\tilde{\widetilde}
\def\e1{\epsilon}
\def\AAl{\mathcal{A}_{\lambda}}
\def\A0{\stackrel{\circ}{\AAl}}
\def\o1{\omega}
\def\01{\Omega}
\def\c1{\gamma}
\def\g1{\Sigma}
\def\bigcup{\cup}
\def\l1{\Lambda}
\def\v1{\varphi}
\def\d1{\delta}
\def\part{\partial}
\def\f1{\frac}
\def\t1{\theta}
\def\b1{\beta}
\def\bar{\overline}
\def\bs{\begin{eqnarray*}}
\def\es{\end{eqnarray*}}
\def\m1{\Theta}
\def\w1{\wedge}
\begin{document}

\author{John Morgan and Gang Tian \thanks{Supported partially by
NSF grants DMS 0706815 (Morgan), DMS 0703985 (Tian), and DMS 0735963(Tian)}}
%\\Department of Mathematics
%\\Princeton University and Peking University}
\date{\today}
\title{Completion of the proof of the Geometrization Conjecture}
\maketitle

%\chapter*{\bf Introduction}
%\markleft{\hfill {\normalfont\sc\uppercase{Introduction}}\hfill}
This paper builds upon and is an extension of \cite{MT}.
In this paper, we complete a proof of the following:

\medskip
\noindent {\bf Geometrization Conjecture: Any
closed, orientable, prime $3$-manifold $M$ contains a disjoint
union of embedded $2$-tori and Klein bottles such that each connected component of the
complement admits a locally homogeneous Riemannian metric of finite volume.}

Recall that a Riemannian manifold is {\em homogeneous} if its isometry group acts transitively on the underlying manifold; a {\em locally homogeneous} Riemannian manifold is the quotient of a homogeneous Riemannian manifold by a discrete group of isometries acting freely. Recall also that a {\em prime} $3$-manifold is one which is not diffeomorphic to $S^3$ and which is not a connected sum of two manifolds neither of which is diffeomorphic to $S^3$. It is a classic result in $3$-manifold topology, see \cite{Milnor} that every $3$-manifold is a connected sum of a finite number of prime $3$-manifolds, and this decomposition is unique up to the order of the factors.

The main part of this paper is devoted to giving a proof of Theorem 7.4 stated in \cite{P2} on locally volume collapsed $3$-manifolds with curvature bounded from below which is the last step  in the proof of the Geometrization Conjecture. In the introduction we will summarize the major results on Ricci flow contained in \cite{MT} and also briefly discuss results on Ricci flow in dimension $3$ beyond those contained in \cite{MT} that are needed in proving the Geometrization conjecture.

The Geometrization Conjecture was proposed by W. Thurston in early 1980's. It includes the Poincar\'e conjecture as a special case.
Thurston himself solved this conjecture for a large class of 3-manifolds, namely those containing an incompressible surface; i.e., an embedded surface of genus $\ge 1$ whose fundamental group injects into the fundamental group of the $3$-manifold.

Thurston's Geometrization Conjecture suggests the existence of especially nice metrics on 3-manifolds and consequently,
a more analytic approach to the problem of classifying 3-manifolds. Richard Hamilton formalized
one such approach in \cite{H} by introducing the Ricci flow equation on the space of Riemannian
metrics:
$$\frac{\partial g(t)}{\partial t}=-2{\rm Ric}(g(t)),$$
where ${\rm Ric}(g(t))$ is the Ricci curvature of the metric $g(t)$. In dimension 3, the fixed
points (up to rescaling) of this equation are the Riemannian metrics of
constant Ricci curvature. Beginning with any Riemannian manifold $(M,g_0)$, there is a solution $g(t)$ of this Ricci flow on $M$
for $t$ in some interval such that $g(0)=g_0$. It is hoped that if $M$ is a closed 3-manifold, then $g(t)$ exists for all $t>0$ after
appropriate scaling and converges to a nice metric outside a part with well-understood topology.

In \cite{HamiltonNSRF3M}, R. Hamilton showed that if the Ricci flow exists for all time and if there is an
appropriate curvature bound together with another geometric bound, then as
$t\rightarrow\infty$, after rescaling to have a fixed diameter, the metric
converges to a metric of constant negative curvature.

However, the general situation is much more complicated to
formulate and much more difficult to establish. There are many technical issues with
this program: One knows that in general the Ricci flow will develop singularities
in finite time, and thus a method for analyzing these singularities and continuing
the flow past them must be found. Furthermore, even if the flow continues for all
time, there remain complicated issues about the nature of the limiting object at
time $t = \infty$. For instance, if the topology of $M$ is sufficiently complicated, say it
is a non-trivial connected sum, then no matter what the initial
metric is one must encounter finite-time singularities, forced by
the topology. More seriously, even if $M$ has simple
topology, beginning with an arbitrary metric, one expects to (and
cannot rule out the possibility that one will) encounter finite-time
singularities in the Ricci flow. These singularities may occur along proper subsets of the
manifold, not the entire manifold. Thus, one is led to study a more general evolution process
called {\em Ricci flow with surgery},
first introduced by Hamilton in the context of four-manifolds, \cite{hamilton4}.
This evolution process is still parameterized by an interval in time, so that for each $t$
in the interval of definition there is a compact Riemannian $3$-manifold
$M_t$. But there is a discrete set of times at which the manifolds and metrics undergo topological and
metric discontinuities (surgeries). In each of the complementary
intervals to the singular times, the evolution is the usual Ricci
flow, though, because of the surgeries, the topological type of the
manifold $M_t$ changes as $t$ moves from one complementary interval
to the next. From an analytic point of view, the surgeries at the
discontinuity times are introduced in order to `cut away' a
neighborhood of the singularities as they develop and insert by
hand, in place of the `cut away' regions,  geometrically nice
regions. This allows one to continue  the Ricci flow (or more
precisely, restart the Ricci flow with the new metric constructed at
the discontinuity time). Of course, the surgery process also changes
the topology. To be able to say anything useful topologically about
such a process, one needs results about Ricci flow, and one also
needs to control both the topology and the geometry of the surgery
process at the singular times. For example, it is crucial for the
topological applications that we do surgery along $2$-spheres rather
than surfaces of higher genus. Surgery along $2$-spheres produces
the connected sum decomposition, which is well-understood
topologically, while, for example, Dehn surgeries along tori can
completely destroy the topology, changing any $3$-manifold into any
other.

The change in topology turns out to be completely understandable and
amazingly, the surgery processes produce exactly the topological
operations needed to cut the manifold into pieces on which the Ricci
flow can produce the metrics sufficiently controlled so that the
topology can be recognized.

Following Perelman in \cite{P2}, we gave a detailed proof
in \cite{MT} of the following long-time existence result for Ricci
flow with surgery:

\begin{thm}\label{surgery} Let $(M,g_0)$ be a closed Riemannian $3$-manifold.
Suppose that there is no embedded, locally separating $P^2$
contained\footnote{That is, no embedded $P^2$ in $M$ with trivial
normal bundle. Clearly, all orientable manifolds satisfy this
condition.} in $M$. Then there is a Ricci flow with surgery, say $(M_t,g(t))$, defined
for all $t\in [0,\infty)$ with initial metric $(M,g_0)$. The set of
discontinuity times for this Ricci flow with surgery is a discrete
subset of $[0,\infty)$. The topological change in the $3$-manifold
as one crosses a surgery time is a connected sum decomposition
together with removal of connected components, each of which is
diffeomorphic to one of $S^2\times S^1$, $\Ar P^3\#\Ar P^3$, the
non-orientable $2$-sphere bundle over $S^1$, or a manifold admitting
a metric of constant positive curvature. Furthermore, there are two decreasing functions
$r(t)>0$ and $\kappa(t)>0$ such that (1) $(M_t,g(t))$ is $\kappa(t)$-non-collapsed (see \cite{MT} Definition 9.1) and (2)
any point $x\in M_t$ with $R(g(t))\ge r(t)^{-2}$ satisfies the so called strong canonical neighborhood
assumption (see \cite{MT} Definition 9.78 and Theorem 15.9).
\end{thm}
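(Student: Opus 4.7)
My plan is to run the Ricci flow from $g_0$ and, whenever the scalar curvature blows up at a finite singular time, perform a controlled surgery to remove the high-curvature regions and restart the flow. The construction is organized around an interleaved induction that produces three decreasing step functions: a canonical-neighborhood scale $r(t)$, a surgery-control parameter $\delta(t)$ forcing surgery to take place inside very thin necks, and a non-collapsing parameter $\kappa(t)$. Between surgery times the flow is smooth, so what is needed is (i) control of the smooth-flow geometry (no local collapsing and canonical neighborhoods at scales below $r$), (ii) a surgery procedure that propagates these properties, and (iii) verification that only finitely many surgeries occur in any finite time interval.

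The analytic backbone is $\kappa$-non-collapsing, derived from the monotonicity of Perelman's reduced volume $\tilde V$ and $\W$-entropy along the smooth Ricci flow. To upgrade this past surgery times, one must choose $\cL$-geodesics realizing the reduced distance that avoid the surgery caps; this is possible provided $\delta(t)$ is sufficiently small, and it forces $\kappa(t)$ to decay but keeps it positive. In parallel, one classifies the ancient, non-negatively curved, $\kappa$-non-collapsed $3$-dimensional limits (the $\kappa$-solutions): shrinking round quotients of $S^3$, the round shrinking $S^2\times\Ar$ and its $\ZZ_2$-quotient, and a non-compact rotationally-symmetric-at-infinity model. The classification uses Hamilton's strong maximum principle, asymptotic solitons extracted via reduced volume, and compactness from $\kappa$-non-collapsing.

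The canonical neighborhood theorem then follows by a blow-up contradiction: if it failed at a point of curvature $\ge r(t)^{-2}$, rescaling would produce a pointed limit which, by $\kappa$-non-collapsing together with the Hamilton--Ivey pinching estimate forcing nonnegative sectional curvature in the limit, would be a $\kappa$-solution, contradicting the failure. In the presence of surgery, the blow-up sequence must be arranged to lie in smooth regions of the flow, which is achieved by requiring the surgery scale and $\delta(t)$ to be much smaller than $r(t)$. At each singular time the surgery is then standard: cut along central $2$-spheres of $\delta(t)$-necks in the horns of $\{R\ge r(t)^{-2}\}$, glue in the standard cap metric, and discard any connected component entirely covered by canonical neighborhoods --- such a component has one of the topologies listed in the statement.

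The main obstacle is the coupled inductive choice of $(r(\cdot),\delta(\cdot),\kappa(\cdot))$: one must show that after each surgery, $\kappa(t)$-non-collapsing and the canonical-neighborhood property at scale $r(t)$ both persist, possibly after shrinking these parameters by a controlled factor depending only on quantities fixed at earlier stages. This is delicate because the surgery cap itself is not a $\kappa$-solution and must be hidden inside a neck structure whose scales are dictated by $r$ and $\delta$; the surgery parameters have to be tuned so that blow-up analysis and reduced-volume arguments still see an essentially smooth flow on scales of interest. Finally, finite-time accumulation of surgeries is ruled out by a volume comparison: the smooth flow has at most exponential volume growth, while each surgery removes a definite amount of volume determined by $\delta(t)$ and $r(t)$, giving a finite upper bound on the number of surgeries in any bounded time interval and extending the flow to $[0,\infty)$.
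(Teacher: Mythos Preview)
The paper does not prove this theorem; it is quoted from \cite{MT} (see the sentence immediately preceding the statement: ``Following Perelman in \cite{P2}, we gave a detailed proof in \cite{MT} of the following long-time existence result for Ricci flow with surgery''), so there is no in-paper proof to compare against. Your outline is an accurate high-level summary of the Perelman strategy carried out in \cite{MT}: the interleaved induction on $(r,\delta,\kappa)$, $\kappa$-non-collapsing via reduced volume with $\cL$-geodesics chosen to avoid surgery caps, the classification of $\kappa$-solutions, the canonical neighborhood theorem by blow-up contradiction using Hamilton--Ivey pinching, the standard surgery on $\delta$-necks, and finiteness of surgeries in finite time by the volume-drop argument. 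As a sketch it is correct and matches the approach of the cited reference; of course, each of these steps occupies many pages in \cite{MT}, so what you have written is an outline rather than a proof.
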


Theorem~\ref{surgery} is central for all applications of Ricci
flow to the topology of three-dimensional manifolds.
The book \cite{MT} dealt with the case that
$M_t = \emptyset$ for $t$ sufficiently large, that is, the case when the Ricci flow with surgery becomes
extinct at finite time. Then it follows from the above theorem that the initial manifold $M$ is diffeomorphic to
a connected sum of copies of $S^2\times S^1$, the non-orientable $2$-sphere bundle over $S^1$, and
$S^3/ \Gamma$, where $\Gamma \subset O(4)$ is a finite group  acting freely on $S^3$.
It was shown in \cite{MT} that if $M$ is a simply-connected $3$-manifold,  then for any initial metric $g_0$ the corresponding Ricci flow with surgery becomes extinct at finite time, see also
(\cite{P3} and \cite{CM}). Consequently, $M$ is diffeomorphic to $S^3$, thus proving the Poincar\'e conjecture.

If $g_0$ has positive scalar curvature, then the Ricci flow with surgery $g(t)$ becomes extinct in time at most
$\frac{3}{2 a}$, where $a$ is a positive lower bound of the scalar curvature of $g_0$. This follows from the maximum principle
and the induced scalar curvature evolution equation for Ricci flow. By the above theorem, we see that  $ M $
in this case is diffeomorphic to a connected sum of  copies of $2$-sphere bundles over $S^1$ and
metric quotients of the round $S^3$. If the scalar curvature is only nonnegative, then by the
strong maximum principle it instantly becomes positive unless the metric is
(Ricci-)flat; thus in this case, $M$ is a flat manifold.

However, if the scalar curvature is negative somewhere, then one needs to analyze the asymptotic behavior
of $g(t) $ as $t$ goes to $\infty$. For this purpose, one first examines when the sectional curvature can be
bounded at $t=\infty$. This was given in Section 6 of Perelman's second paper \cite{P2} and more details can be found
in \cite{KL}, \cite{morgantian2}.

Roughly speaking, Perelman showed that for any $w > 0$, there are $\tau=\tau(w) >0$, $K = K(w) <
\infty$, $\bar r =\bar r(w) > 0$ and $\theta(w)>0$ such that  if the ball $B(x,t, r)$ of $(M_t,g(t))$
centered at $x$ and of radius $r$ has its sectional curvature bounded from below by
$-r^{-2}$, where $\theta^{-1}(w)h(t/2)\le r\le \bar r \sqrt{t}$ (Here $h(s)$ is the surgery scale at time $s$, see \S 4.4 of \cite{P2} or \S 15.3 of \cite{MT}.) and its volume is bounded from below by $ w r^3$, then  its scalar curvature is bounded by $Kr^{-2}$
on $B(x,t,r/4)\times [t-\tau r^2,t]$ (cf. Corollary 6.8, \cite{P2}). Using this and the strong canonical neighborhood assumption one can conclude that given $w>0$ for all $t$ sufficiently large if $B(x,t,r)$ has volume $\ge wr^3$ and sectional curvatures bounded below by $-r^{-2}$, then provided that $0<r\le \bar r(w)\sqrt{t}$, there are constants $K_m=K_m(w),\   m=0,1,\ldots,$ such that the $m^{th}$ covariant derivative of the Riemannian curvature tensor at $(x,t)$ is bounded by $K_mr^{-(m+2)}$.

By the above discussion, one may assume that our initial manifold does not admit a metric
with nonnegative scalar curvature, and that once we get a component with
nonnegative scalar curvature, it is immediately removed. Hence, we can assume that the scalar curvature of $g(t)$ is negative somewhere on each component and each time $t$.
To see what the limit can be as $t\rightarrow\infty$, using the above curvature estiamtes
Perelman adapted the arguments of R. Hamilton in \cite{hamilton4} to
the Ricci flow with surgery $g(t)$. For the readers' convenience, we outline the arguments following \cite{P2} (For more details, see
\cite{KL}, \cite{morgantian2}).

Recall that for the Ricci flow with surgery $g(t)$, one still has the evolution equation
on its scalar curvature $R$
\begin{equation}
\label{eq:scale-curv-evo}
\frac{d R}{dt} = \Delta  R + 2 |{\rm Ric}^o|^2 +\frac{2}{3}R^2,
\end{equation}
where ${\rm Ric}^o$ is the trace-free part of ${\rm Ric}$. Let $R_{min}(t)$ be the minimum of the scalar curvature $R(g(t))$
of $g(t)$. Then by the usual (scalar) maximum principle we have
\begin{equation}
\label{eq:r-min-evo}
\frac{d R_{min}}{dt} \ge  \frac{2}{3} R_{min}^2.
\end{equation}
It follows that
\begin{equation}
\label{eq:r-min-bound}
R_{min}(t) \ge  -
\frac{3}{2}\left(\frac{1}{t + 1/4}\right).
\end{equation}
Let $V$ be the volume, then
\begin{equation}
\label{eq:vol-evo}
\frac{d V}{dt} \le - R_{min} V.
\end{equation}
It follows that the function $V (t)(t+1/4)^{-\frac{3}{2}}$ is non-increasing in $t$.
denote by $\bar V$ its limit as $t\to \infty$.
Put $\hat{R} = R_{min} V^{\frac{2}{3}}$. It is a scale invariant and satisfies
\begin{equation}
\label{eq:hat-r-evo}
\frac{d\hat{R}}{dt}\ge \frac{2}{3} \hat{R} V^{-1} \int (R_{min} - R) dV.
\end{equation}
The right-handed side is nonnegative since $R_{min }\le 0$ on each component.
So $\hat{R}(t)$ has a unique limit, say $\bar R$, as $t\to \infty$.

If $\bar{ V} > 0$, then it follows from Equations~(\ref{eq:r-min-bound}) and~(\ref{eq:vol-evo}) that
$R_{min}(t)$ is asymptotic to $- 3/2t$, that is, $\bar{R} \bar {V}^{-\frac{2}{3 }} = -\frac{3}{2}$. Now Inequality~(\ref{eq:hat-r-evo}) implies that whenever we have a sequence of parabolic balls\footnote{$P(x_0,t_0,r_0,-\Delta t)$ means the product of the ball $B(x_0,t_0,r_0)$ of radius $r_0$ centered at $x_0$ in the metric $g(t_0)$ with the time interval $[t_0-\Delta t,t_0]$. Implicitly, we assume that this ball exists for all times $t\in [t_0-\Delta t,t_0]$.}
$P(x_a,  t_a, r\sqrt{t_a},-r^2t_a)$ for some fixed small $r >0$, such that the
scalings of $g(t)$ by factor $t_a^{-1}$ converge to some limit flow in the smooth topology,
defined in a certain parabolic ball $P(\bar{x}, 1, r,-r^2)$, then the scalar
curvature of this limit flow is independent of the space variables and equals $-\frac{3}{2t}$ at time $t\in [1 - {r}^2, 1]$. Moreover, applying
the strong maximum principle to Equation~(\ref{eq:scale-curv-evo}), one can easily show that
the sectional curvature of the limit at time t is constant and equals $-\frac{1}{4t}$.
If $\bar{V}=0$, then no such a sequence of parabolic balls can exist, so the above conclusion is automatically valid.
Furthermore, using curvature estimates from Section 6 in \cite{P2}, Perelman showed a more effective estimate on how close $g(t)$ is to a hyperbolic metric:
Given $w > 0$, $r > 0$, $\xi > 0$, one can find $T = T (w, r, \xi) < \infty$ such that if the ball $B(x,t,  r\sqrt{t})$ at some time $t \ge T$ has volume at least $w r^3t^{3/2}$ and sectional curvature at least $-r^{-2} t^{-1}$, then Ricci curvature satisfies
\begin{equation}\label{xieqn}
|2t {\rm Ric}(x,t) + g(x,t) |_{g(x,t)} < \xi.
\end{equation}
If one allows $T$ also to depend on $A\in (1,\infty)$, one can even have the above inequality for all points in the parabolic ball
$P(x, t,Ar\sqrt{t},-Ar^2t)$.

Now one can introduce the thick-thin decomposition of $M_t$ for $t$ sufficiently large.
Let $\rho(x, t)$ denote the radius $\rho$ of the ball $B(x, t,\rho)$ where $\inf {\rm Rm} =-\rho^{-2}$.
Fix $w>0$ a small positive constant.
Let $M_-(w, t)$ denote the thin part of $M_t$ consisting of all $ x \in M_t$ satisfying:
$$Vol(B(x_0,t,\rho(x,t))\le w \rho(x,t)^3.$$
Let $M_+(w, t)$ be its complement.

As Perelman pointed out in Section 7.3 of \cite{P2}, using the curvature pinching inequality along $g(t)$ and curvature estimates
from Section 6 in \cite{P2}, one can show: For any $w > 0$, there are $\bar {r}=\bar{r}(w)$ and $\bar{t} = \bar{t}(w)$ such that if
$t \ge \bar{t}$ and $\rho(x, t) < \bar{r} \sqrt{t}$, then
\begin{equation}\label{voleqn}
{\rm vol}(B(x,t,\rho(x, t))) < w \rho(x, t)^3.\end{equation}
It follows that for any $w>0$, if $x\in M_+(w,t)$ and $t$ is sufficiently large, then
\begin{equation}\label{rhoineq}
\rho(x,t)\ge \bar{r}\sqrt{t}.\end{equation}
Then for any given $w$ and $\xi$, for $t$ sufficiently large,
every point $(x,t)\in M_+(w,t)$ satisfies the estimates for curvature in Inequality~\ref{xieqn}.
Using Inequalties~(\ref{xieqn}) and~(\ref{rhoineq}), one can show that if $\{x_a\in M_+(w,t_a)\}$ is
a sequence of points with $t_a \to\infty$, then based manifolds $(M_{t_a}, t_a^{-1} g(t_a), x_a)$
converge, along a subsequence of $a\to\infty$, to a complete hyperbolic manifold of finite volume.
The limit may depend on choices of $(x_a, t_a)$. If one of these limits is closed, then $M_{t_a}$
is diffeomorphic to this limit when $a$ is sufficiently large and $t_a^{-1}g(t_a)$ converges to a hyperbolic metric as $a\to\infty$.
Then using the rigidity of hyperbolic metrics, one can further show that $(M_t,t^{-1}g(t))$ converges to the same hyperbolic manifold.
So one may assume that none of the limits is closed, let $ H_1$ is such a limit with the least number of cusps. Define
$H_1(w')$ to be the set of points in $H_1$ where the injectivity radius is not less than $w'$. Then by using an
argument in \cite{hamilton4} based on hyperbolic rigidity, Perelman showed that for all sufficiently small
$w$ and for $t$ large enough, $M_+(w/2, t)$ contains an almost isometric copy of $\varphi_t\colon H_1(w)\to M_+(w/2,t)$,
which in turn contains a component of $M_+(w, t)$; Moreover, this embedded
copy $\varphi_t(H_1(w))$ moves by isotopy as $t\to\infty$. If for some $w > 0$ the
complement $M_+(w, t) \backslash \varphi_t(H_1(w))$ is not empty for a sequence of $t\to\infty$, then one
can repeat the above arguments and get other complete hyperbolic manifolds $H_2,\cdots, H_k$. Since each hyperbolic
3-manifold with finite volume has a uniform lower bound, one can conclude that for $k$ sufficiently large and each sufficiently
small $w >0$,  the images of $\varphi_t(H_1(w)),\cdots, \varphi_t(H_k(w))$ in $M_t$ cover $M_+(w, t)$ for all sufficiently large t. Clearly, each boundary component
of $H_j(w)$ is a torus. In fact, those boundary tori of $\varphi_t(H_j(w))$ are incompressible in $M_t$. The proof is identical to that of R. Hamilton
in \cite{hamilton4} and is a minimal surface argument, using a result of Meeks
and Yau. An alternative proof for this incompressibility was given in \cite{KL} by using
volume comparison. This later proof is simpler and more elementary.

For $t$ sufficiently large, we define
$$\tilde M_-(w,t)=\overline{ M_t\backslash \varphi_t(H_1(w))\cup\cdots \cup \varphi_t(H_k(w))}.$$
It is a compact, codimension-$0$ submanifold of $M_-(w,t)$ with incompressible tori as boundary components. Furthermore, if $w$ is sufficiently small and $t$ is sufficiently large,
each boundary component of $(\tilde M_-(w,t),t^{-1}g(t))$ is convex,
has diameter at most $w$ and has a (topologically trivial) collar of length one, where the sectional curvatures
are between $-1/4 - \epsilon$ and $-1/4 + \epsilon$, where $\epsilon>0$ can be as small as one wants so long as $t$ is large enough.
For sufficiently small $w > 0$ and sufficiently large
$t$, $\tilde M_-(w, t)$ is actually diffeomorphic to a graph manifold, and consequently satisfies the Geometrization Conjecture\footnote{The definition of a graph manifold and a discussion of the fact that graph manifolds satisfy the Geometrization Conjecture are both given in the next section.} as does $M_t$. The fact that, for $t$ sufficiently large and $w$ sufficiently small, the $\tilde M_-(w,t)$ are graph manifolds is a consequence of the following theorem on collapsing with local lower curvature bound, applied to the $(\tilde M_-(w,t),t^{-1}g(t))$. The goal of this paper is to give a self-contained, complete proof of this theorem, which is a slight rewording\footnote{The difference is that we have not restricted $\rho(x)$ to be less than the diameter of the manifold. This is taken care of by the argument in Section~\ref{rhosect}.} of the result stated (without proof) as Theorem 7.4 of \cite{P3}.

\begin{thm}\label{7.4}
Suppose that $(M_n,g_n)$ is a sequence of compact, oriented
Riemannian $3$-manifolds, closed or with convex boundary, and that
$w_n$ is a sequence of positive numbers tending to zero as $n$ tends
to $\infty$. Assume that:
\begin{enumerate}
\item For each point $x\in M_n$ there exists a radius
$\rho=\rho_n(x)$  such that the ball $B_{g_n}(x,\rho)$ has volume at
most $w_n\rho^3$ and all the sectional curvatures of the restriction
of $g_n$ to this ball are all at least $-\rho^{-2}$;
\item Each component of the boundary of $M_n$ is an incompressible
torus of diameter at most $w_n$ and with a topologically trivial
collar containing the all points within distance $1$ of the boundary
on which the sectional curvatures are between $-5/16$ and $-3/16$;
\item For every $w'>0$ there exist $\bar r=\bar r(w')>0$ and
constants $K_m=K_m(w')<\infty$ for $m=0,1,2,\ldots,$ such that for all $n$
sufficiently large, and any $0<r\le \bar r$, if the ball
$B_{g_n}(x,r)$ has volume at least $w'r^3$ and sectional curvatures
at least $-r^{-2}$, then the curvature and its $m^{th}$-order
covariant derivatives at $x$, $m=1,2,\ldots,$ are bounded by
$K_0r^{-2}$ and $K_mr^{-m-2}$, respectively.
\end{enumerate}
Then for every $n$ sufficiently large, $M_n$ is a graph manifold.
If, in addition, $M_n$ is aspherical, there is a finite collection
${\mathcal T}_n$ of disjoint copies of $T^2\times I$ and twisted
$I$-bundles over the Klein bottle in $M_n$ such that each component
of  $M_n\setminus {\rm int}\,{\mathcal T}_n$  is a Seifert
fibered $3$-manifold with incompressible boundary. It follows that
each component of $M_n\setminus{\mathcal T}_n$ admits a locally
homogeneous geometry of finite volume.
\end{thm}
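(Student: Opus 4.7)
The plan is to show that for every sufficiently large $n$ each $M_n$ admits a decomposition along a disjoint family of incompressible tori and Klein-bottle twisted $I$-bundles into Seifert fibered pieces with incompressible boundary---that is, a graph manifold structure. Once this is proved, the aspherical addendum is a purely topological consequence: an aspherical graph manifold admits a canonical JSJ decomposition of exactly this form, and each Seifert fibered piece carries one of the finite-volume Thurston geometries $\mathbb{H}^2\times\mathbb{R}$, $\widetilde{SL_2(\mathbb{R})}$, Nil, or $\mathbb{R}^3$.

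First, I would rescale locally: for each $x\in M_n$, the metric $g_n'=\rho_n(x)^{-2}g_n$ makes the unit ball around $x$ have sectional curvature $\geq -1$ and volume $\leq w_n$, which tends to zero. By Gromov precompactness for Alexandrov spaces, a subsequence of these pointed unit balls converges in the pointed Gromov--Hausdorff topology to a pointed Alexandrov space of curvature $\geq -1$ and, since volumes vanish, of dimension at most two. Hypothesis (3) upgrades this convergence to smooth convergence on the subregions that remain non-collapsed at intermediate scales, so that the Cheeger--Fukaya--Gromov collapsing theory produces a local $F$-structure there. I would then stratify $M_n$ by the dimension $k\in\{0,1,2\}$ of the Gromov--Hausdorff limit at the scale $\rho_n(x)$ and produce on each stratum a canonical topological model. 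When $k=2$, the local picture is a Seifert $S^1$-fibration over a $2$-dimensional Alexandrov base. When $k=1$, one has a local fibration over a $1$-dimensional Alexandrov space whose regular fibers are closed surfaces of nonnegative Euler characteristic ($T^2$, Klein bottle, $S^2$, or $\mathbb{R}P^2$), with singular fibers of solid-torus or twisted $I$-bundle type over the endpoints. When $k=0$ on a closed component, one obtains an almost flat $3$-manifold, which by Gromov's almost-flat theorem is an infranilmanifold and hence already a graph manifold.

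Next I would globalize by patching the local models along incompressible tori. Cover $M_n$ by finitely many such local models of controlled multiplicity. On overlaps of two Seifert structures, the essential uniqueness up to isotopy of Seifert fibrations on irreducible $3$-manifolds (Waldhausen) allows the fibrations to be isotoped to agree; transitions between strata of different collapse dimension are arranged to occur across embedded tori, which are incompressible because of the negative lower sectional curvature at the transition scale. The convex boundary tori of hypothesis (2) extend into $T^2\times I$ collars that attach to the interior graph structure along their inner boundary, which is itself incompressible by (2).

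The principal obstacle I anticipate is this globalization step. One must analyze the singular set of the limiting Alexandrov space---its boundary, extremal subsets, and topologically singular points---in fine detail to understand how $S^1$-fibers of a $2$-dimensional collapse can match consistently with surface fibers of a $1$-dimensional collapse, and to rule out gluings that would force a non-graph piece to appear. A further subtlety is uniformity: the scale $\rho_n(x)$ depends on both $x$ and $n$, so the finite covers and all $C^0$- and $C^\infty$-convergence statements must be made uniform as $n\to\infty$. Perelman's stability theorem for Alexandrov spaces, which preserves topological type under small Gromov--Hausdorff perturbations of the limit, is the key tool here, and the bulk of the technical work in the paper should consist of organizing these local-to-global compatibility arguments.
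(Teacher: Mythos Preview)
Your broad outline---rescale by $\rho_n(x)^{-2}$, take Gromov--Hausdorff limits which are Alexandrov spaces of dimension $\le 2$, analyze the local models, and globalize---matches the paper's strategy. But the specific tools you invoke diverge from the paper in ways that matter. The paper explicitly \emph{avoids} Perelman's stability theorem; indeed one stated motivation is to give a proof that does not rely on it (the Shioya--Yamaguchi approach does, and the paper contrasts itself with that). Instead, hypothesis~(3) is used directly: whenever a rescaled sequence has a $3$-dimensional Alexandrov limit, the curvature-derivative bounds upgrade the convergence to smooth $C^\infty$ convergence (Proposition~\ref{smlimits}), and this smooth limit is a complete non-negatively curved $3$-manifold whose soul theory yields the local topology. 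This replaces both the stability theorem and the Cheeger--Fukaya--Gromov $F$-structure machinery (which in any case typically needs two-sided curvature bounds you do not have globally).

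The globalization is also done differently. Rather than appealing to Waldhausen's uniqueness of Seifert fibrations on overlaps, the paper constructs one explicit global $S^1$-fibration on the generic $2$-dimensional region by an averaging-and-gluing procedure for local $S^1$-product neighborhoods (Proposition~\ref{s1glue}), then attaches solid tori near interior cone points, solid cylinders near flat boundary points, and $3$-balls near boundary corners, all organized by an intermediate structural theorem (Theorem~\ref{1Dthm'}) that specifies exactly which pieces occur and how they meet. Your claim that the transition tori are incompressible ``because of the negative lower sectional curvature at the transition scale'' is not right as stated; incompressibility in the paper comes from the topological bookkeeping in the reduction of Theorem~\ref{7.4} to Theorem~\ref{1Dthm'} (compressible tori are surgered away and shown to bound solid tori), not from curvature. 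Two smaller corrections: since $M_n$ is orientable the surface fibers over a $1$-dimensional limit are $S^2$ or $T^2$ only (Lemma~\ref{1Dlem}), not Klein bottles or $\mathbb{R}P^2$; and the $k=0$ case does not occur once $\rho_n$ is adjusted so that $\rho_n(x)\le \tfrac{1}{2}\mathrm{diam}\,M_n$ (Lemma~\ref{rholem}), which forces the rescaled unit balls to be non-compact and hence the limit to be at least $1$-dimensional.
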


From the above discussions we see that, given any $w>0$ for all $t$ sufficiently large the manifolds $\tilde M_-(w,t)$ satisfy the hypotheses of Theorem~\ref{7.4}. Thus, applying this theorem, we see that for all
 $w$ sufficiently small and for all $t$ sufficiently large, the $\tilde M_-(w,t)$ are graph manifolds with incompressible boundary. Thus, fixing $w$ sufficiently small and fixing $t$ sufficiently large, we have the following: There is a decomposition $M_t=M_+(w,t)\cup \tilde M_-(w,t)$ where $M_+(w,t)$ and $\tilde M_-(w,t)$ are compact codimension-$0$ submanifolds meeting along their boundaries, these boundaries being incompressible tori. Furthermore, the interior of $M_+(w,t)$ is diffeomorphic to a complete hyperbolic manifold of finite volume and $\tilde M_-(w,t)$ is a graph manifold with incompressible boundary. Since each connected component of $M_t$ is either prime or diffeomorphic to the $3$-sphere, the same is true for $\tilde M_-(w,t)$. It is known that every connected, orientable, prime graph manifold with incompressible torus boundary either is diffeomorphic to one of $T^2\times I$ or a twisted $I$-bundle over the Klein bottle, or can be decomposed by a finite collection of disjoint incompressible tori and Klein bottles
into manifolds which admit complete, locally homogeneous Riemannian metrics.
 It follows immediately that the same statement is true for $M_t$. {\bf This completes the outline of the proof of the Geometrization Conjecture}.

At the beginning of this introduction we stated that in this paper we are extending and building upon the work of \cite{MT} in order to present a complete proof the Geometrization Conjecture.
Let us clarify that statement. What we present here in detail is a proof of the theorem about locally volume collapsed $3$-manifolds (Theorem 7.4 of \cite{P2}). In addition to that and the material in \cite{MT} one also needs the material in Section 6 and in the first part of section 7 of \cite{P2}.
In this paper we have been content to outline the main results from this material. It is our plan to combine this manuscript with an exposition of the remaining material from \cite{P2} into a sequel to \cite{MT}. Together these two books will give an entirely self-contained proof of the Geometrization Conjecture using Ricci flow and Alexandrov spaces.

There are other approaches to the Geometrization Conjecture which use variations of Theorem 7.4.
As was indicated above, if a $3$-manifold $M$ admits an impressible torus, then it falls into the class of $3$-manifolds for which the Geometrization Conjecture had been established
by Thurston himself. A detailed proof of the Geometrization Conjecture for those $3$-manifolds was given in \cite{Otal1} and \cite{Otal2}. In view of this, it suffices to prove Theorem 7.4 for closed
manifolds (again appealing to the Ricci flow results from \cite{P1} and the material in \cite{P2} preceding Theorem 7.4). A version of Theorem 7.4 for closed
$3$-manifolds has been proved in a series of papers of Shioya-Yamaguchi (\cite{SY2000}, \cite{SY}). They did not make use of Assumption 3 on bounds on derivatives of curvature.\footnote{Their proof was mostly for manifolds with curvature bounded from below, but the extension to the case of curvature locally bounded from below is not difficult as they point out in an appendix to their paper.} So their result can be applied to
$3$-manifolds which do not necessarily arise from Ricci flow. However, because they are not relying on Assumption 3, to prove their result,
Shioya-Yamaguchi need to use a hard stability theorem on Alexandrov spaces with generalized curvature bounded from below. This stability theorem
was due to Perelman and its proof was given in an unpublished manuscript in 1993. Recently, V. Kapovitch posted a preprint, \cite{Kapovitch}, which proposes
a more readable proof for this stability theorem of Perelman. Putting all these together, one has a
Perelman-Shioya-Yamaguchi-Kapovitch proof of Theorem 7.4 for closed manifolds without Assumption 3. As we have indicated, this proof requires a much deeper knowledge on Alexandrov spaces than the proof we present.
Our presentation of the collapsing space theory is motivated by, and to a large extent follows, the Shioya-Yamaguchi paper.

There is another approach to the proof of the Geometriation Conjecture due to  Besson etc. \cite{besson-etc} which avoids using Theorem 7.4. Again, this result relies on Thurston's theorem that $3$-manifolds with incompressible surfaces satisfy the Geometrization Conjecture, so that, as in the previous approach, one only needs to consider the case when the entire closed $3$-manifold is collapsed. Rather than appealing to the theory of Alexandrov spaces, this approach relies on other deep works on geometry and topology, e.g., results on the Gromov norms of $3$-manifolds.

Thus, all other approaches rely on Thurston's result on geometrization of $3$-manifolds containing an incompressible surface. The proof of this result uses completely different techniques than Ricci flow and is highly non-trivial in its own right, relying as it does on delicate and powerful results from hyperbolic geometry. For this reason, we feel that it is worthwhile to have a self-contained argument based on Ricci flow with surgery not making use of Thurston's results on hyperbolic manifolds and the hard stability theorem of Alexandrov spaces.

One can find other, related works on Ricci flow and the Geometrization Conjecture on www.claymath.org and in the long introduction of our previous book \cite{MT}. This paper will eventually be a part of our book project \cite{morgantian2} on the Geometrization Conjecture.

\section{The Collapsing Theorem: First remarks}

From now on in this article $3$-manifolds are implicitly assumed to be
orientable. Recall that a {\em Seifert fibration structure} on a compact
$3$-manifold  is a locally-free circle action on a $2$-sheeted
covering $\widetilde M$ of $M$ such that, denoting the covering
transformation on $\widetilde M$ by $\tau$, we have $\tau(\zeta\cdot
x)=\bar \zeta\cdot x$ for all $x\in \widetilde M$ and all $\zeta\in
S^1$. Seifert fibration structures are classified in terms of their
base orbifolds,  local Seifert invariants, and, when the base is
closed, an `Euler class,' see \cite{Seifert} or \cite{Orlik}.
 A compact $3$-manifold is
said to be {\em Seifert fibered} if it admits a Seifert fibration
structure. A compact, connected, Seifert fibered $3$-manifold is
either diffeomorphic to a solid torus or has boundary consisting of incompressible
tori. Furthermore, the interior of any compact, connected, Seifert
fibered $3$-manifold not diffeomorphic to a solid torus, to $T^2\times
I$, or to a twisted $I$-bundle over a Klein bottle admits a complete,
locally homogeneous Riemann metric of finite volume.
 A {\em graph manifold} is a compact $3$-manifold that is a connected
sum of manifolds each of which is either diffeomorphic to the solid
torus or can be cut apart along a finite collection of
incompressible tori into Seifert fibered $3$-manifolds.  Thus, a
graph manifold with incompressible boundary satisfies Thurston's
geometrization conjecture. One result we need is that the union
along boundary tori of the total space of a locally trivial
$S^1$-fibration and a collection of solid tori is a graph manifold,
see \cite{Wald}. Furthermore, if the fiber of the $S^1$-fibration is
homotopically essential in each of the solid tori, then the result
is a Seifert fibered $3$-manifold.

For the rest of this paper we fix the constants $\bar r(w')$ and the
$K_m(w'),\ m=0,1,\ldots$ as in the statement of Theorem~\ref{7.4}.

\subsection{Outline of the Proof}

According to Theorem 1.17 in Section 1.6 of \cite{apanasov}, a
closed, connected $3$-manifold admitting a flat metric is Seifert
fibered and hence is a graph manifold. If a closed, orientable
$3$-manifold has a metric of non-negative sectional curvature then by \cite{H}
it is diffeomorphic to one of the following:
\begin{enumerate}
\item  a spherical $3$-dimensional space-form,
\item a manifold double covered by $S^2\times S^1$, or
\item a flat $3$-manifold.
\end{enumerate}

Thus, without loss of generality we can make the following
assumption.

\medskip
\noindent {\bf Assumption 1. For each $n$, no connected, closed component of
$M_n$  admits a Riemann metric of non-negative sectional curvature.}

\medskip

The idea of the proof is to consider a sequence of balls of the form
$B_{g'_n(x_n)}(x_n,1)\subset M_n$,  $n =1,2,\ldots$, where by
definition $g'_n(x_n)= \rho_n(x_n)^{-2}g_n$. The hypotheses of the
theorem and Assumption 1 imply that each of these balls is
non-compact, but locally complete and of sectional curvature $\ge
-1$. The general theory of Alexandrov spaces  implies that given any
such sequence there is a subsequence that converges in the sense of
Gromov-Hausdorff to a ball of radius one in an Alexandrov space of
curvature $\ge -1$ and of dimension at least $1$ and at most $3$.
The volume condition implies that the limit is a $1$- or
$2$-dimensional. We then use results on the structure of Alexandrov
spaces of dimension $1$ and $2$ to deduce strong information about
the structure of these balls in $M_n$ for all $n$ sufficiently
large. These local structures can then be pieced together to form a
global result, proving the theorem stated above. In
Section~\ref{secbasics} we recall the general theory on Alexandrov
spaces,  and in Section~\ref{sec2D} we give a more detailed analysis
of $1$- and $2$-dimensional Alexandrov spaces that is necessary to
prove this result. In this introduction we assume that these basic
notions are understood and we formulate the precise structural
results that will be proved. In Section 4 we deduce the local results,
i.e., the possible structures of the balls $B_{g'_n(x)}(x,1)$, and in Section 5
we piece the local results together proving Theorem~\ref{1Dthm'} below.

\subsection{The collapsing theorem}

Let us now state the topological theorem that is established using the
compactness of Alexandrov spaces of curvature $\ge -1$ and the
volume collapsing hypotheses.

\begin{thm}\label{1Dthm'}
Suppose that we have a sequence of compact $3$-manifolds
 satisfying the hypothesis of Theorem~\ref{7.4} and satisfying
Assumption 1. Then, for every $n$ sufficiently large there are
compact, codimension-$0$, smooth submanifolds $V_{n,1}\subset M_n$
and $V_{n,2}\subset M_n$ with $\partial M_n\subset V_{n,1}$
satisfying the following.
\begin{enumerate} \item Each connected component
of $V_{n,1}$ is diffeomorphic to  one of the following:
\begin{enumerate}
\item[(a)]  a
$T^2$-bundle over $S^1$ or a union of two twisted $I$-bundles over
the Klein bottle along their common boundary;
\item[(b)]  $T^2\times I$ or $S^2\times I$, where $I$ is a closed
interval;
\item[(c)]  a compact $3$-ball or the complement of an open  $3$-ball in $\Ar
P^3$;
\item[(d)]  a twisted $I$-bundle over the Klein bottle;
or a solid torus.
\end{enumerate}
In particular, every boundary component of $V_{n,1}$ is either a
$2$-sphere or a $2$-torus.
\item[2.] $V_{n,2}\cap V_{n,1}=\partial V_{n,2}\cap \partial V_{n,1}$.
\item[3.] If $X_0$ is a $2$-torus component of $\partial V_{n,1}$, then $X_0\subset \partial
V_{n,2}$ if and only if $X_0$ is not a boundary component of $M_n$.
\item[4.] If $X_0$ is a $2$-sphere component of $\partial V_{n,1}$,
then $X_0\cap \partial V_{n,2}$ is diffeomorphic to an annulus.
\item[5.] $V_{n,2}$ is the total
space of a locally trivial $S^1$-bundle and $\partial V_{n,1}\cap
\partial V_{n,2}$ is saturated under this fibration.
\item[6.]  $M_n\setminus
{\rm int}\,\left(V_{n,2}\cup V_{n,1}\right)$ is a disjoint union of
solid tori  and
solid cylinders, i.e., copies of $D^2\times I$.
The boundary of each solid torus is a boundary component of $V_{n,2}$, and
 each solid cylinder $D^2\times I$ meets
$V_{n,1}$ exactly in $D^2\times\partial I$.
\end{enumerate}
\end{thm}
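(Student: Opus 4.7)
The plan is to prove Theorem~\ref{1Dthm'} by rescaling $M_n$ pointwise to expose the local collapsing geometry, classifying the local models via Gromov--Hausdorff convergence to low-dimensional Alexandrov spaces, and then patching these models into a global decomposition. For each $x\in M_n$, set $g'_n(x)=\rho_n(x)^{-2}g_n$, so that the unit ball $B_{g'_n(x)}(x,1)$ has sectional curvature at least $-1$ and volume bounded by $w_n\to 0$. Given any sequence $x_n\in M_n$, Alexandrov compactness yields, along a subsequence, a pointed Gromov--Hausdorff limit $(Y_{\infty},y_{\infty})$ that is a unit ball in an Alexandrov space of curvature $\ge -1$; the volume collapse combined with Assumption~1 and the maximality in the definition of $\rho_n$ forces $\dim Y_{\infty}\in\{1,2\}$.

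First I would carry out the local analysis of Section~4. When $\dim Y_{\infty}=1$, the limit is an interval or a circle and $M_n$ collapses along small closed $2$-manifold fibers; orientability and the lower curvature bound leave only $2$-spheres, tori, or Klein bottles as possible fibers, and one obtains on a definite subball around $x_n$ a neighborhood diffeomorphic to one of the cap pieces~(1a), (1b), or (1d). When $\dim Y_{\infty}=2$, the regular locus of $Y_{\infty}$ is a smooth $2$-manifold over which $M_n$ carries, for large $n$, an honest locally trivial $S^{1}$-fibration. The derivative-bound hypothesis~(3) of Theorem~\ref{7.4} is crucial here: it upgrades Gromov--Hausdorff convergence on the non-collapsed part to smooth convergence, and hence produces a smooth circle fibration. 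Boundary points, orbifold cone points, and isolated topological singularities of $Y_{\infty}$ contribute the remaining cap pieces in (1c)--(1d): solid tori over boundary arcs, twisted $I$-bundles over the Klein bottle over $\mathbb{Z}/2$ orbifold points, and $3$-balls or complements of open balls in $\mathbb{R}P^{3}$ over isolated topological singularities.

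Next I would globalize in Section~5. For each sufficiently large $n$, fix a Vitali-type cover of $M_n$ by balls $B_{g'_n(x_{\alpha})}(x_{\alpha},c)$ at a fixed small radius $c$ on which the local model theorem applies. Define $V_{n,2}$ to be a regular neighborhood of the union of those balls whose limit is $2$-dimensional at the basepoint, truncated so that $\partial V_{n,2}$ is saturated by the local fibers; this yields conclusion~(5). Define $V_{n,1}$ to be the union of the cap pieces coming from $1$-dimensional limits and from the singular points of $2$-dimensional limits, together with the collar of $\partial M_n$, which by hypothesis~(2) of Theorem~\ref{7.4} contributes $T^{2}\times I$ components. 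Arrange the truncations so that $V_{n,1}\cap V_{n,2}=\partial V_{n,1}\cap\partial V_{n,2}$, giving~(2). The torus boundary components of $V_{n,1}$ lying in the interior of $M_n$ then bound the fibered region, giving~(3); each $2$-sphere component of $\partial V_{n,1}$ meets $V_{n,2}$ in an annulus of fibers, giving~(4); and the leftover region $M_n\setminus\operatorname{int}(V_{n,1}\cup V_{n,2})$ breaks into solid tori (when a circle fiber bounds a disk in a nearby cap) and copies of $D^{2}\times I$ (connecting a sphere component of $\partial V_{n,1}$ to an annulus on $\partial V_{n,2}$), as required in~(6).

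The main obstacle is the global coherence of the locally defined circle fibrations on $V_{n,2}$: overlapping balls in the cover produce fibrations that a priori differ by a near-identity diffeomorphism on the overlap, and these must be glued into a genuine locally trivial $S^{1}$-bundle whose fibers extend unambiguously to $\partial V_{n,2}$. The quantitative smooth convergence supplied by hypothesis~(3) of Theorem~\ref{7.4} is the key analytic input, and it has to be combined with a careful combinatorial choice of the truncation locus between $V_{n,1}$ and $V_{n,2}$ so that the leftover pieces are restricted to the two topological types appearing in~(6). A secondary technical point is to handle basepoints at which $\rho_n(x)$ is comparable to the diameter of $M_n$, where the rescaling argument degenerates; this case is isolated and treated separately. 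Once these compatibility issues are resolved, the local-to-global assembly yields the full list of conclusions (1)--(6) of Theorem~\ref{1Dthm'}.
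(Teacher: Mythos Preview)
Your overall architecture is right and matches the paper's: rescale by $\rho_n^{-2}$, pass to Gromov--Hausdorff limits of dimension $1$ or $2$, classify local models, and assemble. You also correctly identify the gluing of local $S^1$-fibrations as the central analytic obstacle, and the role of hypothesis~(3) in upgrading GH convergence to smooth convergence. But your local classification of the $2$-dimensional case is miswired, and that error propagates into the global assembly.

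Specifically, your attribution of cap types to $2$-dimensional singularities is wrong. Interior cone points of the $2$-dimensional limit give the solid tori (they sit inside the Seifert-fibered region, and in fact the $S^1$-fibration extends across them with a possible exceptional fiber); boundary points do \emph{not} give solid tori. The flat part of $\partial Y_\infty$ instead produces $\epsilon'$-solid cylinders $D^2\times I$, and boundary corner points (cone angle $<\pi$) produce $3$-balls. The twisted $I$-bundles over the Klein bottle and the $\mathbb{R}P^3\setminus B^3$ pieces do not arise from $2$-dimensional singularities at all: they appear in the $1$-dimensional analysis, near endpoints of the limiting interval, via a blow-up argument that produces a complete nonnegatively curved $3$-manifold whose soul determines the cap type. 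Your proposal conflates these two sources.

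The second, more serious, gap is that you are missing the entire mechanism that produces the $D^2\times I$ pieces in conclusion~(6). In the paper these come from \emph{chains} of overlapping $\epsilon'$-solid cylinders constructed along the flat boundary of the $2$-dimensional limit; one must control the pairwise intersections carefully (``good intersections''), distinguish linear chains (which terminate at $2$-sphere boundary components of $V_{n,1}$, i.e.\ at $3$-balls near boundary corners) from circular chains (which close up into solid tori), and then perturb the frontier of each chain so that it becomes saturated under the global $S^1$-fibration on $V_{n,2}$. Your one-line description of the leftover region does not account for this, and without it there is no reason the complement of $V_{n,1}\cup V_{n,2}$ should consist only of solid tori and solid cylinders. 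A related omission is the transition between the $1$- and $2$-dimensional regimes: components of $M_n\setminus U'_{n,1}$ that are close to an interval may, after further rescaling, expand to be close to a $2$-dimensional ball of definite area, and this dichotomy has to be handled explicitly before the $2$-dimensional covering argument can begin.
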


\subsection{Proof that Theorem~\protect{\ref{1Dthm'}} implies Theorem~\protect{\ref{7.4}}}

 In deducing Theorem~\ref{7.4} from Theorem~\ref{1Dthm'} we shall introduce several
topological simplifications in the decomposition given in the
conclusion of Theorem~\ref{1Dthm'}. While the decomposition given in
Theorem~\ref{1Dthm'} is deduced from the collapsing theory (in
particular, $V_{n,1}$ is the  part of $M_n$ close to a
$1$-dimensional space and  $V_{n,2}$ is the part close to a
$2$-dimensional space), as we modify the decomposition we  work
purely topologically and do not try to keep the connection with the
collapsing geometry.

\begin{claim}
It suffices to establish Theorem~\ref{7.4} under the assumption that
we have a decomposition as given in Theorem~\ref{1Dthm'} that
satisfies the following additional properties:
\begin{enumerate}
\item $V_{n,1}$ has no closed components.
\item Each $2$-sphere component of $\partial V_{n,1}$ bounds a
$3$-ball component of $V_{n,1}$.
\item Each $2$-torus component of
$\partial V_{n,1}$ that is compressible in $M_n$ bounds a solid
torus component of $V_{n,1}$.
\end{enumerate}
\end{claim}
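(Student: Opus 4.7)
The strategy is three successive topological simplifications of $M_n$ and its decomposition, one per property. We exploit that (a) the closed manifolds in the list of possible $V_{n,1}$-components in property~(1) of Theorem~\ref{1Dthm'}, namely $T^2$-bundles over $S^1$ and doubles of twisted $I$-bundles over the Klein bottle along the torus boundary, admit Sol, Nil, or flat geometric structures and hence are graph manifolds, and (b) graph manifolds are closed under connect sum and under gluing a solid torus onto a boundary torus. These two closure facts will allow a graph-manifold conclusion proved for a simplified manifold $M_n'$ to be transferred back to $M_n$.

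For property~(1), a closed component of $V_{n,1}$ is automatically a connected component of $M_n$; by~(a) it already satisfies the conclusion of Theorem~\ref{7.4}, so we discard such components and continue with the rest of $M_n$. For property~(2), let $\Sigma$ be a $2$-sphere component of $\partial V_{n,1}$ that does not bound a $3$-ball component of $V_{n,1}$. By property~(1) of Theorem~\ref{1Dthm'}, the $V_{n,1}$-component on the interior side of $\Sigma$ is either $S^2\times I$ or $\RR P^3\setminus \mathrm{int}(B^3)$. Cut $M_n$ along $\Sigma$, cap each resulting $S^2$ boundary with a $3$-ball, and place the balls into the new $V_{n,1}$. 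An $S^2\times I$-component splits into two $3$-ball components, while an $\RR P^3\setminus\mathrm{int}(B^3)$-component closes up to $\RR P^3$ (Seifert-fibered), which is then removed by step~(1). Since $M_n$ is the connect sum of the resulting pieces at $\Sigma$, closure under connect sum from~(b) allows the conclusion to be transferred back.

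For property~(3), let $T$ be a compressible $2$-torus component of $\partial V_{n,1}$ not bounding a solid-torus component of $V_{n,1}$. The other candidates from property~(1) for the $V_{n,1}$-component $V$ containing $T$, namely $T^2\times I$ and the twisted $I$-bundle over the Klein bottle, both have incompressible boundary, so by the loop theorem the compressing disk $D$ for $T$ lies in $M_n\setminus\mathrm{int}(V_{n,1})$. Compress $T$ along $D$ to produce a $2$-sphere $\Sigma$ embedded in $M_n$ near $T\cup D$, and take the new $V_{n,1}'$-component replacing $V$ to consist of $V$ together with a regular neighborhood of $D$. The replacement component has $\Sigma$ (in place of $T$) as a sphere boundary, together with the original other boundary of $V$ (namely $T^2\times\{1\}$ if $V = T^2\times I$; none if $V$ was the twisted Klein-bottle bundle, in which case the new component is closed and is absorbed by step~(1)). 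The new sphere $\Sigma\subset\partial V_{n,1}'$ is then handled by step~(2), and capping by a $3$-ball turns the new component into a solid torus when $V=T^2\times I$, as required. The original $M_n$ is recovered from $M_n'$ by an inverse drilling/Dehn-filling operation that preserves the graph-manifold property by~(b). The principal bookkeeping point throughout is to verify that properties~(2)--(6) of Theorem~\ref{1Dthm'} survive each modification; this is ensured by property~(5), which guarantees that the cuts along $\partial V_{n,1}$ meet $\partial V_{n,2}$ in $S^1$-saturated annuli, so the fibration of $V_{n,2}$ and the remnants of property~(6) restrict compatibly to the new decomposition.
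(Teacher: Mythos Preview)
Your handling of properties (1) and (2) is correct and essentially matches the paper. The gap is in property (3).

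The claim that the compressing disk $D$ lies in $M_n\setminus\mathrm{int}(V_{n,1})$ is overstated: incompressibility of $T$ in the component $V$ containing it only forces $D\subset M_n\setminus\mathrm{int}(V)$; nothing prevents $D$ from crossing other components of $V_{n,1}$ or the complementary solid tori and solid cylinders. More seriously, your replacement $V\mapsto V\cup N(D)$ carves the $2$-handle $N(D)$ out of $V_{n,2}$ (and possibly out of pieces of $M_n\setminus(V_{n,1}\cup V_{n,2})$). The leftover $V_{n,2}\setminus N(D)$ is in general no longer the total space of an $S^1$-bundle, so property (5) of Theorem~\ref{1Dthm'} fails for the modified decomposition; your appeal to property (5) as the bookkeeping device is exactly what breaks. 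The new component $V\cup N(D)$ (a punctured solid torus when $V\cong T^2\times I$) is also not among the types in item (1) of Theorem~\ref{1Dthm'}, so step (2) does not literally apply to it. Finally, you do not treat the case where $T$ is non-separating, and the recovery operation you need is connect sum (with a piece, or with $S^2\times S^1$), not drilling/Dehn-filling.

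The paper avoids all of this by never cutting into $V_{n,2}$. It uses $T\cup D$ only to locate a $2$-sphere $S$, writes $M_n\cong A\# B$ (or $M_n'\#(S^2\times S^1)$ when $T$ is non-separating), and defines new decompositions of the summands by \emph{restricting} $V_{n,1}$ and $V_{n,2}$ to each side of $T$ and adjoining a fresh solid-torus component to $V_{n,1}$ on each side. The point is that cutting along $T$, which already lies in $\partial V_{n,1}\cap\partial V_{n,2}$, respects the $S^1$-bundle structure on $V_{n,2}$; cutting along a disk that plunges into $V_{n,2}$ does not.
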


\begin{proof}
By assumption, each closed component of $V_{n,1}$  can be decomposed
along a single incompressible $T^2$ into Seifert fibered manifolds,
and hence these satisfy the conclusion of Theorem~\ref{7.4}. Thus,
without loss of generality we can assume that there are no closed
components of $V_{n,1}$. In the similar
way, we can suppose that no component of $M_n$ is the union of two
solid tori, the union of a solid torus and a twisted $I$-bundle over
the Klein bottle, or the union of two twisted $I$-bundles over the
Klein bottle along a common boundary torus, since manifolds of the
first two types admit Riemannian metrics of non-negative sectional
curvature and those of the third decompose along an incompressible
torus into pieces that are Seifert fibered.

Let $C$ be a $2$-sphere component of $\partial V_{n,1}$. If $C$
bounds a component $\hat C$ of $V_{n,1}$ diffeomorphic to $\Ar
P^3\setminus B^3$, then we remove $\hat C$ from $M_n$ and from
$V_{n,1}$ and replace it in each with a $3$-ball in each. This has
the effect of removing a prime factor diffeomorphic to $\Ar P^3$
from $M_n$.
 This allows us to assume
that there are no components of $V_{n,1}$ diffeomorphic to $\Ar
P^3\setminus B^3$ and hence that the only components of $V_{n,1}$
with boundary $2$-spheres are either $3$-balls or diffeomorphic to $S^2\times I$.

Now let $C$ be a $2$-sphere component of $\partial V_{n,1}$, but not
bounding a  $3$-ball component of $V_{n,1}$. We cut $M_n$ open along
$C$ and cap off  the resulting two copies of $C$ with $3$-balls. We
add these balls to $V_{n,1}$ forming $V_{n,1}'$, and we leave
$V_{n,2}$ unchanged.
 The resulting subsets $V'_{n,1}$
and $V_{n,2}$ satisfy all the conclusions of Theorem~\ref{1Dthm'}.
If we can show that the result is a  graph manifold, then the same
is true for $M_n$. Induction then allows us to assume that every
$S^2$-boundary component of $V_{n,1}$ bounds a $3$-ball component of
$V_{n,1}$.

Next, we consider a $2$-torus component $T$ of $\partial V_{n,1}$
that is a compressible $2$-torus in $M_n$, but one that does not
bound a solid torus component of $V_{n,1}$. By Dehn's lemma there is
an embedded disk in $M_n$ meeting $T$ only along its boundary, that
intersection being homotopically non-trivial in $T$. First, suppose
that $T$ separates $M_n$. We write $M_n=P\cup _TN$. A thickening of
$T\cup D$ has a $2$-sphere boundary component $S$, which we can
suppose lies in $P$. Let $R$ be the region between $T$ and $S$; it
is diffeomorphic to the complement in a solid torus of a $3$-ball.
We form $A=P\cup_TF$ where $F$, is a solid torus, glued in such a
way that $R\cup_TF$ is diffeomorphic to a $3$-ball. We set
$V_{n,2}(A)=V_{n,2}\cap P$ and $V_{n,1}(A)=\left(V_{n,1}\cap
A\right)\cup F$. We also form $B=\widehat R\cup _TN$ where $\widehat
R$ is the solid torus obtained from $R$ by attaching a $3$-ball to
its $S^2$-boundary. We set $V_{n,2}(B)=V_{n,2}\cap N$ and
$V_{n,1}(B)=\left(V_{n,1}\cap N\right)\cup \widehat R$. It is easy
to see that $M_n$ is diffeomorphic to  $A\# B$ and that the given
decompositions of $A$ and $B$ satisfy all the conclusions of
 Theorem~\ref{1Dthm'} unless $T$ bounds a component of
$V_{n,1}$ that is a twisted $I$-bundle over the Klein bottle. In
this case, that component of $V_{n,1}$ is $N$ and $\widehat
R\cup_TN$ is Seifert fibered, whereas the conclusions of
Theorem~\ref{1Dthm'} hold for $A$. Thus, by a straightforward
induction argument, allows us to assume that every compressible
$2$-torus component of $\partial V_{n,1}$ that separates $M_n$
bounds a solid torus component of $V_{n,1}$. If $T$ does not
separate $M_n$ we cut $M_n$ open along $T$, add a solid torus $F$ as
before to the copy of $T$ bounding $R$ and add a copy of $\widehat
R$ to the other copy of $T$. Then $M_n$ is diffeomorphic to the
connected sum of the resulting manifold, $M'_n$, and $S^2\times
S^1$. Furthermore, adding $\widehat R\coprod F$ and to $V_{n,1}$ and
leaving $V_{n,2}$ unchanged produces a new decomposition satisfying
the hypotheses of Theorem~\ref{1Dthm'}. Again a simple induction
argument shows that repeated application of this operation removes
all non-separating compressing tori boundary components of $V_{n,1}$
without creating any new compressing tori boundary components that
do not bound solid torus components of $V_{n,1}$. This completes the
proof of the claim.
\end{proof}

With all these simplifying assumptions in place, we are ready to
complete the proof that Theorem~\ref{1Dthm'} implies
Theorem~\ref{7.4}. Let us consider the union, $X$, of the $D^2\times
I$ components of the closure of $M_n\setminus \left(V_{n,1}\cup
V_{n,2}\right)$ and the $3$-ball components of $V_{n,1}$. Since
every $2$-sphere boundary component of $V_{n,1}$ bounds a $3$-ball
component of $V_{n,1}$, each $D^2\times I$ meets the disjoint union
of the $3$-balls exactly in $D^2\times
\partial I$ and the boundary of each $3$-ball contains exactly
two disks in common with $\coprod D^2\times \partial I$. It then
follows from the fact that $M_n$ is orientable that $X$ is
diffeomorphic to a disjoint union of a finite number of solid tori.
Hence, the closure of $M_n\setminus V_{n,2}$ is a finite collection
of solid tori, components diffeomorphic to $T^2\times I$, and
components diffeomorphic to twisted $I$-bundles over the Klein
bottle. Furthermore, all boundary components of the $T^2\times I$
and twisted $I$-bundles over the Klein bottle are incompressible in
$M_n$. We remove from $M_n$ all components of $M_n\setminus V_{n,2}$
diffeomorphic to either $T^2\times I$ or to a twisted $I$-bundle
over the Klein bottle. The result, $W_n$, is a manifold that is the
union of $V_{n,2}$ and a collection of solid tori glued in along
boundary components. According to \cite{Wald}, since $V_{n,2}$ is an
$S^1$-fibration, $W_n$ is a graph manifold. Since the tori boundary
components that we cut along are incompressible, $\partial W_n$
consists of incompressible boundary tori. It follows that each prime
factor of $W_n$ has the property that removing a disjoint union of
submanifolds diffeomorphic to $T^2\times I$ and twisted $I$-bundles
over the Klein bottle results in an open manifold each component of
which admits complete homogeneous metrics of finite volume. The same
is then true of $M_n$.

If $M_n$ is aspherical, then it is not a non-trivial connected sum.
Removing from $M_n$ copies of $T^2\times I$ and twisted $I$-bundles
over the Klein bottle yields a manifold each component of which is
aspherical. But an aspherical graph manifold with incompressible
boundary has the property that taking out further copies of
$T^2\times I$ and twisted $I$-bundles over the Klein bottle results
in a manifold each component of which is Seifert fibered with
incompressible boundary. This completes the proof that
Theorem~\ref{1Dthm'} implies Theorem~\ref{7.4}. The rest of this
paper is devoted to the proof of Theorem~\ref{1Dthm'}.

\subsection{First reductions in the proof of Theorem~\protect{\ref{1Dthm'}}}

\subsubsection{A smooth limit result}

As we have already indicated, the entire argument revolves around
considering sequences $\{x_n\in M_n\}_{n=1}^\infty$,  rescaling the
metrics $g_n$, and, after passing to a subsequence, extracting a
limit (usually a Gromov-Hausdorff limit) of the metric unit balls in
the rescaled metrics. In general, a limit like this can be of
dimension $1$, $2$, or $3$ (although when we use $\rho^{-2}_n(x_n)$
to rescale the limit, the volume collapsing hypothesis implies that
the limit has dimension $1$ or $2$) and depending on which it is we
get a different structure for balls. The easiest case to treat is
when the limit is $3$-dimensional. As the next theorem shows,
because of the assumption on bounds on the curvature and its
derivatives in the statement of Theorem~\ref{7.4} such limits are
automatically smooth limits, rather than the more general
Gromov-Hausdorff limits that occur in the other two cases.

\begin{prop}\label{smlimits}
Let $(M_n,g_n)$ and $w_n$ be as in the statement of
Theorem~\ref{7.4}. Suppose that we have a sequence of points $x_n\in
M_n$  such that  $B_n=B_{g_n}(x_n,\rho_n(x_n))$ is disjoint from
$\partial M_n$ and a sequence of constants $\lambda_n$ with a
Gromov-Hausdorff limit of a subsequence of $(B_n,\lambda_ng_n,x_n)$,
which is a $3$-dimensional Alexandrov space. Then, passing to a
further subsequence, there is a smooth limit of the
$(B_n,\lambda_ng_n,x_n)$, which is a complete, non-compact manifold
of non-negative curvature.
\end{prop}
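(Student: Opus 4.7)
The plan is to upgrade the pointed Gromov-Hausdorff convergence $(B_n,\lambda_n g_n, x_n) \to Y$ to pointed $C^\infty$ Cheeger-Gromov convergence by combining the local volume non-collapsing forced by the $3$-dimensionality of $Y$ with the uniform bounds on covariant derivatives of curvature supplied by Hypothesis (3) of Theorem~\ref{7.4}. Write $\tilde g_n := \lambda_n g_n$ and $r_n := \sqrt{\lambda_n}\,\rho_n(x_n)$, so that in $\tilde g_n$ the ball $B_n$ has radius $r_n$, volume at most $w_n r_n^3$, and sectional curvature $\geq -r_n^{-2}$. Since $Y$ is $3$-dimensional its $3$-dimensional Hausdorff measure is positive near $x_\infty$, while volume continuity under non-collapsing Gromov-Hausdorff convergence of spaces with curvature bounded below forces $\mathrm{vol}_{\tilde g_n}(B_{\tilde g_n}(x_n, s_0))$ to remain bounded below for every small $s_0$; combined with $w_n \to 0$ this gives $r_n \to \infty$, and in particular the rescaled lower curvature bound $-r_n^{-2}\to 0$.

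The main technical step is producing uniform local volume non-collapsing in $\tilde g_n$ and then feeding it into Hypothesis (3). Fix $R > 0$. The $3$-dimensional limit provides non-collapsing at $x_n$, and Bishop-Gromov comparison (using the uniform lower curvature bound $\geq -1$ on the relevant region for $n$ large) propagates this throughout $B_{\tilde g_n}(x_n, R)$, producing a constant $c = c(R) > 0$ such that for every $y_n \in B_{\tilde g_n}(x_n, R)$ and all sufficiently small $s' > 0$, $\mathrm{vol}_{\tilde g_n}(B_{\tilde g_n}(y_n, s')) \geq c(s')^3$ for all $n$ large. Transferring to $g_n$, the ball $B_{g_n}(y_n, \tilde r_n)$ with $\tilde r_n := s'/\sqrt{\lambda_n}$ has $g_n$-volume at least $c\,\tilde r_n^{\,3}$. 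Since $\tilde r_n / \rho_n(x_n) = s'/r_n \to 0$, for $n$ large $\tilde r_n \leq \min(\rho_n(x_n), \bar r(c))$, and the sectional curvature on $B_{g_n}(y_n,\tilde r_n)\subset B_n$ is $\geq -\rho_n(x_n)^{-2}\geq -\tilde r_n^{-2}$. Hypothesis (3) of Theorem~\ref{7.4} then yields, for every $m \geq 0$,
\[
|\nabla^m \mathrm{Rm}|_{g_n}(y_n) \;\leq\; K_m\,\tilde r_n^{-m-2}, \qquad\text{equivalently}\qquad |\nabla^m \mathrm{Rm}|_{\tilde g_n}(y_n) \;\leq\; K_m\,(s')^{-m-2},
\]
uniformly in $n$ and in $y_n \in B_{\tilde g_n}(x_n, R)$.

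Exhausting $Y$ by compact sets and diagonalizing, these uniform bounds on all covariant derivatives of curvature together with the volume non-collapsing (which, via the standard Cheeger-Gromov injectivity radius estimate, yields uniform lower injectivity radius bounds over bounded regions) verify the hypotheses of the smooth Cheeger-Gromov compactness theorem for pointed Riemannian manifolds. A subsequence of $(B_n,\tilde g_n, x_n)$ therefore converges in the pointed $C^\infty$ topology to a smooth pointed Riemannian manifold $(M_\infty, g_\infty, x_\infty)$; completeness follows from $r_n \to \infty$, and non-negativity of the sectional curvature from $-r_n^{-2}\to 0$. The hard part is ruling out compactness of $M_\infty$, and this is the one place Assumption~1 enters: if $M_\infty$ were compact, pointed smooth convergence would produce, for $n$ large, a diffeomorphism $\varphi_n\colon M_\infty \to U_n \subset M_n$ whose image $U_n$ would be both open and compact in $M_n$, hence a closed connected component of $M_n$. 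That component would then admit the non-negatively curved Riemannian metric $(\varphi_n^{-1})^* g_\infty$, contradicting Assumption~1. Hence $M_\infty$ is non-compact, completing the proof.
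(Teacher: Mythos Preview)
Your argument is correct and follows the same overall strategy as the paper: obtain local volume non-collapsing from the $3$-dimensionality of the limit, feed this into Hypothesis~(3) of Theorem~\ref{7.4} to get uniform $C^\infty$ curvature bounds, and then apply Cheeger--Gromov compactness. Two points of comparison are worth making.

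First, for the non-collapsing step you invoke a volume-continuity theorem for equidimensional Gromov--Hausdorff convergence of spaces with a uniform lower curvature bound. The paper instead gives a self-contained argument (Claim~\ref{vollimit}): it passes to a $(3,\delta)$-regular point $y$ of the limit, lifts a $(3,\delta)$-strainer back to points $y_n\in B_n$, and then Proposition~\ref{goodregion} supplies an almost-bilipschitz map from a definite-radius ball about $y_n$ onto a Euclidean ball, giving an explicit volume lower bound. This keeps the proof entirely within the paper's own Alexandrov-space toolkit; your route is shorter if one is willing to quote the external convergence theorem. Note also that the paper then centers its curvature estimates at the auxiliary point $y_n$ (using $\rho(y_n)$ and showing $\rho(y_n)\sqrt{\lambda_n}\to\infty$), whereas you work directly with $\rho_n(x_n)$ and propagate non-collapsing throughout $B_{\tilde g_n}(x_n,R)$ by Bishop--Gromov; both variants are fine.

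Second, you explicitly derive non-compactness of the limit from Assumption~1. The paper's own proof of this proposition does not address non-compactness at all; the reasoning you give (a compact smooth limit would be diffeomorphic to a closed component of $M_n$ and would endow it with a metric of non-negative curvature) is essentially what appears later, in the proof of Lemma~\ref{rholem}. So here you are filling in a step the paper leaves implicit.
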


\begin{proof}
{\bf First step:}

\begin{claim}\label{vollimit} If $(B_n,\lambda_ng_n,x_n)$ converges to a 3-dimensional
Alexandrov space, then  there is a sequence of points $y_n\in M_n$
converging to a point $y$ in the limit and constants $r>0$ and
$\kappa>0$ such that for all $n$ sufficiently large ${\rm Vol}\,
B_{\lambda_ng_n}(y_n,r)\ge \kappa r^3$.
\end{claim}

\begin{proof} Fix $\delta>0$ sufficiently small.
Let $X$ be the limiting $3$-dimensional Alexandrov space. By Corollary 6.7 of
\cite{BGP}  the subset $R_\delta(X)$ consisting of
points with a $(3,\delta)$-strainer is dense. Choose $y\in
R_\delta(X)$
 and let $y_n\in M_n$ be a sequence converging to $y$.
 Then there is a $(3,\delta)$-strainer $\{a_1,b_1,a_2,b_2,a_3,b_3\}$
 at $y$. Let $d$ be the size of this strainer. Hence for all $n$
 sufficiently large, there is a $(3,\delta)$-strainer of size $d/2$
 at $y_n$ in $\lambda_nB_n$. According to Proposition~\ref{goodregion} this means that for
 some $r<< d/2$, but depending only on $d$, there is an almost bilipschitz homeomorphism from
 $B_{\lambda_ng_n}(y_n,r)$ to the ball of radius $r$ in Euclidean
 space, where the error estimate goes to zero with $\delta$.
 Hence, for any $\epsilon>0$ sufficiently small, the cardinality of a maximal
 $\epsilon$-net in $B_{\lambda_ng_n}(y_n,r)$ is at least $\alpha
 \epsilon^{-3}r^3$ for a universal constant $\alpha>0$. If we choose
 $\epsilon>0$ sufficiently small then the volume in $\lambda_ng_n$
 of any ball of radius $\epsilon/2$ centered at a point of
 $B_{\lambda_ng}(y_n,r)$ is at least $\omega_0(\epsilon/2)^3$ where
 $\omega_0$ is the volume of the unit ball in Euclidean $3$-space.
 Hence, ${\rm Vol}\,B_{\lambda_ng_n}(y_n,(r+\epsilon))\ge \alpha\omega_0 r^3/8$.
Taking the limit as $\epsilon\rightarrow 0$ gives the uniform lower
bound to the volume of the ball of radius $B_{\lambda_ng_n}(y_n,r)$.
\end{proof}

{\bf Second Step:} Suppose that $y_n\in B_n$ is as in the previous
claim. Then, we see that the $(B_n,\lambda_ng_n)$ are uniformly
volume non-collapsed at $y_n$. That is to say for some $r>0$ and
$w'>0$, for all $n$ the volume of $B_{\lambda_ng_n}(y_n,r)$ is at
least $w'r^3$.
 Since
there is $\rho(y_n)$ such that the  ball $B(y_n,\rho(y_n))$ has
volume is at most $w_n\rho(y_n)^3$ where $w_n\rightarrow 0$ as
$n\rightarrow \infty$, it follows from Bishop-Gromov volume
comparison that $\rho(y_n)\sqrt{\lambda_n}\mapsto\infty$ as $n$
tends to infinity. Hence, for any $A<\infty$, for all $n$
sufficiently large, we have $A<\rho(y_n)\sqrt{\lambda_n}$. Thus, by
our assumption, for all $n$ sufficiently large, the curvature of
$\lambda_ng_n$ on $B_{\lambda_ng_n}(y_n,A)\ge
-\lambda_n^{-1}\rho(y_n)^{-2}>-A^{-2}$. Also, for all $n$
sufficiently large, $A/\sqrt{\lambda_n}<\bar r(w')$. Hence, for any
$A<\infty$, for all $n$ sufficiently large, we have uniform bounds
on the curvature and all its derivatives in
$B_{\lambda_ng_n}(y_n,A)$. Since we also have uniform volume
non-collapsing at the base point, we can pass to a subsequence,
which has a smooth complete limit. Since
$\rho(y_n)^{-2}\lambda_n^{-1}$ tends to zero, the curvature of the
limit manifold is $\ge 0$.
\end{proof}

This result about the $3$-dimensional limits will be important as we
study the $1$- and $2$-dimensional limits.

\subsubsection{Adjusting $\rho_n$}\label{rhosect}

\begin{lem}\label{rholem} Let $M_n$, $w_n$ and $\rho_n$ satisfy the
hypotheses of Theorem~\ref{7.4} and suppose that the $M_n$ satisfy
Assumption 1. After passing to a subsequence, there are constants
$w'_n$ and functions
 $\rho_n\colon M_n\to (0,\infty)$ satisfying the hypothesis of
 Theorem~\ref{7.4} such that in addition the following hold:
\begin{enumerate}
\item For any connected component $M_n^0$ of $M_n$ and for any $x\in M_n^0$
we have $$\rho_n(x)\le \frac{1}{2}{\rm diam}\, M_n^0,$$  and
\item if $y\in B(x,\rho_n(x)/2)$ then $\rho_n(y)/2\le \rho_n(x)\le
2\rho_n(x)$.
\end{enumerate}
\end{lem}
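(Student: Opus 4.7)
The plan is to modify $\rho_n$ downward in two conceptual stages, tracking how the constant $w_n$ must be enlarged at each step. Throughout, let $D_n$ denote the diameter of the connected component $M_n^0$ containing the point under consideration; the new constant $w'_n$ will be a universal multiple of $w_n$, hence still tends to zero.

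First, to establish property~(1), I would set $\rho^{(1)}_n(x) = \min\{\rho_n(x),\,D_n/2\}$. The lower curvature bound on the smaller ball $B_{g_n}(x,\rho^{(1)}_n(x)) \subset B_{g_n}(x,\rho_n(x))$ is inherited, but the volume-ratio condition requires that $\rho_n(x)/D_n$ be uniformly bounded. I would establish this by contradiction via a subsequence: if $\rho_n(x_n)/D_n \to \infty$ along some subsequence, then $(M_n^0, D_n^{-2} g_n)$ would have diameter $1$ with a lower sectional curvature bound tending to $0$. Using hypothesis~3 of Theorem~\ref{7.4} together with Proposition~\ref{smlimits}, any Gromov-Hausdorff subsequential limit would be a smooth closed $3$-manifold of nonnegative sectional curvature, contradicting Assumption~1. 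Discarding such indices leaves $\rho_n \le C D_n$ for some universal $C$, and then $\rho^{(1)}_n$ satisfies the hypothesis of Theorem~\ref{7.4} with $w^{(1)}_n = 8 C^3 w_n \to 0$ (since ${\rm Vol}\,B_{g_n}(x,D_n/2) \le w_n\rho_n(x)^3 \le w_n (CD_n)^3 = 8 C^3 w_n (D_n/2)^3$).

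For property~(2), I would use a maximality principle. Define
\[
\rho'_n(x) \;=\; \sup\bigl\{\,r \in (0,\, D_n/2] \;:\; {\rm Vol}\,B_{g_n}(x,r) \le w'_n\, r^3 \text{ and sectional curvatures on } B_{g_n}(x,r) \text{ are } \ge -r^{-2}\,\bigr\},
\]
where $w'_n$ is chosen a fixed factor larger than $w^{(1)}_n$ in order to absorb the scale-change inflation described below. The admissible set is nonempty (it contains $\rho^{(1)}_n(x)$ once $w'_n \ge w^{(1)}_n$), the supremum is attained by continuity of the defining inequalities, and property~(1) together with the hypothesis of Theorem~\ref{7.4} for the new $w'_n$ are automatic. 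For the Lipschitz-like bound, if $y \in B_{g_n}(x,\rho'_n(x)/2)$ then $B_{g_n}(y,\rho'_n(x)/2) \subset B_{g_n}(x,\rho'_n(x))$, so the curvature bound at $y$ at radius $\rho'_n(x)/2$ is strictly stronger than required and the volume ratio is at worst $8$ times the ratio at $x$. With $w'_n$ chosen so that this eightfold inflation remains within the admissible regime, maximality of $\rho'_n(y)$ forces $\rho'_n(y) \ge \rho'_n(x)/2$. The symmetric bound $\rho'_n(y) \le 2\rho'_n(x)$ follows by swapping roles: if it failed, then $x \in B_{g_n}(y,\rho'_n(y)/2)$ and the direction already proved would yield $\rho'_n(x) \ge \rho'_n(y)/2 > \rho'_n(x)$, a contradiction.

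The main technical obstacle is the bookkeeping of constants in the maximality step: the factor-of-eight inflation of the volume ratio incurred when passing from scale $\rho'_n(x)$ to $\rho'_n(x)/2$ at a neighbouring point must be absorbed by the choice of $w'_n$ relative to $w^{(1)}_n$ (for example via a two-stage definition in which the recorded volume threshold is a fixed fraction of the admissibility threshold), while still preserving $w'_n \to 0$. Step~1 is conceptually straightforward but depends essentially on Assumption~1 and the smooth-limit conclusion of Proposition~\ref{smlimits} to exclude the possibility of a closed component on which $\rho_n$ is vastly oversized relative to the diameter.
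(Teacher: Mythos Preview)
Your Step 2 has a genuine gap that you yourself identify but do not close. When you include the volume condition in the supremum defining $\rho'_n$, the maximality argument breaks: at scale $\rho'_n(x)/2$ around a nearby point $y$ you only get ${\rm Vol}\,B(y,\rho'_n(x)/2)\le 8w'_n(\rho'_n(x)/2)^3$, so $\rho'_n(x)/2$ is \emph{not} admissible for the supremum defining $\rho'_n(y)$, and you cannot conclude $\rho'_n(y)\ge \rho'_n(x)/2$. The ``two-stage'' suggestion is circular: whatever threshold you put in the supremum is the one you are stuck with at the maximal scale, so you cannot record the hypothesis of Theorem~\ref{7.4} at a strictly smaller threshold. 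The paper avoids this entirely by defining the new $\rho_n(x)$ as the maximal $r$ for which the \emph{curvature} bound ${\rm Rm}\ge -r^{-2}$ holds on $B(x,r)$ (this exists by Assumption~1), which gives the Lipschitz inequality (2) immediately with no volume bookkeeping; the volume inequality is then recovered at the new scale from the old one by Bishop--Gromov, at the cost of a universal multiplicative constant in $w_n$.

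Your Step 1 also has a gap. To invoke Proposition~\ref{smlimits} and contradict Assumption~1 you need the Gromov--Hausdorff limit of $(M_n^0,D_n^{-2}g_n)$ to be $3$-dimensional, i.e.\ you need ${\rm Vol}(M_n^0)/D_n^3$ bounded away from zero along the subsequence. But from ${\rm Vol}(M_n^0)\le w_n\rho_n(x_n)^3$ with $w_n\to 0$ and $\rho_n(x_n)/D_n\to\infty$ you cannot conclude this; the ratio ${\rm Vol}(M_n^0)/D_n^3$ may well tend to zero, in which case the limit is lower-dimensional and there is no contradiction. The paper does not attempt to prove $\rho_n\le CD_n$ (which may be false); instead, in the residual collapsing sub-case it simply sets the new $\rho_n\equiv D_n/2$ and takes $w'_n=8\,{\rm Vol}(M_n^0)/D_n^3\to 0$.
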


\begin{proof}
Without loss of generality we can assume that $M_n$ is connected. If
$M_n$ is closed, then by assumption it is not the case that ${\rm
Rm}\ge 0$ on all of $M_n$. If $M_n$ has non-empty boundary, then
also by assumption ${\rm Rm}$ is not everywhere positive. Thus, for
each $x\in M_n$, there is a maximum $r=r_n(x)\ge \rho_n(x)$ such
that the ${\rm Rm}\ge -r^{-2}$ on $B(x,r)$. Furthermore,  by volume
comparison (the Bishop-Gromov theorem)
$${\rm vol}\, B(x,r)\le
\frac{V_{\rm hyp}(1)}{V_{\rm Eucl}(1)}w_nr^3,$$ where $V_{\rm
hyp}(1)$, resp. $V_{\rm Eucl}(1)$, is the volume of the unit ball in
hyperbolic, resp. Euclidean, $3$-space. Thus, at the expense of
changing the $w_n$ by a factor independent of $n$, we can define the
function $\rho_n$ so that $\rho_n(x)$ is this maximum $r(x)$. It
follows immediately that if $y\in B(x,\rho_n(x)/2)$ then
$$\frac{1}{2}\rho_n(x)\le \rho_n(y)\le 2\rho_n(x).$$
If, for all $n$, we have $\rho_n(x)\le {\rm diam}\, M_n$
for all $x\in M_n$, then we simply replace $\rho_n(x)$ by $\rho_n(x)/2$
and $w_n$ by $8w_n$ and we have established the claim in this case.

Now suppose (after passing to a subsequence) that for each $n$ there
is $x_n\in M_n$ with $\rho_n(x_n)>{\rm diam}\, M_n$. This implies
that ${\rm Rm}(x)\ge -({\rm diam}\, M_n)^{-2}$ for all $x\in M_n$
and hence that $\rho_n$ is a constant function; we denote its value
by $\rho_n$. Passing to a subsequence we can assume that ${\rm
vol}(M_n)/({\rm diam}\,M_n)^3$ tends to a limit (possibly $+\infty$)
as $n\rightarrow \infty$. First, we consider the case when this
limit is non-zero. The fact that the volume divided by the cube of
the diameter is bounded away from zero and the volume inequality
assumed in Theorem~\ref{7.4} imply that ${\rm diam}\,M_n/\rho_n$
tends to $0$ as $n\rightarrow\infty$. By the hypothesis about the
boundary of $M_n$, this implies that $M_n$ is closed. Rescaling
$M_n$ to make its diameter $1$ yields a manifold whose sectional
curvatures are bounded below by $-({\rm diam}\, M_n)^2/\rho^2_n$ and
whose volume is bounded away from zero. By
Proposition~\ref{smlimits} we see that passing to a subsequence
there is a smooth limit which has non-negative sectional curvature.
This is contrary to Assumption 1. Thus, we can suppose that ${\rm
vol}(M_n)/({\rm diam}\,M_n)^3$ tends to zero as $n$ goes to
infinity. We take $w_n'=8{\rm vol}(M_n)/({\rm diam}\,M_n)^3$ and we
take $\rho_n$ to be the constant ${\rm diam}\, M_n/2$.
\end{proof}

\noindent
{\bf Assumption 2 and notation: Now we fix the constants $w_n$ and
the functions $\rho_n\colon M_n\to (0,\infty)$ satisfying
Lemma~\ref{rholem}. For any $n$ and any $x\in M_n$ we denote by
$g'_n(x)$ the metric $\rho_n(x)^{-2}g_n$.
Thus, $B_{g_n}(x,\rho_n(x))= B_{g'_n(x)}(x,1)$.}

\section{Gromov-Hausdorff convergence of Alexandrov spaces}\label{secbasics}

\subsection{Basics about Alexandrov Spaces}

Let $X$ be  locally compact metric space. Then $X$ is a {\em local
length space} if $X$ is covered by open subsets $U_i$ such that for
each $i$ and for every two points $x,y$ in $U_i$ there is a closed
interval $I\subset \Ar$ and an isometric embedding $I\to X$ whose
endpoints are $x$ and $y$. In particular, the image of $I$ is
rectifiable and its length is the distance between its endpoints.
Fix a number $k$. We denote by $H_{k}$ the complete, simply
connected surface of constant curvature $k$.  Given three points
$p,q,r$ in a metric space and a real number $k$, then a {\em
comparison triangle} $\tilde p\tilde q\tilde r$ in $H_k$ is a
triangle  whose side lengths are equal to the distances between the
corresponding points of $X$, e.g., $d_{H_{k}}(\tilde p,\tilde
q)=d_X(p,q)$. Such a comparison triangle exists and is unique up to
isometries of $H_k$, provided only that if $k>0$, then the sum of
the three pair-wise distances is at most $2\pi/\sqrt{ k}$. We define
the {\em $k$-comparison angle} (or {\em the comparison angle} if $k$
is clear from the context) $\tilde \angle rpq$ to be the angle
$\angle \tilde r\tilde p\tilde q$ in $H_{k}$. By definition, a {\em
local Alexandrov space with curvature bounded below by $k$} is a
locally compact, local length space with the property that for every
$p\in X$ there is a neighborhood $U\subset X$ of $p$ such that for
any three points $q,r,s$ in $U$ the $k$-comparison angles satisfy
\begin{equation}\label{triangle}
\tilde\angle qpr+\tilde\angle rps+\tilde \angle spr\le 2\pi,
\end{equation}
 see
\cite{BGP}. In addition, if the dimension of $X$ is one and $k>0$,
then we require the diameter of $X$ to be at most $\pi/\sqrt{k}$. A {\em local
Alexandrov space} is a local Alexandrov space of curvature bounded
below by some $k$. If $X$ is a local  Alexandrov space with curvature $\ge
k$ and if $\lambda>0$ then  the metric space, denoted $\lambda X$,
obtained by multiplying the given metric by $\lambda^2$ is a local
Alexandrov space with curvature bounded below by $\lambda^{-2}k$.
An {\em Alexandrov space} is a complete metric space which is a length space
in the sense that any two points are jointed by an isometric embedding of an
arc, and that is also a local Alexandrov space. In such spaces Inequality~\ref{triangle}
holds globally, i.e., for all $4$-tuples of points in the space. (See
\S 3 of \cite{BGP}.)

Let $A, B$ be disjoint compact subsets of a local Alexandrov space $X$.
By a {\em geodesic} from $A$ to $B$ we mean an isometric embedding
of an interval into $X$ with one endpoint contained in $A$ and the
other contained in $B$ and such that the length of this interval is
equal to the distance from $A$ to $B$. By a {\em geodesic} we mean a
geodesic between the compact sets which are its endpoints. It is an
easy exercise to show that if $\gamma$ is a geodesic from $A$ to $B$
and $q$ is an interior point of this geodesic, then the sub-interval
of $\gamma$ from $A$ to $q$ is the unique geodesic from $A$ to $q$.

An elementary application of Toponogov \cite{Top}  theory shows the following.

\begin{exam}
Let $M$ be a  Riemannian manifold with locally convex boundary of with all sectional curvatures
$\ge k$. Then $M$ is a local Alexandrov space of curvature bounded below
by $k$. If $M$ is complete, then it is an Alexandrov space.
\end{exam}

Some of the basic properties of   Alexandrov spaces $X$ that follow
from this definition are:
\begin{prop}\label{ASbasics}
Let $X$ be a local Alexandrov space with curvature bounded
below by $k$ and let $U\subset X$ be an open subset with the property that
every pair of points in $U$ is connected by a geodesic and with the property
that Inequality~\ref{triangle}
holds for all quadruples of points in $U$. \begin{enumerate}
\item Let $p,q,r$ be three points in $U$
 and let $\tilde p\tilde q\tilde r$ be a comparison triangle in
$H_{k}$. Let $\gamma$ be a geodesic in $U$ with endpoints $q$ and
$r$ and let $x\in \gamma$ be at distance $s$ from $q$. Let $\tilde
x\in \tilde q\tilde r$ be a point on the corresponding geodesic in
$H_{k}$ at distance $s$ from $\tilde q$. Then $d(\tilde p,\tilde
x)\le d(p,x)$.
\item Let $\gamma$ and $\mu$ be geodesics in $U$ emanating from $p$. Set $a$, respectively $b$,
equal to the length of $\gamma$, respectively $\mu$. For
$0<s\le a$ and $0<t\le b$ let $\gamma(s)$ be the point on $\gamma$ at
distance $s$ from $p$ and let $\mu(t)$ be the point on $\mu$ at
distance $t$ from $p$. Define the function
$$f(s,t)=\tilde \angle \gamma(s)p\mu(t).$$
Then $f(s,t)$ is a monotone non-increasing function of either
variable $s$ and $t$ when the other is held fixed. In particular,
the limit as $s$ and $t$ both go to zero $f(s,t)$ exists and is
denoted $\angle qpr$.
\end{enumerate}
In particular, if $X$ is complete, then all these properties hold with $U=X$.

\end{prop}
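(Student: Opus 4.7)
My approach would be to prove (1) first, by combining the triangle-angle condition applied at the intermediate point $x$ with a classical model-space lemma in $H_k$, and then to deduce (2) from (1) by applying it to a suitable sub-triangle.

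For (1), I would first observe that since $x$ lies on a geodesic from $q$ to $r$, one has $d(q,x)+d(x,r)=d(q,r)$; consequently the $k$-comparison triangle for $\{q,x,r\}$ is degenerate and $\tilde\angle qxr=\pi$. The hypothesis that Inequality~\ref{triangle} holds for all quadruples in $U$, applied at the vertex $x$ with points $p,q,r$, then yields
$$
\tilde\angle pxq+\tilde\angle pxr+\tilde\angle qxr\le 2\pi,
$$
and hence $\tilde\angle pxq+\tilde\angle pxr\le \pi$. Next I would build, in the model plane $H_k$, two comparison triangles $\bar p\bar q\bar x$ for $(p,q,x)$ and $\bar p\bar x\bar r$ for $(p,x,r)$, and glue them along the common edge $\bar p\bar x$ (of length $d(p,x)$) with $\bar q$ and $\bar r$ on opposite sides; the resulting angle $\angle\bar q\bar x\bar r$ equals $\tilde\angle pxq+\tilde\angle pxr$ and is at most $\pi$. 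The Alexandrov lemma in $H_k$ says that if one rotates $\bar r$ about $\bar x$ to straighten this hinge, opening the angle at $\bar x$ up to $\pi$ while keeping the four outer side lengths $d(p,q),d(q,x),d(x,r),d(r,p)$ fixed, then the interior diagonal $|\bar p\bar x|$ decreases monotonically. The straightened configuration, by uniqueness of the comparison triangle, is precisely $\tilde p\tilde q\tilde r$ with $\tilde x$ on $\tilde q\tilde r$ at distance $s$ from $\tilde q$, so $d(\tilde p,\tilde x)\le d(p,x)$ as desired.

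For (2), fix $t$ and take $0<s_1<s_2\le a$. I would apply (1) to the triangle $\{p,\gamma(s_2),\mu(t)\}$ with the point $\gamma(s_1)$, which lies on the geodesic subsegment of $\gamma$ from $p$ to $\gamma(s_2)$. Letting $\hat p\hat\gamma\hat\mu$ be the comparison triangle for $(p,\gamma(s_2),\mu(t))$ in $H_k$ and $\hat y$ the point on side $\hat p\hat\gamma$ at distance $s_1$ from $\hat p$, part (1) gives $d(\hat y,\hat\mu)\le d(\gamma(s_1),\mu(t))$. The angle at $\hat p$ in the subtriangle $\hat p\hat y\hat\mu$ coincides with $\tilde\angle\gamma(s_2)p\mu(t)$, since $\hat y$ lies on the ray from $\hat p$ to $\hat\gamma$. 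Because in $H_k$ the angle of a triangle with two sides of fixed lengths $s_1$ and $t$ is a strictly increasing function of the third (opposite) side length, the distance inequality immediately yields
$$
\tilde\angle\gamma(s_2)p\mu(t)\le\tilde\angle\gamma(s_1)p\mu(t).
$$
The same argument in the variable $t$ gives the full monotonicity statement, and the existence of the limit $\angle qpr$ as $s,t\to 0$ follows from bounded monotone convergence.

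The main obstacle I anticipate is the $H_k$-planar Alexandrov lemma invoked in (1). Although intuitively clear, it requires a careful application of the law of cosines in $H_k$ to show that, with the four outer sides of a geodesic quadrilateral in $H_k$ fixed, the length of the interior diagonal is a monotone function of the hinge angle at the opposite vertex. Some extra bookkeeping is needed when $k>0$ to check that all side lengths involved stay below $\pi/\sqrt k$ so that the relevant comparison triangles and the straightening motion are genuinely realized in $H_k$ throughout the argument. Once that planar lemma is in hand, the rest of the proof is a formal deduction from the triangle condition supplied by the hypothesis on $U$.
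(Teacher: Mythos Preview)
Your proof is correct and is the standard argument. The paper itself does not give a proof of this proposition; it simply refers to \S 3 of \cite{BGP}, where exactly the argument you outline (the four-point condition at the interior point $x$, the Alexandrov gluing lemma in $H_k$, and the deduction of angle monotonicity from the distance comparison) is carried out.

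One small wording issue in your sketch of (1): you cannot literally ``rotate $\bar r$ about $\bar x$'' while keeping all four outer side-lengths fixed, since that rotation fixes $|\bar x\bar r|$ but moves $|\bar p\bar r|$. The correct formulation, which you clearly have in mind, is to regard the glued quadrilateral $\bar p\bar q\bar x\bar r$ as a one-parameter family with the four outer sides $d(p,q),d(q,x),d(x,r),d(r,p)$ held fixed and the diagonal $|\bar p\bar x|$ as parameter; by the law of cosines in $H_k$ the hinge angle $\angle\bar q\bar x\bar r$ is a strictly decreasing function of this diagonal, so increasing the hinge from its value $\le\pi$ to exactly $\pi$ forces $|\bar p\bar x|$ to decrease to $d(\tilde p,\tilde x)$. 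This is precisely the Alexandrov lemma, and your identification of it as the one nontrivial ingredient is on target.
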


These are all proved in \S 3 for \cite{BGP}.

An important result is the following splitting theorem for complete
Alexandrov spaces.

\begin{prop}\label{splitting}
Suppose that $X$ is an Alexandrov space of curvature $\ge 0$
and that $X$ contains a geodesic line $L\subset X$ (i.e., a copy of
$\Ar$ isometrically embedded in $X$) parameterized as $\zeta(s)$.
Then there is an Alexandrov space $Y$ and an isometry
$\Ar\times Y\to X$ so that $L$ is the image of $\Ar\times\{y\}$ for
some $y\in Y$. The parallel copies of $Y$ in the product are the
level sets of the function $f={\rm lim}_{s\rightarrow
-\infty}(d(\zeta(s),\cdot)-s$.
\end{prop}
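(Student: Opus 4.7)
The plan is to follow the Cheeger--Gromoll splitting argument, in the synthetic form developed for Alexandrov spaces in \cite{BGP}, organized around the two Busemann functions attached to the ends of $L$. Writing $\zeta^+(s) = \zeta(s)$ and $\zeta^-(s) = \zeta(-s)$ for $s \ge 0$, set
\[
b^\pm(x) \;=\; \lim_{s\to\infty}\bigl(s - d(\zeta^\pm(s), x)\bigr).
\]
Each limit exists because $s \mapsto s - d(\zeta^\pm(s), x)$ is monotone non-decreasing in $s$ (triangle inequality) and bounded above by $d(\zeta(0), x)$; both $b^\pm$ are $1$-Lipschitz. The function $f$ in the statement agrees, up to a sign and an additive constant, with $b^+$ (equivalently $-b^-$).

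The key analytic step is to prove $b^+ + b^- \equiv 0$. The inequality $b^+ + b^- \le 0$ is immediate from the triangle inequality applied to $\zeta^-(s), x, \zeta^+(s)$ together with $d(\zeta^-(s), \zeta^+(s)) = 2s$. For the reverse, fix $x$ and large $s, t$, and pick a point $p_{s,t}$ on a minimizing geodesic from $\zeta^-(s)$ to $\zeta^+(t)$ nearest to $x$. In the Euclidean comparison triangle $\tilde x\,\tilde\zeta^-(s)\,\tilde\zeta^+(t)$ in $H_0$, the foot of the perpendicular from $\tilde x$ to the opposite side has controllable position, and Proposition~\ref{ASbasics}(1) translates this into an upper bound on $d(x, p_{s,t})$ in $X$. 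Since $\zeta$ is an isometric line the comparison triangle becomes arbitrarily flat as $s, t \to \infty$, and one deduces $b^+(x) + b^-(x) \ge -\epsilon$ for every $\epsilon > 0$.

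Granted $b^+ + b^- = 0$, set $f := b^+$. For each $x \in X$ choose minimizing geodesics from $x$ to $\zeta^\pm(t_k)$ with $t_k \to \infty$; by local compactness and Arzel\`a--Ascoli they subconverge to asymptotic rays $\gamma^\pm_x$ issuing from $x$, along which $f$ is affine with slopes $\pm 1$. The equality $b^+(x) + b^-(x) = 0$ forces the comparison angle between $\gamma^+_x$ and $\gamma^-_x$ at $x$ to tend to $\pi$, so by Proposition~\ref{ASbasics}(2) the actual angle is $\pi$ and the concatenation $\gamma^-_x \cup \gamma^+_x$ is a geodesic line $L_x$ through $x$; on $L_x$ the function $f$ agrees with arclength up to the constant $f(x)$.

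Finally let $Y := f^{-1}(0)$ with the restricted metric and define $\Phi \colon \Ar \times Y \to X$ by $\Phi(t, y) = L_y(t)$. The main obstacle is the rigidity statement that for any $y_1, y_2 \in Y$ the function $t \mapsto d(L_{y_1}(t), L_{y_2}(t))$ is constant. This follows from Proposition~\ref{ASbasics}(1) applied to triangles with one vertex moving along $L_{y_1}$ and the opposite side lying on $L_{y_2}$, combined with the fact that $f$ is simultaneously $b^+$ and $-b^-$: the distance function is then shown to be both convex and concave in $t$, hence affine, and being bounded it must be constant. Once this equidistance is in hand, the Euclidean comparison applied to $\{L_{y_1}(t_1), L_{y_2}(t_2), y_1, y_2\}$ becomes an equality, yielding $d(\Phi(t_1, y_1), \Phi(t_2, y_2))^2 = (t_1 - t_2)^2 + d_Y(y_1, y_2)^2$. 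Hence $\Phi$ is the required isometry, $Y$ inherits the structure of an Alexandrov space, and the level sets of $f$ are exactly the parallel slices $\{t\}\times Y$.
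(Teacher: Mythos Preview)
The paper gives no proof of this proposition; it simply asserts that the smooth-case argument in \cite{Top} works \emph{mutatis mutandis} for Alexandrov spaces. Your sketch is exactly that Busemann-function splitting argument, so you are supplying what the paper leaves to the reference, and the overall strategy is the right one.

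One concrete slip to fix: in the step $b^++b^-\ge 0$ you say Proposition~\ref{ASbasics}(1) yields an \emph{upper} bound on $d(x,p_{s,t})$. It gives the opposite direction, $d(x,p)\ge d(\tilde x,\tilde p)$ for $p$ on the geodesic opposite $x$. The argument still goes through with the inequality reversed: since $\zeta$ is a line, the minimizing geodesic from $\zeta(-s)$ to $\zeta(t)$ is $\zeta|_{[-s,t]}$ itself, so take $p=\zeta(0)$. In the Euclidean comparison triangle with $s=t$ one computes directly that $d(\tilde x,\tilde\zeta(0))^2=-s\bigl(b^+(x)+b^-(x)\bigr)+O(1)$; hence if $b^+(x)+b^-(x)<0$ the lower bound $d(x,\zeta(0))\ge d(\tilde x,\tilde\zeta(0))$ would force $d(x,\zeta(0))\to\infty$, which is absurd.

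Your final rigidity step is also a little thin as written. In curvature $\ge 0$ the distance between two geodesics is not in general convex, so ``both convex and concave'' needs a different source. The standard route is to first observe that each $b^\pm$ is concave (this is the genuine use of Proposition~\ref{ASbasics}(1), applied to triangles with apex $\zeta^\pm(s)$ and base on an arbitrary geodesic, then letting $s\to\infty$); the identity $b^+=-b^-$ then makes $f=b^+$ affine with unit Lipschitz constant realized along every $L_x$. The constancy of $t\mapsto d(L_{y_1}(t),L_{y_2}(t))$ and the product formula follow from the equality case of the triangle comparison applied to the quadruples $L_{y_1}(t),L_{y_1}(t'),L_{y_2}(t),L_{y_2}(t')$; this is where \cite{BGP} (or \cite{Top} in the smooth case) does the work.
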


This leads immediately by induction to:

\begin{cor}\label{prod}
Suppose that $X$ is an Alexandrov space of curvature $\ge 0$
containing an isometric copy of $\Ar^m$ for some $m>0$. Then there
is an Alexandrov space $Y$ and an isometric product decomposition
$X=\Ar^m\times Y$ with the property that the given copy of $\Ar^m$
is identified with $\Ar^m\times \{y\}$ for some $y\in Y$.
\end{cor}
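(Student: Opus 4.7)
The proof proceeds by induction on $m$, with Proposition~\ref{splitting} providing both the base case and the engine for the inductive step.

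For the base case $m=1$, an isometric copy of $\mathbb{R}$ in $X$ is precisely a geodesic line, so Proposition~\ref{splitting} gives the desired splitting $X = \mathbb{R} \times Y$ directly, with the line identified with $\mathbb{R} \times \{y\}$.

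For the inductive step, assume the corollary holds for $m-1$, and let $\iota\colon \mathbb{R}^m \hookrightarrow X$ be an isometric embedding. Pick the first coordinate line $L = \iota(\mathbb{R}\times\{0\}^{m-1})$; this is a geodesic line in $X$, so by Proposition~\ref{splitting} we obtain an isometry $X \cong \mathbb{R}\times Y_1$ with $L$ mapped to $\mathbb{R}\times\{y_1\}$, and with the $Y_1$-slices realized as level sets of the Busemann function
\[
f(x) = \lim_{s\to-\infty}\bigl(d_X(\zeta(s),x)-s\bigr),
\]
where $\zeta$ parametrizes $L$. The main technical point is to show that the orthogonal hyperplane $\iota(\{0\}\times \mathbb{R}^{m-1})$ lies in a single slice of this product. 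Because $\iota$ is isometric, for any $p=\iota(0,x_2,\ldots,x_m)$ the distances $d_X(\zeta(s),p)$ agree with the corresponding Euclidean distances in $\mathbb{R}^m$, so $f(p)$ coincides with the Euclidean Busemann function of the first coordinate axis evaluated at $(0,x_2,\ldots,x_m)$. A direct calculation in $\mathbb{R}^m$ shows that this Busemann function equals the first coordinate; hence $f$ is identically zero on $\iota(\{0\}\times\mathbb{R}^{m-1})$, and the image lies in one slice, which under the splitting is a copy of $Y_1$.

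Since $X\cong \mathbb{R}\times Y_1$ is an isometric product, the induced intrinsic metric on this slice is the metric of $Y_1$, and distances within the slice agree with distances in $X$. Composing with the isometric embedding $\mathbb{R}^{m-1}\hookrightarrow X$ coming from $\iota$, we obtain an isometric embedding $\mathbb{R}^{m-1}\hookrightarrow Y_1$. Moreover, $Y_1$ is itself an Alexandrov space of curvature $\ge 0$ (as the product factor of a non-negatively curved Alexandrov space), so the inductive hypothesis applies and yields an isometric product decomposition $Y_1 = \mathbb{R}^{m-1}\times Y$ in which our $\mathbb{R}^{m-1}$ is sent to $\mathbb{R}^{m-1}\times\{y\}$ for some $y\in Y$. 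Stringing the two splittings together gives $X \cong \mathbb{R}\times\mathbb{R}^{m-1}\times Y = \mathbb{R}^m\times Y$, and by construction the given copy of $\mathbb{R}^m$ is identified with $\mathbb{R}^m\times\{y\}$, completing the induction.

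The only real obstacle is the Busemann-function step in the middle paragraph: one must verify that $\iota$ being isometric forces the Busemann function of the chosen coordinate line to restrict correctly to $\iota(\mathbb{R}^m)$. Everything else is bookkeeping about isometric product decompositions.
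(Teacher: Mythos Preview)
Your argument is correct and follows exactly the approach the paper indicates: the paper's entire proof is the phrase ``This leads immediately by induction to,'' together with a remark that Toponogov's smooth arguments carry over, and your induction on $m$ via Proposition~\ref{splitting} is precisely that. Your Busemann-function computation showing that $\iota(\{0\}\times\Ar^{m-1})$ lands in a single slice is the right way to make the inductive step rigorous, and the rest is, as you say, bookkeeping.
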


The proofs given in \cite{Top} in the case of smooth manifolds work mutatis-mutandis
for Alexandrov spaces.

\begin{defn}
The {\em dimension} of a local Alexandrov space is its Hausdorff
dimension.
 Later, we shall see there is a much more
precise statement about dimension using strainers (or burst points).
\end{defn}

\subsection{Gromov-Hausdorff convergence}

Hausdorff convergence is defined for  metric spaces. The Hausdorff
distance between two metric spaces $X$ and $Y$ is less $\epsilon$ if
there is a metric on $X\coprod Y$ extending the given metrics on $X$
and $Y$ with $X$ contained in the $\epsilon$-neighborhood of $Y$ and
$Y$ contained in the $\epsilon$-neighborhood of $X$.

\begin{defn}
Let $X$ be a metric space. For any compact subset $A\subset X$ and
any $r>0$ we define the metric ball $B(A,r)=\{x\in X\left|\right.
d(x,A)<r\}$ and we denote the metric sphere $S(A,r)=\{x\in
X\left|\right.d(A,x)=r\}$. Notice that balls are open subsets.
\end{defn}

\begin{defn}
Let $(X_n,p_n)$ be a sequence of pointed, locally compact metric
spaces and that $(X,p)$ is a complete pointed metric space. We say
that the  $(X_n,p_n))$ converge in the {\em Gromov-Hausdorff sense}
to a ball $(X,p)$ if for every $R<\infty$ the balls $B(p_n,R)$
converge in the Hausdorff topology to $B(p,R)$.
\end{defn}

A crucial property (and indeed one of the main reasons for
introducing Alexandrov spaces) is the following compactness result
(see \S 8.5 of \cite{BGP}).

\begin{prop}
Fix an integer $N$ and a real number $k$. The collection of
complete, pointed Alexandrov spaces of dimension $\le N$ with
curvature bounded below by $k$ is sequentially  compact in the {\em
Gromov-Hausdorff sense}, meaning that if $(X_n,p_n)$ is a sequence
of such Alexandrov spaces, then there is a subsequence converging in
the Gromov-Hausdorff sense to a complete pointed Alexandrov space
$(X,p)$ which itself has dimension at most $N$ and curvature bounded
below by $k$.
\end{prop}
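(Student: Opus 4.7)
\smallskip

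\noindent\textbf{Proof sketch.}
The plan is to imitate Gromov's classical precompactness argument, using the curvature hypothesis to gain uniform control on $\epsilon$-net sizes, extracting a limit by a diagonal procedure, and then verifying that each of the three defining properties (completeness, the triangle-comparison inequality, and the dimension bound) passes to the Gromov-Hausdorff limit.

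First I would establish uniform total boundedness of the balls $B(p_n,R)$ for every fixed $R<\infty$. Since each $X_n$ is an Alexandrov space of curvature $\ge k$ and dimension $\le N$, the Bishop-Gromov-type volume comparison in \S 10 of \cite{BGP} bounds the number of points in a maximal $\epsilon$-separated subset of $B(p_n,R)$ by a quantity $\Phi(N,k,R,\epsilon)$ depending only on $N,k,R,\epsilon$ (concretely, the corresponding packing number for a ball of radius $R$ in the model space $H_k^N$). Thus, along any sequence, the closed balls $\overline B(p_n,R)$ form a uniformly totally bounded family of compact metric spaces, so by Gromov's precompactness theorem and a standard diagonal argument over an exhaustion $R\to\infty$, we can pass to a subsequence along which $(X_n,p_n)$ converges in the pointed Gromov-Hausdorff sense to some complete pointed metric space $(X,p)$ (completeness of the limit follows because each bounded ball in $X$ is compact).

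Next I would check that $(X,p)$ is a length space. This is the midpoint property: given $x,y\in X$, choose approximating points $x_n,y_n\in X_n$; since $X_n$ is a complete Alexandrov space, there is a midpoint $m_n$ of $x_n$ and $y_n$ lying in a large ball, and by total boundedness a subsequence of the $m_n$ converges to a point $m\in X$ which is easily seen to be a midpoint of $x$ and $y$. Since $X$ is complete, the midpoint property yields a geodesic between any two points. For the curvature bound, take any four points of $X$, approximate them by quadruples in $X_n$, and observe that the $k$-comparison angles depend continuously on distances; hence the inequality $\tilde\angle qpr+\tilde\angle rps+\tilde\angle spq\le 2\pi$ of (\ref{triangle}) passes to the limit.

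The main obstacle is the dimension bound $\dim_{\mathrm H}X\le N$, which is the only property that is not obviously continuous under GH convergence. For this I would use the quantitative packing bound from Step~1: the estimate $\Phi(N,k,R,\epsilon)\le C(N,k,R)\,\epsilon^{-N}$ (as $\epsilon\to 0$) holds uniformly in $n$, and this bound descends to the Hausdorff limit because $\epsilon$-covering numbers are lower semicontinuous under Gromov-Hausdorff convergence (a $2\epsilon$-net in $X$ is obtained from an $\epsilon$-net in $X_n$ via the almost-isometry witnessing GH-closeness). This controls the $N$-dimensional Hausdorff content of every bounded ball in $X$ and hence gives $\dim_{\mathrm H}X\le N$. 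Assembling these four steps yields that $(X,p)$ is a complete pointed Alexandrov space with curvature $\ge k$ and dimension at most $N$, as required.
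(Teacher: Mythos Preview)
The paper does not give its own proof of this proposition; it simply cites \S 8.5 of \cite{BGP}. Your sketch is the standard argument (uniform packing bounds from Bishop--Gromov, diagonal extraction, and passage of the comparison inequality and dimension bound to the limit) and is essentially what the cited reference does, so there is nothing to compare.
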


It is important to recognize that even if the $X_n$ all have the
same dimension $N$, it may well be the case that the limit $X$ has
lower dimension.

We shall also need a local version of convergence.

\begin{defn}
An {\em Alexandrov ball} is a local Alexandrov space $B=B(p,r)$ that
is a metric ball and has the property for any $y,z\in B$
with $d(y,z)+d(p,y)+d(p,z)<2r$ are joined by a geodesic in $B$.
\end{defn}

The local version that we need is the following.

\begin{prop}\label{ballscon}
Suppose that $B_n=B(p_n,r_n)$ are Alexandrov balls  of dimension $N$
and of curvature bounded below by $k$. Suppose that the
$r_n\rightarrow r$ as $n\rightarrow \infty$ with $0<r\le \infty$.
Then, after passing to a subsequence, there is an Alexandrov ball
$(B,p)$ of dimension at most $N$ and of radius $r$ such that for
every $r'<r$  the balls $ B(p_n,r')$ converge in the
Gromov-Hausdorff topology to $B(p,r')$.
\end{prop}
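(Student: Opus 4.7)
The plan is to reduce Proposition~\ref{ballscon} to the global Gromov--Hausdorff compactness theorem stated just above, by exhausting each ball $B_n = B(p_n, r_n)$ with closed sub-balls, extracting convergent subsequences through a diagonal argument, and then assembling the limits into a single Alexandrov ball of radius $r$.

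\emph{Diagonal extraction and assembly of the limit.} Fix a sequence $r'_m \nearrow r$. For any $m$ and all sufficiently large $n$, the closed sub-ball $\overline{B}(p_n, r'_m)$ is a compact subset of $B_n$, and these closed balls are uniformly totally bounded by Bishop--Gromov volume comparison in Alexandrov spaces of dimension $\le N$ and curvature $\ge k$. Gromov's precompactness criterion (the heart of the previous proposition) applied inductively in $m$, combined with a standard diagonal extraction, yields a single subsequence (still denoted $n$) such that for every $m$ the closed balls $\overline{B}(p_n, r'_m)$ converge in Gromov--Hausdorff sense to a compact pointed Alexandrov space $(X^{(m)}, p)$ of dimension $\le N$ and curvature $\ge k$, equal to the closed $r'_m$-ball of itself around $p$. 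By choosing the approximating maps compatibly in $m$, the inclusions $\overline{B}(p_n, r'_l) \hookrightarrow \overline{B}(p_n, r'_m)$ for $l \le m$ pass to isometric inclusions $X^{(l)} \hookrightarrow X^{(m)}$. Set $B = \bigcup_m X^{(m)}$ with the induced metric; then $B = B(p, r)$ has dimension $\le N$, and the $k$-comparison inequality~(\ref{triangle}) for any quadruple in $B$ reduces to the same inequality in some $X^{(m)}$ containing that quadruple, which passes to the limit. For any $r' < r$, picking $m$ with $r' < r'_m$ and restricting the GH approximations gives $B(p_n, r') \to B(p, r')$ in GH, as required.

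\emph{The Alexandrov-ball geodesic property.} To confirm that $B$ is an Alexandrov ball, we must join any $y, z \in B$ satisfying $d(y, z) + d(p, y) + d(p, z) < 2r$ by a geodesic in $B$. Choose $m$ so that this perimeter is strictly less than $2 r'_m$, and take approximants $y_n, z_n$ with $y_n \to y$ and $z_n \to z$ under the GH approximations. For $n$ large, the analogous perimeter inequality holds in $B_n$ with $r_n$ in place of $r$, so the Alexandrov-ball hypothesis on $B_n$ produces a geodesic $\gamma_n$ in $B_n$ from $y_n$ to $z_n$. A direct triangle-inequality argument bounds $d(p_n, \gamma_n(t)) \le \tfrac{1}{2}\bigl(d(p_n, y_n) + d(p_n, z_n) + d(y_n, z_n)\bigr) < r'_m$ for large $n$, so every $\gamma_n$ is confined to $\overline{B}(p_n, r'_m)$, the region on which GH-convergence to $X^{(m)}$ has been arranged. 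Transporting these $1$-Lipschitz curves to $X^{(m)}$ through the GH-approximations and applying Arzel\`a--Ascoli yields a limit curve in $X^{(m)} \subset B$, which is a geodesic from $y$ to $z$ because its length equals $\lim d(y_n, z_n) = d(y, z)$.

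\emph{Main obstacle.} The one genuinely delicate point is the interior confinement $d(p_n, \gamma_n(t)) < r'_m$, required so that Arzel\`a--Ascoli can be applied inside the region where GH-convergence has been arranged. The triangle-inequality computation settles this cleanly once the strict perimeter hypothesis $d(y, z) + d(p, y) + d(p, z) < 2r$ is in hand; without the strictness one would need finer Alexandrov-geometry input. The remaining ingredients---compatibility of the GH-approximations across different $m$ via a double-diagonal extraction, and preservation of dimension, curvature bound, and length-space structure under GH-limits---are routine.
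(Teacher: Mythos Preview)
Your proof is correct and follows essentially the same approach as the paper, which only offers a one-line hint: the essential point is that for every $s<r$ and all large $n$, the Alexandrov comparison holds on balls of radius $(r-s)/3$ centered at each point of $B(p_n,s)$, giving the uniformity needed to adapt the complete-case argument. You have spelled this out explicitly via the diagonal extraction on closed sub-balls and the triangle-inequality confinement $d(p_n,\gamma_n(t))\le\tfrac12\bigl(d(p_n,y_n)+d(p_n,z_n)+d(y_n,z_n)\bigr)$, which is precisely the computation that keeps the approximating geodesics inside the region of controlled convergence.
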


The essential point here is that for every $s<r$ for all $n$
sufficiently large there are balls of radius $(r-s)/3$ centered at
each point of $B(p_n,s)$ on which the Alexandrov property holds.
Given this uniformity, the argument in the complete case adapts to
establish this result.

%We also need a more general result for coverings of Alexandrov balls
%which is proved using the same methods

%\begin{prop}\label{coveringscon}
%Suppose that $B_n=B(p_n,r_n)$ are Alexandrov balls of dimension $N$
%and curvature $\ge -1$ with $r_n\rightarrow r>0$ and suppose that $(\widetilde
%B_n,\tilde p_n)$ are connected coverings. Then the $\widetilde B_n$
%are local Alexandrov spaces and passing to a subsequence there is a
%Gromov-Hausdorff limit of the based sequence which is a based local
%Alexandrov space of curvature $\ge -1$.
%\end{prop}

%Notice that Proposition~\ref{coveringscon} includes
%Proposition~\ref{ballscon} as the special case of trivial
%(one-sheeted) coverings. Every point in  $\widetilde B_n$ is within
%$r_n$ of a point in the preimage of $p_n$. For any $s<r$, for all
%$n$ sufficiently large, there are uniform size balls around each
%point in the $s$ neighborhood of the preimage of $p_n$ in
%$\widetilde B_n$ that are local length spaces. Given this, the
%arguments in Section 5.1 of \cite{MT} show that one can extract a
%limit of a subsequence.

If spaces $X_n$ converge in the Gromov-Hausdorff sense to $X$, then we say that a sequence $x_n\in
X_n$ {\em converges to} $x\in X$ (or that $x$ is the {\em limit} of the
$x_n$) if, setting $d_n$ equal to the distance between $x_n$ and $x$
in the given metric on $X_n\coprod X$, the $d_n(x_n,x)$ tend to zero as $n$
tends to infinity. Suppose that $(X_n,p_n)$ is a sequence of pointed
local Alexandrov spaces of dimension $\le N$ with curvature bounded
below by $k$ which are either complete or are  Alexandrov balls of
radius $r_n\rightarrow r>0$. If the $X_n$ are complete, then we set
$r=\infty$. Suppose that the $(X_n,p_n)$ are of dimension $\le N$
and that they converge in the Gromov-Hausdorff sense to $(X,p)$. Then all of the following are
immediate consequences of the definitions and usual compactness
arguments.
\begin{enumerate}
\item[(1)] Suppose that  $s<r$ and that for each $n$ we have a point $x_n\in
B(p_n,s)$. Then, after passing to a subsequence, there is a limit
point $x\in B(p,r)\subset X$ for the sequence.
\item[(2)] Suppose that we have a sequence of geodesics $\gamma_n$
in $X_n$ whose lengths converge to a  non-zero (but possibly
infinite) limit as $n$ tends to $\infty$ and suppose that the
initial points of the $\gamma_n$ converge to a point of $X$. Then
after passing to a further subsequence the geodesics converge,
uniformly on compact sets, to a geodesic in $X$.
\item[(3)]
 Suppose that we have
distinct points $q,r,s$ in $X$ and sequences $q_n,r_n,s_n$ in $X_n$
such that ${\rm lim}\,q_n=q$, ${\rm lim}\,r_n=r$ and ${\rm
lim}\,s_n=s$. Then
$${\rm lim}_{n\rightarrow\infty}\tilde \angle q_nr_ns_n=\tilde\angle qrs.$$
\item[(4)]  Suppose that for each $n$ we have geodesics $\gamma_n$
and $\mu_n$ in  $X_n$ both emanating from a point $p_n\in X_n$ with
${\rm lim}\,\gamma_n=\gamma$, ${\rm lim}\, \mu_n=\mu$. Then
$${\rm liminf}_{n\rightarrow\infty}\angle \gamma_n\mu_n\ge \angle \gamma\mu.$$
\end{enumerate}

\subsubsection{Limits that are products}

\begin{prop}\label{1limitprod} Fix $r>0$. Let $\lambda_n\rightarrow\infty$
and $\delta_n\rightarrow 0$ as $n\rightarrow\infty$. Suppose that
$(X_n,x_n)$ is a sequence of local Alexandrov spaces of dimension
$N$ and curvature $\ge -1$. Suppose that for each $n$ there are
compact sets $\{A^+_n,A^-_n\}$ with $d(x_n,A^+_n),d(x_n,A^-_n)\ge
r$. We suppose that for each point $z_n$ in the ball $B(x_n,r)$
there are geodesics from $z_n$ to $A^\pm$. We also suppose that the
comparison angle $\tilde\angle A^-_nx_nA^+_n>\pi-\delta_n$. Suppose
that the $(\lambda_nX_n,x_n) $ converge to $(X,x)$. Then there is a
based Alexandrov space $(Y,y)$ of dimension $\le N-1$ and isometry
$(X,x)\cong (Y,y)\times (\Ar,0)$ with the property that for any
sequence of points $z_n\in X_n$ converging to a point $z\in X$ and
geodesics $\gamma_n^\pm$ from $x_n$ to $A_n^\pm$, the $\gamma_n^\pm$
converge (uniformly on compact sets containing $x_n$) to the
geodesic rays from $z$ in the positive and negative $\Ar$-directions
in the product.
\end{prop}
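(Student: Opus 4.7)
The strategy is to construct a geodesic line in $X$ and then invoke the splitting theorem (Proposition~\ref{splitting}). For each $n$ choose $a_n^\pm\in A_n^\pm$ and minimizing geodesics $\gamma_n^\pm$ from $x_n$ to $a_n^\pm$. In the rescaled metric $\lambda_n g_n$ these have length at least $\lambda_n r\to\infty$, so by the Gromov-Hausdorff convergence property~(2) of~\S\ref{secbasics}, after passing to a subsequence the rescaled $\gamma_n^\pm$ converge uniformly on compact sets to geodesic rays $\sigma^\pm$ from $x$ in $X$.

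\textbf{Producing the line and splitting.} I claim $\sigma^-\cup\sigma^+$ is an isometric copy of $\Ar$. Fix $s,t>0$ and let $p_n,q_n$ be the points at rescaled arclength $s$ and $t$ from $x_n$ on $\gamma_n^-,\gamma_n^+$. Comparison-angle monotonicity (Proposition~\ref{ASbasics}(2)) applied in the unrescaled metric (where the curvature bound is $\ge -1$) gives $\tilde\angle p_n x_n q_n\ge\tilde\angle a_n^- x_n a_n^+>\pi-\delta_n$. Since in the unrescaled space the two sides have length $s/\lambda_n,\,t/\lambda_n\to 0$, a Taylor expansion of the hyperbolic law of cosines forces $\lambda_n d_{g_n}(p_n,q_n)\to s+t$, and combined with the triangle inequality this gives $d_X(\sigma^-(s),\sigma^+(t))=s+t$. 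Since $\lambda_n X_n$ has curvature $\ge-\lambda_n^{-2}\to 0$, $X$ has curvature $\ge 0$, and Proposition~\ref{splitting} produces an isometry $X\cong Y\times\Ar$ identifying $\sigma^-\cup\sigma^+$ with $\{y\}\times\Ar$ and $x$ with $(y,0)$; the bound $\dim Y\le N-1$ follows from $\dim X\le N$.

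\textbf{The last assertion and the main obstacle.} Given $z_n\to z$ and geodesics $\gamma_n^\pm$ from $z_n$ to $A_n^\pm$, the rescaled lengths still diverge, so subsequential limits are rays $\mu^\pm$ from $z$ in $X=Y\times\Ar$. A unit-speed geodesic in a metric product of length spaces factors as $\mu^+(t)=(\nu(t),s_z+ct)$ for some $c\in[-1,1]$, and I must show $c=1$. This is the delicate step, since the angle hypothesis controls the behavior only at $x_n$ and must be transferred to an arbitrary basepoint $z_n$. Using the almost-line estimate $d(a_n^-,a_n^+)\ge d(x_n,a_n^-)+d(x_n,a_n^+)-\varepsilon_n$ from the previous step together with the triangle inequalities $d(\mu_n^+(t),a_n^-)\ge d(a_n^-,a_n^+)-d(\mu_n^+(t),a_n^+)$ and $d(\mu_n^+(t),a_n^+)=d(z_n,a_n^+)-t\le d(z_n,x_n)+d(x_n,a_n^+)-t$ yields, for any fixed $u>0$, the bound $d(\mu_n^+(t),\sigma_n^-(u))\ge u+t-d(z_n,x_n)-\varepsilon_n$. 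Passing to the limit in $n$, and then letting $u\to\infty$ in the product (using that $d((\nu(t),s_z+ct),(y,-u))-u\to s_z+ct$), gives $s_z+ct\ge t-d(z,x)$ for all $t\ge 0$, which forces $c\ge 1$ and hence $c=1$. The case of $\mu^-$ is symmetric, completing the proof.
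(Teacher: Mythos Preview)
Your construction of the line and the application of the splitting theorem match the paper's argument exactly. For the last assertion the paper takes a slightly different route: it introduces the functions $f_n=d_{\lambda_n^2g_n}(A_n^-,\cdot)-d_{\lambda_n^2g_n}(A_n^-,x_n)$, observes that these converge to the Busemann function $f$ of the line (which is the $\Ar$-coordinate of the product), and then argues that the directional derivative of $f_n$ at $z_n$ along $\gamma_n^+$ tends to $1$. Your direct distance-estimate approach has the same underlying content; it simply unpacks the Busemann-function statement into explicit triangle inequalities.

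There is, however, one concrete slip in your chain. The equality $d(\mu_n^+(t),a_n^+)=d(z_n,a_n^+)-t$ is not justified: the geodesic $\gamma_n^+$ from $z_n$ to $A_n^+$ need not terminate at the \emph{same} point $a_n^+\in A_n^+$ that you fixed earlier as the endpoint of the geodesic from $x_n$. If you let $b_n^+\in A_n^+$ denote the actual endpoint of the $z_n$-geodesic, then $d(\mu_n^+(t),b_n^+)=d(z_n,b_n^+)-t$ does hold. Since the hypothesis $\tilde\angle A_n^-x_nA_n^+>\pi-\delta_n$ applies to any choice of points in the sets, you also get the almost-line estimate $d(a_n^-,b_n^+)\ge d(x_n,a_n^-)+d(x_n,b_n^+)-\varepsilon_n$. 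Replacing $a_n^+$ by $b_n^+$ throughout your inequalities and using $d(z_n,b_n^+)\le d(z_n,x_n)+d(x_n,b_n^+)$ then recovers the bound $d(\mu_n^+(t),\sigma_n^-(u))\ge u+t-d(z_n,x_n)-\varepsilon_n$, and the rest of your limiting argument goes through unchanged.
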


\begin{proof}
Denote by $g_n$ the metrics on $X_n$; the rescaled metrics are
$\lambda_n^2g_n$. Let $\zeta_n^\pm$ be geodesics from $x_n$ to
$A^\pm_n$. Since the comparison angle $\tilde\angle A^-_nx_nA^+_n$
is greater that $\pi-\delta_n$, by monotonicity for any points
$u_n^\pm$ on $\zeta_n^\pm$ the comparison angle $\tilde\angle u_n^-x_nu_n^+$
is greater than
$\pi-\delta_n$. Hence, rescaling by the $\lambda_n$ and taking
limits we see that for points $u^\pm$ on the limiting geodesic rays
$\zeta^\pm$ the comparison angle $\tilde\angle u^-xu^+=0$, meaning that
$\zeta=\zeta^-\cup\zeta^+$ is a geodesic line. Since the $X_n$ have
curvature $\ge -1$ and the $\lambda_n\rightarrow \infty$, the limit
$X$ has curvature $\ge 0$. Hence, by Proposition~\ref{splitting} it
splits as a product $Y\times \Ar$ in such a way that $\zeta$ is the
factor in the $\Ar$-direction through the base point. Furthermore,
it also follows from this proposition that, letting $f_n$ be the
function
$d_{\lambda_n^2g_n}(A_n^-,\cdot)-d_{\lambda_n^2g_n}(A^{-1}_n,x_n)$,
the $f_n$ converge to a function $f\colon X\to \Ar$ whose level sets
are the parallel copies of $Y$ in the product structure. Let $z_n\in
B(x_n,r)$ be a sequence of points converging to $z\in X$, and let
$\gamma^\pm$ be a geodesic from $z_n$ to $A_n^\pm$. The directional
derivatives of the $f_n$ at the $x_n$ in the directions of the
$\gamma_n^+$ converge to $1$ as $n$ goes to infinity. Hence, the
$\gamma_n^+$ converge to rays in the positive $\Ar$-direction. It
follows that the $\gamma_n^-$ converge to rays in the negative
$\Ar$-direction.
\end{proof}

\subsection{Local geometry of Alexandrov spaces}

In this section we introduce the notion of the tangent cone for an
Alexandrov space. Using this we define directional derivatives for
Lipschitz functions and also the boundary of an Alexandrov space

\subsubsection{The space of directions and the tangent cone}
Let $X$ be a local Alexandrov space of dimension $\le N$, and let
$p\in X$ be a point.  Let $\Sigma_p'$ be the set of equivalences
classes of geodesics with $p$ as one endpoint where, by definition,
two geodesics are {\em equivalent} if their intersection contains a
geodesic of positive length emanating from $p$. The set of
equivalence classes has a metric: $d([\gamma_1],[\gamma_2])$ is
equal to the angle at $p$ of representatives $\gamma_1$ and
$\gamma_2$ of the equivalence classes, which is clearly independent
of the choice of representatives. The metric completion of
$\Sigma_p'$ is the {\em space of directions at $p$}, denoted
$\Sigma_p$. The dense subset $\Sigma_p'$ in $\Sigma_p$ is called the
{\em set of directions realizable by geodesics}.

\begin{prop}
The space $\Sigma_p$ is a compact Alexandrov space of dimension one
less than the dimension of $X$. If the dimension of $X$ is at least
three, then $\Sigma_p$ is an Alexandrov space of curvature $\ge 1$.
If $X$ has dimension $2$, then either $\Sigma_p$ is isometric to a
circle or an interval and has diameter at most $\pi$.
\end{prop}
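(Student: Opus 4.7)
The plan is to derive all four assertions from the tangent cone $T_pX$, defined as the pointed Gromov-Hausdorff limit of a subsequence of the rescaled spaces $(\lambda_n X,p)$ with $\lambda_n\to\infty$. Since $\lambda_n X$ has curvature $\ge k/\lambda_n^2$ tending to $0$, with the same upper bound $N$ on dimension, Proposition~\ref{ballscon} (applied to balls of arbitrary radius) guarantees existence of such a limit $(T_pX,o)$, which is an Alexandrov space of curvature $\ge 0$. The rescaling invariance of the construction (rescaling the limit by any $\mu>0$ recovers the same pointed space up to isometry) shows that $T_pX$ is a Euclidean metric cone with apex $o$ over the set $S$ of points at distance exactly $1$ from $o$; limits of unit-speed geodesic germs at $p$ in $X$ then provide a canonical isometric identification of $S$ with the space of directions $\Sigma_p$ defined above.

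For compactness, I would bound the cardinality of an $\epsilon$-separated subset of $\Sigma'_p$. Given $\xi_1,\dots,\xi_M\in \Sigma'_p$ with pairwise angles at least $\epsilon$, choose unit-speed geodesic representatives $\gamma_i$. By monotonicity of the $k$-comparison angle as the sides shrink, itself part of Proposition~\ref{ASbasics}(2), for all $s>0$ sufficiently small we have $\tilde\angle\,\gamma_i(s)\,p\,\gamma_j(s)\ge \epsilon$, and the law of cosines in $H_k$ forces $d(\gamma_i(s),\gamma_j(s))\ge c(\epsilon)\,s$ for some $c(\epsilon)>0$. Bishop-Gromov volume comparison, valid in Alexandrov spaces, then bounds $M$ by a constant depending only on $\epsilon$, $k$, and $\dim X$. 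Hence $\Sigma'_p$ is totally bounded and its completion $\Sigma_p$ is compact.

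For the curvature bound when $\dim X\ge 3$, the tangent cone $T_pX=C(\Sigma_p)$ has curvature $\ge 0$ and dimension at least $2$. A standard fact (see \S 4 of \cite{BGP}) asserts that a Euclidean cone $C(Y)$ over a compact metric space $Y$ of dimension $\ge 1$ has curvature $\ge 0$ if and only if $Y$ itself has curvature $\ge 1$; applied to $Y=\Sigma_p$ this gives the stated bound. The dimension identity $\dim \Sigma_p=\dim X-1$ follows from two ingredients: Hausdorff dimension is preserved under Gromov-Hausdorff limits within the class of Alexandrov spaces of bounded dimension with a uniform lower curvature bound, so $\dim T_pX=\dim X$; and a Euclidean cone $C(Y)$ over a compact $Y$ has Hausdorff dimension $\dim Y+1$.

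Finally, for the two-dimensional case, the preceding paragraph gives $\dim \Sigma_p=1$, and a one-dimensional compact local length space is necessarily a topological $1$-manifold with or without boundary, hence isometric to a closed interval or a circle. The bound $\mathrm{diam}(\Sigma_p)\le\pi$ follows because, in dimension $2$, nonnegative curvature of $T_pX=C(\Sigma_p)$ is equivalent to the total length of $\Sigma_p$ being at most $2\pi$ if it is a circle, and to length at most $\pi$ if it is an interval; both inequalities translate to the diameter bound. The principal obstacle is rigorously setting up the cone structure of $T_pX$ and the canonical isometry with $\Sigma_p$, which requires uniform control of geodesic germs under rescaling; the monotonicity in Proposition~\ref{ASbasics}(2) is the key input, together with the cone-characterization lemma from \cite{BGP} which, once available, makes the curvature and dimension statements fall out cleanly.
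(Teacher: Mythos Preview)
The paper does not give its own proof of this proposition; it simply refers the reader to Section~7 of \cite{BGP}. So there is no argument in the paper to compare your sketch against, and your outline is broadly faithful to the strategy actually carried out in \cite{BGP}: realize the tangent cone, identify it as a Euclidean cone over $\Sigma_p$, and read off the properties of $\Sigma_p$ from those of $T_pX$.

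That said, one step in your sketch is not correct as written. You assert that ``Hausdorff dimension is preserved under Gromov--Hausdorff limits within the class of Alexandrov spaces of bounded dimension with a uniform lower curvature bound,'' and use this to deduce $\dim T_pX=\dim X$. This statement is false in general: collapsing sequences of Alexandrov spaces can have strictly lower-dimensional limits (indeed, this phenomenon is central to the entire paper). What is true, and what \cite{BGP} actually proves, is the special fact that blow-ups at a point do not lose dimension. The argument there goes through strainers: if $\dim X=n$ then (by Corollary~6.7 of \cite{BGP}, cited in the paper as the density of $R_\delta(X)$) every neighborhood of $p$ contains a point with an $(n,\delta)$-strainer, and one can transport this to produce an $(n,\delta)$-strainer at $p$ itself of arbitrarily small size; rescaling then gives an $(n,\delta)$-strainer in $T_pX$, forcing $\dim T_pX\ge n$. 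You should replace your dimension-preservation claim with this strainer argument.

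Your compactness argument via bounding $\epsilon$-separated subsets of $\Sigma'_p$ is fine, and your reduction of the curvature bound to the cone characterization from \cite{BGP} is the standard route. You are also right to flag the identification of the GH-limit cone with the abstract cone $C(\Sigma_p)$ as the main technical point; note that the paper takes the opposite logical order, \emph{defining} $T_pX$ as $C(\Sigma_p)$ and then proving (Lemma~\ref{conelimit}, citing Theorem~7.8.1 of \cite{BGP}) that the rescalings converge to it.
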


For a proof of this result see Section 7 of \cite{BGP}.

Notice that the length of a metric circle is twice its diameter.

\begin{defn}
The tangent cone $T_pX$ is the cone over $\Sigma_p$ to the point
$p$. It is an Alexandrov space of curvature $\ge 0$ of the same
dimension as $X$.
\end{defn}

\begin{lem}\label{conelimit}
Let $\lambda_n$ be a sequence of positive real numbers tending
$+\infty$. Then the based local Alexandrov spaces $(\lambda_nX,p)$
converge in the Gromov-Hausdorff topology to
$(T_pX,\{p\})$.\end{lem}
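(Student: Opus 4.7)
The plan is to fix $R>0$ and show that the ball $B_{\lambda_n X}(p,R)$ converges in the Hausdorff sense to the ball $B_{T_pX}(p,R)$, which is precisely what pointed Gromov--Hausdorff convergence demands. Since a ball of radius $R$ in $\lambda_n X$ corresponds to a ball of radius $R/\lambda_n$ in $X$, for $n$ sufficiently large this ball lies entirely inside a fixed neighborhood $U$ of $p$ on which the comparison inequality~(\ref{triangle}) holds for all quadruples and every two points are joined by a geodesic; I would work inside such a neighborhood throughout.

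The key analytic input is the infinitesimal law of cosines: for any two geodesics $\gamma,\mu$ emanating from $p$ with angle $\theta=\angle\gamma\mu$, and for points $q=\gamma(s)$, $r=\mu(t)$ with $s,t>0$ small,
$$d(q,r)^2 = s^2+t^2-2st\cos\theta + o(\max(s,t)^2).$$
This follows from the monotonicity statement in Proposition~\ref{ASbasics}(2): the comparison angle $\tilde\angle qpr$ computed in $H_k$ is non-increasing in $s$ and $t$ and tends to $\theta$ as $s,t\to 0$, and the $k$-comparison angle for a triangle with side lengths going to zero agrees with the Euclidean ($0$-)comparison angle up to $o(1)$. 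After rescaling by $\lambda_n$, if $q_n=\gamma(s/\lambda_n)$ and $r_n=\mu(t/\lambda_n)$, then $d_{\lambda_n X}(q_n,r_n)^2=\lambda_n^2\,d(q_n,r_n)^2\to s^2+t^2-2st\cos\theta$, which is exactly the squared distance in the Euclidean cone $T_pX$ between the points at heights $s$ and $t$ on the rays determined by $[\gamma]$ and $[\mu]$.

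Given this, I would assemble the Gromov--Hausdorff approximations as follows. Fix $\epsilon>0$ and choose a finite $\epsilon$-net $\{(\xi_i,r_i)\}_{i=1}^{k}$ in $B_{T_pX}(p,R)$ with each $\xi_i\in\Sigma_p'$ realizable by a geodesic $\gamma_i$ in $X$; density of $\Sigma_p'$ in $\Sigma_p$ makes this possible. Define $\phi_n(\xi_i,r_i):=\gamma_i(r_i/\lambda_n)\in B_{\lambda_n X}(p,R)$. Applying the displayed formula pairwise, for $n$ large the distances $d_{\lambda_n X}(\phi_n(\xi_i,r_i),\phi_n(\xi_j,r_j))$ are within $\epsilon$ of the cone distances $d_{T_pX}((\xi_i,r_i),(\xi_j,r_j))$. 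To verify that the image is also $O(\epsilon)$-dense in $B_{\lambda_n X}(p,R)$, take any $q\in B_{\lambda_n X}(p,R)$, join it to $p$ by a geodesic $\gamma$ of length $\leq R/\lambda_n$ in $X$ (available since the ball lies in $U$), let $\xi=[\gamma]\in\Sigma_p$, choose $(\xi_i,r_i)$ from the net with $d_{T_pX}((\xi_i,r_i),(\xi,\lambda_n|\gamma|))<\epsilon$, and apply the infinitesimal cosine formula once more to bound $d_{\lambda_n X}(q,\phi_n(\xi_i,r_i))$ by $O(\epsilon)$.

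The main obstacle to make rigorous is the infinitesimal law of cosines, and in particular the statement that the $k$-comparison angle for triangles of diameter tending to zero is asymptotically equal to the Euclidean comparison angle for the same side lengths; this is a direct computation from the law of cosines in $H_k$, but it is the step where the lower curvature bound $k$ drops out of the picture and the tangent cone acquires its flat (curvature-$\geq 0$) structure. Once that estimate is nailed down, the remainder of the argument is a routine $\epsilon$-net compactness package: the map $\phi_n$ is asymptotically an isometry on a prescribed $\epsilon$-net whose image is $O(\epsilon)$-dense, which produces $O(\epsilon)$-Hausdorff approximations between $B_{\lambda_n X}(p,R)$ and $B_{T_pX}(p,R)$ for all $n$ sufficiently large, yielding the claimed Gromov--Hausdorff convergence.
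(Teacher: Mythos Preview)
Your proposal is correct and is essentially the standard argument. The paper does not give its own proof of this lemma; it simply refers the reader to Theorem~7.8.1 of \cite{BGP}, whose proof proceeds along the same lines you outline: use monotonicity of comparison angles (Proposition~\ref{ASbasics}(2)) to obtain the first-variation / infinitesimal cosine formula, then build $\epsilon$-approximations between $B_{\lambda_n X}(p,R)$ and $B_{T_pX}(p,R)$ via finite nets of radial points $\gamma_i(r_i/\lambda_n)$.

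One point worth tightening when you write this out: in the density step you apply the infinitesimal cosine formula to a geodesic $\gamma$ (from $p$ to an arbitrary $q$) that varies with $n$, so you cannot invoke the $o(\max(s,t)^2)$ estimate for a \emph{fixed} pair of geodesics. What saves you is that monotonicity gives the one-sided inequality $\tilde\angle\,q\,p\,\gamma_i(r_i/\lambda_n)\le d_{\Sigma_p}(\xi,\xi_i)$ uniformly, and this is all you need for an \emph{upper} bound on $d_{\lambda_n X}(q,\phi_n(\xi_i,r_i))$ in terms of the cone distance; the curvature-$k$ correction to the Euclidean law of cosines is uniformly $o(\max(s,t)^2)$ since it depends only on the three side lengths. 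For the finitely many pairs $(\gamma_i,\gamma_j)$ in the net, the two-sided estimate holds because the geodesics are fixed. With that remark your argument is complete.
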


For a proof, see Theorem 7.8.1 of \cite{BGP}.

\subsubsection{The boundary of an Alexandrov space}

\begin{defn}
The boundary of a local Alexandrov space is defined inductively on
dimension. Let $X$ be a one-dimensional local Alexandrov space. Then
it is either isometric to either an interval or a circle. Its
boundary as an Alexandrov space is its topological boundary.
 More
generally, we define the boundary of a higher dimensional local Alexandrov
space by induction. For $X$ an $n$-dimensional local Alexandrov space, we
define $\partial X$ to be the subset of $X$ consisting of points $p$
for which $\Sigma_p$ is an $(n-1)$-dimensional Alexandrov space with
non-empty boundary. Then $\partial X$ is a closed subset. Its
complement is denoted ${\rm int}\, X$.
\end{defn}

\subsection{Regular functions}

For the material in this section see Section 11 of \cite{BGP}.

\begin{defn}
Let $X$ be a local Alexandrov space. We say that a Lipschitz
function $f\colon X\to \Ar$ {\em has a directional derivative at
$q\in X$} if there is a continuous function $f_q'\colon \Sigma_q\to
\Ar$ such that for any geodesic $\gamma$ starting at $q$ and
parameterized as $\gamma(s)$ where $s$ is the distance from $q$, the
function $s\mapsto f(\gamma(s))$ has a one-sided derivative at
$s=0$, and this derivative is $f'_q([\gamma])$. We say that a
Lipschitz function $f$ is {\em regular} at $q$ if it has a
directional derivative at $q$ and if the directional derivative in
some direction at $q$ is positive. We say that a $1$-Lipschitz
function $f$ is {\em $a$-strongly regular} at $q$, if  $f$ has a
directional derivative and if there is a direction $\tau$ such that
$f_q'(\tau)>a$. The function $f$ is {\em strongly regular} if it is
$a$-strongly regular for every $a<1$.
\end{defn}

\begin{defn}
Let $A$ be a compact subset of a local Alexandrov space $X$ and let
$q$ be a point of $X\setminus A$.  We denote by  $A'\subset
\Sigma_q$ the set of directions at $q$ to all geodesics $\gamma$
from $A$ to $q$. It is a closed subset of $\Sigma_q$.
\end{defn}

\begin{lem}\label{cosformula}
Let $A\subset X$ be a compact subset, and set $f=d(A,\cdot)$. Then
at any point $q\in X\setminus A$ for which there is at least one
geodesic from $A$ to every point in a neighborhood of $q$, the
function $f$ is a $1$-Lipschitz function and $f$ has a directional
derivative at $q$. The derivative $f'_q$ is given by
$$f'_q(\xi)=-{\rm cos}(|\xi A'|),$$
where  $|\xi A'|$ denotes the distance in $\Sigma_q$ from $\xi$ to
the closed set $A'$ of all tangent directions at $p$ to geodesics
from $A$ to $p$. In particular, $f$ is $a$-strongly regular if and
only if there is a direction $\tau$ such that $d(A',\tau)>{\rm
cos}^{-1}(-a)$.
\end{lem}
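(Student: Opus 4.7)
My plan: the $1$-Lipschitz property follows from the triangle inequality $d(y,a)\le d(y,x)+d(x,a)$ on taking the infimum over $a\in A$. The substantive content is the directional-derivative formula. Since $\xi\mapsto -\cos(|\xi A'|)$ is continuous on $\Sigma_q$ and directions realizable by geodesics are dense in $\Sigma_q$, it suffices to verify $f'_q(\xi)=-\cos(|\xi A'|)$ when $\xi$ is the initial direction of some geodesic $\gamma\colon[0,\epsilon)\to X$ starting at $q$. The hypothesis makes $A'\subset\Sigma_q$ nonempty (take any geodesic from $A$ to $q$) and closed in the compact space $\Sigma_q$; the final equivalence for $a$-strong regularity is immediate from monotonicity of $\cos$ on $[0,\pi]$.

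For the \emph{upper bound}, pick $\tau\in A'$ closest to $\xi$, realized by a minimizing geodesic $\sigma$ from $a\in A$ to $q$. Toponogov's hinge inequality for curvature $\ge k$, applied at the vertex $q$ to the pair $(\sigma^{-1},\gamma|_{[0,s]})$ with included angle $\angle(\xi,\tau)=|\xi A'|$, gives
\[
d(a,\gamma(s))\le \tilde d_k(f(q),s,|\xi A'|)=f(q)-s\cos(|\xi A'|)+O(s^2).
\]
Since $f(\gamma(s))\le d(a,\gamma(s))$ and $f(q)=d(a,q)$, this yields $\limsup_{s\to 0^+}(f(\gamma(s))-f(q))/s\le -\cos(|\xi A'|)$.

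For the \emph{lower bound}, fix a sequence $s_n\to 0^+$ attaining the liminf. Compactness of $A$ produces $a_n\in A$ realizing $f(\gamma(s_n))$, joined to $\gamma(s_n)$ by a geodesic $\sigma_n$ (by hypothesis). From $f(q)\le d(a_n,q)\le f(\gamma(s_n))+s_n\to f(q)$, after passing to a subsequence $a_n\to a^*\in A$ with $d(a^*,q)=f(q)$ and $\sigma_n$ converges by Arzel\`a--Ascoli to a geodesic $\sigma^*$ from $a^*$ to $q$ with reversed initial direction $\tau^*\in A'$ at $q$. Applying the hinge at $\gamma(s_n)$ to $\sigma_n^{-1}$ (initial direction $\eta_n$) and the reverse of $\gamma|_{[0,s_n]}$ (initial direction $-\dot\gamma(s_n)$), with included angle $\omega_n:=\angle(\eta_n,-\dot\gamma(s_n))$, gives $d(a_n,q)\le f(\gamma(s_n))-s_n\cos\omega_n+O(s_n^2)$, and combined with $d(a_n,q)\ge f(q)$ this yields $(f(\gamma(s_n))-f(q))/s_n\ge \cos\omega_n+O(s_n)$.

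The crux is bounding $\liminf\cos\omega_n$ from below. Consider the complementary angle $\omega_n':=\angle(\eta_n,\dot\gamma(s_n))$ at $\gamma(s_n)$, between $\eta_n$ and the forward direction of $\gamma$. Since $\gamma$ passes through $\gamma(s_n)$ as a geodesic (for small $s_n$), the directions $\pm\dot\gamma(s_n)$ are antipodal in $\Sigma_{\gamma(s_n)}$ at distance $\pi$; combining the triangle inequality in $\Sigma_{\gamma(s_n)}$ ($\omega_n+\omega_n'\ge\pi$) with the four-point inequality $\tilde\angle upv+\tilde\angle vpw+\tilde\angle wpu\le 2\pi$ applied to points sliding along $\gamma$ and $\sigma_n^{-1}$ ($\omega_n+\omega_n'\le\pi$) gives the identity $\omega_n+\omega_n'=\pi$. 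By the lower semi-continuity of angles stated earlier in the excerpt, applied with moving base point $\gamma(s_n)\to q$ to the geodesics $\gamma|_{[s_n,s_n+\delta]}$ (limiting to $\gamma|_{[0,\delta]}$ with direction $\xi$) and $\sigma_n^{-1}$ (limiting to $\sigma^*$ with direction $\tau^*$), I obtain $\liminf\omega_n'\ge \angle(\xi,\tau^*)$. Therefore $\limsup\omega_n\le \pi-\angle(\xi,\tau^*)$ and $\liminf\cos\omega_n\ge -\cos\angle(\xi,\tau^*)\ge -\cos(|\xi A'|)$ since $\tau^*\in A'$. The main obstacle is this last step: establishing the angle-sum identity $\omega_n+\omega_n'=\pi$ and invoking lower semi-continuity with varying base points; both are standard Alexandrov facts but must be invoked with care.
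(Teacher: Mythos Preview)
The paper itself gives no proof here; it simply cites \S 11.4 of \cite{BGP}. Your argument follows the standard route taken there and is structurally sound: the upper bound via a nearest direction $\tau\in A'$, the lower bound via footpoints $a_n$ of $\gamma(s_n)$, the angle-sum identity $\omega_n+\omega_n'=\pi$ (which you justify correctly using the four-point inequality in $X$ together with the triangle inequality in $\Sigma_{\gamma(s_n)}$), and the lower semicontinuity of angles with moving base point. The last two points, which you flag as the main obstacles, are fine as written.

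There is, however, one genuine slip that occurs twice. In both the upper-bound and lower-bound steps you invoke ``Toponogov's hinge inequality'' to get an \emph{upper} bound on the far side of a hinge. In curvature $\ge k$ the hinge lemma goes the other way: given a hinge at $p$ with sides of the prescribed lengths and included angle $\alpha$, one has $d(a,b)\ge \tilde d_k(|pa|,|pb|,\alpha)$, not $\le$. So as stated your two displayed inequalities do not follow from the hinge lemma. The fix is immediate and does not disturb the rest of the argument: use angle monotonicity instead. For the upper bound, $\tilde\angle\,\gamma(s)\,q\,a\le \angle(\xi,\tau)=|\xi A'|$, and the law of cosines defining $\tilde\angle$ then gives
\[
d(a,\gamma(s))=f(q)-s\cos\bigl(\tilde\angle\,\gamma(s)\,q\,a\bigr)+O(s^2)\le f(q)-s\cos(|\xi A'|)+O(s^2).
\]
For the lower bound, $\omega_n\ge \tilde\angle\,a_n\,\gamma(s_n)\,q$, and the same law-of-cosines identity yields
\[
d(a_n,q)=f(\gamma(s_n))-s_n\cos\bigl(\tilde\angle\,a_n\,\gamma(s_n)\,q\bigr)+O(s_n^2)\le f(\gamma(s_n))-s_n\cos\omega_n+O(s_n^2),
\]
which combined with $d(a_n,q)\ge f(q)$ gives your inequality $(f(\gamma(s_n))-f(q))/s_n\ge \cos\omega_n+O(s_n)$. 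With this correction your proof is complete and matches the BGP argument the paper cites.
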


\begin{proof}
See \S 11.4 of \cite{BGP}.
\end{proof}

The following is an elementary consequence of this lemma.

\begin{cor}\label{goodreg} Let $A$ a compact subset of $X$
and let $V\subset X\setminus A$ be an open subset. Suppose that there is a geodesic from
$A$ to each point of $V$. Then the subset of $V$ where $d(A,\cdot)$
is regular, respectively $a$-strongly regular, is an open set of
$V$ and includes any point $q\in V$ with the property
that there is a geodesic from $A$ to a point $w\not= q$ and passing
through $q$.
\end{cor}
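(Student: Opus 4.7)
The corollary has two assertions to establish: first, that any $q\in V$ through which a geodesic from $A$ continues past $q$ is strongly regular, and second, that the regular (respectively $a$-strongly regular) locus is open in $V$. Both rest on the formula $f'_q(\xi)=-\cos(d(\xi,A'))$ of Lemma~\ref{cosformula}.

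\emph{Inclusion of extended-geodesic points.} Suppose $\gamma$ is a geodesic from $a\in A$ to $w\neq q$ passing through $q$. Since $q\in V\subset X\setminus A$ and $q\neq w$, the point $q$ is an interior point of $\gamma$, so I can write $\gamma=\gamma_1\cdot\gamma_2$ at $q$. The triangle inequality forces $d(A,q)=\mathrm{length}(\gamma_1)$, so $\gamma_1$ is a geodesic from $A$ to $q$. Let $\tau\in\Sigma_q$ be the initial direction of $\gamma_2$. For any $\sigma\in A'$ realized by a geodesic $\alpha$ from some $a'\in A$ to $q$, the concatenation $\alpha\cdot\gamma_2$ has length $d(A,q)+d(q,w)=\mathrm{length}(\gamma)=d(A,w)\leq d(a',w)$, so it is length-minimizing and hence a geodesic; therefore the angle at $q$ between $\sigma$ and $\tau$ equals $\pi$. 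Since $A'$ is the closure of such directions, $d(A',\tau)=\pi$, so $f'_q(\tau)=-\cos(\pi)=1$, proving $f$ is $a$-strongly regular at $q$ for every $a<1$.

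\emph{Openness.} I will show that $a$-strong regularity is preserved under small perturbations of $q$. Suppose $q_0\in V$ is $a$-strongly regular, witnessed by $\tau_0\in\Sigma_{q_0}$ with $d(A'_{q_0},\tau_0)>\beta+3\eta$ where $\beta=\cos^{-1}(-a)$ and $\eta>0$. By density of $\Sigma'_{q_0}$ I can assume $\tau_0$ is realized by a geodesic $\gamma_0$ from $q_0$, and I extend it a short distance $\delta>0$ to a point $z_0=\gamma_0(\delta)$. For $q\in V$ near $q_0$, a geodesic from $q$ to $z_0$ exists by local geodesic existence in Alexandrov spaces; let $\tau_q\in\Sigma_q$ be its initial direction. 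Suppose for contradiction that $q_n\to q_0$ with each $q_n$ failing to be $a$-strongly regular; then in particular there exist $\sigma_n\in A'_{q_n}$ with $d(\sigma_n,\tau_{q_n})\leq\beta$. Realize each $\sigma_n$ by a geodesic $\alpha_n$ from $A$ to $q_n$. Compactness of $A$, the uniform length bound $\mathrm{length}(\alpha_n)=d(A,q_n)\to d(A,q_0)$, and Arzelà--Ascoli yield a subsequential limit $\alpha_\infty$, a geodesic from $A$ to $q_0$ with direction $\sigma_\infty\in A'_{q_0}$; likewise the geodesics from $q_n$ to $z_0$ subconverge to a geodesic from $q_0$ to $z_0$ with some initial direction $\tau_\infty$. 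Lower semi-continuity of angles (item (4) of Section~\ref{secbasics}) gives $\angle(\sigma_\infty,\tau_\infty)\leq\beta$.

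\emph{Main obstacle and conclusion.} The remaining point is to compare $\tau_\infty$ with $\tau_0$. Both are initial directions at $q_0$ of geodesics from $q_0$ to $z_0=\gamma_0(\delta)$, and by a standard Toponogov-type argument using the curvature lower bound, any two such short geodesics have initial directions within an error tending to $0$ with $\delta$; choosing $\delta$ small at the outset makes $d(\tau_\infty,\tau_0)\leq\eta$. Then
\[
d(A'_{q_0},\tau_0)\;\leq\;d(A'_{q_0},\tau_\infty)+d(\tau_\infty,\tau_0)\;\leq\;\beta+\eta,
\]
contradicting our choice of $\tau_0$. This is the delicate step, since in a general Alexandrov space multiple geodesics between two points can coexist; the argument only succeeds because the comparison-angle inequality is quantitative in $\delta$. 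The argument for plain regularity is the same with $\beta=\pi/2$.
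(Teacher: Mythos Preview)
Your treatment of the first assertion---that points lying on extended geodesics from $A$ are strongly regular---is correct and cleanly argued. The concatenation argument showing that every $\sigma\in A'$ makes angle $\pi$ with $\tau$ is exactly right.

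The openness argument is where the difficulty lies, and you have correctly located the delicate step, but your resolution of it is not adequate. You assert that ``by a standard Toponogov-type argument \ldots\ any two such short geodesics have initial directions within an error tending to $0$ with $\delta$,'' yet no argument is supplied, and this is not standard Toponogov. The hinge comparison in curvature $\ge k$ gives an \emph{upper} bound on the third side for a given angle, which amounts to $\angle\ge\tilde\angle$; applied to the degenerate bigon $q_0z_0q_0$ it yields only $\theta\ge 0$. Lower semicontinuity of angles (item (4)) likewise points the wrong way. What you actually need is an upper bound on the diameter of the set of directions at $q_0$ toward $z_0$; this can be extracted from tangent-cone convergence (Lemma~\ref{conelimit}), but it is not the one-line fact you present it as, and you have not argued it.

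There is a cleaner route, presumably what the paper has in mind by ``elementary consequence,'' which bypasses the non-uniqueness of geodesics to $z_0$ altogether. Having fixed $z_0=\gamma_0(\delta)$ with $f(z_0)>f(q_0)+(a+2\varepsilon_0)\delta$ for some $\varepsilon_0>0$ (possible since $f'_{q_0}(\tau_0)>a$), note that for any $q$ near $q_0$, any foot $a_q\in A$ realizing $d(A,q)$, and \emph{any} direction $\tau_q$ at $q$ toward $z_0$, monotonicity gives
\[
\angle(\sigma_q,\tau_q)\;\ge\;\tilde\angle\,a_q\,q\,z_0.
\]
The comparison angle on the right depends continuously on the three distances $d(a_q,q)=f(q)$, $d(q,z_0)$, and $d(a_q,z_0)\ge f(z_0)$. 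An elementary law-of-cosines estimate in $H_k$, using $f(z_0)-f(q_0)>(a+2\varepsilon_0)\delta$ and $f(q_0)>0$, shows $\tilde\angle\,a_q\,q\,z_0>\beta=\cos^{-1}(-a)$ once $\delta$ is small and $q$ is close enough to $q_0$. Hence $d(\tau_q,A'_q)>\beta$ and $f'_q(\tau_q)>a$ directly---no limiting sequence, and no comparison of $\tau_\infty$ with $\tau_0$, is needed.
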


Similarly, one shows:

\begin{cor}\label{regularopen}
Suppose that we have a sequence of pointed local Alexandrov spaces
$(X_n,p_n)$ with curvature $\ge k$ converging in the
Gromov-Hausdorff topology to a limit $(X,p)$. Suppose that there are
compact subsets $A_n\subset X_n$ converging  to $A\subset X$ and
open subsets $V_n$ converging to $V$. Suppose that there is a
geodesic from $A_n$ to each point of $V_n$, and suppose that $q\in
V$ and $q_n\in V_n$ is a sequence converging to $q$. Then if
$d(A,\cdot)$ is regular at $q$, resp. $a$-strongly regular at $q$,
then for all $n$ sufficiently large, $d(A_n,\cdot)$ is regular at
$q_n$, resp. $a$-strongly regular at $q_n$.
\end{cor}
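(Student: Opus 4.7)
The plan is to translate (strong) regularity into the language of directions supplied by Lemma~\ref{cosformula}: $f=d(A,\cdot)$ is $a$-strongly regular at $q$ exactly when some $\tau\in\Sigma_q$ satisfies $d(A',\tau)>\cos^{-1}(-a)$, where $A'\subset\Sigma_q$ is the closed set of initial directions of geodesics from $q$ to $A$. The ordinary ``regular'' case is the same statement with $\pi/2$ in place of $\cos^{-1}(-a)$; since $A'$ is compact and the inequality is strict, in that case one obtains a positive gap $d(A',\tau)>\pi/2+\delta$ for some $\delta>0$, which makes $f$ in fact $a$-strongly regular for $a=\sin\delta>0$. So it suffices to handle the $a$-strongly regular case.

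Fix such a $\tau$. Since $\Sigma'_q$ is dense in $\Sigma_q$, after perturbing $\tau$ slightly we may assume it is the initial direction of an actual geodesic $\eta$ emanating from $q$, and the inequality $d(A',\tau)>\cos^{-1}(-a)$ still holds (with slightly reduced margin). Pick $y=\eta(t_0)$ for $t_0>0$ small and approximate $y$ by points $y_n\in X_n$ converging to $y$. For $n$ large, $q_n$ and $y_n$ lie in a common small ball within which the local Alexandrov structure furnishes a geodesic $\eta_n$ from $q_n$ to $y_n$; by convergence fact (2), a subsequence of the $\eta_n$ converges to $\eta$. Let $\tau_n\in\Sigma_{q_n}$ be the initial direction of $\eta_n$; this will be the candidate witness of $a$-strong regularity for $f_n=d(A_n,\cdot)$ at $q_n$.

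Assume toward a contradiction that, after passing to a subsequence, $d(A'_n,\tau_n)\le\cos^{-1}(-a)$ for all $n$. Choose $\sigma_n\in A'_n$ with $\angle(\sigma_n,\tau_n)\le\cos^{-1}(-a)+1/n$; each $\sigma_n$ is the initial direction of a geodesic $\gamma_n$ from $q_n$ to some $a_n\in A_n$ realizing $d(q_n,A_n)$. Since $A_n\to A$ in the Hausdorff sense and $A$ is compact, $d(q_n,A_n)\to d(q,A)>0$ (using $q\in V\subset X\setminus A$), so fact (2) produces along a further subsequence a limit geodesic $\gamma$ from $q$ to $\lim a_n\in A$; matching of lengths shows $\gamma$ realizes $d(q,A)$, and hence its initial direction $\sigma$ lies in $A'$. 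Applying fact (4) to $\gamma_n\to\gamma$ and $\eta_n\to\eta$ yields
$$\angle(\sigma,\tau)\le\liminf_{n\to\infty}\angle(\sigma_n,\tau_n)\le\cos^{-1}(-a),$$
contradicting $d(A',\tau)>\cos^{-1}(-a)$.

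The only real subtlety is the construction of the auxiliary geodesics $\eta_n$: the hypotheses provide geodesics only from $A_n$ to $V_n$, so $\tau_n$ must be produced as the initial direction of something we build ourselves. This is absorbed into the local Alexandrov structure of $X_n$ by taking $t_0$ small enough that $y_n$ is close to $q_n$ for all large $n$; everything else is a clean combination of the cosine formula with the two standard Gromov--Hausdorff facts about convergence of geodesics and upper semicontinuity of angles.
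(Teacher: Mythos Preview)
The paper gives no explicit proof here---it simply says ``Similarly, one shows''---so your write-up is filling in what the authors left implicit, and your overall strategy via Lemma~\ref{cosformula} and facts (2)--(4) is the natural one.

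There is one step that needs more care. You write ``by convergence fact (2), a subsequence of the $\eta_n$ converges to $\eta$.'' Fact (2) only guarantees that a subsequence of the $\eta_n$ converges to \emph{some} geodesic from $q$ to $y$; if geodesics from $q$ to $y$ are not unique, the limit $\eta^*$ may have initial direction $\tau^*\neq\tau$, and you have not checked that $d(A',\tau^*)>\cos^{-1}(-a)$. Your application of fact (4) then yields $\angle(\sigma,\tau^*)\le\cos^{-1}(-a)$, which does not immediately contradict the hypothesis on $\tau$.

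Two easy repairs are available. First, you may choose $y$ in the dense set of points to which the geodesic from $q$ is unique, so that $\eta^*=\eta$ is forced. Second (and more robustly), bypass fact (4) entirely: from $f'_q(\tau)>a$ pick $a'\in(a,f'_q(\tau))$ and $t_0$ small with $d(A,y)-d(A,q)>a'\,d(q,y)$; then for $n$ large $d(A_n,y_n)-d(A_n,q_n)>a''\,d(q_n,y_n)$ with $a''\in(a,a')$. Now for \emph{any} geodesic $\eta_n$ from $q_n$ to $y_n$ with direction $\tau_n$, and any $\sigma_n\in A'_n$ realized by a minimal geodesic to $a_n\in A_n$, monotonicity gives $\angle(\tau_n,\sigma_n)\ge\tilde\angle\, y_n q_n a_n$, while $|y_na_n|\ge d(A_n,y_n)>|q_na_n|+a''|q_ny_n|$ forces $\tilde\angle\, y_n q_n a_n>\cos^{-1}(-a)$ once $t_0$ is small and $n$ large (a law-of-cosines computation in $H_k$). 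This yields $d(A'_n,\tau_n)>\cos^{-1}(-a)$ directly, with no subsequence or limiting-angle argument needed.
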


\begin{lem}\label{levelsetlimit}
Suppose that $f\colon X\to \Ar$ is a Lipschitz function with
directional derivatives and that $q_n\in f^{-1}(f(q))$ is a sequence
converging to $q$. Let $\gamma_n$ be a geodesic from $q$ to $q_n$.
Suppose that the unit tangent vectors to the $\gamma_n$ at $q$
converge to a tangent direction $\tau$. Then $f_q'(\tau)=0$.
\end{lem}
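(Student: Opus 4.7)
The plan is to rescale by $s_n^{-1}$, where $s_n := d(q,q_n) \to 0$, and to pass to the limit in the tangent cone $T_qX$. By Lemma~\ref{conelimit}, the rescaled pointed spaces $((s_n^{-1})X, q)$ converge in the Gromov-Hausdorff sense to $(T_qX, q)$. Each $\gamma_n$ becomes a unit-length geodesic from $q$ in direction $\tau_n := [\gamma_n] \in \Sigma_q$, and since $\tau_n \to \tau$ and a radial geodesic issuing from the cone vertex is determined by its initial direction, after passing to a subsequence the points $q_n$ converge in the Gromov-Hausdorff sense to the point $q_\infty \in T_qX$ at distance $1$ from the vertex in direction $\tau$.

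Next I would transfer $f$ to the rescaled spaces by setting $F_n := s_n^{-1}(f - f(q))$ on $(s_n^{-1})X$. If $L$ is the Lipschitz constant of $f$, then each $F_n$ is $L$-Lipschitz in the rescaled metric, $F_n(q)=0$, and, because $f(q_n)=f(q)$, also $F_n(q_n)=0$. A standard Arzela-Ascoli argument for uniformly Lipschitz functions on Gromov-Hausdorff converging Alexandrov spaces yields, along a further subsequence, an $L$-Lipschitz limit $F_\infty\colon T_qX \to \Ar$ to which the $F_n$ converge uniformly on compact subsets. In particular, $F_\infty(q_\infty) = \lim_n F_n(q_n) = 0$.

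It remains to identify $F_\infty(q_\infty)$ with $f'_q(\tau)$. Fix any realizable direction $\xi \in \Sigma_q'$ and a geodesic $\mu$ in $X$ starting at $q$ in direction $\xi$. The point at rescaled distance $s$ along $\mu$ is $\mu(s_n s) \in X$, and the hypothesis that $f$ has a directional derivative at $q$ gives
$$
F_n(\mu(s_n s)) \;=\; s_n^{-1}\bigl(f(\mu(s_n s)) - f(q)\bigr) \;=\; s\, f'_q(\xi) + \frac{o_\xi(s_n s)}{s_n} \;\longrightarrow\; s\, f'_q(\xi)
$$
as $n \to \infty$ with $s$ fixed. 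Hence $F_\infty$ assigns the value $s\, f'_q(\xi)$ to the point at distance $s$ in direction $\xi$. Since $\Sigma_q'$ is dense in $\Sigma_q$ and both $F_\infty$ and $f'_q$ are continuous, this formula extends to every $\xi \in \Sigma_q$. Evaluating at $q_\infty$ (distance $1$, direction $\tau$) gives $f'_q(\tau) = F_\infty(q_\infty) = 0$, as required.

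The main technical hurdle is the passage to the limit $F_\infty$, since the $F_n$ live on different spaces $(s_n^{-1})X$; one must formulate uniform convergence along a Gromov-Hausdorff sequence via approximate isometries, but this is standard in analysis on Gromov-Hausdorff converging spaces. Everything else is a direct unwinding of the definitions of the directional derivative $f'_q$ and of the tangent cone $T_qX$, combined with the density of $\Sigma_q'$ in $\Sigma_q$ and the continuity of $f'_q$.
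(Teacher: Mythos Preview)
Your argument is correct, but the paper takes a much shorter route: it simply declares the result ``elementary from the comparison results'' and cites \S 11.3 of \cite{BGP}. The direct argument runs as follows. Given $\epsilon>0$, choose a realizable direction $\xi\in\Sigma_q'$ with $d_{\Sigma_q}(\xi,\tau)<\epsilon$ and a geodesic $\mu$ from $q$ in direction $\xi$. Monotonicity of comparison angles (Proposition~\ref{ASbasics}) gives $\tilde\angle\, q_n\, q\, \mu(s_n)\le d_{\Sigma_q}(\tau_n,\xi)<2\epsilon$ for large $n$, hence $d(q_n,\mu(s_n))\le C\epsilon\, s_n$ for a constant $C$ depending only on the curvature bound. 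Since $f$ is $L$-Lipschitz and $f(q_n)=f(q)$, this yields $|f(\mu(s_n))-f(q)|\le CL\epsilon\, s_n$, so letting $n\to\infty$ gives $|f'_q(\xi)|\le CL\epsilon$. Now let $\xi\to\tau$ along realizable directions and $\epsilon\to 0$ to conclude $f'_q(\tau)=0$.

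Your tangent-cone approach is conceptually appealing --- it effectively constructs the differential of $f$ as a Lipschitz function on $T_qX$ and reads off the value at $\tau$ --- but it invokes substantially heavier machinery (Gromov--Hausdorff convergence to the tangent cone, Arzel\`a--Ascoli for functions on varying spaces) for what is a two-line consequence of angle monotonicity and the Lipschitz bound. Note also that your Step~2, identifying the Gromov--Hausdorff limit of $q_n$ in $T_qX$ as the point at distance $1$ in direction $\tau$, when fully unpacked relies on exactly this comparison-angle estimate; so the elementary argument is already hiding inside yours.
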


\begin{proof}
This is elementary from the comparison results, see \S 11.3 of
\cite{BGP}.
\end{proof}

\subsubsection{Regular functions on smooth manifolds}

We shall need information about level sets of regular functions on
smooth manifolds.

\begin{lem}\label{smreg}
Suppose that $X$ is a locally complete Riemannian manifold and that
$f$ is the distance function from a compact set $A$ and that $f$ is
regular (in the Alexandrov sense) at $q_0\in X\setminus A$. Then there is a
neighborhood $U$ of $q_0$ and a smooth unit vector field $\tau$ on
$U$  with the property that $f_q'(\tau)>0$ for all $q\in U$.
Furthermore, there is an open interval $J$, an open subset $U'$ of
$\Ar^{n-1}$, and a bi-Lipschitz homeomorphism $U\cong U'\times J$
with the property that the level sets of $f|_U$ are identified with
the subsets $U'\times \{j\}$ for $j\in J$. In particular, the level
sets of $f$ are topologically locally flat, codimension-$1$
submanifolds near $q$.
\end{lem}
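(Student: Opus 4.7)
The plan is to convert Alexandrov regularity at $q_0$ into a quantitative angle condition that persists on a whole neighborhood, and then to use the flow of a smooth transverse vector field to straighten the level sets of $f$ into horizontal slices. \textbf{Step 1 (uniform angle condition):} by Lemma \ref{cosformula}, regularity of $f$ at $q_0$ is equivalent to the existence of a unit vector $\xi_0\in\Sigma_{q_0}$ and a constant $c>0$ with $d(\xi_0,A'_{q_0})>\pi/2+c$, where $A'_{q_0}$ is the compact set of unit tangent directions at $q_0$ along which minimizing geodesics from $A$ arrive. Extend $\xi_0$ to a smooth unit vector field $\tau$ on a normal coordinate neighborhood of $q_0$ (for instance by parallel transport along radial geodesics). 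The multifunction $q\mapsto A'_q$ is upper semicontinuous: if $q_n\to q_0$ and $v_n\in A'_{q_n}$, the corresponding minimizing segments from $A$ have uniformly bounded length with initial points in the compact set $A$, so along a subsequence they converge to a minimizing segment from $A$ to $q_0$ whose terminal direction (the limit of the $v_n$ under parallel identification of nearby tangent spaces) lies in $A'_{q_0}$. Combined with continuity of $\tau$, this forces $d(\tau(q),A'_q)>\pi/2+c/2$ on some neighborhood $U$ of $q_0$, and Lemma \ref{cosformula} then gives $f'_q(\tau(q))\ge \sin(c/2)>0$ throughout $U$.

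\textbf{Step 2 (monotonicity along the flow):} let $\phi_t$ denote the local flow of $\tau$. In normal coordinates at any $q\in U$ we have $\phi_t(q)=\exp_q(t\tau(q))+O(t^2)$, and since $f$ is $1$-Lipschitz, the right derivative at $t=0$ of $t\mapsto f(\phi_t(q))$ coincides with the one-sided derivative of $f$ along the geodesic with initial velocity $\tau(q)$, namely $f'_q(\tau(q))\ge \sin(c/2)$. Applied at every point whose flow line lies in $U$, the Lipschitz function $t\mapsto f(\phi_t(q))$ has lower right Dini derivative at least $\sin(c/2)$ everywhere, so a standard Dini-derivative monotonicity lemma yields $f(\phi_{t_2}(q))-f(\phi_{t_1}(q))\ge \sin(c/2)(t_2-t_1)$ whenever $t_1<t_2$ and the flow segment stays in $U$.

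\textbf{Step 3 (bi-Lipschitz product structure):} choose a smooth codimension-$1$ submanifold $S\subset U$ through $q_0$ transverse to $\tau(q_0)$, identified diffeomorphically with an open subset $U'\subset\Ar^{n-1}$. The map $\Phi(\sigma,t):=\phi_t(\sigma)$ is a smooth diffeomorphism from a neighborhood of $(q_0,0)$ in $U'\times\Ar$ onto a smaller neighborhood of $q_0$ in $X$. The composition $f\circ\Phi$ is Lipschitz in both variables and, for each fixed $\sigma$, bi-Lipschitz in $t$ with constants independent of $\sigma$ (upper bound from the Lipschitz constants of $f$ and $\Phi$, lower bound $\sin(c/2)$ from Step 2). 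Inverting $t\mapsto f(\Phi(\sigma,t))=j$ to obtain $t=h_\sigma(j)$ produces a jointly Lipschitz function $h$ (a standard Lipschitz implicit-function argument), so $\Psi(\sigma,j):=\Phi(\sigma,h_\sigma(j))$ is a bi-Lipschitz homeomorphism of an open set $U'\times J$ onto a neighborhood $U$ of $q_0$ satisfying $f\circ\Psi(\sigma,j)=j$. The level sets of $f$ in $U$ are then precisely the horizontal slices $U'\times\{j\}$, and since $U'$ is a smooth $(n-1)$-manifold they are topologically locally flat codimension-$1$ submanifolds near $q_0$.

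The main obstacle is the semicontinuity step that upgrades pointwise Alexandrov regularity at $q_0$ to a uniform positive lower bound for $f'_q(\tau(q))$ on a whole neighborhood; once this uniformity is in hand, the remainder is an elementary ODE-flow argument combined with a Lipschitz change of variables. Smoothness of $X$ enters in two places: it allows us to extend $\xi_0$ to a genuinely smooth vector field $\tau$ with a well-defined flow, and it ensures the integral curves of $\tau$ agree to first order with geodesics, which is exactly what is needed to equate the right derivative of $f$ along $\phi_t$ with the Alexandrov directional derivative $f'_q(\tau(q))$.
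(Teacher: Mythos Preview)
Your proof is correct and follows essentially the same route as the paper's: obtain a smooth unit vector field $\tau$ with $f'_q(\tau(q))\ge\alpha>0$ on a neighborhood, then use the flow of $\tau$ to straighten the level sets into horizontal slices of a product $U'\times J$. The paper gets the openness of the good-direction set in the unit tangent bundle by appealing to ``arguments similar to the above'' (i.e.\ Corollaries~\ref{goodreg} and~\ref{regularopen}), whereas you spell this out via the upper semicontinuity of $q\mapsto A'_q$ and Lemma~\ref{cosformula}; and the paper writes the bi-Lipschitz map directly as $(x^1,\dots,x^n)\mapsto(f,x^2,\dots,x^n)$ in flow-box coordinates, while you invert $t\mapsto f(\Phi(\sigma,t))$---these are the same map in different notation.
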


\begin{proof}
Consider the subset of the unit tangent bundle of $X$ consisting of
directions $\chi_q\in T_qX$ with the property that $f_q'(\chi_q)>0$
as $q$ varies over an open neighborhood $U$ of $q_0$. Arguments
similar to the above show that this is an open subset of $TX$. If we
take $U$ small enough, the fiber over every $q\in U$ is non-empty.
Hence, there is a smooth vector field $\tau$ defined
in a neighborhood $U$ of $q$ and $\alpha>0$ such that
$f_q'(\tau(q))\ge \alpha$ for all  $q\in U$. Now we integrate $\tau$
to define a smooth local coordinate system $(x^1,\ldots,x^n)$ near
$q_0$ such that $\tau=\partial/\partial x^1$. We replace $U$ be a
smaller open set which is the product of an open ball in
$(x^2,\ldots,x^n)$-space with an interval in the $x^1$-direction.
Since $f'(\partial/\partial x^1)>0$ everywhere, we see that the
level sets of $f$ meet each interval in the $x^1$-direction in at
most one point. That is to say, near $q_0$ these level sets are
given by the graphs of functions $x^1=\varphi(x^2,\ldots,x^n)$.
Elementary arguments show that the map $(x^1,\ldots,x^n)\mapsto
(f(x^1,\ldots,x^n),x^2,\ldots,x^n)$ is the required bi-Lipschitz
homeomorphism.
\end{proof}

We also need a fairly restricted version of an analogous result for
maps to the plane. The following is an elementary lemma.

\begin{lem}\label{2dregular}
Given $\epsilon'>0$, the following holds for all $\epsilon>0$
sufficiently small. Let $B(0,\epsilon^{-1})$ be the ball of
radius $\epsilon^{-1}$ in the Euclidean plane centered at the origin. We
denote by $(x,y)$ the Euclidean coordinates on this ball and by
$\theta$ the usual coordinate along the circle. Let $g$ be a Riemann
metric  on $U=B(0,\epsilon^{-1})\times S^1$ that is within
$\epsilon$ in the $C^N$-topology (where $N=[\epsilon^{-1}]$) of the
product of the usual Euclidean metric on $B(0,\epsilon^{-1})$ and
the Riemannian metric of length $1$ on the circle.  Suppose that
$F=(f_1,f_2)\colon U\to \Ar^2$ is a map with the property that $f_1$
and $f_2$ are $1$-Lipschitz with respect to $g$ with directional
derivatives at all points of $U$. Suppose further that the
directional derivatives of $f_i$ with respect to $g$ satisfy:
$$|f_1'(\partial_x)-1|<\epsilon$$
$$|f_2'(\partial_y)-1|<\epsilon$$
$${\rm max}(|f_1'(\pm\partial_y)|,|f_2'(\pm\partial_x)|,|f_1'(\pm\partial_\theta)|,
|f_2'(\pm\partial_\theta)|)<\epsilon.$$
Then any fiber $F^{-1}(p)$ that meets $B(0,\epsilon^{-1}/2)$ is a
circle that is $\epsilon'$-orthogonal to the family of horizontal
spaces $B(0,\epsilon^{-1})\times \{\theta\}$ in the sense that,
fixing $a\in F^{-1}(p)$, as $b\in F^{-1}(p)$ approaches $a$ the
angle (measured with respect to product metric) of the geodesic (in
the product metric) from $a$ to $b$ with the horizontal space
through $a$ is within $\epsilon'$ of $\pi/2$. Furthermore, any fiber
$F^{-1}(p)$ that meets $B(0,\epsilon^{-1}/2)$ intersects each
horizontal space $\{\theta\}\times B(0,\epsilon^{-1})$ in a single
point.
\end{lem}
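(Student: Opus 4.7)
The hypotheses say $F$ is a uniformly small Lipschitz perturbation of the coordinate projection $\pi\colon B(0,\epsilon^{-1})\times S^1\to B(0,\epsilon^{-1})$, $(x,y,\theta)\mapsto(x,y)$. The plan is to prove a core coordinate estimate comparing $f_i$ to the $i$-th coordinate, and then deduce slice injectivity, existence of a continuous graph parametrization of each fiber over $S^1$, and $\epsilon'$-closeness to verticality.

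\textbf{Key estimate.} Given $a=(x_a,y_a,\theta_a)$ and $b=(x_b,y_b,\theta_b)$ in $U$, connect them by the broken coordinate path that moves first in $\theta$, then in $x$, then in $y$. Because $g$ is $C^N$-close to the product Euclidean metric, the $g$-length of each segment differs from its Euclidean length by a factor $1+O(\epsilon)$, and each $g$-unit tangent is within $O(\epsilon)$ of $\pm\partial_x$, $\pm\partial_y$, or $\pm\partial_\theta$. The hypothesized bounds on $f_i'(\pm\partial_x)$, $f_i'(\pm\partial_y)$, $f_i'(\pm\partial_\theta)$, together with Lipschitzness of $f_i$ (which makes $s\mapsto f_i(\gamma(s))$ absolutely continuous along each segment), allow one to integrate these directional derivatives and conclude
\begin{equation*}
|f_1(b)-f_1(a)-(x_b-x_a)|\le C\epsilon\bigl(|x_b-x_a|+|y_b-y_a|+|\theta_b-\theta_a|\bigr),
\end{equation*}
together with the symmetric statement for $f_2$, for a universal constant $C$.

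\textbf{Slice injectivity, circle structure, and orthogonality.} Specializing the key estimate to $\theta_a=\theta_b$ shows $F$ restricted to each horizontal slice $S_\theta=B(0,\epsilon^{-1})\times\{\theta\}$ is $O(\epsilon)$-close in Lipschitz sense to a translate of the standard identification $S_\theta\cong\mathbb{R}^2$, hence injective; a degree argument using near-identity behavior on boundary circles then shows the image of $F|_{S_\theta}$ covers $B(0,\epsilon^{-1}/2)$ (after translation), so every $p\in F(B(0,\epsilon^{-1}/2)\times S^1)$ has a unique preimage $q_\theta$ on each slice. Applying the key estimate between $q_{\theta_1}$ and $q_{\theta_2}$, whose $f_i$-values coincide, yields $|x_{q_{\theta_1}}-x_{q_{\theta_2}}|+|y_{q_{\theta_1}}-y_{q_{\theta_2}}|\le O(\epsilon)|\theta_1-\theta_2|$, so $\theta\mapsto q_\theta$ is Lipschitz; the resulting embedded circle equals $F^{-1}(p)$ by slice injectivity, establishing both the one-point-per-slice claim and the circle structure. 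For orthogonality, let $a,b\in F^{-1}(p)$ be distinct and let $v=(v_x,v_y,v_\theta)$ be the Euclidean unit tangent at $a$ to the straight segment to $b$ of Euclidean length $\delta$; writing $b_i-a_i=v_i\delta$ and substituting $f_i(b)-f_i(a)=0$ into the key estimate gives $|v_x|,|v_y|\le C\epsilon$, so the Euclidean angle between the segment and $S_{\theta_a}$ lies in $[\pi/2-O(\epsilon),\pi/2]$, which is within $\epsilon'$ of $\pi/2$ once $\epsilon$ is small.

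\textbf{Main obstacle.} The principal subtlety lies in the key estimate: rigorously decomposing the change of each $f_i$ along a piecewise coordinate path when $f_i$ is merely $1$-Lipschitz with one-sided directional derivatives prescribed only in the six coordinate directions. This requires (i) using Lipschitzness to get absolute continuity of $s\mapsto f_i(\gamma(s))$ along coordinate curves, and (ii) using the $C^N$-closeness of $g$ to the flat product metric to identify $g$-unit tangents along coordinate segments with $\pm\partial_x,\pm\partial_y,\pm\partial_\theta$ up to $O(\epsilon)$ in $g$-norm, absorbing the discrepancy into the Lipschitz error. Once this estimate is in place, slice injectivity, circle structure, and orthogonality follow by the elementary manipulations above.
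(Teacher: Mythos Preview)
The paper does not give a proof of this lemma; it introduces it only with the sentence ``The following is an elementary lemma'' and then moves on. So there is no paper proof to compare against, and your task was really to supply the missing elementary argument.

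Your approach is correct and is the natural one. The key estimate
\[
|f_1(b)-f_1(a)-(x_b-x_a)|\le C\epsilon\bigl(|x_b-x_a|+|y_b-y_a|+|\theta_b-\theta_a|\bigr)
\]
(and its companion for $f_2$) is exactly the right engine, and your justification via absolute continuity along coordinate segments handles the only genuine subtlety: the $f_i$ are merely Lipschitz with one-sided directional derivatives, but restricting to a coordinate line gives an absolutely continuous function whose a.e.\ derivative coincides with the prescribed directional derivative, so the integral bound goes through. The deductions of slice injectivity, the Lipschitz parametrization $\theta\mapsto q_\theta$, and the $\epsilon'$-orthogonality are then straightforward as you describe.

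Two minor points you might tighten. First, in the surjectivity step you need that for \emph{every} $\theta$ there is a preimage of $p$ on $S_\theta$; rather than a degree argument on a fixed slice, it is cleaner to note that $F(x_0,y_0,\theta)$ is within $O(\epsilon)$ of $p$ by the key estimate (only the $\theta$-coordinate changed, and $|\theta-\theta_0|\le 1/2$), and then use that $F|_{S_\theta}$ is $(1-C\epsilon)$-bilipschitz close to a translation near $(x_0,y_0)$ to solve $F(x,y,\theta)=p$ with $(x,y)$ within $O(\epsilon)$ of $(x_0,y_0)$, which stays well inside $B(0,\epsilon^{-1})$. Second, $|\theta_b-\theta_a|$ should be read as circular distance; this is implicit in your argument but worth saying.
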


\subsection{Almost manifold regions in Alexandrov spaces}

We introduce an open dense set of `good' points in an
$n$-dimensional Alexandrov space and show that these points have
neighborhoods that are $(1+\epsilon)$-Lipschitz equivalent to $\Ar
^n$.

\subsubsection{$(m,\delta)$-strainers}

Let $X$ be an $n$-dimensional local Alexandrov space of curvature
bounded below by $k$. Fix $x\in X$ and let $U\subset X$ be a neighborhood of $x$
in which the Alexandrov property holds. A {\em $(m,\delta)$-strainer}\footnote{This
notion is called $(m,\delta)$-burst points in \cite{BGP}.} at $x\in
X$ is a set of $2m$ points $\{a_1,b_1,\ldots,a_m,b_m\}$ in $U$ such that:
\begin{enumerate}
\item $\tilde \angle a_ipb_i>\pi-\delta$ for all $i=1,\ldots,m$,
\item $\tilde \angle a_ipa_j>\pi/2-\delta$ for all $i\not= j$,
\item $\tilde \angle b_ipb_j>\pi/2-\delta$ for all $i\not= j$.
\end{enumerate}

Notice that it follows from the defining property of an Alexandrov
space that $\tilde \angle a_ipa_j<\pi/2+2\delta$ for every
$i\not=j$, and similarly for the $b_i,b_j$. The {\em size} of an
$(m,\delta)$-strainer is defined to be the minimum of the $2m$
distances $\{d(p,a_i),d(p,b_i)\}_{i=1}^m$. For any local Alexandrov
space $X$ and any $m\ge 0$ and any $\delta>0$, the subset of points
$x\in X$ that have a $(m,\delta)$-strainer is an open subset.

\begin{defn}
Let $X$ be an $n$-dimensional local Alexandrov space. Then for any
$\delta>0$ denote by $R_\delta(X)$, the {\em $\delta$-regular set},
the subset of $X$ consisting of points with an
$(n,\delta)$-strainer. According to Section 6 of \cite{BGP}
$R_\delta(X)$ is an open dense subset of $X$.
\end{defn}

\begin{prop}\label{goodregion} Fix $n>0$.
Given $\epsilon>0$ there is $\delta>0$ such that the following
holds. Let $X$ be an $n$-dimensional local Alexandrov space and
suppose that $x\in R_\delta(X)$, and suppose that the Alexandrov
property holds on the ball of radius $r$ centered at $x$. Then there
is a neighborhood $U$ of $x$ and a $(1+\epsilon)$-bilipschitz
homeomorphism from $U$ to an open subset of $\Ar^n$. The open set
$U$ contains a metric ball about $x$ whose radius depends only on
the size of the $(n,\delta)$-strainer at $x$ contained in $B(x,r)$.
\end{prop}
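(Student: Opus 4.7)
The plan is to construct the classical strainer map and show it gives a bilipschitz chart. Let $\{a_1,b_1,\ldots,a_n,b_n\}$ be the $(n,\delta)$-strainer at $x$ of size $\sigma>0$, contained in $B(x,r)$, and define
$$\Phi\colon U\to \Ar^n, \qquad \Phi(y) = (d(a_1,y),\ldots,d(a_n,y)),$$
where $U=B(x,\eta\sigma)$ for a small constant $\eta=\eta(n,\epsilon)$ to be chosen. The first observation is that an $(n,\delta)$-strainer of size $\sigma$ at $x$ is automatically, by the \emph{same} $2n$ points, an $(n,\delta')$-strainer at every $q\in U$ with $\delta' = \delta + O(\eta)$: this is an immediate consequence of monotonicity of comparison angles (Proposition~\ref{ASbasics}(2)). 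This uniformity allows the following analysis to be applied at every $q \in U$, not merely at $x$.

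\textbf{Upper Lipschitz bound.} Each coordinate $f_i = d(a_i,\cdot)$ is $1$-Lipschitz, but the trivial bound $\sqrt{n}$ is far from what we want. Instead I would use Lemma~\ref{cosformula}: along any geodesic emanating from $q\in U$ with initial direction $\xi$, the directional derivative of $f_i$ is $-\cos\theta_i$, where $\theta_i$ is the distance in $\Sigma_q$ from $\xi$ to the set of directions pointing toward $a_i$. The strainer conditions $\tilde\angle a_ipb_i>\pi-\delta$, $\tilde\angle a_ipa_j>\pi/2-\delta$ and $\tilde\angle b_ipb_j>\pi/2-\delta$ (together with the complementary upper bound $\tilde\angle a_ipa_j<\pi/2+2\delta$ recorded in the text) force the direction classes $\{\alpha_i,\beta_i\}_{i=1}^n\subset \Sigma_q$ to form an almost-antipodal-almost-orthonormal configuration. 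Triangle comparison on the unit sphere then gives $\sum_{i=1}^n \cos^2\theta_i \le 1 + O(\delta+\eta)$, which, integrated along a geodesic from $y$ to $z$ in $U$, produces $|\Phi(y)-\Phi(z)|\le (1+\epsilon)\,d(y,z)$ once $\delta,\eta$ are small enough.

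\textbf{Lower Lipschitz bound and image.} Symmetrically, one proves $\sum_{i=1}^n \cos^2\theta_i \ge 1 - O(\delta+\eta)$: the doubled family $\{\alpha_i,\beta_i\}$ almost spans $\Sigma_q$, so for any $\xi$ there must exist some $i$ whose corresponding contribution cannot be small. Integrating along any geodesic from $y$ to $z$ in $U$ (or along a piecewise geodesic, if necessary, to keep the path inside $U$) gives $|\Phi(y)-\Phi(z)|\ge (1-\epsilon)\,d(y,z)$, and in particular $\Phi|_U$ is injective. Combined with the upper bound, $\Phi$ is a $(1+\epsilon)$-bilipschitz embedding of $U$ into $\Ar^n$. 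Invariance of domain (applicable because $U$ is a topological $n$-manifold once this bilipschitz estimate is in hand) shows that $\Phi(U)$ is open in $\Ar^n$. The radius $\eta\sigma$ of the guaranteed ball $B(x,\eta\sigma)\subset U$ depends only on $\sigma$, as required.

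\textbf{Main obstacle.} The delicate step is establishing the two-sided inequality $\sum\cos^2\theta_i = 1+O(\delta+\eta)$ \emph{uniformly} over $q\in U$. Pointwise at $x$ it is essentially a restatement of the strainer condition, but to integrate directional derivatives one needs the estimate at every nearby base point and in every direction. This couples the two small parameters: $\delta$ must be chosen small as a function of $\epsilon$, and then $\eta$ must be chosen small as a function of both $\delta$ and $\epsilon$ so that comparison angles at points of $U$ deteriorate from those at $x$ by at most $O(\delta)$. A related subtlety is that geodesics between two points of $U$ need not remain in $U$, so either one shrinks $\eta$ further to ensure geodesic convexity, or one argues via piecewise paths and the uniform directional-derivative bound. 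Once this uniform estimate is in place, the rest of the proof is formal.
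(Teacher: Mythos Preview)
Your approach is the same as the paper's: define the strainer map $\Phi(y)=(d(a_1,y),\dots,d(a_n,y))$ and argue it is $(1+\epsilon)$-bilipschitz on a ball whose radius depends only on the strainer size. The paper does not prove this in detail; it simply cites Theorem~9.4 of \cite{BGP}, so your sketch is more explicit than what the paper actually writes.

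That said, two steps in your sketch do not go through as written. First, the lower Lipschitz bound cannot be obtained by ``integrating'' the pointwise inequality $\sum_i\cos^2\theta_i(t)\ge 1-O(\delta+\eta)$ along a geodesic: writing $v_i=\int_0^L\cos\theta_i(t)\,dt$, the quantity $\sum_i v_i^2$ is $\bigl\|\int_0^L\mathbf c(t)\,dt\bigr\|^2$, and the triangle inequality for vector-valued integrals only gives the \emph{upper} bound $\le\bigl(\int_0^L\|\mathbf c(t)\|\,dt\bigr)^2$; there is no reverse inequality without knowing the $\cos\theta_i(t)$ are nearly constant in $t$, which is essentially what you are trying to prove. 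The correct route (and the one in \cite{BGP}) bypasses integration: apply the law of cosines in the comparison triangle $(a_i,y,z)$ to get $d(a_i,z)-d(a_i,y)=-d(y,z)\cos\tilde\theta_i+O(d(y,z)^2/\sigma)$ with $\tilde\theta_i=\tilde\angle a_iyz$, and then prove the single algebraic estimate $\sum_i\cos^2\tilde\theta_i=1+O(\delta+\eta)$ for the \emph{comparison} angles at the fixed endpoint $y$. Second, your appeal to invariance of domain is circular: bilipschitz into $\Ar^n$ does not by itself make $U$ an $n$-manifold, so you cannot invoke the theorem. Openness of $\Phi(U)$ requires a separate local-surjectivity argument (again in \cite{BGP}), using the strainer directions to produce nearby preimages.
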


\begin{proof}
Let $\{a_1,b_1,\ldots,a_n,b_n\}$ be a $\delta$-strainer of size $s$
at $x$ contained in $B(x,r)$. We define a map $X\to \Ar^n$ by $y\mapsto
(d(y,a_1),\ldots,d(y,a_n))$. According to Theorem 9.4 of \cite{BGP}
this map has the required properties in a  ball about $x$ whose
radius depends only on $s$ and $n$.
\end{proof}

Any compact, connected $1$-dimensional Alexandrov space is isometric
either to a closed interval or to a circle. In each case, the only
invariant up to isometry is the total length of the space.
Furthermore, for any $\delta>0$ the subset of $\delta$-regular
points is the interior of the interval or the entire circle. The
size of a $(1,\delta)$-strainer is the distance from the boundary in
the case of the interval, or one-quarter the length in the case of
the circle.

\subsection{A blow-up argument}

We need a special result about rescaling Alexandrov spaces so as to construct
higher dimensional limits. We need this result in order to handle sequences of
points $x\in M_n$ converging to a singular point of a
$2$-dimensional limit. The following is a reformulation in our context of Lemma 3.6 of \cite{SY2000}.

\begin{prop}\label{blowup}
For any $\delta>0$, the following holds for all $\mu>0$ sufficiently
small. Fix $r>0$. Suppose that $B_n=B(x_n,1))$ is a sequence of
Riemannian balls of radius $1$ with curvature $\ge -1$ in a complete
Riemannian manifolds with convex boundary. Suppose that the $B_n$
are non-compact and converge to an Alexandrov ball $(X,x)$. Suppose
that ${\rm dim}\,X$ is either $1$ or $2$. Suppose also that
rescaling $B=B(x,r)$ by $r^{-2}$ produces a ball  that is within
$\mu$ in the Gromov-Hausdorff distance of a flat cone. If $B$ is
$1$-dimensional, then we require that this flat cone is the cone on
a single point. If $B$ is $2$-dimensional we require that the flat
cone is the cone  either on a circle or on an interval and the
diameter of the base of the cone is  most $\pi-\delta$. Then, after passing to a subsequence, there are
points $\hat x_n\in X_n$ with $d(x_n,\hat x_n)\rightarrow 0$ as
$n\rightarrow \infty$ such that one of the following holds:
\begin{enumerate}
\item   $d(\hat x_n,\cdot)$ has no critical points in $B(\hat
x_n,r)\setminus\{\hat x_n\}$. In this case $B(\hat x_n,r')$ is
diffeomorphic to a ball for every $0<r'< r$.
\item There is a sequence of positive constants $\delta_n\rightarrow 0$ as $n\rightarrow
\infty$ such that:
\begin{enumerate}
\item Every critical point of $d(\hat x_n,\cdot)$ in $B(\hat x_n,r)$ is within
distance $\delta_n$ of $\hat x_n$, and
\item there is a critical
point $q_n$ for $d(\hat x_n,\cdot)$ at distance $\delta_n$ from
$\hat x_n$.
\end{enumerate}
In this case, passing to a subsequence there is a  limit of the
$\frac{1}{\delta_n}B(\hat x_n,r)$. This limit is a complete
Alexandrov space of curvature $\ge 0$ and of dimension at least one
more that the dimension of $X$.
\end{enumerate}
\end{prop}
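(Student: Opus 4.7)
My strategy is a dichotomy on the Alexandrov-critical set $C_n\subset B(\hat x_n,r)\setminus\{\hat x_n\}$ of the distance function $f_n:=d(\hat x_n,\cdot)$, where $\hat x_n\in B_n$ is chosen close to $x_n$ and converging to the vertex of the approximating flat cone in $X$. The key observation is that in a flat cone whose link has diameter at most $\pi-\delta$, the distance from the vertex is strongly regular at every non-vertex point (Lemma~\ref{cosformula}), and flat cones are self-similar under rescaling, so regularity persists on every closed annulus of positive inner radius. Transferring this via the Gromov-Hausdorff convergence $B_n\to X$ using Corollary~\ref{regularopen}, a standard contradiction argument gives that for every $\epsilon>0$ we have $C_n\subset B(\hat x_n,\epsilon)$ for all sufficiently large $n$. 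Hence, setting $\delta_n:=\sup\{d(\hat x_n,q):q\in C_n\}$ (and $\delta_n=0$ when $C_n=\emptyset$), we have $\delta_n\to 0$.

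\textbf{The two cases.} If $C_n=\emptyset$ for infinitely many $n$, pass to that subsequence (Case~(1)): integrating a smooth gradient-like vector field for $f_n$ on $B(\hat x_n,r')\setminus\{\hat x_n\}$, as provided by Lemma~\ref{smreg}, yields a diffeomorphism between $B(\hat x_n,r')$ and the cone on $\Sigma_{\hat x_n}=S^{2}$, i.e. a $3$-ball. Otherwise $C_n\ne\emptyset$ for all large $n$; pick $q_n\in C_n$ realising the supremum, so $d(\hat x_n,q_n)=\delta_n\to 0$. Rescale by $\tilde g_n:=\delta_n^{-2}g_n$: the ball $(B(\hat x_n,r),\tilde g_n)$ then has radius $r\delta_n^{-1}\to\infty$ and curvature $\ge -\delta_n^{2}\to 0$. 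Proposition~\ref{ballscon} extracts, after a subsequence, a pointed Gromov-Hausdorff limit $(Y,y)$ which is a complete Alexandrov space of non-negative curvature and dimension at most $3$; the rescaled $q_n$ subconverges to a point $q\in Y$ with $d(y,q)=1$, and the convergence of comparison angles (Section~\ref{secbasics}, item~(3)) shows that $q$ is critical for $d(y,\cdot)$.

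\textbf{The dimension inequality, and the main obstacle.} What remains, and what I expect to be the principal technical difficulty, is to prove $\dim Y\ge\dim X+1$. Suppose for contradiction $\dim Y\le\dim X$. The self-similarity of flat cones together with Lemma~\ref{conelimit} identifies the tangent cone $T_xX$ with the approximating flat cone, so $\dim T_xX=\dim X$. A diagonal blow-up, pairing the rescaling of $B_n$ at scale $\delta_n$ with the rescaling of $X$ at the same scale, yields an isometric copy of $T_xX$ inside $(Y,y)$ centred at $y$, with the cone rays (of $X$-length $r$) passing to infinite rays in $Y$ since $r\delta_n^{-1}\to\infty$. Under the contradiction hypothesis $\dim Y=\dim T_xX$, a rigidity argument using the splitting theorem (Proposition~\ref{splitting}) and Corollary~\ref{prod}, applied to the non-negatively curved $Y$, forces $Y$ to coincide isometrically with $T_xX$. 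But the distance from the vertex of a flat cone whose link has diameter $\le\pi-\delta$ has no critical points in its interior (by Lemma~\ref{cosformula}), contradicting the existence of the critical point $q\in Y$ at distance $1$ from $y$. Hence $\dim Y\ge\dim X+1$. The hard step is the rigidity claim: one must carefully control how the flat-cone structure present in $X$ at scale $r$ descends to scale $\delta_n$ in $B_n$ and survives the passage to the rescaled limit. This is essentially the content of Lemma~3.6 of \cite{SY2000}, whose approach I intend to adapt.
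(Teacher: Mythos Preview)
The paper does not give its own proof of this proposition; it merely records that the statement is a reformulation of Lemma~3.6 of \cite{SY2000}. So there is no ``paper's proof'' to compare against, and your outline is to be judged on its own. Your overall dichotomy (critical set shrinks to $\hat x_n$; either it is eventually empty, or rescale at the outermost critical point) is correct and is indeed the Shioya--Yamaguchi strategy. Two points, however, are genuine gaps rather than details.

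First, you never say how $\hat x_n$ is chosen. ``Close to $x_n$ and converging to the vertex'' is not enough: the dimension jump in Case~(2) depends on this choice. In the applications later in the paper (e.g.\ the proof of Lemma~\ref{endpts}) one sees the mechanism: $\hat x_n$ is taken to be the local maximum, near $x_n$, of the distance from a fixed reference point $p_n$ lying far out along a cone direction. This guarantees that at $\hat x_n$ every short geodesic makes angle $\ge\pi/2$ with the geodesic to $p_n$, which is exactly the extra ``direction'' one needs to combine with the critical point $q_n$ to manufacture an $(m+1,\delta')$-strainer in the rescaled limit (here $m=\dim X$). Without this specific choice your dimension argument cannot be completed.

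Second, your proposed dimension argument has a wrong step. The ``diagonal blow-up'' giving an isometric copy of $T_xX$ inside $Y$ is not justified: the Gromov--Hausdorff distance from $B_n$ to $X$ need not be $o(\delta_n)$, so rescaling both by $\delta_n^{-1}$ does not yield convergence to the same object. What actually survives the rescaling are the geodesics from $\hat x_n$ to the reference points $p_n$ (of length $\approx r$, hence rescaled length $\to\infty$), which become rays in $Y$; together with the critical point $q$ and the angle information from the choice of $\hat x_n$, these rays are used to build a strainer of rank $m+1$ at a suitable point of $Y$, forcing $\dim Y\ge m+1$. Your proposed route via the splitting theorem does not work as stated: the flat cones under consideration (cone on a point, or cone on a circle or interval of diameter $\le\pi-\delta$) generally contain no line through the vertex, so Proposition~\ref{splitting} does not apply directly to force $Y\cong T_xX$. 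The correct mechanism is the strainer construction just described, which is the substance of \cite{SY2000}, Lemma~3.6.
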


\subsection{Gromov-Hausdorff limits of balls in the $M_n$}

Now we turn from generalities about Alexandrov spaces to special
properties of Gromov-Hausdorff limits of balls in the $M_n$.
Recall that we have a sequence of constants $w_n\rightarrow 0$ as
$n\rightarrow \infty$ and functions $\rho_n\colon M_n\to [0,\infty)$
with the property that $\rho_n(x)\le {\rm diam}(M^0_n)/2$ for every
$n$ and every $x$ in the connected component $M_n^0$ of $M_n$.
Thus, for every $n$ and every $x\in M_n$,
the ball $B_{g_n}(x,\rho_n(x))$ is non-compact. Since $M_n$ is
itself compact, it follows that for every $0<r<\rho_n(x)$, the ball
$B_{g_n}(x,r)$ has compact closure in $B(x,\rho_n(x))$. It then follows that
the $B_{g_n}(x,\rho_n(x))$ are non-compact Alexandrov balls.
Rescaling the metric by $\rho_n(x)^{-2}$, that is to say
replacing the metric $g_n$ on this ball by the metric
$g'_n(x)=\rho_n(x)^{-2}g_n$ we obtain non-compact Alexandrov
balls $B_{g'_n(x)}(x,1)$ of radius $1$ with the property that their
sectional curvatures are bounded below by $-1$, and their volumes are bounded above by $w_n$.
Since $w_n\rightarrow 0$ as $n\rightarrow \infty$,
the following is then immediate from Lemma~\ref{ballscon} and Proposition~\ref{goodregion}.

\begin{lem}
Let $x_n\in M_n$ be given for every $n\ge 1$. Then, after passing to
a subsequence, the $B_{g'_n(x)}(x_n,1)$ converge to  a non-compact
Alexandrov ball $B= B(\bar x,1)$ of curvature $\ge
-1$  and of dimension $1$ or $2$. The limiting ball contains points at every distance $<1$ from $\bar x$.
\end{lem}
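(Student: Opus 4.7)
The lemma should follow as a direct consequence of Lemma~\ref{ballscon} and Proposition~\ref{goodregion}, together with the volume collapse hypothesis $w_n \to 0$. My plan is to organize the argument in three moves: pass to a GH-limit, rule out three-dimensional limits by volume, and establish the density/non-compactness statement.

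First I would set up the convergence. The ball $B_{g'_n(x_n)}(x_n,1)$ equals $B_{g_n}(x_n,\rho_n(x_n))$ with the metric multiplied by $\rho_n(x_n)^{-2}$. Hypothesis (1) of Theorem~\ref{7.4} gives sectional curvature $\ge -\rho_n(x_n)^{-2}$ on this ball, which rescales to $\ge -1$ in $g'_n(x_n)$, and the remark just before the statement identifies the rescaled balls as non-compact Alexandrov balls of ambient dimension $3$. Lemma~\ref{ballscon}, applied with $N=3$, $k=-1$ and $r_n=1$, then yields after a subsequence a Gromov-Hausdorff limit $(B(\bar x,1),\bar x)$ that is an Alexandrov ball of dimension at most $3$ and curvature $\ge -1$.

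Next I would rule out the three-dimensional case using volume. The rescaled volume of the ball is $\rho_n(x_n)^{-3}\,\mathrm{vol}_{g_n}(B_{g_n}(x_n,\rho_n(x_n))) \le w_n \to 0$. If the limit were three-dimensional, I would choose, using density of $R_\delta$ (Section 6 of \cite{BGP}), an interior $(3,\delta)$-regular point $\bar y$ with a $(3,\delta)$-strainer $\{\bar a_i,\bar b_i\}$ of some fixed size $s>0$. By continuity of comparison angles along GH-convergent sequences (property (3) in the list following Proposition~\ref{ballscon}), approximants $a_i^n, b_i^n$ and $y^n$ in $B_{g'_n(x_n)}(x_n,1)$ form $(3,\delta)$-strainers of size $\ge s/2$ at $y^n$ for all $n$ large. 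Proposition~\ref{goodregion} then produces a $(1+\epsilon)$-bi-Lipschitz chart from a metric ball around $y^n$ of radius depending only on $s,\delta$ (not on $n$) into an open set of $\mathbb{R}^3$, forcing a uniform positive lower bound on $\mathrm{vol}_{g'_n(x_n)}(B_{g'_n(x_n)}(x_n,1))$ and contradicting $w_n\to 0$.

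For the density and non-compactness statement: the non-compactness of each $B_{g'_n(x_n)}(x_n,1)$ together with the Riemannian length-space structure of $M_n$ gives, for every $r<1$, a point $y_n^{(r)}\in M_n$ with $d_{g'_n(x_n)}(x_n,y_n^{(r)})=r$ (follow a minimizing geodesic to any point lying outside $B_{g'_n(x_n)}(x_n,r)$). A diagonal subsequence over a countable dense $\{r_k\}\subset(0,1)$, combined with property (1) in the list following Proposition~\ref{ballscon}, produces points $\bar y^{(r_k)}\in B(\bar x,1)$ with $d(\bar x,\bar y^{(r_k)})=r_k$. The Alexandrov-ball geodesic property (applicable since $d(\bar x,\bar y^{(r_k)})+d(\bar x,\bar x)+d(\bar x,\bar y^{(r_k)})=2r_k<2$) then joins $\bar x$ to $\bar y^{(r_k)}$ by a geodesic in the ball, filling in all intermediate distances. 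Hence $B(\bar x,1)$ contains points at every distance $<1$ from $\bar x$, is in particular non-compact, and has dimension at least $1$.

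The main technical point is the strainer-persistence argument used to rule out dimension $3$: one must verify that both the size of the persistent strainer and the radius of the bi-Lipschitz chart from Proposition~\ref{goodregion} are controlled by data intrinsic to the limit, giving a volume lower bound independent of $n$. Both items follow routinely from the cited results once comparison-angle convergence is invoked, after which the contradiction with $w_n\to 0$ is immediate.
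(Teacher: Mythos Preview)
Your proposal is correct and follows exactly the route the paper indicates: the paper states the lemma is ``immediate from Lemma~\ref{ballscon} and Proposition~\ref{goodregion}'' together with $w_n\to 0$, and you have filled in precisely those details. Your strainer-persistence argument to rule out dimension $3$ is essentially the content of Claim~\ref{vollimit} (inside the proof of Proposition~\ref{smlimits}), which the paper has already established a few paragraphs earlier.
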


This leads immediately to the following corollary.

\begin{cor}\label{epsilonn}
There is a decreasing sequence of constants $\epsilon_n>0$ tending
to zero as $n\rightarrow\infty$ such that for every $n$ and  for any
$x\in M_n$ there is a non-compact Alexandrov ball $B$ of radius $1$,
of curvature $\ge -1$, and of dimension $1$ or $2$, such that
$B_{g'_n(x)}(x,1)$ is within $\epsilon_n$ in the Gromov-Hausdorff
distance of $B$.
\end{cor}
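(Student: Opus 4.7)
This is a standard ``uniformization'' of the preceding lemma via a compactness/contradiction argument. Let $\mathcal{A}$ denote the set (up to isometry) of non-compact Alexandrov balls of radius $1$, of curvature $\ge -1$, and of dimension $1$ or $2$. For each $n$ define
\[
\delta_n \;=\; \sup_{x\in M_n}\,\inf_{B\in\mathcal{A}}\, d_{GH}\bigl(B_{g'_n(x)}(x,1),\,B\bigr).
\]
The plan is first to show $\delta_n\to 0$ as $n\to\infty$, and then to replace $\delta_n$ by a decreasing sequence $\epsilon_n$ with the same limiting behaviour.

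To show $\delta_n\to 0$, I argue by contradiction. Suppose instead that there exist $\epsilon_0>0$ and a subsequence $n_k\to\infty$ with $\delta_{n_k}\ge \epsilon_0$ for all $k$. By the definition of supremum, for each $k$ we may choose a point $x_{n_k}\in M_{n_k}$ such that
\[
\inf_{B\in\mathcal{A}}\, d_{GH}\bigl(B_{g'_{n_k}(x_{n_k})}(x_{n_k},1),\,B\bigr) \;\ge\; \epsilon_0/2.
\]
Now apply the preceding lemma to the sequence $\{x_{n_k}\in M_{n_k}\}$: after passing to a further subsequence, the balls $B_{g'_{n_k}(x_{n_k})}(x_{n_k},1)$ converge in the Gromov-Hausdorff sense to some $B\in\mathcal{A}$. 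In particular, for all $k$ sufficiently large the Gromov-Hausdorff distance from $B_{g'_{n_k}(x_{n_k})}(x_{n_k},1)$ to $B$ is less than $\epsilon_0/2$, contradicting the choice of $x_{n_k}$. Hence $\delta_n\to 0$.

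Finally, to obtain a sequence which is actually decreasing (rather than merely tending to zero), set
\[
\epsilon_n \;=\; \sup_{m\ge n}\delta_m \;+\; \tfrac{1}{n}.
\]
Since $\sup_{m\ge n}\delta_m$ is non-increasing in $n$ and tends to $0$ (because $\delta_n\to 0$), and since $1/n$ is strictly decreasing to $0$, the sequence $\epsilon_n$ is strictly decreasing and satisfies $\epsilon_n\to 0$. By construction $\epsilon_n\ge \delta_n$, so for every $x\in M_n$ there exists $B\in\mathcal{A}$ with $d_{GH}(B_{g'_n(x)}(x,1),B)<\epsilon_n$, which is exactly the conclusion. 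There is no substantial obstacle here: everything is extracted from the previous lemma by a routine $\epsilon$-argument, and the only thing one needs to be slightly careful about is that the sup defining $\delta_n$ may not be attained, which is why we pick $x_{n_k}$ only within $\epsilon_0/2$ of the sup.
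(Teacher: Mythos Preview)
Your proof is correct and is exactly the standard compactness/contradiction argument that the paper has in mind when it says ``This leads immediately to the following corollary.'' The paper gives no further details beyond that sentence, so your write-up simply makes explicit the routine extraction of a uniform $\epsilon_n$ from the subsequential convergence of the preceding lemma; the only cosmetic point is that the lemma is stated for a full sequence $x_n\in M_n$, so when you apply it to the subsequence $\{x_{n_k}\}$ you should (trivially) fill in arbitrary points for the missing indices.
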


\noindent
{\bf Assumption 3: We fix a sequence of $\epsilon_n\rightarrow 0$ as
in the corollary.}

\section{$2$-dimensional Alexandrov spaces}\label{sec2D}

 In order get enough information about the structure of
balls in the $M_n$ limiting (after rescaling) to a $2$-dimensional
Alexandrov space, we need fairly delicate information about
$2$-dimensional Alexandrov balls. We will cover these
$2$-dimensional spaces by four types of neighborhoods -- those near
flat balls in $\Ar^2$, those near flat circular cones on a circle of length $<2\pi$,
those near flat cones in $\Ar^2$ of angle $<\pi$,
and those near flat boundary points. Establishing this is the subject of this section.

\subsection{Basics}

\begin{claim}
A $2$-dimensional local Alexandrov space $X$ is a topological
$2$-manifold, possibly with boundary. The
topological boundary of $X$ is Alexandrov boundary $\partial X$.
\end{claim}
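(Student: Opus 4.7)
The plan is to establish both parts of the claim by analyzing the local structure at each point via its space of directions and tangent cone.

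First, at any $p\in X$ the space of directions $\Sigma_p$ is a $1$-dimensional Alexandrov space, so by the classification already stated it is isometric to either a circle of length at most $2\pi$ or a closed interval of length at most $\pi$. The tangent cone $T_pX=C(\Sigma_p)$ is therefore homeomorphic to $\Ar^2$ in the first case and to the closed half-plane $\Ar^2_+$ in the second. By the inductive definition of Alexandrov boundary, $p\in\partial X$ if and only if $\Sigma_p$ has non-empty boundary, which for a $1$-dimensional Alexandrov space happens precisely when $\Sigma_p$ is an interval.

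Next, I would show that for each $p$ there is $r>0$ such that $B(p,r)$ is homeomorphic to a conical neighborhood of the vertex of $T_pX$. The strategy is to prove that $d(p,\cdot)$ is regular on the punctured ball $B(p,r)\setminus\{p\}$. Given such regularity, I would integrate a continuous selection of ``outward'' directions (a gradient-like flow of the type described in \S12 of \cite{BGP}) to produce a homeomorphism between concentric level sets $S(p,r_1)$ and $S(p,r_2)$ for $0<r_1,r_2<r$. For $r_0$ small, Lemma~\ref{conelimit} and the Gromov--Hausdorff convergence $(\lambda X,p)\to(T_pX,\{p\})$ force the level set $S(p,r_0)$ to be Gromov--Hausdorff close to $\Sigma_p$; regularity of $d(p,\cdot)$ and the $1$-dimensional classification then imply that $S(p,r_0)$ itself is a disjoint union of topological circles and arcs whose total ``angular length'' matches $\Sigma_p$. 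Pushing in along the gradient flow identifies $B(p,r)$ with the cone on this level set, hence with a topological disk when $\Sigma_p$ is a circle and with a half-disk (with the arc boundary mapping into $\partial X$) when $\Sigma_p$ is an interval.

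With the local model established, the equality of the two notions of boundary is immediate: a point $p$ lies on the topological boundary of the $2$-manifold exactly when its neighborhood model is a half-disk, which by the classification of $\Sigma_p$ happens exactly when $\Sigma_p$ is an interval, which is the very definition of $p\in\partial X$.

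The main obstacle is establishing regularity of $d(p,\cdot)$ on a punctured neighborhood of $p$, since $p$ itself need not carry a $(2,\delta)$-strainer when $\Sigma_p$ is a short circle or interval. I would argue this by contradiction and scaling. If $d(p,\cdot)$ were not regular at some sequence $q_n\to p$, then rescaling $X$ about $p$ by $d(p,q_n)^{-2}$ produces, via Lemma~\ref{conelimit}, a convergent sequence $(\lambda_n X,p,q_n)\to(T_pX,o,q_\infty)$ with $q_\infty$ at unit distance from the vertex $o$. By Corollary~\ref{regularopen}, the failure of regularity passes to the limit, so $d(o,\cdot)$ would fail to be regular at $q_\infty$. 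But on the cone $T_pX=C(\Sigma_p)$ the function $d(o,\cdot)$ is nothing but the cone radius, and the outward radial direction $\tau$ at any $q_\infty\neq o$ satisfies $d(o,\cdot)'_{q_\infty}(\tau)=1$ via Lemma~\ref{cosformula}, so $d(o,\cdot)$ is in fact strongly regular everywhere away from $o$. This contradiction gives the required regularity of $d(p,\cdot)$ near $p$ and completes the argument.
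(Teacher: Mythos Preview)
The paper does not give a proof of this claim; it simply refers to \S12.9.3 of \cite{BGP}. Your outline follows the same strategy one finds there: classify $\Sigma_p$ as a circle or an arc, prove regularity of $d(p,\cdot)$ on a punctured ball by a rescaling contradiction via Lemma~\ref{conelimit} and Corollary~\ref{regularopen}, and then use the resulting local fibration to identify a small ball with the cone on $\Sigma_p$.

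One step deserves more care. You write that Gromov--Hausdorff closeness of $S(p,r_0)$ to $\Sigma_p$, together with the $1$-dimensional classification, determines $S(p,r_0)$ topologically. Regularity of $d(p,\cdot)$ does make $S(p,r_0)$ a compact $1$-manifold, hence a finite disjoint union of circles and arcs, but GH-closeness to a single circle or arc does not by itself exclude several components sitting close together; and the cone on a disconnected $1$-manifold is not a disk or half-disk. You need a separate argument that $S(p,r_0)$ is connected. One clean route: if $B(p,r)\setminus\{p\}$ had two components $U_1,U_2$, pick geodesics $\gamma_i$ from $p$ into $U_i$; the angle at $p$ between them must equal $\pi$, since otherwise for small $s$ one has $d(\gamma_1(s),\gamma_2(s))<2s$ and the geodesic joining $\gamma_1(s)$ to $\gamma_2(s)$ avoids $p$, connecting $U_1$ to $U_2$. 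Thus every direction into $U_1$ lies at distance $\pi$ in $\Sigma_p$ from every direction into $U_2$, which is impossible when $\Sigma_p$ is a circle of length $<2\pi$ or an arc of length $<\pi$; in the remaining flat cases $p$ already lies in $R_\delta(X)$ and Proposition~\ref{goodregion} supplies a Euclidean (or half-plane) chart directly. With this amendment your argument goes through.
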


For a proof, see \S 12.9.3 of \cite{BGP}.

 Let $X$ be a $2$-dimensional local Alexandrov
space. We define the {\em cone angle} at any point $p\in X$ to be
the total length of  $\Sigma_p$. It follows from the Alexandrov
space axioms that if $p\in {\rm int}\, X$ then the cone angle at $p$
is at most $2\pi$  and the tangent cone is a flat circular cone of
this cone angle. If $p\in
\partial X$, then the cone angle  at $p$ is at most $\pi$, and the tangent cone is
a subcone of $\Ar^2$ of this cone angle.

\begin{lem}\label{decconeangle}
Suppose that $(X_n,x_n)$ is a sequence of $2$-dimensional
Alexandrov balls converging to a $2$-dimensional local Alexandrov
ball $(X,x)$. Suppose that sequence $y_n\in  X_n$ converges to
$y\in X$. Then: \begin{enumerate}
\item If $y_n\in
\partial X_n$ for all $n$ then $y\in
\partial X$.
\item
 Conversely, if $y\in \partial X$, then there is a sequence
$y_n\in \partial X_n$ converging to $y$. \end{enumerate}
\end{lem}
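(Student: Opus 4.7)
The plan is to establish both parts through diagonal Gromov--Hausdorff extractions that compare the tangent cones $T_{y_n}X_n$ with $T_yX$, exploiting the dichotomy in dimension $2$: a point $p\in X$ lies in $\partial X$ iff $\Sigma_p$ is an interval (equivalently, $T_pX$ is a $2$-dimensional half-cone in $\Ar^2$), and $p\in{\rm int}\,X$ iff $\Sigma_p$ is a circle (equivalently, $T_pX$ is a full circular cone). Lemma~\ref{conelimit} gives $(\lambda_nX,p)\to (T_pX,\{p\})$ in pointed GH whenever $\lambda_n\to\infty$, and this is the basic convergence statement I will use.

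For part (1), suppose $y_n\in\partial X_n$ and $y_n\to y$, so each $T_{y_n}X_n$ is a half-cone of some opening angle $\theta_n\in(0,\pi]$. Using Lemma~\ref{conelimit}, I would choose for each $n$ a scale $\mu_n$ large enough that $(\mu_nX_n,y_n)$ is within GH distance $1/n$ of $(T_{y_n}X_n,y_n)$ on the ball of radius $n$; a standard diagonal extraction, combined with the hypothesis $(X_n,y_n)\to (X,y)$, then allows me to arrange simultaneously that $(\mu_nX_n,y_n)\to (T_yX,y)$. Hence, after passing to a subsequence, the half-cones $T_{y_n}X_n$ themselves converge in pointed GH to $T_yX$. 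Because the tangent cone of a $2$-dim Alexandrov space is itself $2$-dim, the limit does not drop dimension; and because a $1$-dim interval cannot converge in GH to a $1$-dim circle under non-degenerate limits (the two endpoints of an interval, characterised by having a singleton space of directions, persist in the limit), $\Sigma_y$ is an interval, and therefore $y\in\partial X$.

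For part (2), I argue by contradiction. If $y\in\partial X$ but no sequence $y_n\in\partial X_n$ converges to $y$, then after passing to a subsequence there exists $\epsilon>0$ and points $y_n\to y$ with $B(y_n,\epsilon)\subset{\rm int}\,X_n$ for every $n$, so $T_{y_n}X_n$ is a full circular cone on a circle of length $\theta_n\in(0,2\pi]$. Applying exactly the same diagonal extraction, $T_yX$ emerges as the pointed GH limit of these full cones; non-degenerate GH limits of circles are circles (every point of the limit has at least two directions), and dimension $2$ rules out the degenerate $1$-dim limit, so $T_yX$ is itself a full circular cone. This contradicts $y\in\partial X$.

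The main obstacle is the diagonal extraction itself: the scales $\mu_n\to\infty$ must grow slowly enough that $(\mu_nX_n,y_n)\to (T_yX,y)$ continues to hold, yet fast enough that $(\mu_nX_n,y_n)$ resembles $T_{y_n}X_n$ on a ball whose radius grows with $n$. The supplementary input is the rigidity statement that intervals and circles form disjoint classes of non-degenerate pointed GH limits of $1$-dim Alexandrov spaces, which is read off the spaces of directions and is standard from Section~7 of \cite{BGP}. Once these are in place the argument is routine.
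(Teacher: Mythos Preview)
Your diagonal extraction is the place where the argument breaks, and you yourself flag it as ``the main obstacle'' without actually overcoming it. The two requirements you impose on $\mu_n$ pull in opposite directions: to make $(\mu_n X_n,y_n)$ close to $(T_{y_n}X_n,0)$ on a ball of radius $n$ you must take $\mu_n$ very large, with a lower bound $M_n$ that depends on how slowly $(\lambda X_n,y_n)$ approaches its own tangent cone; to make $(\mu_n X_n,y_n)$ close to $(T_yX,0)$ via $(\mu_n X,y)$ you need $\mu_n\cdot d_{GH}\bigl((X_n,y_n),(X,y)\bigr)\to 0$, which bounds $\mu_n$ from above. There is no a priori relation between $M_n$ and the rate of convergence $X_n\to X$, so these constraints can be incompatible. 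More fundamentally, tangent cones are \emph{not} continuous under Gromov--Hausdorff convergence: already in a fixed $2$-dimensional Alexandrov space $X$, if $y_n\to y$ along the boundary with $y$ a genuine corner, the $T_{y_n}X$ can all be half-planes while $T_yX$ is a strictly smaller sector. Thus the conclusion ``the half-cones $T_{y_n}X_n$ themselves converge in pointed GH to $T_yX$'' simply does not follow, and with it both halves of your argument collapse.

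The paper proceeds very differently and avoids tangent-cone continuity altogether. For Part~1 it first rescales so that the limit is a flat circular cone, then observes that the boundary component through $y_n$ must shrink to the cone point; it encloses this component by an approximate metric circle, passes to the infinite cyclic cover that unwinds it, and derives a contradiction with Bishop--Gromov: the deck transformation moves a base point by a distance $d_n\to 0$, yet preimages of a fixed regular ball stay a definite distance apart, forcing unbounded volume in a ball of fixed radius. For Part~2 the paper uses regularity of the distance function from $y$ on a small annulus: the level sets $f_n^{-1}(3r/4)$ are $1$-manifolds with boundary in $\partial X_n$, hence (under the contradiction hypothesis $d(y_n,\partial X_n)\ge r$) closed circles, yet they converge to the corresponding level set in $X$, which is an arc. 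Both arguments use invariants (volume growth, level-set topology) that are robust under GH limits, which is precisely what the tangent-cone approach lacks.
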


\begin{proof}
Let us suppose that $y_n\in \partial X_n$ for all $n$ and show that
$y\in \partial X$. Suppose to the contrary that $y\in {\rm int}\,
X$. Let $d_n$ be the Gromov-Hausdorff distance from $(X_n,y_n)$ to
$(X,y)$. Choose constants $\lambda_n\rightarrow\infty$ such that
$\lambda_nd_n\rightarrow 0$. Then the Gromov-Hausdorff distance from
$(\lambda_nX_n,y_n)$ to $(\lambda_nX,y)$ goes to zero and the
$(\lambda_nX,y)$ converge to the tangent cone to $X$ at $y$. This
allows us to assume that the $(X_n,y_n)$ converge to a flat circular
cone $C$ with cone point $y$.  For any compact subset $K\subset C\setminus \{y\}$, for
any sequence $z_n\in X_n$ converging to a point in $K$, and for any
$\delta>0$, for all $n$ sufficiently large, there is a $(2,\delta)$
strainer at $z_n$. Hence, $z_n\in {\rm int}\, X_n$. In particular,
the boundary component of $X_n$ containing $y_n$ converges to the
cone point $y$, and thus  the diameter of the boundary
component converges to $0$ as $n\rightarrow \infty$. We take a
piecewise geodesic approximation $\gamma_1,\ldots,\gamma_k$ to the
metric sphere $S(y,1)$, with the endpoints of $\gamma_i$ being $z_i$
and $z_{i+1}$ in $S(y,1)$. Let $z_{n,i}$ be points in $X_n$
converging to $z_i$ and let $\gamma_{n,i}$ be a geodesic with
endpoints $z_{n,i}$ and $z_{n,i+1}$. Then for all $n$ sufficiently
large, the union of the $\gamma_{n,i}$ is a simple closed curve in
${\rm int}\, X_n$ separating $X_n$ into two pieces, one of which,
$X_n'$, contains $y_n$ and hence the entire component of $\partial
X_n$ containing $y_n$. This component has convex boundary and hence
is  a compact $2$-dimensional Alexandrov space $X'_n$. It has at least two boundary components -- the
boundary component $\partial _0$ of $X_n$ containing $y_n$ and the component $\partial _1$
that is the union of the $\gamma_{n,i}$. There is an infinite cyclic
$\widetilde X'_n$ covering of $X'_n$ that unwraps both $\partial _0$ and $\partial _1$.
We give $\widetilde X'_n$ the length space
metric with the property that the projection mapping to $X'_n$ is a
local isometry. This metric makes the total space of the infinite cyclic covering
a complete local Alexandrov space and hence by Theorem 3.2 of\cite{BGP}, an Alexandrov space.
Since the diameter of the boundary component of
$X'_n$ containing $y_n$ is going to zero as $n\rightarrow \infty$,
the generator of the covering group of $\widetilde X'_n$ moves any
point in the preimage of $\partial_0$ a distance $d_n$ that
goes to zero as $n\rightarrow \infty$. Fix a
metric ball $B$ in the $\delta$-regular subset of the cone which is a
topological ball. For all $n$ sufficiently large, there are metric balls
$B_n\subset X'_n$ converging to $B$. For $n$ sufficiently large the $B_n$ are topological balls.
The area of $B_n$ is bounded away from $0$ as
$n\rightarrow \infty$ and the distance from $B_n$ to $y_n$ is also
bounded, say  by $\epsilon$, as $n\rightarrow\infty$. The preimage of
$B_n$ in the covering $\widetilde X'_n$ is a disjoint union of balls
freely permuted by the infinite cyclic covering group. Fix a point $\tilde y_n\in
\widetilde X'_n$ covering $y_n$. Given any $N$, there are distinct
pre-images of $y_n$ all within distance $Nd_n$ of $\tilde y_n$ and
hence $Nd_n$ distinct preimages of $B_n$ within $\epsilon+Nd_n$ of
$\tilde y_n$. Thus, for every $n$ there are $[\epsilon/d_n]$
distinct preimages of $B_n$ within $2\epsilon$ of $\tilde y_n$.
Since the areas of these preimages are bounded away from zero and
the $d_n\rightarrow 0$ as $n\rightarrow \infty$, and since the
$\widetilde X'_n$ all have curvature $\ge -1$, for $n$ sufficiently
large this violates the Bishop-Gromov inequality. This proves that
$y\in \partial X$.

Conversely, suppose that $y\in \partial X$ and let $d_n$ be the
distance from $y_n$ to $\partial X_n$. (We interpret $d_n=\infty$ if
$\partial X_n=\emptyset$.) We suppose that the $d_n$ are bounded
away from $0$. Then by Lemma~\ref{conelimit} there is $0<r<d_n$ for
every $n$ such that the distance function from $y$ is $a$-strongly
regular on $B(y,r)\setminus \{y\}$ and the corresponding level sets
are arcs. Consequently, for  all $n$ sufficiently large, the
distance function $f_n$ from $y_n$ is $a$-strongly regular on
$B(y_n,r)\setminus B(y_n,r/2)$. This implies that this difference is
a topological product of the level set $f_n^{-1}(3r/4)$ with an
interval. Furthermore, furthermore $f_n^{-1}(3r/4)$ is a
$1$-manifold with boundary in $\partial X_n$. Because $d_n>r$, this
level set is in fact unions of circles, and since $B(y,r)\setminus
B(y,r/2)$ is connected, it follows that $f_n^{-1}(3r/4)$ is a single
circle. But this is a contradiction. On the one hand, the
$f_n^{-1}(3r/4)$ are converging in the Gromov-Hausdorff sense to the
corresponding level set of the distance function from $y$ which is
an arc, and on the other hand, a  circle is not close in the
Gromov-Hausdorff distance to an arc. This contradiction proves that
$d_n\rightarrow 0$ as $n\rightarrow\infty$. Replacing $y_n$ by a
nearest point on the boundary of $\partial X_n$ gives us a sequence
of boundary points converging to $y$.
\end{proof}

\begin{cor}\label{disttobdry}
Suppose that $X_n$ are $2$-dimensional local Alexandrov spaces
converging to a $2$-dimensional local Alexandrov space $X$. Suppose
that $x_n\in X_n$ converge to $x\in X$. Let $d_n$ be the distance
from $x_n$ to $\partial X_n$ and let $d$ be the distance from $x$ to
$\partial X$. Then $d={\rm lim}_{n\rightarrow\infty}d_n$.
\end{cor}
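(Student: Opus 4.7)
The plan is to prove the two inequalities $\limsup_{n\to\infty} d_n \le d$ and $\liminf_{n\to\infty} d_n \ge d$ separately, using the two halves of Lemma~\ref{decconeangle} as the essential inputs. Throughout I use the convention $d = +\infty$ when $\partial X = \emptyset$, and likewise $d_n = +\infty$ when $\partial X_n = \emptyset$, consistently with the proof of that lemma.

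For the upper bound, if $d = +\infty$ there is nothing to show. Otherwise I pick $y \in \partial X$ realizing $d(x,y) = d$. Part~(2) of Lemma~\ref{decconeangle} supplies a sequence $y_n \in \partial X_n$ with $y_n \to y$, and then
\[
d_n \;\le\; d(x_n, y_n) \;\longrightarrow\; d(x, y) \;=\; d,
\]
since Gromov-Hausdorff convergence preserves distances between convergent sequences. Hence $\limsup d_n \le d$.

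For the lower bound I argue by contradiction: suppose along a subsequence (relabelled $n$) we have $d_n \to d' < d$. Then $d_n$ is eventually finite, so for each large $n$ we can choose $y_n \in \partial X_n$ with $d(x_n, y_n) \le d_n + 1/n$; the points $y_n$ then lie in a ball of radius $d' + 2$ about $x_n$ once $n$ is large. Since local Alexandrov spaces are locally compact and the ball convergence of Proposition~\ref{ballscon} gives a limiting Alexandrov ball of radius $d' + 2$ about $x$ in $X$, a further subsequence of the $y_n$ converges to a point $y \in X$ with $d(x, y) = d'$. Part~(1) of Lemma~\ref{decconeangle} then forces $y \in \partial X$, so $d \le d(x, y) = d' < d$, a contradiction. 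Thus $\liminf d_n \ge d$.

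Combining the two inequalities yields $\lim_{n\to\infty} d_n = d$. The only mildly delicate point is the extraction of $y$ in the second paragraph, since closed bounded sets in a general local Alexandrov space need not be compact; but because we already know the pointed balls converge in Gromov-Hausdorff sense, this extraction is immediate once one works inside a fixed ambient ball of radius $d' + 2$ where the $X_n$ and $X$ live simultaneously. Everything else is essentially a routine translation of the statements of Lemma~\ref{decconeangle}.
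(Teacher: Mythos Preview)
Your proof is correct and is exactly the argument the paper has in mind: the corollary is stated without proof immediately after Lemma~\ref{decconeangle}, and the two halves of that lemma give precisely the two inequalities you prove. One cosmetic point: rather than assuming a nearest boundary point $y\in\partial X$ exists, it is cleaner to take $y$ with $d(x,y)<d+\epsilon$ and let $\epsilon\to 0$, but this changes nothing substantive.
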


\begin{defn}
 We say that a local Alexandrov space $(X,g)$ of dimension $1$ or $2$
 is a {\em standard ball} if:
\begin{enumerate}
\item there is $x\in X$ such that $X=B_g(x,R)$ for some $0<R\le 1$
is an Alexandrov ball of radius  $R$,
\item the curvature of $X$ is bounded below by $-1$, and
\item $X$ is non-compact, so that in particular, $X$ contains points at any distance $<R$
from $x$.
\end{enumerate}
\end{defn}

\subsection{The Interior}

We approximate interior points by cones, including flat cones.

\begin{defn}
Fix $\mu>0$. Let $(X,g)$ be an $2$-dimensional local Alexandrov
space. Then $X$ is {\em interior $\mu$-good at a point $y\in {\rm
int}\, X$ on scale $r$ and of angle $\alpha$} if $B_{r^{-2}g}(y,1)$
is a standard ball and is within $\mu$ in the Gromov-Hausdorff
distance of the circular cone of
 cone angle $\alpha$. We say that $X$ is {\em interior
$\mu$-flat at $y$ on scale $s$} if $B_{s^{-2}g}(y,1)$ is a standard
ball and is within $\mu$ in the Gromov-Hausdorff distance of the
unit ball in $\Ar^2$. We say that $X$ is {\em flat at $y$ on scale
$s$} if for every $\mu>0$ is $\mu$-flat at $y$ on scale $s$.
\end{defn}

The first thing to notice is that being interior flat at one scale
implies interior flatness at all smaller scales.

\begin{lem}\label{flatinherits}
Given $\mu>0$ there is $\nu>0$ such that the following holds. If
$X=B(x,1)$ is interior $\nu$-flat at $x$ on scale $1$, then for any
$0<s\le 1$, the ball  $X$ is interior $\mu$-flat at $x$ on
scale $s$.
\end{lem}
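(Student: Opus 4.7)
I would argue by contradiction: suppose for some $\mu_0 > 0$ there is a sequence $\nu_n \downarrow 0$ and standard balls $(X_n, g_n) = (B_{g_n}(x_n, 1), x_n)$ each within $\nu_n$ of the unit Euclidean disk $D \subset \Ar^2$ in Gromov-Hausdorff distance, together with scales $s_n \in (0, 1]$ for which $Y_n := (B_{s_n^{-2} g_n}(x_n, 1), x_n)$ satisfies $d_{GH}(Y_n, D) \ge \mu_0$. Passing to a subsequence we may assume either $s_n \to s_\infty > 0$ or $s_n \to 0$. In the first case, $(X_n, x_n) \to (D, 0)$ forces $B_{g_n}(x_n, s_n) \to B_D(0, s_\infty)$, and since rescaling by the convergent factor $s_n^{-2} \to s_\infty^{-2}$ commutes with Gromov-Hausdorff convergence of balls of bounded diameter, $Y_n \to s_\infty^{-2} B_D(0, s_\infty) = D$, a contradiction.

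So we may assume $s_n \to 0$. Observe that rescaling an Alexandrov ball by $s_n^{-2}$ scales Gromov-Hausdorff distances by $s_n^{-1}$, so the sub-ball argument already gives $d_{GH}(Y_n, D) \le O(\nu_n/s_n)$; passing to a further subsequence we may therefore assume $s_n < \sqrt{\nu_n}$ (otherwise $\nu_n/s_n \to 0$ yields an immediate contradiction). Now the rescaled metrics $s_n^{-2} g_n$ have curvature $\ge -s_n^2 \to 0$, so by the compactness result (Proposition~\ref{ballscon}) a subsequence of $Y_n$ converges to an Alexandrov ball $Y$ of radius $1$, curvature $\ge 0$, and dimension $\le 2$.

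To identify $Y$ I insert an intermediate scale $r_n := \sqrt{\nu_n}$, for which $s_n < r_n \to 0$ and $\nu_n/r_n \to 0$. The same sub-ball argument at scale $r_n$ shows $B_{r_n^{-2} g_n}(x_n, 1)$ converges to $D$, so $\mathrm{vol}_{g_n}(B_{g_n}(x_n, r_n)) = \pi r_n^2 (1 + o(1))$. Bishop-Gromov monotonicity on $(X_n, g_n)$ (curvature $\ge -1$) from $r_n$ down to $s_n$ then gives
$$
\mathrm{vol}_{g_n}(B_{g_n}(x_n, s_n)) \ge V_{-1}(s_n) \cdot \frac{\pi r_n^2 (1 + o(1))}{V_{-1}(r_n)} = V_{-1}(s_n)(1 - o(1)),
$$
using $V_{-1}(r) = \pi r^2 (1 + O(r^2))$. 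Rescaling by $s_n^{-2}$ (with $s_n^{-2} V_{-1}(s_n) \to \pi$) yields $\mathrm{vol}(Y_n) \ge \pi(1 - o(1))$, so lower semicontinuity of volume gives $\mathrm{vol}(Y) \ge \pi$, forcing $\dim Y = 2$. On the other hand, Bishop-Gromov inside $Y$ (nonnegative curvature) gives $\mathrm{vol}(Y) \le \pi$. Hence $\mathrm{vol}(Y) = \pi$, and the Bishop-Gromov rigidity theorem for Alexandrov spaces of nonnegative curvature -- equality in the volume comparison at all scales forces a metric cone structure with density one at the cone point, hence isometry to the Euclidean disk -- identifies $Y = D$, contradicting $d_{GH}(Y_n, D) \ge \mu_0$. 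The main obstacle is this final rigidity step, which I would invoke from \cite{BGP}; the interplay between the fixed bound $\ge -1$ at scale $g_n$ and the vanishing rescaled bound $\ge -s_n^2$ at scale $s_n^{-2} g_n$ is what drives the argument.
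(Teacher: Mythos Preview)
Your argument is correct, but the paper takes a more direct route in the $s_n \to 0$ case. Rather than tracking volume, the paper observes that the $\nu_n$-closeness of $X_n$ to the Euclidean disk yields a $(2,\delta_n)$-strainer at $x_n$ of fixed size with $\delta_n \to 0$; after rescaling by $s_n^{-1}$ this strainer has size $\to \infty$, so any pointed limit of the rescaled spaces is a complete Alexandrov space of curvature $\ge 0$ and dimension $\le 2$ containing two orthogonal geodesic lines, hence is $\Ar^2$ by the splitting result (Corollary~\ref{prod}). This uses only tools already set up in the paper. Your approach via Bishop--Gromov and the intermediate scale $r_n = \sqrt{\nu_n}$ is a nice way to push the volume information down to scale $s_n$, but it trades the splitting theorem for two facts that are not in \cite{BGP}: continuity of Hausdorff measure under non-collapsed Gromov--Hausdorff convergence of Alexandrov spaces (your ``lower semicontinuity'' is really this, after the volume lower bound rules out collapse), and the volume-rigidity statement that a $2$-dimensional Alexandrov unit ball of curvature $\ge 0$ and area exactly $\pi$ must be the Euclidean disk. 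Both are true but require separate justification, so the paper's strainer argument is the cleaner one here.
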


\begin{proof}
Suppose that the result does not hold for some $\mu$. Then there is
 a sequence $\nu_n$ tending to $0$ as $n\rightarrow \infty$ and
 be a $\nu_n$ counter-example, $X_n=B(x_n,1)$, at scale $s_n$ with
$0<s_n\le 1$. Passing to a subsequence we can suppose that the
$X_n$ converge to $X=B(\bar x,1)$ and that the $s_n$ converge to
$0\le s\le 1$. Since the $\nu_n\rightarrow 0$, the limit $X$ is isometric to the unit
ball in $\Ar ^2$. Clearly, then $X$ is $\mu$-flat at $\bar
y$ on any scale $s'$ with $0<s'\le 1$. Thus, if $s\not= 0$,
rescaling the $X_n$ by $1/s_n$ we obtain a contradiction.
Suppose now that $s=0$. Now we rescale $(X_n,x_n)$ by $1/s_n$,
and pass to a subsequence that has a limit. After rescaling there is
a $(2,\delta_n)$-strainer at $x_n$ of size $1/s_n$, where $\delta_n$
depends only on $\nu_n$ and goes to zero as it does. It follows
from Corollary~\ref{prod} that the resulting limit is $\Ar^2$. This
leads immediately to a contradiction as before.
\end{proof}

Next, we see that interior good at a point implies interior flat in
a nearby annular region where the constants depend on the area.

\begin{prop}\label{intcone}
Given $\mu>0$ and $a>0$ then for all $\mu'>0$ sufficiently small
there is $s_0>0$, depending only on $a$,  such that the following
holds. Suppose that $X=B(x,1)$ is interior $\mu'$-good at $x$ on
scale $1$, and suppose that the area of $X$ is $\ge a$. Then $X$ is
interior $\mu$-flat at every point $y\in B(x,7/8)\setminus B(x,1/8)$
on all scales $\le s_0$. Furthermore, for every $b\in (1/8,7/8)$ the
metric sphere $S(x,b)$ is a simple closed curve. (See Figure 1.)
\end{prop}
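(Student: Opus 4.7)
The plan is a Gromov--Hausdorff compactness argument exploiting the fact that a flat circular cone is locally Euclidean away from its vertex, combined with the bilipschitz-to-Euclidean result (Proposition~\ref{goodregion}) to transfer regularity to the nearby Alexandrov spaces. The area hypothesis is used only to bound the cone angle away from zero, so that any point of the annular region is safely away from the vertex in the limit.

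For the interior $\mu$-flatness statement I argue by contradiction. Suppose it fails: for some $\mu,a>0$ there exist $\mu_n'\to 0$, standard balls $X_n=B(x_n,1)$ each interior $\mu_n'$-good at $x_n$ of cone angle $\alpha_n$ with $\mathrm{area}(X_n)\ge a$, points $y_n\in B(x_n,7/8)\setminus B(x_n,1/8)$, and scales $\sigma_n\le 1/16$ for which $X_n$ is not interior $\mu$-flat at $y_n$ on scale $\sigma_n$. After a subsequence $\alpha_n\to\alpha$; since the area of the unit ball of the flat cone $C_\alpha$ equals $\alpha/2$, one has $\alpha\ge 2a>0$, and $(X_n,x_n)\to (C_\alpha,v)$ in the pointed GH topology, with $y_n\to y\in C_\alpha$ satisfying $d(v,y)\in[1/8,7/8]$. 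The ball $B_{C_\alpha}(y,1/8)$ is then isometric to a flat Euclidean disk, so $y$ admits a $(2,\delta)$-strainer of size $1/32$ for every $\delta>0$. GH convergence transfers such strainers to $y_n$ with strainer-error $\delta_n\to 0$, and Proposition~\ref{goodregion} produces, for every $\epsilon>0$, a $(1+\epsilon)$-bilipschitz homeomorphism from a neighborhood of $y_n$ of a uniform radius $r_0>0$ (depending only on $1/32$ and $\epsilon$) into $\Ar^2$ once $n$ is large. Setting $s_0=\min(1/16,r_0)$, which depends only on $a$, the bilipschitz control is scale invariant, so rescaling by $\sigma^{-2}$ for any $\sigma\le s_0$ yields a $(1+\epsilon)$-bilipschitz map of $B_{\sigma^{-2}g_n}(y_n,1)$ into an open subset of $\Ar^2$ containing the unit Euclidean ball. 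Thus $B_{\sigma^{-2}g_n}(y_n,1)$ is $O(\epsilon)$-close in GH to the unit Euclidean ball uniformly in $\sigma\in(0,s_0]$, contradicting the failure of $\mu$-flatness once $\epsilon<\mu$.

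For the metric sphere statement, observe that on $C_\alpha$ the function $d(v,\cdot)$ is $1$-strongly regular at every non-vertex point (its radial derivative equals $1$). By Corollary~\ref{regularopen}, for any small $\eta>0$ the function $f_n=d(x_n,\cdot)$ is $(1-\eta)$-strongly regular throughout $B(x_n,7/8)\setminus B(x_n,1/8)$ for all $n$ large. Combined with the interior flatness at small scales just established and Lemma~\ref{smreg}-style local-product consequences of a strongly regular distance function (produced in this $2$-dimensional Alexandrov setting by the bilipschitz charts from Proposition~\ref{goodregion}), the gradient-like flow of $f_n$ on the annular region yields a global topological product decomposition $B(x_n,7/8)\setminus B(x_n,1/8)\cong \Sigma_n\times (1/8,7/8)$ in which the slices are precisely the level sets of $f_n$. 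The annular region converges in GH to the flat product annulus $\{w\in C_\alpha:d(v,w)\in[1/8,7/8]\}\cong S^1_\alpha\times[1/8,7/8]$, where $S^1_\alpha$ is a round circle of length $\alpha$. Hence the compact boundaryless $1$-manifold $\Sigma_n$ is forced to be GH-close to a single circle, so $\Sigma_n\cong S^1$ for all $n$ large, and $S(x_n,b)$ is a simple closed curve for every $b\in(1/8,7/8)$.

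The main obstacle is the global topological step in the second conclusion: one must upgrade the pointwise strong regularity of $f_n$ (together with the pointwise local flatness from the first conclusion) into a \emph{single global} product decomposition of the annulus, and then use GH-closeness to a single circle to rule out any spurious extra component of the generic fiber. The first conclusion reduces cleanly to a bilipschitz question once the area hypothesis places the limit point $y$ a definite distance from the cone vertex, and the conclusion is uniform in $\sigma\in(0,s_0]$ precisely because bilipschitz constants are scale invariant.
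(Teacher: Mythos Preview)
Your overall strategy matches the paper's---a compactness argument reducing to the flat cone, plus strong regularity of the distance function for the level-set statement---but there is a genuine error in your first paragraph that breaks the argument as written.

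You claim that for $y\in C_\alpha$ with $d(v,y)\in[1/8,7/8]$ the ball $B_{C_\alpha}(y,1/8)$ is isometric to a Euclidean disk, and hence that $y$ carries a $(2,\delta)$-strainer of size $1/32$. This is false when the cone angle is small: in a flat circular cone of angle $\alpha<\pi$, the injectivity radius at a point at distance $d$ from the apex is only $d\sin(\alpha/2)$, so for $d$ near $1/8$ the largest Euclidean ball about $y$ has radius roughly $\tfrac18\sin(\alpha/2)$, not $\tfrac18$. Since the area hypothesis only gives $\alpha\ge 2a$ and $a$ may be arbitrarily small, the strainer size cannot be a universal constant like $1/32$; it must depend on $a$. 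This is precisely where the $a$-dependence of $s_0$ enters, and your sentence ``Setting $s_0=\min(1/16,r_0)$, which depends only on $a$'' is unjustified by your own derivation (you said $r_0$ depends on $1/32$ and $\epsilon$, neither of which involves $a$). The fix is easy: replace $1/8$ by $r_1(a)=\tfrac18\sin(\min(a,\pi/2))$, obtain a strainer of size $r_1(a)/4$, and then Proposition~\ref{goodregion} gives $r_0$ depending only on this strainer size---hence only on $a$, as required. Note also that by the wording of Proposition~\ref{goodregion} the radius $r_0$ depends only on the strainer \emph{size}, not on $\delta$ or $\epsilon$, so no hidden $\mu$-dependence enters.

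Once this is repaired, your route through Proposition~\ref{goodregion} and the scale-invariance of bilipschitz control is a legitimate alternative to the paper's use of Lemma~\ref{flatinherits}; the two are close in spirit, since the proof of Lemma~\ref{flatinherits} itself goes through a strainer argument. For the second statement the paper simply cites \S 11 of \cite{BGP} to conclude that level sets of a strongly regular distance function on a $2$-dimensional Alexandrov space are simple closed curves. Your more hands-on product-decomposition argument is correct in outline, though the connectedness step ``GH-close to a single circle forces $\Sigma_n\cong S^1$'' deserves one more sentence: use the interior flatness from the first part to see that distinct components of $\Sigma_n$ would lie at a definite positive distance from one another, which is incompatible with GH-closeness to a single connected circle of length $\ge \alpha b\ge a/4$.
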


\begin{proof}
Given $\mu>0$, let $\nu>0$ be as in Lemma~\ref{flatinherits}.
Without loss of generality we can assume that $a\le \pi$. Then for
any $a\le a'\le \pi$ there is a unique flat circular cone $C$ such
that the unit ball about the cone point $p$ has area $a'$. Since
every point of $C\setminus \{p\}$ is interior flat, there is
$s_0>0$, depending only on $a$, be such that every $y$ in the
closure of if the annular sub-region $B(p,7/8)\setminus B(p,1/8)$ of
$C$ is interior flat on all scales $\le s_0$. The first statement
follows immediately by taking limits and using
Lemma~\ref{flatinherits}.

Since $d(p,\cdot)$ is strongly regular on the
annular region $B(p,7/8\setminus B(p,1/8)$ in $C$, for any
$\delta>0$, provided that $\mu'$ sufficiently small, the distance
$d(x,\cdot)$ is $(1-\delta)$-strongly regular on $B(x,7/8)\setminus
B(x,1/8)$. It then follows from  \S 11 of \cite{BGP} that $S(x,b)$
is a simple closed curve provided that $\mu'$ is sufficiently small.
\end{proof}

\begin{defn}
If $B(x,7/8)\setminus B(x,1/8)$ satisfies the statement in the above
proposition then we say that it is a {\em $(\mu,s_0)$-good annular
region.}
\end{defn}

\subsection{The boundary}

 We turn to the analogues for the boundary of interior flatness and interior
goodness.

\begin{defn}
Fix  $\mu>0$. Let $(X,g)$ be a $2$-dimensional Alexandrov space and
let $y\in \partial X$. We say that $X$ is {\em boundary $\mu$-good
on scale $r$ at  $y$ of angle $\alpha$} if the rescaled ball
$B_{r^{-2}g}(y,1)$ is a standard ball and is within $\mu$ in the
Gromov-Hausdorff distance of the unit ball with center the cone
point in a flat $2$-dimensional cone in $\Ar^2$ of cone angle
$\alpha$. We say that $X$ is {\em boundary $\mu$-good at $x$ on
scale $r$}, if it is boundary $\mu$-good on scale $r$ at $x$ of some
angle $\alpha$. We say that $X$ is {\em boundary $\mu$-flat at
$y\in\partial X$} on scale $s$ if $B_{s^{-2}g}(y,1)$ is within $\mu$
in the Gromov-Hausdorff distance to the unit ball centered at a
boundary point of $\Ar\times [0,\infty)$, and we say that $X$ is
{\em boundary flat on scale $s$ at $y$} if it is boundary $\mu$-flat
at $y$ on scale $s$ for every $\mu>0$.
\end{defn}

The next observation is that boundary flatness at a point at one
scale implies boundary flatness at that point at all smaller scales
and interior flatness at nearby points.

\begin{lem}\label{bdryflatinh}
Given $\mu>0$ for all $\nu'>0$ sufficiently small the following
holds. If $X=B(x,1)$ is boundary $\nu'$-flat at $x$ on scale $1$,
then for any $0<s\le 1$, the ball $X$ is boundary $\mu$-flat at $x$
on all scales $s$, $0<s\le 1$. Also, for any $z\in {\rm int}\, X\cap
B(x,1/2)$ the ball $X$ is interior $\mu$-flat at $z$ on scales $\le
d/2$ where $d$ is the distance from $z$ to $\partial X$.
\end{lem}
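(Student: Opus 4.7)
The plan is to argue by compactness and contradiction in the spirit of the proof of Lemma~\ref{flatinherits}, now exploiting the fact that the half-plane $\Ar\times[0,\infty)$ is scale-invariant at its boundary points.

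\textbf{Part (a).}  Suppose the statement fails for some $\mu>0$; then there exist $\nu_n\to 0$, standard balls $X_n=B(x_n,1)$ that are boundary $\nu_n$-flat at $x_n$ on scale $1$, and scales $s_n\in(0,1]$ on which $X_n$ is not boundary $\mu$-flat.  After passing to a subsequence $(X_n,x_n)\to(X,\bar x)$ where $X$ is the unit ball at a boundary point of the half-plane, and $s_n\to s\in[0,1]$.  If $s>0$, scale-invariance of the half-plane at $\bar x$ shows $(s_n^{-2}X_n,x_n)\to(s^{-2}X,\bar x)$ is again a half-plane unit ball, giving boundary $\mu$-flatness on scale $s_n$ for large $n$, a contradiction.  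For the remaining case $s=0$, rescale by $1/s_n$ and use Proposition~\ref{ballscon} together with the curvature bound $-s_n^2\to 0$ to extract a pointed Gromov--Hausdorff limit $(Y,\tilde x)$, a complete Alexandrov space of dimension $\le 2$ and curvature $\ge 0$.  Choosing boundary points $a_n,b_n\in\partial X_n$ converging to $a,b\in\partial X$ equidistant from $\bar x$ on the boundary geodesic of the half-plane, one has $\tilde\angle a\bar xb=\pi$; monotonicity of comparison angles under rescaling and the limit of the corresponding minimizing geodesics produce a geodesic line $L$ through $\tilde x$ in $Y$, and Lemma~\ref{decconeangle} places $\tilde x\in\partial Y$ with $L\subset\partial Y$.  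The inward normal geodesic from $x_n$ (of original length $\to 1$) rescales to an infinite ray from $\tilde x$ orthogonal to $L$, so $\dim Y=2$.  Applying Proposition~\ref{splitting} to $L$ gives $Y=\Ar\times W$, $\tilde x=(0,w)$, $w\in\partial W$; the infinite perpendicular ray forces $W=[0,\infty)$, so $Y$ is the half-plane and $B_Y(\tilde x,1)$ is a half-plane unit ball, a contradiction.

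\textbf{Part (b).}  Extract analogous counterexamples with $z_n\in\mathrm{int}\,X_n\cap B(x_n,1/2)$ and $s_n\le d_n/2$, where $d_n=d(z_n,\partial X_n)$.  After passing to a subsequence, $(X_n,x_n,z_n)\to(X,\bar x,\bar z)$ and $d_n\to d:=d(\bar z,\partial X)$ by Corollary~\ref{disttobdry}.  If $d>0$ then $\bar z$ lies in the flat interior of $X$ and $B_g(\bar z,d/2)$ is a Euclidean disk; consequently $B_{g_n}(z_n,d_n/2)$ rescaled by $(2/d_n)^2$ converges to a Euclidean unit disk, and $X_n$ is interior $\nu'$-flat at $z_n$ on scale $d_n/2$ for every $\nu'>0$ and all large $n$.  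Applying Lemma~\ref{flatinherits} with $\nu'$ small enough yields interior $\mu$-flatness at $z_n$ on all scales $\le d_n/2$, in particular on $s_n$, a contradiction.  If instead $d=0$, then $s_n\to 0$ and we repeat the blow-up at $z_n$: extract a limit $(Y,\tilde z)$ as before; since $d_n/s_n\ge 2$, $\tilde z\in\mathrm{int}\,Y$ at distance $c:=\lim d_n/s_n\in[2,\infty]$ from $\partial Y$.  The boundary-parallel strainer gives a geodesic line $L$ through $\tilde z$ in $\mathrm{int}\,Y$ (parallel to $\partial Y$ at distance $c$), and Proposition~\ref{splitting} writes $Y=\Ar\times W$, $\tilde z=(0,w)$, $w\in\mathrm{int}\,W$.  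The inward/outward perpendicular geodesics from $z_n$ show $W$ contains an infinite ray from $w$ in one direction, and in the other either an infinite ray (if $c=\infty$) or a segment of length $c$ to $\partial W$ (if $c<\infty$); hence $W=\Ar$ (so $Y=\Ar^2$) or $W=[0,\infty)$ with $w=c$ (so $Y$ is the half-plane with $\tilde z$ at distance $c\ge 2$ from $\partial Y$).  Either way, $B_Y(\tilde z,1)$ is a Euclidean unit disk (since $c\ge 2>1$), contradicting the failure of interior $\mu$-flatness.

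\textbf{Main obstacle.}  The delicate point is the $s=0$ (resp.\ $d=0$) case: one must produce a genuine geodesic line in the rescaled limit $Y$ from a strainer that is only asymptotically perfect, and then use the splitting theorem together with careful bookkeeping of which pieces of $\partial X_n$ stay at bounded distance in the rescaled picture (and which escape to infinity) in order to pin down the one-dimensional factor $W$ as either $[0,\infty)$ or $\Ar$.  Once $W$ is identified, the desired flat model for $B_Y(\tilde x,1)$ or $B_Y(\tilde z,1)$ follows by inspection, and the contradiction closes the argument.
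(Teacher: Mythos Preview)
Your argument is correct and follows the same compactness-and-contradiction template the paper uses. The paper's own proof is far terser: it first records a preliminary claim that a boundary $\nu'$-flat ball carries, at every nearby boundary point $y$, a uniform ``half-strainer'' (boundary points $z_\pm$ with $\tilde\angle z_-yz_+>\pi-\beta$ and an interior point $w$ with $\tilde\angle z_\pm yw>\pi/2-\beta$), and then simply declares the first statement ``analogous to the proof of Lemma~\ref{flatinherits} using Corollary~\ref{disttobdry},'' leaving the second statement entirely to the reader. Your explicit blow-up in the $s=0$ (resp.\ $d=0$) cases---building the line $L$ from the boundary-parallel pair, the perpendicular ray from the inward normal, and then invoking Proposition~\ref{splitting} to pin down the rescaled limit as $\Ar\times[0,\infty)$ (or $\Ar^2$)---is precisely the content the paper's ``analogous'' is pointing to; the half-strainer of the paper's claim is exactly the data you feed into the splitting argument.

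One small imprecision worth noting: in Part~(a) you assert $L\subset\partial Y$ via Lemma~\ref{decconeangle}, but geodesics between boundary points need not lie in the boundary, so this is not immediate. It is also unnecessary: once you know $\tilde x\in\partial Y$ (which Lemma~\ref{decconeangle} does give, after you have established $\dim Y=2$) and $Y=\Ar\times W$ with $\tilde x=(0,w)$, the product structure forces $w\in\partial W$, and together with the infinite inward ray this already yields $W=[0,\infty)$; the inclusion $L\subset\partial Y$ is then a consequence rather than an input.
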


\begin{proof}
We begin the proof with a claim.

\begin{claim}\label{firstclaim}
Fix $\beta>0$. The following holds for all  $\nu'>0$ sufficiently
small. Suppose $X=B(x,1)$ is boundary $\nu'$-flat at $x$ on scale
$1$. Let $y\in \partial X\cap B(x,7/8)$ and let $z_-$ and $z_+$ be
points on $\partial X$ at distance  $1/8$ from $y$, lying on
opposite sides of $y$ in $\partial X$. Then the comparison angle
$\tilde\angle z_-yz_+$ is at least $\pi-\beta$. There is also a
point $w\in B(x,1)$ at distance $1/8$ from $y$ such that the
comparison angles $\tilde\angle z_-yw$ and $\tilde\angle wyz_+$ are
both at least $\pi/2-\beta$.
\end{claim}

\begin{proof}
These statements hold for $\beta=0$ for be the unit ball in $\Ar\times [0,\infty)$
centered at a boundary point $\bar x$. Thus, given $\beta>0$, the result follows for all
$\nu'>0$ sufficiently small by taking limits.
\end{proof}

Given this claim, the proof of the first statement of this result is
analogous to the proof of Lemma~\ref{flatinherits} using
Corollary~\ref{disttobdry}.
\end{proof}

Now we need the analogue of Proposition~\ref{intcone} producing good
annular regions.

\begin{prop}\label{bdrycone}
Given $\mu>0$ and $a>0$ there is $\mu''>0$ such that the following
holds for some positive constants $s_1$ and $s_2$ depending only on $a$. Suppose that
$X=B(x,1)$ is boundary $\mu''$-good at $x$ on scale $1$ and of area
$\ge a$. Then at every point $y'\in
\partial X\cap \left(B(x,7/8)\setminus B(x,1/8)\right)$ the ball $X$
is boundary $\mu$-flat on scale $s_1$, and for any $b\in [1/8,7/8]$
the metric sphere $S(x,b)$ is an arc with endpoints in $\partial X$.
Furthermore, for any $y\in B(x,7/8)\setminus B(x,1/8)$ one of the
following holds.
\begin{enumerate}
\item $X$ is interior $\mu$-flat at $y$ on all scales $\le s_2$.
\item There is $y'\in \partial X\cap
\left(B(x,7/8)\setminus B(x,1/8)\right)$ with $y\in
B(y',s_1/4)$, and the ball $X$ is interior $\mu$-flat at $y$ on all
scales $\le d/2$ where $d$ is the distance of $y$ to $\partial X$.
\end{enumerate}
(See Figure 2.)
\end{prop}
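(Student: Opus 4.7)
The plan is to argue by a Gromov-Hausdorff compactness/contradiction argument parallel to that of Proposition~\ref{intcone}, with the relevant model family being the flat boundary cones rather than the flat interior cones. First I would set up the model family: for each $\alpha \in (0,\pi]$ let $C_\alpha \subset \Ar^2$ be a flat sector of angle $\alpha$ with cone point $p$, so that the unit ball $B(p,1) \subset C_\alpha$ has area $\alpha/2$. The area condition forces $\alpha \geq 2a$, and since we are in the boundary regime $\alpha \leq \pi$, the parameter $\alpha$ ranges over the compact interval $[2a,\pi]$.

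Next I would verify the three conclusions directly on each $C_\alpha$ and take a minimum over $\alpha$ to produce the universal constants. Every boundary point $y' \neq p$ of $C_\alpha$ is a genuine flat half-plane boundary point, so $C_\alpha$ is literally boundary flat at $y'$ on every scale $\leq d(y',p)$; restricted to $y' \in B(p,7/8)\setminus B(p,1/8)$ this gives boundary flatness at a definite scale $\sigma_1(\alpha)>0$. The metric sphere $S(p,b)$ in $C_\alpha$ for $b \in [1/8,7/8]$ is literally a circular arc with endpoints on $\partial C_\alpha$. For an interior point $y$ in the model annular region, either $d(y,\partial C_\alpha) \geq \sigma_1(\alpha)/2$, in which case $y$ is interior flat on all scales up to some $\sigma_2(\alpha)>0$, or else $y$ lies within $\sigma_1(\alpha)/4$ of some boundary point $y' \in \partial C_\alpha \cap (B(p,7/8)\setminus B(p,1/8))$ and $y$ is interior flat on all scales $\leq d(y,\partial C_\alpha)/2$. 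Continuity in $\alpha$ and compactness of $[2a,\pi]$ then let me set $s_1 = \inf_\alpha \sigma_1(\alpha) > 0$ and $s_2 = \inf_\alpha \sigma_2(\alpha) > 0$, each depending only on $a$.

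With this model data in hand, I would run the contradiction: assume the proposition fails for some $\mu>0$, producing $\mu_n''\to 0$ and standard balls $X_n = B(x_n,1)$, each boundary $\mu_n''$-good of area $\geq a$ yet violating at least one assertion. After passing to a subsequence the approximating cone angles $\alpha_n$ converge to some $\alpha \in [2a,\pi]$ and $(X_n,x_n)$ converges in the Gromov-Hausdorff sense to $(B(p,1),p) \subset C_\alpha$. For assertion (a), if $y_n' \in \partial X_n$ in the annular region violates boundary $\mu$-flatness at scale $s_1$, extract $y_n' \to y' \in \partial C_\alpha$ via Lemma~\ref{decconeangle}; since $C_\alpha$ is honestly boundary flat at $y'$ on scale $s_1$, Gromov-Hausdorff convergence combined with Lemma~\ref{bdryflatinh} forces boundary $\mu$-flatness of $X_n$ at $y_n'$ on scale $s_1$ for large $n$, a contradiction. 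For assertion (b), apply Corollary~\ref{regularopen} to the fact that $d(p,\cdot)$ is $1$-strongly regular on the closed annular region of $C_\alpha$: this shows $d(x_n,\cdot)$ is $(1-o(1))$-strongly regular on the corresponding annulus in $X_n$, and in the $2$-dimensional setting, together with Corollary~\ref{disttobdry}, identifies $S(x_n,b_n)$ as a topological $1$-manifold with boundary on $\partial X_n$ that is Gromov-Hausdorff close to the circular arc $S(p,b) \subset C_\alpha$; connectedness in the limit then forces $S(x_n,b_n)$ to be a single arc with both endpoints on $\partial X_n$. For assertion (c), extract $y_n \to y$: if $y \in \mathrm{int}\,C_\alpha$ then interior $\mu$-flatness at $y_n$ on scales $\leq s_2$ follows from (the argument of) Proposition~\ref{intcone} applied at $y$; if $y \in \partial C_\alpha$ then a nearby boundary point $y_n' \to y$ produced by Lemma~\ref{decconeangle} is boundary $\mu$-flat at scale $s_1$ (by case (a)), and the second half of Lemma~\ref{bdryflatinh} transfers this to interior $\mu$-flatness at $y_n$ on all scales $\leq d(y_n,\partial X_n)/2$.

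The main obstacle I anticipate is the metric-sphere assertion (b) in the Alexandrov (rather than smooth) setting, because Lemma~\ref{smreg} is a Riemannian statement and I must instead invoke its Alexandrov analogue: in a $2$-dimensional Alexandrov space the level sets of a strongly regular distance function are topological $1$-manifolds, which follows from the $(1,\delta)$-strainer analysis of \S 11 of \cite{BGP}. Pinning down that $S(x_n,b_n)$ is connected, and that its two endpoints actually lie on $\partial X_n$ rather than elsewhere, is where the Gromov-Hausdorff proximity to the model arc $S(p,b)$ must be used carefully. A secondary obstacle is verifying the continuous dependence of $\sigma_1(\alpha)$ and $\sigma_2(\alpha)$ on $\alpha$ so that their infima over the compact interval $[2a,\pi]$ remain strictly positive.
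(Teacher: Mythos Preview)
Your approach is essentially the paper's own: a Gromov--Hausdorff compactness/contradiction argument reducing to the flat boundary cones $C_\alpha$, with Lemma~\ref{decconeangle} handling boundary convergence, Lemma~\ref{bdryflatinh} transferring flatness, and \S 11 of \cite{BGP} identifying the metric spheres as arcs. The paper organizes it as three separate contradiction passes (one for $s_1$, one for $s_2$, one for the arc statement) rather than your single pass, but the content is the same.

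One genuine slip in your case split for assertion (c): you divide according to whether the limit point $y$ lies in $\mathrm{int}\,C_\alpha$ or in $\partial C_\alpha$, and in the interior case you claim option (1) via the argument of Proposition~\ref{intcone}. This fails when $y\in\mathrm{int}\,C_\alpha$ but $d(y,\partial C_\alpha)$ is small, say much less than $s_2$: the ball $B(y,s_2)$ then contains boundary, so $C_\alpha$ is \emph{not} interior flat at $y$ on scale $s_2$, and you cannot conclude option (1) for $y_n$. The correct dichotomy---which the paper uses---is on $d(y_n,\partial X_n)$ relative to the fixed threshold $s_1/4$: if eventually $d(y_n,\partial X_n)\ge s_1/4$, the limit $y$ is at distance $\ge s_1/4$ from $\partial C_\alpha$ and Lemma~\ref{flatinherits} gives option (1) with $s_2\le s_1/8$; if $d(y_n,\partial X_n)<s_1/4$, then (using Corollary~\ref{disttobdry}) a nearest boundary point $y_n'$ lies in the annular region, assertion (a) makes $X_n$ boundary $\mu$-flat at $y_n'$ on scale $s_1$, and the second half of Lemma~\ref{bdryflatinh} yields option (2). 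Your model dichotomy ($d\ge\sigma_1/2$ versus within $\sigma_1/4$ of a boundary point) has the same mismatch; align both thresholds at $s_1/4$ and the argument closes.
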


\begin{proof}
Fix $\mu>0$ and $a>0$. Without loss of generality we can assume that $a\le \pi$.
Let $\nu$ and $\nu'$ be  the constants
associated to $\mu$ by Lemma~\ref{flatinherits} and
Lemma~\ref{bdryflatinh}, respectively. First we show that for all
$\mu''>0$ sufficiently small there is an $s_1$ so that at every
point  $y'\in \partial X\cap \left(B(x,7/8)\setminus
B(x,1/8)\right)$ the ball $X$ is boundary $\nu'$-flat on all scales
$\le s_1$. Suppose not. Then there are a sequence of
$\mu''_k\rightarrow 0$ and examples $X_k=B(x_k,1)$ of area $\ge a$
that are boundary $\mu''_k$-good at $x$ on scale $1$, for which
there are points $y'_k\in  \partial X_k\cap
\left(B(x_k,7/8)\setminus B(x_k,1/8)\right)$  at which $X_k$ is not
boundary $\nu'$-flat on some scales $s'_k\rightarrow 0$. Passing to
a subsequence, we can suppose that the $X_k$ converge to a limit
$X=B(\bar x,1)$ which is a flat cone of area  $\ge a$. We can also
assume that the $y'_k$ converge to $y'$ in the closure of $B(\bar
x,7/8)\setminus B(\bar x,1/8)$, and by Lemma~\ref{decconeangle}, we
have $y'\in
\partial X$. Thus, $X$ is boundary flat at $y'$ on a scale $\hat s_1>0$ that depends only on $a$,
and hence for all $k$ sufficiently large, $X_k$ is boundary
$\nu'$-flat at $y'_k$ on scale $\hat s_1$. This is a contradiction.
Hence, there is $s_1>0$ such that assuming that $\mu''$ sufficiently
small, $X$ is boundary $\nu'$-flat at every $y\in
\partial X\cap\left(B(x,7/8)\setminus B(x,1/8)\right)$ on scale $s_1$. By
Lemma~\ref{bdryflatinh} this proves that for every such $y$, the
ball $X$ is boundary $\mu$-flat on all scales $\le s_1$ and for
every point  of $\partial X\cap\left(B(x_k,7/8)\setminus
B(x_k,1/8)\right)$ at distance $d\le s_1/4$ of $\partial X$ is
interior $\mu$-flat of all scales $\le d/2$.

Now, provided that $\mu''>0$ is sufficiently small, we establish the
existence of $s_2$ such that every point of $B(x,7/8)\setminus
B(x,1/8)$ that is not within $s_1/4$ of $\partial X$ is interior
$\nu$-flat on scale $s_2$. Suppose not. Then there are a sequence
$\mu''_k\rightarrow 0$ and examples $X_k=B(x_k,1)$ of area $\ge a$
that are boundary $\mu''_k$-good at $x_k$ on scale $1$ and points
$z_k\in B(x_k,7/8)\setminus B(x_k,1/8)$ at distance at least $s_1/4$
from $\partial X_k$ at which $X_k$ is not interior $\nu$-flat on
some scale $s'_k\rightarrow 0$. Passing to a subsequence we have a
limit $X$ which is a flat cone of area $\ge a$ and a limit $z\in X$
of the $z_k$. By Corollary~\ref{disttobdry} this is a point at
distance at least $s_1/4$ from $\partial X$. Thus, by
Lemma~\ref{bdryflatinh}, $X$ is interior flat at $z$ on scale
$s_1/8$. It follows that for all $k$ sufficiently large that $X_k$
is interior $\nu$-flat on scale $s_1/8$. This contradiction together
with Lemma~\ref{flatinherits} proves the existence of $s_2$ as
required.

Lastly, since the distance from the cone point in a flat cone is
strongly regular on the corresponding annular region, given any
$\delta>0$, then assuming that $\mu''$ is sufficiently small, the
distance from $x$ is $(1-\delta)$-regular on $B(x,7/8)\setminus
B(x,1/8)$. It follows from \S 11 of \cite{BGP} that, provided that
$\mu''$ is sufficiently small, for any $b\in [1/8,7/8]$ the metric
sphere $S(x,b)$ is an arc with endpoints in $\partial X$.
\end{proof}

\subsection{Geodesics approximating the boundary}

It turns out that near the flat part of the boundary it is better to
take neighborhoods centered around geodesics near the boundary
rather than balls centered around boundary points. Here, we follow
\cite{SY} closely.

\begin{defn}
Fix a $2$-dimensional local Alexandrov space $X$ with curvature $\ge
-1$. Suppose that $\gamma$ is an oriented
 geodesic in $X$ with initial point $e_-$ and final point $e_+$
 and of length $\ell=\ell(\gamma)$. We define
$$f_\gamma =\frac{1}{2}(d(e_-,\cdot)-d(e_+,\cdot))\ \ \ {\rm and} \ \ \
h_\gamma =d(\gamma,\cdot).$$ These  are $1$-Lipschitz
functions. Further, for any $\alpha>0$ we define
$$\nu_\alpha(\gamma)=f_\gamma^{-1}\left([-\ell/4,\ell/4]\right)\cap
h_\gamma^{-1}([0,\alpha\ell)),$$ and
$$\bar\nu_\alpha(\gamma)=f_\gamma^{-1}\left([-\ell/4,\ell/4]\right)\cap
h_\gamma^{-1}([0,\alpha\ell]).$$ We denote
$\nu_\xi^0(\gamma)=\nu_\xi(\gamma)\setminus
\bar\nu_{\xi^2}(\gamma)$. The {\em ends} of $\nu_\alpha(\gamma)$ are
their intersections with $f_\gamma^{-1}(\pm\ell/4)$, and the {\em
side} of $\bar \nu_\alpha(\gamma)$ is its intersection with
$h_\gamma^{-1}(\alpha\ell)$. For any $-\ell/4\le a<b\le \ell/4$ we
set
$$\nu_{\alpha,[a,b]}(\gamma)=f_\gamma^{-1}([a,b])\cap
h_\gamma^{-1}([0,\alpha\ell))$$ and we denote by
$\bar\nu_{\alpha,[a,b]}(\gamma)$ its closure. As before, the
boundary of $\bar\nu_{\alpha,[a,b]}(\gamma)$ is made up of the side,
given by $h_\gamma^{-1}(\alpha\ell)$, and the two ends, given by
$f_\gamma^{-1}(a)$ and $f_\gamma^{-1}(b)$. We say that $\alpha\ell$
is the {\em width} of the neighborhood and $(b-a)\ell$ is its {\em
length}. The level set $f_\gamma^{-1}(0)$ is the {\em center}
 of $\nu_\alpha(\gamma)$.
\end{defn}

\begin{lem}\label{goodrect}
The following hold for any $\xi>0$ sufficiently small and, given
$\xi$, for all $\mu>0$ sufficiently small and for any $s>0$. Suppose
that $X$ is a standard $2$-dimensional ball and  that $\gamma$ is a
geodesic of length between $s/10$ and $s$ with endpoints in
$\partial X$. Then if there is a point $x\in \partial X$ such that
$X$ is boundary $\mu$-flat at $x$ on all scales $\le 5s$ and if
$\gamma\subset B(x,s)$ then the following hold.
\begin{enumerate}
\item The arcs $\nu_\xi(\gamma)\cap \partial X$ and $\gamma\cap
\nu_\xi(\gamma)$ are within $\xi^2\ell(\gamma)$ of each other in
$X$.
\item For each
$y\in \bar\nu_\xi(\gamma)$ the comparison angle $\tilde \angle e_-ye_+$
is greater than $\pi-10\xi$.
\item For each point
$y\in \nu^0_\xi(\gamma)$ there is a geodesic $\zeta$ from $y$ to a
point $z$, with $d(y,z)>10\xi\ell$ such that for any $w\in \zeta$ at
distance at most $5\xi\ell$ from $y$ the comparison angle
$\tilde\angle\gamma wz\ge \pi-\xi$, and the comparison angles
$\tilde\angle e_\pm wz$ are greater than $\pi/2-10\xi$ and less than
$\pi/2+10\xi$.
\item For any level set $L$ of $f_\gamma$ in $\bar\nu_\xi(\gamma)$ and
for any $c\in [\xi^2,\xi]$ the distance from $L\cap \gamma$ to
$L\cap h_\gamma^{-1}(c\ell(\gamma))$ is less than
$(1+2\xi)c\ell(\gamma)$.
\end{enumerate}
(See Figure 3.)
\end{lem}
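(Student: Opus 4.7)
The plan is to prove all four conclusions simultaneously by a Gromov--Hausdorff compactness/contradiction argument, reducing each to an elementary Euclidean calculation in the half-plane $\Ar\times[0,\infty)$. The boundary $\mu$-flatness hypothesis on scale $5s$ means precisely that the ball $B(x,5s)$, rescaled by $(5s)^{-1}$, is $\mu$-close to the unit half-plane ball at a boundary point; under this rescaling our $\gamma\subset B(x,s)$ of length in $[s/10,s]$ becomes a geodesic of length in $[1/50,1/5]$ living inside the ball of radius $1/5$ about the center. Fix $\xi>0$ small, and suppose for contradiction that for some $\xi$ the conclusions fail along a sequence $\mu_n\to 0$, with scales $s_n$ and counterexamples $(X_n,x_n,\gamma_n)$. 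Passing to the rescaled balls and extracting a subsequence, Proposition~\ref{ballscon} gives Gromov--Hausdorff convergence to the unit half-plane ball about a boundary point $\bar x$; Lemma~\ref{decconeangle} and the geodesic-convergence property (item (2) after Proposition~\ref{ballscon}) imply that the $\gamma_n$ converge to a geodesic $\gamma_\infty$ with endpoints on $\Ar\times\{0\}$; and since any geodesic in the half-plane joining two boundary points lies on the boundary, $\gamma_\infty$ is a Euclidean segment of length $\ell_\infty\in[1/50,1/5]$ in $\Ar\times\{0\}$. The functions $f_{\gamma_n},h_{\gamma_n}$ then converge uniformly on compacta to $f_{\gamma_\infty},h_{\gamma_\infty}$.

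In this flat limit each of (1)--(4) is immediate. For (1), the sets $\gamma_\infty\cap\nu_\xi(\gamma_\infty)$ and $\partial(\Ar\times[0,\infty))\cap\nu_\xi(\gamma_\infty)$ coincide, so the two approximating arcs in $X_n$ lie within any prescribed fraction of $\ell_n$ for $n$ large. For (2), writing $y=(a,b)$ with $|a|\le\ell/4+O(\xi^2\ell)$ and $0\le b\le\xi\ell$, a direct computation of the angle between $e_\pm-y$ gives $\angle e_-ye_+\ge\pi-C\xi$ for an absolute constant $C<6$; convergence of comparison angles (item (3) after Proposition~\ref{ballscon}) upgrades this to the claimed $\tilde\angle e_-y_ne_+>\pi-10\xi$. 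For (3), given $y\in\nu^0_\xi(\gamma_\infty)$, let $\zeta$ be the vertical ray from $y$ moving away from the boundary and let $z$ be its point at distance $100\xi\ell_\infty$ from $y$; the angle conditions at any $w\in\zeta$ within $5\xi\ell$ of $y$ are then immediate, since at $w$ the direction to $z$ is opposite to the (nearly downward) direction toward $\gamma$ (giving angle $\pi$) and orthogonal to the (nearly horizontal) directions toward $e_\pm$ (giving angles $\pi/2$). For (4), the level set $L=f_{\gamma_\infty}^{-1}(f_0)$ is a hyperbola branch with foci $e_\pm$ through $(f_0,0)$; its point at height $c\ell$ has first coordinate $f_0+O(c^2\ell)$, so the Euclidean distance from $L\cap\gamma_\infty$ to $L\cap h_{\gamma_\infty}^{-1}(c\ell)$ equals $c\ell(1+O(\xi^2))$, well below $(1+2\xi)c\ell$ once $\xi$ is small.

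The main obstacle is conclusion (3). Unlike the distance and comparison-angle estimates of (1), (2), and (4), which transfer through Gromov--Hausdorff convergence automatically via items (2) and (3) following Proposition~\ref{ballscon}, conclusion (3) requires exhibiting an \emph{actual} geodesic $\zeta_n$ in $X_n$ of prescribed length and initial direction. The construction is to take a lift $z_n\in X_n$ converging to $z$ and join $y_n$ to $z_n$ by a geodesic $\zeta_n$; existence is guaranteed by the Alexandrov-ball axiom (since $d(y,z)+d(y,e_\pm)+d(z,e_\pm)$ stays well inside the radius bound for $\xi$ small), and a further subsequence gives $\zeta_n\to\zeta$ by the geodesic-convergence property. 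Angle convergence then converts the Euclidean equalities along $\zeta$ into the claimed comparison-angle inequalities along $\zeta_n$ with the required margins, completing the contradiction for each of the four failure modes.
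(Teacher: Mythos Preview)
Your proposal is correct and follows essentially the same approach as the paper: rescale by $(5s)^{-1}$ to normalize the scale, verify the four conclusions by direct Euclidean computation in the half-plane model, and then pass to the general case by a Gromov--Hausdorff limit/contradiction argument as $\mu\to 0$. The paper's proof is considerably more terse (it simply asserts the flat computation and says ``the result is now immediate by fixing $\xi$ and taking limits as $\mu$ tends to zero''), while you spell out the compactness step and, importantly, isolate the one genuine subtlety --- that conclusion (3) requires producing an actual geodesic $\zeta$ in $X_n$, not merely a limit object --- and handle it correctly by lifting $z$ and invoking geodesic existence in the Alexandrov ball.
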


\begin{proof}
Direct computation shows that the result holds for $\xi>0$
sufficiently small, say $\xi\le \xi_0$ for some $\xi_0>0$, for $X$
being a ball of radius $1$ centered at a boundary point of
$\Ar\times [0,\infty)$ and $\gamma$ being a geodesic contained in
$\partial X$ of length between $1/50$ and $1/5$. Fix any $0<\xi\le
\xi_0$. Now let $B=B(x,5s)$ be a ball that is boundary $\mu$-flat at
$x$ on scale $5s$. Rescaling the metric by $(1/5s)^2$, we can suppose that
$s=1/5$ and that $B$ is a ball of radius $1$. The result is now
immediate by fixing $\xi$ and taking limits as $\mu$ tends to zero.
\end{proof}

The exact same proof as in the above lemma
shows the following result.

\begin{cor}\label{flatint}
The following
holds for all $\xi>0$ sufficiently small, and given $\xi$ for all
$\mu>0$ sufficiently small. Let $X$ be a standard $2$-dimensional
ball and suppose we have a geodesic $\gamma\subset X$, a constant  $s$,
 and a point $x\in X$ satisfying the hypotheses of the previous lemma.
 Suppose that $\zeta\subset B(x,5s)$
is a geodesic of length between $s/20$ and $2s$ with endpoints in
$\partial X$. Fix a direction along $\partial X\cap B(x,5s)$ and let
endpoints of $\gamma$ and $\zeta$, denoted $e_\pm(\gamma)$ and
$e_\pm(\zeta)$, be chosen so that in the given direction along
$\partial X$ we have $e_-(\gamma)<e_+(\gamma)$ and
 $e_-(\zeta)<e_+(\zeta)$. Suppose that there are constants $c,c'$
 with $\xi\le c,c'\le 1$. Then the following hold:
\begin{enumerate}
\item If $c\ell(\gamma)\le(0.9)c'\ell(\zeta)$, then the
side of $\bar\nu_{c'\xi}(\zeta)$
is at distance at least $(0.05)c\xi\ell(\zeta)$ from
$\bar\nu_{c\xi}(\gamma)$.
\item If $c\ell(\gamma)\ge (1.1)c'\ell(\zeta)$, then the
side of $\bar\nu_{c\xi}(\gamma)$ is at distance at least
$(0.05)c\xi\ell(\gamma)$ from $\bar\nu_{c'\xi}(\zeta)$.
\item For any point $y\in
\bar\nu_\xi(\gamma)\cap \bar\nu_\xi(\zeta)$, the comparison angles satisfy:
$$\tilde \angle e_-(\gamma)ye_+(\zeta)>\pi-10\xi$$ and
$$\tilde\angle e_-(\zeta)ye_+(\gamma)>\pi-10\xi.$$
\item Suppose that a level set $L\subset \bar\nu_\xi(\zeta)$
for $f_\zeta$ meets $\nu_\xi(\gamma)$. Then for any $y_1,y_2\in
L\cap \nu_\xi(\gamma)$ we have
$$|f_\gamma(y_1)-f_\gamma(y_2)|<\xi^2\ell(\gamma).$$
\end{enumerate}
(See Figure 3.)
\end{cor}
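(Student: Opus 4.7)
\medskip
\noindent\textbf{Proof plan.} The statement is the exact analogue of Lemma~\ref{goodrect} for a second geodesic $\zeta$ in addition to $\gamma$, and I would follow the same two-step template: verify the four conclusions with strict slack in the flat model $\Ar\times[0,\infty)$, then extend to boundary $\mu$-flat standard balls by a Gromov-Hausdorff limit argument.

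\emph{Step 1 (the model).} Under the rescaling by $1/(5s)^2$ that takes $B(x,5s)$ to a unit ball centered at a boundary point $\bar x$ of $\Ar\times[0,\infty)$, any geodesic in the model with endpoints on the boundary reduces to a segment of the boundary line $\Ar\times\{0\}$. The rescaled $\gamma$ and $\zeta$ are thus horizontal segments of bounded length bounded away from zero, and $\bar\nu_{c\xi}(\gamma)$, $\bar\nu_{c'\xi}(\zeta)$ become closed Euclidean rectangles above the middle halves of $\gamma$ and $\zeta$, of heights $c\xi\ell(\gamma)$ and $c'\xi\ell(\zeta)$ respectively. Items (1) and (2) are then direct comparisons of these heights under the hypotheses $c\ell(\gamma)\le 0.9\,c'\ell(\zeta)$ and $c\ell(\gamma)\ge 1.1\,c'\ell(\zeta)$, yielding strict vertical gaps of at least $0.1\,c\xi\ell(\zeta)$ and $0.1\,c\xi\ell(\gamma)$. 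Item (3) is immediate because for $y$ in either rectangle the four points $e_\pm(\gamma),e_\pm(\zeta)$ are nearly collinear with $y$, so the two mixed comparison angles approach $\pi$ at rate $O(\xi)$. Item (4) is a direct Euclidean computation: level sets of $f_\zeta$ near $\zeta$ are close to lines perpendicular to $\zeta$, both $f_\gamma$ and $f_\zeta$ restrict to the horizontal coordinate on the boundary segment, and so within $\bar\nu_\xi(\zeta)$ the deviation of $f_\gamma$ along a level set of $f_\zeta$ is of order $\xi^2\ell(\gamma)$.

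\emph{Step 2 (the limit argument).} Fix $\xi>0$ small enough that Step~1 gives strict slack. Suppose no admissible $\mu$ works; then there are sequences $\mu_k\to 0$ and counterexamples $(X_k,x_k,\gamma_k,\zeta_k,c_k,c'_k)$ with $X_k$ boundary $\mu_k$-flat at $x_k$ on scale $5s$, $c_k,c'_k\in[\xi,1]$, $\gamma_k\subset B(x_k,s)$ and $\zeta_k\subset B(x_k,5s)$ of the prescribed lengths and with endpoints on $\partial X_k$, for which one of (1)--(4) fails. After rescaling so that $s=1/5$ and passing to a subsequence, $(X_k,x_k)$ converges in the Gromov-Hausdorff sense to the unit ball at a boundary point of $\Ar\times[0,\infty)$. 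By Lemma~\ref{decconeangle} and Corollary~\ref{disttobdry}, boundary endpoints of $\gamma_k$ and $\zeta_k$ converge to boundary points of the limit; passing to a further subsequence, the geodesics themselves converge to boundary segments $\gamma_\infty,\zeta_\infty$ and $c_k,c'_k$ converge to $c_\infty,c'_\infty\in[\xi,1]$. The $1$-Lipschitz functions $f_{\gamma_k},h_{\gamma_k},f_{\zeta_k},h_{\zeta_k}$ converge uniformly on compact sets, so the neighborhoods converge in the Hausdorff sense and comparison angles pass continuously to the limit. The negation of the failing conclusion therefore survives passage to the limit and contradicts the strict inequality established in Step~1.

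\emph{Main obstacle.} The only real difficulty is the bookkeeping in Step~1: checking that the $0.05$ margin appearing in (1) and (2) is uniformly below the $0.1$ one actually obtains in the flat model, as $c,c'$ range over $[\xi,1]$ and as $\ell(\gamma),\ell(\zeta)$ range over their admissible intervals, so that there is enough room to absorb the Hausdorff convergence error in Step~2. A subsidiary technical point is confirming that limit endpoints of $\gamma_\infty,\zeta_\infty$ genuinely lie on $\partial X_\infty$ rather than drifting into the interior, for which Lemma~\ref{decconeangle} and Corollary~\ref{disttobdry} are used. Once the slack is in place the rest is standard continuity for distance functions and comparison angles.
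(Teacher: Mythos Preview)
Your proposal is correct and follows exactly the paper's approach: the paper's entire proof is the single sentence ``The exact same proof as in the above lemma shows the following result,'' referring to the two-step template of Lemma~\ref{goodrect} (direct computation in $\Ar\times[0,\infty)$, then rescale to $s=1/5$ and take limits as $\mu\to 0$), which is precisely what you have spelled out.
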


\begin{defn}
 We say that
a geodesic $\gamma\subset X$
 is a {\em $\xi$-approximation to
$\partial X$ on scale $s$ with $\mu$-control} if $\gamma$ is a
geodesic of length between $s/10$ and $s$ and if there is a point
$x\in \partial X$ at which $X$ is boundary $\mu$-flat on scales $\le
5s$ with $\gamma\subset B(x,s)$ such that:
\begin{enumerate}
\item the conclusions of Lemma~\ref{goodrect} hold for $\xi$, and
\item the conclusions of Corollary~\ref{flatint} hold for
$\xi$ and any
geodesic $\zeta\subset B(x,5s)$  of length between $s/20$ and $2s$.
\end{enumerate}
The point $x$ is a {\em $\mu$-control point} for $\gamma$.
\end{defn}

We must also compare flat regions near the boundary with balls
around boundary points.

\begin{cor}\label{flat/ball}
The  following holds for all
$\xi>0$ sufficiently small and, given $\xi$, for all $\mu>0$
sufficiently small. Suppose that $X$ is a standard $2$-dimensional
ball that is boundary $\mu$-good at $y$ on scale $r$. Suppose that
$\gamma\subset X$ is  a $\xi$-approximation to $\partial X$ on scale
$s\le r/20$ that is contained in $B(y,7r/8)\setminus B(y,r/8)$. We
orient $\gamma$ so that $e_+$ is separated along $\partial X\cap
B(y,r)$ by $e_-$ from $y$. Then for any $z\in \nu_\xi(\gamma)$ the
comparison angle $\tilde\angle yze_+$ is greater than $(.99)\pi$.
Furthermore, for any level set $L$ of $d(y,\cdot)$ that meets
$\bar\nu_{\xi,[-(.24)\ell,(.24)\ell]}(\gamma)$ intersection
$L\cap\bar \nu_\xi(\gamma)$ is an interval with on endpoint in
$\partial X$ and the other in the side of $\bar \nu_\xi(\gamma)$.
Furthermore, the function $f_\gamma$ varies by at most
$\xi^2\ell(\gamma)$ on this intersection. (See Figure 4.)
\end{cor}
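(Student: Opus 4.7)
The plan is to follow the template already used for Lemma~\ref{goodrect} and Corollary~\ref{flatint}: verify each of the three assertions by a direct Euclidean computation in the flat model case, and then deduce the full statement by a Gromov--Hausdorff limit argument as $\mu\to 0$.

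For the model computation, take $X_0$ to be the unit ball about the cone point $y_0$ in a planar flat sector of angle $\alpha\le \pi$, so $\partial X_0$ meets the interior of the ball in two rays emanating from $y_0$. Let $\gamma_0\subset \partial X_0$ be a subsegment of one of these rays, contained in the annulus $B(y_0,7/8)\setminus B(y_0,1/8)$ and of length $\ell\in [s/10,s]$ for some $s\le 1/20$, oriented so that $e_+$ is farther from $y_0$. In this configuration each assertion is elementary. For (1): the geodesic from $z\in \nu_\xi(\gamma_0)$ to $e_+$ points nearly in the direction of the ray away from $y_0$, while the geodesic from $z$ to $y_0$ points in the opposite direction along (or nearly along) the same ray, so $\tilde \angle y_0 z e_+$ tends to $\pi$ as $\xi\to 0$. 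For (2): $d(y_0,\cdot)$ is strongly regular on the annular region (cf.\ Proposition~\ref{bdrycone}), and any level set $L=d(y_0,\cdot)^{-1}(\rho)$ meeting $\bar\nu_{\xi,[-(.24)\ell,(.24)\ell]}(\gamma_0)$ is a circular arc centered at $y_0$ crossing the thin strip nearly perpendicularly, so $L\cap \bar\nu_\xi(\gamma_0)$ runs from $\partial X_0$ to the side $h_{\gamma_0}^{-1}(\xi\ell)$. For (3): $f_{\gamma_0}$ vanishes on $\gamma_0$, and an elementary trigonometric estimate bounds the variation of $f_{\gamma_0}=\tfrac12(d(e_-,\cdot)-d(e_+,\cdot))$ on the short nearly-perpendicular arc in (2) by $\xi^2\ell$.

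For the general case I would argue by contradiction. Fix $\xi$ small enough that the model gives strict inequalities with definite slack, and assume that for some $\mu_k\to 0$ the conclusion fails for standard balls $X_k=B(y_k,r_k)$ that are boundary $\mu_k$-good at $y_k$ on scale $r_k$ and for $\xi$-approximations $\gamma_k\subset X_k$ on scale $s_k\le r_k/20$ contained in $B(y_k,7r_k/8)\setminus B(y_k,r_k/8)$. Rescale by $r_k^{-2}$ so $r_k=1$; after passing to a subsequence $(X_k,y_k)$ converges in the Gromov--Hausdorff sense to the unit ball about the cone point in some flat sector $X_\infty$ of angle $\alpha\le \pi$, and $s_k\to s_\infty\in [0,1/20]$. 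If $s_\infty>0$, then along a further subsequence $\gamma_k\to \gamma_\infty\subset X_\infty$, and by Lemma~\ref{goodrect}(1) the limit $\gamma_\infty$ lies in $\partial X_\infty$; applying the model computation to $\gamma_\infty$ and passing all relevant quantities (comparison angles via property (3) after Proposition~\ref{ballscon}, the functions $f_{\gamma_k}$ and $h_{\gamma_k}$, and the level sets of $d(y_k,\cdot)$) to the limit gives each conclusion for $\gamma_k$ with $k$ large, contradicting the hypothesis.

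In the case $s_\infty=0$ I would rescale the already rescaled $X_k$ by $1/s_k$ around a $\mu_k$-control point $x_k$ of $\gamma_k$. Because $X_k$ is boundary $\mu_k$-flat on all scales $\le 5s_k$ at $x_k$, Lemma~\ref{bdryflatinh} shows that the twice-rescaled balls converge to a ball in the half-plane $\RR\times [0,\infty)$, with $\gamma_k$ converging to a boundary geodesic $\gamma_\infty$ of length in $[1/10,1]$. Meanwhile $y_k$ recedes along $\partial X_k$ (in the direction opposite $e_+$, by the orientation convention) to Gromov--Hausdorff distance $\ge 1/(8s_k)\to \infty$ from $\gamma_k$, so in the limit it plays the role of an ideal boundary point in that direction; the level sets of $d(y_k,\cdot)$ inside the twice-rescaled ball converge to lines perpendicular to $\partial(\RR\times [0,\infty))$, and the same computation yields the three conclusions in the half-plane, which propagate back to $\gamma_k$ by convergence. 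This last case is the main obstacle: one has to juggle two independent rescalings and a reference point that escapes to infinity, and the only thing that makes the twice-rescaled limit a usable flat half-plane rather than an uncontrolled object is precisely the $\mu$-control hypothesis on $\gamma_k$ combined with Lemma~\ref{bdryflatinh}.
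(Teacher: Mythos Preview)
Your model-plus-limits strategy matches the paper for the level-set structure and the $f_\gamma$-variation: once the comparison angle estimate is established, the paper also rescales to make $\ell(\gamma)=1$ and passes to limits. The divergence, and the gap in your argument, is in the comparison angle $\tilde\angle yze_+$ itself when $s_\infty=0$.

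You assert that after the second rescaling ``the level sets of $d(y_k,\cdot)$ converge to lines perpendicular to $\partial(\RR\times[0,\infty))$,'' but this does not follow from the convergence of $B(x_k,5s_k)$ to the half-plane alone: that convergence is intrinsic to a ball of bounded twice-rescaled radius, while $y_k$ sits at twice-rescaled distance $\sim 1/s_k\to\infty$ outside it. To know that $d(y_k,\cdot)$ behaves like the horizontal Busemann function at scale $s_k$ you would need, for instance, that geodesics from $z_k$ to $y_k$ make angle near $\pi$ with geodesics from $z_k$ to $e_{+,k}$---precisely the estimate you are trying to prove. The scale-$r$ limit $X_k\to$ (flat cone) supplies this only up to error $O(\mu_k)$, and since $s$ ranges freely over $(0,r/20]$ after $\mu$ is fixed, you have no control over $\mu_k/s_k$ in the contradiction sequence.

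The paper avoids this circularity by establishing the comparison angle \emph{directly}, before any rescaling at scale $\ell(\gamma)$. The key is Claim~\ref{omega}: for boundary points $x,w\in\partial X$ with $d(x,y)\ge r/16$ and $w$ separated from $y$ by $x$, one has $\tilde\angle wxy>\pi-\omega(\mu)$ with $\omega(\mu)\to 0$. When $d(w,x)\ge r/16$ this follows from boundary $\mu$-goodness by the obvious limit, but the extension to $w$ arbitrarily close to $x$ (which is exactly the $s_\infty=0$ difficulty) is a direct crossing-geodesic monotonicity argument exploiting that the cone angle at a boundary point is at most $\pi$. One then approximates $z\in\nu_\xi(\gamma)$ by a boundary point at distance $\le(1+3\xi)\xi\ell(\gamma)$ (via Lemma~\ref{goodrect}) and uses the law of cosines to transfer the estimate from that boundary point to $z$. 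Only with the comparison angle already in hand does the paper invoke rescaling and limits for the remaining assertions.
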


\begin{proof}  First we show the following.

\begin{claim}\label{omega}
There is $\omega$ depending only on $\mu$ and going to zero with
$\mu$ such that the following holds. For any point $x\in \partial
X\cap B(y,15r/16)$ with $d(x,y)\ge r/16$ and any $w\in
\partial X\cap B(y,r)$ that is separated from $y$ by $x$ along
$\partial X\cap B(y,r)$, the comparison angle $\tilde\angle
wxy>\pi-\omega$.
\end{claim}

\begin{proof}
First consider the point $w'\in \partial X\cap B(y,r)$ that is
separated from $y$ by $x$ and lies at distance $\ge r/16$ from $x$.
Since $B(y,r)$ is boundary $\mu$-good at $z$ on scale $r$, it
follows that there is $\omega$ depending only on $\mu$ and going to
zero as $\mu\rightarrow 0$ so that the comparison angle $\tilde
\angle w'xy>\pi-\omega$. Next, we claim that for any $w\in \partial
X\cap B(y,r)$ lying in the interior of the sub-interval of $\partial
X$ with endpoints $x$ and $w'$ we have $\tilde\angle wxy\ge
\tilde\angle w'xy$. To see this  consider a geodesics $\alpha$ from
$y$ to $w$ and $\beta$ from $x$ to $w'$. These must cross at exactly
one point, say $u$. Also, let $\alpha'$ be a geodesic from $y$ to
$x$ and $\beta'$ be a geodesic from $x$ to $w$. Then by the
monotonicity of comparison angles $\tilde\angle uxy\ge \tilde \angle
w'xy$. Also by monotonicity we have $\tilde\angle wxu+\tilde\angle
uxy$ is at most the sum of the angle between $\alpha'$ and $\beta$
at $x$ and the angle between $\beta$ and $\beta'$ at $x$. But since
$x\in\partial X$, the sum of the angles between these geodesics is
at most $\pi$. It follows that $\tilde\angle yxu+\tilde\angle uxw\le
\pi$. Using this we see that $\tilde\angle yxw\ge \tilde \angle
yxu+\tilde\angle uxw$. Thus, $\tilde \angle yxw\ge \tilde\angle
yxu\ge \tilde\angle yxw',$ completing the proof of the claim.
\end{proof}

Let $z\in \nu_\xi(\gamma)$. Then $d(z,e_+)>\ell(\gamma)/4$ and
$d(z,y)\ge r/4$. According to Lemma~\ref{goodrect}, $z$ is within
$(1+2\xi)\xi\ell(\gamma)$ of a point $q\in \gamma\cap
\nu_\xi(\gamma)$ and every point of $\gamma\cap\nu_\xi(\gamma)$ is
within $\xi^2\ell(\gamma)$ of a point in the boundary arc with
endpoints $e_+$ and $e_-$. It follows that $z$ is within
$(1+3\xi)\xi\ell(\gamma)$ of a point $w$ in the arc with endpoints
$e_+$ and $e_-$ of $\partial B(y,r)$. Since $\ell(\gamma)<r$, it
follows from the law of cosines that there is $\omega'$ that goes to
zero as $\omega$ and $\xi$ both go to zero, so that the comparison
angle $\tilde\angle e_+zy$ is at least $\pi-\omega'$. This proves
that given any $\omega>0$, the angle
$\tilde\angle yze_+<\pi-\omega$ provided that $\xi$ and $\mu$ are
sufficiently small.

With this estimate, the result now follows immediately by rescaling by to make
 $\ell(\gamma)=1$ taking limits.

\end{proof}

\begin{cor}\label{levelset}
Let $\xi>0$ be given; fix $\mu>0$ such that Lemma~\ref{goodrect} and
Corollaries~\ref{flatint} and~\ref{flat/ball} hold. Fix $a>0$ and
let $\mu''>0$ be such that Proposition~\ref{bdrycone} for these
values of $\mu,a$ and $\mu''$. Then there are $s'_1$ and $s'_2$ such
that the following holds. Suppose that $X=B(x,1)$ is boundary
$\mu''$-good on scale $1$ at $x$ and of area $\ge a$. Then for any
$b\in(1/8,7/8)$, the metric sphere $S(x,b)$ is an interval with
endpoints $y_1,y_2$ in $\partial X$. The space $X$ is boundary
$\mu$-flat at $y_i$ on all scales $\le s_1'$ and there are geodesics
$\gamma_i$ that are $\xi$-approximations to $\partial X$ on scale
$s_1'$ such that $S(x,b)$ is contained in the union of: (i) the open
subset of points at which $X$ is interior $\mu$-flat on all scales
$\le s'_2$ and
$\nu_{\xi^2,[-\ell(\gamma_1)/8,\ell(\gamma_1)/8]}(\gamma_1)\cup
\nu_{\xi^2,[-\ell(\gamma_2)/8,\ell(\gamma_2)/8]}(\gamma_2)$.
\end{cor}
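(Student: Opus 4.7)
The plan is threefold: extract the coarse picture from Proposition~\ref{bdrycone}, construct geodesics $\gamma_i$ near the two boundary endpoints of $S(x,b)$, and then control $S(x,b)$ inside the rectangles $\nu_{\xi^2}(\gamma_i)$ via Corollary~\ref{flat/ball}. For the given $\mu, a$, Proposition~\ref{bdrycone} supplies $\mu''$ and constants $s_1, s_2 > 0$ so that, for any $b \in (1/8, 7/8)$, the sphere $S(x,b)$ is an arc meeting $\partial X$ exactly in two points $y_1, y_2 \in \partial X \cap (B(x, 7/8) \setminus B(x, 1/8))$; $X$ is boundary $\mu$-flat at each $y_i$ on scale $s_1$; and any point of the annular region is either at distance $\ge s_1/4$ from $\partial X$ and interior $\mu$-flat on scale $s_2$, or within $s_1/4$ of $\partial X$ and interior $\mu$-flat on all scales $\le d(\cdot, \partial X)/2$. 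I would set $s_1' = s_1/5$ and $s_2' \le \min\{s_2,\, s_1/8,\, \xi^2 s_1'/40\}$; Lemma~\ref{flatinherits} then makes any point $z$ with $d(z, \partial X) \ge 2s_2'$ interior $\mu$-flat on all scales $\le s_2'$.

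Near each $y_i$ the boundary $\mu$-flatness on scale $s_1 \ge 5 s_1'$ (via Lemma~\ref{bdryflatinh}) lets me pick two boundary points $e_\pm \in \partial X$ on opposite sides of $y_i$ along $\partial X$, each at distance approximately $s_1'/2$ from $y_i$, and take $\gamma_i$ to be a minimizing geodesic between them. Then $\ell(\gamma_i) \in [s_1'/10, s_1']$, $\gamma_i \subset B(y_i, s_1')$, and $y_i$ is near the midpoint of $\gamma_i$, so $|f_{\gamma_i}(y_i)|$ is much smaller than $\ell(\gamma_i)$. The point $y_i$ serves as a $\mu$-control point for $\gamma_i$, and the conclusions of Lemma~\ref{goodrect} and Corollary~\ref{flatint} apply to $\gamma_i$ directly, so $\gamma_i$ is a $\xi$-approximation to $\partial X$ on scale $s_1'$.

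For the covering: any $z \in S(x,b)$ with $d(z, \partial X) \ge 2s_2'$ lies in the interior $\mu$-flat region by the choice of $s_2'$. If instead $d(z, \partial X) < 2 s_2'$, then since the arc $S(x,b)$ meets $\partial X$ only at $y_1, y_2$ and leaves each endpoint transversely, $z$ lies on a short sub-arc near one of them, say $y_1$, and in particular $z \in \bar\nu_{\xi, [-(.24)\ell, (.24)\ell]}(\gamma_1)$. Corollary~\ref{flat/ball} applied with $L = S(x,b)$, $\gamma = \gamma_1$, $y = x$ then identifies $S(x,b) \cap \bar\nu_\xi(\gamma_1)$ as an arc from $y_1$ to the side $h_{\gamma_1} = \xi \ell(\gamma_1)$ on which $f_{\gamma_1}$ varies by at most $\xi^2 \ell(\gamma_1)$. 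Combined with $|f_{\gamma_1}(y_1)| \ll \ell(\gamma_1)$ this yields $|f_{\gamma_1}(z)| \le \ell(\gamma_1)/8$, while Lemma~\ref{goodrect}(1) gives $h_{\gamma_1}(z) \le d(z, \partial X) + \mu' \le 2 s_2' + \mu'$, where $\mu'$ is the actual Hausdorff distance between $\gamma_1 \cap \nu_\xi(\gamma_1)$ and $\partial X \cap \nu_\xi(\gamma_1)$.

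The main technical obstacle is that Lemma~\ref{goodrect}(1) only guarantees $\mu' \le \xi^2 \ell(\gamma_1)$ in general, which yields the too-weak estimate $h_{\gamma_1}(z) \le 2 s_2' + \xi^2 \ell(\gamma_1)$. The resolution exploits that the bound $\xi^2 \ell$ in Lemma~\ref{goodrect}(1) arises from a Gromov-Hausdorff limit as $\mu \to 0$: by further shrinking $\mu$ (while preserving validity of Lemma~\ref{goodrect} and Corollaries~\ref{flatint},~\ref{flat/ball}) one arranges $\mu' \le \xi^2 s_1'/40$, and then $h_{\gamma_1}(z) \le 2 s_2' + \mu' < \xi^2 s_1'/10 \le \xi^2 \ell(\gamma_1)$, placing $z \in \nu_{\xi^2, [-\ell(\gamma_1)/8, \ell(\gamma_1)/8]}(\gamma_1)$ as required.
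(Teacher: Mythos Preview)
Your argument has a genuine gap at the step you yourself flag as the ``main technical obstacle.'' You need $h_{\gamma_1}(z)<\xi^2\ell(\gamma_1)$, but your triangle-inequality bound only gives $h_{\gamma_1}(z)\le d(z,\partial X)+\xi^2\ell(\gamma_1)$, which is never strictly less than $\xi^2\ell(\gamma_1)$. Your proposed rescue---shrink $\mu$ further so that the Hausdorff distance $\mu'$ between $\gamma_1\cap\nu_\xi(\gamma_1)$ and $\partial X\cap\nu_\xi(\gamma_1)$ drops below $\xi^2 s_1'/40$---does not work as written, for two reasons. First, Lemma~\ref{goodrect}(1) as stated gives only the bound $\xi^2\ell(\gamma)$; the claim that this improves to an arbitrarily small quantity as $\mu\to 0$ is a strengthening not recorded anywhere, so you would be invoking an unstated lemma. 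Second, even if that strengthening were available, the hypothesis of the corollary fixes $\mu$ (depending only on $\xi$) \emph{before} $a$ is given, whereas $s_1'$ comes from Proposition~\ref{bdrycone} and depends on $a$; so you cannot make $\mu$ small in terms of $s_1'$ without reversing the order of quantifiers in the statement. A secondary gap: the sentence ``since the arc $S(x,b)$ \ldots\ leaves each endpoint transversely, $z$ lies on a short sub-arc near one of them'' is not justified---topological transversality at the endpoints does not prevent the arc from dipping back close to $\partial X$ elsewhere, and you need the monotonicity of $d(x,\cdot)$ along $\partial X$ (Claim~\ref{omega}) to locate the nearest boundary point.

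The paper's proof avoids the $h$-bound entirely by arguing in the other direction: it shows that if $z\in S(x,b)\setminus(\nu_{\xi^2}(\gamma_1)\cup\nu_{\xi^2}(\gamma_2))$ then $d(z,\partial X)\ge\xi^2 s_1$. The key geometric observation is that a short geodesic $\zeta$ from $z$ to its nearest boundary point $w$ forces $w\in\nu_{\xi^2}(\gamma_i)$ (by monotonicity of $d(x,\cdot)$ along $\partial X$), and then $\zeta$ must exit $\nu_{\xi^2}(\gamma_i)$; the $f_{\gamma_i}$-estimates from Corollary~\ref{flat/ball} rule out exit through the ends, so $\zeta$ exits through the side and hence crosses $\gamma_i$, giving $|\zeta|\ge\xi^2\ell(\gamma_i)=\xi^2 s_1$. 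This crossing argument is what your proposal is missing; once you have it, no extra smallness of $\mu$ is needed and one can take $s_1'=s_1$, $s_2'=\min(s_2,\xi^2 s_1/2)$.
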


\begin{proof}
Fixing $\xi,\mu,a,\mu''$ so that Proposition~\ref{bdrycone} holds,
we take $s_1$ and $s_2$ as in Proposition~\ref{bdrycone}. Then by \S
11 of \cite{BGP} for any $b\in (1/8,7/8)$ the metric sphere $S(x,b)$
is an interval with endpoints in $\partial X$. We denote these
endpoints by $y_1$ and $y_2$. Let $\gamma_i$ be a geodesic of length
$s_1$ with endpoints in $\partial X$ equidistance from $y_i$.
\begin{claim}
Every point of $S(x,b)\setminus \left( \nu_{\xi^2}(\gamma_1)\cup
\nu_{\xi^2}(\gamma_2)\right)$ is distance at least $\xi^2s_1$ from
$\partial X$.
\end{claim}

\begin{proof}
Let $z\in S(x,b)\setminus \left( \nu_{\xi^2}(\gamma_1)\cup
\nu_{\xi^2}(\gamma_2)\right)$ and suppose that $\zeta$ is a geodesic
from $z$ to a point $w\in \partial X$ with the length of $\zeta$
being less than $\xi^2s_1$. Then we have $|d(w,x)-b|<\xi^2s_1$. By
the first statement in Corollary~\ref{flat/ball}, provided that
$\xi$ is sufficiently small, the value of the function $d(x,\cdot)$
at the endpoints of $\nu_{\xi^2}(\gamma_i)$ differs by at least
$3s_1/8$. Since the endpoints of $\gamma_i$ are equidistance from
$y_i$, it follows that the distance of each of these the endpoints
from $x$ differs from $b$ by at least $s_1/8$. Since by
Claim~\ref{omega}, the distance from $x$ is monotone along $\partial
X\cap \left(B(x,7/8)\setminus B(x,1/8)\right)$, it follows that on
$\partial X\setminus\left(\nu_{\xi^2}(\gamma_1)\cup
\nu_{\xi^2}(\gamma_2)\right)$ the distance from $x$ takes no value
within $s_1/8$ of $b$. Hence, the point $w\in
\nu_{\xi^2}(\gamma_1)\cup \nu_{\xi^2}(\gamma_2)$. By symmetry we can
suppose that $w\in \nu_{\xi^2}(\gamma_1)$. This argument shows that
the distance from $w$ to the endpoints of $\partial X\cap
\nu_{\xi^2}(\gamma_1)$ is at least $s_1/16$. Since $w\in
\nu_{\xi^2}(\gamma_1)$, the geodesic $\zeta$ must cross the frontier
of $\nu_{\xi^2}(\gamma_1)$.  But the distance from $x$ of any point
in the ends of $\nu_{\xi^2}(\gamma_1)$ is within $200\xi^2 s_1$ of
the distance from $x$ of the corresponding  endpoint of $\partial
X\cap \nu_{\xi^2}(\gamma_1)$. This means that the distance from $w$
to the ends of $\nu_{\xi^2}(\gamma_1)$ is greater than $s_1/32$.
Provided that $\xi$ is sufficiently small, this means that $\zeta$
cannot cross the ends of $\nu_{\xi^2}(\gamma_1)$, and hence it must
cross the side of this neighborhood. It then also crosses the
geodesic $\gamma_1$ and hence its length is at least $\xi^2s_1$,
which is a contradiction. This contradiction proves the claim.
\end{proof}

Now every point $z\in
S(x,b)\setminus\left(\nu_{\xi^2}(\gamma_1)\cup\nu_{\xi^2}(\gamma_2)\right)$
is either not within $s_1/4$ of $\partial X$, in which case by
Lemma~\ref{bdrycone} the space $X$ is interior $\mu$-flat on all
scales $\le s_2$ at $z$, or $z$ is within $s_1/4$ of $\partial X$,
and $X$ is interior $\mu$-flat at $z$ on all scales less than or
equal to $d(z,\partial X)/2$, and by the previous claim
$d(z,\partial X)\ge \xi^2s_1$. Taking $s_1'=s_1$ and $s_2'={\rm
min}(s_2,\xi^2s_1/2)$ then gives the result.
\end{proof}

\begin{defn}
If $B(x,7/8)\setminus B(x,1/8)$ satisfies the conclusions of
Corollary~\ref{levelset}, then we say that it is a {\em
$(\xi,\mu,s'_1,s'_2)$-good strip.}
\end{defn}

\subsection{The covering}

Now we assemble all the local results to give a covering of a
standard $2$-dimensional ball.

\begin{thm}\label{summary}
The following holds for all $\xi>0$  sufficiently small, for all
$\mu>0$ less than a positive constant\footnote{This means that
$\mu_1(\xi)$ is a constant that depends on $\xi$. We shall use a
similar convention throughout.}  $\mu_1(\xi)$, and for all $a>0$.
There are positive constants $\delta$ and $r_0$,  depending on
$\xi,\mu$, and $a$,  with $r_0\le 10^{-3}$, such that for all
$s_1>0$ less than a positive constant $\tilde s_1(\xi,\mu,a,r_0)\le
r_0/20$ and for all $s_2>0$ less than a positive constant $\tilde
s_2(\xi,\mu,a,s_1)\le s_1$, for any standard $2$-dimensional ball
$X=B(x,1)$ of area $\ge a$, the ball $B(x,1/2)$ is covered by open
subsets of the following four types:
\begin{enumerate}
\item The open subset of points $y\in B(x,1/2)$ with the property
that $X$ is interior $\mu$-flat on all scales
$\le s_2$ at $y$.
\item The open subset of points that lie in the center of
neighborhoods $\nu_{\xi^2}(\gamma)$ where $\gamma$ is a $\xi$-approximation to
$\partial X$ at scale $s_1$ with $\mu$-control.
\item Open balls  $B(y,r'/4)$  for some $r_0\le r'=r'(y)\le 10^{-3}$ such
that the open ball $B(y,r')$ is interior $\mu$-good at $y$ at scale
$r'$ and angle $\le 2\pi-\delta$ and
$$\frac{1}{r'}B(y,7r'/8)\setminus \frac{1}{r'}B(y,r'/8)$$
is a $(\mu,s_2)$-good annular region.
\item Open balls $B(y,r'/4)$, for some $r_0\le r'=r'(y)\le 10^{-3}$,
such that $X$ is boundary $\mu$-good at $y$ on scale $r'$ and of
angle $\le \pi-\delta$ and also $\frac{1}{r'}B(y,7r'/8)\setminus
\frac{1}{r'}B(y,r'/8)$ is a $(\xi,\mu,s_1,s_2)$-good strip.
\end{enumerate}
\end{thm}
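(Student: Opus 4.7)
The plan is to choose the constants in the prescribed dependency order $\xi, \mu, (\delta, r_0), s_1, s_2$ by a chain of compactness arguments, and then to cover $B(x,1/2)$ by a four-fold case analysis on the location and tangent cone at each point. First fix $\xi$ and $\mu<\mu_1(\xi)$ small enough that all the local structure results developed earlier in Section~\ref{sec2D} (Lemmas~\ref{flatinherits},~\ref{bdryflatinh},~\ref{goodrect}; Corollaries~\ref{flatint},~\ref{flat/ball},~\ref{levelset}; Propositions~\ref{intcone},~\ref{bdrycone}) apply with $\mu$ appearing in their conclusions. The pair $(\delta, r_0)$ is then chosen by contradiction: were no such pair to exist, for each $n$ there would be a standard $2$-dimensional ball $X_n$ of area $\ge a$ and a point $y_n\in \overline{B(x_n,1/2)}$ whose tangent cone angle is $\le 2\pi-1/n$ in the interior case (resp.\ $\le \pi-1/n$ in the boundary case), yet for which no scale $r'\in [1/n, 10^{-3}]$ realizes the Type~3 (resp.\ Type~4) hypotheses at $y_n$. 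By Proposition~\ref{ballscon} and Lemma~\ref{decconeangle} we may pass to a Gromov-Hausdorff convergent subsequence with limit $(X_\infty, y_\infty)$, a standard ball of area $\ge a$ whose tangent cone at $y_\infty$ has angle strictly less than $2\pi$ (resp.\ $\pi$). Lemma~\ref{conelimit} combined with Propositions~\ref{intcone} and~\ref{bdrycone} furnishes a fixed scale $r_\infty'>0$ at which the required goodness plus annular-region (or strip) conditions hold at $y_\infty$; this property passes to $(X_n, y_n)$ for $n$ large, giving the contradiction. Parallel compactness arguments determine $s_1\le r_0/20$, so that the $\xi$-approximation construction of Lemma~\ref{goodrect} works uniformly near every near-flat boundary point, and then $s_2\le s_1$ small enough for the annular-region and strip conditions of Proposition~\ref{intcone} and Corollary~\ref{levelset}, and comparable to $\xi^2 s_1$.

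With these constants in hand, each $y\in B(x,1/2)$ is placed in one of four types. If $y$ lies within $r'/4$ of some $y^*\in \partial X$ whose tangent cone has angle $\le \pi-\delta$, the choice of $r_0$ provides $r'\in [r_0,10^{-3}]$ such that $B(y^*,r')$ is boundary $\mu$-good at $y^*$ on scale $r'$ of angle $\le \pi-\delta$, and Corollary~\ref{levelset} makes $B(y^*,r'/4)$ a Type~4 neighborhood containing $y$. Else if $y$ lies within $r'/4$ of an interior point $y^*$ whose tangent cone has angle $\le 2\pi-\delta$, the analogous application of Proposition~\ref{intcone} yields a Type~3 neighborhood $B(y^*,r'/4)$ containing $y$. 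Otherwise every tangent cone in a neighborhood of $y$ is near-flat. If $y$ is far from $\partial X$ (distance $\ge s_2$), Lemma~\ref{flatinherits} gives interior $\mu$-flatness of $X$ at $y$ on all scales $\le s_2$, placing $y$ in Type~1. If instead $y$ is close to $\partial X$, so that its nearest boundary point $y^*$ has tangent cone angle $>\pi-\delta$, Lemma~\ref{bdryflatinh} provides boundary $\mu$-flatness of $X$ at $y^*$ on scales $\le 5s_1$, and Lemma~\ref{goodrect} furnishes a $\xi$-approximation geodesic $\gamma$ of length in $[s_1/10, s_1]$ with $\mu$-control point $y^*$, chosen so that $f_\gamma(y)=0$ and $y\in \nu_{\xi^2}(\gamma)$, placing $y$ in Type~2. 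The condition $s_2\le \xi^2 s_1$ guarantees no gap between the ``interior'' and ``near-boundary'' sub-cases.

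The main obstacle will be this last case: explicitly constructing the $\xi$-approximation geodesic $\gamma$ with $y$ exactly at its center. The geodesic must simultaneously satisfy $\ell(\gamma)\in [s_1/10,s_1]$, be contained in $B(y^*,s_1)$ for some $\mu$-control boundary point $y^*$, verify both the conclusions of Lemma~\ref{goodrect} and Corollary~\ref{flatint}, and have its endpoints on $\partial X$ symmetric about the foot of the perpendicular from $y$ so that the midpoint condition $f_\gamma(y)=0$ is realized with $y$ in the $\xi^2\ell(\gamma)$-neighborhood of $\gamma$. The construction is done by performing the (tautological) symmetric choice in the model half-plane $\Ar\times [0,\infty)$ tangent to $X$ at $y^*$ and transferring it back to $X$ via Gromov-Hausdorff closeness of $B(y^*,5s_1)$ (rescaled) to a ball in that model. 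Uniformity across all standard balls of area $\ge a$ is extracted by a final compactness argument on the class of such balls, which determines the upper bound $\tilde s_1(\xi,\mu,a,r_0)$ and completes the choice of constants.
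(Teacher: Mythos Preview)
Your overall strategy---compactness plus a four-way case split on the tangent cone---is the same as the paper's, but the way you stage the choice of constants has a genuine gap. In your contradiction argument for $(\delta, r_0)$ you take points $y_n$ with tangent cone angle $\le 2\pi - 1/n$ and assert that the limit $y_\infty$ has cone angle strictly less than $2\pi$. That inference fails: if $X_n$ is the flat circular cone of angle $2\pi - 1/n$ with cone point $y_n$, then $(X_n, y_n)$ converges to the Euclidean plane and the limiting angle is exactly $2\pi$. In that event Proposition~\ref{intcone} does not apply at $y_\infty$, no Type~3 neighborhood is produced, and your contradiction collapses. The boundary version with $\pi - 1/n$ has the identical defect. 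A related uniformity problem infects your direct case analysis: knowing that the tangent cone at $y$ has angle $> 2\pi - \delta$ does not by itself give interior $\mu$-flatness at $y$ on a \emph{fixed} scale $s_2$; Lemma~\ref{flatinherits} needs $\nu$-flatness at a definite input scale, and that scale depends on the point.

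The paper sidesteps both issues by running a \emph{single} contradiction in which all four parameters $\delta_k, r_{0,k}, s_{1,k}, s_{2,k}$ tend to zero together and the counterexample point $y_k$ is assumed to lie in \emph{none} of the four types. After extracting a limit and examining the tangent cone at $y_\infty$, whichever of the four cone shapes occurs (circular of angle $2\pi$; circular of angle $<2\pi$; flat in $\Ar^2$ of angle $\pi$; flat of angle $<\pi$) forces $y_k$ into the matching type (1, 3, 2, 4 respectively) for large $k$, via Lemmas~\ref{flatinherits} and~\ref{bdryflatinh}, Proposition~\ref{intcone}, and Corollary~\ref{levelset}. The point is that the four types form a partition governed by the \emph{limiting} cone, so you cannot isolate the conical types (3 and 4) without simultaneously allowing the flat types (1 and 2) as possible outcomes; the constants $\delta, r_0, s_1, s_2$ must be chosen jointly, not in stages.
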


\begin{proof}
Fix $\xi>0$ sufficiently small and let $\mu>0$ be as in
Corollary~\ref{levelset} for this value of $\xi$. Let $\nu$ and
$\nu'$ be the values associated to $\mu$ by Lemma~\ref{flatinherits}
and Lemma~\ref{bdryflatinh}, respectively. Fix $a>0$. Suppose that
the result is not true. Then take decreasing sequences
$\{\delta_k,r_k,s_{1,k},s_{2,k}\}$ tending to zero, and suppose that
there are examples $X_k=B(x_k,1)$, each satisfying the hypotheses
for the given values of $\xi,\mu$, and $a$ and points $y_k\in
B(x_k,1/2)$ not contained in an open set of any of the four types
for any fixed values of the parameters
$\delta_k,r_{0,k},s_{1,k},s_{2,k}$. Passing to a subsequence we can
suppose that the $X_k$ converge to $X=B(\bar x,1)$ and the $y_k$
converge to $y$ in the closure of $B(\bar x,1/2)$. Since the $X_k$
all have area at least $a$, $X$ is a standard $2$-dimensional ball
of area at least $a$. By Lemma~\ref{conelimit} given any
$\epsilon>0$, there is a constant $r=r(y)>0$ such that
$\frac{1}{r}B(y,r)$ is within $\epsilon$ in the Gromov-Hausdorff
distance to the unit ball in a cone $C$. We can assume that $r\le
10^{-3}$. Because the area of $X$ is $\ge a$, there is $a'>0$,
depending only on $a$, such that the area of $C$ is at least $a'$.
The cone is either a flat cone in $\Ar^2$, if $y\in
\partial X$, or a circular cone, if $y\in {\rm int}\, X$.
We consider the various cases.

\noindent {\bf Case 1: The cone is a circular cone of angle $2\pi$.}
Provided that we have chosen  $\epsilon<\nu$, the ball $X$ is
interior $\nu$-flat at $y$ on scale $s$ and hence for all $k$
sufficiently large $X_k$ is interior $\nu$-flat at $y_k$ on scale
$s$ and hence by Lemma~\ref{flatinherits} for all $k$ sufficiently
large $X_k$ is $\mu$-flat at $y_k$ on all scales $\le s$. This is a
contradiction since $s_{2,k}<s$ for all $k$ sufficiently large.

\noindent {\bf Case 2: The cone is a circular cone of angle
$2\pi-\delta$ for some $\delta>0$.} In this case provided that we
have chosen $\epsilon$ less than the $\mu'$ determined by $\mu$ and
$a'$ by Proposition~\ref{intcone}, it follows  that
$\frac{1}{r}B(y,r)$ is interior $\mu'$-good at $y$ on scale $1$. The
same is true for the $\frac{1}{r}B(y_k,r)$ for all $k$ sufficiently
large. Hence, by Proposition~\ref{intcone} there is $s_0=s_0(a)$
such that for all $k$ sufficiently large, the region
$\frac{1}{r}B(y_k,7r/8)\setminus \frac{1}{r}B(y_k,r/8)$ is a
$(\mu,s_0(a))$-good annular region. This is a contradiction since
$r_{0,k}<r\le 10^{-3}$ and $\delta_k<\delta$ and $s_{s,k}<s_0$ for
all $k$ sufficiently large.

\noindent {\bf Case 3: The cone is a flat cone in $\Ar^2$ of cone
angle $\pi$.} This means that the limit is boundary flat at $y$ on
scale $r$. Provided that we choose $\epsilon$ less than the $\nu'$
determined by $\mu$  by Lemma~\ref{bdryflatinh}, it follows that for
all $k$ sufficiently large, $X_k$ is boundary $\nu'$-flat at $y_k$
on scale $r$. Hence, by Lemma~\ref{bdryflatinh} for all $k$
sufficiently large $X_k$ is boundary $\mu$-flat at $y_k$ on all
scales $\le r$. Choose a geodesic $\gamma_k$ of length $r/20$ with
endpoints on $\partial X_k$ equidistant from $y_k$. According to
Lemma~\ref{goodrect} this geodesic is a $\xi$-approximation to
$\partial X_k$. Clearly, the limit of the $\gamma_k$ is a geodesic
contained in $\partial X$ whose midpoint is $y$. In
particular, $\nu_{\xi^2}(\gamma)$ contains an entire neighborhood of
$y$, and hence for all $k$ sufficiently large the neighborhood
$\nu_{\xi^2}(\gamma_k)$ contains $y_k$.
Since the endpoints of $\gamma_k$ are equidistance for $y_k$, the point
$y_k$ lies in the center of this neighborhood.
This is a contradiction
since for all $k$ sufficiently large $s_{1,k}<r/20$.

\noindent{\bf Case 4: The cone is a flat cone in $\Ar^2$ of cone
angle $\pi-\delta$ for some $\delta>0$.} Provided that we have
chosen $\epsilon$ less than the constant $\mu''$ associated to $\mu$
and $a'$ by Proposition~\ref{bdrycone}, it follows that $X$ is
boundary good at $y$ of angle $\pi-\delta$ on scale $r$. Hence, the
same is true for $X_k$ at $y_k$ for all $k$ sufficiently large. It
then follows from Corollary~\ref{levelset} that there are constants
$s_1',s_2'>0$ such that $\frac{1}{r}B(y_k,7r/8)\setminus
\frac{1}{r}B(y_k,r/8)$ is a $(\xi,\mu,s_1',s_2')$-good product
region. This is a contradiction since $\delta_k<\delta$,
$s_{1,k}<s'_1$ and $s_{2,k}<s'_2$ for all $k$ sufficiently large.

In all cases we have arrived at a contradiction, proving the result.
\end{proof}

\subsection{Transition between the $2$- and  $1$-dimensional part}

We need to understand the passage between the $1$- and
$2$-dimensional parts of the $M_n$. A $1$-dimensional standard ball
$B(x,1)$ is either an open interval of length $2$ or is a half-open
interval of length $\ell$ with $1\le \ell\le 2$.

\begin{lem}\label{smarea1D}
The following hold for all $\beta>0$ and for all $a>0$ less than a
positive constant $a_2(\beta)$. Let $B(x,1)$ be a standard
$2$-dimensional ball  and suppose that there is a point $y\in
B(x,24/25)$ with the area of $B(y,1/100)$ being at most $a$. Then
$B(x,1)$ is within $\beta$ in the Gromov-Hausdorff distance of a
standard $1$-dimensional ball $J$.
\end{lem}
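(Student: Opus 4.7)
The plan is to argue by contradiction using Gromov-Hausdorff compactness of Alexandrov balls. Suppose the lemma fails for some $\beta>0$: then there exist sequences $a_n\to 0$, standard $2$-dimensional balls $X_n=B(x_n,1)$, and points $y_n\in B(x_n,24/25)$ with $\mathrm{area}(B(y_n,1/100))\le a_n$, yet no $X_n$ is within Gromov-Hausdorff distance $\beta$ of any standard $1$-dimensional ball. By Proposition~\ref{ballscon}, after passing to a subsequence $(X_n,x_n)$ converges to an Alexandrov ball $(X,\bar x)$ of radius $1$ with curvature $\ge -1$ and dimension $\le 2$, and $y_n$ converges to some $\bar y\in \overline{B(\bar x,24/25)}$. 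Since each $X_n$ contains points at every distance $<1$ from $x_n$, so does $X$, and $X$ is itself a standard ball of dimension $1$ or $2$.

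If $\dim X=1$, then by the classification of $1$-dimensional standard balls recalled at the start of this section, $X$ is a standard $1$-dimensional ball; but then for $n$ large, $X_n$ is within $\beta$ of $X$ in the Gromov-Hausdorff distance, a contradiction. The remaining case $\dim X=2$ will be ruled out by showing $\mathrm{area}(B(y_n,1/100))$ is bounded below by a positive constant independent of $n$, contradicting $\mathrm{area}(B(y_n,1/100))\le a_n\to 0$.

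To establish the area lower bound, use density of the regular set $R_\delta(X)$ (\S 6 of \cite{BGP}) to choose $\bar z\in R_\delta(X)$ with $d(\bar y,\bar z)<1/400$, and let $\{a_i,b_i\}_{i=1}^2$ be a $(2,\delta)$-strainer at $\bar z$. Pick $z_n\in X_n$ with $z_n\to\bar z$ and approximations $a_{i,n},b_{i,n}\to a_i,b_i$; by convergence of comparison angles (item (3) following Proposition~\ref{ballscon}), for $n$ large $\{a_{i,n},b_{i,n}\}$ is a $(2,2\delta)$-strainer at $z_n$ of size bounded below uniformly in $n$. Proposition~\ref{goodregion} then supplies $r_0>0$, independent of $n$, such that $B(z_n,r_0)$ is $(1+\epsilon)$-bilipschitz to an open subset of $\Ar^2$ containing a ball of radius $r_0/(1+\epsilon)$, and hence has area at least $\pi r_0^2/(1+\epsilon)^4$. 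Taking $r_0<1/400$ and noting that $d(y_n,z_n)<1/200$ for $n$ large gives $B(z_n,r_0)\subset B(y_n,1/100)$, and the contradiction follows. The only delicate step, transferring the strainer at $\bar z$ to $z_n$ with uniformly controlled size so that the bilipschitz radius $r_0$ in Proposition~\ref{goodregion} does not degenerate, is standard via the openness of the strainer condition under Gromov-Hausdorff convergence, so no serious analytic difficulty arises; the real content of the proof is the dimensional dichotomy of the limit ball together with the incompatibility of area collapse at an interior point of a $2$-dimensional Alexandrov space.
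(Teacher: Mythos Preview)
Your proof is correct and follows the same contradiction-plus-compactness strategy as the paper. The paper's argument is terser: after passing to a limit $\bar B$, it simply asserts that the area condition forces $B(\bar y,1/100)$ to be $1$-dimensional (implicitly using that a ball in a $2$-dimensional Alexandrov space has positive $2$-dimensional Hausdorff measure), whereas you spell out the contrapositive explicitly by pulling back a strainer from the limit to obtain a uniform area lower bound in the approximating balls---essentially the same mechanism as Claim~\ref{vollimit}, one dimension down.
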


\begin{proof}
Fixing $\beta>0$ suppose that the result does not hold for any
$a>0$. Take a sequence $B(x_k,1)$ of standard $2$-dimensional balls
of area $a_k\rightarrow 0$ as $k\rightarrow \infty$ and points
$y_k\in B(x_k,24/25)$ for which the result does not hold. Passing to
a subsequence we can extract a limit $\bar B$ with the $y_k$
converging to $\bar y\in \bar B$. Because of the area condition, the
neighborhood $B(\bar y,1/100)$ must be  $1$-dimensional, and hence
$\bar B$ is a standard $1$-dimensional ball.
\end{proof}

\section{$3$-dimensional analogues}\label{4}

Now we discuss the structure of balls in a $3$-dimensional
Riemannian manifold that are close to the various $1$- and
$2$-dimensional balls that we have been discussing. Since we shall
need the results for $3$-dimensional balls near $2$-dimensional balls in
our study of $3$-dimensional balls near $1$-dimensional balls, we
start with the $2$-dimensional case. Recall that for any
$x\in M_n$ we denote by $g'_n(x)$ the rescaled metric
$\rho_n(x)^{-2}g_n$. Throughout this section we consider pairs
consisting of a point $x_n\in M_n$ and a constant $\lambda\ge
\rho_n(x_n)^{-2}$ with the property that $B_{\lambda g_n}(x_n,1)$ is
disjoint from $\partial M_n$. Of course, since $\lambda\ge
\rho_n(x_n)^{-2}$ the sectional curvatures of these balls is bounded
below by $-1$. Any time we refer to such $B_{\lambda g_n}(x_n,1)$, unless
we explicitly state the contrary,
 we are implicitly assuming that it is disjoint from the boundary.

\subsection{Generic interior points of $2$-dimensional Alexandrov spaces}

 We begin with a description of the $3$-dimensional part of a
Riemannian $3$-manifold $M$ that is near the `generic' part of a
$2$-dimensional Alexandrov space.

\begin{lem}\label{generic2D}
For all $\epsilon>0$ and any $0<s_2\le 1/2$, the following holds for
all $\mu>0$ less than a positive constant $\mu_2(\epsilon)$
 and for all $\hat\epsilon>0$ less than a positive constant
$\hat\epsilon_0(s_2,\epsilon)$.  Suppose  that the ball $B_{\lambda
g_n}(x_n,1)$ is within $\hat\epsilon$ of a $2$-dimensional
Alexandrov ball $B=B(\bar x,1)$ which is interior $\mu$-flat at
$\bar x$ on scale $s_2$. Then
 there exist an
embedding $\varphi\colon S^1\times B(0,\epsilon^{-1})\to M_n$ with
$x\in \varphi(S^1\times \{0\})$ and a constant
$\lambda'>\epsilon^{-2}\lambda$ such that the metric
$\varphi^*\lambda'g$ is within $\epsilon$ in the
$C^{[1/\epsilon]}$-topology to the product of the metric of length $1$ on the
circle and the restriction of the standard Euclidean
metric to $B(0,\epsilon^{-1})$. Lastly, there is a universal constant $C>0$
such that, measured using
$\lambda g_n$, the lengths of the circles in this product structure
are less than $C\hat\epsilon$.
\end{lem}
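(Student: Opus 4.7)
The plan is to argue by contradiction via a compactness argument. Fix $\epsilon>0$ and $s_2\in(0,1/2]$, and suppose no constants $\mu_2(\epsilon)$ and $\hat\epsilon_0(s_2,\epsilon)$ make the statement true. Then I extract sequences $\mu_k\to 0$, $\hat\epsilon_k\to 0$, indices $n_k$, points $x_k\in M_{n_k}$, rescalings $\lambda_k\ge\rho_{n_k}(x_k)^{-2}$, and $2$-dimensional Alexandrov balls $B_k$ interior $\mu_k$-flat at $\bar x_k$ on scale $s_2$, with $B_{\lambda_k g_{n_k}}(x_k,1)$ within $\hat\epsilon_k$ of $B_k$ in the Gromov--Hausdorff distance, yet admitting no pair $(\varphi_k,\lambda'_k)$ as in the conclusion. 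Restricting to $B_{\lambda_k g_{n_k}}(x_k,s_2)$ and setting $\lambda_{1,k}=s_2^{-2}\lambda_k$, the rescaled ball $B_{\lambda_{1,k}g_{n_k}}(x_k,1)$ is $\delta_k$-close in Gromov--Hausdorff distance to the unit flat disk in $\Ar^2$, with $\delta_k\le\mu_k+s_2^{-1}\hat\epsilon_k\to 0$; its sectional curvatures are at least $-s_2^2$ by Lemma~\ref{rholem} and the hypothesis $\lambda_k\ge\rho_{n_k}(x_k)^{-2}$, and every Gromov--Hausdorff fiber has diameter at most $\delta_k$.

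Next I zoom in a second time. Let $\ell_k$ denote the length of the shortest nontrivial geodesic loop at $x_k$ in $\lambda_{1,k}g_{n_k}$, and set $\lambda'_k=\ell_k^{-2}\lambda_{1,k}=s_2^{-2}\ell_k^{-2}\lambda_k$, so that in $\lambda'_k g_{n_k}$ the shortest loop at $x_k$ has unit length. Because the ball collapses to a $2$-dimensional limit, $\ell_k\to 0$; a direct estimate gives $\ell_k\le 2s_2^{-1}\hat\epsilon_k$. The sectional curvatures of $\lambda'_k g_{n_k}$ on any fixed bounded neighborhood of $x_k$ then tend to zero with $k$. A volume-comparison calculation, using the near-product structure at the $\lambda_{1,k}$-scale, yields a uniform lower bound $w'(R)>0$ on the $\lambda'_k g_{n_k}$-volume of every ball $B_{\lambda'_k g_{n_k}}(x_k,R)$; converting to $g_{n_k}$-balls of radius $R/\sqrt{\lambda'_k}\to 0$ puts me in the regime where Assumption~(3) of Theorem~\ref{7.4} supplies uniform bounds on the full jet of curvature of $\lambda'_k g_{n_k}$, independent of $k$, on any fixed bounded neighborhood of $x_k$.

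With these bounds Cheeger--Gromov compactness provides, along a subsequence, smooth pointed convergence $(M_{n_k},\lambda'_k g_{n_k},x_k)\to(M_\infty,g_\infty,x_\infty)$ to a complete, flat, orientable Riemannian $3$-manifold $M_\infty$ of quadratic volume growth containing a closed geodesic of length $1$ through $x_\infty$. The classification of complete flat orientable $3$-manifolds of quadratic growth (together with the constancy in the limit of the shortest-loop length near $x_\infty$, which I would derive from the uniform fiber-length control at the $\lambda'_k$-scale) identifies $(M_\infty,g_\infty)$ with the flat product $S^1\times\Ar^2$ of circle length $1$. Pulling back a standard product chart $\psi\colon S^1\times B(0,2\epsilon^{-1})\to M_\infty$ through the zero section via the Cheeger--Gromov approximating diffeomorphisms then produces, for all $k$ large, embeddings $\varphi_k\colon S^1\times B(0,\epsilon^{-1})\to M_{n_k}$ with $\varphi_k(S^1\times\{0\})\ni x_k$ and $\varphi_k^*\lambda'_k g_{n_k}$ within $\epsilon$ of the product metric in $C^{[1/\epsilon]}$, contradicting the failure hypothesis.

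Two numerical checks close the plan. The ratio $\lambda'_k/\lambda_k=s_2^{-2}\ell_k^{-2}\ge(2\hat\epsilon_k)^{-2}$ exceeds $\epsilon^{-2}$ once $\hat\epsilon_k<\epsilon/2$, which determines $\hat\epsilon_0(s_2,\epsilon)$. Each circle in the product structure has unit length in $\lambda'_k g_{n_k}$ and therefore length $\sqrt{\lambda_k/\lambda'_k}=s_2\ell_k\le 2\hat\epsilon_k$ in $\lambda_k g_{n_k}$, so the universal constant $C=2$ works. I expect the main technical obstacle to be arranging the volume non-collapsing required by Assumption~(3) after the second rescaling by $\ell_k^{-2}$: the choice of $\ell_k$ as the shortest-loop length is what keeps the Cheeger--Gromov limit genuinely three-dimensional rather than collapsing once more. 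A parallel delicate point, which the classification step must handle, is excluding a nontrivial rotation in the deck group of $M_\infty$ so that the limit is a true metric product rather than a twisted flat $S^1$-bundle.
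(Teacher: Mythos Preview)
Your overall strategy---contradiction, rescale, apply Assumption~(3) to extract a smooth $3$-dimensional limit, identify it---matches the paper's. The execution, however, has a circularity precisely at the step you flag as ``the main technical obstacle.'' You normalize by the length $\ell_k$ of the shortest nontrivial loop at $x_k$, but at this stage you only know that $B_{\lambda_{1,k}g_{n_k}}(x_k,1)$ is Gromov--Hausdorff close to a $2$-disk with curvature $\ge -s_2^2$; nothing yet guarantees that a nontrivial loop exists, and the existence of a short $S^1$-fiber is essentially the conclusion of the lemma. For the same reason your volume lower bound $w'(R)$ cannot come from a ``near-product structure at the $\lambda_{1,k}$-scale''---no such structure is yet in hand---so you cannot invoke Assumption~(3) of Theorem~\ref{7.4}. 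The paper avoids this by normalizing \emph{volume} instead: one rescales so that the unit ball has volume exactly half the Euclidean value. This forces the Gromov--Hausdorff limit to be $3$-dimensional, whereupon Proposition~\ref{smlimits} (which packages the use of Assumption~(3)) delivers the smooth, complete, non-negatively curved limit directly.

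The second gap you name yourself but do not close: ``constancy of the shortest-loop length'' does not distinguish a screw-motion quotient of $\Ar^3$ from $S^1\times\Ar^2$, since the translation length is constant in both. The paper's argument here is short and uses ingredients you already have: the interior $\mu_k$-flatness of $B(\bar x_k,1)$ on scale $s_2$ gives a $(2,\mu_k)$-strainer at $x_k$ whose size tends to infinity in the final metric, so the limit contains an isometrically embedded $\Ar^2$, and Corollary~\ref{prod} forces a metric splitting $\Ar^2\times Y$; the volume normalization then rules out $Y\cong\Ar$. (As a minor point, your bound $\ell_k\le 2s_2^{-1}\hat\epsilon_k$ drops the $\mu_k$ contribution to $\delta_k$; the paper instead reads off the $C\hat\epsilon$ fiber-length bound directly from the $\hat\epsilon$-closeness of the original ball $B_{\lambda g_n}(x_n,1)$ to a $2$-dimensional space.)
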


\begin{proof}
Let us first show that it suffices to prove the first conclusion for
$s_2=1/2$. For, suppose that we have established the conclusion in
this special case with constants $\mu_1(\epsilon)$ and
$\hat\epsilon_0(\epsilon,1/2)$, and let us consider the statement
for another value $0<s_2\le 1/2$. Then suppose for some
$\mu<\mu_1(\epsilon)$ and
$\hat\epsilon<2s_2\hat\epsilon_0(\epsilon,1/2)$ that we have
$B_{\lambda g_n}(x_n,1)$ within $\hat\epsilon$ of $B(\bar x,1)$, the
latter being interior $\mu$-flat $\bar x$ on scale $s_2$. Then
$B_{(\lambda/4s^2_2)g_n}(x_n,1)$ is within $\hat\epsilon/(2s_2)$ of
$\frac{1}{2s_2}B(\bar x,2s_2)$, and the latter is $\mu$-flat at $\bar x$ on
scale $1/2$. Since, by construction,
$\hat\epsilon/(2s_2)<\hat\epsilon_0(\epsilon,1/2)$, the result for
$s_2=1/2$ implies the existence of a constant $\lambda'$ as
required. (Of course, $\lambda'>(1/2s_2)\lambda$ since
$B_{(1/4s^2_2)\lambda g_n}(x_n,1)$ is close to a $2$-dimensional ball
whereas $B_{\lambda' g_n}(x_n,1)$ has $3$-dimensional volume bounded
away from zero.)

Thus, we can now assume that $s_2=1/2$. Fix $\epsilon>0$ and suppose
that the first conclusion does not hold for this constant. Then
there is are sequences $\mu_k\rightarrow 0$ and  $\hat\epsilon_k>0$
both tending to zero as $k\rightarrow \infty$ such that for each $k$
there is an index $n(k)$ and a point $x_{n(k)}\in M_{n(k)}$ and
constants $\lambda_{k}\ge \rho_{n(k)}(x_{n(k)})^{-2}$ so that the
ball $B_{\lambda_{k}g_{n(k)}}(x_{n(k)},1)$ is within
$\hat\epsilon_{k}$ of a $2$-dimensional Alexandrov ball
$B_{k}=B(\bar x_{k},1)$ that is interior $\mu_k$-flat at $\bar
x_{k}$ on scale $1/2$, yet no $x_{n(k)}$  satisfies the first
conclusion of the lemma. Fix a sequence of positive constants
$c_k\rightarrow \infty$ such that $c^2_k\hat\epsilon_{k}\rightarrow 0$
as $k\rightarrow \infty$. The $2$-dimensional balls $c_kB(\bar
x_k,1)$  are $\mu_k$-flat at $\bar x_k$ on scale $c_k/2$, and since
$c_k\rightarrow \infty$, these rescaled balls converge in the
Gromov-Hausdorff topology to $\Ar^2$. On the other hand, since
$c_k\hat\epsilon_k\rightarrow 0$, the balls
$c_kB_{\lambda_kg_{n(k)}}(x_{n(k)},1)$ also converge to the same
limit, $\Ar^2$, in the Gromov-Hausdorff topology.
 In particular, the volume of
the unit balls centered at $x_{n(k)}$ in
$c_kB_{\lambda_{k}g_{n(k)}}(x_{n(k)},1)$ tend to zero as
$k\rightarrow\infty$.

 Fix
$\omega$ equal to one-half the volume of the $3$-dimensional
Euclidean ball of radius $1$. We rescale, forming $\widetilde
B_k=\alpha_k B_{\lambda_kg_{n(k)}}(x_{n(k)},1))$  in such a way that
the volume of the unit ball in $\widetilde B_k$ centered at
$x_{n(k)}$ is $\omega$. These rescaling factors divided by $c_k$
tend to infinity, so that $\widetilde B_k$ has a $(2,\mu_k)$
strainer at $x_{n(k)}$ of size that tends to infinity as
$k\rightarrow\infty$. Let $(\widetilde B,\bar x)$ be the limit of a
subsequence. By Proposition~\ref{smlimits} this limit is a smooth,
complete manifold of non-negative curvature and without boundary,
and the convergence is a smooth. The existence of the
$(2,\mu_k)$-strainers of size going to infinity in the sequence
implies that there is an isometric copy of $\Ar^2$ through $\bar x$
in $\widetilde B$. Hence, by Corollary~\ref{prod}, $\widetilde B$
splits as a product of $\Ar^2$ with a complete, connected
$1$-manifold without boundary. This $1$-manifold cannot be $\Ar^1$
because the volume of the unit ball in $\widetilde B$ is one-half
the volume of the unit ball in Euclidean space. Thus, $\widetilde B$
is the product of a circle with $\Ar^2$. Rescaling again by a fixed
constant, we can make the limit the product of the circle of length
$1$ with $\Ar^2$. The conclusion of the lemma then holds for all $k$
sufficiently large by taking limits. This is a contradiction and
proves the existence of the map $\varphi$ as required.

From this and the fact that $B_{\lambda g_n}(x_n,1)$ is within
$\hat\epsilon$ of a $2$-dimensional ball, it is easy to see that the
lengths of the fibers are at most $C\hat \epsilon$ for some
universal constant $C$.
\end{proof}

\begin{defn}
Anytime we have an embedding $\varphi\colon S^1\times
B(0,\epsilon^{-1})\to M$ with $x\in \varphi(S^1\times \{0\})$ that
satisfies the conclusion of the previous lemma, we say that $x$ is
{\em the center of a $S^1$-product neighborhood with
$\epsilon$-control.} The {\em horizontal spaces} of an $S^1$-product
neighborhood are the subspaces $\varphi(\{\theta\}\times B(0,\epsilon^{-1}))$
for $\theta\in S^1$.
\end{defn}

We need a semi-local version of this result.

\begin{prop}\label{semilocal}
Fix $\epsilon'>0$ sufficiently small. Then there is
$\epsilon_0(\epsilon')>0$ such that the following hold for all
$\epsilon<\epsilon_0$. Let $0<\mu\le \mu_2(\epsilon)$ as in
Lemma~\ref{generic2D}. For any $d>0$ there is
$\bar\epsilon(\epsilon,\mu,d)>0$ such that the following holds for
all $\hat\epsilon<\bar\epsilon(\epsilon,\mu,d)$. Suppose that
$B_{\lambda g_n}(x_n,1)$ is within $\hat\epsilon$ of a standard $2$-dimensional
ball $B(\bar x,1)$. Suppose that $(a_1,a_2,b_1,b_2)$ is a
$(2,\mu)$-strainer at a point $\bar y\in B(\bar x,1/2)$ of size
$d$. Then there is a constant $d'>0$ depending only on $d$ such that
the following holds. Let $y_n\in B_{\lambda g_n}(x_n,1/2)$ be a
point within distance $\hat \epsilon$ of $\bar y$ and let $\tilde
a_1,\tilde a_2$ be points of $B_{\lambda g_n}(x_n,1)$ within
$\hat\epsilon$ of $a_1,a_2$. Then there is an open subset $U_n$ with
$B_{\lambda g_n}(y_n,d')\subset U_n\subset B_{\lambda g_n}(y_n,2d')$
such that the function $F=(f_1,f_2)$, where $f_i=d(\tilde
a_i,\cdot)$, determines a (topological) fibration of $U_n$ by circles over an open
topological ball. Furthermore, for each $x\in U_n$ there is an
$S^1$-product neighborhood with $\epsilon$-control centered at $x$,
and the fiber through $x$ of this fibration  makes an angle\footnote{By angle we mean the limit as
$q'\in S(q)$ approaches $q$ of angle between the direction of any
geodesic from $q$ to $q'$ and the horizontal subspace at $q$.} within $\epsilon'$ of
$\pi/2$ to the horizontal spaces of this $S^1$-product structure.
Lastly, this fiber is isotopic in that $S^1$-product neighborhood to
the $S^1$-factor.
\end{prop}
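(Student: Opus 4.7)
Proof proposal: The plan is to combine Lemma~\ref{generic2D}, which supplies an $S^1$-product chart at any point whose $2$-dimensional Alexandrov approximation is flat on a definite scale, with Lemma~\ref{2dregular}, which controls the fibers of an approximately linear map into $\Ar^2$ in such a chart. First, I would exploit the $(2,\mu)$-strainer at $\bar y$: by Proposition~\ref{goodregion}, this strainer produces a $(1+\nu)$-bilipschitz homeomorphism (with $\nu$ small when $\mu$ is small) from a neighborhood of $\bar y$ in $B(\bar x,1)$ of some radius $d''=d''(d,\mu)$ onto an open set of $\Ar^2$. Consequently, the limit ball is interior $\mu$-flat at every point of $B(\bar y,d'')$ on a uniform scale $s_2=s_2(d)$. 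Taking $\hat\epsilon$ sufficiently small, Lemma~\ref{generic2D} then applies at every point of $B_{\lambda g_n}(y_n,d'')$, producing at each such point a local $S^1$-product chart $\varphi\colon S^1\times B(0,\epsilon^{-1})\to M_n$ with $\epsilon$-control, and with the $S^1$-fibers of $g_n$-length $O(\hat\epsilon)$.

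In the second step, I would estimate, in each such chart, the directional derivatives of the distance functions $f_i=d(\tilde a_i,\cdot)$ via the cosine formula of Lemma~\ref{cosformula}. Strainers propagate to nearby points with comparable size and angle defect, so the gradients of $d(a_1,\cdot)$ and $d(a_2,\cdot)$ on the limit approach a pair of orthogonal unit vectors in the tangent plane at every point in a neighborhood of $\bar y$. Pulling back to $M_n$ modulo the Gromov--Hausdorff error $\hat\epsilon$, the horizontal derivatives of $f_1$ and $f_2$ in the $S^1$-product chart are therefore close to two orthogonal unit horizontal vectors, while the derivative along the collapsing $S^1$-fiber is $O(\hat\epsilon)$ since $f_i$ is $1$-Lipschitz and the fiber length is $O(\hat\epsilon)$. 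After aligning the chart's Euclidean axes $\partial_x,\partial_y$ with these two near-gradients, the hypotheses of Lemma~\ref{2dregular} are satisfied by $F=(f_1,f_2)$ in the chart.

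Third, Lemma~\ref{2dregular} then tells us that each fiber of $F$ meeting the inner half of the chart is a simple closed curve within $\epsilon'$ of being orthogonal to the horizontal spaces, intersecting each horizontal disk in a single point, and hence isotopic in the chart to the $S^1$-factor $\varphi(S^1\times\{0\})$. I would define $U_n:=F^{-1}(V)$ where $V$ is a Euclidean $2$-ball of radius $d'$ around $F(y_n)$ with $d'=d'(d)$ chosen so small (relative to $s_2$ and the charts' size) that every $F$-fiber over $V$ lies entirely in a union of overlapping $S^1$-product charts constructed above, and so that $B_{\lambda g_n}(y_n,d')\subset U_n\subset B_{\lambda g_n}(y_n,2d')$. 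Since the fibers of $F$ are intrinsically defined, the local circle fiberings produced by Lemma~\ref{2dregular} in each chart automatically agree on overlaps, yielding a global topological circle fibration $F\colon U_n\to V$ over an open topological $2$-ball, with each fiber isotopic in its ambient chart to the $S^1$-factor and making an angle within $\epsilon'$ of $\pi/2$ with the horizontal spaces.

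The main obstacle is obtaining these quantitative bounds uniformly across overlapping charts before passing to the limit: one needs that two $S^1$-product structures produced by Lemma~\ref{generic2D} at nearby points give comparable horizontal coordinates, so that the error controls furnished by Lemma~\ref{2dregular} in each chart are compatible where the charts meet. This compatibility is automatic in the limit (both structures recover the same tangent plane to the limit $2$-manifold), so a contradiction/compactness argument in the spirit of Lemma~\ref{generic2D} -- taking a hypothetical sequence of counterexamples with $\hat\epsilon_k\to 0$ and extracting a smooth limit flow in which all statements hold by direct inspection -- handles the uniform gluing and delivers the conclusion.
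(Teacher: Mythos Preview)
Your proposal is correct and follows essentially the same route as the paper: lift the strainer to the $3$-manifold, invoke Lemma~\ref{generic2D} to get an $S^1$-product chart at every point of a ball of radius $d'\ll d$, and then verify that the distance functions $f_1,f_2$ have the right directional-derivative behavior in such a chart so that the fibers of $F$ are circles nearly orthogonal to the horizontal spaces and isotopic to the $S^1$-factor. The only cosmetic difference is that you package the last step through Lemma~\ref{2dregular}, whereas the paper appeals directly to Section~11 of \cite{BGP}; your final worry about compatibility across overlapping charts is unnecessary, since each $F$-fiber has diameter $O(\hat\epsilon)$ and hence sits inside a single $S^1$-product chart, so no gluing is required.
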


\begin{proof}  For any $\mu'>0$, provided that $d'<<d$
and $\hat\epsilon$ are sufficiently small, choosing points $\tilde
b_1,\tilde b_2$ in $B_{\lambda g_n}(x_n,1)$ within $\hat \epsilon$
of $b_1,b_2$, the quadruple $\{\tilde a_1,\tilde a_2,\tilde
b_1,\tilde b_2\}$ is a $(2,\mu+\mu')$-strainer at any point
$B_{\lambda g_n}(y_n,d')$ of size $d/2$. By Lemma~\ref{generic2D}
provided that $\mu+\mu'$ is sufficiently small, for all
$\hat\epsilon$ sufficiently small (depending on $\mu+\mu'$ and $d'$)
every point of $B_{\lambda g_n}(y_n,d')$ is the center of an
$S^1$-product neighborhood with $\epsilon$-control. Clearly, the
geodesics from each of the four points of the strainer to any point
of this ball are almost horizontal in the $S^1$-product structure.
It follows from Section 11 of \cite{BGP} that the fibers of $F$ are
circles and that they are almost orthogonal to all geodesics from
the four points of the strainer, and hence they are almost
orthogonal to the horizontal spaces. All the errors go to zero $\mu\rightarrow 0$,
$d'\rightarrow 0$ and $\hat \epsilon\rightarrow 0$. Lastly, since
the restriction of $F$ to any horizontal space of an $S^1$-product
structure is a homeomorphism into, the fibers of the fibration
structure on $U_n$ `go around' the $S^1$-direction once and hence
are isotopic to the $S^1$-factor.
\end{proof}

This is a semi-local result: it is not small or the order of the
fiber but it is small on the order of the base. But there is a truly
global result obtained by piecing together the $S^1$-product
structures to form a global $S^1$-fibration.

\subsection{The global $S^1$-fibration}

\begin{prop}\label{s1glue}
Given $\epsilon'>0$,  the following holds for all  $\epsilon>0$ less
than a positive constant $\epsilon_1(\epsilon')$. Suppose that
$K\subset M$ is a compact subset and each $x\in K$ is the center of
an $S^1$-product neighborhood with $\epsilon$-control. Then there is
an open subset $V$ containing $K$ and  a smooth $S^1$-fibration structure on $V$. Furthermore, if $U$ is an $S^1$-product structure with $\epsilon$-control that contains a fiber $F$ of the fibration on $V$, then $F$ is within $\epsilon'$ of vertical
in  $U$ and $F$ generates the fundamental group $U$. In particular, the diameter of $F$  is at most twice the length of any circle in the $S^1$-product structure centered at any point of $F$.
\end{prop}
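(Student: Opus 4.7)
\medskip
\noindent\textbf{Proof plan.} The strategy is to pass from the local product data to a single smooth unit vector field on a neighborhood of $K$ by partition of unity, then verify that its flow lines close up into embedded circles and so define a smooth $S^1$-fibration.

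First, since $K$ is compact and every $x\in K$ lies in an $S^1$-product neighborhood with $\epsilon$-control, choose finitely many such charts $\varphi_i\colon S^1\times B(0,\epsilon^{-1})\to M$, $1\le i\le N$, whose images $U_i$ cover $K$. On each $U_i$ let $X_i$ denote the smooth unit vector field obtained by pushing forward $\partial_\theta$ via $\varphi_i$ and renormalizing in $g$, with orientations chosen consistently (using the fixed orientation on $M$ and the fact that, on any overlap, the $S^1$-factors are freely homotopic in $M$ so the relative orientations can be matched once and for all).

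Second, compare $X_i$ with $X_j$ on $U_i\cap U_j$. The $\epsilon$-control says the pullback metric under each $\varphi_k$ is $\epsilon$-close in $C^{[1/\epsilon]}$ to the flat product of the round $S^1$ of length $1$ with the Euclidean ball. In particular, the circle fiber of $\varphi_i$ through any $q\in U_i\cap U_j$ is a short loop that is freely homotopic, inside $U_j$, to the circle fiber of $\varphi_j$ through $q$ (both generate $\pi_1$ of the local product) and is $C^1$-close to it. Consequently $|X_i-X_j|$ on the overlap can be made as small as desired in terms of $\epsilon'$ by choosing $\epsilon$ small enough. Let $\{\chi_i\}$ be a smooth partition of unity subordinate to a shrinking of $\{U_i\}$ still covering $K$, set $Y=\sum_i\chi_iX_i$ on $V=\bigcup_i\operatorname{supp}\chi_i$, and define
\[
X=\frac{Y}{|Y|}.
\]
Then $Y$ is non-vanishing on $V$, and $X$ is a smooth unit vector field on an open neighborhood $V$ of $K$ which is $O(\epsilon')$-close to each $X_i$ on $U_i$.

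Third, show the integral curves of $X$ are embedded circles that assemble into a smooth $S^1$-fibration. For each $p\in U_i$ the flow line of $X$ through $p$ is $C^1$-close to the flow line of $X_i$ through $p$, which is the circle fiber $F_i^p$ of $\varphi_i$ of length approximately $1$. Using the horizontal disc $\varphi_i(\{\theta(p)\}\times B(0,\epsilon^{-1}))$ as a local Poincar\'e section transverse to $X$, the first-return map is $C^1$-close to the identity; the implicit function theorem produces a unique fixed point very close to $p$, and hence a unique closed integral curve of $X$ through $p$ that is an embedded circle $C^1$-close to $F_i^p$. Varying $p$ over the transverse discs, these closed orbits fit together into a smooth $S^1$-fibration on an open neighborhood $V_0\subset V$ of $K$ (the base is locally the quotient of a transverse disc by the trivial return map). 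Verticality and the $\pi_1$-generation claim are then immediate: if $F\subset U$ is a fiber of this fibration lying in an $S^1$-product neighborhood $U$ with $\epsilon$-control, then $F$ is an integral curve of $X$, hence $\epsilon'$-close to vertical in $U$, hence homotopic to the $S^1$-factor of $U$ and thus a generator of $\pi_1(U)\cong\Zee$. The diameter bound is a direct consequence of $F$ being almost vertical in any such product chart through any of its points.

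The main obstacle is the fourth step: an averaged vector field $X$ automatically has approximately periodic orbits, but exact periodicity requires extra input. The point is to exploit the fact that each $X_i$ has exact circle orbits of approximately unit length, use the horizontal discs of the $\varphi_i$ as genuine transverse sections, and appeal to the implicit function theorem at the $C^1$-level to anchor approximate periodicity to exact periodicity. Once the closed-orbit structure is in place, smoothness of the fibration is automatic from smoothness of $X$ and smoothness of its flow.
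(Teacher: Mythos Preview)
Your argument has a genuine gap at the step you yourself flag as ``the main obstacle.'' The averaged unit field $X=\sum\chi_iX_i/|\sum\chi_iX_i|$ will have \emph{approximately} periodic orbits, but your mechanism for upgrading this to \emph{exact} periodicity does not work. You say the first-return map $R$ to a horizontal disc is $C^1$-close to the identity and then invoke the implicit function theorem to produce a fixed point. But this is exactly the situation in which the implicit function theorem fails: to locate a zero of $R-\mathrm{Id}$ you need $D(R-\mathrm{Id})$ to be invertible, whereas here $DR$ is close to the identity, so $D(R-\mathrm{Id})$ is close to zero. A return map $C^1$-close to the identity can be a small translation with no fixed points at all; in that case the $X$-orbit through $p$ spirals and never closes. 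Nothing in your construction rules this out. (A secondary issue: the claim that the $X_i$ can be oriented consistently ``once and for all'' across all overlaps needs justification; and the estimate $|X_i-X_j|$ small on overlaps implicitly uses that the rescaling constants $\lambda_{U_i}/\lambda_{U_j}$ are close to $1$, which requires an argument.)

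The paper avoids the closing-up problem entirely by not averaging tangent fields. Instead it first shows (via the shortest-loop argument) that on any overlap the two product metrics $(\varphi_i^{-1})^*g_0$ and $(\varphi_j^{-1})^*g_0$ are $C^N$-close, in particular $\lambda_{U_i}/\lambda_{U_j}\approx 1$. It then extracts a finite cover with an a priori bound $C$ on the number of charts meeting any given chart, partitions the charts into $C$ groups of pairwise disjoint charts, and glues group by group. The key step is different from yours: given an already-built $S^1$-fibration on $W$ and a new product chart $U$, one \emph{averages the horizontal $B(0,R)$-coordinate of $\varphi^{-1}$ over the existing fibers} to produce a deformed chart $\varphi'$ whose vertical circles \emph{coincide exactly} with the fibers on the overlap. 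Because the averaging is in the base direction and the existing fibers are honest circles, the new product structure is genuinely fiber-compatible; no ``closing lemma'' is needed. Iterating $C$ times (shrinking by $1/C$ each time) yields the global fibration.
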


The proof of this proposition takes up this entire subsection.
For $\epsilon>0$ sufficiently small, we set $N=[1/\epsilon]$. Recall
that an $S^1$-product neighborhood $U\subset M$ is the image
$\varphi(S^1\times B(0,\epsilon^{-1}))$ with the property that
there is $\lambda_{U}>0$ such that
$\varphi^*(\lambda_{U} g)$ is within $\epsilon$ in the $C^N$-topology of
$g_0$,  the product of the Riemannian metric of length $1$ on $S^1$
and the usual Euclidean metric on the ball $B(0,\epsilon^{-1})$ in
the plane.

\subsubsection{Comparing the standard metrics on the overlap}

The first thing to do is to show that on the overlap of $S^1$-product neighborhoods
the standard metrics are close.

\begin{claim}
Given $\epsilon'>0$ there is $\epsilon>0$ such that the following
holds. Suppose that $U_1=\varphi_1(S^1\times B(0,\epsilon^{-1}))$
and $U_2=\varphi_2(S^1\times B(0,\epsilon^{-1}))$ are $S^1$-product
neighborhoods with $\epsilon$-control in
$M$. Suppose that there is a point
$$x\in\varphi_1(S^1\times B(0,\epsilon^{-1}/2))\cap \varphi_2(S^1\times B(0,\epsilon^{-1}/2)).$$
Then for $i=1,2$ the circle factor $F_i$ though $x$ in the product structure on
$U_i$ is within $\epsilon'$ of vertical in the product structure of
$U_{3-i}$. The length of this fiber is between $1-\epsilon'$ and
$1+\epsilon'$ times the length of any circle factor in the product structure
of $U_{3-i}$ as is the ratio $\lambda_{U_1}/\lambda_{U_2}$.
The homotopy class of $F_i$ generates $\pi_1(U_{3-i})$.
\end{claim}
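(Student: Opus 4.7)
The plan is contradiction plus compactness. Fix $\epsilon'>0$ and suppose the claim fails; pick a sequence $\epsilon_k\to 0$ with counterexamples $(U_{1,k},U_{2,k},x_k)$. Write $\lambda_{i,k}=\lambda_{U_{i,k}}$, $\tilde g_{i,k}=\varphi_{i,k}^{*}(\lambda_{i,k}g)$ (which is $C^{N_k}$-close to the standard product $g_0$ on $S^1\times B(0,\epsilon_k^{-1})$, with $N_k=[1/\epsilon_k]\to\infty$), and $y_{i,k}=\varphi_{i,k}^{-1}(x_k)\in S^1\times B(0,\epsilon_k^{-1}/2)$. The transition $\psi_k=\varphi_{2,k}^{-1}\circ\varphi_{1,k}$, defined on $\Omega_k:=\varphi_{1,k}^{-1}(U_{1,k}\cap U_{2,k})$, sends $y_{1,k}$ to $y_{2,k}$ and satisfies the homothety relation
\[
\psi_k^{*}\tilde g_{2,k}=\mu_k\,\tilde g_{1,k},\qquad\mu_k:=\lambda_{1,k}/\lambda_{2,k}.
\]

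The first task is to bound $\mu_k$ uniformly. Since $\psi_k$ dilates $\tilde g_{1,k}$-distances by approximately $\sqrt{\mu_k}$ and maps the inner-half-ball point $y_{1,k}$ to the inner-half-ball point $y_{2,k}$, the image $\psi_k(B_{\tilde g_{1,k}}(y_{1,k},\epsilon_k^{-1}/4))$ must sit inside $S^1\times B(0,\epsilon_k^{-1})$ around $y_{2,k}$; comparing radii forces $\sqrt{\mu_k}\le C$ for an absolute $C$, and the symmetric argument bounds $\mu_k$ below. After passing to a subsequence, $\mu_k\to\mu_\infty\in(0,\infty)$. The boundedness together with $\epsilon_k\to 0$ also shows $\Omega_k$ exhausts the domain in the limit: it contains a $\tilde g_{1,k}$-ball of radius $\gtrsim\epsilon_k^{-1}\min(1,\mu_k^{-1/2})\to\infty$.

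Next pass to smooth pointed limits: $(S^1\times B(0,\epsilon_k^{-1}),\tilde g_{i,k},y_{i,k})\to(S^1\times\Ar^2,g_0,\bar y_i)$ in $C^\infty$ using the $C^{N_k}$-approximation, and $\psi_k$ converges to a surjective smooth local diffeomorphism $\psi_\infty\colon S^1\times\Ar^2\to S^1\times\Ar^2$ with $\psi_\infty^{*}g_0=\mu_\infty g_0$. A local isometry (after rescaling the target) between complete Riemannian manifolds is a covering map, so $\psi_\infty$ lifts on the universal cover $\Ar^3$ to a global conformal map with constant factor; by Liouville's theorem this lift is affine: $\tilde\psi_\infty(x)=\sqrt{\mu_\infty}Ax+b$ with $A\in O(3)$. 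Equivariance for the deck translation by $e_1$ (the $S^1$-direction) forces $\sqrt{\mu_\infty}Ae_1=ne_1$ for some nonzero $n\in\ZZ$; since $|Ae_1|=1$, this gives $\sqrt{\mu_\infty}=|n|\in\ZZ_{>0}$, and the same argument applied to $\psi_\infty^{-1}$ gives $1/\sqrt{\mu_\infty}\in\ZZ_{>0}$. Hence $\mu_\infty=1$, $Ae_1=\pm e_1$, and $\psi_\infty$ is an isometry of $S^1\times\Ar^2$ preserving the $S^1$-fibration.

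The four conclusions now follow for large $k$. Smooth convergence implies the $\psi_k$-image of the source $S^1$-fiber through $y_{1,k}$ is $\epsilon'$-close to the target $S^1$-fiber through $y_{2,k}$, giving $\epsilon'$-verticality of $F_{1,k}$ in $U_{2,k}$'s product structure; its $\tilde g_{2,k}$-length differs from $1$ by less than $\epsilon'$, which translates both to the length-ratio statement and to $\mu_k\in(1-\epsilon',1+\epsilon')$; and $\pi_1$ of the limit isometry sends generator to generator, so $[F_{1,k}]$ generates $\pi_1(U_{2,k})$. The main obstacle is Step 2 (bounding $\mu_k$): without it, the two product structures could live at radically different scales and the transition map could degenerate. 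Once $\mu_k$ is controlled, the classical rigidity of affine homotheties of $\Ar^3$ that descend to $S^1\times\Ar^2$ supplies everything else.
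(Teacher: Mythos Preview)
There is a genuine gap in your Step~2, precisely at the place you flag as ``the main obstacle.'' First, the homothety relation is inverted: since
\[
\psi_k^{*}\tilde g_{2,k}=\varphi_{1,k}^{*}(\lambda_{2,k}g)=\frac{\lambda_{2,k}}{\lambda_{1,k}}\,\tilde g_{1,k},
\]
one has $\psi_k^{*}\tilde g_{2,k}=\mu_k^{-1}\tilde g_{1,k}$, so $\psi_k$ dilates by $\mu_k^{-1/2}$, not $\sqrt{\mu_k}$. More seriously, your ``comparing radii'' step silently assumes that the ball $B_{\tilde g_{1,k}}(y_{1,k},\epsilon_k^{-1}/4)$ lies in the domain $\Omega_k$ of $\psi_k$, i.e., that its $\varphi_{1,k}$-image lies in $U_{2,k}$. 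In the ambient metric $g$ that containment reads $\tfrac{1}{4}\epsilon_k^{-1}\lambda_{1,k}^{-1/2}\lesssim \tfrac{1}{2}\epsilon_k^{-1}\lambda_{2,k}^{-1/2}$, i.e., $\mu_k\gtrsim 1/4$ --- exactly the kind of bound you are trying to establish. With the correct dilation factor, the implication ``ball lies in $\Omega_k$ $\Rightarrow$ image fits in target'' becomes ``$\mu_k\gtrsim 1/4 \Rightarrow \mu_k\ge 1/16$,'' and the symmetric version is ``$\mu_k\lesssim 4 \Rightarrow \mu_k\le 16$''; both are vacuous. So no bound on $\mu_k$ follows from radii alone, and without it your compactness/limiting scheme never gets started. (The base disks have radius $\epsilon_k^{-1}$ in \emph{both} charts, so pure radius comparison carries no information; all rigidity must come from the $S^1$-factor.)

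The paper's argument is a short systolic one and does exactly that. Assuming $\lambda_{U_2}\ge\lambda_{U_1}$, take the $g$-shortest homotopically nontrivial loop $\zeta$ through $x$ in $U_2$; its $g$-length is $\approx\lambda_{U_2}^{-1/2}\le\lambda_{U_1}^{-1/2}$, so $\zeta\subset U_1$ with $g_0$-length $\approx(\lambda_{U_1}/\lambda_{U_2})^{1/2}\le 1$ in $U_1$'s chart. If $\zeta$ were null-homotopic in $U_1$ it would bound a disk of $g$-diameter $\approx\lambda_{U_2}^{-1/2}$, hence lying in $U_2$ (since $x$ sits at $g$-distance $\gtrsim\tfrac12\epsilon^{-1}\lambda_{U_2}^{-1/2}$ from $\partial U_2$), contradicting nontriviality in $U_2$. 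Thus $\zeta$ is nontrivial in $U_1$, so its $g_0$-length in $U_1$'s chart is $\approx 1$ and it generates $\pi_1(U_1)$. This single step already gives $\lambda_{U_1}/\lambda_{U_2}\approx 1$ and the $\pi_1$ statement, from which the verticality and length-ratio conclusions follow immediately --- no limiting argument needed. If you want to salvage your approach, replace the radius comparison by this systole comparison to get the two-sided bound on $\mu_k$; after that, your affine-rigidity endgame is a valid (if heavier) route to the remaining conclusions.
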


\begin{proof}
Without loss of generality we can assume that $\lambda_{U_2}\ge
\lambda_{U_1}$. Let $\zeta$ be the $g$-shortest homotopically non-trivial
loop through $x$ in $U_2$. Its $g$-length is close to
$\lambda_{U_2}^{-1/2}$. Hence, it is contained in $U_1$ and its length
with respect to the product metric $g_0$ on $U_1$ is close to
$(\lambda_{U_1}/\lambda_{U_2})^{1/2}\le 1$. Let us suppose that it is homotopically
trivial in $U_1$. Then it bounds a disk contained in the
$g$-neighborhood of size $2\lambda_{U_2}^{-1/2}$ of $x$. This disk is then
contained in $U_2$, which is a contradiction. It follows that
$\zeta$ is a homotopically non-trivial loop in $U_1$ through $x$.
Since its length in the metric $g_0$ on $U_1$ is close $(\lambda_{U_1}/\lambda_{U_2})^{1/2}\le 1$, the loop
$\zeta$ generates the fundamental group of $U_1$. It follows that $\lambda_{U_1}/\lambda_{U_2}$ must be close to one. The errors
in these estimates go to zero as $\epsilon$ tends to zero.
\end{proof}

\begin{cor}\label{metclo}
We continue with the notation of the previous claim. Given $\epsilon'>0$
 if $\epsilon>0$ is sufficiently small then the restrictions of $(\varphi_1^{-1})^*g_0$ and $(\varphi_2^{-1})^*g_0$  to $\varphi_1(S^1\times B(0,\epsilon^{-1}/2))\cap \varphi_2(S^1\times B(0,\epsilon^{-1}/2))$ are within $\epsilon'$ in the $C^N$-topology.
 \end{cor}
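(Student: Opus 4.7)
The plan is to pull both metrics back to $U_1' := \varphi_1^{-1}(W)$ via $\varphi_1$, where $W$ denotes the overlap, so that the statement becomes a comparison between $g_0$ and $\psi^{*}g_0$ on $U_1'$, with $\psi := \varphi_2^{-1}\circ\varphi_1\colon U_1'\to U_2'$ the transition map. Setting $h_i := \varphi_i^{*}(\lambda_{U_i}g)$ and $\eta_i := h_i - g_0$, the hypothesis gives $\|\eta_i\|_{C^N}<\epsilon$. Since $\varphi_2\circ\psi = \varphi_1$,
$$\psi^{*}h_2 = \varphi_1^{*}(\lambda_{U_2}g) = \alpha\,h_1, \qquad \alpha := \lambda_{U_2}/\lambda_{U_1},$$
and the previous claim yields $|\alpha-1|<\epsilon'$ for $\epsilon$ small. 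Expanding $h_i = g_0+\eta_i$ and rearranging,
$$\psi^{*}g_0 - g_0 = (\alpha-1)g_0 + \alpha\,\eta_1 - \psi^{*}\eta_2.$$
The first two terms on the right are obviously $O(\epsilon')$ in $C^N$, so the whole problem reduces to a $C^N$-bound on $\psi^{*}\eta_2$, which in turn demands a $C^N$-bound on $\psi$.

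The second (and main) step is therefore to upgrade the identity $\psi^{*}h_2 = \alpha h_1$ to $C^N$-control on $\psi$ itself. Working in the flat-cylinder coordinates, in which $g_0$ is constant with identity matrix, the identity reads
$$\sum_{a}\partial_i\psi^{a}\,\partial_j\psi^{a} \;=\; \alpha\,\delta_{ij} + \alpha(\eta_1)_{ij} - (\psi^{*}\eta_2)_{ij}.$$
Taking the trace and absorbing the nonlinear term $(\psi^{*}\eta_2)$ via a smallness assumption on $\epsilon$ gives $\|d\psi\|_{C^0}=O(1)$ and in fact $d\psi^{T}d\psi = \alpha\,\Id + O_{C^0}(\epsilon)$, so at each point $d\psi$ is $C^0$-close to $\sqrt{\alpha}$ times a rotation. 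Differentiating the identity once and isolating the top-order derivative of $\psi$ by the classical cyclic-permutation trick used to read Christoffel symbols off a metric produces $\|\partial^{2}\psi\|_{C^0}=O(\epsilon)$; iterating this procedure yields, by induction on $m$, bounds $\|\partial^{m}\psi\|_{C^0}\le C_m\epsilon$ for $2\le m\le N$, so $\psi$ differs from an affine isometry of the cylinder by an $O_{C^N}(\epsilon)$ correction.

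The principal obstacle is precisely this bootstrap: at each order one must carefully track how derivatives of $\psi^{*}\eta_2$ interact with derivatives of $\psi$ to keep all nonlinear feedback strictly subleading, and this is what dictates choosing $\epsilon$ very small in terms of $N$ and $\epsilon'$. Granting $\|\psi\|_{C^N}\le C(N)$, the chain-rule estimate $\|\psi^{*}\eta_2\|_{C^N}\le C'(N)\|\eta_2\|_{C^N}<C'(N)\epsilon$ combines with $|\alpha-1|<\epsilon'$ and $\|\eta_1\|_{C^N}<\epsilon$ to give $\|\psi^{*}g_0-g_0\|_{C^N}<\epsilon'$, as required. A cleaner alternative that sidesteps the explicit bootstrap is a contradiction-and-compactness argument: if the corollary failed, one would obtain sequences $\epsilon_k\to 0$ and transition maps $\psi_k$ whose pullbacks $\psi_k^{*}g_0$ stay bounded away from $g_0$ in $C^N$; the uniform first-order bound on $d\psi_k$ together with Arzelà–Ascoli and standard regularity for solutions of the isometry equation would extract a smooth limit isometry of $g_0$, contradicting the defect at each finite stage.
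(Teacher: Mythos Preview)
Your argument is correct and considerably more detailed than what the paper offers: the paper gives no proof for this corollary, treating it as an immediate consequence of the preceding claim that $\lambda_{U_1}/\lambda_{U_2}$ is close to $1$. The intended one-line reason is that both $(\varphi_i^{-1})^*g_0$ are $C^N$-close to the \emph{same} intrinsic metric on the overlap, namely $\lambda_{U_1}g\approx\lambda_{U_2}g$, and hence to each other. Making this precise requires exactly the kind of work you do, so your proposal is filling a genuine gap.

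Two remarks. First, there is a slightly shorter route that avoids bounding the transition map $\psi$ explicitly: use $\lambda_{U_1}g$ as the reference. Since scaling a metric does not change its Levi-Civita connection, the $C^N$-norms defined by $\lambda_{U_1}g$ and $\lambda_{U_2}g$ agree up to powers of $\alpha$; and since $\|h_i-g_0\|_{C^N(g_0)}<\epsilon$, the $C^N$-norms defined by $G_i:=(\varphi_i^{-1})^*g_0$ and by $\lambda_{U_i}g$ are equivalent with constants $1+O(\epsilon)$ (the standard computation comparing $\nabla^h$ and $\nabla^{g_0}$ via the Christoffel difference tensor). Then the triangle inequality in the $C^N(\lambda_{U_1}g)$-norm gives $\|G_1-G_2\|\le \|G_1-\lambda_{U_1}g\|+|1-\alpha|\,\|\lambda_{U_1}g\|+\|\lambda_{U_2}g-G_2\|$, each term being $O(\epsilon')$. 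This packages the same estimates as your bootstrap but without tracking $\psi$.

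Second, a minor slip in your version: the chain-rule bound $\|\psi^*\eta_2\|_{C^N}\le C'(N)\|\eta_2\|_{C^N}$ actually needs $C^{N+1}$ control on $\psi$, not just $C^N$, since $\partial^N(\psi^*\eta_2)$ contains a term $\partial^{N+1}\psi\cdot\partial\psi\cdot\eta_2$. This is harmless: your bootstrap via the totally-geodesic-map equation $\partial_i\partial_j\psi^a=\Gamma^k_{ij}(\alpha h_1)\partial_k\psi^a-\Gamma^a_{bc}(h_2)(\psi)\partial_i\psi^b\partial_j\psi^c$ extends one step further, because differentiating it $N-1$ times involves only $\partial^{\le N}h_i$, which you control.
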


\subsubsection{Bounding the intersections}

Now we turn to constructing a finite cover with a uniformly bounded number of neighborhoods meeting any given neighborhood.

\begin{claim}\label{Rcover} Fix $R<\infty$ and $\epsilon'>0$. Then for all $\epsilon>0$ sufficiently small,
there is a  finite collection of $S^1$-product neighborhoods with
$\epsilon$-control
$$\varphi_1(S^1\times B(0,\epsilon^{-1})),\ldots,
\varphi_T(S^1\times B(0,\epsilon^{-1}))$$
such that the union of
the images $U'_i=\varphi_i(S^1\times B(0,R))$ cover $K$, and the $\varphi_i(S^1\times B(0,R/3))$ are disjoint. Furthermore for every $i,j$,  $(\varphi_i^{-1})^*g_0$ and $(\varphi_j^{-1})^*g_0$ are within $\epsilon'$ in the $C^N$-topology for Riemannian metrics $$\varphi_i(S^1\times B(0,\epsilon^{-1}/2))\cap \varphi_j(S^1\times B(0,\epsilon^{-1}/2)).$$
\end{claim}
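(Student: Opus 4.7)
The plan is a Vitali-type maximal-disjoint extraction adapted to $S^1$-product neighborhoods, with all distance comparisons routed through the preceding scale-ratio claim and Corollary~\ref{metclo} to handle the varying scales $\lambda_x$. For each $x\in K$, fix an $S^1$-product neighborhood $\varphi_x\colon S^1\times B(0,\epsilon^{-1})\to M$ with $\epsilon$-control and associated scale $\lambda_x$ (so that $\varphi_x^*(\lambda_x g)$ is within $\epsilon$ in $C^N$ of the product metric $g_0$); set $V_x=\varphi_x(S^1\times B(0,R/3))$ and $U'_x=\varphi_x(S^1\times B(0,R))$.

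Run the greedy procedure: pick $x_1\in K$ arbitrarily, and given $x_1,\dots,x_k$, stop if $K\subset\bigcup_{i\le k}U'_{x_i}$, otherwise pick $x_{k+1}\in K\setminus\bigcup_{i\le k}U'_{x_i}$. The key assertion is that each new $V_{x_{k+1}}$ is disjoint from every previous $V_{x_j}$. Suppose otherwise, and let $y=\varphi_{x_{k+1}}(\theta,p)=\varphi_{x_j}(\theta',p')$ with $|p|,|p'|<R/3$ lie in the intersection. The $\epsilon$-control gives
\[
d_{\lambda_{x_{k+1}}g}(x_{k+1},y),\;d_{\lambda_{x_j}g}(x_j,y)\;\le\;(1+O(\epsilon))\sqrt{\tfrac{1}{4}+\tfrac{R^2}{9}},
\]
the $\tfrac{1}{4}$ accounting for the half-$S^1$-fiber. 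For $\epsilon$ small enough that $R\ll\epsilon^{-1}/2$, the $\epsilon^{-1}/2$-sub-neighborhoods at $x_{k+1}$ and $x_j$ overlap, so the scale-ratio claim preceding Corollary~\ref{metclo} yields $\lambda_{x_{k+1}}/\lambda_{x_j}=1+O(\epsilon)$. Translating to the common scale $\lambda_{x_j}g$ and applying the triangle inequality,
\[
d_{\lambda_{x_j}g}(x_{k+1},x_j)\;\le\;(2+O(\epsilon))\sqrt{\tfrac{1}{4}+\tfrac{R^2}{9}}\;<\;R,
\]
provided $R>3/\sqrt{5}$ and $\epsilon$ is small, a condition harmlessly met in the applications. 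Writing $x_{k+1}=\varphi_{x_j}(\theta'',p'')$, the product-metric bound $|p''|\le(1+O(\epsilon))d_{\lambda_{x_j}g}(x_j,x_{k+1})$ then forces $|p''|<R$, i.e., $x_{k+1}\in U'_{x_j}$, contradicting its selection.

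For termination, cover $K$ by finitely many $V_{y_1},\dots,V_{y_N}$ by compactness; each chosen $x_i$ lies in some $V_{y_m}$, and by Corollary~\ref{metclo} the metric $(\varphi_{y_m}^{-1})^*g_0$ is $C^N$-close to the product metric, so each disjoint $V_{x_i}$ inside a slightly enlarged $V_{y_m}$ occupies a definite fraction of its volume; hence only finitely many $x_i$ fit and the process halts at some $T$. The required $C^N$-closeness of $(\varphi_i^{-1})^*g_0$ and $(\varphi_j^{-1})^*g_0$ on $\varphi_i(S^1\times B(0,\epsilon^{-1}/2))\cap\varphi_j(S^1\times B(0,\epsilon^{-1}/2))$ is exactly Corollary~\ref{metclo}, once $\epsilon$ is chosen small enough as a function of $\epsilon'$ at the outset. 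The principal technical obstacle is that the scales $\lambda_x$ vary with $x$, so neither disjointness nor covering is a metric-ball statement in a single fixed background metric; every comparison of distances across distinct charts must be mediated by the scale-ratio claim.
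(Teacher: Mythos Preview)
Your greedy selection is the same maximal-disjoint argument the paper uses, run in the contrapositive direction: the paper takes a collection maximal with respect to the $R/3$-sets being disjoint and deduces that the $R$-sets cover $K$; you build the cover greedily and deduce disjointness of the $R/3$-sets. Both rest on the single implication that if two $R/3$-sets meet, one center lies in the other's $R$-set.

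Your route to that implication through ambient distances and the triangle inequality picks up the term $\sqrt{1/4 + R^2/9}$ from the half-length of the $S^1$-fiber, which is what forces the artificial restriction $R > 3/\sqrt{5}$. The paper avoids this by working in the $B$-coordinate rather than with full distances: Corollary~\ref{metclo} says the transition map $\varphi_i^{-1}\circ\varphi_j$ is a near-$g_0$-isometry on the overlap, hence approximately of the form $(\theta,p)\mapsto(\theta+\alpha, Ap+b)$; a common point with both $B$-coordinates of norm $< R/3$ then forces $|b| < 2R/3 + O(\epsilon')$, so $\varphi_j(S^1\times B(0,R/3))\subset\varphi_i(S^1\times B(0,R))$ for every fixed $R$ once $\epsilon$ is small enough. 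Projecting out the $S^1$-direction is exactly what kills the $1/4$ term. Your termination argument and the final appeal to Corollary~\ref{metclo} for the $C^N$-closeness on overlaps are both fine and match the paper.
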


\begin{proof} Fix $\epsilon>0$ sufficiently small.
If $\varphi_i(S^1\times B(0,R/3))\cap \varphi_j(S^1\times
B(0,R/3))\not=\emptyset$, then, by the previous result, the standard metrics on the two
images almost agree, and in particular,  their union is contained in
$\varphi_i(S^1\times B(0,R))$. Take a collection $\{\widehat U_i=\varphi_i(S^1\times B(0,\epsilon^{-1}))\}$ of
$S^1$-product neighborhoods with $\epsilon$-control centered at
points of $K$, maximal with respect to the property that the
$\varphi_i(S^1\times B(0,R/3))$ are disjoint. Then the
$U'_i=\varphi_i(S^1\times B(0,R))$ cover $K$. If we have chosen $\epsilon>0$ sufficiently small, the last statement follows from the previous result.
\end{proof}

\begin{claim}\label{Cnumber}
Given $R>4$, there is an integer $C=C(R)$ such that following holds
for all $\epsilon>0$ sufficiently small. Let $(M,g)$ be a Riemannian
$3$-manifold with curvature $\ge -1$. Suppose that we have a
collection $\{\widehat U_i=\varphi_i(S^1\times B(0,\epsilon^{-1}))\}_i$
of $S^1$-product neighborhoods with $\epsilon$-control. Let $U_i$ be
the image of $\varphi_i(S^1\times B(0,R+1))$. Suppose also that
$\varphi_i(S^1\times B(0,R/3))\cap \varphi_j(S^1\times
B(0,R/3))=\emptyset$ for all $i\not=j$. Then for each $i$ the number
of $j$ for which $U_i\cap U_j\not=\emptyset$ is at most $C$.
\end{claim}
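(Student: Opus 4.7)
The plan is to reduce the count to a volume estimate in the nearly flat chart $\widehat U_i$, by first showing that each $U_j$ meeting $U_i$ lies entirely inside $\widehat U_i$ and is positioned essentially as a rigid translate of the standard $S^1\times B(0,R+1)$. Work in the rescaled metric $\tilde g := \lambda_{U_i}g$, so that $\varphi_i^*\tilde g$ is $C^N$-close to the standard product metric on $S^1\times B(0,\epsilon^{-1})$. If $U_j\cap U_i\not=\emptyset$, pick $p$ in the intersection; since $R+1<\epsilon^{-1}/2$ for $\epsilon$ small (depending on $R$), the claim preceding Corollary~\ref{metclo} yields $\lambda_{U_j}/\lambda_{U_i}$ within $\epsilon'$ of $1$, and Corollary~\ref{metclo} shows that the transition $\psi:=\varphi_i^{-1}\circ\varphi_j$, wherever defined, is $C^N$-close to a rigid motion $\psi_0$ of the standard product metric. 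The motion $\psi_0$ has the form $(\theta,v)\mapsto(\pm\theta+c,Ov+w)$ with $|w|\le 2(R+1)$, and so $\psi_0(S^1\times B(0,R+1))\subset S^1\times B(0,3(R+1))$.

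The key step is to extend $\psi$ across all of $\varphi_j^{-1}(U_j)=S^1\times B(0,R+1)$ while keeping the image inside $\widehat U_i$. Let $W$ be the connected component of $\varphi_j^{-1}(\widehat U_i)\cap(S^1\times B(0,R+1))$ containing $\varphi_j^{-1}(p)$. Since $d\psi$ is $C^{N-1}$-close on $W$ to the orthogonal derivative $d\psi_0$, integration along paths from $\varphi_j^{-1}(p)$ gives that $\psi$ is $C^0$-close to $\psi_0$ on $W$ with error $\le C(R)\epsilon$; hence $\psi(W)\subset S^1\times B(0,3(R+1)+1)\subset S^1\times B(0,\epsilon^{-1}/2)$ for $\epsilon$ small. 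A standard open-closed argument then gives $W=S^1\times B(0,R+1)$: if a point $q$ lay in $\partial W\cap(S^1\times B(0,R+1))$, then $\varphi_j(q)$ would have to sit on the outer shell $\varphi_i(S^1\times\partial B(0,\epsilon^{-1}))$ of the target chart, contradicting the $C^0$-control on $\psi(q)$. Consequently $U_j\subset\widehat U_i$ and $\psi(\varphi_j^{-1}(U_j))\subset S^1\times B(0,3(R+1)+1)$.

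The conclusion is a direct volume count in $\widehat U_i$. All of the disjoint regions $\varphi_j(S^1\times B(0,R/3))$ for $j$ meeting $U_i$ sit inside $\varphi_i(S^1\times B(0,3(R+1)+1))$, whose $g$-volume is close to $\pi(3(R+1)+1)^2\lambda_{U_i}^{-3/2}$ by $\epsilon$-control. Each disjoint piece has $g$-volume close to $\pi(R/3)^2\lambda_{U_j}^{-3/2}\ge c\,R^2\lambda_{U_i}^{-3/2}$ for a universal $c>0$, using $\lambda_{U_j}/\lambda_{U_i}$ close to $1$. Dividing gives a bound $N\le C(R)$ depending only on $R$ and independent of $\epsilon$ and $\lambda_{U_i}$. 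The main obstacle is the continuation step: rigorously ensuring that the near-rigid-motion $\psi$ extends to all of $S^1\times B(0,R+1)$ without the maximal domain terminating in the interior, which is what the $C^0$-closeness to $\psi_0$ combined with the large ambient radius $\epsilon^{-1}$ of the target chart provides.
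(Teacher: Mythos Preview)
Your proposal is correct and is exactly the argument the paper has in mind; the paper's proof is the single sentence ``This is immediate from the fact that the standard metrics almost agree on the overlaps of the $U_i$,'' and you have simply unpacked the volume--packing count that this sentence encodes. The only place to tighten is the phrase ``$d\psi$ is $C^{N-1}$-close on $W$ to the orthogonal derivative $d\psi_0$'': closeness of $\psi^*g_0$ to $g_0$ only gives that $d\psi$ is pointwise close to \emph{some} orthogonal matrix, and one needs the $C^1$-control on the metric (hence smallness of the pulled-back Christoffel symbols) to propagate this to closeness to a \emph{fixed} rigid motion over a region of diameter $O(R)$, with error $O(R^2\epsilon')$ rather than $O(\epsilon')$; your stated bound $C(R)\epsilon$ absorbs this, so the conclusion stands.
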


\begin{proof}
This is immediate from the fact that the standard metrics almost
agree on the overlaps of the $U_i$.
\end{proof}

For $R<\epsilon^{-1}$ we define a {\em reduced $S^1$-product structure
with $\epsilon$-control of size $R$} to be an embedding
$\varphi\colon S^1\times B(0,R)\to M$ with the property that there is
$\lambda>0$ such that $\varphi^*\lambda g$ is within $\epsilon$ in the $C^{N}$-topology
to the standard product metric $g_0$ on this product.

Fix $R$ and a covering $\{U_a\}_{a\in A}$ of $K$ as in Claim~\ref{Rcover}.
It follows directly from  Claim~\ref{Cnumber} that we can divide the open sets
 $\{U_a\}$ into  $C$ groups ${\mathcal
U}_1,\ldots,{\mathcal U}_C$ with the following properties:
\begin{enumerate}
\item Each ${\mathcal U}_i$
consists of a disjoint union of finite number of the $U_a$, denoted $U_{i,1},\ldots, U_{i,j_0(i)}$.
\item Each $U_a$ in the original collection occurs as exactly one of the $U_{i,j}$, so that in particular, setting ${\mathcal U}'_i$ equal to the images $\varphi_{i,j}(S^1\times B(0,R))$ for $1\le j\le j_0(i)$, the union
$\cup_{i=1}^C{\mathcal U}'_i$ covers $K$.
\end{enumerate}

\begin{defn}
 For each $0\le D\le 1$
we define ${\mathcal U}_i^{[D]}$ to be the union of the images
$\varphi_{i,j}(S^1\times B(0,R+1-D))$. Notice that ${\mathcal U}_i'={\mathcal U}_i^{[1]}$.
\end{defn}

\subsubsection{The Gluing}

Given a smooth fibration of a Riemannian manifold $M$ by circles,
then on each fiber $F$ there is a unique
measure $d\mu_F$ that is conformally equivalent to the measure
induced by the restriction of the Riemannian metric to $F$
and in which $F$ has total length $1$.

Suppose that we have an open subset $W\subset M$ that is the union
of restrictions of $S^1$-product neighborhoods with $\alpha$-control
to subsets $U_i=\varphi_i(S^1\times B(0,R'))$ for some $R\le R'\le
R+1$, and suppose that the circle fibrations of the various  $U_i$
are compatible so that they define a circle fibration on $W$.
Suppose also that we have a reduced $S^1$-product
structure with $\epsilon$ control $\varphi\colon S^1\times
B(0,R+2)\to M$. Let $U=\varphi(S^1\times B(0,R+1))$. Assuming that $\alpha$
and $\epsilon$ are sufficiently small, let us define a map
from the saturation of $U\cap W$ under the $S^1$-fibration on $W$
to $S^1\times B(0,R+2)$.
For $\alpha$ and $\epsilon$ sufficiently small  this saturation is contained in $\varphi(S^1\times B(0,R+2))$.
Suppose that $p$ is a point of the saturation of  $U\cap W$, say
$p=\varphi(\theta,x)$. Let $F_p$ be the fiber of the fibration
structure on $W$ through $p$. For each $q\in F_p$ we have
$(\theta(q),x(q))$ defined by
$\varphi^{-1}(q)=(\theta(q),x(q))$, so that $x\colon F_p\to B(0,R+2)$. We form $\hat
x(p)=\int_{F_p}x(q)d\mu_{F_p}$ and define the map
$$\psi(p)=(\theta(p),\hat x(p)).$$

The following is obvious from the definitions

\begin{claim}
If $F$ is an orbit of the $S^1$-fibration on $W$ passing though a point of $U$, then $\hat x\colon
F\to B(0,R+2)$ is constant.
\end{claim}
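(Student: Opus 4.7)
The plan is to observe that the claim is almost tautological once one parses the definitions carefully. The map $\hat{x}$ was defined, for any point $p$ of the saturation of $U \cap W$, by
\[
\hat{x}(p) = \int_{F_p} x(q)\,d\mu_{F_p},
\]
where $F_p$ denotes the fiber of the $S^1$-fibration on $W$ through $p$ and $d\mu_{F_p}$ is the normalized measure on $F_p$ of total mass $1$. The right-hand side depends on $p$ only through the data $(F_p, d\mu_{F_p})$, together with the restriction of $x = \mathrm{pr}_2 \circ \varphi^{-1}$ to $F_p$.

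Now suppose $F$ is an orbit of the $S^1$-fibration on $W$ that passes through a point of $U$. Since the fibers of a fibration are exactly the equivalence classes under the relation ``lies in the same orbit'', we have $F_p = F$ for every $p \in F$. Consequently, for every $p \in F$, all three pieces of data entering the integral are identical: the domain of integration is $F$, the measure is $d\mu_F$, and the integrand is the restriction $x|_F$. Hence $\hat{x}(p)$ is independent of the choice of $p \in F$.

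The only point requiring verification is that $\hat{x}(p)$ is actually well-defined for every $p \in F$, i.e., that the restriction $x|_F$ makes sense, which requires $F \subset \varphi(S^1 \times B(0,R+2))$. This is built into the hypotheses: since $F$ meets $U$, the entire orbit $F$ lies in the saturation of $U \cap W$ under the $S^1$-fibration, and we have already arranged (by taking $\alpha$ and $\epsilon$ sufficiently small) that this saturation is contained in $\varphi(S^1 \times B(0,R+2))$. Thus the integrand is defined on all of $F$, the integral converges, and its value is the same for every base point $p \in F$, completing the argument.

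There is no significant obstacle in this claim; it serves purely as a consistency check on the definition of $\psi$, asserting that the second coordinate $\hat{x}$ descends to a function on the quotient by the fibration on $W \cap \varphi(S^1 \times B(0,R+2))$. The substantive use of the averaging will come later, when one compares $\hat{x}$ with the original trivialization coordinate $x$ (using that the fibers $F_p$ are $C^0$-close to the vertical fibers of $\varphi$, so that $\hat{x}$ differs from $x$ by a small perturbation), but that is not what is being asserted here.
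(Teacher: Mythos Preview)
Your proof is correct and matches the paper's approach exactly: the paper simply states that the claim ``is obvious from the definitions,'' and what you have written is precisely the unpacking of those definitions. Your additional check that $F$ lies in the domain of $\varphi^{-1}$ is a reasonable point of care, though the paper treats it as implicit in the setup.
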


Denote by ${\rm Sat}(U\cap W)$ the saturation of $U\cap W$ under the fibration structure on $W$.

\begin{cor}\label{close} Given $\epsilon_1>0$, then for all $\alpha,\epsilon>0$ sufficiently small, the map $\hat x\colon {\rm Sat}(U\cap W)\to B(0,R+2)$ is within $\epsilon_1$ in the $C^{N+1}$-topology of the restriction to ${\rm Sat}(U\cap W)\subset U$ of the composition of $\varphi^{-1}$ with the projection in product structure to $B(0,R+2)$.
\end{cor}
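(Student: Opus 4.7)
The plan is to compare the $W$-fibers $F_p$ passing through points $p\in U$ with the vertical circles of the $U$-product structure, show they are $C^{N+1}$-close, and then observe that the averaging operation inherits this closeness. Throughout, shrinking $\alpha$ and $\epsilon$ only improves the regularity indices in the definitions of $\alpha$- and $\epsilon$-control (since these indices are $[1/\alpha]$ and $[1/\epsilon]$), so I may freely assume that both control constants lie below $1/(N+2)$, making every estimate available in $C^{N+1}$.

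First I would fix $p\in{\rm Sat}(U\cap W)$ and choose a local $S^1$-product neighborhood $U_i=\varphi_i(S^1\times B(0,R'))$ with $\alpha$-control whose $S^1$-fibration contains a neighborhood of $F_p$ in $W$; say $F_p=\varphi_i(S^1\times\{y_i(p)\})$. On the overlap, both $\varphi$ and $\varphi_i$ pull back $g$ (suitably rescaled) to metrics $C^{N+1}$-close to the standard product metric $g_0$ on $S^1\times\Ar^2$, so the transition map $\Phi=\varphi^{-1}\circ\varphi_i$ is a map whose action on $g_0$ is $C^{N+1}$-close to preserving it. Using the rigidity of isometries of $g_0$ (which are precisely products of $S^1$-rotations with Euclidean isometries of $\Ar^2$) together with the elementary fact that a $C^{N+1}$-near-isometry is $C^{N+1}$-close to an honest isometry, $\Phi$ is $C^{N+1}$-close to some product isometry $\Phi_0(\theta,u)=(\theta+\theta_0,Au+u_0)$. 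In particular the $B(0,R+2)$-coordinate $(\pi_2\circ\Phi)(\theta,y)$ is $C^{N+1}$-close to the $\theta$-independent function $Ay+u_0$.

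Next I would express $\hat x$ and the naive map in terms of this transition. Choosing $\theta_0(p)\in S^1$ with $\varphi_i(\theta_0(p),y_i(p))=p$ and letting $J(\theta,y)$ denote the density of the length-normalized arc-length measure on $\varphi_i(S^1\times\{y\})$ in the $\theta$-variable, one gets
\[
\hat x(p)-(\pi_2\circ\varphi^{-1})(p)=\int_{S^1}\bigl[(\pi_2\circ\Phi)(\theta,y_i(p))-(\pi_2\circ\Phi)(\theta_0(p),y_i(p))\bigr]J(\theta,y_i(p))\,d\theta.
\]
By the previous paragraph the bracketed integrand is $O(\alpha+\epsilon)$ in $C^{N+1}$ jointly in $(\theta,y)$, and the maps $p\mapsto y_i(p)$, $p\mapsto\theta_0(p)$ are smooth with $C^{N+1}$-bounds depending only on the $\alpha$-control. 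Differentiating up to $N+1$ times under the integral sign and applying the chain rule, the right-hand side and each of its $p$-derivatives up to order $N+1$ is $O(\alpha+\epsilon)$, uniformly in $p$. Choosing $\alpha,\epsilon$ small enough that this is below $\epsilon_1$ gives the conclusion.

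The hard part will be the near-isometry rigidity invoked in the second paragraph. A clean route that avoids isometric rigidity of $g_0$ directly is to describe each $W$-fiber as the integral curve of the smooth unit vector field $X_i=(\varphi_i)_*\partial_\theta$ on $U_i$, to observe that in the overlap $U$-chart this vector field differs from $\partial_\theta$ by $O(\alpha+\epsilon)$ in $C^{N+1}$ (it is computed from the $C^{N+1}$-close transition map), and to invoke continuous dependence of ODE solutions on smooth parameters to conclude that each fiber $F_p$, together with its unit-speed parameterization, is $C^{N+1}$-close to a vertical circle. Then the integrand in the display and its derivatives are controlled term-by-term, yielding the claim.
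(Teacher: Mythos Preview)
Your proposal is correct and, at bottom, the same approach as the paper's. The paper's proof is a two-sentence appeal to Corollary~\ref{metclo}: the $W$-fibers are $S^1$-factors of some $\varphi_i$, hence geodesics in $(\varphi_i^{-1})^*g_0$, which on the overlap is $C^N$-close to $(\varphi^{-1})^*g_0$; geodesics of a $C^N$-perturbation of the flat product metric are $C^{N+1}$-close to vertical circles, and averaging the $B(0,R+2)$-coordinate over such near-vertical curves yields something $C^{N+1}$-close to the projection. Your final paragraph (the vector-field/ODE route) is exactly this argument; your main body via near-isometry rigidity of the transition map $\Phi$ is a correct but slightly longer detour to the same conclusion, since establishing that a $C^{N+1}$-near-isometry of $S^1\times\Ar^2$ is $C^{N+1}$-close to a product isometry is itself a nontrivial lemma, whereas the geodesic observation sidesteps it entirely.
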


\begin{proof}
It follows immediately from Corollary~\ref{metclo} that the fibers of the $S^1$-fibration on ${\rm Sat}(U\cap W)$ induced from the fibration on $W$ are geodesics in a metric that is $C^N$-close to the metric $g_0$ on $U$. From this we see that  the map $p\mapsto \hat x(p)$ is $C^{N+1}$-close to the composition of $\varphi^{-1}$ with the projection to $B(0,R+2)$ with the same error estimate.
\end{proof}

It follows from Corollary~\ref{close} that
given $\epsilon_1>0$, there is a constant $\alpha_0(\epsilon_1)>0$ such that
if $\alpha$ and $\epsilon$ are less than $\alpha_0(\epsilon_1)$,
then we can define a map $\psi\colon {\rm Sat}(U\cap W)\to S^1\times B(0,R+2)$
by sending $p=\varphi(\theta,x)$ to $\psi(p)=(\theta(p),\hat x(p))$.
Again invoking Corollary~\ref{close}, we see that:

\begin{cor}\label{close1} Provided that $\alpha$ and $\epsilon$ are less that $\alpha_0(\epsilon_1)$,
the composition
$${\rm Sat}(U\cap W)\buildrel\psi\over\longrightarrow S^1\times B(0,R+2)\buildrel
\varphi\over\longrightarrow \varphi(S^1\times B(0,R+2))$$  is within $\epsilon_1$ of the inclusion of ${\rm Sat}(U\cap W)\subset \varphi(S^1\times B(0,R+2))$ in the $C^{N+1}$-topology.
\end{cor}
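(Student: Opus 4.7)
The plan is to reduce this directly to Corollary~\ref{close}, using the observation that the inclusion of ${\rm Sat}(U\cap W)$ into $\varphi(S^1\times B(0,R+2))$ factors through $\varphi^{-1}$ followed by $\varphi$. Indeed, for any $p\in {\rm Sat}(U\cap W)\subset U$ we may write $\varphi^{-1}(p)=(\theta(p),x(p))$, where $x(p)$ is precisely the composition of $\varphi^{-1}$ with the projection to $B(0,R+2)$ appearing in Corollary~\ref{close}. The inclusion is then $p\mapsto \varphi(\theta(p),x(p))=p$, while the map in question is $\varphi\circ\psi\colon p\mapsto \varphi(\theta(p),\hat x(p))$. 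So the two maps agree in the first ($S^1$) coordinate and differ in the second coordinate by $\hat x-x$.

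By Corollary~\ref{close}, given any $\epsilon_2>0$, provided $\alpha$ and $\epsilon$ are sufficiently small (depending on $\epsilon_2$), the function $\hat x$ is within $\epsilon_2$ in the $C^{N+1}$-topology of $x$. Consequently $\psi$ is within $\epsilon_2$ in the $C^{N+1}$-topology of $\varphi^{-1}|_{{\rm Sat}(U\cap W)}$. Composing with $\varphi$ and noting that $\varphi\circ\varphi^{-1}$ is the identity, we conclude that $\varphi\circ\psi$ is $C^{N+1}$-close to the inclusion, with an error controlled by $\epsilon_2$ multiplied by the $C^{N+1}$-operator norm of $\varphi$ over the relevant region.

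The one point that requires a moment of care is that composition with $\varphi$ on the left can amplify $C^{N+1}$-distances by a factor depending on the derivatives of $\varphi$ up to order $N+1$. But by hypothesis $\varphi^*(\lambda_U g)$ is within $\epsilon$ in the $C^N$-topology of the flat product metric $g_0$; combined with the fact that $\varphi$ takes the $g_0$-geodesic $S^1$-factor to a curve of $g$-length close to $\lambda_U^{-1/2}$, this gives uniform $C^{N+1}$-bounds on $\varphi$ (in any fixed smooth background chart on its image), independent of the particular $S^1$-product neighborhood. Therefore, given $\epsilon_1>0$, we can choose $\epsilon_2$ small enough that this amplification still produces a $C^{N+1}$-error less than $\epsilon_1$, and then apply Corollary~\ref{close} to select $\alpha_0(\epsilon_1)\le \alpha_0(\epsilon_2)$ correspondingly. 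The main (minor) obstacle is this bookkeeping between the two closeness constants, but no new geometric input beyond Corollary~\ref{close} and the $C^N$-control on $\varphi$ is needed.
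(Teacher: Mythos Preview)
Your proposal is correct and follows essentially the same approach as the paper, which simply states that the result follows by again invoking Corollary~\ref{close}. Your additional remark about the chain-rule amplification under composition with $\varphi$ is a reasonable bit of bookkeeping that the paper leaves implicit, but it does not constitute a genuinely different argument.
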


Let $\beta\colon [0,R']\to [0,1]$ be
a function that is identically $1$ near $R'$  and identically zero
on a neighborhood of $[0,R'-1/C]$.
We define $\beta_i\colon U_i\to [0,1]$ by $\beta_i(\varphi_i(\theta,x))=\beta(|x|)$.
For all $i$ such that $U_i\cap U\not=\emptyset$,
the gradients of the $\beta_i$ with respect to $\lambda_Ug$ are bounded independent of $i$.
(Recall that $\lambda_U g$ is the multiple of $g$ which is close to the standard
product metric $g_0$ on $U$.)
We set $\hat\beta\colon W\to [0,1]$ equal to the product over the $i$ of the $\beta_i$. This function is identically $1$ in the complement of $W$ and the restriction to $U$ of $\hat\beta$ has a gradient with respect to $g_0$ that is bounded depending only on $C$.
Define $\Psi\colon U\to S^1\times B(0,R+2)$ by
$$\Psi(p)=\beta(p)\varphi^{-1}(p)+(1-\beta(p))\psi(p).$$

\begin{claim}
Given $\epsilon_1$ there is $\alpha_1=\alpha_1(\epsilon_1)>0$ such that if $\alpha$ and $\epsilon$ are less than $\alpha_1$, then $\Psi$ is within $\epsilon_1$ of $\varphi^{-1}$ in the $C^{N+1}$-topology using the metrics $\lambda_Ug$ on the domain and $g_0$ on the range.
\end{claim}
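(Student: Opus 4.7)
The plan is to reduce the claim to the algebraic identity
\[
\Psi(p) - \varphi^{-1}(p) = (1-\hat\beta(p))\bigl(\psi(p) - \varphi^{-1}(p)\bigr)
\]
and then apply the Leibniz rule together with Corollary~\ref{close1}. First I would observe that the formula in the definition of $\Psi$ is meaningful because the two maps $\varphi^{-1}$ and $\psi$ share the same $S^1$-component $\theta(p)$ (by the very definition $\psi(p)=(\theta(p),\hat x(p))$), so the convex combination takes place only in the contractible $B(0,R+2)$-factor; the identity above is then immediate, and since $\hat\beta$ is identically $1$ on the complement of ${\rm Sat}(U\cap W)$ the product makes sense as a smooth map on all of $U$. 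Because $\varphi^{\ast}(\lambda_U g)$ is within $\epsilon$ of $g_0$ in $C^N$, the $C^{N+1}$-norms measured in $\lambda_U g$ on $U$ and in $g_0$ on $\varphi^{-1}(U)$ are uniformly equivalent, so it suffices to estimate in the $g_0$-chart.

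Next I would control $\|1-\hat\beta\|_{C^{N+1}}$. The function $\hat\beta$ is a product of at most $C=C(R)$ factors $\beta_{i,j}(p)=\beta(|x_{i,j}(p)|)$, where $\beta\colon[0,R']\to[0,1]$ is a fixed smooth cutoff chosen once and for all with $C^k$-norm bounded by some constant depending only on $k$, and $x_{i,j}$ is the $B(0,R+1)$-component of $\varphi_{i,j}^{-1}$. By Corollary~\ref{metclo} the charts $\varphi_{i,j}$ that meet $U$ have transition functions with $\varphi$ that are $C^N$-close to isometries of the flat product metric $g_0$. Consequently, in the single chart $\varphi^{-1}\colon U\to S^1\times B(0,R+1)$, each $\beta_{i,j}$ has $C^{N+1}$-norm bounded by a constant depending only on $\beta$, $C$, and $N$. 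Applying the Leibniz rule to the product of at most $C$ such factors gives
\[
\|1-\hat\beta\|_{C^{N+1}} \le K,
\]
for some $K=K(\beta,C,N)$.

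Finally, the Leibniz rule gives $\|fh\|_{C^{N+1}}\le c_{N+1}\|f\|_{C^{N+1}}\|h\|_{C^{N+1}}$, and combined with the displayed identity this yields
\[
\|\Psi-\varphi^{-1}\|_{C^{N+1}} \le c_{N+1}\,K\,\|\psi-\varphi^{-1}\|_{C^{N+1}}.
\]
By Corollary~\ref{close1}, given any $\epsilon_2>0$ there exists $\alpha_0(\epsilon_2)>0$ such that $\alpha,\epsilon<\alpha_0(\epsilon_2)$ forces $\|\varphi\circ\psi-\mathrm{id}\|_{C^{N+1}}<\epsilon_2$ on ${\rm Sat}(U\cap W)$, which (since $\varphi$ has bounded derivatives in the chart) implies $\|\psi-\varphi^{-1}\|_{C^{N+1}}<c'\epsilon_2$ for a constant $c'$ depending on $\varphi$. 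Setting $\epsilon_2=\epsilon_1/(c_{N+1}\,K\,c')$ and $\alpha_1=\alpha_0(\epsilon_2)$ completes the proof.

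The only technical subtlety is the coupling $N=[1/\epsilon]$: the constant $K$ depends on $N$, which in turn depends on $\epsilon$, so strictly speaking one should verify that $\alpha_1$ can be chosen uniformly for all $\epsilon<\alpha_1$. However, for each individual $\epsilon$ (and hence $N$) the argument above is valid, and since $\alpha_1$ is permitted to depend on $\epsilon_1$ alone, one simply requires that after fixing $\epsilon_1$ and then choosing $\alpha_1$ small, the bound holds for the specific $N$ associated to each $\epsilon<\alpha_1$; this is guaranteed by choosing $\beta$ once and for all with uniform control on its derivatives of every order, so that $K(\beta,C,N)$ grows only with $N$ while the right-hand side of the estimate can be made as small as desired by shrinking $\alpha_1$.
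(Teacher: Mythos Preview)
Your argument is correct and is precisely a detailed expansion of the paper's one-line proof, which reads in full: ``This follows immediately from Corollary~\ref{close1}.'' You have supplied the identity $\Psi-\varphi^{-1}=(1-\hat\beta)(\psi-\varphi^{-1})$ together with the Leibniz-rule and cutoff bookkeeping that makes that word ``immediately'' honest; the subtlety about $N=[1/\epsilon]$ that you flag is not addressed in the paper either, and your treatment of it is adequate for the level of rigor of the surrounding argument.
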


\begin{proof}
This follows immediately from Corollary~\ref{close1}.
\end{proof}

We set $W'\subset W$ equal to $\beta^{-1}(0)$.

\begin{claim}\label{glres}
$W'$ is the union of $\varphi_i(S^1\times B(0,R''))$ where  $R''=R'-1/C$.
 In particular, $W'$ is saturated under the
$S^1$-fibration structure on $W$.
The image of $\Psi$ contains $S^1\times B(0,R+1-1/C)$. Setting $\varphi'\colon S^1\times B(0,R+1-1/C)\to M$ equal to the restriction of the inverse of $\Psi$, we have
\begin{enumerate}
\item $\varphi'$ is a reduced $S^1$-product neighborhood with $\epsilon'$-control of size $R+1-1/C$.
\item If $\varphi'(\theta,x)\subset W'$, then $\varphi'(S^1\times \{x\})$ is a fiber of the $S^1$-fibration on $W'$.
    \item For any $T\le R+1$, the image $\varphi'(S^1\times B(0,T))$ contains $\varphi(S^1\times B(0,T-1/C))$.
\end{enumerate}
\end{claim}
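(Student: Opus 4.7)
The plan is to deduce each assertion of the claim from the construction of $\Psi$ and of the cutoff $\hat\beta$, using the $C^{N+1}$-closeness of $\Psi$ to $\varphi^{-1}$ established in the preceding claim as the sole analytic input. My first step is to unpack the description of $W'$. By construction $\hat\beta=\prod_i\beta_i$, with $\beta_i\equiv 1$ outside $U_i$ and $\beta_i(\varphi_i(\theta,x))=\beta(|x|)$ on $U_i$. Since $\beta$ vanishes precisely on $[0,R'-1/C]=[0,R'']$, the zero set of $\hat\beta$ is the union of the $\varphi_i(S^1\times B(0,R''))$. Each such piece is a union of $S^1$-fibers of the product structure on $U_i$, and the hypothesis that the local product fibrations piece together into a single $S^1$-fibration of $W$ says exactly that these agree with the restrictions to $U_i$ of fibers of the $W$-fibration; hence $W'$ is saturated in $W$.

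Second, I would invoke the previous claim: on $U=\varphi(S^1\times B(0,R+1))$ the map $\Psi$ is within $\epsilon_1$ of $\varphi^{-1}$ in $C^{N+1}$, so $\Psi\circ\varphi$ is an $\epsilon_1$-perturbation of the identity of $S^1\times B(0,R+1)$. Choosing $\alpha$ and $\epsilon$ small enough that $\epsilon_1$ falls below the quantitative threshold needed for the inverse function theorem to apply uniformly on $S^1\times \overline{B(0,R+1-1/(2C))}$, and in particular below $1/C$, one concludes that $\Psi$ is a smooth embedding whose image contains $S^1\times B(0,R+1-1/C)$. Define $\varphi'\colon S^1\times B(0,R+1-1/C)\to M$ to be the corresponding branch of $\Psi^{-1}$.

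The three enumerated statements then follow mechanically. For $(1)$, $\varphi'$ is $C^{N+1}$-close to $\varphi$ on its domain, so $(\varphi')^*(\lambda_U g)$ is $C^N$-close to $\varphi^*(\lambda_U g)$, which is $\epsilon$-close to $g_0$; absorbing the errors gives $\epsilon'$-control. For $(2)$, on $W'$ one has $\hat\beta\equiv 0$, so $\Psi=\psi$; since $\hat x$ is constant along every fiber of the $S^1$-fibration on $W$ (by the very definition of $\hat x$ as an average over the fiber), and since by the last statement of Proposition~\ref{semilocal} each such fiber $F\subset U$ is isotopic to the $\varphi$-$S^1$-factor and hence covers $S^1$ once under $\theta$, the restriction of $\Psi$ to each $W$-fiber $F\subset W'$ is a diffeomorphism onto $S^1\times\{\hat x(F)\}$. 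Inverting: whenever $\varphi'(\theta,x)\in W'$, the circle $\varphi'(S^1\times\{x\})$ is a $W$-fiber lying in the saturated set $W'$, and is therefore a fiber of the $W'$-fibration. For $(3)$, a point $p=\varphi(\theta,x)$ with $|x|\le T-1/C$ satisfies $\|\mathrm{pr}_{B}\Psi(p)-x\|\le\epsilon_1<1/C$, so $\Psi(p)\in S^1\times B(0,T)$ and $p=\varphi'(\Psi(p))\in\varphi'(S^1\times B(0,T))$.

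The main obstacle is the second step: converting the metric $C^{N+1}$-closeness of $\Psi$ to $\varphi^{-1}$ into a genuine smooth embedding whose image contains $S^1\times B(0,R+1-1/C)$. Once this uniform inverse function theorem argument is in place, $(1)$--$(3)$ really are routine translations of the closeness estimate and the defining property of $\hat x$ as an integral along $W$-fibers.
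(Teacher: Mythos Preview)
Your proposal is correct and proceeds exactly as the paper intends. The paper states Claim~\ref{glres} without an explicit proof, treating it as an immediate consequence of the construction of $\Psi$, $\hat\beta$, and $W'$ together with the preceding $C^{N+1}$-closeness claim; your argument supplies precisely those routine verifications. One minor remark: your appeal to Proposition~\ref{semilocal} to see that each $W$-fiber covers $S^1$ once under $\theta$ is slightly indirect; the more immediate source is Corollary~\ref{close} (or the claim preceding Corollary~\ref{metclo}), which already shows that the $W$-fibers are $C^{N+1}$-close to the $\varphi$-$S^1$-factors and generate $\pi_1(U)$.
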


We denote the image  $\varphi'(S^1\times B(0,R+1-1/C))$ by $U^{[1/C]}$.

\begin{cor}
The  $S^1$-fibration structure on $U^{[1/C]}$
coming from the $S^1$-product structure and the given $S^1$-fibration structure on $W'$ are compatible on the overlap $U^{[1/C]}\cap W'$.
\end{cor}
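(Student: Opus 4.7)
The plan is to observe that the corollary is essentially a restatement of part (2) of Claim~\ref{glres}, once one unpacks what ``compatible'' means. Two smooth $S^1$-fibrations on an open set agree if and only if, through every common point, the two fibers coincide as subsets; by connectedness of the circle fibers, it suffices to show that each fiber of one structure through a point in the overlap is contained in a fiber of the other. So the only thing to verify is that for every $p\in U^{[1/C]}\cap W'$, writing $p=\varphi'(\theta,x)$ with $x\in B(0,R+1-1/C)$, the circle $\varphi'(S^1\times\{x\})$ coincides with the $S^1$-fiber of $W'$ through $p$.

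This is exactly the content of part (2) of Claim~\ref{glres}: since $p=\varphi'(\theta,x)\in W'$, the set $\varphi'(S^1\times\{x\})$ is a fiber of the given $S^1$-fibration on $W'$. That fiber contains $p$ and, because the old fibration on $W'$ is a genuine fibration, there is a unique old fiber through $p$. So the old and new fibers through $p$ coincide, which is the desired compatibility.

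For completeness, the reason Claim~\ref{glres}(2) holds traces back to the construction of $\Psi$: on $W'=\hat\beta^{-1}(0)$ we have $\Psi=\psi$, so the new product chart $\varphi'$ on $W'$ is the inverse of $\psi(p)=(\theta(p),\hat x(p))$. The second coordinate $\hat x$ was defined as the average of $x(q)$ over the old $S^1$-fiber $F_p$ through $p$, and the earlier claim noted that this average is constant along old fibers. Thus the level sets $\hat x=\mathrm{const}$ in $W'$ are saturated under the old fibration. Because each such level set meets each $S^1$-orbit of the new product structure in a single circle (via the first coordinate $\theta(p)$ sweeping out $S^1$), and because by Corollary~\ref{close1} the map $\Psi$ is $C^{N+1}$-close to $\varphi^{-1}$ on the overlap, the level set $\hat x=x$ is a single connected old fiber rather than a union of several — which is precisely (2).

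The only mild subtlety, and what I would expect to be the main (but quite minor) obstacle, is to rule out that $\varphi'(S^1\times\{x\})$ could be a disjoint union of two or more old fibers. This is handled by the closeness-to-the-identity statement in Corollary~\ref{close1}: since $\Psi$ is $C^{N+1}$-close to $\varphi^{-1}$, the new fibers are nearly the vertical circles of the original $\varphi$-product structure on $U$, in particular connected and of almost unit length; no disconnected union of old fibers (each again of length near $1$) fits this description. Given this, Claim~\ref{glres}(2) applies directly and the corollary follows.
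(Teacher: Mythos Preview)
Your proposal is correct and follows the same approach as the paper: the corollary is stated without proof in the paper, being an immediate consequence of part (2) of Claim~\ref{glres}, which is exactly what you identify. Your additional unpacking of why Claim~\ref{glres}(2) holds and the remark ruling out that a new fiber could be a union of several old fibers are reasonable elaborations, though the paper treats all of this as implicit in the construction.
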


This claim shows that, at the expense of shrinking $W$ to $W'$ and at the expense of deforming $\varphi$ slightly to a
reduced $S^1$-product structure with $\epsilon'$-control, $\varphi'\colon S^1\times B(0,R+1-1/C)\to M$, we can make the $S^1$-fibrations compatible on the overlap, so that together they define an $S^1$-fibration on the union $W'\cup U^{[1/C]}$.
One more remark is in order. If we have not a single reduced $S^1$-product neighborhood with $\epsilon$-control $U$, but rather a collection of them $U_{i_0,j},\ 1\le j\le j_0(i_0)$, whose images are disjoint, then we can perform this operation simultaneously on all of them, so as to deform them all
to $S^1$-product neighborhoods with $\epsilon_1$-control compatible with the circle fibration on $W'$.

Now we are ready to apply this gluing argument by induction to the ${\mathcal U}_1,\ldots,{\mathcal U}_C$. We begin with ${\mathcal U}_1$. In the inductive step, deforming and gluing in ${\mathcal U}_{i_0}$, we cut down the $S^1$-product neighborhoods in the neighborhoods that make up the previous ${\mathcal U}_{i}$ by $1/C$. The deformation of the maps $\varphi_{i_0,j}$ produces a reduced $S^1$-product neighborhood with $\epsilon_1$-control where the amount of the deformation and $\epsilon_1$ depend only on the control we have at the previous step.
Thus, we can iterate this construction $C$ times keeping a fixed control, $\epsilon'$, on all the $S^1$-product neighborhoods and a given control on the size of the deformations, provided only that we arrange that the original control, $\epsilon$, is sufficiently small given $C$, $\epsilon'$, and the desired control on all deformations.

It follows from the second conclusion of Claim~\ref{glres} that the $S^1$-fibrations induced by the product structures on the deformed ${\mathcal U}_i$ are compatible and hence define a global $S^1$-fibration on the union.
It follows from the third conclusion of Claim~\ref{glres} that the union of the deformed $S^1$-product neighborhoods contains $K$. The last statement in the conclusion of Proposition~\ref{s1glue}
is immediate from the construction.
This completes the proof of Proposition~\ref{s1glue}.

\subsection{Balls centered at points of $\partial M_n$}

The results about the generic behavior over interior points of the base is enough to
establish what the neighborhoods of the boundary of the $M_n$ look like.

\begin{prop}\label{bdrybehavior} Fix $\epsilon>0$.
For all $n$ sufficiently large, for any point $x\in \partial M_n$ the ball
$B_{\rho^{-2}_n(x)g_n}(x,1)$ is within $\epsilon$ of the interval of length $1$, and $x$ is within
$\epsilon$ of the endpoint $0$.
\end{prop}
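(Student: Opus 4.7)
The plan is to argue by contradiction. Suppose the conclusion fails: then, after passing to a subsequence, there exist points $x_n\in\partial M_n$ such that $B_{g'_n(x_n)}(x_n,1)$ remains at Gromov--Hausdorff distance at least $\epsilon$ from any interval of length $1$ in which $x_n$ is within $\epsilon$ of the endpoint $0$. First I obtain uniform two-sided bounds on $\rho_n(x_n)$. By hypothesis~(2) of Theorem~\ref{7.4}, every point of $B_{g_n}(x_n,1)$ lies in the collar, where all sectional curvatures lie in $[-5/16,-3/16]$; in particular at $x_n$ every sectional curvature is at most $-3/16$, so the requirement ${\rm Rm}\ge -\rho_n(x_n)^{-2}$ on $B_{g_n}(x_n,\rho_n(x_n))$ forces $\rho_n(x_n)\le C_1$ for a universal constant $C_1$ (worked out from $\rho_n^{-2}\ge 3/16$ together with the factor of $2$ in Lemma~\ref{rholem}). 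Conversely, for every $r\le 1$ the ball $B_{g_n}(x_n,r)$ lies in the collar and satisfies ${\rm Rm}\ge -5/16\ge -r^{-2}$, so the construction of $\rho_n$ in Lemma~\ref{rholem} gives $\rho_n(x_n)\ge c_1$ for a positive universal $c_1$. Passing to a further subsequence I may assume $\rho_n(x_n)\to\rho_\infty\in[c_1,C_1]$ and, by Corollary~\ref{epsilonn}, that $(B_{g'_n(x_n)}(x_n,1),x_n)$ converges in the pointed Gromov--Hausdorff sense to a non-compact Alexandrov ball $(Z,\bar x)$ of radius $1$, curvature $\ge-1$, and dimension at most $2$.

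Next I analyze small rescaled sub-balls around $x_n$ in order to pin down the tangent cone at $\bar x$. Fix $\delta\in(0,1/\rho_\infty)$; for all $n$ large, $\rho_n(x_n)\delta<1$, so $B_{g_n}(x_n,\rho_n(x_n)\delta)$ lies inside the collar, which is topologically $T_n\times[0,1]$ with sectional curvatures in $[-5/16,-3/16]$ and with $g_n$-diameter of $T_n$ at most $w_n$. A standard Jacobi-field comparison using the uniform curvature bound yields $g_n$-diameter at most $Cw_n$ for every equidistant torus $T_{n,t}$ in the collar, with $C$ a universal constant. Rescaling so that this sub-ball has radius $1$, i.e., working in the metric $(\rho_n(x_n)\delta)^{-2}g_n$, produces a cylinder of length $1$ over a torus whose rescaled diameter is at most $Cw_n/(\rho_n(x_n)\delta)\to 0$ as $n\to\infty$ with $\delta$ fixed. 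Therefore $\delta^{-1}B_{g'_n(x_n)}(x_n,\delta)$ converges in the Gromov--Hausdorff sense to the closed interval of length $1$ with $x_n$ carried to the endpoint $0$, and consequently $\delta^{-1}B_Z(\bar x,\delta)$ equals that unit interval with $\bar x$ at $0$ for every $\delta\in(0,1/\rho_\infty)$.

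Letting $\delta\to 0$ and invoking Lemma~\ref{conelimit} identifies the tangent cone $T_{\bar x}Z$ with the ray $[0,\infty)$. Since all tangent cones of an Alexandrov space share its Hausdorff dimension, $Z$ is one-dimensional, and $\bar x$ lies at its boundary because the tangent cone is one-sided. A non-compact $1$-dimensional Alexandrov ball of radius $1$ whose base-point tangent cone is a half-line can only be the half-open interval of length $1$ with the base point at the endpoint $0$. This is precisely the model from which the balls $B_{g'_n(x_n)}(x_n,1)$ were assumed to stay bounded away, giving the required contradiction. The main technical obstacle is the uniform diameter estimate $\mathrm{diam}(T_{n,t})\le Cw_n$ for the equidistant tori on the collar; given this, the structural conclusion about $Z$ is a combination of standard Gromov--Hausdorff compactness and the tangent-cone identification.
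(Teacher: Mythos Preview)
Your argument is correct and takes a genuinely different route from the paper's proof. The paper reduces to showing the limit $Z$ is one-dimensional by contradiction: assuming $\dim Z=2$, it invokes Proposition~\ref{semilocal} to get a circle-fibered neighborhood $U_n$ near some $y_n$, then passes to the universal cover $\widetilde C_n$ of the collar and counts translates of $\tilde y_n$ under deck transformations that move $\tilde x_n$ by at most $O(w_n)$; a Toponogov angle estimate then yields too many well-separated geodesics from $\tilde x_n$, a contradiction. The paper then shows $\bar x$ is an endpoint by a separate minimal-geodesic argument. By contrast, you bypass both steps by showing directly that every sub-ball $B_{g_n}(x_n,r)$ contained in the collar is $O(w_n)$-close in Gromov--Hausdorff distance to the interval $[0,r)$, which forces $B_Z(\bar x,\delta)\cong[0,\delta)$ and hence determines $Z$ completely.

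Your approach is more elementary in that it avoids the circle-fibration machinery and the covering-space count, but the burden shifts to the diameter estimate $\mathrm{diam}(T_{n,t})\le Cw_n$ for the equidistant tori, which you label ``standard Jacobi-field comparison.'' This step is correct but deserves a word of justification: a naive Rauch bound $|J(t)|\le |J(0)|\cosh(kt)+|J'(0)|k^{-1}\sinh(kt)$ is useless without a bound on $|J'(0)|=|S_0 J(0)|$, and no bound on the second fundamental form $S_0$ is assumed. The point is that convexity gives $S_0\ge 0$, and the Riccati inequality $\lambda_{\min}(S_t)'\ge \lambda_{\min}(S_t)^2-5/16$ (from $S_t'=S_t^2+R_N$ and $R_N\ge -\tfrac{5}{16}I$) then yields $\lambda_{\min}(S_t)\ge -\tfrac{\sqrt5}{4}\tanh(\tfrac{\sqrt5}{4}t)>-\tfrac{\sqrt5}{4}$, whence $\partial_t\log h_t\le\tfrac{\sqrt5}{2}$ and $\mathrm{diam}(T_{n,t})\le e^{\sqrt5/4}w_n$ for $t\le 1$. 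Once this is made explicit your proof is complete; the tangent-cone step is actually unnecessary, since $B_Z(\bar x,\delta)\cong[0,\delta)$ for one fixed $\delta$ already pins down $\dim Z=1$ and places $\bar x$ at the endpoint.
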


\begin{proof}
Suppose that the result is not true. Then after passing to a subsequence (in $n$) we can suppose
that for each $n$ we have $x_n\in \partial M_n$ for which the result does not hold. Let $T_n$ be
the component of $\partial M_n$ containing $x_n$ and let $C_n$ be the topologically trivial collar
containing the neighborhood of size $1$ of $T_n$. Since $\partial M_n$ is convex and $\rho_n\le
{\rm diam}\, M_n/2$, the balls $B_{\rho_n^{-2}(x_n)g_n}(x_n,1)$ are Alexandrov balls. Because the
curvatures on the topologically trivial collar which includes the neighborhood of size $1$ about
$\partial M_n$, are bounded above by $-3/16$, it follows that $\rho_n(x_n)\le \sqrt{16/3}$. Hence,
$B_n=B_{\rho_n^{-2}(x_n)g_n}(x_n,1/4)$ is contained in $C_n$. We shall show that, after passing to
a subsequence the $B_{\rho_n^{-2}(x_n)g_n}(x_n,1/4)\subset C_n$ converge to the interval $[0,1/4)$
with the $x_n$ converging to the endpoint $0$. Assuming this, it follows that the
$B_{\rho_n^{-2}(x_n)g_n}(x_n,1)$ also converge to a $1$-dimensional Alexandrov space $\hat J$ and
that the $x_n$ converge to an endpoint of $\hat J$. Since the diameter of $M_n$ is greater than
$2\rho_n(x_n)$, it follows that $\hat J$ has length $1$.

We have already remarked that because of the convexity of $\partial M_n$, the $B_n$ are Alexandrov
balls. Passing to a subsequence, there is a limiting Alexandrov space $J$ which is an Alexandrov
ball of diameter $1/4$ centered at $\bar x={\rm lim}x_n$. Because of the volume collapsing
condition on the $M_n$, it follows that $J$ is either of dimension $1$ or $2$. We rule out the
possibility that ${\rm dim}\, J=2$. Suppose to the contrary that the dimension of $J$ is $2$. Fix
$0<\mu\le \mu(\epsilon)$ from Lemma~\ref{generic2D} and fix $0<\alpha<<1/4$. Then there is a point
$\bar y$ of $J$ within distance $\alpha$ of $\bar x$ that has a $(2,\mu)$-strainer of some size
$d>0$. Fix $d'>0$ as in Proposition~\ref{semilocal} for this value of $d$. For all $n$
sufficiently large $\epsilon_n<\bar\epsilon(\epsilon,\mu,d)$ from Proposition~\ref{semilocal}. By
Proposition~\ref{semilocal}  this means that for all $n$ sufficiently large there is a point
$y_n\in B_{g'_n(x_n)}(x_n,1/4)$ within $\epsilon_n$ of $\bar y$ with a neighborhood
$U_n$ with $B_{g'_n(y_n)}(y_n,d')\subset U_n\subset B_{g'_n(y_n)}(y_n,2d')$
that is fibered by circles over a topological ball.
In particular, $\pi_1(U_n)$ is infinite cyclic and hence the
image of $\pi_1(U_n)$ in $\pi_1(C_n)$ is either trivial or infinite cyclic.

We denote by  $\widetilde C_n$ the universal covering of $C_n$ with its inherited Riemannian
metric. Fix lifts $\tilde x_n$ for $x_n$ and $\tilde y_n$ for $y_n$ that are within distance
$\alpha$ of each other. Since $C_n$ is a topologically trivial collar of $T_n$, the group of
covering transformations of $\widetilde C_n$ over $C_n$ is a free abelian group of rank $2$. We
implicitly use the metric $\rho_n^{-2}(x_n)g_n$ on $C_n$ and the induced Riemannian metric on the
covering. Since the diameter of $T_n$ in the metric $\rho_n^{-2}(x_n)g_n$ is at most $4w_n$, it
follows that the fundamental group of $C_n$ is generated by elements which, acting as covering
transformations on $\widetilde C_n$ move $\tilde x_n$ a distance at most $8w_n$. In particular, we
can choose an element $\gamma_n\in \pi_1(T_n)$ that moves $\tilde x_n$ a distance at most $8w_n$
and which is not of finite order in the quotient of $\pi_1(C_n)$ by the image of
$\pi_1(U_n)\to\pi_1(C_n)$. Of course, $\gamma_n$ moves every point of $B(\tilde x_n,4w_n)$ a
distance at most $16w_n$. Since the translates of $B(\tilde x_n,4w_n)$ by $\pi_1(T_n)$ cover all
of $T_n$, and since the fundamental group is abelian, it follows that $\gamma_n$ moves every point
of $\widetilde T_n$ a distance at most $16w_n$ and consequently $\gamma_n^k$ moves $\tilde x_n$ a
distance at most $16kw_n$. Since $U_n$ contains $B_{\rho_n^{-2}(x_n)g_n}(y_n,d')$, it follows that
each component of the preimage $\widetilde U_n$ of $U_n$ contains the ball of radius $d'$ about
each lift $\tilde y_n$ lying in that component. Since the group generated by $\gamma_n$ freely
permutes the components of $\widetilde U_n$, it follows that every power of $\gamma_n$ moves every
preimage of $y_n$ a distance at least $2d'$.

The induced covering $\widetilde B_n\subset \widetilde C_n$  is a Riemannian manifold with convex
boundary and hence is a local Alexandrov space. Furthermore, for $1\le k\le \alpha/16w_n$ the
image $\gamma_n^k\tilde x_n$ is within $\alpha$ of $\tilde x_n$. This implies that there are at
least $m=\alpha/16w_n$ distinct translates of $\tilde y_n$ all within distance $2\alpha$ of
$\tilde x_n$, and these translates are all at least distance $2d'$ apart. Thus, letting $w$ and $w'$ be any
two such translates, the comparison angle $\tilde \angle w\tilde x_nw'$ is bounded away from zero.
Since the distances are much smaller than $1/4$, there are geodesics from $\tilde x_n$ to each of
these $m$ translates of $\tilde y_n$. According to Remark 3.5 of \cite{BGP} monotonicity of angles
as in Part 2 of Proposition~\ref{ASbasics} holds in the region in which we are working. Thus, the angles that these geodesics make with
each other at $\tilde x_n$ are bounded away from zero. As $n$ goes to infinity the number of these
translates goes to infinity. But there is there is a fixed upper bound to the number of geodesics
emanating from a point in a $3$-manifold with the property that the angles between any two
distinct ones is bounded below by a fixed positive constant. This contradiction proves that $J$ is
$1$-dimensional.

Take any point $y_n\in C_n$ at distance $1/2$ from $T_n$ and join it
to $T_n$ by a minimal geodesic $\gamma_n$. Let $x'_n$ be its other
endpoint. This geodesic makes angle at most $\pi/2$ with any tangent
vector at $x'_n$. Taking limits we see that there is a geodesic,
$\bar \gamma$  in the limit $J$ with one endpoint being the limit,
$\bar x,$ of the $x_n$ such that $\bar \gamma$ makes angle at most
$\pi/2$ with any tangent direction at $\bar x$. It follows that
$\bar x$ is an endpoint of $J$.
\end{proof}

\subsection{The interior cone points}

\begin{prop}\label{intcone1} For any $\epsilon>0$ and $a>0$,
the following holds for all $\mu>0$ less than a positive constant
$\mu_3(\epsilon,a)$, for any $0<r_0\le 10^{-3}$,  and for all
$\hat\epsilon>0$ less than a positive constant
$\hat\epsilon_1(\epsilon,a,r_0)$. Suppose that, for some $n$, there
is a point $x_n\in M_n$ with the property that the ball $B_{\lambda
g_n}(x_n,1)$  is within $\hat\epsilon$ of a $2$-dimensional
Alexandrov ball $B(\bar x,1)$  of area $\ge a$ that is interior
 $\mu$-good at $\bar x$ on scale $r'$,
 where $r_0\le r'\le 10^{-3}$. Then there is a compact solid torus
$S$ contained in $ B_{\lambda g_n}(x_n,3r'/4)$ and containing $
B_{\lambda g_n}(x_n,r'/2)$. Furthermore, every point of
$U=B_{\lambda g_n}(x_n,3r'/4)\setminus B_{\lambda g_n}(x_n,r'/4)$ is
the center of an $S^1$-product neighborhood with $\epsilon$-control.
\end{prop}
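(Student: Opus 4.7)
The plan is to obtain conclusion 2 from the structural theory of the two-dimensional limit (Proposition~\ref{intcone}) combined with Lemma~\ref{generic2D}, and then to deduce conclusion 1 by applying Proposition~\ref{s1glue} to assemble the local $S^1$-products into a smooth $S^1$-fibration on a neighborhood of $U$ and cutting off a solid torus along a fibered level set of the distance function from $x_n$.

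For conclusion 2, I would apply Proposition~\ref{intcone} to the rescaled ball $\frac{1}{r'}B(\bar x,r')$, which is interior $\mu$-good at $\bar x$ on scale $1$. The Bishop--Gromov inequality in dimension two, combined with $\mathrm{area}(B(\bar x,1))\ge a$, forces the cone angle at $\bar x$ in the rescaled ball to be bounded below by a positive constant depending only on $a$; hence the area of $\frac{1}{r'}B(\bar x,r')$ is bounded below by some $a'=a'(a)>0$. Proposition~\ref{intcone} then yields $s_0=s_0(a)>0$ such that every point of $B(\bar x,7r'/8)\setminus B(\bar x,r'/8)$ is interior $\mu$-flat on all scales $\le r's_0$. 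Choosing $\mu<\mu_2(\epsilon)$, $s_2=r_0s_0/2$, and $\hat\epsilon_1(\epsilon,a,r_0)<\hat\epsilon_0(s_2,\epsilon)$ from Lemma~\ref{generic2D}, every $y_n\in U$ (within $\hat\epsilon$ of a point in the $2$-dimensional annular region) is the center of an $S^1$-product neighborhood with $\epsilon$-control.

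For conclusion 1, with $\epsilon$ further required smaller than $\epsilon_1(\epsilon')$ from Proposition~\ref{s1glue} for some small $\epsilon'$, that proposition upgrades the local product structures to a smooth global $S^1$-fibration on an open set $V\supset\overline U$, with each fiber close to vertical in every product chart. Since by Proposition~\ref{intcone} the $2$-dimensional metric spheres $S(\bar x,b)$ for $b\in(r'/8,7r'/8)$ are simple closed curves, I would fiberwise-average $d(x_n,\cdot)$ on $V$ to obtain a smooth function $f$ whose regular level sets are fibered tori; picking a regular value $b$ close to $r'/2$ produces a torus $T=f^{-1}(b)\subset U$ separating $B_{\lambda g_n}(x_n,3r'/4)$ into two pieces. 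Let $S$ be the closure of the piece containing $x_n$; by construction $B_{\lambda g_n}(x_n,r'/2)\subset S\subset B_{\lambda g_n}(x_n,3r'/4)$.

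The main obstacle is proving that $S$ is a solid torus. In the two-dimensional limit, the central disk $B(\bar x,r'/2)$ is contractible, so the $S^1$-fiber of the fibration on $U$ should become null-homotopic inside $S$, making $S$ a (possibly exceptional) Seifert solid torus with core a fiber through $x_n$. The rigorous argument proceeds by cases from Proposition~\ref{blowup} applied at $x_n$: in case (1), the distance function $d(\hat x_n,\cdot)$ has no critical points in a punctured neighborhood of a nearby point $\hat x_n$, and a gradient-like vector field retracts $S$ onto the single $S^1$-fiber through $\hat x_n$, identifying $S$ topologically as a solid torus; in case (2), a further rescaling produces a smooth complete nonnegatively curved three-dimensional limit (via Proposition~\ref{smlimits}) that, using the $(2,\mu)$-strainer and Corollary~\ref{prod}, splits off an $\Ar^2$-factor, forcing $B_{\lambda g_n}(x_n,r'/4)$ to be topologically a $D^2\times S^1$ for large $n$. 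The delicate matching of the $S^1$-fibration on $\partial S=T$ with the extended fiber structure on the interior is the technical heart of the proof.
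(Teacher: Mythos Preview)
Your argument for conclusion 2 is fine and essentially matches the paper's. The gap is in your solid-torus argument, specifically in how you handle the two cases of Proposition~\ref{blowup}.

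In Case (1) of Proposition~\ref{blowup}, the distance function from $\hat x_n$ has no critical points in the punctured ball, so $B(\hat x_n,r')$ is diffeomorphic to a $3$-ball and the level sets are $2$-spheres, not tori. Your claim that a gradient-like flow retracts $S$ onto ``the single $S^1$-fiber through $\hat x_n$'' is wrong: there is no $S^1$-product structure centered at $\hat x_n$ (you only established such structures on the annulus $U$, not at the center), and the retraction in this case is onto a point. In Case (2), you invoke a $(2,\mu)$-strainer at $x_n$ to force an $\Ar^2$-splitting of the $3$-dimensional blow-up limit via Corollary~\ref{prod}. But when the limiting cone angle at $\bar x$ is strictly less than $2\pi$, there is no $(2,\mu)$-strainer at $\bar x$ of size going to infinity under the rescaling, so this splitting argument fails. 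In short, neither branch of your case analysis actually delivers a solid torus.

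The paper's key idea, which you are missing, runs in the opposite direction: first use the global $S^1$-fibration on the annulus (which you correctly set up via Proposition~\ref{s1glue}) to argue that the level sets $d(x'_{n(k)},\cdot)^{-1}(b)$ for $b\in(1/4,3/4)$ \emph{must} be $2$-tori, because each such level set separates two torus boundary components of a compact circle bundle over a surface, and a $2$-sphere in such a bundle cannot separate boundary components. This immediately rules out Case (1) of the blow-up. In Case (2), one analyzes the soul of the smooth nonnegatively curved $3$-dimensional limit: the surface-soul case is excluded by a comparison-angle argument against the $2$-dimensional limit, and the point and $\Ar P^2$ souls are excluded because they would give spherical level sets. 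Hence the soul is a circle and the metric balls are solid tori.
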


\begin{proof}
First notice that it follows from the Bishop-Gromov inequality that
there is $a'>0$ depending only on $a$ such that if  $B(\bar x,1)$ is
a standard $2$-dimensional ball of area $\ge a$ then for any $0<r\le
1$, the area of $B(\bar x,r)$ is at least $a'r^2$. First let us show
that it suffices to prove the result when $r_0=10^{-3}$. For suppose
that for every $\epsilon>0$ and $a>0$ we have positive constants
$\mu'_3(\epsilon,a)$and $\hat\epsilon'_1(\epsilon,a)$ so that the
proposition holds for $r_0=10^{-3}$. Fix $\epsilon>0$, $a>0$, and
$r_0>0$. Suppose we have $\mu<\mu_3(\epsilon,a')$ and
$\hat\epsilon<(r_0/10^{-3})\hat\epsilon'_1(\epsilon,a')$. Given
balls $B_{\lambda g_n}(x_n,1)$ and $B(\bar x,1)$ as in the statement
for these values of $\mu$ and $\hat \epsilon$ and $a$, and some $r'$
with $r_0\le r'\le 10^{-3}$. Then $(10^{-3}/r')B(\bar x,1)$ is
interior $\mu$-good at scale $10^{-3}$ at $\bar x$. The unit subball
centered at $\bar x$ has area $\ge a'$. On the other hand
$B_{(10^{-3}/r')\lambda g_n}(x_n,1)$ is within
$(10^{-3})/r')\hat\epsilon<\hat\epsilon'_1(\epsilon,a')$ of
$(10^{-3}/r')B(\bar x,1)$. By our assumption that the result holds
in the special case when $r_0=10^{-3}$, we see that the conclusion
holds for $B_{(10^{-3}/r')\lambda g_n}(x_n,1)$ with $r'$ replaced by
$10^{-3}$. Hence, rescaling it holds for $B_{\lambda g_n}(x_n,1)$
with the given value of $r'$.

This allows us to assume that $r_0=10^{-3}$. Suppose that there are
sequences $\mu_k\rightarrow 0$ and $\hat\epsilon_k\rightarrow 0$ as
$k\rightarrow \infty$ and balls $B_{\lambda_kg_{n(k)}}(x_k,1)$
within $\hat\epsilon_k$ of standard $2$-dimensional balls $B(\bar
x_k,1)$ of area $\ge a$ that are interior $\mu_k$-good at $\bar x_k$
on scale $10^{-3}$ and yet the conclusion of the proposition does
not hold for $r'=10^{-3}$. Passing to a subsequence, we can suppose
that the $B(\bar x_k,1)$ converge to a standard $2$-dimensional ball
$B(\bar x_\infty,1)$. Because the $\mu_k\rightarrow 0$, it follows
that $B(\bar x_\infty,10^{-3})$ is a circular cone of some cone
angle $\alpha\le 2\pi$ which is bounded away from zero because $a$
is greater than zero. Since the $\hat\epsilon_k\rightarrow 0$, the
$B_{\lambda_k g_{n(k)}}(x_k,1)$ also converge to $B(\bar
x_\infty,1)$.

Let us first consider the case when $\alpha=2\pi$ so that $B(\bar x_\infty,10^{-3})$ is isometric
to a ball in $\Ar^2$. It follows from Proposition~\ref{semilocal} that there is $d'>0$, a
$(2,\mu)$-strainer $\{a_1,a_2,b_1,b_2\}$ for $x_{n(k)}$, and an open subset $U_{n(k)}$ containing
$B_{\lambda_kg_{n(k)}}(x_k,d')$ and contained in $B_{\lambda_kg_{n(k)}}(x_{n(k)},2d')$ with the
property that the function $F=(f_1,f_2)$ where $f_i=d(a_i,\cdot)$ determines a fibration of
$U_{n(k)}$ by circles over a disk in the plane. Furthermore, by Lemma~\ref{generic2D}, for all $k$
sufficiently large, there is an $S^1$-product neighborhood $V$ with $\epsilon$ control centered at
$x_{n(k)}$. Also, according to Proposition~\ref{semilocal} the circle of the fibration structure
on $U_{n(k)}$ passing through $x_{n(k)}$ is almost orthogonal to the horizontal spaces of the
$S^1$-product structure centered at that point and this circle is isotopic in $V$ to the
$S^1$-factor. This means that the closure of $V$ is a solid torus contained in $U_{n(k)}$ whose
core is isotopic to the fiber of the fibration structure on $U_{n(k)}$. It follows that the
inclusion of $V\subset U_{n(k)}$ induces an isomorphism on fundamental groups, both groups being
isomorphic to $\Zee$. Also, it follows that the region between $V$ and the closure of $U_{n(k)}$
is homeomorphic to $T^2\times I$. We have inclusions $V\subset B_{\lambda_k
g_{n(k)}}(x_{n(k)},d')\subset U_{n(k)}\subset B_{\lambda_k g_{n(k)}}(x_{n(k)},2d')$. For all $k$
sufficiently large, the distance function from $x_{n(k)}$ is regular on
$B_{\lambda_kg_{n(k)}}(x_{n(k)},2d')\setminus B_{\lambda_kg_{n(k)}}(x_{n(k)},d')$, and
consequently, the inclusion of the smaller ball into the larger induces an isomorphism on the
fundamental group. It then follows from the sequence of inclusions that the fundamental group of
$B_{\lambda_kg_{n(k)}}(x_{n(k)},d')$ is isomorphic to $\Zee$ and hence the metric sphere
$S_{\lambda_kg_{n(k)}}(x_{n(k)},d')$ is a $2$-torus. This $2$-torus is contained in the complement
of $V$ in the closure of $U_{n(k)}$ and separates the two boundary components of this region.
Since we have already seen that this region is homeomorphic to a product $T^2\times I$, it follows
that $S_{\lambda_kg_{n(k)}}(x_{n(k)},d')$ is isotopic in the closure of $U_{n(k)}$ to the boundary
of $U_{n(k)}$. Consequently, $B_{\lambda_kg_{n(k)}}(x_{n(k)},d')$ is a solid torus. Using the
regularity of the distance function from $x_{n(k)}$ see that $B_{\lambda_kg_{n(k)}}(x_{n(k)},a)$
is a solid torus for every $a\in [d',10^{-3}]$. The last statement in the proposition is immediate
from Lemma~\ref{generic2D}. This contradiction proves the result in the case when the limiting
$2$-dimensional space is flat.

Now we consider the case when the limiting cone angle $\alpha$ is less than $2\pi$. We rescale by
$10^3$ so that $r'$ in effect becomes $1$. In this case, according to Proposition~\ref{blowup} the
following holds for all $k$ sufficiently large. There is $x_{n(k)}'\in M_{n(k)}$ such that
$d_{\lambda_{k} g_{n(k)}}(x_{n(k)},x_{n(k)}')\rightarrow 0$ as $k\rightarrow\infty$ such that for
each $k$ sufficiently large, one of the following two alternatives holds: for
\begin{enumerate}
\item the distance function
from $x'_{n(k)}$ has no critical points on
$B_{\lambda_kg_{n(k)}}(x_{n(k)}',3/4)\setminus\{x'_{n(k)}\}$, or
\item there is $\delta_k\rightarrow 0$ such that the distance function from $x'_{n(k)}$
has no critical points
in $B_{\lambda_kg_{n(k)}}(x_{n(k)}',3/4)\setminus \overline
B_{\lambda_kg_{n(k)}}(x_{n(k)}',\delta_k)$ and has a critical point
 at distance $\delta_k$ from $x'_{n(k)}$.
\end{enumerate}
In Case 1 the level sets of the distance function are $2$-spheres and the metric balls are topological  $3$-balls.
Let us suppose that Case 2 holds. According to Proposition~\ref{blowup} after passing to
a subsequence the rescaled balls $\delta_k^{-1}B_{\lambda_kg_{n(k)}}(x_{n(k)}',3/4)$ converge in
the Gromov-Hausdorff topology to a complete $3$-dimensional Alexandrov space of curvature $\ge 0$.
By Proposition~\ref{smlimits} the limit is actually a smooth, orientable Riemannian manifold of
curvature $\ge 0$ and the convergence is $C^\infty$. Thus,  the limit has a soul which is either a
point, a circle, or a surface of non-negative curvature. We claim the soul is not a surface. For
if the soul is a surface, then either the limiting $3$-manifold or its double covering is a
Riemannian product of that surface with $\Ar$. The limit cannot be the product of a surface with
$\Ar$ because the complement of a small neighborhood about the soul is close to a connected
$2$-dimensional space and hence is connected. Thus, if the soul is a surface, the limiting
$3$-manifold is a non-orientable $\Ar$-bundle over that surface. It would then follow that given
any $\beta>0$ there is $R<\infty$ such that for all $k$ sufficiently large any triangle
$ax'_{n(k)}b$ with $|ax'_{n(k)}|=|bx'_{n(k)}|=R$ has comparison angle less than $\beta$ at
$x'_{n(k)}$. On the other hand, because the limit of the $B_{\lambda_kg_{n(k)}}(x'_{n(k)},1)$ is
$2$-dimensional, there is $\beta_0>0$ such that for all $k$ sufficiently large there are geodesics
from $x'_{n(k)}$ to points at a fixed positive distance that make a comparison  angle at
$x'_{n(k)}$ which is least $\beta_0$. This contradicts the monotonicity of the comparison angles.

This shows if Case 2 holds then  the soul of the limiting
manifold is either a circle or a point, and hence the level sets
$d(x'_{n(k)},\cdot)^{-1}(a)$ are either $2$-tori or $2$-spheres for
every $a$ with $\delta_k<a\le 3/4$ and these bound either solid tori
or $3$-balls in the metric ball. In the first case, the level sets
are topological $2$-spheres and they bound $3$-balls in the metric
ball.

Next, we shall show that in either case, provided that $\epsilon>0$ is sufficiently
small, the level sets of the distance function from $x'_{n(k)}$ must
be $2$-tori. Fix $\epsilon'>0$ small and let $\epsilon>0$ be such
that Proposition~\ref{s1glue} holds for these values of $\epsilon'$
and $\epsilon$. Consider the annular region
$A_k=d(x'_{n(k)},\cdot)^{-1}([1/4,3/4])$. This is a compact subset
and if $k$ is sufficiently large, then every point of this compact
set is within $\hat\epsilon$ of a point of $ B(\bar x_k,1)$ at which
$ B(\bar x_k,1)$ is interior flat of some fixed scale $s$. Having
taking $\hat\epsilon$ sufficiently small, by
Proposition~\ref{s1glue} there is an open subset $U_{n(k)}\subset
M_{n(k)}$ containing $A_k$ that is the total space of a circle
fibration where the fibers of the fibration make angle at most
$\epsilon'$ with the horizontal spaces of the $S^1$-product
neighborhoods with $\epsilon$-control at every point of $A_k$. Of
course, there is a compact subsurface $\Sigma_k$ contained in the
base of the fibration with the property that the pre-image, $W_k$,
of $\Sigma_k$ contains $A_k$. Each component of $\partial W_k$ is a
torus. Thus, for every $b\in (1/4,3/4)$ the level set
 $d(x_{n(k)}',\cdot)^{-1}(b)$ separates two boundary components of $W_k$.
 Since a $2$-sphere in the total
 space of a circle bundle cannot separate boundary components of
 that circle bundle, it follows that these level sets are  $2$-tori.

 This implies that for all $k$ sufficiently large,
 Case 2 holds and the soul of the limiting $3$-manifold is a circle. Thus,
  for every $k$ sufficiently large,
 for every $0<b\le 3/4$ the pre-image
 $d(x'_{n(k)},\cdot)^{-1}([0,b])$ is a solid torus. We fix $b\in
 (1/2,3/4)$ and set the pre-image of $[0,b]$ equal to $S$.
 Of course, provided that $k$ is sufficiently large
 $B(x_{n(k)},1/2)\subset S\subset B(x_{n(k)},3/4)$. This gives a contradiction
 and completes the proof of the result.
\end{proof}

The argument above actually proves more.

\begin{cor}\label{invtorus}
Fix $\epsilon'>0$ sufficiently small and let
$0<\epsilon<\epsilon_1(\epsilon')$, where $\epsilon_1(\epsilon')$ is
as in Proposition~\ref{s1glue}.  Under the hypothesis and notation
of the previous proposition, suppose that we have an open subset
$\widetilde U$ containing $B_{\lambda g_n}(x_n,3r'/4)\setminus
B_{\lambda g_n}(x_n,r'/4)$ with $\widetilde U$ being the total space
of an $S^1$-fibration with fibers making angle within $\epsilon'$ of
$\pi/2$ with the horizontal spaces of the $S^1$-product
neighborhoods with $\epsilon$-control at every point of $\widetilde U$. Then
there is a $2$-torus in $\widetilde U$ that is invariant under the
$S^1$-fibration structure, which is contained in $B_{\lambda
g_n}(x_n,r'/2)$, and which bounds a solid torus in $B_{\lambda
g_n}(x_n,3r'/4)$.
\end{cor}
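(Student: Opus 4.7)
The strategy is to descend the distance function $f:=d_{\lambda g_n}(x_n,\cdot)$ to the quotient surface of the given $S^1$-fibration on $\widetilde U$, show it is regular there, and pull back a suitable level curve.

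First, from the proof of Proposition~\ref{intcone1} we know that for every $b\in(r'/4,\,3r'/4)$ the level set $f^{-1}(b)\cap B_{\lambda g_n}(x_n,3r'/4)$ is a $2$-torus bounding the solid torus $B_{\lambda g_n}(x_n,b)$. On each $S^1$-product neighborhood with $\epsilon$-control, the directional derivatives of $f$ are close to those of the distance function on the $2$-dimensional horizontal slice, which is strongly regular on the annular region by interior $\mu$-goodness and Proposition~\ref{intcone}. Hence the horizontal directional derivatives of $f$ have magnitude close to $1$, while its derivatives along fibers are $O(\epsilon')$. Because fiber lengths in the $S^1$-product structures are uniformly bounded, $f$ oscillates by at most $O(\epsilon')$ along every fiber of the external $S^1$-fibration.

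Let $\pi\colon \widetilde U\to B:=\widetilde U/S^1$ denote the quotient of the given $S^1$-fibration, a smooth surface by Proposition~\ref{s1glue}. Define $\tilde f\colon B\to\Ar$ by fiberwise averaging, $\tilde f(\pi(y)):=\int_{\pi^{-1}(\pi(y))} f\,d\mu$. By the oscillation estimate $|\tilde f\circ\pi-f|=O(\epsilon')$ on $\widetilde U$, and the same horizontal-gradient considerations show that $\tilde f$ is smooth and regular on $B$. Consider the subregion $V:=f^{-1}([r'/4+\eta,\,3r'/4-\eta])$ for a small $\eta>0$; by Proposition~\ref{intcone1} it is diffeomorphic to $T^2\times I$, and its boundary tori are $f$-level sets which are approximately saturated by the fibration. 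It follows that $V/S^1$ is an annulus, with inner and outer boundary circles on which $\tilde f$ takes values close to $r'/4+\eta$ and $3r'/4-\eta$ respectively.

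Pick any regular value $b$ of $\tilde f$ in $(r'/4+2\eta,\,r'/2-2\eta)$. By monotonicity of $\tilde f$ between the boundaries and its regularity, $\tilde f^{-1}(b)\subset V/S^1$ is a single simple closed curve separating the two boundary components, and its preimage $T:=\pi^{-1}(\tilde f^{-1}(b))$ is a saturated $2$-torus contained in $f^{-1}([b-\eta,\,b+\eta])\subset B_{\lambda g_n}(x_n,r'/2)$. To see that $T$ bounds a solid torus in $B_{\lambda g_n}(x_n,3r'/4)$, interpolate $h_t:=(1-t)f+t(\tilde f\circ\pi)$ for $t\in[0,1]$; each $h_t$ is regular on $V$ with $h_t^{-1}(b)$ a $2$-torus, and these tori form an isotopy inside $V\subset B_{\lambda g_n}(x_n,3r'/4)$ from $f^{-1}(b)=\partial B_{\lambda g_n}(x_n,b)$ to $T$. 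Since $f^{-1}(b)$ bounds the solid torus $B_{\lambda g_n}(x_n,b)$, so does $T$. The main obstacle is verifying that the quotient $V/S^1$ is a smooth annulus with $\tilde f$ monotone between its two boundary components; this rests on the $\epsilon'$-compatibility of the external $S^1$-fibration with the horizontal product structures at every point of $V$, combined with the already-established topology $V\cong T^2\times I$ from Proposition~\ref{intcone1}.
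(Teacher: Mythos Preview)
Your route differs from the paper's. The paper gives no separate argument; it just remarks that the proof of Proposition~\ref{intcone1} already yields this. Concretely, that proof produces a compact saturated submanifold $W=\pi^{-1}(\Sigma)$ of the given fibration (for $\Sigma$ a compact subsurface of the base). For the corollary one simply chooses $\Sigma$ so that $W$ contains a metric sphere $f^{-1}(b)$ with $b$ a little below $r'/2$ while $W\subset B_{\lambda g_n}(x_n,r'/2)$; this is possible because the fiber diameters are $O(\hat\epsilon)\ll r'$ by Proposition~\ref{s1glue}. Each component of $\partial W$ is a saturated torus, and one of them separates the two ends of the $T^2\times I$ annulus already established in Proposition~\ref{intcone1}; a separating torus in $T^2\times I$ is isotopic to a level torus and hence bounds a solid torus in $B_{\lambda g_n}(x_n,3r'/4)$. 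No function on the quotient surface is ever analyzed.

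Your averaging construction is a legitimate alternative idea, but there is a genuine technical slip: $\tilde f$ is \emph{not} smooth. The distance function $f=d_{\lambda g_n}(x_n,\cdot)$ is only Lipschitz, so its fiber average is only Lipschitz, and the interpolants $h_t=(1-t)f+t(\tilde f\circ\pi)$ are neither smooth nor distance functions. The paper's regularity machinery (Lemma~\ref{smreg} and the surrounding material) is stated for distance functions, so your appeals to ``regular values'' of $\tilde f$ on $B$ and to the level-set isotopy for the family $h_t$ are not covered by the tools at hand; in particular, existence of directional derivatives for $\tilde f\circ\pi$ and for $h_t$ is not automatic. The fix is routine---replace $f$ on the annulus by a nearby smooth function without critical points (the transverse vector field of Lemma~\ref{smreg} lets you build one), then average and interpolate as you do---but as written the smoothness claim is false and the argument leans on it. The paper's purely topological separation argument sidesteps this issue entirely, which is what it buys over your approach.
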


There is a further result that is not actually  necessary for what follows
but which makes the picture clearer and also simplifies somewhat
several of the arguments.

\begin{prop}\label{seifertfib}
For $\epsilon'>0$ sufficiently small and let
$0<\epsilon<\epsilon_1(\epsilon')$, where $\epsilon_1(\epsilon')$ is
as in Proposition~\ref{s1glue}. Under the hypothesis of the previous
proposition, the $S^1$-factors in the local $S^1$-product structures
with $\epsilon$-control contained in $B_{\lambda
g_n}(x_n,3r'/4)\setminus B_{\lambda g_n}(x_n,r'/4)$ are
homotopically non-trivial in  $B_{\lambda g_n}(x_n,3r'/4)$.
\end{prop}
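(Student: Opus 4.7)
Let $B:=B_{\lambda g_n}(x_n,3r'/4)$. By Proposition~\ref{intcone1}, $B$ is a solid torus, so $\pi_1(B)=\Zee$. By Corollary~\ref{invtorus}, there is an $S^1$-invariant torus $T\subset B_{\lambda g_n}(x_n,r'/2)$ bounding a solid torus $S_T$ in $B$; take $S_T$ to be the component containing $x_n$. The complementary region $N:=B\setminus\mathrm{int}(S_T)$ is a compact codimension-$0$ submanifold with two torus boundary components, lying in the open set $V'$ on which Proposition~\ref{s1glue} gives a global $S^1$-fibration; as $\partial N$ is (isotopic to) a union of saturated tori, $N$ is the total space of an oriented circle bundle over an annulus and is therefore diffeomorphic to $T^2\times I$. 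Hence the inclusion $S_T\hookrightarrow B$ is a homotopy equivalence, and the plan reduces to showing that the $S^1$-fiber class $[F]$ is nontrivial in $\pi_1(S_T)=\Zee$; equivalently, $F$ is not the meridian of $S_T$.

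Suppose the conclusion fails. Then there is a sequence of counterexamples $(M_{n_k},x_{n_k},\lambda_k,r'_k)$ with $\mu_k,\hat\epsilon_k\to 0$ and $[F_k]=0$ in $\pi_1(B_k)$. After passing to a subsequence, the balls Gromov-Hausdorff converge to a standard $2$-dimensional Alexandrov ball $B(\bar x,1)$ that is interior good at $\bar x$ of some cone angle $\alpha\le 2\pi$. Mirroring the dichotomy in the proof of Proposition~\ref{intcone1}, we treat the flat case $\alpha=2\pi$ and the strict cone case $\alpha<2\pi$ separately.

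In the flat case, Lemma~\ref{generic2D} gives, for all $k$ large, an $S^1$-product neighborhood $V_k$ with $\epsilon$-control centered at $x_{n_k}$ itself. The argument in Case 1 of the proof of Proposition~\ref{intcone1} identifies $\overline{V_k}$ as a solid torus whose core is the fiber through $x_{n_k}$ and shows that the chain of inclusions $V_k\subset B_{\lambda_k g_{n_k}}(x_{n_k},d')\subset S_{T,k}$ consists of $\pi_1$-isomorphisms (using regularity of the distance function from $x_{n_k}$ in the annular range). Since Proposition~\ref{s1glue} extends the $S^1$-fibration over $x_{n_k}$ in this case and identifies all its fibers as freely homotopic, $F_k$ generates $\pi_1(S_{T,k})$, contradicting the assumption.

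In the strict cone case, apply the blow-up analysis from Case 2 of the proof of Proposition~\ref{intcone1}: after adjusting the center to $x'_{n_k}$ with $d(x_{n_k},x'_{n_k})\to 0$ and rescaling by $\delta_k^{-1}$ for suitable $\delta_k\to 0$, the balls converge in $C^\infty$ to a complete, orientable Riemannian $3$-manifold $L$ of nonnegative curvature with circle soul $C$. Orientability forces $L\cong S^1\times\Ar^2$, and $\pi_1(L)=\Zee$ is generated by $[C]$. The $S^1$-fibration of Proposition~\ref{s1glue} on the rescaled annular region passes in the limit to an $S^1$-fibration on the end $S^1\times(\Ar^2\setminus B(0,R))$ of $L$. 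On the limiting end tori $S^1\times\partial B(0,R)$, the angular direction has length $2\pi R\to\infty$, so the only simple closed curves of uniformly bounded length are (multiples of) the soul direction. Hence the limit fiber class is a nonzero multiple of $[C]$. By the $C^\infty$ convergence on compact sets, for $k$ large the fiber $F_k$ is identified (in a compact neighborhood of the core of $S_{T,k}$) with a nonzero multiple of the core class of $S_{T,k}$, contradicting $[F_k]=0\in\pi_1(S_{T,k})$. The main obstacle is in this last step: the ratio $\hat\epsilon_k/\sqrt{\delta_k}$ of the original fiber length to the blow-up scale is not a priori bounded above or below, so the rescaled fibers could in principle collapse or blow up in the limit. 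To handle this one passes, after extracting the blow-up limit, to a further (diagonal) rescaling chosen so that the fibers have length of order one in the new limit, or equivalently works through the invariant torus $T$ of Corollary~\ref{invtorus} at a level where the fibration structure is directly visible in the smooth limit; in either route the topological identification $[F_k]=\mu[C]$ with $\mu\ne 0$ follows from the smooth convergence on the appropriate compact set.
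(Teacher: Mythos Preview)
Your approach is genuinely different from the paper's, and the strict-cone case has a real gap that your proposed fixes do not close.

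The paper's argument is a direct covering-space count with no case split on the cone angle. One passes to the universal cover $\widetilde B_k$ of $B_k=B_{\lambda_k g_{n(k)}}(x_{n(k)},3r'/4)$. Since $\pi_1(B_k)\cong\Zee$ is generated by a loop through $x_{n(k)}$ whose length tends to $0$, arbitrarily many deck translates of a lift $\tilde x_{n(k)}$ lie in any fixed ball $B(\tilde x_{n(k)},s)$. On the other hand, if the $S^1$-factor at a point $y_k$ in the annular region were null-homotopic in $B_k$, then the $S^1$-product neighborhood about $y_k$ (whose $\pi_1$ is generated by that fiber) would lift homeomorphically; hence for some fixed $s\le r'/8$ the $s$-ball about $y_k$ lifts, and the translates $g\cdot\tilde y_k$ (for $g$ moving $\tilde x_{n(k)}$ at most $s$) are pairwise $2s$-separated yet all lie within distance $r'$ of $\tilde x_{n(k)}$. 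Comparison angles at $\tilde x_{n(k)}$ between the geodesics to these translates are then bounded below by a fixed positive constant, which is impossible once their number exceeds the packing bound in a $3$-dimensional unit sphere. This is short, uniform in the cone angle, and uses only the existence of an $S^1$-product structure at a \emph{single} point of the annulus.

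In your strict-cone case you blow up at the critical-point scale $\delta_k$ and obtain $L\cong S^1\times\Ar^2$, then try to read off the homotopy class of the annular fiber $F_k$ from the soul $C$. The difficulty you flag is the actual problem: after rescaling by $\delta_k^{-1}$, the annular region $[r'/4,3r'/4]$ escapes to infinity, so $F_k$ does not lie in any fixed compact set on which the smooth convergence gives control. Your two suggested fixes are both incomplete. A ``further diagonal rescaling so the fibers have length one'' just reproduces the $S^1$-product neighborhood at a point of the annulus and says nothing about the core of $S_{T,k}$. ``Working through the invariant torus $T$'' still requires transporting the fiber class along the product region $[2\delta_k,r'/2]$ and identifying it with a class on a torus near the soul; for that you must know that every intermediate level torus carries a compatible $S^1$-product structure so that Proposition~\ref{s1glue} applies to the \emph{entire} region between the soul neighborhood and the annulus, not just to the two ends. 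You have not established this (and Lemma~\ref{generic2D} as stated does not apply near the cone point). Without that bridge, the identification $[F_k]=\mu[C]$ with $\mu\ne 0$ is unjustified.

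Your flat case is essentially fine, but the decomposition into cases is unnecessary: the covering argument handles both at once and is considerably shorter.
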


\begin{proof}
Suppose that the result does not hold for any $\epsilon'>0$. We take
a sequence of $\epsilon'_k$ tending to zero and $\epsilon'_k$
counter-examples $(B_k,x_k)$. After passing to a subsequence these
counter-examples converge to a $2$-dimensional Alexandrov ball
$(Z,z)$ with curvature $\ge -1$ which is interior good at the
limiting base point $z$ on some scale $\bar r'\ge r_0$. The
fundamental group $\Gamma_k$ of $B_k$ is infinite cyclic and the shortest
homotopically non-trivial loop through $x_k$ has a length that tends
to zero as $k\rightarrow\infty$. We
consider the universal coverings $\widetilde B_k$ of the $B_k$ and
let $\tilde x_k$ be a lifting of $x_k$. For any fixed $s>0$ and any fixed $N<\infty$
for all $k$ sufficiently large there are at least $N$ distinct preimages of $x_k$ in
$B(\tilde x_k,s)$. On the other hand, suppose that the circles in the product structure
contained in $B_{\lambda
g_k}(x_k,3r'/4)\setminus B_{\lambda g_k}(x_k,r'/4)$ are
homotopically trivial in  $B_{\lambda g_k}(x_k,3r'/4)$. Then there is $s>0$ such that for all
$k$ sufficiently large and any point $y_k\in \left(B_{\lambda
g_k}(x_k,3r'/4)\setminus B_{\lambda g_k}(x_k,r'/4)\right)$ the preimage in $\widetilde B_k$
of the ball $C_k$ of radius $s$ centered at $y_k$ is a disjoint union of components
mapping homeomorphically onto the ball. Fix $s\le r'/8$. This means that each of these preimages contains the ball of radius $s$ about the corresponding preimage of $y_k$. Fix a pre-image $\tilde x_k$ of $x_k$, and a preimage $\tilde y_k$ of $y_k$ within distance $3r'/4$ of $\tilde x_k$. For $k$ sufficiently large we have an arbitrarily large number of group elements of the fundamental group of $B_k$ that move  $\tilde x_k$ a distance at most $s$, but the balls of radius $s$ about the corresponding translates of $\tilde y_k$ are disjoint. Let $G(k)\subset \pi_1(\tilde B_k,x_k)$ be the set of elements moving $\tilde x_k$ a distance at most $s$ and let $N(k)$ be its cardinality. Since all of these points are contained in the ball of radius $r'$ about $\tilde x_k$, and the exponential mapping at the tangent space to $\tilde B_k$ at $\tilde x_k$ is defined out to distance at least $2r'$. In particular, there are geodesics from $\tilde x_k$ to each of the translates of $\tilde y_k$ by elements of $G(k)$, and consequently $N(k)$ geodesics of length $\le r'$, all of whose endpoints are separated by distances at least $2s$. Thus, the comparison angles at $\tilde x_k$ for the triples of points consisting of $\tilde x_k$ and two translates of $\tilde y_k$ are bounded away from zero independent of $k$. Since the exponential mapping is defined on the ball of radius $2r'$ in the tangent space to $\widetilde B_k$ at $\tilde x_k$, monotonicity holds for these triangles. This
is a contradiction.

\end{proof}

The topological import of this result about the fundamental group is
the following:

\begin{cor}\label{seifcor}
Under the notation and hypotheses of Corollary~\ref{invtorus}, the
$S^1$-fibration structure on $\widetilde U$  extends to a Seifert
fibration over $\widetilde U\cup B_{\lambda g_n}(x,3r'/4)$ with one
singular fiber.
\end{cor}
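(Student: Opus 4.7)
The plan is to reduce the claim to the classical fact that an $S^1$-fibration on the boundary torus of a solid torus extends to a Seifert fibration (with at most one exceptional fiber along the core) precisely when the fiber class is non-trivial in the fundamental group of the solid torus. I would first set $S := B_{\lambda g_n}(x_n, 3r'/4)$, which by Proposition~\ref{intcone1} is a solid torus, and use Corollary~\ref{invtorus} to find an $S^1$-invariant $2$-torus $T \subset \widetilde U \cap B_{\lambda g_n}(x_n, r'/2)$ bounding a solid torus $S_0 \subset S$. The complement $A := S \setminus {\rm int}(S_0)$ is homeomorphic to $T^2 \times I$ (the only other possibility would force $S$ to be a lens space); choosing $T$ to lie in the annular region as a level set of $d(x_n, \cdot)$ (possible since that distance function is regular on the annular region and its level sets are essentially $S^1$-invariant tori by the proof of Proposition~\ref{intcone1}), the collar $A$ will lie in $\widetilde U$, so that $\widetilde U \cup S_0 = \widetilde U \cup S$ and it suffices to install a Seifert fibration on $S_0$ matching the restriction of the $\widetilde U$-fibration on a saturated collar of $T$.

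The critical step will be verifying that a fiber $F \subset T$ of the $\widetilde U$-fibration is homotopically non-trivial in $S$. Since $\widetilde U$ is a connected $S^1$-fibration containing both $T$ and the annular region $B_{\lambda g_n}(x_n, 3r'/4) \setminus B_{\lambda g_n}(x_n, r'/4)$, all its fibers are freely homotopic in $\widetilde U$, so $[F]$ equals the class of a fiber $F'$ over a point of the annular region in $\pi_1(S) \cong \mathbb{Z}$ (free homotopy determines the class here because $\pi_1(S)$ is abelian). I would then invoke Lemma~\ref{generic2D} to locate $F'$ inside an $S^1$-product neighborhood $V$ with $\epsilon$-control, and Proposition~\ref{s1glue} to conclude that $F'$ generates $\pi_1(V)$ and is therefore freely homotopic in $V$ to the $S^1$-factor of the product structure. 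Proposition~\ref{seifertfib} then gives the desired non-triviality: that $S^1$-factor is not null-homotopic in $S$, so $[F] \neq 0$ in $\pi_1(S)$. This forces $\pi_1(S_0) \hookrightarrow \pi_1(S)$ to be injective (else every fiber would be null-homotopic in $S$), so $[F]$ is also non-trivial in $\pi_1(S_0) \cong \mathbb{Z}$.

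With non-triviality established, I identify $S_0$ with $D^2 \times S^1$ so that the core $\{0\} \times S^1$ generates $\pi_1(S_0)$. The simple closed curve $F$ on $T$ then represents a primitive class $(a, q) \in H_1(T) \cong \mathbb{Z}^2$ with $q \neq 0$ (the meridian $(1,0)$ becomes null-homologous in $S_0$, so $[F]$ maps to $q$ in $\pi_1(S_0)$). The locally free $S^1$-action $\theta \cdot (z, w) = (e^{ia\theta} z, e^{iq\theta} w)$ on $D^2 \times S^1$ defines a Seifert fibration of $S_0$ over the disk with a single exceptional fiber of multiplicity $|q|$ along the core (regular when $|q|=1$), whose boundary orbits on $T$ realize the class $[F]$. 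A standard isotopy argument in a collar of $T$---two smooth $S^1$-fibrations of a torus with the same primitive fiber class are isotopic, and any such isotopy extends smoothly to a collar---permits me to adjust this model Seifert fibration so that it agrees with the $\widetilde U$-fibration on a saturated collar of $T$ inside $S_0$. Gluing then produces a Seifert fibration on $\widetilde U \cup S_0 = \widetilde U \cup B_{\lambda g_n}(x_n, 3r'/4)$ with one exceptional fiber. The main non-routine content is the non-triviality argument in the middle paragraph; the topological extension across the solid torus and the gluing along the collar are standard once $[F] \neq 0$ is in hand.
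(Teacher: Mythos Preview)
Your approach is essentially the same as the paper's: the paper presents this corollary without an explicit proof, stating it immediately after Proposition~\ref{seifertfib} as ``the topological import of this result about the fundamental group,'' i.e., as a direct consequence of the non-triviality of the $S^1$-factor in $\pi_1(B_{\lambda g_n}(x_n,3r'/4))$. You have correctly identified that non-triviality (via Proposition~\ref{seifertfib}) is the one substantive input, and filled in the standard extension argument for a Seifert fibration over a solid torus once the boundary fiber class is non-trivial.

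A couple of minor points worth tightening: you tacitly assume $\widetilde U$ is connected when transporting the fiber class from $T$ to the annular region, whereas Corollary~\ref{invtorus} does not assert connectedness---you should restrict to the component of $\widetilde U$ containing the annular region (which is connected) and note that the invariant torus $T$, being constructed in the proof of Proposition~\ref{intcone1} as a boundary component of a compact saturated submanifold containing the annular region, lies in that component. Similarly, the equality $\widetilde U\cup S_0=\widetilde U\cup B_{\lambda g_n}(x_n,3r'/4)$ is cleanest once you observe (from the proof of Proposition~\ref{intcone1}) that the metric balls themselves are solid tori and that the invariant torus can be taken to separate the level sets at radii $r'/4$ and $3r'/4$, so that $B_{\lambda g_n}(x_n,r'/4)\subset S_0$ and hence $B_{\lambda g_n}(x_n,3r'/4)\setminus S_0$ lies in the annular region contained in $\widetilde U$.
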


\begin{defn}
$B_{\lambda g_n}(x_n,r'/4)$ satisfying the conclusions of
Propositions~\ref{intcone1} and~\ref{seifertfib} and Corollary~\ref{invtorus}
is {\em an $\epsilon'$-solid torus
neighborhood near a $2$-dimensional interior cone point.}
\end{defn}

\begin{rem}
In fact, a strengthening of this argument (see Theorem 0.2 and the
material in Section 4 of \cite{SY2000}) proves that the order of the
exceptional fiber is bounded above by $2\pi/\alpha$ where $\alpha$
is the cone angle of the nearby interior $\mu$-good ball at its
central point. We shall not make use of this result.
\end{rem}

\subsection{Near almost flat boundary points}

Now let us turn to the parts of the $M_n$ close to flat boundary
points of a $2$-dimensional Alexandrov ball.

\begin{prop}\label{flatboundary} Given $\epsilon'>0$,
let $0<\epsilon<\epsilon_1(\epsilon')$, where
$\epsilon_1(\epsilon')$ is  the constant given in
Proposition~\ref{s1glue}. The following hold  for all $\xi>0$ less
than a positive constant $\xi_0(\epsilon)$ and  for all $\mu>0$ less
than a positive constant $\mu_4(\xi, \epsilon)$. For any $0<s_1\le
1/4$
 and, given $s_1$, for all $\hat\epsilon>0$ less than a positive constant
 $\hat\epsilon_2(\epsilon,\mu,s_1)$, suppose that, for some $n$, there is a point $x_n\in M_n$
with the property  that $B_{\lambda g_n}(x_n,1)$ is within
$\hat\epsilon$ of a $2$-dimensional Alexandrov ball $X=B(\bar x,1)$.
Suppose that $\gamma$
 is a $\xi$-approximation to $\partial X\cap B(\bar x,3/4)$ on scale $s_1$
 with $\mu$-control. Suppose that $\tilde\gamma$ is a geodesic in $M_n$ within
$\hat\epsilon$ of $\gamma$. Then the subspace
$\bar\nu_\xi(\widetilde\gamma)$ is homeomorphic to $D^2\times [0,1]$
where the (closed) disks in this (topological) product structure are
the level sets of $f_{\widetilde\gamma}$.  The subset
$\nu_\xi^0(\widetilde\gamma)$ is homeomorphic to $S^1\times
[0,1]\times (0,1)$ where each circle factor is the intersection of a
level set of $f_{\widetilde\gamma}$ with a level set of
$h_{\widetilde\gamma}$. (These intersections are called {\em level
circles}.)
\end{prop}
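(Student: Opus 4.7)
The strategy is to combine the $2$-dimensional picture of Lemma~\ref{goodrect} with the $S^1$-fibration machinery of Lemma~\ref{generic2D} and Proposition~\ref{s1glue}. The limit $X$ is approximately a flat half-plane ball near $\gamma$, and the $3$-dimensional region $\bar\nu_\xi(\tilde\gamma)\subset M_n$ collapses to a rectangle in this half-plane with $S^1$-fibers shrinking onto $\partial X$. After rescaling $\lambda g_n$ so that $\ell(\tilde\gamma)=1$, all the comparison estimates of Lemma~\ref{goodrect} and Corollary~\ref{flatint} are available.

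\textbf{Step 1: Global $S^1$-fibration on the annular region.} I would first show that every point $\tilde q\in\overline{\nu_\xi^0(\tilde\gamma)}$ is the center of an $S^1$-product neighborhood with $\epsilon$-control. For $\tilde q$ the projection $q\in X$ lies at $2$-dimensional distance $h_\gamma(q)\ge \xi^2\ell/2$ from $\partial X$ (after accounting for the $\hat\epsilon$-error), so Lemma~\ref{bdryflatinh} applied to the boundary $\mu$-flat control point of $\gamma$ provides interior $\mu'$-flatness of $X$ at $q$ on scale $\ge \xi^2\ell/4$. Taking $\mu'\le \mu_2(\epsilon)$ of Lemma~\ref{generic2D} and $\hat\epsilon\le \hat\epsilon_0(\xi^2\ell,\epsilon)$, the lemma yields the required $S^1$-product neighborhood centered at $\tilde q$. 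Applying Proposition~\ref{s1glue} to the compact set $\overline{\nu_\xi^0(\tilde\gamma)}$ then produces a smooth $S^1$-fibration on an open neighborhood $V\supset\overline{\nu_\xi^0(\tilde\gamma)}$ whose fibers are within $\epsilon'$ of vertical in every local product chart.

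\textbf{Step 2: The functions $f_{\tilde\gamma}$ and $h_{\tilde\gamma}$ in product charts.} Next I would verify that in each $S^1$-product chart $\varphi\colon S^1\times B(0,\epsilon^{-1})\to V$ constructed in Step~1, the pair $F=(f_{\tilde\gamma},h_{\tilde\gamma})$ satisfies the hypotheses of Lemma~\ref{2dregular}. The reason is that in the $2$-dimensional limit, $f_\gamma$ and $h_\gamma$ are (up to $O(\xi)$) the standard horizontal and vertical coordinates on the flat rectangle (this is the content of Lemma~\ref{goodrect}(2)--(4) and Corollary~\ref{flatint}), while along the $S^1$-fiber the distances to the endpoints $e_\pm$ and to $\tilde\gamma$ are almost constant (since the fibers collapse to a point in the limit). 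Lemma~\ref{cosformula} then gives the required directional-derivative estimates for $f_{\tilde\gamma}$ and $h_{\tilde\gamma}$ in the $\partial_x$, $\partial_y$ and $\partial_\theta$ directions. Invoking Lemma~\ref{2dregular} shows that each fiber of $F$ meeting the chart is a circle homotopic to the $S^1$-factor, transverse to the horizontal slices. Piecing these local conclusions together along $V$ yields a homeomorphism
\[
\nu_\xi^0(\tilde\gamma)\ \cong\ S^1\times\bigl[-\ell/4,\ell/4\bigr]\times\bigl(\xi^2\ell,\xi\ell\bigr),
\]
where the first factor is the fiber of the $S^1$-fibration, the second coordinate is $f_{\tilde\gamma}$ and the third is $h_{\tilde\gamma}$. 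Normalizing gives the advertised $S^1\times[0,1]\times(0,1)$.

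\textbf{Step 3: Filling in to obtain the $D^2\times[0,1]$ structure.} To extend the description to the full $\bar\nu_\xi(\tilde\gamma)$, I would close up the core strip $\bar\nu_{\xi^2}(\tilde\gamma)$. Because $B_{\lambda g_n}(x_n,1)$ is a true Riemannian ball disjoint from $\partial M_n$, the geodesic $\tilde\gamma$ has a genuine smooth tubular neighborhood in $M_n$ on which $f_{\tilde\gamma}$ is smooth and regular away from the endpoints, with level sets transverse to $\tilde\gamma$ and meeting $\tilde\gamma$ in single points. Using the angle estimates of Lemma~\ref{goodrect}(2)(3) together with Lemma~\ref{cosformula}, one verifies that $f_{\tilde\gamma}$ has no critical points on $\bar\nu_\xi(\tilde\gamma)\setminus\tilde\gamma$, so each level set $L_c=f_{\tilde\gamma}^{-1}(c)\cap\bar\nu_\xi(\tilde\gamma)$ is a topologically locally flat codimension-$1$ submanifold by Lemma~\ref{smreg}. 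Step~2 identifies $L_c\cap\nu_\xi^0(\tilde\gamma)$ as an open annulus whose inner boundary circle (lying on $h_{\tilde\gamma}^{-1}(\xi^2\ell)$) is isotopic to the $S^1$-fiber of the global fibration, and hence is null-homotopic in the tubular neighborhood of $\tilde\gamma$ (since it bounds a smooth normal disk). Capping off produces a closed disk $L_c\cong D^2$. The resulting family $\{L_c\}_{c\in[-\ell/4,\ell/4]}$ varies continuously in $c$, giving the desired homeomorphism $\bar\nu_\xi(\tilde\gamma)\cong D^2\times[0,1]$.

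\textbf{Main obstacle.} The hard part is Step~3: one has to show that the circle fibers of the $S^1$-fibration on $\nu_\xi^0(\tilde\gamma)$, which are homotopically essential inside $\nu_\xi^0$, are nonetheless null-homotopic in the full region $\bar\nu_\xi(\tilde\gamma)$. In other words, one must match up the global fibration of Step~1 with the local tubular-neighborhood structure of $\tilde\gamma$, and verify that the $S^1$-fiber is isotopic in $M_n$ to a small normal circle to $\tilde\gamma$. This requires genuinely using that $\tilde\gamma$ is close (within $\xi^2\ell$) to $\partial X$ in the $2$-dimensional projection, so that the collapsing direction of $M_n\to X$ near $\tilde\gamma$ coincides (up to small perturbation) with the normal disk direction to $\tilde\gamma$, and hence the fiber bounds a disk transverse to $\tilde\gamma$.
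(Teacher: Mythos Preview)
Your Steps~1 and~2 are correct and essentially identical to the first two paragraphs of the paper's proof: regularity of $f_{\tilde\gamma}$ gives a foliation by Lipschitz surfaces, and the pair $(f_{\tilde\gamma},h_{\tilde\gamma})$ satisfies Lemma~\ref{2dregular} on $\nu_\xi^0(\tilde\gamma)$, producing the $S^1\times[0,1]\times(0,1)$ structure.

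Step~3, however, has a genuine gap. You argue that the inner boundary circle of the annulus $L_c\cap\nu_\xi^0(\tilde\gamma)$ is null-homotopic ``in the tubular neighborhood of $\tilde\gamma$'' and then ``cap off'' to get a disk. But the size of a genuine tubular neighborhood of $\tilde\gamma$ is controlled by the normal injectivity radius, which in a collapsing manifold may be far smaller than $\xi^2\ell(\tilde\gamma)$; there is no reason the circle at $h_{\tilde\gamma}=\xi^2\ell$ lies inside such a neighborhood. More fundamentally, even if that circle were null-homotopic \emph{somewhere}, this would not tell you that the actual surface $L_c\cap\bar\nu_{\xi^2}(\tilde\gamma)$ is a disk: a priori it could be a once-punctured torus or any compact orientable surface with a single boundary circle. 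Your ``capping'' step silently assumes the conclusion. The remark in your ``Main obstacle'' paragraph that the collapsing direction coincides with the normal-disk direction is a heuristic, not an argument, and does not survive the lack of injectivity-radius control.

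The paper handles this point by an entirely different mechanism: it shows directly that $\pi_1(L_c)$ is virtually abelian, via a covering-space growth argument in the style of Fukaya--Yamaguchi (see the appendix of \cite{FY}). Since $L_c$ is an orientable surface with one boundary component, its fundamental group is free of rank $2g$; virtual abelianness forces $g=0$, so $L_c\cong D^2$. Concretely, one supposes $\pi_1(L_c)$ is a non-abelian free group, passes to the universal cover of a small ball about the midpoint $p$ of $\tilde\gamma$, and uses the $S^1$-product structure on a nearby ball $B(q,\delta)\subset\nu_\xi^0(\tilde\gamma)$ to see that the stabilizer of a lifted component is at most cyclic. Two non-commuting short generators then produce exponentially many $\delta$-separated translates of $\tilde q$ inside a ball of fixed radius, contradicting the curvature lower bound via angle comparison. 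This is the missing idea in your Step~3.
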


\begin{proof}
We work with the metric $\lambda g_n$, so that in particular,
$\ell(\widetilde \gamma)$ means the length of $\widetilde \gamma$
with respect to this metric. Provided that $\hat\epsilon$ is sufficiently small,
it follows from Lemma~\ref{goodrect} that $f_{\widetilde\gamma}$ is
$20\xi$-regular on $\bar\nu_\xi(\widetilde\gamma)$ so that, provided
that $\xi$ is sufficiently small, each level set $L$ is a Lipschitz
surface and these level surfaces foliate $\bar\nu_\xi(\widetilde\gamma)$.
The conditions on the restrictions of $f_\gamma$ and $h_\gamma$ to
$\nu^0_\xi(\gamma)$ imply by arguments identical to the ones given
in the proof of Lemma~\ref{semilocal} that the map
$F=(f_{\widetilde\gamma},h_{\widetilde \gamma})$ determines a
fibration of $\nu^0_\xi(\widetilde \gamma)$ with fibers circles.
Hence, $\nu^0_\xi(\widetilde \gamma)$ is homeomorphic to $S^1\times
[0,1]\times (0,1)$ where the circles are the level circles.

We shall show that provided that $\xi>0$ is sufficiently small, the level sets of
$f_{\widetilde\gamma}$ are homeomorphic to disks. From the immediately preceding  discussion, it
follows that the boundary of any level surface for $f_{\widetilde \gamma}$ is a single circle.
Since the level sets of $f_{\widetilde\gamma}$ are connected, to show these level sets are
homeomorphic to disks it suffices to show that  they have virtually nilpotent fundamental groups
and are orientable. The level sets are orientable since $M_n$ is and since they are the level sets
of a regular Lipschitz function so that there is a neighborhood of the level set in $M_n$ that is
homeomorphic to the product of the level set with $I$.

\begin{claim} For $\xi>0$ sufficiently small, the
fundamental groups of the level sets of $f_\gamma$ are virtually
abelian.
\end{claim}

Of course, the image in the fundamental group of the manifold of the fundamental group of a level
set is the same as the image of the fundamental group of the $\xi$-neighborhood of a point. The
argument we give here is a simplification of the argument in the appendix of \cite{FY} proving a
more general result.

\begin{proof}
We suppose that the claim does not hold. Then there is a sequence of $\xi_k$ and counter-examples
$\nu_\xi(\widetilde \gamma_k)$. Take as base points $p_k$ the midpoints of $\widetilde \gamma_k$.
According to what we just established every point of $\nu_{\xi_k}^0(\widetilde\gamma_k)$ is the
center of an $S^1$-product neighborhood with $\epsilon$-control. Thus, there are fixed constants
$\epsilon>0$ and $\delta>0$, independent of $k$, such that the ball of radius $\epsilon$ about
$p_k$ contains a ball of radius $\delta$ about some point $q_k$ with the property that the image
$\Gamma_k^0$ of $\pi_1(B(q_k,\delta))$ in $\Gamma_k=\pi_1(B(p_k,\epsilon))$ is either trivial or
infinite cyclic. Notice that, being the fundamental group of a surface with non-empty boundary,
$\Gamma_k$ is either cyclic or a free group
of rank at least $2$. We are supposing that for no $k$ is $\Gamma_k$ cyclic. Hence, for all $k$,
the group $\Gamma_k$ is a non-abelian free group. Thus, any non-abelian subgroup of $\Gamma_k$ is
automatically free. Also, the inclusion of  the ball $B(p_k,\xi_k)\to B(p_k,\epsilon)$ is
surjective on fundamental groups. Consider the universal covering $\widetilde B$ of
$B(p_k,\epsilon)$ and a lift $\tilde p_k$ of $p_k$. Also, fix a component $\widetilde B_0$ of the
preimage $B(q_k,\delta)$ containing a lift $\tilde q_k$ of $q_k$ at distance $d_k\le \epsilon$
from $\tilde p_k$. For any subgroup $F\subset \Gamma_k$ containing $\Gamma_k^0$, the set of
components of $\widetilde B_0\cdot F$ is in natural one-to-one correspondence with the cosets
$\Gamma_k^0\backslash F$. The group $\Gamma_k$ is generated by elements $\gamma\in \Gamma_k$ with
the property that $B(\tilde p_k,\xi_k)\cap B(\tilde p_k,\xi_k)\cdot \gamma\not=\emptyset.$ All
these generators are represented by loops based at $p_k$ of length at most $2\xi_k$. Since these
elements generate the non-abelian free group there are two of them, $\alpha,\beta$ that do not
commute, and hence themselves generate a free group $F=F\langle\alpha,\beta\rangle$ of rank $2$.
Each of these elements moves $\tilde p_k$ by a distance at most $2\xi_k$, and hence their inverses
move $\tilde p_k$ by distance at most $2\xi_k$. Thus, we have at least $2\cdot 3^{N-1}$ translates
of $\widetilde B_0$ within distance $2N\xi_0$ of $\widetilde B_0$. (Distinct irreducible words in
$\alpha^{\pm},\beta^{\pm}$ that end with a letter that does not cancel with the first letter of the two
inverse words generating the $\Gamma_k^0$ represent distinct cosets of $\Gamma_k^0$.) Choose
geodesics $\mu_i$ from $\tilde p_k$ to each the translates of $\tilde q_k$. Each of these $\mu_i$
has length at most $2N\xi_k+2\epsilon$. We fix $N=[\epsilon/\xi_k]$, so that this length is at
most $4\epsilon$. Since $\widetilde B_0$ contains the $\delta$ ball about $\tilde q_k$, it follows
that the endpoints of the $\mu_i$ are separated by at least $2\delta$. Since the curvature of the
$B(p_k,\epsilon)$ is bounded below by $-1$, by the comparison result and monotonicity (see Remark 3.5 of \cite{BGP}), this implies that there is a
constant $\delta'>0$ depending only on $\delta$ and $\epsilon$ so that the angles between the
$\mu_i$ at $\tilde p_k$ is at least $\delta'$. The number of these geodesics is $2\cdot
3^{([\epsilon/\xi_k]-1)}$, which tends to infinity as $k\rightarrow \infty$. But this is
impossible; given $\delta'>0$ there is an upper bound $N(\delta')$ to the number of geodesics
emanating from a point of a $3$-manifold mutually separated from each other by angle at least
$\delta'$.
\end{proof}
This completes the proof of the fact that the level sets of
$f_\gamma$ are $2$-disks, and hence completes the proof of the
proposition.
\end{proof}

We also need to understand the relationship of
$\nu^0_\xi(\widetilde\gamma)$ with the $S^1$-product neighborhoods.

\begin{lem}\label{flatboundary1}
With notation and assumptions as in the previous proposition the following hold. \begin{enumerate}
\item  Each point of $\nu_\xi^0(\widetilde\gamma)$ is the center of an $S^1$-product neighborhood
with $\epsilon$-control.
\item For any point $x\in \nu^0_\xi(\widetilde\gamma)$,
the $S^1$-product structure with $\epsilon$-control centered at $x$, $\varphi\colon
S^1\times B(0,\epsilon^{-1})\to M_n$,  can be chosen so that the following hold for
$R\le\epsilon^{-1}/2$ and any point $q\in \varphi(S^1\times B(0,R))$:
\begin{enumerate}
\item For any geodesic $\zeta$ from $\gamma$ to a point $q$,
$\varphi^{-1}$ of intersection of $\zeta$ with the neighborhood is within $\epsilon'$ of the
straight line starting at $q$ in the negative $y$-direction in the $\Ar^2$factor.
\item For any geodesics $\zeta_\pm$ from $e_\pm(\gamma)$ to $q$, $\varphi^{-1}$
of the intersection of $\zeta_\pm$ with the neighborhood are within $\epsilon'$ of straight lines
starting at $q$ in the $x_\pm$-directions.
\end{enumerate}
\item For any
$q\in\nu_\xi^0(\widetilde\gamma)$   and for any $S^1$-product neighborhood with $\epsilon$-control
containing $q$, the angle at
$q$ between the level circle $S(q)$ through
$q$ and the horizontal space of the $S^1$-product
neighborhood is within $\epsilon'$ of $\pi/2$. Furthermore, if $q$
is contained in $\varphi(S^1\times B(0,R))$, then
$S(q)$ is isotopic in the $S^1$-product neighborhood to an
$S^1$-factor.
\end{enumerate}
\end{lem}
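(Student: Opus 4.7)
The plan is to reduce each of the three claims to applications of Lemma~\ref{generic2D}, Proposition~\ref{semilocal}, and Lemma~\ref{2dregular}, exploiting the fact that in the limiting $2$-dimensional ball $X$, points corresponding to $\nu_\xi^0(\widetilde\gamma)$ sit at a definite positive distance from $\partial X$, so $X$ is interior $\mu$-flat at these points on a controlled scale.

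For part~(1), I would first observe that any $y\in\nu_\xi^0(\widetilde\gamma)$ lies at $\widetilde\gamma$-distance between $\xi^2\ell(\widetilde\gamma)$ and $\xi\ell(\widetilde\gamma)$, hence, via Lemma~\ref{goodrect}(1), at distance of order at least $\xi^2 s_1$ from $\partial X$ in the $2$-dimensional limit. Since $\widetilde\gamma$ has a $\mu$-control point at which $X$ is boundary $\mu$-flat on scales $\le 5s_1$, Lemma~\ref{bdryflatinh} implies $X$ is interior $\mu$-flat on a scale of order $\xi^2 s_1$ at the corresponding limit point. Pushing this back to $M_n$ (taking $\hat\epsilon$ small relative to $\xi^2 s_1$), each such point of $\nu_\xi^0(\widetilde\gamma)$ satisfies the hypotheses of Lemma~\ref{generic2D}, which provides an $S^1$-product neighborhood with $\epsilon$-control.

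For part~(2), I would use the strainer structure at $q\in\nu_\xi^0(\widetilde\gamma)$ furnished by the $\xi$-approximation. Lemma~\ref{goodrect}(2) makes $e_-(\widetilde\gamma)$ and $e_+(\widetilde\gamma)$ nearly antipodal from $q$, and Lemma~\ref{goodrect}(3) provides a geodesic through $q$ nearly orthogonal to the $e_\pm$ directions and almost collinear with the direction toward $\widetilde\gamma$. Together these give a $(2,\mu')$-strainer at $q$ of definite size. Following the construction of the $S^1$-product neighborhood in Lemma~\ref{generic2D} (as in Proposition~\ref{semilocal}), the map $\varphi$ can be chosen so that these three strainer directions at $q$ correspond to $\pm\partial_x$ and $-\partial_y$ in the horizontal $\Ar^2$-factor. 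Monotonicity of comparison angles then keeps geodesics from these strainer points to any nearby $q'\in\varphi(S^1\times B(0,R))$ close to the corresponding coordinate axes, yielding (a) and (b).

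For part~(3), the level circle $S(q)$ is the fiber through $q$ of the map $F=(f_{\widetilde\gamma},h_{\widetilde\gamma})$. By Lemma~\ref{cosformula}, the directional derivatives of $f_{\widetilde\gamma}$ and $h_{\widetilde\gamma}$ at a point of $\nu_\xi^0(\widetilde\gamma)$ are determined by the angles made by tangent directions with the geodesic directions to $e_\pm(\widetilde\gamma)$ and to $\widetilde\gamma$; by part~(2) these pull back under $\varphi$ to vectors within $\epsilon'$ of $\pm\partial_x$ and $-\partial_y$ respectively. Hence $F\circ\varphi$ satisfies the hypotheses of Lemma~\ref{2dregular}, so each fiber of $F$ meeting the inner half-ball is a single circle $\epsilon'$-orthogonal to the horizontal slices and intersecting each slice in exactly one point, which forces it to be isotopic to the $S^1$-factor in $\varphi(S^1\times B(0,R))$. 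The main obstacle I anticipate is the bookkeeping of the quantifier chain: arranging $\xi$, $\mu$, $\hat\epsilon$, and the scale $s_2$ used inside Lemma~\ref{generic2D} so that the $\epsilon$-control produced there, the regularity hypotheses of Lemma~\ref{2dregular}, and the flatness hypotheses of Lemma~\ref{bdryflatinh} are all satisfied simultaneously at the desired $\epsilon'$.
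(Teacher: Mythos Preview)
Your proposal is correct and follows essentially the same route as the paper: interior $\mu$-flatness at a scale of order $\xi^2 s_1$ feeds into Lemma~\ref{generic2D} for part~(1); the strainer directions $e_\pm$ and $\widetilde\gamma$ align with the Euclidean coordinate axes of the $S^1$-product for part~(2); and the resulting directional-derivative control on $F=(f_{\widetilde\gamma},h_{\widetilde\gamma})$ yields part~(3). The only cosmetic difference is that for part~(3) the paper argues the conclusions of Lemma~\ref{2dregular} directly (monotonicity in $x$ and $y$ on horizontal slices, near-orthogonality via \S11 of \cite{BGP}) rather than invoking that lemma as a black box, and for the horizontality of the geodesics in part~(2) it appeals to the length disparity between the geodesics and the circle fibers rather than to Proposition~\ref{semilocal}; both are equivalent to what you wrote.
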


\begin{proof}
For any  point $x\in\bar\nu_\xi^0(\widetilde\gamma)$ the ball
$B_{\lambda g_n}(x,1)$ is within $\hat\epsilon$ of an Alexandrov
ball $B(\bar x,1)$  that is interior $\mu$-flat at $\bar x$ on scale
$\xi^2s_1/20$. It follows from Lemma~\ref{generic2D} that for all
$\hat\epsilon$ sufficiently small, that every point of
$\nu_\xi^0(\widetilde\gamma)$ is the center of an $S^1$-product
neighborhood with $\epsilon$-control. It then follows from the last
statement in Lemma~\ref{generic2D} that we can choose the Euclidean
coordinates for this $S^1$-product structure so that any geodesics
from $e_\pm$ is within $\epsilon'$ of the straight line in the
horizontal space in the $x_\pm$-direction. Now consider any geodesic
from $\widetilde\gamma$ to a point of this product neighborhood. It
makes angle near to $\pi/2$ with any geodesic from $e_\pm$ and also
is almost horizontal since its length is much longer that the
diameter of the circle factors. It follows that (possibly after
reversing the $y$-coordinate) these geodesics are within $\epsilon'$
of the $y_-$-direction in the horizontal spaces. We consider
$F=(f_{\widetilde\gamma},h_{\widetilde\gamma})$ mapping
this $S^1$-product neighborhood to $\Ar^2$. The restriction of $f_{\widetilde\gamma}$
to any horizontal space  is strictly increasing in the $x$-direction
and in fact $|f_{\widetilde\gamma}(x_0,y)-f_{\widetilde\gamma}(x_1,y)|\ge (1-\epsilon')|x_0-x_1|$,
whereas $|f_{\widetilde\gamma}(x,y_0)-f_{\widetilde\gamma}(x,y_1)|< \epsilon'|y_0-y_1|$. The second
coordinate function $h_{\widetilde\gamma}$ satisfies analogous inequalities with the roles
of $x$ and $y$ reversed. It follows that the restriction of $F$ to
any horizontal space is one-to-one and hence the level sets of $F$
meet each the horizontal space in at most one point. Furthermore,
the fact that the geodesics from $\widetilde\gamma$ and from $e_\pm$
are nearly horizontal implies, by \S 11 of \cite{BGP} that the level
sets of $F$ are nearly orthogonal to the horizontal spaces in the
sense described in the statement of the proposition. Choosing
$\hat\epsilon$ sufficiently small, we can arrange that these level
sets make angle within $\epsilon'$ of $\pi/2$ with every horizontal
space. It follows that any level set of $F$ that meets
$\varphi\left(S^1\times B(0,R)\right)$ is contained in
$\varphi\left(S^1\times B(0,\epsilon^{-1})\right)$ and is a circle
meeting each horizontal space once. Such circles are
isotopic in the neighborhood to the circle factors.
\end{proof}

\begin{defn}
We call any neighborhood $\nu_\xi(\widetilde\gamma)$ for which there is
a geodesic $\gamma$ in a $2$-dimensional standard ball satisfying the
hypotheses of Proposition~\ref{flatboundary} (and hence $\nu_\xi(\widetilde \gamma)$
satisfies the conclusions
of the last two results) {\em an
$\epsilon'$-solid cylinder neighborhood at scale $s_1$ near a flat boundary,} or
simply an {\em $\epsilon'$-solid cylinder neighborhood at scale $s_1$} for short.
\end{defn}

For later use we need one final addendum about the $\nu_\xi(\widetilde \gamma)$. It follows
directly from the corresponding statement in the $2$-dimensional case (Lemma~\ref{goodrect}.)

\begin{lem}\label{flatboundary2}
With notation and assumptions as in the previous proposition the
following holds. For any $c\in [\xi^2,\xi]$ and for any level
surface $L$ of $f_{\widetilde\gamma}$ the distance from any point of
$L\cap h_{\widetilde\gamma}^{-1}(c\cdot\ell(\widetilde\gamma))$ to
$L\cap \widetilde\gamma$ is at most
$(1+4\xi)c\cdot\ell(\widetilde\gamma)$. Also, for any point $y\in
\nu_\xi^0(\widetilde \gamma)$ there is a geodesic $\zeta$ of length
$10\xi$ from $y$ to a point $z$ such that for any $w\in \zeta$ at
distance  at most $5\xi\ell(\widetilde \gamma)$ from $y$ the
comparison angle $\widetilde\gamma wz$ is greater than $\pi-2\xi$.
(Here all distances and $\ell(\widetilde\gamma)$ are measured with
respect to $\lambda g_n$.)
\end{lem}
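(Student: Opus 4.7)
The plan is to deduce both assertions from Lemma~\ref{goodrect} applied to $\gamma$ in the two-dimensional ball $X=B(\bar x,1)$, by transporting the two-dimensional conclusions through the $\hat\epsilon$-Gromov--Hausdorff approximation between $B_{\lambda g_n}(x_n,1)$ and $X$. Since $\gamma$ is a $\xi$-approximation to $\partial X$ on scale $s_1$ with $\mu$-control, all four conclusions of Lemma~\ref{goodrect} apply to $\gamma$ in $X$. Moreover the endpoints of $\widetilde\gamma$ lie within $\hat\epsilon$ of those of $\gamma$, and both $f_{\widetilde\gamma},h_{\widetilde\gamma}$ and $f_\gamma,h_\gamma$ are built from distances to those endpoints; consequently, if $\iota$ denotes an $\hat\epsilon$-GH approximation $B_{\lambda g_n}(x_n,1)\to X$, then $f_{\widetilde\gamma}-f_\gamma\circ\iota$ and $h_{\widetilde\gamma}-h_\gamma\circ\iota$ are both $O(\hat\epsilon)$ uniformly.

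For the distance bound, fix a level set $L$ of $f_{\widetilde\gamma}$ in $\bar\nu_\xi(\widetilde\gamma)$ and $c\in[\xi^2,\xi]$, and pick $p\in L\cap h_{\widetilde\gamma}^{-1}(c\ell(\widetilde\gamma))$. Its image $\bar p=\iota(p)$ lies within $O(\hat\epsilon)$ of the two-dimensional arc $\bar L\cap h_\gamma^{-1}(c\ell(\gamma))$ where $\bar L=f_\gamma^{-1}(f_\gamma(\bar p))$. Lemma~\ref{goodrect}(4) applied to $\bar L$ yields a point $\bar q\in \bar L\cap\gamma$ with $d(\bar p,\bar q)<(1+2\xi)c\ell(\gamma)$; pulling $\bar q$ back to a nearby point on $\widetilde\gamma$ and then moving a further $O(\hat\epsilon)$ along $\widetilde\gamma$ to land in $L\cap\widetilde\gamma$ produces a point $q$ with $d(p,q)\le (1+2\xi)c\ell(\widetilde\gamma)+O(\hat\epsilon)$. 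Refining $\hat\epsilon_2$ in Proposition~\ref{flatboundary} so that $O(\hat\epsilon)\le 2\xi\cdot\xi^2 s_1/10\le 2\xi c\ell(\widetilde\gamma)$ (using $c\ell(\widetilde\gamma)\ge\xi^2 s_1/10$) yields the required $(1+4\xi)c\ell(\widetilde\gamma)$.

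For the geodesic $\zeta$, given $y\in\nu^0_\xi(\widetilde\gamma)$ set $\bar y=\iota(y)$; by Lemma~\ref{goodrect}(3) there is a two-dimensional geodesic from $\bar y$ to a point $\bar z$ with $d(\bar y,\bar z)>10\xi\ell(\gamma)$ such that $\tilde\angle\gamma\bar w\bar z\ge\pi-\xi$ for every $\bar w$ on this geodesic within $5\xi\ell(\gamma)$ of $\bar y$. Lift $\bar z$ to a nearby $z\in M_n$ and let $\zeta$ be a three-dimensional geodesic from $y$ to $z$. For $w\in\zeta$ with $d(y,w)\le 5\xi\ell(\widetilde\gamma)$, the distances from $w$ to $z$, to the endpoints of $\widetilde\gamma$, and to $\widetilde\gamma$ itself differ by $O(\hat\epsilon)$ from the corresponding two-dimensional quantities, and each is bounded below by a fixed positive multiple of $\xi\ell(\widetilde\gamma)$. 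On the resulting compact set of distance data the comparison-angle function $\tilde\angle$ is uniformly Lipschitz, so the three-dimensional comparison angle is within $\xi$ of the two-dimensional one, yielding $\tilde\angle\widetilde\gamma wz\ge\pi-2\xi$. The main technical point, which is essentially the same for both assertions, is quantifying the Lipschitz dependence of the comparison-angle function on its distance data so that the $O(\hat\epsilon)$ approximation errors fit inside the slack ($2\xi$ for distances, $\xi$ for angles) between the two-dimensional bounds of Lemma~\ref{goodrect} and the three-dimensional bounds being proved.
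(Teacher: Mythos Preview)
Your approach matches the paper's, which simply asserts that the lemma ``follows directly from the corresponding statement in the $2$-dimensional case (Lemma~\ref{goodrect})''; both parts are meant to be read as Gromov--Hausdorff transfers of parts (4) and (3) of that lemma, exactly as you carry out.

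There is one point in your treatment of the second assertion that deserves to be made explicit. When you say that the distances from $w\in\zeta$ differ by $O(\hat\epsilon)$ from ``the corresponding two-dimensional quantities'', the GH approximation only tells you they are close to the distances from $\iota(w)$, not from the point $\bar w$ at the same arclength on the two-dimensional geodesic. Since Lemma~\ref{goodrect}(3) is stated for points $\bar w$ lying \emph{on} that geodesic, you still owe an argument that the comparison angle at $\iota(w)$ is controlled. The standard way around this, and the style used throughout the paper, is to phrase the transfer as a limiting argument: if the conclusion failed along a sequence with $\hat\epsilon_k\to 0$, the three-dimensional geodesics $\zeta_k$ would converge (after passing to a subsequence) to a geodesic in the two-dimensional limit, by the convergence properties listed just after Proposition~\ref{ballscon}, and the comparison angles would converge to something $\ge\pi-\xi$ by Lemma~\ref{goodrect}(3), giving the contradiction. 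Alternatively you can observe that $\iota(w)$ nearly saturates the triangle inequality for $\bar y$ and $\bar z$, and the boundary $\mu$-flatness then forces $\iota(w)$ to lie close to the unique geodesic between them; either route closes the gap with no new ideas.
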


We shall also need smooth vector fields well-adapted to
$\nu_\xi(\widetilde\gamma)$.

\begin{cor}\label{3dvf}
There is a smooth unit vector field $\tilde\tau$ on $\nu_\xi(\widetilde\gamma)$  such that,
setting $d_\pm$ equal to the distance function from the endpoints $e_\pm$ of $\widetilde \gamma$,
we have $d'_-(\widetilde\tau)>1-10\xi$, $d'_+(\tilde\tau)>-1+10\xi$, and
$h_{\widetilde\gamma}'(\tilde\tau)<c\xi^2$ for a universal constant $c$.
 Provided that $\xi$ is
sufficiently small, for any points $p,q$ on a flow line of the flow generated by $\tilde\tau$ we
have
$$\left|\frac{h_{\widetilde\gamma}(p)-h_{\widetilde\gamma}(q)}
{f_{\widetilde\gamma}(p)-f_{\widetilde\gamma}(q)}\right|<2c\xi^2.$$
In particular, for $\xi>0$ sufficiently small, any maximal flow line
of $\widetilde \tau$ that meets $\nu_{3\xi/4}(\widetilde\gamma)$ is
an interval with endpoints in the ends of
$\nu_\xi(\widetilde\gamma)$ and this interval meets each level set
of $f_{\widetilde\gamma}$ in a single point.
\end{cor}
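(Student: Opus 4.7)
The plan is to construct $\widetilde\tau$ as a partition-of-unity convex combination of two families of local candidates and then normalize to unit length. Near $\widetilde\gamma$ I would use a smooth extension of the unit tangent field along $\widetilde\gamma$ itself, for which $d'_-=+1$, $d'_+=-1$ and $h'_{\widetilde\gamma}=0$ hold exactly on $\widetilde\gamma$. Away from $\widetilde\gamma$, at each $q\in\nu_\xi^0(\widetilde\gamma)$ I would use $\varphi_{q\,*}(\partial_x)$, where $\varphi_q\colon S^1\times B(0,\epsilon^{-1})\to M_n$ is the $S^1$-product neighborhood with $\epsilon$-control centered at $q$ furnished by Lemma~\ref{flatboundary1}, with Euclidean coordinates chosen so that geodesics from $e_\pm$ at $q$ lie within $\epsilon'$ of the $\pm x$-direction and geodesics from $\widetilde\gamma$ at $q$ lie within $\epsilon'$ of the $-y$-direction.

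The pointwise bounds on $d'_\pm$ come straight from Lemma~\ref{cosformula}: $d'_\pm(\varphi_{q\,*}\partial_x)=-\cos|\partial_x,A'_\pm|$, where $A'_\pm\subset\Sigma_q$ are the directions of geodesics from $e_\pm$; since $A'_-$ lies within $\epsilon'$ of $-\partial_x$ and $A'_+$ within $\epsilon'$ of $+\partial_x$, taking $\epsilon'\ll\xi$ gives the stated bounds on $d'_\pm(\widetilde\tau)$. The estimate $|h'_{\widetilde\gamma}(\varphi_{q\,*}\partial_x)|<c\xi^2$ would be extracted from the same cosine formula combined with the quantitative conclusion of Lemma~\ref{goodrect}(4): that estimate forces the direction at $q$ to the nearest point of $\widetilde\gamma$ to make an angle with the horizontal space that is $O(\xi^2)$, hence within $O(\xi^2)$ of being orthogonal to $\partial_x$. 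A smooth partition of unity subordinate to the cover of $\nu_\xi(\widetilde\gamma)$ by the $S^1$-product neighborhoods together with the tubular neighborhood of $\widetilde\gamma$ glues the candidates; since $d'_\pm$ and $h'_{\widetilde\gamma}$ on a smooth manifold are the inner products of fixed unit gradients with the tangent vector, convex combinations of unit vectors satisfying the bounds still satisfy them, and since all local candidates differ pairwise in direction by $O(\epsilon')$, the sum has norm within $O(\epsilon'^2)$ of one, so normalization perturbs the estimates by at most a constant factor that can be absorbed into $c$.

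For the "in particular" statement, $f_{\widetilde\gamma}=(d_--d_+)/2$ gives $f'_{\widetilde\gamma}(\widetilde\tau)>1-10\xi>1/2$ for $\xi$ small, so $f_{\widetilde\gamma}$ is strictly monotone along each flow line. The ratio bound is immediate: $|h_{\widetilde\gamma}(p)-h_{\widetilde\gamma}(q)|/|f_{\widetilde\gamma}(p)-f_{\widetilde\gamma}(q)|\le |h'_{\widetilde\gamma}(\widetilde\tau)|/f'_{\widetilde\gamma}(\widetilde\tau)<2c\xi^2$, and strict monotonicity shows each flow line meets each level of $f_{\widetilde\gamma}$ in at most one point. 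If a maximal flow line meets $\nu_{3\xi/4}(\widetilde\gamma)$ at some $p$, then $h_{\widetilde\gamma}(p)\le (3\xi/4)\ell(\widetilde\gamma)$, and because $f_{\widetilde\gamma}$ ranges over $[-\ell(\widetilde\gamma)/4,\ell(\widetilde\gamma)/4]$ on $\nu_\xi(\widetilde\gamma)$, the total variation of $h_{\widetilde\gamma}$ along the flow line is at most $2c\xi^2\cdot\ell(\widetilde\gamma)/2=c\xi^2\ell(\widetilde\gamma)$. For $\xi$ small, this keeps $h_{\widetilde\gamma}<\xi\ell(\widetilde\gamma)$ throughout, so the flow line cannot leave through the side $h_{\widetilde\gamma}^{-1}(\xi\ell(\widetilde\gamma))$, and hence it must terminate at both ends $f_{\widetilde\gamma}^{-1}(\pm\ell(\widetilde\gamma)/4)$.

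The main obstacle I expect is establishing the $O(\xi^2)$ bound on $h'_{\widetilde\gamma}(\widetilde\tau)$ rather than a weaker $O(\epsilon')$ or $O(\xi)$ bound, since this is precisely what drives the conclusion that flow lines stay inside $\nu_\xi(\widetilde\gamma)$. The naive estimate from Lemma~\ref{flatboundary1}(2)(a) alone is only linear in $\epsilon'$; the quadratic-in-$\xi$ improvement must come from the second-order geometry in Lemma~\ref{goodrect}(4), and its survival under the partition-of-unity gluing requires that all local candidate fields satisfy the same $\xi^2$-bound with uniform constant and that their pairwise angular misalignment is itself of order much smaller than $\xi$.
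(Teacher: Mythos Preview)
Your approach differs fundamentally from the paper's, and it has a genuine order-of-constants problem. The paper's one-line proof invokes only the defining comparison-angle inequalities of a $\xi$-approximation (Lemma~\ref{goodrect} and its 3D transcriptions) together with the open-set-of-directions argument of Lemma~\ref{smreg}: at each $y\in\nu_\xi(\widetilde\gamma)$ the inequalities $\tilde\angle e_-ye_+>\pi-10\xi$ and the existence of the auxiliary direction $\zeta$ from Lemma~\ref{flatboundary2} cut out a nonempty open subset of the unit tangent sphere at $y$ on which the three required bounds hold, and then one takes a smooth section. The bounds produced this way are expressed directly in $\xi$ because the $\xi$-approximation hypotheses are.

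Your construction instead feeds through the $S^1$-product neighborhoods of Lemma~\ref{flatboundary1}, whose precision is governed by $\epsilon'$. But in the hypotheses inherited from Proposition~\ref{flatboundary} the order of quantifiers is $\epsilon'$, then $\epsilon<\epsilon_1(\epsilon')$, then $\xi<\xi_0(\epsilon)$; your phrase ``taking $\epsilon'\ll\xi$'' reverses this. With the correct ordering one may have $\xi\ll\epsilon'^2$, and then your candidate $\varphi_{q*}(\partial_x)$ need not satisfy even the $d'_-$ bound: Lemma~\ref{flatboundary1}(2)(b) gives $|\partial_x,A'_{e_-}|>\pi-\epsilon'$, hence only $d'_-(\partial_x)>\cos\epsilon'\approx 1-\epsilon'^2/2$, which is weaker than $1-10\xi$ when $\xi$ is small. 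The same obstruction is worse for $h'_{\widetilde\gamma}$, where you get $O(\epsilon')$ control rather than $O(\xi^2)$. Your proposed rescue via Lemma~\ref{goodrect}(4) does not close this gap: that estimate bounds the \emph{length} of the $f_\gamma$-level arc up to height $c\ell$ by $(1+2\xi)c\ell$, which translates to an angle deviation of order $\sqrt\xi$, not $\xi^2$.

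A secondary issue: your assertion that $d'_\pm$ and $h'_{\widetilde\gamma}$ are ``inner products of fixed unit gradients with the tangent vector'' presumes these distance functions are $C^1$, i.e., that $y$ avoids the cut loci of $e_\pm$ and of $\widetilde\gamma$. In a 3-manifold this close to collapsed onto a surface the cut loci are nearby, so linearity of the directional derivative in $\tau$ cannot be taken for granted; this is another reason the paper works with the concave functional $\tau\mapsto -\cos|\tau,A'|$ and the open-fiber argument of Lemma~\ref{smreg} rather than an explicit partition-of-unity average. The ``in particular'' paragraph of your proposal is fine once the vector field exists with the stated bounds.
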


\begin{proof}
The existence of $\tilde\tau$  as stated follows immediately from the definition of a $\xi$-approximation
and Lemma~\ref{smreg}. The last statements then follow easily.
\end{proof}

\begin{defn}
The metric $\lambda g_n$ that was used in the previous proposition
is called the {\em metric used to define the neighborhood}
$\nu(\widetilde \gamma)$. By $\ell(\widetilde \gamma)$ we always
mean the length of the geodesic $\widetilde \gamma$ with respect to
the metric used to define the neighborhood. By a {\em spanning disk} in an $\epsilon'$-solid cylinder we mean a $2$-disk with boundary contained in the side of the solid cylinder that separates the ends of the solid cylinder.
\end{defn}

\subsubsection{Intersections of the $\nu_\xi(\widetilde\gamma)$}

It is important to have control over how the various
$\epsilon'$-solid cylinder neighborhoods near a flat boundary
intersect.

\begin{lem}\label{nuint} The following hold for all $\xi>0$ sufficiently small,
for all $\mu>0$ less than a positive constant $\mu_5(\xi)$, for
 any $0<s_1\le 1/4$, and,
 for all $\hat\epsilon>0$ less than a positive constant
$\hat\epsilon_3(\xi,s_1)$. For $i=1,2$, let $X_i=B(\bar x_i,1)$ be
standard $2$-dimensional balls and let $\gamma_i\subset B(\bar
x_i,3/4)$  be $\xi$-approximations to $\partial X_i$ on scale $s_1$
with $\mu$-control with $\bar y_i\in B(\bar x_i,1)$ being the
$\mu$-control point for $\gamma_i$.
 Suppose we have points $x_i\in
M_n$  with the property that $B_{ g'_n(x_i)}(x_i,1)$ is within
$\hat\epsilon$ of $B(\bar x_i,1)$ for $i=1,2$.
 Furthermore, suppose that
$\widetilde\gamma_i\subset B_{g'_n(x_i)}(x_i,1)$ is within
$\hat\epsilon$ of $\gamma_i$.
 Denote by $\ell_i$ the length of $\widetilde\gamma_i$ as measured
in the metric $g'_n(x_i)$. Then the  intersection of
$\widetilde\gamma_2$ with $\nu_\xi(\widetilde\gamma_1)$ is contained
in $\nu_{\xi^2}(\widetilde\gamma_1)$ and $\widetilde\gamma_1$ meets each level set of $f_{\widetilde\gamma_2}$
in at most one point. In particular, for any $c\ge
\xi$ the intersection of $\widetilde\gamma_2$ with the boundary of
$\bar\nu_{c\xi}(\widetilde\gamma_1)$ is contained in the ends of
this neighborhood.
\end{lem}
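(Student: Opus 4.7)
The plan is to argue by contradiction using Gromov--Hausdorff compactness, reducing to a single limit picture in which both geodesics become subsegments of a common flat boundary line. Fix $\xi>0$ sufficiently small for the earlier results and suppose no positive $\mu_5(\xi)$ and $\hat\epsilon_3(\xi,s_1)$ suffice. Extract sequences $\mu_k\to 0$, $\hat\epsilon_k\to 0$, indices $n_k$, standard $2$-dimensional balls $X_i^{(k)}=B(\bar x_i^{(k)},1)$, $\xi$-approximating geodesics $\gamma_i^{(k)}$ with $\mu_k$-control, and corresponding points $x_i^{(k)}\in M_{n_k}$ and geodesics $\tilde\gamma_i^{(k)}$ satisfying all hypotheses but violating one of the two conclusions for every $k$. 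Any violation forces $\tilde\gamma_1^{(k)}$ and $\tilde\gamma_2^{(k)}$ to be close in $M_{n_k}$, and each has length at most $s_1\le 1/4$ in its rescaled metric, so the $g_{n_k}$-distance between $x_1^{(k)}$ and $x_2^{(k)}$ is bounded by a constant multiple of $\rho_{n_k}(x_1^{(k)})$. Lemma~\ref{rholem} then yields, after a subsequence, $\rho_{n_k}(x_2^{(k)})/\rho_{n_k}(x_1^{(k)})\to\lambda_\infty>0$.

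Next, I would rescale by $\rho_{n_k}(x_1^{(k)})^{-2}$ and pass to a Gromov--Hausdorff limit. By Alexandrov compactness, a subsequence of $(B_{g'_{n_k}(x_1^{(k)})}(x_1^{(k)},3),x_1^{(k)})$ converges to a standard $2$-dimensional Alexandrov ball $(\widehat B,\bar x_1^\infty)$, with $\tilde\gamma_1^{(k)}\to\widehat\gamma_1$, $x_2^{(k)}\to\bar x_2^\infty$, and $\tilde\gamma_2^{(k)}\to\widehat\gamma_2\subset\widehat B$. The $\mu_k$-control points $\bar y_i^{(k)}$ lift via the approximating maps to points in $M_{n_k}$ whose limits $\hat y_i^\infty$ lie in $\partial\widehat B$ by Lemma~\ref{decconeangle}. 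Because $\mu_k\to 0$, $\widehat B$ is boundary flat at each $\hat y_i^\infty$ on the scale governing the corresponding geodesic; the two half-plane patches overlap since the limit geodesics do, so both are contained in a single flat half-plane patch $\Ar\times[0,\infty)\subset\widehat B$. Lemma~\ref{goodrect}(1) then forces each $\widehat\gamma_i$ (in the limit $\mu_k\to 0$) to lie on the boundary line $L=\Ar\times\{0\}$, so $\widehat\gamma_1$ and $\widehat\gamma_2$ are subsegments of a single straight line.

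The two conclusions now follow from stability of the distance functions $f_\gamma, h_\gamma$ under Gromov--Hausdorff convergence. In the flat limit $h_{\widehat\gamma_1}\equiv 0$ on $\widehat\gamma_2\subset L$, so for all $k$ large $h_{\tilde\gamma_1^{(k)}}<\tfrac{1}{2}\xi^2\ell(\tilde\gamma_1^{(k)})$ on $\tilde\gamma_2^{(k)}\cap\nu_\xi(\tilde\gamma_1^{(k)})$, ruling out failure of the first conclusion. For the second, on $L$ the function $f_{\widehat\gamma_2}$ restricted to the range $[-\ell_2/4,\ell_2/4]$ is the signed-distance linear coordinate, hence strictly monotone on $\widehat\gamma_1\cap\bar\nu_\xi(\widehat\gamma_2)$; stability rules out two points of $\tilde\gamma_1^{(k)}$ having the same $f_{\tilde\gamma_2^{(k)}}$-value. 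For the ``in particular'' statement, $c\ge\xi$ gives $c\xi\ell_1\ge\xi^2\ell_1$, and by the first conclusion $h_{\tilde\gamma_1}<\xi^2\ell_1\le c\xi\ell_1$ on $\tilde\gamma_2\cap\nu_\xi(\tilde\gamma_1)$, so $\tilde\gamma_2$ cannot cross the side $\{h_{\tilde\gamma_1}=c\xi\ell_1\}$ of $\bar\nu_{c\xi}(\tilde\gamma_1)$ and any boundary crossing must lie on the ends $f_{\tilde\gamma_1}^{-1}(\pm\ell_1/4)$. The hard part is the simultaneous limit construction: comparing two balls at different scales inside one $M_{n_k}$, identifying their limits inside a common Alexandrov ball via Lemma~\ref{rholem}, and propagating control-point data from the two approximating $2$-dimensional spaces to a single flat boundary patch.
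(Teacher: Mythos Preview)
Your approach is essentially the same as the paper's: argue by contradiction with $\mu_k\to 0$ and $\hat\epsilon_k\to 0$, pass to a limit in which the relevant region is a flat half-plane and both geodesics become subsegments of the boundary line, then read off both conclusions. You are also more explicit than the paper about deriving the second conclusion (monotonicity of $f_{\widetilde\gamma_2}$ along $\widetilde\gamma_1$ in the limit), which the paper leaves implicit in the phrase ``counterexamples to the result.''

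There is one technical difference worth noting. The paper does not take the limit of the $3$-dimensional balls in $M_{n_k}$; instead it compares the $2$-dimensional model balls $B(\bar y_{n,i},5s_1)$ centered at the $\mu$-control points, shows (via the shared points in $M_{n_k}$) that $B(\bar y_{n,2},s_1)$ is $3\hat\epsilon_n$-close to a sub-ball of $B(\bar y_{n,1},5s_1)$, and passes to the limit there. This avoids your step of taking a Gromov--Hausdorff limit of $B_{g'_{n_k}(x_1^{(k)})}(x_1^{(k)},3)$, which is problematic as written: the curvature lower bound $\ge -1$ is only guaranteed on the \emph{unit} ball in the rescaled metric $g'_n(x_1)$ (by the very definition of $\rho_n$), so Alexandrov compactness on a ball of radius $3$ is not directly available, and the endpoints $e_\pm(\widetilde\gamma_2)$ needed to define $f_{\widetilde\gamma_2}$ need not lie in $B_{g'_n(x_1)}(x_1,1)$. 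The fix is exactly what the paper does: work at the control-point scale, showing everything lies in $B_{g'_n(x_1)}(y_1,5s_1)$ and comparing the $2$-dimensional models there. With that adjustment your argument goes through.
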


\begin{proof}
Fix $\xi>0$ and $0<s_1\le 1/4$. Let $y_1\in B_{g'_n(x_1)}(x_1,1)$
and $y_2\in B_{g'_n(x_2)}(x_2,1)$ be points within $\hat\epsilon$ of
$\bar y_1$ and $\bar y_2$ respectively. Since
$\nu_\xi(\widetilde\gamma_{1})\cap
\nu_\xi(\widetilde\gamma_{2})\not=\emptyset$, it follows that
$\rho_n(x_1)/\rho_n(x_2)$ is between $1/2$ and $2$. The ball
$B_{g'_n(x_1)}(y_1,s_1)$ contains $\nu_\xi(\widetilde\gamma_1)$ and
hence contains a point of $\nu_\xi(\widetilde\gamma_2)$. The length
of $\widetilde\gamma_2$ with respect to $g'_n(x_2)$ is between
$s_1/10$ and $s_1$, so its length with respect to $g'_n(x_1)$ is
between $s_1/20$ and $2s_1$. If follows that
$\widetilde\gamma_2\subset B_{g'_n(x_1)}(y_1,4s_1)$. Similarly,
$B_{g'_n(x_2)}(y_2,s_1)\subset B_{g'_n(x_1)}(y_1,5s_1)$.

Now suppose that we have a sequence of $\mu_n\rightarrow 0$ and
$\hat\epsilon_n\rightarrow 0$ and counterexamples to the result for
each of these constants. Denote  the $2$-dimensional balls
associated to these counterexamples by $B(\bar x_{n,1},1)$ and
$B(\bar x_{n,2},1)$ and denote the $\mu_n$-control points by $\bar
y_{n,1}$ and $\bar y_{n,2}$. From the above we see that $B(\bar
y_{n,2},s_1)$ is within $3\hat\epsilon_n$ of a sub-ball of radius
between $s_1/2$ and $2s_1$ in $B(\bar y_{n,1},5s_1)$. Passing to a
subsequence so that limits $B(\bar y_{\infty,1},5s_1)$ and $B(\bar
y_{\infty,2},s_1)$ exist, we see that, taking the limit as $n$ tends
to infinity, we see that $B(\bar y_{\infty,2},s_1)$ is identified
with a sub-ball of $B(\bar y_{\infty,1},5s_1)$. On the other hand
since the $\mu_n\rightarrow 0$, both $B(\bar y_{\infty,1},5s_1)$ and
$B(\bar y_{\infty,2},s_1)$ are sub-balls of $[0,\infty)\times \Ar$
and the limiting geodesics $\gamma_{\infty,1}$ and
$\gamma_{\infty,2}$ are geodesics in the boundary. Hence, the
intersection of $\gamma_{\infty,2}$ with
$\nu_\xi(\gamma_{\infty,1})$ is contained in $\gamma_{\infty,1}$.
This is a contradiction, establishing the result.
\end{proof}

We also need estimates about the vector fields from Lemma~\ref{3dvf}
and also about the distances between the sides of the neighborhoods.

\begin{lem}\label{nuint1}
With notation and assumption as in the previous lemma the following
hold, provided that $\xi>0$ is sufficiently small.
\begin{enumerate}
\item For a unit vector field $\tilde \tau_1$ on
$\nu_\xi(\widetilde\gamma_1)$ satisfying Corollary~\ref{3dvf}, at
any point of $\bar\nu_\xi(\widetilde \gamma_1)\cap
\bar\nu_\xi(\widetilde\gamma_2)$ we have
$$\|f'_{\widetilde\gamma_2}(\tilde\tau_1)\|>1-20\xi.$$
\item For any constants $c_1,c_2$ with
$2\xi\le c_i\le 3/4$ and with
$$c_1\ell_1\rho_n(x_1)<(0.9)c_2\ell_2\rho_n(x_2)$$ each
level set of $f_{\widetilde\gamma_2}$ in $\bar\nu_{c_2\xi}(\widetilde\gamma_2)$ that meets
$\bar\nu_{\xi,[-.24\ell_1,.24\ell_1]}(\widetilde\gamma_1)$ meets
$\bar\nu_{c_1\xi}(\widetilde\gamma_1)$ in a disk whose boundary is contained in the side of
$\bar\nu_{c_1\xi}(\widetilde\gamma_1)$, a disk that separates the ends of
$\bar\nu_{c_1\xi}(\widetilde\gamma_1)$.
\end{enumerate}
\end{lem}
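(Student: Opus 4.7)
My plan is to prove both parts via Gromov--Hausdorff contradiction arguments modelled on the proof of Lemma~\ref{nuint}, combined with the smooth product structure on $\bar\nu_\xi(\widetilde\gamma_1)$ coming from the vector field $\tilde\tau_1$ of Corollary~\ref{3dvf}.

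For statement~(1), I would argue by contradiction: assume a sequence $(\mu_k,\hat\epsilon_k)\to(0,0)$ with counterexamples $(\widetilde\gamma_{k,1},\widetilde\gamma_{k,2})$ and points $p_k\in\bar\nu_\xi(\widetilde\gamma_{k,1})\cap\bar\nu_\xi(\widetilde\gamma_{k,2})$ at which $|f'_{\widetilde\gamma_{k,2}}(\tilde\tau_{k,1})|\le 1-20\xi$. As in Lemma~\ref{nuint}, passing to a subsequence and taking GH limits in the metrics $g'_n(x_{k,1})$, the balls $B_{g'_n(x_{k,1})}(y_{k,1},5s_1)$ converge to a flat ball in $\Ar\times[0,\infty)$, and both $\widetilde\gamma_{k,i}$ converge to geodesic segments $\gamma_{\infty,i}$ lying along a common affine line in the boundary $\Ar\times\{0\}$. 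The defining inequalities on $\tilde\tau_{k,1}$ from Corollary~\ref{3dvf}---$d'_-(\tilde\tau)>1-10\xi$, $d'_+(\tilde\tau)>-1+10\xi$, $h'_{\widetilde\gamma_{k,1}}(\tilde\tau)<c\xi^2$---survive the limit by the stability of directional derivatives under GH convergence (see \S 11 of \cite{BGP}) and pin down $\tilde\tau_{\infty,1}(p_\infty)$ to within $O(\xi)$ of the unit tangent to $\gamma_{\infty,1}$ oriented from $e_-$ to $e_+$. Since $\gamma_{\infty,2}$ lies on the same line, $f_{\gamma_{\infty,2}}$ has gradient $\pm$ this tangent, so $|f'_{\gamma_{\infty,2}}(\tilde\tau_{\infty,1})|>1-O(\xi)$, contradicting the supposed failure of the bound $1-20\xi$ once $\xi$ is small enough that the implicit constant beats $20$.

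For statement~(2), I would first apply a $3$-dimensional analog of Corollary~\ref{flatint}(1), proved by the identical GH argument via Lemma~\ref{nuint}: the size hypothesis $c_1\ell_1\rho_n(x_1)<(0.9)c_2\ell_2\rho_n(x_2)$ forces the side of $\bar\nu_{c_2\xi}(\widetilde\gamma_2)$ to be at positive $g'_n(x_1)$-distance from $\bar\nu_{c_1\xi}(\widetilde\gamma_1)$. Since $\partial L$ lies in that side by Proposition~\ref{flatboundary}, we have $\partial L\cap\bar\nu_{c_1\xi}(\widetilde\gamma_1)=\emptyset$, so the topological frontier of $L\cap\bar\nu_{c_1\xi}(\widetilde\gamma_1)$ in $L$ is contained in $\partial\bar\nu_{c_1\xi}(\widetilde\gamma_1)$. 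Next, by Corollary~\ref{3dvf}, the flow of $\tilde\tau_1$ identifies $\bar\nu_\xi(\widetilde\gamma_1)$ topologically with a product $D\times I$ where $D$ is a level disk of $f_{\widetilde\gamma_1}$ and each $I$-fiber runs from one end of $\bar\nu_\xi(\widetilde\gamma_1)$ to the other. By~(1), $f_{\widetilde\gamma_2}$ is strictly monotonic along each flow line, so $L$ meets each flow line in at most one point. A further GH comparison with the flat model (where $L_\infty$ is a half-plane perpendicular to $\gamma_{\infty,1}$) shows that $f_{\widetilde\gamma_1}$ varies by $O(\xi\ell_1)$ on the component of $L\cap\bar\nu_{c_1\xi}(\widetilde\gamma_1)$ that meets $\bar\nu_{\xi,[-.24\ell_1,.24\ell_1]}(\widetilde\gamma_1)$; being anchored at a point with $|f_{\widetilde\gamma_1}|\le 0.24\ell_1$, this component avoids the ends $f_{\widetilde\gamma_1}^{-1}(\pm\ell_1/4)$, and its frontier therefore lies entirely in the side of $\bar\nu_{c_1\xi}(\widetilde\gamma_1)$. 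Projection along flow lines then identifies this component with a topological disk that spans $\bar\nu_{c_1\xi}(\widetilde\gamma_1)$ and separates its ends.

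The main obstacle I anticipate is the final surjectivity step: verifying, in the sequence rather than only in the flat limit, that every flow line of $\tilde\tau_1$ through $\bar\nu_{c_1\xi}(\widetilde\gamma_1)$ actually meets $L$, so that the projection image is the full disk $D$. This should follow from part~(1) combined with the size comparison $c_1\ell_1\rho_n(x_1)<(0.9)c_2\ell_2\rho_n(x_2)$ to ensure the range of $f_{\widetilde\gamma_2}$ along each flow line covers the defining value of $L$, reducing once again to an explicit check in the flat model.
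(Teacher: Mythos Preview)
Your approach to~(1) is correct and is essentially how Corollary~\ref{flatint} itself is established; the paper simply cites item~(3) of that corollary directly, since the comparison-angle estimate $\tilde\angle e_-(\widetilde\gamma_1)\,y\,e_+(\widetilde\gamma_2)>\pi-10\xi$ combined with the defining inequalities on $\tilde\tau_1$ from Corollary~\ref{3dvf} gives the bound at once.

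For~(2) the overall shape---show $L$ misses the ends, then use the flow of $\tilde\tau_1$ to push a piece of $L$ into a known disk---matches the paper, but the paper resolves your surjectivity obstacle by a different device that sidesteps it entirely. Rather than trying to show the flow projection of $L\cap\bar\nu_{c_1\xi}(\widetilde\gamma_1)$ is onto a reference disk, the paper first applies Lemma~\ref{2dregular} to the pair $(f_{\widetilde\gamma_2},h_{\widetilde\gamma_1})$ on the annular region $U=\nu_{c_1\xi}(\widetilde\gamma_1)\setminus\nu_{\xi^2}(\widetilde\gamma_1)$; this shows $L\cap U$ is foliated by the circles $L\cap h_{\widetilde\gamma_1}^{-1}(t)$ and is therefore an annulus. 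Fixing one such circle $C=L\cap h_{\widetilde\gamma_1}^{-1}(c_1\ell_1/2)$ and letting $D\subset L$ be the surface it bounds, the paper then flows $D$ a distance of order $\xi^2\ell_1$ (using the sharp variation bound $|f_{\widetilde\gamma_1}(y_1)-f_{\widetilde\gamma_1}(y_2)|<\xi^2\ell_1$ from Corollary~\ref{flatint}(4), not merely $O(\xi\ell_1)$) into a nearby level disk $L'=f_{\widetilde\gamma_1}^{-1}(b)$, which is known to be a disk by Proposition~\ref{flatboundary}. Since $D$ embeds in a disk and $\partial D=C$ is a single circle, $D$ is a disk---no surjectivity argument needed. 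Your surjectivity route via the flat model is plausible but more delicate to make rigorous; the paper's use of the circle fibration on $L\cap U$ is the cleaner mechanism and is what buys the conclusion directly.
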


\begin{proof}
The first statement is immediate  from Corollary~\ref{flatint}.
 It follows immediately from this that
any level set of $f_{\widetilde\gamma_{n,2}}$ meets each flow line
for $\tilde\tau_1$ in at most one point.

Now let us establish the second statement. Let $y$ be a point in
$$\nu_\xi(\widetilde\gamma_2)\cap\nu_{\xi,[-(.24)\ell_1,(.24)\ell]}(\widetilde\gamma_1),$$
and consider the level surface $L$ for $f_{\widetilde\gamma_2}$
through $y$. It follows from Corollary~\ref{levelset} that, provided
that $\xi>0$ is sufficiently small, the variation of
$f_{\widetilde\gamma_1}$ on $L\cap \nu_\xi(\widetilde\gamma_1)$ is
less than $(0.001)\ell_1$. This implies that $L$ does not meet the
ends of $\bar\nu_\xi(\widetilde\gamma_1)$. Thus, under the given
assumptions on $c_1$ and $c_2$ we see that $L\cap
\left(h_{\widetilde\gamma_2}^{-1}([0,c_2\xi])\right)$ crosses the
side of $\bar\nu_{c_1\xi}(\widetilde\gamma_1)$.

Let us consider the intersection of $L$ with
$$U=\nu_{c_1\xi}(\widetilde\gamma_1)\setminus\nu_{\xi^2}(\widetilde\gamma_1).$$
On $U$ the functions $f_{\widetilde\gamma_2}$ and
$h_{\widetilde\gamma_1}$ satisfy Lemma~\ref{2dregular} and hence the
intersection of the level sets of these functions are circles that
are almost orthogonal to the horizontal spaces in $S^1$-product
neighborhoods with $\epsilon$-control, circles that meet each of these
horizontal spaces in a single point. This means that $L\cap U$ is
homeomorphic to $S^1\times (0,1)$ and is foliated by circles which
are the intersections of $L$ with level sets of
$h_{\widetilde\gamma_1}$. Now we fix the circle $C=L\cap
h_{\widetilde\gamma_{n,1}}^{-1}(c_1\ell_1/2)$ and let $D\subset L$
be the surface bounded by $C$. Let $z\in C$. We flow $D$ along the
flow lines of $\tilde\tau_1$ to a level set $L'=
f_{\widetilde\gamma_1}^{-1}(b)$ where $b$ is chosen so that
$|b-f_{\widetilde\gamma_1}(z)|=4\xi^2\ell_1$. By
Corollary~\ref{levelset} for all $n$ sufficiently large,
$f_{\widetilde\gamma_1}$ varies by less than $2\xi^2\ell_1$ on $L'$.
By Corollary~\ref{levelset} $L'\cap L=\emptyset$ and the maximal
difference between the values of $f_{\widetilde\gamma_1}$ on $L$ and
$L'$ is at most $4\xi^2\ell_1$. Thus, any flow line for $\tilde
\tau$ that starts on $D$ stays in $\nu_{c_1\xi}(\widetilde\gamma_1)$
as it flows from $D$ to $L'$. Thus, deforming along these flow lines
gives a topological embedding of $D$ as a subsurface of $L'$. Since
$\partial D=C$ is a single circle and since $L'$ is a disk, it
follows that $D$ is a topological disk and hence $L$ is also a disk.
Clearly, it separates the ends of
$\nu_{c_1\xi}(\widetilde\gamma_1)$.
\end{proof}

\begin{addendum}
In the previous two lemmas, we assumed the metrics were $g_n'(x_1)$ and
$g'_n(x_2)$. The reason for this was that if
$B_{g'_n(x_i)}B(x_i,1)$ have non-trivial intersection then the
metrics are within a multiplicative factor of $2$ of each other. We
also have analogous results when we use a fixed multiple $\lambda
g_n$ as the metric in two balls. The proofs are identical, since
this time the metrics agree.
\end{addendum}

\subsection{Boundary points of angle $\le \pi-\delta$}

\begin{prop}\label{corners}
For all $a>0$, given $a$ for all $\mu>0$ sufficiently small, for all
$0<r\le 10^{-3}$, and, given $a, \mu,$ and $r$, for all
$\hat\epsilon>0$ sufficiently small the following hold. Suppose
that, for some $n$ there is a point $x_n\in M_n$ with the property
that $B_{\lambda g_n}(x_n,1)$ is within $\hat\epsilon$ of a
$2$-dimensional Alexandrov space $X=B(\bar x,1)$ of area $\ge a$
that is boundary $\mu$-good at $\bar x$ on scale $r$. Then the ball
$B_{\lambda g_n}(x_n,7r/8)$ is a topological $3$-ball and the
distance function, $d(x_n,\cdot)$, is $(1-\beta)$-strongly regular
on $B_{\lambda g_n}(x_n,7r/8)\setminus B_{\lambda g_n}(x_n,r/8)$,
where $\beta=\beta(\mu)$ limits to zero as $\mu$ tends to zero.
\end{prop}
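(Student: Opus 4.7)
The approach parallels that of Proposition~\ref{intcone1}, modified for the boundary cone case. The proof splits into two parts: verifying the strong regularity assertion on the annular region, and identifying the ball topologically.

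First I would reduce to the case $r = 10^{-3}$ by exactly the rescaling argument used at the start of Proposition~\ref{intcone1}. For the strong regularity claim, I would argue by contradiction: suppose we have sequences $\mu_k \to 0$ and $\hat\epsilon_k \to 0$ with balls $B_{\lambda_k g_{n(k)}}(x_k, 1)$ within $\hat\epsilon_k$ of standard $2$-dimensional balls $B(\bar x_k, 1)$ of area $\ge a$ that are boundary $\mu_k$-good at $\bar x_k$ on scale $10^{-3}$, and for which the regularity conclusion fails. Passing to a subsequence, $B(\bar x_k,1) \to B(\bar x_\infty,1)$, and because $\mu_k \to 0$ the sub-ball $B(\bar x_\infty, 10^{-3})$ is isometric to the unit ball at the cone point of a flat subcone $C \subset \Ar^2$ of some angle $\alpha \le \pi$. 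The area hypothesis combined with Bishop--Gromov volume comparison forces $\alpha \ge \alpha_0(a) > 0$. On any such flat cone the function $d(\bar x, \cdot)$ is $1$-strongly regular on the annular region $B(\bar x, 7\cdot 10^{-3}/8) \setminus B(\bar x, 10^{-3}/8)$. By Corollary~\ref{regularopen}, for all $k$ sufficiently large $d(x_k, \cdot)$ is $(1-\beta_k)$-strongly regular on the corresponding annular region of $B_{\lambda_k g_{n(k)}}(x_k,1)$, with $\beta_k \to 0$. This yields the strong regularity claim with $\beta = \beta(\mu) \to 0$ as $\mu \to 0$.

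Given this strong regularity, Lemma~\ref{smreg} identifies the annular region $B_{\lambda g_n}(x_n, 7r/8) \setminus B_{\lambda g_n}(x_n, r/8)$ with a topological product $\Sigma \times [r/8, 7r/8]$ for some closed oriented surface $\Sigma$ (the level surface of $d(x_n, \cdot)$). It therefore suffices to show that $B_{\lambda g_n}(x_n, r/8)$ is a topological $3$-ball, which will force $\Sigma = S^2$. I would apply the blow-up Proposition~\ref{blowup} at $x_n$: after passing to a subsequence there are points $\hat x_n$ with $d(x_n, \hat x_n) \to 0$ and one of two alternatives holds. In alternative~(i), $d(\hat x_n, \cdot)$ has no critical points in $B(\hat x_n, r) \setminus \{\hat x_n\}$, so $B(\hat x_n, r')$ is a topological $3$-ball for every $r' < r$; combined with the product structure of the annular region, $B(x_n, 7r/8)$ is then a topological $3$-ball. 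In alternative~(ii), there are critical points $q_n$ at distance $\delta_n \to 0$ from $\hat x_n$, and $\delta_n^{-1} B(\hat x_n, r)$ converges to a complete Alexandrov space $Y$ of curvature $\ge 0$ and dimension exactly $3$, which by Proposition~\ref{smlimits} is a smooth, complete Riemannian $3$-manifold of non-negative curvature with smooth convergence.

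The main obstacle is alternative~(ii), where I would analyze the soul $S$ of $Y$ following the template of Proposition~\ref{intcone1}. A surface soul is ruled out exactly as there: the split case $Y = S \times \Ar$ gives a disconnected complement of a neighborhood of $S$, contradicting the connectedness coming from the $2$-dimensional limit; a non-orientable $\Ar$-bundle forces comparison angles to vanish at infinity, contradicting the positive lower bound $\ge \beta_0$ on comparison angles coming from the cone angle $\alpha \ge \alpha_0 > 0$ together with monotonicity. The new difficulty is the circle-soul case, in which $Y$ is diffeomorphic to $S^1 \times \Ar^2$ or to its non-orientable analogue, and the asymptotic cone of $Y$ at infinity (obtained by rescaling $Y$ by factors going to $0$) is isometric to the flat plane $\Ar^2$ of cone angle $2\pi$. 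On the other hand, by choosing intermediate scales $R_n \delta_n \to 0$ with $R_n \to \infty$ and rescaling the original manifold by $(R_n \delta_n)^{-2}$, one sees that this asymptotic cone must coincide with the tangent cone of the $2$-dim limit at $\bar x$, which is a flat subcone of $\Ar^2$ of angle $\alpha \le \pi < 2\pi$. This mismatch rules out a circle soul. Therefore $S$ is a point, $Y \cong \Ar^3$, and $B(\hat x_n, r)$ is a topological $3$-ball for $n$ large. The delicate step whose details would need to be spelled out most carefully is precisely this scale-matching between the blow-up producing $Y$ and the tangent cone at $\bar x$, ensuring the asymptotic cone of $Y$ is canonically identified with $T_{\bar x} B(\bar x, 1)$.
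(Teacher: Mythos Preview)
Your approach has two genuine gaps. First, the blow-up Proposition~\ref{blowup} explicitly requires that the base of the approximating cone has diameter at most $\pi-\delta$ for some fixed $\delta>0$. When your limiting cone angle is $\alpha=\pi$ (the boundary-flat case), this hypothesis fails and the blow-up cannot be invoked. The paper handles $\alpha=\pi$ separately, using Proposition~\ref{flatboundary} directly to exhibit a $D^2\times I$ neighborhood containing a small metric ball about $x_k$; its $2$-sphere boundary then forces the level sets of $d(x'_{n(k)},\cdot)$ to be spheres.

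Second, your argument for ruling out a circle soul is incorrect as stated. The claim that a circle-soul $3$-manifold $Y$ has asymptotic cone $\Ar^2$ of angle $2\pi$ is false: take $Y=S^1\times\Sigma$ with the product metric, where $\Sigma$ is a smoothed flat circular cone of angle $\theta<2\pi$; the soul is a circle but the asymptotic cone is the circular cone of angle $\theta$. One might try to salvage the idea by arguing that the asymptotic cone of $Y$ is a cone over a circle (hence boundaryless) whereas the tangent cone at $\bar x$ is a cone over an interval, but the scale-matching you would then need (identifying the blow-down of $Y$ with the blow-up of the $2$-dimensional limit) is not a formal consequence of the convergence statements available and is nowhere established in the paper. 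The paper avoids this entirely by reversing the logic: rather than analyzing the soul of $Y$ via asymptotic cones, it first proves directly that the level sets of $d(x'_{n(k)},\cdot)$ in the annular region are $2$-spheres. This is done by a concrete construction using two $\epsilon'$-solid cylinders $\nu_\xi(\widetilde\gamma_k)$, $\nu_\xi(\widetilde\gamma_k')$ near the two boundary arcs (from Proposition~\ref{flatboundary}) together with the $S^1$-fibration on their complement (Proposition~\ref{s1glue}) to build a saturated annulus whose boundary circles bound disks in the solid cylinders; the resulting $2$-sphere is homologically nontrivial in the annular region, forcing the level sets to be $S^2$. Once the end of $Y$ is known to be $S^2\times[0,\infty)$, the soul is a point or $\Ar P^2$, and the latter is excluded because it would force the original limit to be $1$-dimensional rather than $2$-dimensional of area $\ge a$.
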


\begin{proof}
Fix $a>0$. Suppose that there is a sequence $\mu_k\rightarrow 0$ as
$k\rightarrow \infty$ for which the result does not hold, meaning
that for each $k$ there is $0<r_k\le 10^{-3}$ for which there is no
constant $\hat\epsilon$ as required. This implies that for each $k$
there is a sequence of constants $\hat\epsilon_{k,\ell}$ tending to
zero and counter-examples
$B_{\lambda_{k,\ell}g_{n(k,\ell)}}(x_{n(k,\ell)},1)$ with these
values of the constants. The  balls
$B_{\lambda_{k,\ell}g_{n(k,\ell)}}(x_{k,\ell},1)$ that are within
$\hat\epsilon_{k,\ell}$ of standard $2$-dimensional balls $B(\bar
x_{k,\ell},1)$ of  area at least $a$, balls that are boundary
$\mu_k$-good at $\bar x_{k,\ell}$ on scale $r_k$. The ball
$B_{(\lambda_{k,\ell}/(r_k)^2)g_{n(k,\ell)}}(x_{k,\ell},1)$ is
within $\hat\epsilon_{k,\ell}/r_k$ of the unit ball centered at
$\bar x_{k,\ell(k)}$ in $(1/r_k)B(\bar x_{k,\ell},r_k)$, and the
latter is boundary $\mu_k$-good at $\bar x_{k,\ell}$ on scale $1$.
For each $k$ choose $\ell(k)$ sufficiently large such that
$\hat\epsilon_{k,\ell(k)}/r_k\rightarrow 0$. (We shall add another
condition on how large $\ell(k)$ must be later in the argument.)
Re-index the constants by $k$ so that, for example,
$\lambda_{k,\ell(k)}$ is denoted $\lambda_k$. Passing to a
subsequence and taking a limit, the fact that the $\mu_k\rightarrow
0$ implies that the $2$-dimensional unit balls converge to a flat
cone $C$ in $\Ar^2$ of angle $\le \pi$. The area of $C$ is bounded
below by a positive constant $a'$ depending only on $a$. By our
choice of $\hat\epsilon_k$, it follows that the balls
$B_{(\lambda_k/(r_k)^2)g_{n(k)}}(x_k,1)$ also converge to $C$, with
the $x_k$ converging to the cone point. It follows that given any
$\zeta>0$ for all $k$ sufficiently large, the distance function
$d_k=d_{(\lambda_k/(r_k)^2)g_{n(k)}}(x_k,\cdot)$ is
$(1-\zeta)$-regular on
$B_{(\lambda_k/(r_k)^2)g_{n(k)}}(x_k,1)\setminus
B_{(\lambda_k/(r_k)^2)g_{n(k)}}(x_k,\zeta)$, and in particular this
annular region is homeomorphic to a product with an interval and the
slices of the product structure are the level sets of the distance
function. We shall achieve a contradiction by showing that these
level sets are $2$-spheres and that the metric balls that they bound
are homeomorphic to $3$-balls.

At this point in the argument, we fix $\epsilon'>0$ sufficiently
small and let $\epsilon<\epsilon_1(\epsilon')$ as in
Proposition~\ref{s1glue}. Then we fix $\xi>0$ sufficiently small so
that Proposition~\ref{flatboundary}. Lastly, we also assume that for
each $k$ we have chosen $\ell(k)$ sufficiently large so that
$\hat\epsilon_{k,\ell(k)}<\hat\epsilon_2(\epsilon,\mu_k,1/4)$ for
Proposition~\ref{flatboundary}.
We consider first the case when the cone angle at the cone point of
$C$ is $\pi$. In this case, $C$ is isometric to a unit ball centered
at a boundary point of $\Ar\times [0,\infty)$.
 Since
$\mu_k\rightarrow 0$  by Proposition~\ref{flatboundary} and our
choice of $\ell(k)$, there is a constant $\zeta>0$ such that for all
$k$ sufficiently large there neighborhood of $x_k$ containing
$B_{(\lambda_k/(r_k)^2)g_{n(k)}}(x_k,\zeta)$ and contained in
$B_{(\lambda_k/(r_k)^2)g_{n(k)}}(x_k,1)$ that is homeomorphic to
$D^2\times I$. The boundary of this neighborhood, which is a
$2$-sphere, separates the level set for $d_k$ at distance $\zeta$
from the level set at distance $1$. It follows that all the level
sets are $S^2$-spheres. Furthermore, since the level set at distance
$\zeta$ is a $2$-sphere contained in a neighborhood of $x_k$
homeomorphic to a $3$-ball, this level set bounds a $3$-ball in this
neighborhood. It follows immediately that for all $k$ sufficiently
large, all the metric balls $B_{(\lambda_k/(r_k)^2)g_{n(k)}}(x_k,t)$
for $\zeta\le t\le 1$ are homeomorphic to $3$-balls. This is a
contradiction, proving the result in this case.

We now examine the case when the cone angle of $C$ is strictly less
than $\pi$. For the rest of the argument we implicitly use the
metric $(\lambda_k/(r_k)^2)g_{n(k)}$ on $M_{n(k)}$ and on all its
subsets.
 According to
the Proposition~\ref{blowup} there is a sequence of points
$x_{n(k)}'\in M_{n(k)}$ with $d(x_{n(k)}',x_{n(k)})\rightarrow 0$
such that one of the two following cases holds:
\begin{enumerate}
\item the distance function from $x'_{n(k)}$ has no critical points on
$B(x'_{n(k)},1)\setminus\{x_{n(k)}'\}$, or
\item  there is a sequence $\delta_k\rightarrow 0$ such that the
distance function from $x_{n(k)}'$ has no critical points at
distances between $\delta_k$ and $1$ and has a critical point at
distance $\delta_k$.
\end{enumerate}
In the first case, all the level sets for the distance function from
$x_{n(k)}'$ at distance strictly between $0$ and $1$ are $2$-spheres
and the corresponding metric balls are homeomorphic to $3$-balls. In
the second case, rescaling by $\delta_k^{-2}$ we get a sequence of
$3$-manifolds with a subsequence converging to a $3$-dimensional
Alexandrov space of curvature $\ge 0$. By Proposition~\ref{smlimits}
the convergence is in fact a smooth convergence and the limit is a
smooth complete $3$-manifold of non-negative curvature. It follows
that for all $k$ sufficiently large, one of these two possibilities
holds for $X_k$.

\begin{claim}
The level sets of the distance function $f_k=d(x_{n(k)}',\cdot)$ at
distance between $\delta_k$ and $1$ are topological $2$-spheres.
\end{claim}

Let us assume this claim for a moment and complete the proof of the
lemma. It follows from this claim that the end of the limiting
manifold is homeomorphic to $S^2\times [0,\infty)$. The limiting
manifold has a soul which is a manifold of non-negative curvature.
Because  the neighborhood of infinity of the limit is diffeomorphic
to $S^2\times [0,\infty)$, the soul must be either a point or $\Ar
P^2$. The second case is not possible, since in this case the
original manifolds would converge to an interval not a
$2$-dimensional Alexandrov space of area $\ge a$. Since its soul is
a point, the limiting manifold is diffeomorphic to $\Ar^3$. The
result is then immediate.

It remains to prove the claim.

\begin{proof} (of the claim) We know that
$$f_k\colon B_{(\lambda_k/(r_k)^2)
g_{n(k)}}(x_{n(k)},3/4)\setminus B_{(\lambda_k/(r_k)^2)
g_{n(k)}}(x_{n(k)},1/4)\to (1/4,3/4)$$ is the projection mapping of a locally
trivial fibration.  Set $b^+=1/2$ and $b^-=(.46)$.  Then using the metric $(\lambda_k/(r_k)^2)g_{n(k)}$,
 by our choice of
constants and Proposition~\ref{flatboundary} there are good
$\xi$-approximations $\nu_\xi(\gamma)$ and $\nu_\xi(\gamma')$ to the
boundary of length $1/10$ centered at the two points of $\partial
B$ at distance $(.48)$ from $\bar x$.
 Let
$\nu_\xi(\widetilde\gamma_k)$ and $\nu_\xi(\widetilde\gamma_k')$ be
corresponding neighborhoods in $M_{n(k)}$. For $k$ sufficiently
large we can choose these geodesics within $\epsilon'$ of $\gamma$
and $\gamma'$. Every point of the open subset $U$ which is the
intersection of
$$B_{(\lambda_k/r^2_k) g_{n(k)}}(x_{n(k)},b^+)\setminus\bar B_{(\lambda_k/r_k^2)
g_{n(k)}}(x_{n(k)},b^-)$$ with the complement of the closure of
$\nu_{\xi^2}(\widetilde\gamma_k)\cup
\nu_{\xi^2}(\widetilde\gamma_k')$ is the center of an $S^1$-product
structure with $\epsilon$-control. Hence, this subset sits inside a
larger open subset that is the total space of an $S^1$-fibration
with fibers within $\epsilon'$ of orthogonal to the horizontal
spaces of the $S^1$-product structures with $\epsilon$-control. This
implies that there is an annulus in $U$ with boundary contained in
$\nu_\xi(\widetilde\gamma_k)\cup\nu_\xi(\widetilde\gamma_k')$ that
separates $f_k^{-1}(b^+)\cap U$ from $f_k^{-1}(b^-)\cap U$. Since the
boundary loops of this annulus are homotopically trivial in
$\nu_\xi(\widetilde\gamma_k)\cup \nu_\xi(\widetilde\gamma_k')$, it
follows that there is a map of $S^2$ into $B_{\lambda_k
g_{n(k)}}(x_{n(k)},b')\setminus B_{\lambda_k g_{n(k)}}(x_{n(k)},b)$
that is homologically non-trivial. The claim follows.
\end{proof}
\end{proof}

This argument actually proves more.

\begin{cor}\label{corners1}
Fix $\epsilon'>0$ and let $\epsilon>0$ be less than the constant
$\epsilon_1(\epsilon')$ as in Proposition~\ref{s1glue}. For all
$\xi>0$ sufficiently small and every $a>0$  the following holds
for all $\mu$ less than a positive constant $\mu_6(\xi,a)$, for
every $r>0$ and for all $\hat\epsilon$ less than a positive constant
$\hat\epsilon_4(\epsilon,a,\mu,r)$. With the notation and
assumptions of the previous proposition, fix $b\in (r'/8,7r'/8)$.
The level set $L_b=d(x_n,\cdot)^{-1}(b)$ is a topologically locally
flat $2$-sphere and the metric ball that it bounds is a topological $3$-ball.
Furthermore, there are two geodesics
$\widetilde\gamma_1$ and $\widetilde\gamma_2$ within $\epsilon$ of
geodesics $\gamma_1$ and $\gamma_2$ in $X$ that are
$\xi$-approximations to $\partial X$ on scale $s_1$ with the
property that every point of $L_b$ that is not the center of an
$S^1$-product neighborhood with $\epsilon$-control is contained in
union $\nu_{\xi^2}(\widetilde\gamma_1)\cup \nu_{\xi^2}(\widetilde
\gamma_2)$.
\end{cor}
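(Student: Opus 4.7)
The plan is to extract the first assertion from the proof of Proposition~\ref{corners} and then obtain the decomposition by transferring Corollary~\ref{levelset} from the limiting $2$-dimensional ball to $M_n$ via the Gromov-Hausdorff approximation. First, recall that Proposition~\ref{corners} gives that $B_{\lambda g_n}(x_n, 7r'/8)$ is a topological $3$-ball and that $d(x_n, \cdot)$ is $(1-\beta)$-strongly regular on the annular region $B_{\lambda g_n}(x_n, 7r'/8) \setminus B_{\lambda g_n}(x_n, r'/8)$. By Lemma~\ref{smreg}, each level set $L_b = d(x_n,\cdot)^{-1}(b)$ with $b\in(r'/8, 7r'/8)$ is therefore a topologically locally flat codimension-$1$ submanifold; connectedness of $L_b$ (a standard consequence of the regularity and the product structure given by Lemma~\ref{smreg}) together with the fact that $L_b$ separates the $3$-ball $B_{\lambda g_n}(x_n, 7r'/8)$ into two pieces, one of which has compact closure a topological $3$-ball, forces $L_b$ to be a topological $2$-sphere bounding a topological $3$-ball.

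For the decomposition, I would apply Corollary~\ref{levelset} to the rescaled ball $\tfrac{1}{r'} B(\bar x, r')$, which is a standard $2$-dimensional ball of area bounded below by a constant depending only on $a$ (using Bishop-Gromov, as in the first paragraph of the proof of Proposition~\ref{intcone1}) and is boundary $\mu$-good at $\bar x$ on scale $1$. Provided $\mu$ is smaller than the corresponding $\mu''$ from Proposition~\ref{bdrycone}, Corollary~\ref{levelset} produces geodesics $\gamma_1, \gamma_2$ in $X$ that are $\xi$-approximations to $\partial X$ on scale $s_1 = s_1' r'$ with $\mu$-control, such that the metric sphere $S(\bar x, b)$ is contained in the union of the open subset of interior $\mu$-flat points on scales $\le s_2 = s_2' r'$ and the two closed rectangles $\nu_{\xi^2, [-\ell(\gamma_i)/8, \ell(\gamma_i)/8]}(\gamma_i)$.

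Now choose geodesics $\widetilde\gamma_i \subset M_n$ within $\epsilon$ of $\gamma_i$. Given a point $z \in L_b$, let $\bar z \in S(\bar x, b)$ be its image under the Gromov-Hausdorff approximation. If $\bar z$ lies in the interior $\mu$-flat region, then $B_{\lambda g_n}(z, s_2)$ is within some $\hat\epsilon'$ (depending linearly on $\hat\epsilon / s_2$) of a $2$-dimensional ball that is interior $\mu$-flat at its center on scale $s_2$; by Lemma~\ref{generic2D}, taking $\hat\epsilon < \hat\epsilon_4(\epsilon, a, \mu, r)$ small enough, $z$ is the center of an $S^1$-product neighborhood with $\epsilon$-control. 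If instead $\bar z$ lies in $\nu_{\xi^2,[-\ell(\gamma_i)/8,\ell(\gamma_i)/8]}(\gamma_i)$, then since $f_{\widetilde\gamma_i}$ and $h_{\widetilde\gamma_i}$ are $1$-Lipschitz and $O(\hat\epsilon)$-close under the Gromov-Hausdorff approximation to $f_{\gamma_i}$ and $h_{\gamma_i}$, the point $z$ satisfies $h_{\widetilde\gamma_i}(z) < \xi^2 \ell(\widetilde\gamma_i)$ and $|f_{\widetilde\gamma_i}(z)| < \ell(\widetilde\gamma_i)/4$, hence $z \in \nu_{\xi^2}(\widetilde\gamma_i)$.

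The main obstacle is coordinating all of the constants. One must choose $\hat\epsilon$ small enough that: (i) the rescaled approximation of $B_{\lambda g_n}(z, s_2)$ to a $2$-dimensional interior flat ball has error below the threshold required by Lemma~\ref{generic2D} (which forces $\hat\epsilon$ to depend on $s_2$, and hence on $r$ through $s_2 = s_2' r'$); (ii) the open condition defining $\nu_{\xi^2}(\widetilde\gamma_i)$ is preserved starting from the closed condition $\nu_{\xi^2, [-\ell/8, \ell/8]}(\gamma_i)$, which requires the approximation error to be a definite fraction smaller than the margin $\ell/8$ in the $f$-direction and the margin $\xi - \xi^2$ in the $h$-direction; and (iii) the geodesics $\widetilde\gamma_i$ actually qualify as $\xi$-approximations on scale $s_1$ with $\mu$-control in $M_n$, which follows from stability of the defining inequalities (Lemma~\ref{goodrect} and Corollary~\ref{flatint}) under small Gromov-Hausdorff perturbations. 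These constraints are cumulative but compatible, and together they determine $\hat\epsilon_4(\epsilon, a, \mu, r)$.
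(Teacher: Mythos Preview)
Your proposal is correct and follows essentially the same approach as the paper. The paper presents this corollary with the remark ``This argument actually proves more,'' meaning the content is already implicit in the proof of Proposition~\ref{corners}: there the geodesics $\widetilde\gamma_k,\widetilde\gamma_k'$ are constructed explicitly (centered at the two boundary points at distance $0.48$) and it is observed that every point of the relevant annulus outside $\nu_{\xi^2}(\widetilde\gamma_k)\cup\nu_{\xi^2}(\widetilde\gamma_k')$ is the center of an $S^1$-product neighborhood with $\epsilon$-control. Your invocation of Corollary~\ref{levelset} to package the $2$-dimensional input is a clean way to organize the same construction.
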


Actually, we have more control over the intersections of the level
sets with the $\nu_\xi(\widetilde\gamma_i)$.

\begin{cor}\label{corners'}
With notation and assumptions as in the previous corollary, for any
$b\in (r'/8,7r'/8)$, and any the level set $L_b=d_{\lambda
g_n}(x_n,\cdot)^{-1}(b)$ meets $\bar\nu_\xi(\widetilde\gamma_i)$ in
a spanning $2$-disk in
$\bar\nu_\xi(\widetilde\gamma_i).$ Furthermore, for any $c\in
[\xi,1]$ the level set
$h_{\widetilde\gamma_i}^{-1}(c\xi\ell(\widetilde\gamma_i))$ crosses
$L_b$ topologically transversally and the intersection is a circle
bounding the disk $L_b\cap\bar\nu_{c\xi}(\widetilde\gamma_i)$.
\end{cor}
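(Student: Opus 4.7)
The plan is to first prove that $\Sigma_i := L_b\cap \bar\nu_\xi(\widetilde\gamma_i)$ is a spanning $2$-disk and then obtain the family of transverse circles by restricting $h_{\widetilde\gamma_i}$ to $\Sigma_i$. Throughout I work with the metric $\lambda g_n$, write $\ell_i = \ell(\widetilde\gamma_i)$, and recall from Corollary~\ref{corners1} that $L_b$ is a topologically locally flat $2$-sphere.

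First I would locate $\Sigma_i$ inside $\bar\nu_\xi(\widetilde\gamma_i)$. Rescaling by $1/r'$ puts us in the hypothesis of Corollary~\ref{flat/ball}: in the $2$-dimensional limit $X=B(\bar x,1)$, the level set $d(\bar x,\cdot)^{-1}(b)$ meets $\bar\nu_\xi(\gamma_i)$ in a single arc $\alpha_i$ with one endpoint in $\partial X$ and the other on the side $h_{\gamma_i}^{-1}(\xi\ell(\gamma_i))$, and $f_{\gamma_i}$ varies by at most $\xi^2\ell(\gamma_i)$ along $\alpha_i$. For $\hat\epsilon$ small enough, $\hat\epsilon$-closeness of $B_{\lambda g_n}(x_n,1)$ to $B(\bar x,1)$ forces $\Sigma_i\subset \bar\nu_{\xi,[-(0.23)\ell_i,(0.23)\ell_i]}(\widetilde\gamma_i)$, so in particular $\Sigma_i$ is disjoint from the ends $f_{\widetilde\gamma_i}^{-1}(\pm\ell_i/4)$ and $\partial\Sigma_i\subset L_b\cap h_{\widetilde\gamma_i}^{-1}(\xi\ell_i)$.

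Next I show $\Sigma_i$ is a disk. By Lemma~\ref{flatboundary1}(1) every point of $\nu_\xi^0(\widetilde\gamma_i)$ is the center of an $S^1$-product neighborhood with $\epsilon$-control; by Lemma~\ref{flatboundary1}(2) the coordinates can be chosen so that $f_{\widetilde\gamma_i}$ is close to the horizontal $x$-coordinate, $h_{\widetilde\gamma_i}$ close to $y$, and, by Corollary~\ref{flat/ball} (which says the angle at any $q\in\nu_\xi^0(\widetilde\gamma_i)$ between a geodesic to $x_n$ and one to $e_+(\widetilde\gamma_i)$ is close to $\pi$), geodesics from $x_n$ arrive at $q$ in the $-x$ direction. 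Applying Lemma~\ref{2dregular} to the pair $(d(x_n,\cdot), h_{\widetilde\gamma_i})$ in these charts shows that every fiber of this pair meeting $\nu_\xi^0(\widetilde\gamma_i)$ is a circle that crosses each horizontal factor once, so $L_b\cap h_{\widetilde\gamma_i}^{-1}(\xi\ell_i)$ is a disjoint union of circles; connectedness of the $2$-dimensional arc $\alpha_i$ together with $\hat\epsilon$-closeness reduces this union to a single circle $C_i$. Since $L_b\cong S^2$, $\Sigma_i$ is one of the two disks of $L_b$ bounded by $C_i$. That $\Sigma_i$ separates the two ends of $\bar\nu_\xi(\widetilde\gamma_i)$ follows because the midpoint of $\widetilde\gamma_i$ lies close to $L_b$ by the choice in Corollary~\ref{corners1}, and $d(x_n,\cdot)$ is monotone along $\widetilde\gamma_i$ with derivative close to $\pm 1$.

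For the family statement, the same Lemma~\ref{2dregular} argument applied to $(d(x_n,\cdot), h_{\widetilde\gamma_i})$ at each level $c\xi\ell_i$, $c\in[\xi,1]$, yields a union of circles, again reduced to a single circle by $2$-dimensional connectedness; topological transversality of $h_{\widetilde\gamma_i}^{-1}(c\xi\ell_i)$ with $L_b$ follows from the independence of the directional derivatives of $d(x_n,\cdot)$ and $h_{\widetilde\gamma_i}$ on $\nu_\xi^0(\widetilde\gamma_i)$. Since $\Sigma_i$ is a disk, this circle bounds a sub-disk in $\Sigma_i$, which is exactly $L_b\cap\bar\nu_{c\xi}(\widetilde\gamma_i)$. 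The main obstacle is carrying the $2$-dimensional estimates of Corollary~\ref{flat/ball} uniformly to $3$ dimensions across the entire range $c\in[\xi,1]$, in particular at the inner boundary $c=\xi$ where one sits on the frontier of the domain of the $S^1$-product structures; this is resolved by fixing $\xi$, then $\mu$, and finally choosing $\hat\epsilon$ small enough that the regularity of $(d(x_n,\cdot), h_{\widetilde\gamma_i})$ extends continuously to the closure $\overline{\nu_\xi^0(\widetilde\gamma_i)}$.
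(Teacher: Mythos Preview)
Your argument is correct and reaches the same conclusion, but the route to ``$\Sigma_i$ is a disk'' differs from the paper's. You use the global fact $L_b\cong S^2$ from Corollary~\ref{corners1}: once you know $\partial\Sigma_i$ is a single circle $C_i$ in $L_b$, the Jordan curve theorem on the sphere hands you the disk. The paper instead argues intrinsically inside the solid cylinder: it takes the vector field $\widetilde\tau$ of Corollary~\ref{3dvf}, observes that $f=d(x_n,\cdot)$ is monotone along its flow lines (via Corollary~\ref{flat/ball}), and flows $L_b\cap\nu_{\xi/2}(\widetilde\gamma_i)$ onto a level surface of $f_{\widetilde\gamma_i}$, which is already known to be a disk by Proposition~\ref{flatboundary}. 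Having embedded $\Sigma_i$ in a disk, the paper then uses the same regularity of $(d(x_n,\cdot),h_{\widetilde\gamma_i})$ on $\nu_\xi^0(\widetilde\gamma_i)$ that you invoke (they cite \S11 of \cite{BGP}, you cite Lemma~\ref{2dregular}) to see that $L_b\cap\nu_\xi^0(\widetilde\gamma_i)$ is a single annulus foliated by the level circles of $h_{\widetilde\gamma_i}$, so the embedded surface has one boundary component and is therefore a disk. Your approach is shorter because it cashes in the ambient $S^2$; the paper's flow argument is more self-contained and would work even before the topological type of $L_b$ is pinned down, which is why the same device reappears later (e.g.\ in Lemma~\ref{nuint1}). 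Two small points: your claim that the midpoint of $\widetilde\gamma_i$ lies close to $L_b$ is correct but relies on the specific choice of $\gamma_i$ made in Corollary~\ref{levelset} (endpoints equidistant from $y_i\in S(\bar x,b)$), which is implicit rather than restated in Corollary~\ref{corners1}; and the sign in ``geodesics from $x_n$ arrive at $q$ in the $-x$ direction'' is reversed (with the orientation of Corollary~\ref{flat/ball}, $x_n$ lies on the $e_-$ side, so the arrival direction is $+x$), though this is immaterial to the application of Lemma~\ref{2dregular}.
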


\begin{proof}
Let $f$ denote the distance function from $x_n$. By
Corollary~\ref{flat/ball}, and the fact that $B_{\lambda
g_n}(x_n,1)$ is within $\hat\epsilon$ of a standard $2$-dimensional
ball $B(\bar x,1)$ that is boundary $\mu$-good at $\bar x$, it
follows that $f$ is less than $b$ on one end of
$\nu_\xi(\widetilde\gamma_i)$ and greater than $b$ on the other end,
so that $L_b\cap \nu_\xi(\widetilde\gamma_i)$ separates the ends of
$\nu_\xi(\widetilde\gamma_i)$. Also, by the same lemma the function
$f$ is increasing on the flow lines of $\widetilde\tau$ is in
Corollary~\ref{3dvf} so that $L_b\cap \nu_\xi(\widetilde\gamma_i)$
is transverse to these flow lines. Furthermore, it follows from
Corollary~\ref{3dvf} that any flow line from a point of
$L_b\cap\nu_{\xi/2}(\widetilde\gamma_i)$  remains in
$\nu_\xi(\widetilde\gamma_i)$ until it crosses the end of this
region. Thus, flowing along these flow lines gives us an embedding
of $L_b\cap \nu_{\xi/2}(\widetilde\gamma_i)$ into a level disk for
$f_{\widetilde\gamma_i}$ and hence embeds this surface as a
subsurface $\Sigma$ of a disk. It follows from \S 11 of \cite{BGP}
applied to the functions $h_{\widetilde\gamma_i}$ and $d(x_0,\cdot)$
restricted to $\nu^0_\xi(\widetilde\gamma_i)$ that the intersection
$L_b\cap
\partial \nu^0_\xi(\widetilde\gamma_i)$ is homeomorphic to $S^1\times (0,1)$
and the intersections of $L_b$ with the level surfaces of
$h_{\widetilde\gamma_i}$ foliate this region by circles. If follows
that $\Sigma$ has a single boundary component and hence is
homeomorphic to a disk. From that it is immediate that for every
$c\in [\xi,1]$ the intersection
$L_b\cap\nu_{c\xi}(\widetilde\gamma_i)$ is a $2$-disk.
\end{proof}

\begin{defn}
We call any ball $B_{\lambda g_n}(x_n,r'/4)$ satisfying the
conclusions of Proposition~\ref{corners} Lemma~\ref{corners1}, and
Corollary~\ref{corners'} {\em a $3$-ball near a $2$-dimensional
boundary corner.}
\end{defn}

\subsection{Balls near open intervals}

The following results describe the parts of $M_n$ close to
$1$-dimensional Alexandrov balls.

\begin{lem}\label{1Dlem}
Given $\epsilon'>0$ the following holds for all $0<\epsilon$ less
than a positive constant $\epsilon_2(\epsilon')$.  If
$B_{g'_n}(x_n,1)$ is within $\epsilon$ of a standard $1$-dimensional
ball $J$,  then for any point $y\in B_{g'_n(x)}(x_n,24/25)$ whose
distance from the endpoints of $J$ (if any) is at least $1/25$
 there
is an open set $U=U(y)$, with $B_{g'_n(x_n)}(y,1/50)\subset
U\subset B_{g'_n(x_n)}(y,1/25)$, and an $\epsilon'$-approximation
$p_{x_n}\colon U\to J$, where $J$ is an open interval of length $3/50$ with central
point $p_{x_n}(y)$ such that
 the following hold:
\begin{enumerate}
\item There is a product structure on $U$ such that $p_{x_n}$ is the projection mapping onto
the interval factor. \item The fibers of $p_{x_n}$ are homeomorphic
to either $2$-spheres or $2$-tori.
\item There is a smooth unit vector unit field $\chi$ on $U$ such that
for any (minimal) geodesic $\gamma$  of length $\ge 1/400$, measured
in the metric $g'_n(x_n)$, ending at a point $z\in U$, the angle at
$z$ between $\chi(z)$ and $\gamma'(z)$ is within $\epsilon'$ of
either $0$ or $\pi$. \item If $z\in B_{g'_n(x_n)}(y,1/50)$
and if $d_{g'_n(x_n)}(w,z)\ge 1/400$, then the level surface of the distance
function $d(w,\cdot)$ through $z$ is contained in $U$ and is
isotopic in $U$ to a fiber of $p_{x_n}$.
\end{enumerate}
\end{lem}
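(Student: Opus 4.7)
My plan is to argue by contradiction via Gromov--Hausdorff compactness. Suppose the lemma fails, so that for some $\epsilon'>0$ there exist sequences $\epsilon_k\to 0$ and points $y_{n(k)}$ satisfying the distance hypothesis in balls $B_{g'_{n(k)}(x_{n(k)})}(x_{n(k)},1)$ that are $\epsilon_k$-close to standard $1$-dimensional balls $J_k$, but for which no valid $U(y_{n(k)})$ exists. Pass to a subsequence so that $(J_k,y_{n(k)})\to(J_\infty,\bar y)$ with $d(\bar y,\partial J_\infty)\ge 1/25$. Fix $\delta>0$ small: then $\bar y$ admits a $(1,\delta)$-strainer $\{a,b\}$ of size $\ge 1/50$, which lifts to a $(1,2\delta)$-strainer $\{a_k,b_k\}$ at $y_{n(k)}$. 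Let $f_k(z)=\tfrac{1}{2}(d(a_k,z)-d(b_k,z))$; Lemma~\ref{cosformula} then makes $f_k$ $(1-C\delta)$-strongly regular on a fixed-size neighborhood of $y_{n(k)}$, and Lemma~\ref{smreg} furnishes a smooth unit vector field $\chi_k$ with $f'_k(\chi_k)\ge 1-C\delta$ and an open set $U_k$, $B_{g'_{n(k)}(x_{n(k)})}(y_{n(k)},1/50)\subset U_k\subset B_{g'_{n(k)}(x_{n(k)})}(y_{n(k)},1/25)$, bi-Lipschitz to a product $F_k\times I_k$ in which $f_k$ is the projection to $I_k$, the fibers $F_k$ are the level sets, and $\chi_k$ is the factor flow. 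Reparametrising $f_k$ affinely onto the interval of length $3/50$ centered at $y_{n(k)}$ produces the required $p_{x_{n(k)}}$; it is an $\epsilon'$-approximation because the Gromov--Hausdorff distance from $U_k$ to that interval is $O(\epsilon_k)$, giving (1).

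For (2), each $F_k$ is a closed orientable topological $2$-manifold: closed because, by Proposition~\ref{bdrybehavior}, $\partial M_{n(k)}$ concentrates near the endpoints of $J_k$ while $y_{n(k)}$ stays a definite distance from them; orientable because $M_{n(k)}$ is. The bi-Lipschitz product structure gives a $\pi_1$-injection $F_k\hookrightarrow U_k\subset B_{g'_{n(k)}(x_{n(k)})}(y_{n(k)},1/25)$. If $F_k$ had genus $\ge 2$, then $\pi_1(F_k)$ would contain a rank-$2$ free subgroup $\langle\alpha,\beta\rangle$ whose generators are realisable as loops based at $y_{n(k)}$ of length at most $2\,\mathrm{diam}_{g'_{n(k)}(x_{n(k)})}(F_k)$, and this diameter tends to zero because of the $1$-dimensional collapse. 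The Fukaya--Yamaguchi counting-of-translates argument used to prove the virtual-abelianness claim inside the proof of Proposition~\ref{flatboundary} (appendix of \cite{FY}) then applies: irreducible words of length $N$ in $\alpha^{\pm},\beta^{\pm}$ produce $\ge 2\cdot 3^{N-1}$ distinct lifts of $y_{n(k)}$ within a fixed small radius in the universal cover of $B_{g'_{n(k)}(x_{n(k)})}(y_{n(k)},1/25)$, supplying too many disjoint small balls to be consistent with Bishop--Gromov on the ambient ball (curvature $\ge -1$) for $N$ large. Hence $\pi_1(F_k)$ is virtually abelian, so $F_k\in\{S^2,T^2,\,\text{Klein bottle},\Ar P^2\}$; orientability leaves $S^2$ or $T^2$.

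For (3), if a minimal geodesic $\gamma$ of length $\ge 1/400$ ending at some $z\in U_k$ made angle bounded away from $0$ and $\pi$ with $\chi_k(z)$, a further compactness extraction would produce a geodesic in $J_\infty$ of length $\ge 1/400$ with nonzero transverse component to the interval direction, impossible in a $1$-dimensional space. For (4), the same argument applied to a minimal geodesic from $w$ to $z$ (with $d(w,z)\ge 1/400$) shows that its initial direction at $z$ is within $\epsilon'$ of $\pm\chi_k(z)$, so $d(w,\cdot)$ is strongly regular near $z$; its level set through $z$ then remains inside $U_k$ by a flow argument along $\chi_k$ modelled on Corollary~\ref{3dvf} and is isotopic, through that flow, to the fiber of $p_{x_{n(k)}}$ through $z$.

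The main obstacle is (2): the Fukaya--Yamaguchi counting-of-translates estimate. It requires both the fiber diameter to vanish (from the $1$-dimensional collapse) and a fixed-size $\pi_1$-injective product neighborhood (which the distance-difference function $f_k$ supplies). The other parts are routine consequences of strainer theory, Lemma~\ref{smreg}, and Gromov--Hausdorff compactness.
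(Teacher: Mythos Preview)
Your handling of (1), (3), and (4) is essentially parallel to the paper's: both build the product structure from a strongly regular distance-type function via Lemma~\ref{smreg}, and the angle control in (3)--(4) follows from comparison in the one-dimensional limit (the paper computes comparison angles directly rather than passing to a limit, but the content is the same).

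The gap is in your treatment of (2). The counting-of-translates argument you invoke from the proof of Proposition~\ref{flatboundary} does \emph{not} count translates of the base point; it counts translates of an auxiliary point $q_k$ lying at a fixed positive distance from $p_k$ and having the property that the image of $\pi_1(B(q_k,\delta))$ in $\Gamma_k$ is trivial or cyclic. That is what makes the translates of $\tilde q_k$ pairwise $2\delta$-separated, which in turn forces the comparison-angle contradiction. In Proposition~\ref{flatboundary} such a $q_k$ is supplied by the $S^1$-product structure on $\nu_\xi^0(\widetilde\gamma)$. In your setting you produce no such point: every point of $U_k$ lies on a fiber isotopic to $F_k$, so the image of $\pi_1$ of any small ball is a priori all of $\pi_1(F_k)$. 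Without a $q_k$, the translates of $\tilde y_{n(k)}$ under words of length $N$ in $\alpha,\beta$ are only known to sit in a ball of radius $2N\,\mathrm{diam}(F_k)$; since the generators themselves have displacement $\le 2\,\mathrm{diam}(F_k)\to 0$, consecutive translates are arbitrarily close, the balls you place around them overlap, and no Bishop--Gromov or angle-packing contradiction follows.

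The paper resolves (2) differently: it rescales by the fiber diameter so that the limit is $\mathbb{R}\times Y$ with $\mathrm{diam}\,Y=1$, and then splits into cases on $\dim Y$. If $\dim Y=2$, Proposition~\ref{smlimits} gives smooth convergence to a nonnegatively curved surface cross $\mathbb{R}$, hence $F_k\cong S^2$ or $T^2$. If $\dim Y=1$, the rescaled balls are now close to a two-dimensional space, so Lemma~\ref{generic2D}, Proposition~\ref{s1glue}, and Proposition~\ref{flatboundary} apply; this is exactly what manufactures the points with cyclic local $\pi_1$ that your direct approach lacks. If you want to keep your route, you would need to invoke a generalized Margulis lemma under a lower sectional curvature bound alone, which is a substantially deeper input than anything the paper uses.
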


\begin{proof}
Take a point $u$ at distance $7/200$ from $x_n$ and define
$$U=\bigl[B_{g'_n(x_n)}(u,13/200)\setminus
B_{g'_n(x_n)}(u,1/200)\bigr]\cap B_{g'_n(x_n)}(x_n,1/25)$$ and let
$p_{x_n}\colon U\to (-3/100,3/100)$ be $d(u,\cdot)-7/200$.
This is a $(1-\delta)$-strongly regular Lipschitz function for some
$\delta$ that depends on $\hat\epsilon$ and goes to zero as
$\hat\epsilon$ does. According to Lemma~\ref{smreg} there is a
smooth unit vector field $\chi$ on $U$ with the property that
$p_{x_n}'(\chi)>1-\delta'$ for some $\delta'$ that depends on
$\delta$ and goes to zero as $\delta$ does. The integral curves of
this vector field cross each level surface of $p_{x_n}$ exactly
once, and hence determine a product structure on $U$ with
$p_{x_n}$ being the projection onto one factor. Now let $\gamma$ be
any geodesic of length at least $1/400$ ending at $z\in U$. By
restricting $\gamma$ to a possibly shorter geodesic we can suppose
that $\gamma\subset B_{g'_n(x_n)}(x_n,1/25)$. Denote by $a$ its other endpoint.
Suppose that we have points $\bar u,\bar z,\bar a\in J$ such that under the
$\hat\epsilon$ approximation of between this ball and the open
interval $J$ that $d(\bar u,u)<\hat\epsilon$, $d(\bar z,z)<\hat\epsilon$ and
$d(\bar a,a)<\hat\epsilon$.  Suppose
that $\bar u$ is separated from $\bar a$ by $\bar z$. Then the
comparison angle $\tilde\angle azu$ is close to $\pi$, with an error that goes to zero as
$\hat\epsilon$ does. Hence, by monotonicity  the actual angle that $\gamma$ makes with any geodesic
from $u$ to $z$ is close to $\pi$ with an error
going to zero as $\hat\epsilon\rightarrow 0$. Since the angle at $z$
between $\chi(z)$ and any geodesic from $u$ to $z$ is close to
$\pi$, it follows that the angle between $\chi(z)$ and
$\gamma'(z)$ is less than an error term that goes to zero as
$\hat\epsilon$ goes to zero.

If $\bar u$ lies on the same side of $\bar z$ as $\bar a$, then we
choose $u'$ at distance $7/200$ from $x_n$ but on the `other side'
of $x_n$ (meaning the approximating points in the interval lie on
the other side). The same argument then shows that the angle between
any geodesic from $u'$ to $z$ and $\gamma$ is close to $\pi$,
implying that the angle between $\gamma$ and any geodesic from $u$
to $z$ is close to $0$. Since the angle between $\chi$ and any
geodesics from $u$ to $z$ is close to $\pi$, the angle between
$\chi$ and $\gamma'(z)$ is close to $\pi$, with an error that goes
to zero as $\hat\epsilon$ goes to zero.

Now let $z\in B_{g'_n(x_n)}(x_n,1/50)$ and let $w\in M_n$ be a point such that
$d(w,z)\ge 1/400$. It follows easily from the result just established
that if $\hat\epsilon>0$ is sufficiently small, then the level set of
$d(w,\cdot)$ through $z$ is contained in
$U$ and is transverse to $\chi$ and hence
isotopic in this open set to a fiber of $p_{x_n}$.

It remains to show that, provided that $\hat\epsilon>0$ is
sufficiently small, the fibers of $p_{x_n}$ are either $2$-spheres
or $2$-tori. If not we take a sequence of $\hat\epsilon_k\rightarrow
0$ and examples $p_k\colon U_k\to (-3/100,3/100)$ with fibers
$L_k=p_k^{-1}(t_k)$ that are not $2$-spheres or $2$-tori. Fix points
$z_k\in L_k$, let $d_k$ be the diameter of $L_k$ and rescale,
forming $\frac{1}{d_k}(U_k,z_k)$, and, after passing to a
subsequence take a limit. This limit is an Alexandrov space of
dimension $2$ or $3$ and splits as a product $\Ar\times Y$ where $Y$
has diameter $1$. If $Y$ is $2$-dimensional, then by
Proposition~\ref{smlimits} the convergence is smooth and $Y$ is a
surface of curvature $\ge 0$. Since $Y$ is orientable, it follows in
this case that $Y$ and hence the fibers $L_k$, for all $k$ sufficiently large,
are homeomorphic to either $2$-spheres or $2$-tori, which is a contradiction.

Suppose that $Y$ is $1$-dimensional. Then it is either a closed
interval or circle, and there are rescalings $\lambda_k$ such that
$\lambda_kU_k$ converge to the product $\Ar\times Y$. If $Y$
is a circle, we invoke Lemma~\ref{generic2D} and
Proposition~\ref{s1glue} to see that for all $k$ sufficiently large,
any level set of $p_k$ is contained in an open subset $V_k\subset
\lambda_kU_k$ that is the total space of a circle fibration.
We can take a slightly smaller compact fibration $W_k\subset V_k$
still containing the level set. The boundary components of $W_k$ are
tori and at least one of them separates the two ends of
$\lambda_kU_k$. On the other hand, the level set $L_k$
separates two of the boundary components of $W_k$. These two facts
together imply that for all $k$ sufficiently large, $L_k$ is a
$2$-torus, in contradiction to our assumption.

Lastly, suppose that $Y$ is a closed interval. Then invoking
Lemma~\ref{generic2D}, Proposition~\ref{s1glue} and
Proposition~\ref{flatboundary} we see that for all $k$ sufficiently
large every level set of $p_k$ is contained in the union of the
total space of an $S^1$-fibration and two sets of the form
$\nu_\xi^0(\widetilde\gamma_i)$ as in
Proposition~\ref{flatboundary}. Since the homotopy class of the
fiber of the $S^1$-fibration is trivial in $\nu_\xi^0(\widetilde \gamma_i)$, it
follows that the level set of $p_k$ is contained in an open subset
of $U_k$ whose fundamental group is the fundamental group of
a connected surface with non-empty boundary; that is to say a free
group. But the fundamental group of the level set maps
isomorphically onto the fundamental group of $U_k$ and is the
group of a surface. This means that the level set is the $2$-sphere.
\end{proof}

\begin{defn}
 A neighborhood $U$, a point $y\in U$, and a projection mapping
 $p\colon U\to J$ satisfying the conclusions of the above lemma
 is called {\em an interval product structure centered at $y$ with $\epsilon'$-control.}
 The content of the above lemma is that
 for $\epsilon<\epsilon_2(\epsilon')$ if $B_{g'_n(x)}(x,1)$ is within
 $\epsilon$ of a standard $1$-dimensional ball $J$ and if
 $y\in B_{g'_n(x)}(x,24/25)$ has distance at least $1/25$ from
 the endpoints (if any) of $J$, then there is an interval
 product structure centered at $y$ with  $\epsilon'$-control.
\end{defn}

Now we need to understand what happens near the endpoints of the
nearby interval. Unlike elsewhere in this section, here we do not assume that
$B_{g'_n(x_n)}(x_n,1)$ is disjoint from $\partial M_n$.

\begin{lem}\label{endpts}
There is $a_1>0$ such that the following holds for all $\epsilon>0$
and for all $\beta$ less than a positive constant
$\tilde\beta(\epsilon)$. Suppose that $B_{g'_n(x)}(x,1)$ is within
$\beta$ of a standard $1$-dimensional ball $J$, that $\bar x$ is an
endpoint of $J$ and that $d(x,\bar x)<1/25$. One of the following
two possibilities holds:
\begin{enumerate}
\item $\bar B(x,1/2)$ is diffeomorphic to $T^2\times [0,1/2)$, to a solid torus, to a twisted
$I$-bundle over the Klein bottle, to a $3$-ball, or to $\Ar P^3\setminus
B^3$.
\item There is a $\lambda\ge \epsilon^{-1}\rho_n(x)^{-2}$ such that $B_{\lambda g_n}(x,1)$
is within $\epsilon$ of a standard $2$-dimensional ball of area at
least $a_1$, and $\bar B_{g'_n(x)}(x,1/2)\setminus
B_{g'_n(x)}(x,1/\lambda)$ is a topological product of a surface with an interval
with $d(x,\cdot)$ being
the projection mapping to the interval of this product structure.
\end{enumerate}
\end{lem}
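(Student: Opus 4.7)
The plan is to dichotomize according to whether some rescaling of the metric turns $B(x,1)$ into an approximately two-dimensional Alexandrov ball of definite area. Fix an auxiliary constant $\eta>0$ (to be chosen small depending on $\epsilon$). Call a scale $\lambda\ge \rho_n(x)^{-2}$ \emph{two-dimensional} if $B_{\lambda g_n}(x,1)$ is within $\eta$ in the Gromov-Hausdorff distance of a standard $2$-dimensional Alexandrov ball. If some two-dimensional $\lambda$ satisfies $\lambda\ge \epsilon^{-1}\rho_n(x)^{-2}$, choose the smallest such $\lambda_n$ and establish alternative~(2); otherwise establish alternative~(1).

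For alternative~(2), by minimality of $\lambda_n$ every $\mu\in[\rho_n(x)^{-2},\lambda_n)$ is not two-dimensional, so $B_{\mu g_n}(x,1)$ remains close (after shrinking $\eta$) to a $1$-dimensional standard ball. I invoke Lemma~\ref{1Dlem} at each such scale: this produces compatible interval product structures with $S^2$- or $T^2$-fibers at every point of appropriate distance from the center. Using $d(x,\cdot)$ as a global radial projection, these structures glue to identify $\bar B_{g'_n(x)}(x,1/2)\setminus B_{g'_n(x)}(x,1/\lambda_n)$ with a topological product $F\times I$ whose projection to $I$ is $d(x,\cdot)$, where $F$ is either $S^2$ or $T^2$. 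The uniform lower bound $a_1$ on the area of the limiting $2$-dimensional ball is extracted by contradiction and compactness: were the areas to tend to zero along some subsequence, the limiting $2$-dimensional balls would themselves collapse to a $1$-dimensional space, contradicting the minimal choice of $\lambda_n$ together with continuity of Gromov-Hausdorff limits under rescaling.

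For alternative~(1), no rescaling produces a $2$-dimensional limit. First, suppose $B_{g'_n(x)}(x,1)$ meets $\partial M_n$. Then by Proposition~\ref{bdrybehavior} and the convex, topologically trivial collar hypothesis, a boundary torus of $M_n$ lies within distance close to $d(x,\bar x)$ of $x$, and the collar together with the interval product structures of Lemma~\ref{1Dlem} identify $\bar B(x,1/2)$ with $T^2\times [0,1/2)$. Otherwise, apply Proposition~\ref{blowup} at $x$. In its first alternative, the distance function from $x$ has no critical points on a punctured neighborhood, so the generic cross-sections must be topological spheres (torus cross-sections being incompatible with the existence of a capping topological $3$-ball), and $\bar B(x,1/2)$ is a $3$-ball. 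In its second alternative, rescaling by $\delta_n^{-1}\to\infty$ produces a complete limit of dimension $\ge 2$ with curvature $\ge 0$; by Proposition~\ref{smlimits}, since the volumes are bounded below in the rescaled metric, this limit is a smooth, complete, orientable $3$-manifold $N$ of nonnegative curvature with a single $F$-end, where $F=S^2$ or $T^2$ according to Lemma~\ref{1Dlem}. By the Cheeger-Gromoll soul theorem and orientability of $N$, the soul must be a point, $\Ar P^2$, $S^1$, or the Klein bottle ($S^2$ and $T^2$ souls produce two-ended manifolds via the trivial line bundle, incompatible with the single-end picture); these four cases give exactly the $3$-ball, $\Ar P^3\setminus B^3$, solid torus, and twisted $I$-bundle over the Klein bottle listed in~(1).

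The main technical obstacle will be establishing the uniform area lower bound $a_1$ independent of $n$ and $\epsilon$ in alternative~(2); the difficulty is that the minimal two-dimensional scale $\lambda_n$ is only just barely two-dimensional, so the area could conceivably degenerate without a careful continuity-and-compactness argument forcing a definite jump. A secondary obstacle is ruling out $T^2$ and $S^2$ souls in alternative~(1), which requires combining orientability of $N$ with the single-end structure inherited from the $1$-dimensional approximation; the gluing of local product and fibration structures in both alternatives also requires care to produce a clean global decomposition compatible with $d(x,\cdot)$.
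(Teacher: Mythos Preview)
Your overall architecture---dichotomize on whether some rescaled unit ball is close to a two-dimensional space---is different from the paper's, and the difference matters precisely at the point you flag as the main obstacle: the universal area lower bound $a_1$.

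The paper does not set up a ``minimal two-dimensional scale''. Instead it runs a contradiction argument and applies Proposition~\ref{blowup} directly. That proposition supplies, in its second alternative, not just a rescaled limit but a specific geometric configuration: a point $x'_k$ chosen as the unique local maximum of $d(q_k,\cdot)$ near the endpoint (for $q_k$ at distance $1/4$), and a critical point $p_k$ of $d(x'_k,\cdot)$ at distance exactly $\delta_k$. After rescaling by $\delta_k^{-1}$ one has $|x'_kq'_k|=2$, $|x'_kp_k|=1$, the inequality $|x'_kq'_k|\ge |p_kq'_k|$ from the local-max property, and the comparison angle $\tilde\angle x'_kp_kq'_k\le\pi/2$ from criticality of $p_k$. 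These four facts force a triangle of definite size in the two-dimensional limit, and that is what gives the universal constant $a_1$.

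Your compactness argument for $a_1$ does not work as stated. The issue is that ``within $\eta$ of a standard $2$-dimensional ball'' and ``within $\eta$ of a standard $1$-dimensional ball'' are not mutually exclusive: a $2$-dimensional ball of very small area is itself close to an interval (Lemma~\ref{smarea1D}). So if the areas at your minimal scale $\lambda_n$ tend to zero, the ball $B_{\lambda_n g_n}(x,1)$ is simultaneously close to a thin $2$-dimensional ball and to an interval, and this is in no tension with the minimality of $\lambda_n$ as a ``two-dimensional scale''. There is no contradiction to extract. Relatedly, in your alternative~(1) you assert that the blowup limit is $3$-dimensional ``since the volumes are bounded below'', but you give no argument for this volume bound; Proposition~\ref{blowup} only guarantees dimension $\ge 2$, and its basepoint $\hat x_n$ is not $x$, so your hypothesis ``no scale centered at $x$ is two-dimensional'' does not directly rule out a $2$-dimensional blowup limit.

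The product structure on $\bar B(x,1/2)\setminus B(x,1/\lambda)$ also comes for free from the paper's route: it is exactly the statement that $d(x'_k,\cdot)$ has no critical points on this annulus, which is part of the conclusion of Proposition~\ref{blowup}. You propose instead to glue the interval product structures of Lemma~\ref{1Dlem} across varying scales, which is substantially more work and not obviously compatible with the single radial function $d(x,\cdot)$.
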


\begin{proof}
The first case to consider is when $B(x,1/2)$ meets the boundary of
$M_n$. Let $x'\in B(x,1/2)\cap \partial M_n$. According to
Proposition~\ref{bdrybehavior} $B(x',1)$ is diffeomorphic to
$T^2\times [0,1)$ and the result follows easily in this case. Thus,
we can suppose that $B(x,1)$ is disjoint from $\partial M_n$.
Suppose that there is no $\beta$ as required. We take a sequence of
$\beta_k\rightarrow 0$ and counter examples $x_k\in M_{n(k)}$. We can
assume that $x_k$ is within $\beta_k$ to the endpoint of the
interval. We apply the blow-up result (Proposition~\ref{blowup}).
This tells us that for all $k$ sufficiently large there is another
point $x'_k$, such that the sequence $x'_k$ also converges to the
endpoint, such that one of two
possibilities holds for the distance function from $x'_k$: either it
has no critical points within distance $1/2$ of $x'_k$ (except of
course $x'_k$) or there is a sequence $\delta_k\rightarrow0$ such
that all critical points within distance $1/2$ are within $\delta_k$
and there is a critical point at distance $\delta_k$ from $x'_k$. In
the first case, the ball in question is a topological $3$-ball. In
the second we rescale by multiplying the metric by $\delta_k^{-2}$.
The result is a complete Alexandrov space of non-negative curvature
and of dimension $2$ or $3$ with $\bar x$ being the limit of the $x'_k$.
We consider the case when the result is
$3$-dimensional. By Proposition~\ref{smlimits} it is a complete
$3$-manifold of non-negative curvature, and as such it has a soul.
If the soul is a point, then the limit is diffeomorphic to $\Ar^3$
and level sets of the distance function from $\bar x$ are $2$-spheres. If the soul
is a circle, then the limit is a solid torus and the level sets of the distance function
from $\bar x$ are
$2$-tori. If the soul is a Klein bottle, then the level sets of the distance function
from $\bar x$ are $2$-tori. If the soul is $\Ar P^2$, then the limit is a punctured
$\Ar P^3$ and the level sets are $2$-spheres. Thus, in these cases
the original $B(x_k,1/2)$ is diffeomorphic to the limiting complete manifold
and the level sets of the distance function for $x_k$ away from the
end point are topologically isotopic to the level sets of the
distance function from $x'_k$ at distances more than $\delta_k'$.
This establishes by contradiction that Case 1 holds under these
assumptions.

Suppose that the limit of the rescalings is
$2$-dimensional $(X,\bar x')$.
Consider points $q_k\in B_{g'_{n(k)(x_k)}}(x_k,1/2)$ that converge to a point $\bar q\in J$
at distance $1/4$ from $\bar x$. The point $x'_k$ is chosen as the unique local maximum for the distance function
from $q_k$ near the endpoint of $J$. Let $\gamma_k$ be a geodesic from $x'_k$ to $q_k$, and let $q'_k$ be the point of $\gamma_k$
at distance $2/\delta_k$ from $x'_k$. Let $p_k$ be the critical point for $d(x'_k,\cdot)$ at distance
$\delta_k$ from $x'_k$. In the rescaled ball we have $|x'_kq'_k|=2$ and $x'_kp_k|=1$.
The fact that $x'_k$ is the unique local maximum for the distance function from $q_k$ near the endpoint of $J$, this implies that $|x'_kq'_k|\ge |p_k,q'_k|$.
Since $p_k$ is a critical point for the distance function from $x'_k$, the comparison angle $\tilde x'_kp_kq'_k$
is at most $\pi/2$. This facts together imply that the area of the unit ball centered at $\bar x'$ in $X$
has area at least $a_1$ for some universal constant $a_1$. This shows that Case 2 holds under these assumptions, which is a contradiction.
\end{proof}

\subsection{Determination of the Constants}

We fix $\epsilon'>0$ a  universally small constant. Then
$\epsilon>0$ is chosen to be less than the minimum of the constants
$\epsilon_0(\epsilon')$ in Proposition~\ref{semilocal},
$\epsilon_1(\epsilon')$ in Proposition~\ref{s1glue}, and
$\epsilon_2(\epsilon')$ in Lemma~\ref{1Dlem} and sufficiently small
so that Lemma~\ref{2dregular} holds. Then $\beta$ is chosen less
than $\tilde\beta(\epsilon)$ in Lemma~\ref{endpts} and also less
than $\epsilon/2$. Now we fix $0<\xi\le 10^{-3}$ with $\xi$
sufficiently small so that Theorem~\ref{summary}, Lemma~\ref{nuint},
Lemma~\ref{nuint1}, and Corollary~\ref{corners1}, all hold. We also
fix $\xi>0$ less than the constant $\xi_0(\epsilon)$ in
Proposition~\ref{flatboundary}. Next, we fix $a>0$ less than the
constants $a_2(\beta/2)$ in Lemma~\ref{smarea1D} and $a_1$ in
Lemma~\ref{endpts}. We now fix $\mu>0$ less than the minimum of
$\{\mu_1(\xi),\mu_2(\epsilon),\mu_3(\epsilon,a),
\mu_4(\xi,\epsilon),\mu_5(\xi),\mu_6(\xi,a)\}$ where these are
the constants given Theorem~\ref{summary}, Lemma~\ref{generic2D},
Proposition~\ref{intcone1}, Proposition~\ref{flatboundary},
Lemma~\ref{nuint}, and Corollary~\ref{corners1}.  Now we fix
$\delta,r_0$ positive constants as in Theorem~\ref{summary} for the
given values of $\xi,\mu$ and $a$. We fix $s_1>0$ less than $\tilde
s_1(\xi,\mu,a)$ in Theorem~\ref{summary}. Then we choose $s_2>0$
less than the constants $\tilde s_2(\xi,\mu,a,s_1)$ in
Theorem~\ref{summary}. With all of these constants determined, we
are ready to fix $0<\hat\epsilon<\beta/2$. We choose this constant
less that the minimum of
$$\{\hat\epsilon_0(\mu,s_2),\hat\epsilon_1(\epsilon,a,r_0),\hat\epsilon_2(\epsilon,\mu,s_1),
\hat\epsilon_3(\xi,s_1),\hat\epsilon_4(\epsilon,a,\mu,r_0)\}$$ as given
in Lemma~\ref{generic2D}, Proposition~\ref{intcone1},
Proposition~\ref{flatboundary}, Lemma~\ref{nuint},
Corollary~\ref{corners1}. We also choose
$\hat \epsilon<10^{-3}\xi^2s_1/C$ where $C$ is the constant in Lemma~\ref{generic2D}.
Now we pass to a subsequence of
the $M_n$ so that $\epsilon_n\le {\rm min}(\hat\epsilon,\epsilon)$
for all $n$, and also so that Proposition~\ref{bdrybehavior} holds
for all $n$.

\section{The global result}

At this point we have fixed all the constants appearing in the last
two sections in such a way that the conclusions of all the results from these two sections
hold. This gives us
complete control over the local nature of the $(M_n,g_n)$, in the
sense that we have complete control over the
$B_{g'_n(x)}(x,1/2)\subset B_{g'_n(x)}(x,1)$ for every $x\in M_n$.
The purpose of this section is to globalize these results
establishing Theorem~\ref{1Dthm'}.

\begin{defn}
Given a ball $B_{\lambda g_n}(x,r)$ we say that $r$ is its {\em rescaled radius} and $r/\sqrt{\lambda}$ is its {\em unrescaled radius}.
\end{defn}

\subsection{Regions of $M_n$ close to open intervals}

We begin the globalization by studying the generic
``$1$-dimensional'' regions of the $M_n$. We shall construct an open
set $U_{n,1}'\subset M_n$ which is a first approximation to the
submanifold $V_{n,1}\subset M_n$ referred to in
Theorem~\ref{1Dthm'}. The manifold $U'_{n,1}$ will be an open submanifold.
Eventually, when we  define $V_{n,1}$ as
follows: For each end of $U_{n,1}'$ either we truncate it by
removing an open collar neighborhood of that end, or we
extend it  by adding compact external collar neighborhood. Also, we
shall add disjoint compact $3$-balls to $U'_{n,1}$ in creating $V_{n,1}$.

\begin{prop}\label{Un1}
Consider the subset $X_{n,1}\subset M_n$ consisting of all points
$x_n\in M_n$ for which $B_{g'_n(x_n)}(x_n,1)$  is within $\hat\epsilon$
of a standard $1$-dimensional ball $J$ and the distance from $x_n$
to the endpoints (if any) of $J$ is at least $1/50$. Then there is
an open subset $U_{n,1}\subset M_n$ containing $X_{n,1}$ with the
following properties:
\begin{enumerate}
\item Each component of $U_{n,1}$ is either a $2$-torus bundle over
the circle, or diffeomorphic to a product of either $S^2$ or $T^2$
with an open interval.
\item For each non-compact end ${\mathcal E}$ of $U_{n,1}$ there is a point $x_{\mathcal E}\in
X_{n,1}$, and an interval product structure centered at $x_{\mathcal E}$ with $\epsilon'$-control,
$p_{x_{\mathcal E}}\colon U(x_{\mathcal E})\to J(x_{\mathcal E})$,
where $J(x_{\mathcal E})$ is an interval of length $\ge 1/100$,
with the property that $U(x_{\mathcal E})$  is a neighborhood of the end ${\mathcal E}$.
\item For distinct non-compact ends ${\mathcal E}$ and ${\mathcal
E}'$ the neighborhoods $U(x_{\mathcal E})$ and $U(x_{\mathcal E}')$
are disjoint.
\item For each point $x\in X_{n,1}$, the ball  $B_{g'_n(x)}(x,1/400)$
is contained in $U_{n,1}$.
\end{enumerate}
\end{prop}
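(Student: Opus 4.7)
The plan is to construct $U_{n,1}$ by gluing together the local interval product structures provided by Lemma~\ref{1Dlem}. By our choice of constants and Assumption 3, for every $x \in X_{n,1}$ we have $\epsilon_n \le \hat\epsilon < \epsilon_2(\epsilon')$, so Lemma~\ref{1Dlem} applies at each such $x$ and produces an open set $U(x)$ with $B_{g'_n(x)}(x,1/50) \subset U(x) \subset B_{g'_n(x)}(x,1/25)$, a projection $p_x \colon U(x) \to J(x)$ to an open interval of length $3/50$ realizing $U(x)$ as a topological product, a smooth unit vector field $\chi_x$ almost aligned with the $J(x)$-direction, and the assertion that the fibers of $p_x$ are homeomorphic to $S^2$ or $T^2$.

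The key step is to show that these local product structures are mutually compatible on overlaps. Suppose $x, x' \in X_{n,1}$ with $U(x) \cap U(x') \neq \emptyset$. By Lemma~\ref{rholem} the scaling factors $\rho_n(x)$ and $\rho_n(x')$ differ by at most a factor of $2$. Property (4) of Lemma~\ref{1Dlem} asserts that for any $z \in B_{g'_n(x)}(x,1/50)$ and any $w$ at $g'_n(x)$-distance at least $1/400$ from $z$, the level set of $d(w,\cdot)$ through $z$ is contained in $U(x)$ and is isotopic there to a fiber of $p_x$. Applying this to $w$ chosen inside the $U(x')$ chart (noting that we may freely choose $w$ on either side of $z$ along the nearby $1$-dimensional limit), we conclude that each fiber of $p_{x'}$ that meets $U(x)$ is isotopic within $U(x)$ to a fiber of $p_x$. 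In particular the fibers of $p_x$ and $p_{x'}$ have the same topological type on the overlap, and by conclusion (3) of Lemma~\ref{1Dlem} the vector fields $\chi_x, \chi_{x'}$ agree at each point up to sign. Setting $\widetilde U_{n,1}$ equal to the union of all $U(x)$ for $x \in X_{n,1}$, on each connected component the unsigned $\chi_x$ patch to a (possibly non-orientable) line field transverse to a foliation by closed surfaces of a single topological type. Conclusion (4) of the proposition is then immediate, since $B_{g'_n(x)}(x,1/400) \subset U(x) \subset \widetilde U_{n,1}$ for every $x \in X_{n,1}$.

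Consequently, each connected component of $\widetilde U_{n,1}$ is the total space of a fiber bundle with fiber $S^2$ or $T^2$ over either an open interval or a circle; passing if necessary to an orientation double cover of the base, the possibilities are $S^2 \times I$, $T^2 \times I$, a closed $S^2$-bundle over $S^1$, or a closed $T^2$-bundle over $S^1$. A closed component of $\widetilde U_{n,1}$ is simultaneously open and closed in $M_n$, hence is a full connected component of $M_n$; by Assumption 1 such a component admits no metric of non-negative sectional curvature. This excludes $S^2$-bundles over $S^1$, as the orientable total space $S^2 \times S^1$ is non-negatively curved and the non-orientable one is ruled out by orientability of $M_n$. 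This establishes conclusion (1). For conclusions (2) and (3), at each non-compact end $\mathcal{E}$ of $\widetilde U_{n,1}$ we choose $x_{\mathcal{E}} \in X_{n,1}$ lying sufficiently deep in $\mathcal{E}$ that $U(x_{\mathcal{E}})$ is contained in a neighborhood of $\mathcal{E}$ disjoint from every other end; this is possible because distinct ends of the product-over-interval components are separated by a compact subset of the component. Shrinking $\widetilde U_{n,1}$ near each end so as to discard any exterior portion (without losing points of $X_{n,1}$) yields $U_{n,1}$ with the $U(x_{\mathcal{E}})$ pairwise disjoint end-neighborhoods; the projection $p_{x_{\mathcal{E}}}\colon U(x_{\mathcal{E}}) \to J(x_{\mathcal{E}})$ is the required interval product structure with $\epsilon'$-control, of base length $3/50 > 1/100$.

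The main obstacle is the overlap compatibility step: verifying that the locally defined fiberings agree up to isotopy and so patch to a globally defined foliation. The essential input is Lemma~\ref{1Dlem}(4), which provides an intrinsic description of fibers as level sets of distance functions from sufficiently distant points—a description accessible from either chart and thus canonical on the overlap. Once this is in hand, the rigidity of closed surface bundles plus Assumption 1 (to rule out $S^2$-bundles over $S^1$) forces the topology claimed in conclusion (1), and the end-neighborhood conclusions (2)--(3) are a routine shrinking argument.
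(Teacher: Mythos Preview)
Your strategy---take the union $\widetilde U_{n,1}=\bigcup_{x\in X_{n,1}}U(x)$ and argue directly that it is a surface bundle---has a genuine gap at the gluing step. You assert that on each component ``the unsigned $\chi_x$ patch to a line field transverse to a foliation by closed surfaces,'' but neither half of this is established. Lemma~\ref{1Dlem}(4) tells you only that a $p_{x'}$-fiber through a point $z\in B_{g'_n(x)}(x,1/50)$ is \emph{isotopic} in $U(x)$ to a $p_x$-fiber; isotopic surfaces coming from different charts need not assemble into a single foliation of the union. Similarly, conclusion (3) of Lemma~\ref{1Dlem} gives $\chi_x$ and $\chi_{x'}$ agreeing up to sign only \emph{to within $\epsilon'$}, so they do not literally patch to a line field. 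Without an actual foliation or bundle structure on $\widetilde U_{n,1}$, conclusion (1) does not follow, and the subsequent end-neighborhood discussion (including the unexplained ``shrinking'' step, which as stated could violate conclusion (4)) has no foundation.

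The paper proceeds instead by a finite inductive construction: one maintains an open set $V$ already known to satisfy (1)--(3), and if some $x\in X_{n,1}$ fails (4) one adjoins a truncated chart $W=p_x^{-1}(J'')$ for a subinterval $J''$ of length $1/25$. The role of Lemma~\ref{1Dlem}(4) is then not to manufacture a global foliation but to certify that some level surface near each overlapping end of $W$ lies in $V$ and is isotopic there to a fiber of the \emph{existing} bundle structure on $V$; that is precisely what one needs to see that $V\cup W$ is again a bundle over an interval or circle. The paper also handles the edge case your argument does not cover: if an end of $W$ meets $V$ but no level surface near that end is contained in $V$, one enlarges to $\hat W=p_x^{-1}(-3/100,1/50)$ and uses the factor-of-$2$ comparison $\rho_n(x')\le 2\rho_n(x)$ to guarantee a common fiber. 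Finiteness of the procedure follows from compactness of $M_n$ together with the positive lower bound on $\rho_n$.
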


\begin{proof}
Suppose that we have an open set $V\subset M_n$ satisfying the first
three conclusions and a point $x\in X_{n,1}$ for which the
fourth conclusion does not hold.  Consider the open set $U(x)$ and
projection $p_x\colon U(x)\to J'$ associated to $x$ by
Lemma~\ref{1Dlem}. Recall that $J'$ is of length $3/50$.
Let $J''\subset J'$ be an open interval of length
$1/25$ centered at $p_x(x)$ and let $W=p^{-1}(J')$. If $W$ is
disjoint from $V$ we replace $V$ by $V\cup W$. The result satisfies
the first three conclusions. Suppose that $W$ meets a component
of $V$. Since $W$ is close to an open interval, it has two ends.
Suppose that there is a level set of $p$ near each end of $W$ that
is contained in $V$. Then by Lemma~\ref{1Dlem} these level sets are
isotopic to the fibers of the product structure of these components
of $V$, and hence the union of $V\cup W$ still satisfies the first
three conclusions of this proposition. Similarly, if one end of
$W$ has such a level surface and the other end is disjoint from $V$,
then the union $V\cup W$ satisfies the first three conclusions
of this proposition.

Now suppose that one of the ends of $W$ (say the end corresponding
to $-1/50)$) meets $V$ but no level surface near this end of $W$ is
contained in $V$. Let $x'$ be the point as in the second item for
the corresponding end of $V$. Then $\rho_n(x')$ and $\rho_n(x)$ are
within a multiplicative factor of $2$ of each other. We extend
the end of $W$ by taking $\hat W=p^{-1}(-3/100,1/50)$. According to
Lemma~\ref{1Dlem} and the fact that $\rho_n(x')\le 2\rho_n(x)$,
there is a level surface of $\hat W$ near the negative end of $\hat
W$ that is contained in $V$. Arguing as before shows that in all
cases we can extend $V$ by taking its union with a set of the form
$W$ in such a way that the first three conclusions still hold
but also so that $B_{g'(x)}(x,1/400)\subset V\cup W$. Since $M_n$ is
compact and $\rho_n$ is bounded below by a positive constant on
$M_n$, it follows easily that after a finite number of such
extensions we have arrived at a situation where the all four
conclusions hold.
\end{proof}

We fix $U_{n,1}\subset M_n$ as in the above proposition. For each
non-compact end ${\mathcal E}$ of $U_{n,1}$ we fix a point
$x_{\mathcal E}$ producing the neighborhood $U(x_{\mathcal E})$ of
the end together with a projection mapping $p_{x_{\mathcal E}}\colon
U(x_{\mathcal E})\to J(x_{\mathcal E})$ as in Conclusion 2 of
Proposition~\ref{Un1}. In particular, $p_{x_{\mathcal E}}$ is an
$\epsilon'$-approximation  and $J(x_{\mathcal E})$ is an interval of
length at least $1/50$ centered at $p_{x_{\mathcal E}}(x_{\mathcal
E})$.

\subsection{Balls close to half-open intervals}

Now suppose that $x\in M_n\setminus U_{n,1}$ is in the closure of
$U_{n,1}$. Since $\epsilon_n<\hat\epsilon$ for all $n$, there are
three possibilities for $B_{g'_n(x)}(x,1)$:
\begin{enumerate}
\item It is within $\hat\epsilon$ of a standard $2$-dimension ball
$\bar B$ of area $\ge a$. \item It is within $\hat\epsilon$ of a
$2$-dimensional standard ball $\bar B$ of area $<a$.
\item It is within $\hat\epsilon$ of a standard $1$-dimensional ball
$J$.\end{enumerate}

In the second case, if follows from Lemma~\ref{smarea1D} and the
fact $a<a_2(\beta/2)$ that  $\bar B$ is within $\beta/2$ of a
standard $1$-dimensional ball $J$ and since $\hat\epsilon<\beta/2$,
if follows that $B_{g'_n(x)}(x,1)$ is within $\beta$ of $J$. Thus,
in the second and third cases, $B_{g'_n(x)}(x,1)$ is within $\beta$
of a standard $1$-dimensional ball $J$. Suppose that either Case
2 or 3 above holds, and consider two further possibilities: (i)
 If the endpoints of
$J$, if any, are at distance at least $1/25$ from $x$, and (ii)
there is an endpoint of $J$ within distance $1/25$ of $x$. The first
possibility contradicts the fact that $x\not\in U_{n,1}$: Since we
have chosen $\beta<\epsilon/2$ it follows from the definition if
(i) holds then that $x\in X_{n,1}\subset U_{n,1}$. This contradicts
our assumption that $x\not\in U_{n,1}$. Thus, we conclude that $x$
is within distance $1/25$ of an endpoint of $J$. Since
$\beta<\tilde\beta(\epsilon)$ from Lemma~\ref{endpts}, the
conclusions of that lemma hold for $B= B_{g'_n(x)}(x,1)$. That is to
say: there exists an open subset $V=V(x)\subset B$ containing
$B_{g'_n(x)}(x,1/2)$ such that one of the following hold:
\begin{enumerate}
\item $V$ is diffeomorphic to $T^2\times [0,1)$ and contains a
boundary component of $M_n$.
\item $V$ is an open $3$-ball or is homeomorphic to (a) the complement
of a closed $3$-ball in $\Ar P^3$,
(b) an open solid torus, or (c) an open twisted $I$-bundle
over the Klein bottle.
\item  There is a constant $\lambda>\epsilon^{-1}$
and a point $x'\in M_n$ such that $B'=B_{\lambda g'_n(x)}(x',1)$
contains $B_{\lambda g'_n(x)}(x,1/2)$ and is within $\epsilon$ of a
$2$-dimensional Alexandrov ball of radius $1$ and area at least
$a_0$.
\end{enumerate}
Furthermore, in all cases the end of $V$ is contained in $U_{n,1}$
and there is a level set $\Sigma$ for the distance function from $x$
with $\Sigma\subset U_{n,1}$ and with $\Sigma$ isotopic in $V\cap
U_{n,1}$ to a fiber of the fibration structure of $U_{n,1}$ (of
course $\Sigma$ is either a $2$-sphere or a $2$-torus). Thus, in the
first three cases the union of $V$ with the component of $U_{n,1}$
containing the end of $V$ is diffeomorphic to $V$. In the last case,
the distance function from $x'$ has no critical points in
$B\setminus \overline{B_{\lambda g'_n(x')}(x',1)}$, and in
particular, the region between $B'$ and $\Sigma$ is a topological
product.

 This completes the proof of the following:

\begin{lem}\label{dichotomy}
Let $A$ be a connected component of $M_n\setminus U_{n,1}$. Then one
of the following holds.
\begin{enumerate}
\item  For every point $x\in A$ the ball $B_{g'_n(x)}(x,1)$ is within
$\epsilon_n$ of a standard $2$-dimensional ball $\bar B$ of area
$\ge a$.
\item There is a point $x\in A$ such that $A\subset
B_{g'_n(x)}(x,1/2)$ is diffeomorphic to $T^2\times [0,1]$,
a solid torus, a twisted
$I$-bundle over the Klein bottle, a closed $3$-ball, or $\Ar
P^3\setminus B^3$. In all these cases the metric sphere
$S_{g'_n(x)}(x,1)$ is either a $2$-torus or a $2$-sphere and is isotopic in $U_{n,1}$ to a fiber in its
fibration structure.
\item There is $\lambda>\epsilon^{-1}$ and a point $x\in A$ such
that:
\begin{enumerate}
\item[(a)]  $A\subset  B_{g'_n(x)}(x,1/2)$.
\item[(b)] $\bar B_{g'_n(x)}(x,1)\setminus  B_{g'_n(x)}(x,1/\lambda)$ is a
topological product with an interval and the distance function from
$x$ is the projection mapping of this product structure.
\item[(c)] $B_{g'_n(x)}(x,9/10)\setminus B_{g'_n(x)}(x,1/10)\subset
U_{n,1}$ and  $S_{g'_n(x)}(x,1/2)$ is isotopic in
$U_{n,1}$ to a fiber of its fibration structure.
\item[(d)] $B_{\lambda g'_n(x)}(x,1)$ is within $\epsilon$ of a standard
$2$-dimensional ball of area $\ge a$.
\end{enumerate}
\end{enumerate}
\end{lem}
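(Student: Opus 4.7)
My plan is a case analysis on the points of $A$ together with a clopen/connectedness argument to globalize. First, if every $x\in A$ satisfies $B_{g'_n(x)}(x,1)$ being within $\epsilon_n\le\hat\epsilon$ of a standard $2$-dimensional ball of area $\ge a$, Case 1 of the lemma is immediate. Otherwise, by Corollary~\ref{epsilonn} we may choose $x\in A$ whose unit ball is within $\hat\epsilon$ either of a $2$-dimensional standard ball of area $<a$ (which, by Lemma~\ref{smarea1D} applied with $a<a_2(\beta/2)$, is within $\beta/2$ of a $1$-dimensional ball) or of a $1$-dimensional ball directly. In both cases $B_{g'_n(x)}(x,1)$ is within $\beta<\tilde\beta(\epsilon)$ of a standard $1$-dimensional ball $J$, and since $x\notin U_{n,1}\supset X_{n,1}$ the distance condition in Proposition~\ref{Un1} forces $x$ to lie within $1/25$ of an endpoint of $J$.

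With $x$ so positioned, Lemma~\ref{endpts} applies and produces an open neighborhood $V(x)\supset B_{g'_n(x)}(x,1/2)$ that either is diffeomorphic to one of the listed model pieces ($T^2\times[0,1)$, an open solid torus, an open twisted $I$-bundle over the Klein bottle, an open $3$-ball, or $\Ar P^3\setminus \overline{B^3}$) -- yielding Case 2 -- or satisfies: there is $\lambda\ge\epsilon^{-1}\rho_n(x)^{-2}$ for which $B_{\lambda g_n}(x,1)$ is within $\epsilon$ of a standard $2$-dimensional ball of area $\ge a_1\ge a$, and $V(x)\setminus B_{g'_n(x)}(x,1/\lambda)$ is a topological product with projection $d(x,\cdot)$ -- yielding Case 3. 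In Case 3 the annular region $B_{g'_n(x)}(x,9/10)\setminus B_{g'_n(x)}(x,1/10)$ sits inside $U_{n,1}$: every $y$ there has, by Lemma~\ref{rholem}, $\rho_n(y)$ comparable to $\rho_n(x)$, and the local $1$-dimensional profile inherited from the product structure places $y$ at distance at least $1/50$ from the endpoint of the nearby $J$, hence $y\in X_{n,1}\subset U_{n,1}$; the isotopy of $S_{g'_n(x)}(x,1/2)$ to a fiber of the fibration structure of $U_{n,1}$ then follows from item $(4)$ of Lemma~\ref{1Dlem}.

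The last step is the globalization $A\subset V(x)$. The frontier $\partial V(x)\subset M_n$ is contained in $U_{n,1}$ by the paragraph above in Case 3, and by the analogous identification of the end of $V(x)$ with a neighborhood in $U_{n,1}$ furnished by Lemma~\ref{endpts} in Case 2. Hence $A\cap V(x)$ is open in $A$, and its closure in $M_n$ is contained in $\overline{V(x)}\cap A=V(x)\cap A$ because $\partial V(x)$ is disjoint from $A$; thus $A\cap V(x)$ is also closed in $A$. Connectedness of $A$, together with $x\in A\cap V(x)$, then forces $A=A\cap V(x)\subset V(x)$, yielding the inclusions in Cases 2 and 3. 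The main obstacle is exactly this globalization: pinning down that $\partial V(x)$ really sits in $U_{n,1}$ is the subtlest part, since it demands that the generic $1$-dimensional structure coming from Proposition~\ref{Un1} exhausts the end of $V(x)$ -- a fact that rests on the product structure of Lemma~\ref{endpts} and the control over $\rho_n$ coming from Lemma~\ref{rholem}.
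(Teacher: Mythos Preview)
Your proposal is correct and follows essentially the same approach as the paper. The paper's argument (the paragraphs immediately preceding the lemma) chooses $x$ to be a point of $A$ lying in the closure of $U_{n,1}$ rather than an arbitrary point of $A$ failing the Case~1 hypothesis, and it leaves the containment $A\subset V(x)$ implicit; your clopen/connectedness argument makes this globalization explicit, which is a useful clarification. Otherwise the case analysis, the appeal to Lemma~\ref{smarea1D} to reduce the small-area 2D case to the 1D case, the use of $x\notin X_{n,1}$ to force proximity to an endpoint, and the invocation of Lemma~\ref{endpts} are the same as in the paper.
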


\begin{defn}
We call a component of $M_n\setminus U_{n,1}$ satisfying the
Conclusion 3 above {\em a component which is close to an interval but which
expands to be close to a standard $2$-dimensional ball} and we call
a component satisfying Conclusion 1 above {\em a component close to
a $2$-dimensional space}. For a component which is close to an interval but which
expands to be close to a standard $2$-dimensional ball, we use the
metric $\lambda g'_n(x)$ as described in Part 3 of the previous lemma on
the entire component. For components close to a $2$-dimensional
space we use varying metrics $g'_n(x_i)$ as described in Part 1 of
the previous lemma.
\end{defn}

At this point we add to $U_{n,1}$ every component of $M_n\setminus
U_{n,1}$ of Type 2 in Lemma~\ref{dichotomy}. Call the result $U'_{n,1}$. Some of the
components of $U'_{n,1}$ are components of $U_{n,1}$. Let us
consider the others. Fix  a component $C'$ of $U'_{n,1}$ that is not
a component of $U_{n,1}$. It contains a component $C$ of $U_{n,1}$.
The component $C$ has at most two ends and $C'$ is the union of $C$
with either one or two neighboring components  of $M_n\setminus C$
(neighboring in the sense that their  closures meet $C$). Let $A$ be
a component of $M_n\setminus C$  neighboring $C$ that is contained
in $C'$. Then $A$ is diffeomorphic to one of the four manifolds list
in Conclusion 2 of Lemma~\ref{dichotomy}. Furthermore, any fiber of
the fibration structure on $C$, it divides $C'$ into two components
one of which contains $A$ and is the union of $A$ and a collar
neighborhood of the boundary of $A$. Hence, this closed
complementary component is homeomorphic to $A$. If $C'=C\cup A$,
then it follows that $C'$ is homeomorphic to ${\rm int}\, A$ and
hence to the interior of one of the four manifolds listed in
Conclusion 2 of Lemma~\ref{dichotomy}. If $C'=C\cup A_1\cup A_2$ for
distinct components $A_1$ and $A_2$ of $M_n\setminus U_{n,1}$, then
the same argument shows that $C'$ is the union of
two manifolds homeomorphic to one of the four listed in Conclusion 2
of Lemma~\ref{dichotomy} along their common boundary. Any such
manifold is a component of $M_n$, and every one of its prime factors
is geometric. (The manifold is prime unless it is $S^3$ or $\Ar
P^3\#\Ar P^3$.)

Invoking the hypothesis that no closed component of $M_n$ admits a
Riemannian metric of non-negative sectional curvature, allows us to
conclude the following:

\begin{prop}\label{componenttype}
The open subset $U'_{n,1}\subset M_n$ constructed in the previous
paragraph satisfies the following:
\begin{enumerate}
\item Every component of $U'_{n,1}$ is diffeomorphic to one of the following:
\begin{enumerate}
\item[(a)] a $T^2$-bundle or an $S^2$-bundle over either the circle
or an  interval with the fiber(s) over the endpoint(s)
being boundary component(s) of $M_n$,
\item[(b)] a twisted $I$-bundle over the
Klein bottle whose boundary is a boundary component of $M_n$,
\item[(c)] an open solid torus, an open twisted $I$-bundle over the
Klein bottle, an open $3$-ball, the complement of a closed $3$-ball
in $\Ar P^3$, or
\item[(d)] the union of two twisted $I$-bundles over the Klein bottle along their common boundary.
\end{enumerate}
\item Each  non-compact end of $U'_{n,1}$ has a neighborhood that is
a component of $U_{n,1}$, and hence there is a non-compact end
${\mathcal E}$ of $U_{n,1}$ such that $U(x_{\mathcal E})$ is a
neighborhood of this end.
\item Every complementary component $M_n\setminus U'_{n,1}$
either is a component close to a $2$-dimensional space  or is a component
which is close to an interval but which expands to be close to a standard
$2$-dimensional ball.
\end{enumerate}
\end{prop}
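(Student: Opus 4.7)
The plan is to combine the classification of components of $U_{n,1}$ from Proposition~\ref{Un1} with the trichotomy of Lemma~\ref{dichotomy} for components of $M_n \setminus U_{n,1}$, and to invoke Assumption~1 to rule out any resulting closed component admitting a metric of non-negative sectional curvature.

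I would first fix a component $C'$ of $U'_{n,1}$ and write $C' = C \cup A_1 \cup \cdots \cup A_k$, where $C$ is the unique component of $U_{n,1}$ contained in $C'$ and the $A_j$, $k \in \{0,1,2\}$, are those Type~2 components of $M_n\setminus U_{n,1}$ (in the sense of Lemma~\ref{dichotomy}) whose closures meet an end of $C$. By Proposition~\ref{Un1}, $C$ is either a $T^2$-bundle over $S^1$, or $S^2\times(0,1)$, or $T^2\times(0,1)$. If $C$ is a $T^2$-bundle over $S^1$ then $C' = C$ and $C'$ falls in case~(a). Otherwise $C$ has two ends, each either capped by some $A_j$ or remaining an end of $C'$. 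The discussion immediately preceding the proposition observes that because each $A_j$ meets $C$ in a collar of $\partial A_j$, the gluing $C \cup A_j$ is homeomorphic to $\mathrm{int}\, A_j$, so the diffeomorphism type of $C'$ is completely determined by the type of $C$ and the types of the attached caps.

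Next I would enumerate possibilities. For $C \cong S^2 \times (0,1)$ each end accepts only $S^2$-bounded caps ($3$-balls or $\Ar P^3 \setminus B^3$): one cap gives an open $3$-ball or an open $\Ar P^3 \setminus B^3$ (case~(c)); two caps give $S^3$, $\Ar P^3$, or $\Ar P^3 \# \Ar P^3$, all admitting non-negative curvature and excluded by Assumption~1. For $C \cong T^2 \times (0,1)$ the caps are $T^2$-bounded: a copy of $T^2 \times [0,1)$ containing a component of $\partial M_n$, a solid torus, or a twisted $I$-bundle over the Klein bottle. The possible gluings produce open solid tori and open twisted $I$-bundles (case~(c)); compact $T^2$-bundles over an interval and twisted $I$-bundles over the Klein bottle with boundary in $\partial M_n$ (cases~(a) and~(b)); the union of two twisted $I$-bundles over the Klein bottle along their common boundary (case~(d)); and closed manifolds obtained by gluing two solid tori, or a solid torus and a twisted $I$-bundle over the Klein bottle, along a torus, which are lens spaces, prism manifolds, or $S^2 \times S^1$---all admitting non-negative sectional curvature and hence ruled out by Assumption~1.

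Conclusions~(2) and~(3) are then immediate. For~(2), the caps $A_j$ are compact, so every non-compact end of $C'$ is a non-compact end of $C$ and comes equipped with a point $x_{\mathcal E}$ and neighborhood $U(x_{\mathcal E})$ by Proposition~\ref{Un1}(2),(3). For~(3), $U'_{n,1}$ was obtained from $U_{n,1}$ by adjoining exactly the Type~2 complementary components of Lemma~\ref{dichotomy}, so the components of $M_n \setminus U'_{n,1}$ are precisely the Type~1 and Type~3 components, which are by definition close to a $2$-dimensional space and close to an interval but expanding to be close to a $2$-dimensional ball, respectively. The main obstacle is making the case analysis in the third paragraph exhaustive and checking that Assumption~1 eliminates every unwanted closed diffeomorphism type; this is essentially combinatorial bookkeeping, but requires recognizing each resulting closed $3$-manifold as a lens space, a prism manifold, an $S^3$-quotient, or a flat manifold, each of which admits a metric of non-negative curvature.
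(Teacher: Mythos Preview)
Your approach is essentially the same as the paper's: both decompose each component $C'$ of $U'_{n,1}$ as a component $C$ of $U_{n,1}$ together with at most two Type~2 caps from Lemma~\ref{dichotomy}, enumerate the resulting diffeomorphism types, and invoke Assumption~1 to exclude the closed manifolds of non-negative curvature that arise. Your case analysis is in fact more explicit than the paper's brief discussion preceding the proposition.

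There is one case your enumeration omits. When $C \cong T^2 \times (0,1)$ is capped on one end by a $T^2 \times [0,1]$ containing a component of $\partial M_n$ and on the other end by a solid torus, the resulting $C'$ is a compact solid torus whose boundary lies in $\partial M_n$. This manifold is not on the list (a)--(d), is not closed, and so is not excluded by Assumption~1. It is ruled out instead by the hypothesis of Theorem~\ref{7.4} that each boundary component of $M_n$ is an incompressible torus: such a $C'$ would be an entire connected component of $M_n$ whose single boundary torus is compressible. You should add this one-line observation to make the enumeration complete; the paper's own discussion glosses over this point as well.
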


\subsection{A decomposition into compact sets}\label{compactext}

For each non-compact end ${\mathcal E}$ of $U'_{n,1}$ we have the
neighborhood $U(x_{\mathcal E})$ that fibers over an interval
$J(x_{\mathcal E})$ by an $\epsilon'$-approximation. Denote by $J^+(x_{\mathcal E})$
the closed half-ray with endpoint the central point of $J(x_{\mathcal E})$ whose preimage is also
a neighborhood in $U'_{n,1}$ of the end
${\mathcal E}$. Let
$\Sigma({\mathcal E})\subset U(x_{\mathcal E})$ be the fiber over
the central point of $J(x_{\mathcal E})$, and set $U^+({\mathcal E})$
equal to the preimage of  $J^+(x_{\mathcal E})$. We form the union
$W_{n,2}$ of $M_n\setminus U'_{n,1}$ with the $U^+({\mathcal E})$ as
${\mathcal E}$ varies over the ends of $U'_{n,1}$. Then $W_{n,2}$ is
compact and $\partial W_{n,2}$ is a disjoint union of the
$\Sigma({\mathcal E})$ as ${\mathcal E}$ varies over the non-compact
ends of $U'_{n,1}$. In particular $\partial W_{n,2}$ consists of a
disjoint union of $2$-tori and $2$-spheres. We set $W_{n,1}$ equal
to the complement in $M_n$ of the interior of $W_{n,2}$. It is the
compact manifold with boundary obtained from $U'_{n,1}$ by deleting the collar neighborhoods $U^+(x_{\mathcal E})$ as ${\mathcal E}$ ranges over the non-compact
ends of $U'_{n,1}$. Its boundary consists of the
boundary of $W_{n,2}$ disjoint union the boundary of $M_n$. Recall that the latter is a
disjoint union of incompressible tori.

Let us recap our progress to date.

\begin{prop}\label{ct1} We have a decomposition $M_n=W_{n,1}\cup W_{n,2}$.
The intersection $W_{n,1}\cap W_{n,2}$ is the boundary of $W_{n,2}$ and it is the union of the
boundary components of $W_{n,1}$ that are not boundary components of $M_n$.
For each end ${\mathcal E}$
of $U'_{n,1}$ there is one component of $W_{n,1}\cap W_{n,2}$. This component is denoted $\Sigma({\mathcal E})$.
Each of these components is either a $2$-torus or a $2$-sphere, and each $\Sigma({\mathcal E})$ is  a fiber
of the projection mapping $p_{x_{\mathcal E}}\colon U(x_{\mathcal E})\to J(x_{\mathcal E})$.
Each component of $W_{n,1}$ is homeomorphic to one of the following:
\begin{enumerate}
\item a $T^2$-bundle  over either a circle or a
compact interval,
\item an $S^2$-bundle over a compact interval,
\item a twisted $I$-bundle over the Klein bottle,
\item a compact solid torus,
\item a compact $3$-ball,
\item the complement in $\Ar P^3$ of an open $3$-ball, or
\item the union of two twisted $I$-bundles over the Klein bottle
along their common boundary.
\end{enumerate}
\end{prop}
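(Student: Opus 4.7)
The plan is to prove Proposition~\ref{ct1} as a direct verification using the construction of Subsection~\ref{compactext} together with the classification results that precede it. First I would establish the set-theoretic claims. By construction, $W_{n,2}$ is the union of the closed subset $M_n\setminus U'_{n,1}$ with the collar neighborhoods $U^+({\mathcal E})$, where $U^+({\mathcal E})=p_{x_{\mathcal E}}^{-1}(J^+(x_{\mathcal E}))$ is the preimage in $U(x_{\mathcal E})$ of a closed half-ray $J^+(x_{\mathcal E})$ with endpoint the central point of $J(x_{\mathcal E})$. Each $U^+({\mathcal E})$ is closed in $M_n$, so $W_{n,2}$ is closed, and $W_{n,1}=M_n\setminus \mathrm{int}\, W_{n,2}$ is closed with $W_{n,1}\cup W_{n,2}=M_n$ and $W_{n,1}\cap W_{n,2}=\partial W_{n,2}$.

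Next I would identify $\partial W_{n,2}$. The frontier of $M_n\setminus U'_{n,1}$ in $M_n$ coincides with the frontier of $U'_{n,1}$, so the only way $\partial W_{n,2}$ can be nonempty is at the interface between $M_n\setminus U'_{n,1}$ and the attached collars, or at boundary components of $M_n$ that lie in $W_{n,2}$. By Conclusion 2 of Proposition~\ref{Un1}, $U(x_{\mathcal E})$ is an entire neighborhood of the end ${\mathcal E}$, and $J^+(x_{\mathcal E})$ was chosen so that $p_{x_{\mathcal E}}^{-1}(J^+(x_{\mathcal E}))$ is also a neighborhood of ${\mathcal E}$; gluing $U^+({\mathcal E})$ onto $M_n\setminus U'_{n,1}$ along this neighborhood therefore creates exactly one new frontier component, namely the fiber $\Sigma({\mathcal E})=p_{x_{\mathcal E}}^{-1}(\text{central point})$. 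By Conclusion 3 of Proposition~\ref{Un1} the neighborhoods $U(x_{\mathcal E})$ for distinct ends are disjoint, so the $\Sigma({\mathcal E})$ are pairwise disjoint. Moreover boundary components of $M_n$ are assigned to $W_{n,1}$ rather than $W_{n,2}$ (they lie in the components of $U'_{n,1}$ of types (a) and (b) of Proposition~\ref{componenttype}, hence in $W_{n,1}$), so $\partial W_{n,2}=\coprod_{\mathcal E}\Sigma({\mathcal E})$. Finally, since $x_{\mathcal E}\in X_{n,1}$, Lemma~\ref{1Dlem} applies to $p_{x_{\mathcal E}}$ and its fibers are either $2$-spheres or $2$-tori; in particular each $\Sigma({\mathcal E})$ is of one of these two topological types.

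Lastly, I would read off the list of possible diffeomorphism types for the components of $W_{n,1}$ from Proposition~\ref{componenttype}. A component of $W_{n,1}$ is obtained from a component $C'$ of $U'_{n,1}$ by removing the open collars $U^+({\mathcal E})\setminus\Sigma({\mathcal E})$ over the non-compact ends of $C'$. Truncating case (a) gives a $T^2$-bundle or $S^2$-bundle over the circle or a compact interval (the $S^2$-bundle over $S^1$ possibility is excluded since it would be a closed component of $M_n$ admitting a metric of nonnegative sectional curvature, contradicting Assumption 1); case (b) is already compact, yielding a twisted $I$-bundle over the Klein bottle; truncating the four types in case (c) yields respectively a compact solid torus, a compact twisted $I$-bundle over the Klein bottle, a compact $3$-ball, and the complement of an open $3$-ball in $\Ar P^3$; and case (d) is already closed, yielding the union of two twisted $I$-bundles over the Klein bottle along their common boundary. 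These are exactly the seven types listed.

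The only real subtlety, as opposed to bookkeeping, is the second step: one must rule out extra frontier components appearing where $M_n\setminus U'_{n,1}$ meets the attached collars. That is handled by the fact, built into the choice of $U(x_{\mathcal E})$ in Proposition~\ref{Un1}, that $U(x_{\mathcal E})$ already contains a full neighborhood of the end ${\mathcal E}$ and carries an $\epsilon'$-controlled interval product structure whose fibers are closed surfaces; this is what guarantees that $\Sigma({\mathcal E})$ is a \emph{single} embedded closed surface and not, say, a surface with spurious boundary.
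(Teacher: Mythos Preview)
Your proposal is correct and follows essentially the same approach as the paper, which simply says the result is immediate from Proposition~\ref{componenttype} and the construction; you have just spelled out the bookkeeping in more detail, including the exclusion of the $S^2$-bundle over $S^1$ via Assumption~1.
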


\begin{proof}
This is immediate from Proposition~\ref{componenttype} and the
construction.
\end{proof}

For each component $A$ of $M_n\setminus U'_{n,1}$, we set $\widehat
A$ equal to the union of $A$ together with $U^+({\mathcal E})$ as
${\mathcal E}$ varies over the ends of $U'_{n,1}$ whose closures
meet $A$. Then $W_{n,2}$ is the disjoint union of the $\widehat A$
as $A$ ranges over the components of $M_n\setminus U'_{n,1}$.

\subsection{Covering of $W_{n,2}$}

Next we must study the structure of components $\widehat A$ of
$W_{n,2}$. The crucial ingredient is to construct chains of
$\epsilon'$-solid cylinder neighborhoods that together with
$U_{2,{\rm generic}}$, the $\epsilon'$-solid torus neighborhoods near interior cone
points, and the  $3$-balls near a $2$-dimensional
boundary corner cover $W_{n,2}$.

The following two results are immediate consequences of
Theorem~\ref{summary} and the results of Section~\ref{4}

\begin{lem}\label{Acover2}
Suppose that $A$ is a  component of $M_n\setminus U'_{n,1}$ that is
close to a $2$-dimensional space. Then $\widehat A$ is contained in
the union of:
\begin{enumerate}
\item $U_{2,{\rm generic}}$,
\item the open subset $U_{\rm cyl}$ consisting of all points that are in the center of
cores of $\epsilon'$-solid cylinders $\nu_{\xi^2}(\widetilde \gamma)$
at scale $s_1$ near flat $2$-dimensional boundary points,
\end{enumerate}
and a finite number of
\begin{enumerate}
\item[3.] $\epsilon'$-solid tori $B(z_i)=B_{g'_n(z_i)}(z_i,r(z_i)/4)$, for $i=1,\ldots, N_t$, near interior cone points, and
\item[4.]  $3$-balls $B(x_i)=B_{g'_n(x_i)}(x_i,r(x_i)/4)$, for $i=1,\ldots N_c$, near  $2$-dimensional
boundary corners.
\end{enumerate}
\end{lem}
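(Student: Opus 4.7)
The plan is to apply Theorem~\ref{summary} pointwise to the nearby $2$-dimensional Alexandrov balls and translate each of the four types of regions in that theorem into a corresponding $3$-dimensional structure via the lifting results of Section~\ref{4}, then use compactness of $\widehat A$ to pass to a finite subcover for the non-open pieces.

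First I would show that every point $y \in \widehat A$ lies in some $B_{\lambda g_n}(x, 1/2)$ where $B_{\lambda g_n}(x, 1)$ is within $\hat\epsilon$ of a standard $2$-dimensional ball $B(\bar x, 1)$ of area $\geq a$. For $y \in A$ this is the very definition of a component close to a $2$-dimensional space; one simply takes $x = y$ and $\lambda = \rho_n(y)^{-2}$. Applying Theorem~\ref{summary} to $B(\bar x, 1)$ places the image $\bar y$ of $y$ in $B(\bar x, 1/2)$ into exactly one of the four types. Each type converts into a $3$-dimensional conclusion about $y$: Type~1 (interior $\mu$-flat on all scales $\leq s_2$) together with Lemma~\ref{generic2D} and our choice $\hat\epsilon<\hat\epsilon_0(\mu,s_2)$ produces an $S^1$-product neighborhood with $\epsilon$-control at $y$, so $y\in U_{2,{\rm generic}}$; Type~2 (center of $\nu_{\xi^2}(\gamma)$ for a $\xi$-approximation $\gamma$ to $\partial X$ on scale $s_1$ with $\mu$-control) together with Proposition~\ref{flatboundary} produces an $\epsilon'$-solid cylinder $\nu_\xi(\widetilde\gamma)$ whose $\nu_{\xi^2}$-core contains $y$, so $y\in U_{\rm cyl}$; Type~3 (interior $\mu$-good of angle $\leq 2\pi-\delta$ on some scale $r_0 \leq r' \leq 10^{-3}$) together with Proposition~\ref{intcone1} produces an $\epsilon'$-solid torus neighborhood $B_{g'_n(z)}(z, r'/4)$ near an interior cone point that contains $y$; and Type~4 (boundary $\mu$-good of angle $\leq \pi-\delta$) together with Proposition~\ref{corners} and Corollary~\ref{corners1} produces a $3$-ball $B_{g'_n(x_c)}(x_c, r'/4)$ near a $2$-dimensional boundary corner that contains $y$.

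Once this pointwise classification is in place, the four families cover all of $\widehat A$ that arises from interior points of $A$. Since $U_{2,{\rm generic}}$ and $U_{\rm cyl}$ are open by construction and each cone-point and boundary-corner ball is open, these data form an open cover of the compact set $\widehat A$; choosing a finite subcover fixes the finite collections $\{B(z_i)\}_{i=1}^{N_t}$ and $\{B(x_i)\}_{i=1}^{N_c}$ demanded by the lemma.

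The main obstacle I expect is the treatment of the collars $U^+(\mathcal{E})\subset \widehat A$, whose points have $B_{g'_n(\cdot)}(\cdot,1)$ close to a $1$-dimensional ball rather than to a $2$-dimensional ball of area $\geq a$, so that Theorem~\ref{summary} cannot be applied directly at such points. The way I would handle this is to pick $x'\in A$ near the corresponding component of $\partial A$ and observe that, because $\rho_n$ varies only by a factor of $2$ on $B(x',\rho_n(x')/2)$ and because $U^+(\mathcal{E})$ has length comparable to $\rho_n(x_\mathcal{E})$, any $y$ in $U^+(\mathcal{E})$ sufficiently close to $A$ lies in $B_{\lambda g_n}(x',1/2)$ for an appropriate $\lambda$ within a bounded multiple of $\rho_n(x')^{-2}$; this reduces the collar case to the interior case already treated (typically landing $y$ in $U_{2,{\rm generic}}$, since the collar region is already foliated by $T^2$-fibers of the $\epsilon'$-controlled interval product structure from Lemma~\ref{1Dlem}). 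The technical check is verifying that the constants fixed in the determination-of-constants subsection are consistent with this inflation of base point—essentially that $\hat\epsilon$ is small enough to tolerate the shift from $x$ to $x'$.
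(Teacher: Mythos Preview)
Your approach is correct and is precisely what the paper intends: the paper states only that this lemma is an immediate consequence of Theorem~\ref{summary} and the results of Section~\ref{4}, and your pointwise translation of the four cases of Theorem~\ref{summary} into the four $3$-dimensional neighborhoods via Lemma~\ref{generic2D}, Proposition~\ref{flatboundary}, Proposition~\ref{intcone1}, and Proposition~\ref{corners}/Corollary~\ref{corners1} (followed by compactness of $\widehat A$) is exactly the intended argument.

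Two small corrections to your treatment of the collar $U^+(\mathcal E)$. First, your fix is right but the justification should be geometric rather than about $\hat\epsilon$: since $U(x_{\mathcal E})\subset B_{g'_n(x_{\mathcal E})}(x_{\mathcal E},1/25)$ by Lemma~\ref{1Dlem}, the entire collar $U^+(\mathcal E)$ has $g_n$-diameter at most $2\rho_n(x_{\mathcal E})/25$, and since any $x'\in\partial A$ adjacent to $\mathcal E$ has $\rho_n(x')$ within a factor of $2$ of $\rho_n(x_{\mathcal E})$ by Lemma~\ref{rholem}, the whole collar sits inside $B_{g'_n(x')}(x',1/2)$ for a single such $x'\in A$; you then apply Theorem~\ref{summary} at $x'$ as you say. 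Second, your parenthetical remark that collar points ``typically land in $U_{2,{\rm generic}}$ since the collar region is already foliated by $T^2$-fibers'' is not a valid inference (the fibers can also be $S^2$, and in any case the $3$-dimensional product structure on the collar does not determine which of the four $2$-dimensional types the image $\bar y$ falls into); simply drop this remark, since all you need is that $\bar y$ lands in \emph{one} of the four types.
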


For each $3$-ball $B(x)$  near a $2$-dimensional
boundary corner,
we denote by $\widehat B(x)$ the ball
$B_{g'_n(x)}(x,7r(x)/8)$ and call it the {\em expanded version of the ball}.

\begin{addendum}\label{1add}
We can choose the neighborhoods in Lemma~\ref{Acover2} so that in addition to the fact that they cover $\widehat A$ we have:
\begin{enumerate}
\item[(i)] The  $3$-balls $B(x_i), 1\le i\le N_c$, are disjoint.
\item[(ii)] Any $3$-ball $B_{g'_n(x)}(x,r(x)/4)$ near a $2$-dimensional
boundary corner that meets one of the $B(x_i), 1\le i\le N_c$, is contained in one of the  expanded versions $\widehat B(x_i)$.
\item[(iii)] The $\epsilon'$-solid tori $B(z_i)), 1\le i\le N_t$, are disjoint.
\item[(iv)] Each $\epsilon'$-solid torus in (3) of Lemma~\ref{Acover2} is disjoint from each  $3$-ball in (4) of Lemma~\ref{Acover2} and is also disjoint from $U_{\rm cyl}$.
\end{enumerate}
Also, every $3$-ball and every $\epsilon'$-solid torus
 in the collection above meets  $\widehat A$.
\end{addendum}

\begin{proof}

We consider collections of  disjoint $3$-balls near $2$-dimensional boundary corners.
For any member of such a collection we have its unrescaled radius $\rho_n(x_i)r(x_i)$. If there is a disjoint
 $3$-ball near a $2$-dimensional boundary corner then we add it to the collection. If there is a $3$-ball $B$ near a $2$-dimensional boundary corner that meets one of the $3$-balls in the collection
but is not contained in any of the expanded version of the $3$-balls in the collection, then we
add $B$ to the collection and remove all the $3$-balls in the collection that meet it.
Since all these balls are contained in $\widehat B$ which itself is a union of $B$, $U_{2,{\rm gen}}$ and $U_{\rm cyl}$,
removing these balls does not destroy the fact that we have a covering of $\widehat A$.
In this case, it follows that the unrescaled radius of $B$ is at least $1.1$ times the unrescaled radius of each ball that we deleted. Since $\widehat A$ is compact and thus the unrescaled radius of any ball is bounded above and below by positive constants, starting with the empty collection we can only repeat these two operations only finitely many times. When we can no longer repeat the operation we arrive at a collection of $3$-balls near $2$-dimensional boundary corners satisfying the first two conditions.

Consider the collection of $\epsilon'$-solid tori. If two of these meet, say
$B(z_i)$ and $B(z_j)$, then without loss of generality we can suppose that
$$\rho_n(z_i)r(z_i)\ge \rho_n(z_j)r(z_j).$$ This implies that $B(z_j))\subset B_{g'_n(z_i)}(z_i,r(z_i)/2)$. Since
$$B_{g'_n(z_i)}(z_i,r(z_i)/2)\setminus B_{g'_n(z_i)}(z_i,r(z_i)/4)$$ is contained in $U_{2,{\rm gen}}$, we can remove $B(z_j)$ from the collection and still have a covering.
This allows us to make the $\epsilon'$-solid tori disjoint.

Now suppose that an $\epsilon'$-solid torus in the collection meets one of the $3$-balls near a $2$-dimensional boundary
corner in our collection. If the unrescaled radius of the $3$-ball is no greater than  the unrescaled radius of the $\epsilon'$-solid torus, then the $3$-ball is contained in the expanded version of the $3$-solid torus, where, as before, in the expanded version we replace the radius $r(x_i)/4$ by $7r(x_i)/8$. But this is impossible, since the generic circle fibers in the $3$-ball are isotopic in the solid torus to generic fibers of its Seifert fibration, but the circle fibers in the $3$-ball are homotopically trivial in the $3$-ball whereas the circle fibers in the solid torus are homotopically non-trivial in the solid torus. If the unrescaled radius of the  $3$-ball is greater than that of the solid torus, then the solid torus is contained in the expanded $3$-ball and hence, by the same reasoning as above, it can be removed from the collection without destroying the fact that the collection covers $\widehat A$. This shows that we can make the $\epsilon'$-solid tori disjoint from the $3$-balls near boundary corner points.

Lastly, suppose that an $\epsilon'$-solid torus in the collection meets $U_{\rm cyl}$. Then there is an $\epsilon'$-solid cylinder that is contained in the expanded version of the $\epsilon'$-solid torus. This is a contradiction for it implies that the
generic fiber of the Seifert fibration on the expanded $\epsilon'$-solid torus is homotopically trivial in the $\epsilon'$-solid cylinder contained in the expanded $\epsilon'$-solid torus.

This completes the proof that there is a covering satisfying the listed properties. From this collection we simply remove any of the sets in the collection that does not meet $\widehat A$.
\end{proof}

We have analogues of these results for components which are near intervals but
that expand to be near $2$-dimensional components.

\begin{lem}\label{Acover1}
Suppose that $A$ is a component of $M_n\setminus U'_{n,1}$ that is
close to an interval but that expands to be close to a standard
$2$-dimensional ball. Then for some $\lambda>\epsilon^{-1}$, using
the metric $\lambda g'_n(x)$ for an appropriate $x\in A$, we have that  $\widehat A$ is contained in
the union of
\begin{enumerate}
\item $U_{2,{\rm generic}}$,
\item the open subset $U_{\rm cyl}$ of points in the center of cores of $\epsilon'$-solid cylinders $\nu_{\xi^2}(\widetilde \gamma)$ at scale $s_1$ near flat
$2$-dimensional boundary points,
\end{enumerate}
and a finite union of
\begin{enumerate}
\item[3.] $\epsilon'$-solid tori $B(z_i)=B_{\lambda g_n}(z_i,r(z_i)/4)$ near interior cone points, and
\item[4.] $3$-ball components near $2$-dimensional boundary
corners $$B(x_i)=B_{\lambda g_n}(x_i,r(x_i)/4).$$
\end{enumerate}
\end{lem}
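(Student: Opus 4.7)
The proof plan is to parallel the argument of Lemma~\ref{Acover2} after a global rescaling. First, extract $x\in A$ and $\lambda>\epsilon^{-1}$ from Case~3 of Lemma~\ref{dichotomy} and work throughout with $\tilde g = \lambda g'_n(x)$. By Case~3(d), the ball $B_{\tilde g}(x,1)$ is within $\hat\epsilon$ of a standard $2$-dimensional Alexandrov ball $\bar B = B(\bar x,1)$ of area at least $a$. Our choice of constants makes Theorem~\ref{summary} applicable to $\bar B$, yielding a cover of $B_{\tilde g}(x,1/2)$ by open subsets of the four types listed there.

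Next, I would lift each $2$-dimensional piece of this cover to a $3$-dimensional neighborhood in $M_n$ using Section~\ref{4}. An interior $\mu$-flat point $\bar y\in\bar B$ on scale $\le s_2$ corresponds, via Lemma~\ref{generic2D}, to an $S^1$-product neighborhood with $\epsilon$-control centered at the corresponding point $y\in M_n$; such $y$ therefore belongs to $U_{2,{\rm generic}}$. A center of a $\xi$-approximation $\gamma$ to $\partial\bar B$ at scale $s_1$ with $\mu$-control lifts via Proposition~\ref{flatboundary} to an $\epsilon'$-solid cylinder $\nu_{\xi^2}(\widetilde\gamma)$, contributing to $U_{\rm cyl}$. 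An interior $\mu$-good ball of angle bounded away from $2\pi$ lifts via Proposition~\ref{intcone1} to an $\epsilon'$-solid torus $B_{\tilde g}(z,r(z)/4)$ near an interior cone point. A boundary $\mu$-good ball of angle bounded away from $\pi$ lifts via Corollary~\ref{corners1} to a $3$-ball $B_{\tilde g}(y,r(y)/4)$ near a $2$-dimensional boundary corner.

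It then remains to cover the portion of $\widehat A$ lying outside $B_{\tilde g}(x,1/2)$. By Case~3(a)--(c) of Lemma~\ref{dichotomy}, the connectivity of $A$ together with $A\cap U_{n,1}=\emptyset$ and the inclusion $B_{g'_n(x)}(x,9/10)\setminus B_{g'_n(x)}(x,1/10)\subset U_{n,1}$ forces $A\subset B_{g'_n(x)}(x,1/10)$, while the collar extensions $U^+(\mathcal{E})$ constituting $\widehat A\setminus A$ lie in the topological product region of Case~3(b) on which the distance from $x$ is the projection onto the interval factor. After rescaling by $\lambda$, this product region becomes a uniformly collapsing cylinder: its cross-section fibers have $\tilde g$-diameter controlled by the collapsing scale that is forced both by the $2$-dimensional Gromov-Hausdorff convergence of $B_{\tilde g}(x,1)$ and by the fact that $S_{g'_n(x)}(x,1/2)$ is isotopic in $U_{n,1}$ to a fiber of its fibration structure. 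Consequently, for any $y\in\widehat A$ with $d_{\tilde g}(x,y)>1/2$, the unit ball $B_{\tilde g}(y,1)$ is within $\hat\epsilon$ of a standard $2$-dimensional ball which is interior $\mu$-flat at the image of $y$; Lemma~\ref{generic2D} then places $y$ in $U_{2,{\rm generic}}$.

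The hard part will be precisely this last step: justifying that the \emph{topological} product structure of Case~3(b), combined with the volume-collapsing hypothesis and the area lower bound of Case~3(d), becomes after the global rescaling by $\lambda$ a genuine metric collapse to a $2$-dimensional Alexandrov ball out to the full $\tilde g$-extent of $\widehat A$, uniformly enough that Lemma~\ref{generic2D} applies to every point outside $B_{\tilde g}(x,1/2)$. Once that uniform Gromov-Hausdorff control is established, the union of the lifted neighborhoods from the two previous paragraphs covers $\widehat A$ by sets of the four types asserted, completing the proof.
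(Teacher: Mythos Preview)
Your Steps 1--3 are exactly the paper's approach: it declares both Lemma~\ref{Acover2} and Lemma~\ref{Acover1} to be ``immediate consequences of Theorem~\ref{summary} and the results of Section~\ref{4}'' and gives no further proof. So extracting $x,\lambda$ from Case~3 of Lemma~\ref{dichotomy}, applying Theorem~\ref{summary} to the $2$-dimensional model of $B_{\tilde g}(x,1)$, and lifting the four cover types via Lemma~\ref{generic2D}, Propositions~\ref{flatboundary} and~\ref{intcone1}, and Corollary~\ref{corners1} is precisely what is intended.

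Your Step~4, however, contains a real error. You claim that for $y\in\widehat A$ with $d_{\tilde g}(x,y)>1/2$ the ball $B_{\tilde g}(y,1)$ is within $\hat\epsilon$ of a $2$-dimensional ball that is \emph{interior $\mu$-flat} at the image of $y$, hence $y\in U_{2,{\rm generic}}$. Neither the premise nor the conclusion is justified. The cross-section fibers are not small in $\tilde g$: the metric sphere $S_{\tilde g}(x,1/2)$ is Gromov--Hausdorff close to the metric circle (or arc) $S(\bar x,1/2)\subset\bar B$, which has diameter of order one, so the $\tilde g$-diameter of the fiber is of order one, not collapsed. Worse, when the fiber is $S^2$, no point of the collar can lie in $U_{2,{\rm generic}}$ at all: an $S^1$-product neighborhood has infinite cyclic fundamental group, which cannot sit inside a simply connected $S^2\times I$ region. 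So covering the collar by $U_{2,{\rm generic}}$ alone is impossible in that case; one genuinely needs $U_{\rm cyl}$ as well (compare the $S^2$ case of Lemma~\ref{prodinterface}).

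The paper does not spell out a direct cover of the collar either. In the subsequent argument (the subsection on components near $1$-dimensional spaces that expand, and the redefinition of the splitting surfaces) the paper locates a torus $T(\mathcal E)$ or sphere $S(\mathcal E)$ inside the annulus $C_{\tilde g}(x,1/2,1)$ via Lemma~\ref{prodinterface} and then transfers the entire product region between that surface and $\Sigma(\mathcal E)$ to $W'_{n,1}$. What is actually used downstream is therefore the covering of $B_{\tilde g}(x,1)$, which your Steps 1--3 provide. If you want to prove the literal statement that all of $\widehat A$ is covered, the right route for the collar is not interior $\mu$-flatness but rather the argument of Lemma~\ref{prodinterface}: points in the collar lie either in $U_{2,{\rm generic}}$ or in $\epsilon'$-solid cylinders near the ``boundary'' of the collapsed model, and in the $S^2$-fibered case both types are needed.
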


The same argument as in the proof of Addendum~\ref{1add} shows:

\begin{addendum}\label{2add}
We can choose the neighborhoods in Lemma~\ref{Acover1} so that in addition to the fact that they cover $\widehat A$ we have
\begin{enumerate}
\item[(i)] The  $3$-balls $B(x_i)$ in (4) of Lemma~\ref{Acover1} are disjoint.
\item[(ii)] Any $3$-ball $B(x)$ that meets one of the $B(x_i)$ is contained in one of the expanded versions $\widehat B(x_i)$.
\item[(iii)] The $\epsilon'$-solid tori $B(z_i)$ in (3) of Lemma~\ref{Acover1} are disjoint.
\item[(iv)] Each $\epsilon'$-solid torus in (3) of Lemma~\ref{Acover1} is disjoint from each  $3$-ball in (4) of Lemma~\ref{Acover1} and from $U_{\rm cyl}$.
\end{enumerate}
Also, every $3$-ball in (4) of Lemma~\ref{Acover1} and every $\epsilon'$-solid torus in (3)
of Lemma~\ref{Acover1} meets  $\widehat A$.
\end{addendum}

\subsubsection{The circle fibration}

Above we constructed a covering of
the compact submanifold $W_{n,2}$  by:
\begin{enumerate}
\item $U_{2,{\rm gen}}$,
\item $U_{\rm cyl}$,
\item a finite number of $\epsilon'$-solid tori $B(z_i)$, and
\item a finite number of $3$-balls $B(x_i)$ near $2$-dimensional boundary corners.
\end{enumerate}
The constants $r(z_i)$ and $r(x_i)$ are bounded between $r_0$ and $10^{-3}$.
The constants $\lambda_j$ or $\lambda_i$ multiplying the metric $g_n$ is the same for all the balls and all the solid tori that meet a given component of $W_{n,2}$ that is near to an interval but expands to be near to a $2$-dimensional component.
For balls and solid tori that meet any other component  of $W_{n,2}$ the constant multiplying the metric $g_n$ is the value of $\rho_n^{-2}$ at the central point.

Proposition~\ref{s1glue} and Lemma~\ref{generic2D} imply that there is
an open subset $U_2'\subset U_{2,{\rm gen}}$ that contains the
complement in $W_{n,2}$ of  the union of the open sets in (3) and (4) in Lemmas~\ref{Acover2} and~\ref{Acover1} and the complement of $U_{\rm cyl}$.
Furthermore, $U'_2$ admits circle fibration whose fibers are
$\epsilon'$-orthogonal to the $S^1$-product neighborhoods centered
at each point of $U'_2\subset U_{2,{\rm gen}}$ and also the fibers have length less than $C\hat\epsilon$ where $C$ is the universal constant from Lemma~\ref{generic2D}.

\subsubsection{Removing $3$-balls from  $W_{n,2}$}

At this point we modify $W_{n,1}$ and $W_{n,2}$ by removing the
$3$-balls $B(x_i)$ near $2$-dimensional boundary corners  from $W_{n,2}$ and adding their closures as
disjoint components of $W_{n,1}$. The results are denoted $W'_{n,1}$
and $W'_{n,2}$, respectively. A slight modification of Proposition~\ref{ct1} holds
for these subsets.

\begin{cor} The conclusions of Proposition~\ref{ct1} hold
for the compact submanifolds $W'_{n,1}$ and $W'_{n,2}$ with one
change. The intersection $W'_{n,1}\cap W'_{n,2}$ is equal to the
disjoint union of $W_{n,1}\cap W_{n,2}$ and the metric spheres
$S(x_i,r(x_i)/4)$ that are the frontiers of the $B(x_i)$ are topological $2$-spheres.
\end{cor}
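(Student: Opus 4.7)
The plan is to verify the three pieces of Proposition~\ref{ct1} for the modified pair, noting that outside the balls $B(x_i)$ nothing has changed. First I would observe that removing a compact $3$-ball from the interior of one component of a $3$-manifold and re-assigning it to a neighboring region only affects the local picture near each $B(x_i)$; away from $\bigcup_i B(x_i)$ we still have the decomposition $W_{n,1}\cup W_{n,2}=M_n$ of Proposition~\ref{ct1} intact, with the same collection of interface surfaces $\Sigma(\mathcal E)$ and the same classification of the components of $W_{n,1}$.

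Next I would show that the new components added to $W_{n,1}$ are compact $3$-balls and thus fall into case (5) of Proposition~\ref{ct1}. Each $B(x_i)=B_{\lambda g_n}(x_i,r(x_i)/4)$ is a $3$-ball near a $2$-dimensional boundary corner in the sense of Proposition~\ref{corners} and Corollary~\ref{corners1}. Those results assert that $B_{\lambda g_n}(x_i,7r(x_i)/8)$ is a topological $3$-ball, that $d(x_i,\cdot)$ is $(1-\beta)$-strongly regular on the annular region $B_{\lambda g_n}(x_i,7r(x_i)/8)\setminus B_{\lambda g_n}(x_i,r(x_i)/8)$, and that for every $b\in(r(x_i)/8,7r(x_i)/8)$ the level set $d(x_i,\cdot)^{-1}(b)$ is a topologically locally flat $2$-sphere bounding a topological $3$-ball. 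Taking $b=r(x_i)/4$ shows that the closure of $B(x_i)$ is a compact $3$-ball with boundary a topologically locally flat $2$-sphere, which is exactly a component of type (5) in the list of Proposition~\ref{ct1}.

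For the interface, I would check that the $B(x_i)$ all sit in the interior of $W_{n,2}$, so that passing them into $W_{n,1}$ contributes precisely the new boundary spheres $S_{\lambda g_n}(x_i,r(x_i)/4)$ and disturbs none of the old interface surfaces. By construction the $B(x_i)$ are centered at points lying in the complement of $U'_{n,1}$, and the collar $U^+(\mathcal E)$ of each end $\mathcal E$ lies inside a product region $U(x_{\mathcal E})$ associated to a $1$-dimensional limit, whereas each $B(x_i)$ meets only $2$-dimensional-limit regions (it sits inside a component of $M_n\setminus U'_{n,1}$ close to a $2$-dimensional space, or inside the rescaled portion of an expanding interval-type component). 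Moreover, by Addendum~\ref{1add} (or Addendum~\ref{2add}) the $B(x_i)$ are pairwise disjoint. Hence $W'_{n,2}=W_{n,2}\setminus\bigcup_i{\rm int}\,B(x_i)$ is still a compact codimension-$0$ submanifold, its new boundary is $\partial W_{n,2}\sqcup\bigsqcup_i S_{\lambda g_n}(x_i,r(x_i)/4)$, and
$$W'_{n,1}\cap W'_{n,2}=(W_{n,1}\cap W_{n,2})\sqcup\bigsqcup_i S_{\lambda g_n}(x_i,r(x_i)/4),$$
where each added sphere is topologically locally flat by the results cited above.

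The main (very mild) obstacle is to justify that the $B(x_i)$ truly lie in the interior of $W_{n,2}$, i.e.\ do not accidentally reach out to touch any $\Sigma(\mathcal E)$ or any $\partial M_n$. This follows from the choices of constants: the neighborhoods $U^+(\mathcal E)$ are constructed from the $1$-dimensional-limit covering $U_{n,1}$, their boundaries $\Sigma(\mathcal E)$ are fibers of the interval product structures produced by Lemma~\ref{1Dlem}, and by Proposition~\ref{bdrybehavior} any component of $\partial M_n$ lies well inside one of the $T^2$-bundle-over-interval components of $W_{n,1}$. Since $B(x_i)\subset\widehat A$ for a component $A$ of $M_n\setminus U'_{n,1}$, and since such $A$ meets each $U^+(\mathcal E)$ only through the collar extension already folded into $W_{n,2}$, the $B(x_i)$ are disjoint from $\partial W_{n,2}$ and from $\partial M_n$. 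Combining the three steps gives all the conclusions of Proposition~\ref{ct1} for $W'_{n,1}$ and $W'_{n,2}$ with the single amended description of the intersection.
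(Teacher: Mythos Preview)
The paper gives no explicit proof of this corollary; it is stated as an immediate consequence of the construction (removing the disjoint closed $3$-balls $\bar B(x_i)$ from $W_{n,2}$ and adding them as new components of $W_{n,1}$) together with Proposition~\ref{corners} and Corollary~\ref{corners1}. Your proposal correctly unpacks exactly this implicit argument: you verify that nothing changes away from the $B(x_i)$, that the new components are compact $3$-balls (type~(5) in Proposition~\ref{ct1}) with topologically locally flat $2$-sphere frontiers, and that the $B(x_i)$ sit in the interior of $W_{n,2}$ so the old interface $W_{n,1}\cap W_{n,2}$ is undisturbed.

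One small comment on your last paragraph: your justification that the $B(x_i)$ miss $\partial W_{n,2}$ is slightly indirect. A cleaner way to see it is the scale separation built into the constants: each $B(x_i)$ has rescaled radius $r(x_i)/4\le 10^{-3}/4$ in a metric for which the ambient unit ball is $\hat\epsilon$-close to a standard $2$-dimensional ball of area $\ge a$, whereas each $\Sigma(\mathcal E)$ lies in the interval product neighborhood $U(x_{\mathcal E})$ (in the metric $g'_n(x_{\mathcal E})$) where the geometry is $\epsilon'$-close to $1$-dimensional. These two situations are incompatible on overlapping regions by the choice of constants in Section~4.7, so the $B(x_i)$ cannot reach $\Sigma(\mathcal E)$; disjointness from $\partial M_n$ follows directly from Proposition~\ref{bdrybehavior} as you note. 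With this clarification your argument is complete and matches the paper's intended (but unwritten) reasoning.
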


By doing this we have gained one thing: namely, $W'_{n,2}$ is
covered by  $U'_2$, $U_{\rm cyl}$, and the $\epsilon'$-solid tori
$B(z_i)$. The $\epsilon'$-solid tori do not meet $U_{\rm cyl}$,
 and $\epsilon'$-solid tori are pairwise disjoint.

\subsection{Deforming the splitting surfaces}

At this point we have constructed a decomposition $M_n=W'_{n,1}\cup
W'_{n,2}$ where the $W'_{n,i}$ are compact submanifolds meeting along
their boundary. We must modify $W'_{n,1}$ and $W'_{n,2}$ in order to
form $V_{n,1}$ and $V_{n,2}$ as required by Theorem~\ref{1Dthm'}.
There are two steps in this modification. The first involves changing the
boundary surfaces between $W'_{n,1}$ and $W'_{n,2}$ slightly so that they are
well-positioned with respect to the circle fibration on $U'_2$. It is carried out in this
section. The other involves removing $\epsilon'$-solid tori and chains of $\epsilon'$-solid cylinders from $W'_{n,2}$. It is carried out in the two subsections after this one.

\subsubsection{Interface with the $3$-balls near boundary corners}

Let us  deform the boundaries of the
$B(x_i)$  slightly until they
are the (overlapping) union of an annulus in $U'_2$ saturated under the $S^1$-fibration and
two disks, each disk spanning an $\epsilon'$-solid cylinder.

\begin{lem}\label{ballinterface}
Let $B(x_i)$ be a $3$-ball near a $2$-dimensional boundary corner contained in the collection
given in Lemma~\ref{Acover2} or Lemma~\ref{Acover1}. Then
there are disjoint $\epsilon'$-solid cylinders $\nu(1)=\nu(1,i)$ and $\nu(2)=\nu(2,i)$ of scale $s_1$, of length $s_1/2$
and of width $\xi s_1/2$ such that the centers of their cores meet the metric sphere $S(x_i,r(x_i)/4)$.
Furthermore, there is a $2$-sphere
$S(x_i)\subset B(x_i,3r(x_i)/8)\setminus \bar B(x_i,r(x_i)/4)$ that is
the (overlapping) union of an annulus $A(x_i)$ and two $2$-disks, $D_1$ spanning  $\bar \nu(1)$, and  $D_2$ spanning in $\bar \nu(2)$. These satisfy the following:
\begin{enumerate}
\item The annulus $A(x_i)$ is contained in $U'_2$ and is saturated
under  $S^1$-fibration on $U'_2$.
\item One of the boundary circles of $A(x_i)$ is contained in  $\nu(1)$ and the other is contained in $\nu(2)$.
\item For $j=1,2$, the intersection of $D_j$ with $A(x_i)$ is an
annulus which is a collar neighborhood in $D_j$ of $\partial D_j$
and is a collar neighborhood in $A(x_i)$ of one of its boundary
components.
\item Every point of $S(x_i)\setminus A(x_i)$ is contained in the sub-cylinder of one of the  $\nu(i)$  of width $\xi s_1/8$.
    \item For $j=1,2$ and for any $t\in [3\xi s_1/8,\xi s_1/2]$ the intersection of $S(x_i)$ with $h_{\widetilde\gamma_j}^{-1}(t)$ is a circle separating the ends of the level set $h_{\widetilde\gamma_j}^{-1}(t)$ in $\bar\nu_j$.
\end{enumerate}
 The $2$-sphere $S(x_i)$ is isotopic in $B(x_i,3r(x_i)/8)$ to the metric sphere
$S(x_i,r(x_i)/4)$. In particular, the $2$-sphere $S(x_i)$ separates
the metric sphere  $S(x_i,3r(x_i)/8)$ from the metric sphere $S(x_i,r(x_i)/4)$. As a result it bounds a closed
topological $3$-ball $\bar B'(x_i)\subset B(x_i,3r(x_i)/8)$.
\end{lem}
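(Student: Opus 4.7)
The plan is to start with a nearby concentric metric sphere, which by Corollary~\ref{corners'} already decomposes as two spanning disks (one in each cylinder neighborhood) plus an annulus lying in the generic region $U_{2,\mathrm{gen}}$, and then to deform the annular piece to be saturated under the $S^1$-fibration on $U'_2$, leaving the disk pieces essentially unchanged.

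First, by Corollary~\ref{corners1} applied to the nearby $2$-dimensional boundary-good ball $B(\bar x,1)$, there exist two geodesics $\gamma_1,\gamma_2$ in $X$ that are $\xi$-approximations to $\partial X$ on scale $s_1$ and whose $\nu_{\xi^2}$-neighborhoods capture all of the metric sphere that is not centered in an $S^1$-product neighborhood. Choose $\widetilde\gamma_1,\widetilde\gamma_2\subset M_n$ within $\hat\epsilon$ of $\gamma_1,\gamma_2$, and take $\nu(j)=\bar\nu_{\xi/2}(\widetilde\gamma_j)$ rescaled so that $\ell(\widetilde\gamma_j)$ equals $s_1/2$, with the midpoints chosen so that the centers of the cores cross the metric sphere $S(x_i,r(x_i)/4)$. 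Lemma~\ref{nuint} applied to the two $\gamma_j$ shows the $\nu(j)$ are disjoint.

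Second, fix $b=5r(x_i)/16\in(r(x_i)/4,3r(x_i)/8)$ and let $L_b=d(x_i,\cdot)^{-1}(b)$. By Proposition~\ref{corners} the level $L_b$ is a topologically locally flat $2$-sphere, and by Corollary~\ref{corners'} each intersection $L_b\cap\bar\nu_{\xi/2}(\widetilde\gamma_j)$ is a spanning disk $D_j^0$ whose boundary circle lies on the side $h_{\widetilde\gamma_j}^{-1}(\xi s_1/4)$. Its complement $A_0=L_b\setminus(\mathrm{int}\,D_1^0\cup\mathrm{int}\,D_2^0)$ is an annulus lying in the generic region, so by the choice of constants it sits inside $U'_2$ and every point of it is in the center of an $S^1$-product neighborhood with $\epsilon$-control. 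By Lemma~\ref{flatboundary1}(3), the intersection circles $C_j=\partial D_j^0$ are within $\epsilon'$ of being orthogonal to the horizontal spaces; hence $A_0$ itself is nearly orthogonal to the horizontal spaces of every $S^1$-product structure at each of its points.

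Third (the main step), we saturate $A_0$. Because $A_0$ is almost-orthogonal to the horizontal spaces and the $S^1$-fibration on $U'_2$ supplied by Proposition~\ref{s1glue} has fibers also almost orthogonal to those horizontal spaces, the composition of the inclusion $A_0\hookrightarrow U'_2$ with projection to the base is an almost-isometric local embedding on the nose, hence a covering map onto its image; since $A_0$ is an annulus with two boundary circles mapping to the base without folding, the image is an embedded arc and the projection is an embedding. Thus $A_0$ is already transverse to the fibration and meets each fiber in its component of $U'_2$ in at most one point. Inside $\nu_\xi^0(\widetilde\gamma_j)\cap U'_2$, each circle $C_j$ is isotopic (through a short almost-vertical isotopy) to a nearby fiber $F_j$ of the $S^1$-fibration on $U'_2$, staying in $\nu_{\xi/2}(\widetilde\gamma_j)\setminus\nu_{\xi^2/2}(\widetilde\gamma_j)$; this is possible because Lemma~\ref{flatboundary1}(3) gives the required isotopy model. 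Extending these boundary isotopies of $A_0$ to an ambient isotopy of $A_0$ rel a small collar, we produce a new annulus $A(x_i)\subset U'_2$ with boundary $F_1\sqcup F_2$. The $S^1$-fibers sweep out a saturated annulus from $F_1$ to $F_2$; applying the projection-is-embedding argument to this saturated candidate annulus identifies it with (the isotoped) $A(x_i)$, so $A(x_i)$ is in fact $S^1$-saturated. Simultaneously, we extend each disk $D_j^0$ across the collar used in the boundary isotopy to a new disk $D_j\subset\bar\nu_{\xi/2}(\widetilde\gamma_j)$ spanning $\nu(j)$, with $\partial D_j=F_j$, and chosen so that $D_j\cap A(x_i)$ is an annular collar of $F_j$ in both $D_j$ and $A(x_i)$. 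Conclusions (4) and (5) hold by construction, because outside the collar used in the isotopy $D_j$ coincides with $L_b\cap\nu_{\xi/2}(\widetilde\gamma_j)$, and the level-set/width transversality statement in Corollary~\ref{corners'} carries over.

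Finally, because the deformation from $L_b$ to $S(x_i)=D_1\cup A(x_i)\cup D_2$ is an ambient isotopy supported in $B(x_i,3r(x_i)/8)\setminus\bar B(x_i,r(x_i)/4)$, and $L_b$ is isotopic in $B(x_i,3r(x_i)/8)$ to the metric sphere $S(x_i,r(x_i)/4)$ via the $(1-\beta)$-strongly regular level-set foliation of Proposition~\ref{corners}, the constructed $S(x_i)$ is isotopic to $S(x_i,r(x_i)/4)$, separates $S(x_i,3r(x_i)/8)$ from $S(x_i,r(x_i)/4)$, and hence by Alexander's theorem bounds a topological $3$-ball $\bar B'(x_i)\subset B(x_i,3r(x_i)/8)$. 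The main obstacle is the saturation step, where one must verify that the projection of $A_0$ to the base of the $S^1$-fibration is a genuine embedding onto an arc; this is where the quantitative almost-orthogonality provided by Lemma~\ref{flatboundary1}, together with the annulus topology of $A_0$, is essential.
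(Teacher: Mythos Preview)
Your overall strategy---start from a metric level sphere $L_b$, which by Corollary~\ref{corners'} already decomposes as two spanning disks plus an annulus $A_0$ in the generic region, and then isotope $A_0$ to be saturated---is natural, but the saturation step as written has a real gap.

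You claim that the projection of $A_0$ to the base of the $S^1$-fibration is an ``almost-isometric local embedding'' and hence a covering map onto an embedded arc. This is not correct. The annulus $A_0$ is part of a level set of $d(x_i,\cdot)$; since the fibers are very short, the gradient of $d(x_i,\cdot)$ is nearly horizontal, so the tangent space of $A_0$ \emph{nearly contains} the vertical (fiber) direction. Thus $A_0$ is nearly \emph{tangent} to the fibers, not transverse to them, and the projection to the base has derivative of approximate rank~$1$, not~$2$: it is a near-collapse, not a local embedding. Consequently the conclusion ``$A_0$ meets each fiber in at most one point'' does not follow; a fiber through a point of $A_0$ could a priori meet $A_0$ in several points, or wander in and out. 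The subsequent isotopy of the boundary circles $C_j$ to fibers $F_j$ is fine locally, but without knowing that $A_0$ is globally a graph over a saturated annulus you cannot conclude that the isotoped $A_0$ is itself saturated.

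The paper avoids this difficulty by building the saturated annulus directly rather than by deformation. In each solid cylinder $\nu(j)$ it takes a short geodesic $\zeta_j$ transverse to the fibers, running outward from a point at height $\approx (0.11)\xi s_1$ past the side of $\bar\nu(j)$, and \emph{saturates} $\zeta_j$ under the $S^1$-fibration to obtain an annulus $\widehat A_j$ that is automatically saturated and crosses each level set $h_{\widetilde\gamma_j}^{-1}(t)$ in a single separating circle. These two annuli are then connected by a saturated annulus $\widehat A_0$ lying over an arc in the base surface of the fibration (using only connectivity of the base), and the ends are capped by disks contained in level sets of $h_{\widetilde\gamma_j}$. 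Because every piece is either a saturation of an arc or a disk in a level surface, the required properties~(1)--(5) are immediate; no delicate transversality or isotopy argument is needed. If you want to salvage your approach, you would need to prove that $A_0$ is globally a graph over its saturation (e.g., by showing that in each $S^1$-product chart the level set of $d(x_i,\cdot)$ is a $C^0$-small normal graph over a vertical cylinder, and then patching); this is plausible but is substantially more work than the paper's direct construction.
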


\begin{proof}
Since $B(x_i)$ is near a $2$-dimensional boundary corner, there is a $2$-dimensional Alexandrov ball
$\bar B$ of radius $1$ that is boundary $\mu$-good at some $\bar x$ on scale $r(x_i)$ of angle $\le \pi-\delta$ and $(B(x_i,1),x_i)$ is within $\hat\epsilon$ of $(\bar B,\bar x)$.
The metric sphere $S(\bar x,r(x_i)/4)$ is a topological interval that meets the boundary of $\bar B$ in its endpoints.
Let $\gamma_1$ and $\gamma_2$ be geodesics of length $s_1$ in $\bar B$ whose endpoints lie in the boundary of $\bar B$
and whose central points lie in $S(\bar x,r(x_i)/4)$ near to the two boundary points of this metric sphere.
For $j=1,2$, let $\widetilde \gamma_j$ be geodesics in $B(x_i,r(x_i))$ of length $s_1$ within $\hat\epsilon$ of the $\gamma_j$.
We can arrange that the central points of the $\gamma_j$ lie on the metric sphere $S(x_i,r(x_i)/4)$
For $j=1,2$ let $\nu(j)=\nu(j,i)$ be the $\epsilon'$-solid cylinders associated with the $\widetilde\gamma_j$ of length $s_1/2$ and width $\xi s_1/2$. By
construction the intersection of $S(x_i,r(x_i)/4)$ with $\nu(j)$ contains the central point of $\widetilde\gamma_j$. Consider the saturated open subset $U'_2(x_i)$ of $U'_2$
consisting of all fibers of the  $S^1$-fibration on $U'_2$ that meet
$E=B(x_i,(.002)s_1+r(x_i)/4)\setminus \bar B(x_i,(.001)s_1
+r(x_i)/4)$. This open subset contains the complement in $E$ of the
cores of $\nu(1)$ and $\nu(2)$ and is contained in
$B(x_i,3r(x_i)/8)\setminus \bar B(x_i,r(x_i)/4)$. For $j=1,2$ fix a point
$y_j$ in the intersection of the level set $h_{\widetilde\gamma_j}^{-1}((.11)\xi s_1)$ and the central disk of $\nu(j)$. According to
Lemma~\ref{flatboundary} here is a geodesic $\zeta_j$ from $y_j$ to
a point $z$ at distance $(1.1)\xi s_1$ from
$\widetilde \gamma_j$ with the property that for any $w\in \zeta\cap h_{\widetilde\gamma_j}^{-1}((.51)\xi s_1$ the comparison angle
$\tilde\angle\widetilde\gamma_j w z\ge \pi-2\xi$. In particular,
this geodesic meets each level set of $h_{\widetilde\gamma_j}^{-1}(t)$
for $(.11)\xi s_1\le t\le \xi s_1/2$ in a single point. Of course, $\zeta\subset U'_2(x_i)$. Let
$\hat A_j$ be the annulus that is the saturation of $\zeta\cap \bar h_{\widetilde\gamma_j}^{-1}([0,(.51)\xi s_1])$
under the circle fibration on $U'_2(x_i)$. This annulus crosses
each level set of $h_{\widetilde\gamma_i}^{-1}(t)$ for $(.12)\xi s_1\le t\le \xi s_1/2 $ in $\bar\nu(j)$ in a single circle, a circle that separates the ends of that level set.

 The base space of the circle fibration of $U'_2(x_i)$ is a connected surface $\Sigma$. Consider
the saturated open subset $V'_2(x_i)\subset U'_2(x_i)$ which is the union
of all fibers that meet the complement of $\bar\nu(1)\cup
\bar\nu(2)$. It is also connected as is its quotient surface
$\Sigma'\subset \Sigma$. It follows that there is a saturated
annulus $\widehat A_0\subset \Sigma'$  that connects orbits over the
intersection of $\zeta_j\cap h_{\widetilde\gamma_j}^{-1}((.51)\xi s_1)$, for $j=1,2$
and which is disjoint from the union of the orbits over points of $\zeta_j\cap h_{\widetilde \gamma_j}^{-1}([0,(.51)\xi s_1)$.
 The union $\widehat A_1\cup\widehat A_0\cup \widehat A_2$ is
an annulus $\widehat A$. Since in the rescaling of the metric giving
the $S^1$-product structure the fiber circles of the fibration
structure lie within $\epsilon'$ of an $S^1$-factor in an $\epsilon$-product neighborhood, it follows
immediately that  the boundary circles of $\widehat A$ are contained $\cup_{j=1,2}h_{\widetilde\gamma_j}^{-1}([0,(.121)\xi s_1])$. For $j=1,2$, consider
the level set $L(j)=
h_{\widetilde\gamma_j}^{-1}((.122)\xi s_1)$ and the intersection of $c_j=\widehat A \cap L(j)$.
The circle $c_j$
separates (in $\widehat A$) the boundary component of $\widehat A$ contained in $
\nu(j)$ from the intersection of $\widehat A$ with the side of $\bar \nu(j)$.
 We define
$A'(x_i)$ as the subannulus of $\widehat A$ bounded by $c_1$ and $c_2$.
There is a disk $D'_j\subset h_{\widetilde\gamma_j}^{-1}([0,(.122)\xi s_1])$
with boundary $c_j$. We define $D_j$ to be the union of $D'_j$ and the intersection
  of $A'(x_i)\cap \bar\nu(j)$. We set  $A(x_i)$ equal to the sub-annulus of $A'(x_i)$ bounded
  by the $S^1$-fibers of $U'_2$ passing through the point on $\zeta\cap h_{\widetilde\gamma_{j}}^{-1}((.123)\xi s_1)$. Then $A(x_i)$,  the disks $D_j$ and the union $S(x_i)=D_1\cup A(x_i)\cup D_2$
  are as required by the lemma.
\end{proof}

\subsubsection{Other interfaces}

\begin{lem}\label{prodinterface}
Suppose that $\lambda_n\ge \rho_n^{-2}(x_n)$ and that $B_{\lambda_ng_n}(x_n,1)$
is within $\hat\epsilon$ of a standard $2$-dimensional Alexandrov ball of area $\ge a$. Suppose also that for some $\alpha$ with $1/100\le \alpha\le 1/2$ the function $d(x_n,\cdot)$ is $(1-\epsilon)$-regular on
$C(x_n,\alpha,\alpha+1/100)=B_{\lambda_ng_n}(x_n,\alpha+1/100)\setminus
\bar B_{\lambda_ng_n}(x_n,\alpha)$ and determines a fibration of
this open set over the interval $(\alpha,\alpha+1/100)$ with fiber either
$T^2$ or $S^2$.
\begin{enumerate}
\item If the fibers of the restriction of $d(x_n,\cdot)$ to
$C(x_n,\alpha,\alpha+1/100)$ are $2$-tori, then there is a $2$-torus
$T\subset U'_2 \cap A(x_n,\alpha,\alpha+1/100)$ that is saturated
under the  $S^1$-fibration on $U'_2$ and that separates the metric spheres
$S_{\lambda_ng_n}(x_n,\alpha)$ and
$S_{\lambda_ng_n}(x_n,\alpha+1/100)$. This $2$-torus is isotopic to
the fibers of $d(x_n,\cdot)$ on $C(x_n,\alpha,\alpha+1/100)$.
\item  If the fibers of the restriction of $d(x_n,\cdot)$ to
$C(x_n,\alpha,\alpha+1/100)$ are $2$-spheres, then there is a
$2$-sphere $S$ in $C(x_n,\alpha,\alpha+1/100)$ that is the union of
an annulus $A(x_n)$ in $U'_2$, an annulus saturated under the
$S^1$-fibration, and disks $D_1$ and $D_2$ in two
$\epsilon'$-solid cylinders, $\bar\nu(1)$ and $\bar \nu(2)$.
The $D_j$, $A(x_n)$, and $S(x_n)=D_1\cup A(x_n)\cup D_2$ satisfy Properties 1 -- 5
listed in Lemma~\ref{ballinterface}.
 The $2$-sphere $S(x_n)$ separates the metric
spheres $S_{\lambda_ng_n}(x_n,\alpha)$ and
$S_{\lambda_ng_n}(x_n,\alpha+1/100)$.
\end{enumerate}
\end{lem}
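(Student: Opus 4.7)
The plan is to derive both parts from the local covering of the nearby $2$-dimensional Alexandrov ball $\bar B$ given by Theorem~\ref{summary}, combined with the $3$-dimensional translations of these local models in Section~\ref{4}. The fiber topology of the fibration of $C(x_n,\alpha,\alpha+1/100)$ by level sets of $d(x_n,\cdot)$ strongly restricts which of the four local types can actually occur over $C$, and this restriction is what drives the construction in each case.

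For the torus case, I would first argue that $C$ is contained in $U'_2$. The four local models supply, for the $3$-manifold $M_n$, the generic $S^1$-product neighborhoods (local $\pi_1$ of rank $1$), the interior cone-point solid tori from Proposition~\ref{intcone1} (local $\pi_1=\Zee$), the $\epsilon'$-solid cylinders from Proposition~\ref{flatboundary} whose non-generic portion contains spanning disks of the fibration (local $\pi_1=1$ or $\Zee$), and the corner $3$-balls from Proposition~\ref{corners} (local $\pi_1=1$). The presence of a $T^2$ fiber of $d(x_n,\cdot)$ through a point $y\in C$ forces $\pi_1$ of any small connected saturated neighborhood of $y$ in $C$ to have rank at least two, so only the generic model is consistent with the local picture. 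Thus $C\subset U'_2$. Next, the level set $F=d(x_n,\cdot)^{-1}(\alpha+1/200)$ is a topological $T^2$ in $U'_2$, and because $d(x_n,\cdot)$ is $(1-\epsilon)$-regular on $C$, the tangent plane to $F$ at each point is within $\epsilon'$ of horizontal in every $S^1$-product neighborhood with $\epsilon$-control. Since the $S^1$-fibration on $U'_2$ is $\epsilon'$-orthogonal to these horizontal spaces by Proposition~\ref{s1glue}, $F$ is almost transverse to the fibration and meets each nearby $S^1$-fiber in exactly one point (the intersection number is forced to be $\pm 1$ because pushing $F$ along the gradient flow moves it across $C$ keeping the algebraic intersection number, and in a product $T^2\times I$ a nonzero intersection number with a null-homologous loop is $\pm 1$). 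A small isotopy inside a tubular neighborhood of $F$ makes it saturated, producing the desired torus $T$, and the isotopy is small enough (on the scale of $C\hat\epsilon$, the length of the fibers) that $T$ still lies strictly between $S_{\lambda_n g_n}(x_n,\alpha)$ and $S_{\lambda_n g_n}(x_n,\alpha+1/100)$.

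For the sphere case, I would mirror the construction in Lemma~\ref{ballinterface}. The $S^2$ fiber topology now permits $U_{2,\mathrm{gen}}$-neighborhoods and $\epsilon'$-solid cylinders (where $F$ can cap off by spanning disks), while it still excludes the interior cone-point solid tori (because $S^2$ cannot meet a solid torus transversally in a separating disk configuration compatible with the Seifert fiber being homotopically essential) and the corner $3$-balls (which are already captured on their own). The nearby $2$-dimensional ball $\bar B$ that $B_{\lambda_n g_n}(x_n,1)$ approximates must have two boundary-like rays visible in its annular region at radius $\alpha+1/200$; choose two geodesics $\gamma_1,\gamma_2$ in $\bar B$ that are $\xi$-approximations to $\partial \bar B$ on scale $s_1$ with central points on the radius-$(\alpha+1/200)$ circle, lift to geodesics $\widetilde\gamma_1,\widetilde\gamma_2$ in $M_n$, and take the associated $\epsilon'$-solid cylinders $\bar\nu(1),\bar\nu(2)$ of length $s_1/2$ and width $\xi s_1/2$. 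Then, exactly as in the proof of Lemma~\ref{ballinterface}, use Lemma~\ref{flatboundary} to produce spanning disks $D_1,D_2$ whose boundaries lie in the sides of $\bar\nu(j)$, and connect their boundaries by a saturated annulus $A(x_n)$ in $U'_2$ constructed from a saturated loop in the quotient surface of the $S^1$-fibration on $U'_2\setminus(\bar\nu(1)\cup \bar\nu(2))$, appealing to the connectivity of that quotient. This gives the $2$-sphere $S(x_n)=D_1\cup A(x_n)\cup D_2$ with the five listed properties and separating the two metric spheres.

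The main obstacle is in the torus case, specifically showing $C\subset U'_2$ and computing the algebraic intersection number of $F$ with a generic $S^1$-fiber; the rest of that case is then a small-perturbation argument. The sphere case, by contrast, is an almost verbatim transcription of the disk-and-annulus construction in Lemma~\ref{ballinterface}, with the role played there by $S(x_i,r(x_i)/4)$ now played by the middle metric sphere $d(x_n,\cdot)^{-1}(\alpha+1/200)$, and the only real technical point is to arrange the intersections with the level sets of $h_{\widetilde\gamma_j}$ to be the correct separating circles required by property (5) of Lemma~\ref{ballinterface}, which is done by choosing the spanning disks $D_j$ with the same collar structure near their boundary.
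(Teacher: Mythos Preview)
Your torus-case argument has a geometric error that undermines the whole approach. You claim that the level set $F=d(x_n,\cdot)^{-1}(\alpha+1/200)$ has tangent plane ``within $\epsilon'$ of horizontal'' in each $S^1$-product neighborhood, and hence is almost transverse to the $S^1$-fibers. This is backwards. The gradient of $d(x_n,\cdot)$ points along geodesics to $x_n$, and these geodesics are (approximately) horizontal in the $S^1$-product structure, since they descend to radial geodesics in the $2$-dimensional limit. Hence $F$ is perpendicular to a horizontal direction: its tangent plane is spanned by the $S^1$-direction and one horizontal direction, so $F$ is approximately \emph{saturated} (tangent to the fibers), not transverse. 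Indeed, a horizontal slice in $S^1\times B(0,\epsilon^{-1})$ is a disk, so a closed $T^2$ cannot possibly be close to horizontal. Your intersection-number computation (``$\pm 1$ with a null-homologous loop'') is therefore moot, and in any case the $S^1$-fiber is not null-homologous in $T^2\times I$.

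There is a second gap: your local $\pi_1$ argument does not exclude the $\epsilon'$-solid tori near interior cone points, and in fact they \emph{can} appear in $C$. The paper's Van Kampen argument shows only that solid cylinders and corner $3$-balls are absent (since their presence would make $\pi_1(W)$ a free product of cyclic groups, incompatible with the injection of $\pi_1(T^2)$). With the cone-point solid tori still present, $C$ sits in a genuine Seifert fibration $X$ (by Corollary~\ref{seifcor}), not in $U'_2$ alone. The paper then produces the desired saturated torus not by isotoping $F$, but by taking a compact submanifold $X'\subset X$ containing $F$ whose boundary lies in $U'_2$ and is saturated; one boundary component of $X'$ separates the ends of $C$ and is the required $T^2$. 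This sidesteps both of your difficulties at once.

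Your sphere case is close to the paper's in spirit, though you again overreach in claiming to exclude the cone-point solid tori and corner $3$-balls; the paper simply observes that removing these does not disconnect the relevant region, which is all that is needed to find the connecting saturated annulus $A_0$.
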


\begin{proof}
Let $W$ be the union of $C'(x_n)=C(x_n,\alpha+10^{-3},\alpha+(.0099))$ together with all $\epsilon'$-solid cylinders and $3$-balls near $2$-dimensional corner points that meet $C'(x_n)$. Then $W\subset C(x_n)$.
First, we consider the case when the fibers of $d(x_n,\cdot)$ in
$C(x_n)=C(x_n,\alpha,\alpha+1/100)$ are $2$-tori.

\begin{claim}
In this case, there is no $3$-ball near a $2$-dimensional boundary corner and no $\epsilon'$-solid
cylinder that meets $C'(x_n)$.
\end{claim}

\begin{proof}
 Suppose that there is at least one
$\epsilon'$-solid cylinder or $3$-ball meeting $C'(x_n)$. Then by
Van Kampen's theorem, the fundamental group of $W$ is the quotient of
the fundamental group of a Seifert fibration over a non-compact,
connected surface when the class of the generic fiber is set equal
to the trivial element. Hence, the fundamental group is a free
product of cyclic groups. On the other hand, since
$S(x_n,\alpha+1/200)\subset W\subset C(x_n,\alpha,\alpha+1/100)$, it
follows that the fundamental group of $S(x_n,\alpha+1/200)$ is
identified with a subgroup of the fundamental group of $W$. Since
$S(x_n,\alpha+1/200)$ is a $2$-torus, this is a contradiction.
\end{proof}

This proves that $C'(x_n)$ is contained in the total space $X\subset U'_2$ of a Seifert fibration, and $X\subset C(x_n)$. In fact $X$ is the union of a saturated subset of $U'_2$ and a finite union of $\epsilon'$-solid tori $\hat\tau(z_i)$.
 Since the complement of a compact subset
of each $\hat \tau(z_i)$ is contained in $U'_2$, it follows that the
complement of a compact subset of $X$ is contained in $U'_2$ and
hence there is a compact submanifold $X'\subset X$, containing
$S(x_n,\alpha+1/200)$ whose boundary is contained in $U'_2$ and is
saturated under the  $S^1$-fibration. Since $X'$ contains
$S(x_n,\alpha+1/200)$, if follows that $X'$ separates the ends of
$C(x_n,\alpha,\alpha+1/100)$ and hence one of the boundary
components of $X'$ also separates these ends. This boundary
component is a $2$-torus contained in $U'_2$,  saturated under the
$S^1$-fibration. It separates the boundary components of
$C(x_n,\alpha,\alpha+1/100)$. Consequently, it is isotopic in
$C(x_n,\alpha,\alpha+1/100)$ to any fiber of $d(x_n,\cdot)$. This
completes the proof in the case when $C(x_n)$ is fibered by $2$-tori.

Next, we consider the case when the fiber $S(x_n,\alpha+1/200)$ is a
$2$-sphere.
 The first thing to observe is that this $2$-sphere is
not isotopic in $X$ to a $2$-sphere that is disjoint from the
$\epsilon'$-solid cylinders and $3$-ball neighborhoods. The reason is that any
$2$-sphere contained in a Seifert fibered $3$-manifold  with base
a connected, non-compact surface is homotopically trivial in that manifold,
but $S(x_n,\alpha+1/200)$ is not homotopically trivial in $X$.  Since
$d(\bar x,\cdot)$ is $(1-2\epsilon)$-regular on the metric annulus $\bar C=C(\bar x,\alpha,\alpha+/100)$, it follows that the level sets of this function on $\bar C$ are intervals.

Any level set of $d(x_n,\cdot)$
meeting the center of an $\epsilon'$-solid cylinder,  $C'(x_n)$,  intersects the solid cylinder in a spanning $2$-disk. We fix such two such $\epsilon'$-solid cylinders $\bar\nu(1)$ and $\bar\nu(2)$ with central geodesics $\widetilde\gamma_1$ and $\widetilde\gamma_2$ where the corresponding geodesics in $\bar B(\bar x,1)$ are near the two boundary components of $\bar C$. We construct a saturated annulus $\widehat A_j$ extending these by disks $D_j$  as in Lemma~\ref{ballinterface} so that the intersection of $\widehat A_j\cup D_j$
with $\bar\nu(j)$ is a spanning disk.
Since removing the $3$-balls near $2$-dimensional corner points and $\epsilon'$-solid tori from $W$ cannot disconnect it, it follows that there is a saturated annulus $A_0$ in $U'_2\cap C(x_n)$ connecting
the outer boundaries of $\widehat A_1$ and $\widehat A_2$. The union $D_1\cup A_0\cup D_2$ is the $2$-sphere as required.
\end{proof}

This leads immediately to the following two results.

\begin{cor}\label{T2split}
Let $A$ be a component of $M_n\setminus U'_{n,1}$ that is close to a
$2$-dimensional space. Let ${\mathcal E}$ be an end of $U'_{n,1}$
neighboring $A$. Suppose that  the end $U(x_{\mathcal E})$ is fibered by $2$-tori.
Then there is a $2$-torus $T({\mathcal E})\subset U^+(x_{\mathcal E})\cap
U'_2$ that is saturated under the $S^1$-fibration and separates
the ends of $U(x_{\mathcal E})$. In particular, $T({\mathcal E})$ is
isotopic in $U^+(x_{\mathcal E})$ to the boundary $2$-torus $\Sigma({\mathcal E})$ of
$U^+(x_{\mathcal E})$.
\end{cor}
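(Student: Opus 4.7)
The plan is to apply Lemma~\ref{prodinterface}, Case~1, at a carefully chosen point $y\in A$ close to $\Sigma(\mathcal{E})$. Since $A$ is a component of $M_n\setminus U'_{n,1}$ close to a $2$-dimensional space (Type~1 in Lemma~\ref{dichotomy}), every point $z\in A$ satisfies that $B_{g'_n(z)}(z,1)$ is within $\hat\epsilon$ of a standard $2$-dimensional Alexandrov ball of area $\ge a$. I would choose $y\in A$ near $\Sigma(\mathcal{E})$ so that, writing $\lambda_n=\rho_n(y)^{-2}$, one has $d_{\lambda_n g_n}(y,\Sigma(\mathcal{E}))=\alpha_0$ for some fixed $\alpha_0\in (1/100,1/50)$; such a $y$ exists by continuity of $\rho_n$ and the fact that $\Sigma(\mathcal{E})$ is accessible from $A$ across the common frontier $W'_{n,1}\cap W'_{n,2}$.

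The second step is to identify the Gromov-Hausdorff limit $\bar B$ of $B_{\lambda_n g_n}(y,1)$ (passing to a subsequence) and to locate $\Sigma(\mathcal{E})$ inside it. At the coarser scale $g'_n(x_{\mathcal{E}})$, the neighborhood $U(x_{\mathcal{E}})$ is close to a $1$-dimensional interval with $T^2$ fibers, meaning both circle directions of the torus are collapsed; the collapsing direction already visible at the $g'_n(y)$-scale is the $S^1$-fibration direction, and the other circle expands to macroscopic size in the finer scale $g'_n(y)$. Consequently, $\bar B$ contains a cylindrical region corresponding to $U^+(x_{\mathcal{E}})$, with $\Sigma(\mathcal{E})$ itself limiting to a single circle in $\bar B$ at distance $\alpha_0$ from the limit point $\bar y$ of $y$. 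On this cylindrical region the distance function from $\bar y$ is strongly regular and its level sets are circles.

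Pulling back to $M_n$, this means that for all $n$ sufficiently large there is $\alpha\in(\alpha_0,\alpha_0+1/100)$ such that $d_{\lambda_n g_n}(y,\cdot)$ is $(1-\epsilon)$-strongly regular on $C(y,\alpha,\alpha+1/100)$, the annular region is contained in $U^+(x_{\mathcal{E}})$, and its level sets are $2$-tori isotopic to fibers of $p_{x_{\mathcal{E}}}$ (so that the $T^2$-fibration hypothesis in Case~1 of Lemma~\ref{prodinterface}, rather than the $S^2$-fibration hypothesis in Case~2, is the applicable one). Lemma~\ref{prodinterface}, Case~1, then yields a $2$-torus $T(\mathcal{E})\subset C(y,\alpha,\alpha+1/100)\cap U'_2$, saturated under the $S^1$-fibration on $U'_2$, that separates $S_{\lambda_n g_n}(y,\alpha)$ from $S_{\lambda_n g_n}(y,\alpha+1/100)$. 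Since this annular region lies inside the product collar $U^+(x_{\mathcal{E}})$, the torus $T(\mathcal{E})$ is isotopic there to a fiber of $p_{x_{\mathcal{E}}}$, hence to $\Sigma(\mathcal{E})$, and therefore separates the two ends of $U(x_{\mathcal{E}})$.

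The main obstacle is the \emph{two-scale compatibility}: verifying that $C(y,\alpha,\alpha+1/100)$, defined using the $y$-scale $\lambda_n=\rho_n(y)^{-2}$, sits inside the $x_{\mathcal{E}}$-scale collar $U^+(x_{\mathcal{E}})$, and that the level surfaces of $d_{\lambda_n g_n}(y,\cdot)$ there agree up to isotopy with the $T^2$-fibers of $p_{x_{\mathcal{E}}}$. Both points require controlling the ratio $\rho_n(x_{\mathcal{E}})/\rho_n(y)$ (which grows because $A$ collapses more severely than $X_{n,1}$) and using the Gromov-Hausdorff limit argument of the second step to match the collapsed $S^1$-direction of the $T^2$-fiber with the $S^1$-fibration direction on $U'_2$.
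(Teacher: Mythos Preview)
Your approach has a concrete error in the two-scale picture. You posit that $g'_n(y)$ is a strictly finer scale than $g'_n(x_{\mathcal E})$, with the ratio $\rho_n(x_{\mathcal E})/\rho_n(y)$ growing, so that one circle factor of the $T^2$-fiber $\Sigma(\mathcal E)$ becomes macroscopic and $\Sigma(\mathcal E)$ limits to a circle in the $2$-dimensional ball $\bar B$. But any $y\in A$ close to the end $\mathcal E$ lies in the closure of $U(x_{\mathcal E})\subset B_{g'_n(x_{\mathcal E})}(x_{\mathcal E},1/25)$, so Lemma~\ref{rholem} forces $\tfrac12\,\rho_n(x_{\mathcal E})\le\rho_n(y)\le 2\,\rho_n(x_{\mathcal E})$; the ratio is pinned between $1/2$ and $2$ for every $n$ and does not grow. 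At this common scale the entire torus $\Sigma(\mathcal E)$ has diameter $O(\hat\epsilon)$ (because the interval product structure $U(x_{\mathcal E})$ is within $\hat\epsilon$ of a $1$-dimensional interval), so $\Sigma(\mathcal E)$ limits to a \emph{point}, not a circle, in $\bar B$. The ``cylindrical region'' in $\bar B$ that you describe therefore does not exist, and one cannot read off from $\bar B$ that the metric spheres $S_{g'_n(y)}(y,t)$ are $2$-tori lying inside $U^+(x_{\mathcal E})$.

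The route that makes this corollary ``immediate'' bypasses the $2$-dimensional limit entirely. Item~4 of Lemma~\ref{1Dlem} already says: for any $z\in B_{g'_n(x_{\mathcal E})}(x_{\mathcal E},1/50)$ and any $w$ with $d_{g'_n(x_{\mathcal E})}(w,z)\ge 1/400$, the level surface of $d(w,\cdot)$ through $z$ is contained in $U(x_{\mathcal E})$ and is isotopic to a fiber of $p_{x_{\mathcal E}}$. Take $w$ to be a frontier point of $A$ at the end $\mathcal E$ (automatically at distance $\ge 1/400$ from $x_{\mathcal E}$ by item~4 of Proposition~\ref{Un1}) and let $z$ run over points of $B(x_{\mathcal E},1/50)$ on the $U^+$-side of $\Sigma(\mathcal E)$. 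One obtains directly an annulus of metric spheres $C(w,\alpha,\alpha+1/100)$ sitting inside $U^+(x_{\mathcal E})$, each sphere a $2$-torus isotopic to $\Sigma(\mathcal E)$. Since $w\in A$ is of Type~1 in Lemma~\ref{dichotomy}, the ball $B_{g'_n(w)}(w,1)$ is within $\epsilon_n$ of a standard $2$-dimensional ball of area $\ge a$, so the hypotheses of Lemma~\ref{prodinterface}, Case~1, are met at $w$ and the saturated torus $T(\mathcal E)$ follows at once.
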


\begin{cor}\label{S2interface}
Suppose that $A$ is a component of $M_n\setminus U'_{n,1}$ that is
close to a $2$-dimensional space, and suppose that ${\mathcal E}$ is
an end of $U'_{n,1}$ neighboring $A$ and that $U(x_{\mathcal E})$
is fibered by $2$-spheres.
Then there is a $2$-sphere $S({\mathcal E})\subset U^+(x_{\mathcal E})$
that is the union of an annulus $E\subset U'_2$ saturated under the
 $S^1$-fibration and two $2$-disks, $D_1$ and $D_2$, contained in
 two $\epsilon'$-solid cylinders.
These satisfy the following:
\begin{enumerate}
\item Each boundary circle of $E$ is contained in the interior of one of the  $\nu(j)$ and in fact lies in the sub solid cylinder of width $\xi s_1/4$.
\item $S({\mathcal E})$ is isotopic in $U^+({\mathcal E})$ to $\Sigma({\mathcal E})$.
\end{enumerate}
\end{cor}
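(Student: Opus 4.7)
The plan is to parallel the argument for Corollary~\ref{T2split}, but invoking case (2) of Lemma~\ref{prodinterface} instead of case (1). Since the fibers of the interval product structure $p_{x_{\mathcal E}}$ are $2$-spheres, the adjacent $2$-dimensional component $A$ must be close (near the interface) to a $2$-dimensional Alexandrov ball that is boundary $\mu$-good of angle $\le \pi-\delta$ at the nearby base point: this is the only one of the four local models in Theorem~\ref{summary} whose topological realization in $M_n$ produces a non-torus separating surface, as shown in the proof of Proposition~\ref{corners}.

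First I would locate the base point. Fix $x_n \in A$ lying within rescaled distance $1/4$ of the component $\Sigma(\mathcal E)$, chosen so that $B_{g'_n(x_n)}(x_n,1)$ is within $\hat\epsilon$ of a standard $2$-dimensional ball $B(\bar x,1)$ of area $\ge a$ which is boundary $\mu$-good at some point $\bar y$ of angle $\le \pi-\delta$ on a scale $r'$ with $r_0\le r'\le 10^{-3}$; the existence of such a point follows because the interface $\Sigma(\mathcal E)$ being $S^2$ forces the Gromov-Hausdorff limit of rescalings near the interface to be a flat cone on an interval of cone angle $<\pi$ (arguing as in Case 3/4 of the proof of Theorem~\ref{summary}, with the $T^2$-fiber possibility ruled out by Van Kampen as in the proof of Lemma~\ref{prodinterface}). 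By Proposition~\ref{corners} and Corollary~\ref{corners1}, on a suitable annulus $C(x_n,\alpha,\alpha+1/100)\subset B_{g'_n(x_n)}(x_n,7r'/8)\setminus B_{g'_n(x_n)}(x_n,r'/8)$ with $1/8 < \alpha < 7/8$ the distance function $d(x_n,\cdot)$ is $(1-\beta)$-strongly regular and fibers the annulus over an interval by topological $2$-spheres.

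Next I would apply Lemma~\ref{prodinterface}(2) directly: this produces a $2$-sphere $S(\mathcal E) = D_1\cup A(x_n)\cup D_2$ where $A(x_n)$ is a saturated annulus in $U'_2$ and $D_1,D_2$ are spanning disks in two disjoint $\epsilon'$-solid cylinders $\bar\nu(1),\bar\nu(2)$, satisfying properties 1--5 of Lemma~\ref{ballinterface}, in particular the width bound $\xi s_1/4$ on the boundary circles of $A(x_n)$ relative to the $\nu(j)$. By choosing $\alpha$ so that $\Sigma(\mathcal E)$ lies in $C(x_n,\alpha,\alpha+1/100)$ -- permissible because $U^+(x_{\mathcal E})$ is a neighborhood of the end containing $\Sigma(\mathcal E)$, and because the $\epsilon'$-solid cylinders of length $s_1$ and width $\xi s_1/2$ sit within a small multiple of $r'$ of the chosen base point -- the whole configuration lies in $U^+(x_{\mathcal E})$. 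This gives conclusions~(1) and the first half of the corollary.

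For the isotopy claim, both $S(\mathcal E)$ and $\Sigma(\mathcal E)$ are embedded $2$-spheres in $U^+(x_{\mathcal E})$, which by construction is a piece of an $S^2$-bundle over a half-open interval and hence is homeomorphic to $S^2\times[0,\infty)$. The sphere $S(\mathcal E)$ separates the metric spheres $S_{g'_n(x_n)}(x_n,\alpha)$ and $S_{g'_n(x_n)}(x_n,\alpha+1/100)$ by construction, so it separates the two ends of $U(x_{\mathcal E})\cap U^+(x_{\mathcal E})$; hence $S(\mathcal E)$ is isotopic in $U^+(x_{\mathcal E})$ to any fiber of $p_{x_{\mathcal E}}$, in particular to $\Sigma(\mathcal E)$. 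The main obstacle will be the first step: verifying that the base point $x_n$ can be chosen so that the limiting $2$-dimensional ball really is of boundary-corner type (ruling out the torus case combinatorially as in the proof of Lemma~\ref{prodinterface}) and that $C(x_n,\alpha,\alpha+1/100)$ lies in $U^+(x_{\mathcal E})$; the remaining steps are then routine applications of the already-established local results.
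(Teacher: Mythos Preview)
Your overall plan—apply case~(2) of Lemma~\ref{prodinterface}—is exactly the paper's; the two corollaries are stated together as immediate consequences of that lemma with no separate argument given.

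Where you diverge is the unnecessary detour through Proposition~\ref{corners}. You insist that the approximating $2$-dimensional ball be boundary $\mu$-good of angle $\le\pi-\delta$ and then derive the regularity and $S^2$-fibration of the annulus from Proposition~\ref{corners}. Neither step is required by Lemma~\ref{prodinterface}, and your route introduces a scale mismatch: Proposition~\ref{corners} yields regularity only on the annulus between radii $r'/8$ and $7r'/8$ with $r'\le 10^{-3}$, whereas Lemma~\ref{prodinterface} asks for $1/100\le\alpha\le 1/2$; the containment $C(x_n,\alpha,\alpha+1/100)\subset B(x_n,7r'/8)\setminus B(x_n,r'/8)$ with $1/8<\alpha<7/8$ that you write cannot hold as stated. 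Your claim that the $S^2$ fibers force cone angle strictly less than $\pi$ is also too strong—boundary $\mu$-flat points (angle $=\pi$) produce $S^2$ level sets just as well, as in the first case treated in the proof of Proposition~\ref{corners}.

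The direct route is simpler. Pick $x_n\in A$ in the frontier of $U_{n,1}$. The first hypothesis of Lemma~\ref{prodinterface} (closeness to a standard $2$-dimensional ball of area $\ge a$) is then immediate from the definition of ``$A$ close to a $2$-dimensional space'' (Lemma~\ref{dichotomy}, case~1). For the second hypothesis one uses the interval product structure on $U(x_{\mathcal E})$ rather than any corner structure: by Lemma~\ref{1Dlem}(3)--(4), at any point $z\in U(x_{\mathcal E})$ with $d_{g'_n(x_n)}(x_n,z)\ge 1/400$ the function $d(x_n,\cdot)$ is $(1-\epsilon')$-regular and its level surface through $z$ is isotopic to an $S^2$ fiber of $p_{x_{\mathcal E}}$. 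Choosing $\alpha$ so that the annulus sits inside $U^+(x_{\mathcal E})$ then gives both the regularity and the $S^2$-fibration directly, and the isotopy to $\Sigma(\mathcal E)$ follows as you indicate.
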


\subsubsection{Case of components near $1$-dimensional spaces
but which expand to be near $2$-dimensional spaces}

Now we need to perform a similar construction for each component
$A$ of $M_n\setminus \hat U'_{n,1}$ which is close to an
interval but which expands to be near a $2$-dimensional space.
Then there is exactly one
neighboring component of $U'_{n,1}$ and the end ${\mathcal E}$ of
this component neighboring $A$ is either fibered by
$2$-tori or fibered by $2$-spheres. Furthermore, there is
 a point $x\in \widehat A$ and a constant $\lambda>\rho^{-2}_n(x)$
 such that $B_{\lambda g_n}(x,1)$ is
close to a standard $2$-dimensional ball $\bar B=B(\bar x,1)$ and on
the region between the metric sphere $S_{\lambda g}(x,1/2)$ and
$\Sigma({\mathcal E})$ distance function from $x$ is
regular. In particular, the region
bounded by $S_{\lambda g}(x,1/2)$ and $\Sigma({\mathcal E})$ is
homeomorphic to a product of a closed surface (either a $2$-sphere or a $2$-torus)
with a closed interval, and the distance function from $x$
is regular on this region and gives this product structure.

Let us consider first the case when the annular region $C_{\lambda
g_n}(x,1,2)$ is fibered by $2$-tori. Applying
Lemma~\ref{prodinterface} to the annular region $C_{\lambda
g_n}(x,1/2,1)$ we see that in this case there is a $2$-torus
$T({\mathcal E})$ in $C(x,1/2,1)\cap U'_2$ that is saturated under
the circle fibration on $U'_2$ and separates the metric
spheres $S_{\lambda g_n}(x,1/2)$ and $S_{\lambda g_n}(x,1)$. It
follows that $T({\mathcal E})$ is isotopic in $C_{\lambda
g_n}(x,1/2,1)$ to either end and consequently the region between
$T({\mathcal E})$ and $\Sigma({\mathcal E})$ is a product region.

Let us consider now the case when the annular region $C_{\lambda
g_n}(x,1,2)$ is fibered by $2$-spheres. Again applying
Lemma~\ref{prodinterface} we see that there is a $2$-sphere
$S({\mathcal E})\subset C_{\lambda g_n}(x,1,2)$ that separates the
ends and has the properties stated in the second part of that lemma.
The region between $S({\mathcal E})$ and $\Sigma({\mathcal E})$ is
a product region.

\subsubsection{Redefinition of the boundary between $W'_{n,1}$ and
$W'_{n,2}$}

Now we deform slightly $W'_{n,1}$ and $W'_{n,2}$ so as to replace
the splitting surfaces $W'_{n,1}\cap W'_{n,2}$ by the surfaces
constructed in the previous sections.

For each end ${\mathcal E}$ of $U'_{n,1}$ we have a surface either a
$2$-torus $T({\mathcal E})$ that is contained in $U'_2$ and is
saturated under the  $S^1$-fibration or a $2$-sphere
$S({\mathcal E})$ that is the union of an annulus in $U'_2$
saturated under the  $S^1$-fibration and two spanning disks
in  $\epsilon'$-solid cylinders. In all cases this
surface is contained in $W'_{n,2}$ and is parallel to the surface
$\Sigma({\mathcal E})$ which is a splitting surface for $W'_{n,1}$
and $W'_{n,2}$. For each end ${\mathcal E}$ we remove the product
region between either $T({\mathcal E})$ or $S({\mathcal E})$
 and $\Sigma({\mathcal E})$ from $W'_{n,2}$ and add it to
$W'_{n,1}$. For each $3$-ball component $\bar B(x_i,r(x_i)/4)$ of
$W'_{n,1}$ we have a surface $S(x_i)$ as constructed in
Lemma~\ref{ballinterface}. It is contained in $W'_{n,2}$ and the
region between it and the metric sphere $S(x_i,r(x_i)/4)$ is a
product. We remove this product region from $W'_{n,2}$ and add it to
$W'_{n,1}$

After making all these changes we relabel the results $W'_{n,1}$ and
$W'_{n,2}$. What we have achieved is to make each component of the intersection
either a $2$-torus contained in $U'_2$ and saturated
under the  $S^1$-fibration or a $2$-sphere that is the union
of an annulus in $U'_2$ that is saturated under the
$S^1$-fibration and two $2$-disks spanning
$\epsilon'$-solid cylinder neighborhoods.
Also, each component of $W'_{n,1}$ satisfies the properties required
of $V_{n,1}$ in Theorem~\ref{1Dthm'}.
{\bf At this point we define
 $V_{n,1}$ to be $W'_{n,1}$.}

  Here is our progress to date.

\begin{cor}\label{progress}
We have a decomposition $M_n=W'_{n,2}\cup V_{n,1}$. The intersection
$W'_{n,2}\cap V_{n,1}$ is the boundary of $W'_{n,2}$ and is the
union of all boundary components of $V_{n,1}$ that are not boundary
components of $M_n$. Each component of $V_{n,1}$ is as listed in
Theorem~\ref{1Dthm'}. Furthermore, the intersection $W_{n,2}'\cap
V_{n,1}$ consists of $2$-tori contained in $U'_2$ and saturated
under the $S^1$-fibration structure on $U'_2$ and $2$-spheres that are unions of annuli
contained in $U'_2$ and saturated under the  $S^1$-fibration
and two spanning $2$-disks in  $\epsilon'$-solid
cylinders.
\end{cor}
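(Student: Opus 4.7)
My plan is to verify Corollary~\ref{progress} by tracing back through the construction carried out in this section, checking each of the four assertions in turn. Since most of the work has been done already, the argument will amount to a careful bookkeeping.

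\textbf{Step 1: The decomposition.} Starting from Proposition~\ref{ct1} we have a compact decomposition $M_n = W_{n,1} \cup W_{n,2}$ with $W_{n,1} \cap W_{n,2} = \partial W_{n,2}$. Passing to $W'_{n,1}, W'_{n,2}$ we transferred the $3$-balls $B(x_i)$ near $2$-dimensional boundary corners from $W_{n,2}$ to $W_{n,1}$, preserving the decomposition. Finally, the last redefinition step (just before the corollary) removes from $W'_{n,2}$ the product regions lying between each $\Sigma(\mathcal{E})$ and the corresponding $T(\mathcal{E})$ or $S(\mathcal{E})$, as well as the product regions between each $S(x_i,r(x_i)/4)$ and the nearby $2$-sphere $S(x_i)$ of Lemma~\ref{ballinterface}, and attaches them to $W'_{n,1}$. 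Since each such transferred region is a compact product of a closed surface with a closed interval meeting $W'_{n,1}$ along one of its ends, the union remains a decomposition of $M_n$ into two codimension-$0$ compact submanifolds meeting along their common boundary. Setting $V_{n,1} := W'_{n,1}$ (after this last adjustment) establishes the first two assertions.

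\textbf{Step 2: The components of $V_{n,1}$.} Here I would check each component type. The components of $W_{n,1}$ are classified in Proposition~\ref{ct1}; the seven listed types are precisely the ones allowed in Theorem~\ref{1Dthm'} (a $T^2$-bundle over $S^1$, $T^2 \times I$, $S^2 \times I$, a twisted $I$-bundle over the Klein bottle, a solid torus, a $3$-ball, the complement of an open ball in $\Ar P^3$, or the union of two twisted $I$-bundles along a common torus). Transferring the $3$-balls near boundary corners only adds components of type (c) in Theorem~\ref{1Dthm'}. The final product-region transfers enlarge each affected component by a collar and therefore do not change its diffeomorphism type. In particular, every boundary component of $V_{n,1}$ is a $2$-sphere or a $2$-torus.

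\textbf{Step 3: The interface surfaces.} After the final adjustment, each component of $V_{n,1} \cap W'_{n,2}$ is by construction one of the surfaces $T(\mathcal{E})$, $S(\mathcal{E})$, or $S(x_i)$ produced in Corollary~\ref{T2split}, Corollary~\ref{S2interface}, and Lemma~\ref{ballinterface} respectively. In all cases these are either $2$-tori lying in $U'_2$ and saturated under the $S^1$-fibration, or $2$-spheres formed as the union of a saturated annulus in $U'_2$ and two disks spanning $\epsilon'$-solid cylinder neighborhoods (Properties 1--5 of Lemma~\ref{ballinterface}). This is exactly the structure asserted in the corollary.

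\textbf{Expected obstacle.} The proof is essentially a verification, so there is no deep new content to establish. The only care required is in Step 1, to be sure that the product-region transfers are mutually compatible: one must check that for distinct ends $\mathcal{E}, \mathcal{E}'$ of $U'_{n,1}$, the neighborhoods $U^+(x_{\mathcal{E}})$ and $U^+(x_{\mathcal{E}'})$ are disjoint (which is guaranteed by Proposition~\ref{Un1}(3)), and that the product regions attached near the $2$-dimensional corner balls $B(x_i)$ are pairwise disjoint and disjoint from those coming from the ends $\mathcal{E}$. The first follows directly from Addenda~\ref{1add}, \ref{2add}, and the second from the fact that each $B(x_i)$ lies strictly inside a neighborhood $\widehat A$ of a component $A$ of $M_n \setminus U'_{n,1}$, so the transferred collar around $S(x_i)$ stays inside $\widehat A$ and thus away from the $U^+(x_{\mathcal{E}})$. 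Once these disjointness statements are verified, the corollary follows immediately by assembling the ingredients.
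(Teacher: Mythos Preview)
Your proposal is correct and matches the paper's approach: in the paper this corollary is stated as a summary of ``our progress to date'' with no separate proof, and your Step-by-Step verification simply makes explicit the bookkeeping that the paper leaves implicit. One small caveat: in your ``Expected obstacle'' paragraph, the pairwise disjointness of the collar regions $B(x_i,3r(x_i)/8)\setminus \bar B(x_i,r(x_i)/4)$ does not literally follow from Addenda~\ref{1add}, \ref{2add} (those only assert disjointness of the $B(x_i,r(x_i)/4)$), and your argument that the collars stay away from the $U^+(x_{\mathcal E})$ because they lie in $\widehat A$ is not quite right either, since $\widehat A$ itself contains the relevant $U^+(x_{\mathcal E})$; both points are easily fixed by the scale separation $r(x_i)\le 10^{-3}$ versus the unit-scale $U^+(x_{\mathcal E})$, together with the maximality property (ii) in the Addenda, but you should tighten that sentence.
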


\subsection{Overlaps of $\epsilon'$-solid cylinders}

Our next step is to show that we can arrange that the complement of
the union of $U'_2$, the $\epsilon'$-solid tori near interior cone points and
 the $3$-balls near $2$-dimensional corner points referred to in Lemmas~\ref{Acover2}
and~\ref{Acover1} is contained in the union of  a finite set of cores of $\epsilon'$-solid cylinders
and these cylinders have  good intersections with each
other and with the $3$-balls and with the $2$-spheres $S({\mathcal E})$ associated to ends
${\mathcal E}$. To do
this we introduce the notion of chains of these neighborhoods.

\subsubsection{Chains of solid cylinders}

Let us begin with the local structure, namely, two solid cylinders meeting
nicely.
Let $y\in M_n$ and  $\lambda\ge \rho_n(y)^{-2}$ be given. Suppose
that $B_{\lambda g_n}(y,1)$ is within $\hat\epsilon$ of a standard
$2$-dimensional ball $X=B(\bar x,1)$ of area at least $a$, and
suppose that $\gamma\subset B(\bar x,1/2)$ is a $\xi$-approximation
to $\partial X$ on scale $s_1$ and that $\widetilde\gamma\subset
M_n$ is an $\hat\epsilon$-approximation to $\gamma$. We use the
metric $\lambda g_n$ to measure things, so that in particular,
$\ell=\ell(\widetilde \gamma)$ means the length of $\widetilde
\gamma$ in the metric $\lambda g_n$. Recall from
Proposition~\ref{flatboundary} that for any constant $c\in
[\xi^2,\xi)$ and any $-\ell/2\le a<b\le \ell/2$, the region
$\bar\nu_{c,[a,b]}(\widetilde \gamma)$ is homeomorphic to $D^2\times
I$ and is foliated by its intersections with the level sets of
$f_{\widetilde \gamma}$, each intersection being a $2$-disk.

\begin{defn}
Suppose that we have points $y_1,y_2\in M_n$
 such that $B_{g'_n(y_i)}(y_i,1)$
is within $\hat\epsilon$ of a standard $2$-dimensional ball $B(\bar
x_i,1)$. Suppose that we have geodesics $\gamma_i\subset B(\bar
x_i,1/2)$ that are $\xi$-approximations to the boundary on scale
$s_1$ and suppose that we have geodesics $\widetilde\gamma_i$ that
are $\hat\epsilon$ approximations to $\gamma_i$. We denote $\ell_i$
the length of $\widetilde \gamma_i$ with respect to the metric
$g_n'(y_i)$. Suppose that we have constants $c_i\in [100\xi^2,\xi]$, and
intervals $[a_i,b_i]\subset [-\ell_i/4,\ell_i/4]$. We say that the
$\bar\nu(i)=\bar\nu_{c_i,[a_i,b_i]}(\widetilde\gamma_i)$ have {\em good
intersection} if, possibly after reversing the directions either or
both of the $\widetilde \gamma_i$, the following hold:
\begin{enumerate}
\item The function $f_{\widetilde \gamma_1}$ is an increasing function along $\widetilde\gamma_2$
at any point of $\widetilde \gamma_2\cap\bar\nu_\xi(\widetilde
\gamma_1)$.
\item There is a point in the negative end of
$\bar\nu_{c_2,[a_2,b_2]}(\widetilde\gamma_2)$ that is contained in
$f_{\widetilde \gamma_1}^{-1}(b_1-(.02)\ell_1,b_1-(.01)\ell_1)$ in
$\bar\nu_{c_1,[a_1,b_1]}(\widetilde\gamma_1)$, and the positive end of $\bar\nu(2)$ is disjoint from $\bar\nu(1)$.
\item $c_1\ell_1\rho_n(y_1)$ is either at least
$(1.1)c_2\ell_2\rho_n(y_2)$ or is at most
$(0.9)c_2\ell_2\rho_n(y_2)$.
\end{enumerate}
\end{defn}

\begin{lem}
With the notation above, suppose that for $i=1,2$ the sets
$\bar\nu(i)=\bar\nu_{c_i,[a_i,b_i]}(\widetilde\gamma_i)$ have good
intersection. Then that intersection is homeomorphic to a $3$-ball.
If
\begin{equation}\label{lengthcompar}
c_1\ell_1\rho_n(y_1)<c_2\ell_2\rho_n(y_2),
\end{equation}
then that $3$-ball meets the boundary of $\bar\nu(2)$ in a $2$-disk
contained in the negative end of $\bar\nu(2)$ and the rest of the
boundary consists of an annulus in the side of $\bar\nu(1)$ together
with the positive end of $\bar\nu(1)$. If the reverse inequality holds
in~\ref{lengthcompar}, the similar statements hold with the roles of
$\bar\nu(1)$ and $\bar\nu(2)$ and `positive' and `negative' reversed.
\end{lem}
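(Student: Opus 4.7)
I will first reduce to the case of inequality~(\ref{lengthcompar}) $c_1\ell_1\rho_n(y_1)<c_2\ell_2\rho_n(y_2)$, since the reverse case follows by interchanging the indices. The approach is to exhibit $\bar\nu(1)\cap\bar\nu(2)$ as a product $D^2\times[a_2,t_0]$ for an appropriate $t_0\in(a_2,b_2)$, foliated by level disks of the function $f_{\widetilde\gamma_2}$. The key input is Lemma~\ref{nuint1}(2): writing the parameters as $c_i=c_i'\xi$ with $c_i'\in[100\xi,1]$ (legitimate since $c_i\in[100\xi^2,\xi]$), that lemma says each level set $L_t=f_{\widetilde\gamma_2}^{-1}(t)$ meeting $\bar\nu_{\xi,[-(.24)\ell_1,(.24)\ell_1]}(\widetilde\gamma_1)$ cuts the infinite cylinder $\bar\nu_{c_1}(\widetilde\gamma_1)$ in a spanning disk $D_t$ with boundary on the side $h_{\widetilde\gamma_1}^{-1}(c_1\ell_1)$; Corollary~\ref{levelset} bounds the $f_{\widetilde\gamma_1}$-variation on $D_t$ by $0.001\ell_1$, so $D_t$ lies in a thin $f_{\widetilde\gamma_1}$-slab.

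Next I would locate the curve $\widetilde\gamma_2$ inside $\bar\nu(1)$. By condition~2, the negative $\bar\nu(2)$-end meets $\bar\nu(1)$ at $f_{\widetilde\gamma_1}\in(b_1-0.02\ell_1,b_1-0.01\ell_1)$ while the positive $\bar\nu(2)$-end is disjoint from $\bar\nu(1)$; by condition~1, $f_{\widetilde\gamma_1}$ is monotone along $\widetilde\gamma_2$; and by Lemma~\ref{nuint}, $\widetilde\gamma_2$ cannot leave $\bar\nu_{c_1}(\widetilde\gamma_1)$ through its side. Together these force $\widetilde\gamma_2$ to traverse $\bar\nu(1)$ and exit through the positive end $f_{\widetilde\gamma_1}^{-1}(b_1)$ of $\bar\nu(1)$ at a unique parameter $t_0\in(a_2,b_2)$. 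For $t\in[a_2,t_0]$ I would check two inclusions: the $f_{\widetilde\gamma_1}$-slab containing $D_t$ lies in $[a_1,b_1]$ (from condition~2 and the $0.001\ell_1$ slab bound, assuming the mild condition that $b_1-a_1$ is comparable to $\ell_1$), and the $g_n$-diameter of $D_t$ is bounded by roughly $c_1\ell_1\rho_n(y_1)<c_2\ell_2\rho_n(y_2)$, so $h_{\widetilde\gamma_2}\leq c_2\ell_2$ on $D_t$ since $D_t$ contains a point of $\widetilde\gamma_2$ at which $h_{\widetilde\gamma_2}=0$. Both inclusions put $D_t$ inside $\bar\nu(1)\cap\bar\nu(2)$. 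For $t\notin[a_2,t_0]$, the disk $D_t$ is pushed either past $f_{\widetilde\gamma_1}=b_1$ or outside $[a_2,b_2]$, so fails one of the defining conditions of the intersection.

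The foliation $\{D_t\}_{t\in[a_2,t_0]}$ identifies $\bar\nu(1)\cap\bar\nu(2)$ topologically with $D^2\times[a_2,t_0]$, hence a 3-ball, and its boundary decomposes into three pieces: $D_{a_2}$, a 2-disk lying in the negative end of $\bar\nu(2)$ (where $f_{\widetilde\gamma_2}=a_2$) and contained in $\bar\nu(1)$ by condition~2; the annulus $\bigcup_{t\in[a_2,t_0]}\partial D_t$ in the side $h_{\widetilde\gamma_1}^{-1}(c_1\ell_1)$ of $\bar\nu(1)$; and the cap $D_{t_0}$, which I would identify with the positive end of $\bar\nu(1)$. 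The main obstacle lies in this last identification: $D_{t_0}$ is a level disk of $f_{\widetilde\gamma_2}$ while the positive end of $\bar\nu(1)$ is a level disk of $f_{\widetilde\gamma_1}$, so these are geometrically distinct spanning disks of the end-region of $\bar\nu(1)$ near $f_{\widetilde\gamma_1}=b_1$. To absorb the thin sliver between them into the 3-ball, I would flow $D_{t_0}$ to the positive end using the vector field $\tilde\tau_1$ from Corollary~\ref{3dvf}, invoking Corollary~\ref{levelset} to match the boundary circles up to a small isotopy in the side of $\bar\nu(1)$; the resulting region is a trivially-collared $D^2\times I$ that can be appended to $\cup D_t$ without altering the product topology.
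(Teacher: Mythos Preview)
Your foliation approach is more elaborate than the paper's and creates exactly the complication you flag at the end. The paper argues more directly: under inequality~(\ref{lengthcompar}) (so $c_1\ell_1\rho_n(y_1)<(0.9)c_2\ell_2\rho_n(y_2)$ by condition~3 of good intersection), Corollary~\ref{flatint} gives that the \emph{side} of $\bar\nu(2)$ is disjoint from $\bar\nu(1)$. Combined with condition~2 of good intersection (the positive end of $\bar\nu(2)$ misses $\bar\nu(1)$), this means $\partial\bar\nu(2)\cap\bar\nu(1)$ lies entirely in the negative end of $\bar\nu(2)$. By Lemma~\ref{nuint1}(2) that intersection is a single spanning disk $D$ in $\bar\nu(1)$. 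Since $\bar\nu(1)\cong D^2\times I$, the disk $D$ cuts it into two $3$-balls, and $\bar\nu(1)\cap\bar\nu(2)$ is precisely the one on the $f_{\widetilde\gamma_2}\ge a_2$ side. The boundary decomposition (disk in the negative end of $\bar\nu(2)$, annulus in the side of $\bar\nu(1)$, positive end of $\bar\nu(1)$) is then immediate from the product structure on $\bar\nu(1)$.

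Your approach, by contrast, tries to build the $3$-ball out of $f_{\widetilde\gamma_2}$-level disks $D_t$, $t\in[a_2,t_0]$. The claim that $D_t\subset\bar\nu(1)$ for all such $t$ actually fails near $t=t_0$: since $f_{\widetilde\gamma_1}$ varies by up to about $10^{-3}\ell_1$ on each $D_t$, part of $D_{t_0}$ (and of $D_t$ for $t$ just below $t_0$) spills past $f_{\widetilde\gamma_1}=b_1$. So $\bigcup_{t\in[a_2,t_0]}D_t$ is not equal to $\bar\nu(1)\cap\bar\nu(2)$; the discrepancy is the ``sliver'' you try to absorb with the $\tilde\tau_1$-flow. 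That patch can be made rigorous, but it is superfluous: once you know the side of $\bar\nu(2)$ avoids $\bar\nu(1)$, a single spanning disk does the whole job and the positive end of $\bar\nu(1)$ appears automatically as part of the boundary of the cut-off piece, with no matching of caps required.
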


\begin{proof}
We suppose that Inequality~\ref{lengthcompar} holds. It follows from
Lemma~\ref{flatint} that for all $n$ sufficiently large, the sides
of $\bar\nu(1)$ and of $\bar\nu(2)$ do not intersect and in fact the side of
$\bar\nu(2)$ is disjoint from $\bar\nu(1)$.  Thus, the intersection of
$\bar\nu(1)$ and $\partial \bar\nu(2)$ is contained in the negative
end of $\bar\nu(2)$. By Part 3 of Lemma~\ref{nuint}, this intersection
is a $2$-disk. Hence, it cuts off a $3$-ball in $\bar\nu(1)$.

The other case is analogous.
\end{proof}

\begin{cor}
With notation and assumptions above, suppose that
Inequality~\ref{lengthcompar} holds. Then the boundary of
$\bar\nu(1)\cup\bar\nu(2)$ consists of the union of two subsets:
(i) the disjoint union of the negative end of $\bar\nu(1)$ and the
positive end of $\bar\nu(2)$ and (ii) an annulus $A$. These two
subsets are glued together along their boundaries. The annulus $A$
consists of the union of three annuli glued together along their
boundaries. The first is the intersection of the side of $\bar\nu(1)$
with the complement of the interior of $\bar\nu(2)$. The second is the
negative end of $\bar\nu(2)$ minus its intersection with the interior of
$\bar\nu(1)$ and the third is the side of $\bar\nu(2)$. If the opposite
inequality to Inequality(~\ref{lengthcompar}) holds, then there are
similar statements  with the roles of $\bar\nu(1)$ and $\bar\nu(2)$ and
`positive' and `negative' reversed.
\end{cor}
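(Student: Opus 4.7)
The plan is to treat the immediately preceding lemma as a black box and reduce the corollary to a purely set-theoretic computation of $\partial(\bar\nu(1)\cup\bar\nu(2))$, using the standard identity
$$\partial(A\cup B)=\bigl(\partial A\setminus\operatorname{int} B\bigr)\cup\bigl(\partial B\setminus\operatorname{int} A\bigr).$$
I argue the first case of the corollary, under Inequality~(\ref{lengthcompar}); the reverse-inequality case follows by the obvious symmetry with the roles of $\bar\nu(1),\bar\nu(2)$ and of ``positive,'' ``negative'' interchanged.

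First I would record, from the previous lemma (and the disjointness conclusion it draws from Lemma~\ref{nuint}), the precise incidence data on $\partial\bar\nu(1)$ and $\partial\bar\nu(2)$. Write $\partial\bar\nu(i)=E_i^-\cup S_i\cup E_i^+$ for the negative end, side, and positive end. The lemma yields: (i) the side $S_2$ and the positive end $E_2^+$ are disjoint from $\bar\nu(1)$; (ii) the intersection $I=\bar\nu(1)\cap\bar\nu(2)$ meets $\partial\bar\nu(2)$ only in a sub-disk $D\subset E_2^-$, whose boundary circle $\partial D$ lies in the interior of $E_2^-$; (iii) the remainder of $\partial I$ consists of the entire positive end $E_1^+$ together with an annulus $A_0\subset S_1$ which, since $E_1^+$ sits fully in $\operatorname{int}\bar\nu(2)$, must be a collar of $\partial E_1^+$ in $S_1$ wrapping once around; (iv) the boundary circle of $A_0$ opposite $\partial E_1^+$ coincides with $\partial D$.

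Next I would identify each surviving piece. Removing $\operatorname{int}\bar\nu(2)$ from $\partial\bar\nu(1)$ deletes $E_1^+$ and $\operatorname{int} A_0$, leaving $E_1^-\cup(S_1\setminus\operatorname{int} A_0)$; the complement $S_1\setminus\operatorname{int} A_0$ is a single annulus (not two disks) precisely because $A_0$ is a collar of $\partial E_1^+$ encircling $S_1$. Removing $\operatorname{int}\bar\nu(1)$ from $\partial\bar\nu(2)$ deletes $\operatorname{int} D$, leaving $E_2^+\cup S_2\cup(E_2^-\setminus\operatorname{int} D)$.

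Finally I would assemble these pieces into the stated decomposition. The three annuli $S_1\setminus\operatorname{int} A_0$, $E_2^-\setminus\operatorname{int} D$, and $S_2$ glue together along $\partial D$ (shared by the first two) and along $\partial E_2^-$ (shared by the last two) to form a single annulus $A$ whose two boundary circles are $\partial E_1^-$ and $\partial E_2^+$. Attaching the disks $E_1^-$ and $E_2^+$ along these two circles recovers the full boundary $\partial(\bar\nu(1)\cup\bar\nu(2))$, and the resulting decomposition is exactly the one asserted. The only real bookkeeping obstacle is verifying the annulus (rather than two-disk) topology of $S_1\setminus\operatorname{int} A_0$ and correctly matching up the gluing circles of the three constituent annuli of $A$; once those are pinned down, the identifications are forced.
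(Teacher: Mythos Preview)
Your proof is correct and is precisely the unwinding the paper has in mind: the paper gives no proof at all for this corollary, treating it as immediate from the preceding lemma, and your argument spells out exactly that immediacy via the identity $\partial(A\cup B)=(\partial A\setminus\operatorname{int}B)\cup(\partial B\setminus\operatorname{int}A)$ together with the incidence data the lemma provides. The one point worth making explicit (which you handle implicitly) is that $E_1^-$ is disjoint from $\bar\nu(2)$; this follows since the lemma's description of $\partial I\cap\partial\bar\nu(1)$ as $A_0\cup E_1^+$ with $A_0\subset S_1$ leaves no room for $E_1^-$ to meet $I$.
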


There are also an analogous definition and results when the balls
are $B_{\lambda g_n}(y_1,1)$ and $B_{\lambda g_n}(y_2,1)$. (Notice
that we are using the same multiple of the metric on the two balls.)
Since the only place in these arguments where we used the fact that
we were dealing with $g'_n(y_i)$ rather than arbitrary multiples of
$g_n$ was when we compared $\rho_n(y_1)$ with $\rho_n(y_2)$. If we
are using the same multiple of the metric for both balls, then the
comparison factor is $1$. We leave the explicit formulations to the
reader.

\subsubsection{Chains}

Now suppose that we have a sequence of $\epsilon'$-solid cylinders
$\{\bar\nu(1),\ldots,\bar\nu(k)\}$, with
$\bar\nu(i)=\bar\nu_{c_i,[a_i,b_i]}(\widetilde\gamma_i)$ as in
Proposition~\ref{flatboundary} with the geodesics $\widetilde
\gamma_i$ oriented. We say that these form a {\em linear chain of
$\epsilon'$-solid cylinders with good intersections} if:
\begin{enumerate}
\item For each $1\le i<k$ the open sets $\bar\nu(i)$ and $\bar\nu(i+1)$ have a good
intersection with the given orientations.
\item If $\bar\nu(i)\cap \bar\nu(j)\not=\emptyset$ for some $i\not= j$, then
$|i-j|=1$.
\end{enumerate}

In addition to linear chains there are circular chains.

\begin{defn}
A {\em circular chain of $\epsilon'$-solid cylinder neighborhoods
with good intersections} is a sequence
$\{\bar\nu(1),\ldots,\bar\nu(k)\}$ of $\epsilon'$-solid cylinder
neighborhoods, indexed by the integers modulo $k$, such that for
each $i,\ 1\le i\le k$, the pair $\{\bar\nu(i),\bar\nu(i+1)\}$ has good
intersections and $\bar\nu(i)\cap \bar\nu(j)\not=\emptyset$ implies
that $j\cong i-1,i\ \text{\rm or}\ i+1\pmod k$.
\end{defn}

\begin{lem}
Suppose that $\{\bar\nu(1),\cdots,\bar\nu(k)\}$ is a linear chain of
$\epsilon'$-solid cylinder neighborhoods with good intersections.
Then $\bar\nu(1)\cup\cdots\cup\bar\nu(k)$ is homeomorphic to a
$3$-ball and its boundary is the union of the negative end of
$\bar\nu(1)$, the positive end of $\bar\nu(k)$ and an annulus $A$.
\end{lem}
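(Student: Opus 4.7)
The plan is to proceed by induction on $k$, using as the base case $k=1$ (immediate from Proposition~\ref{flatboundary}, which says each $\bar\nu(i)$ is homeomorphic to $D^2\times I$ with the negative and positive ends of the product structure corresponding to the negative and positive ends of $\bar\nu(i)$) and $k=2$ (which is exactly the content of the Corollary immediately preceding the statement). For the inductive step, I would assume the conclusion for $k-1$, set $U = \bar\nu(1)\cup\cdots\cup\bar\nu(k-1)$, and analyze how $\bar\nu(k)$ attaches to $U$.

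First I would observe that, by the second condition in the definition of a linear chain, $\bar\nu(k)$ meets $U$ only in $\bar\nu(k-1)$, so $U \cap \bar\nu(k) = \bar\nu(k-1)\cap \bar\nu(k)$. Thus the attaching region is governed purely by the good intersection of the consecutive pair $\{\bar\nu(k-1),\bar\nu(k)\}$, and the preceding Corollary identifies its boundary structure. By induction $U$ is a $3$-ball whose boundary decomposes as the negative end of $\bar\nu(1)$, the positive end of $\bar\nu(k-1)$, and a connecting annulus $A_{k-1}$. In the boundary of $U$, the portion that sits inside $\bar\nu(k)$ is, by the good-intersection corollary, a $2$-disk $D\subset$ (positive end of $\bar\nu(k-1)$) when $c_{k-1}\ell_{k-1}\rho_n < c_k\ell_k\rho_n$, or a $2$-disk $D$ inside the negative end of $\bar\nu(k)$ in the opposite case. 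In either case the intersection $U\cap\bar\nu(k)$ is a $3$-ball bounded by $D$ and a compressing disk $D'$ in the interior of the positive end of $\bar\nu(k-1)$ (or negative end of $\bar\nu(k)$).

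The key topological observation is then that $U\cup \bar\nu(k)$ is obtained by gluing two $3$-balls along a $2$-disk in each of their boundaries, which again yields a $3$-ball. To describe its boundary as required, I would track the boundary pieces under the gluing: the negative end of $\bar\nu(1)$ is untouched; the positive end of $\bar\nu(k)$ is untouched; the remaining boundary is formed by $A_{k-1}$ minus the open disk where $\bar\nu(k-1)$ meets $\bar\nu(k)$, joined along a circle to the side-annulus contribution from $\bar\nu(k)$, yielding a single annulus $A_k$ running from $\partial(\text{neg end of }\bar\nu(1))$ to $\partial(\text{pos end of }\bar\nu(k))$. This uses Lemma~\ref{nuint1} and the good-intersection structure to guarantee that the various sides and ends glue along circles, not along more complicated curves.

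The main obstacle will be the bookkeeping in the boundary decomposition, in particular ensuring that after attaching $\bar\nu(k)$ the union of the old connecting annulus $A_{k-1}$ (with an open disk removed) and the relevant side pieces of $\bar\nu(k)$ really assembles into a single annulus and not a more complicated surface. This requires verifying that the disk $D\subset\partial U$ where $\bar\nu(k)$ attaches lies in the interior of the positive end of $\bar\nu(k-1)$ (so that its boundary circle is a single simple closed curve in $A_{k-1}$, cutting it into an annulus rather than disconnecting it), which in turn follows from the strict inequality between $c_{k-1}\ell_{k-1}\rho_n$ and $c_k\ell_k\rho_n$ in the definition of good intersection, together with Part~2 of Lemma~\ref{nuint1}; the fact that $\bar\nu(k)$ does not meet any earlier $\bar\nu(j)$ with $j<k-1$ is what prevents any further identifications from spoiling this annular structure.
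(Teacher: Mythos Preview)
Your approach is exactly the paper's: the paper's proof is literally ``This is proved easily by induction,'' and you carry out that induction. The overall structure---base case from Proposition~\ref{flatboundary} and the preceding Corollary, then attach $\bar\nu(k)$ to $U=\bar\nu(1)\cup\cdots\cup\bar\nu(k-1)$ along a disk, using that $\bar\nu(k)$ meets only $\bar\nu(k-1)$---is correct.

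One bookkeeping correction: your case labels are inverted. When $c_{k-1}\ell_{k-1}\rho_n(y_{k-1}) < c_k\ell_k\rho_n(y_k)$ (so $\bar\nu(k-1)$ is the narrower one), the intersection Lemma says $\partial U\cap\bar\nu(k)$ consists of the \emph{entire} positive end of $\bar\nu(k-1)$ together with an annular collar in its side; this is a disk containing (not contained in) the positive end, and it removes an annular collar from $A_{k-1}$. In the opposite case ($\bar\nu(k-1)$ wider), $\partial U\cap\bar\nu(k)$ is a proper sub-disk of the positive end of $\bar\nu(k-1)$, and the leftover annulus from that end joins $A_{k-1}$. In both cases you are gluing two $3$-balls along a disk and the three resulting annular pieces chain into $A_k$, so the conclusion goes through; just swap the descriptions in your two cases. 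Your final paragraph's claim that ``$D$ lies in the interior of the positive end of $\bar\nu(k-1)$'' only holds in the second case, not both.
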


\begin{proof}
This is proved easily by induction.
\end{proof}

The same arguments establish the analogue for circular chains.

\begin{lem}
Let $\{\bar\nu(1),\ldots,\bar\nu(k)\}$ be a circular chain of
$\epsilon'$-solid cylinder neighborhoods with good intersections
contained in $M_n$. Then $\bigcup_i\bar\nu(i)$ is a solid torus.
\end{lem}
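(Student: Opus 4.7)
My plan is to reduce to the linear chain lemma and then perform a $1$-handle attachment. I would first apply the previous lemma to the linear subchain $\{\bar\nu(1),\ldots,\bar\nu(k-1)\}$ to see that $B = \bar\nu(1)\cup\cdots\cup\bar\nu(k-1)$ is a $3$-ball whose boundary contains the negative end of $\bar\nu(1)$ and the positive end of $\bar\nu(k-1)$ as disks, connected by the ``side'' annulus built up from the sides and partial ends of the intermediate cylinders.

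Next I would analyze $\bar\nu(k)\cap B$. By the circular chain disjointness hypothesis, $\bar\nu(k)$ meets $\bar\nu(i)$ only for $i\in\{1,k-1\}$, so $\bar\nu(k)\cap B = (\bar\nu(k)\cap\bar\nu(1))\sqcup(\bar\nu(k)\cap\bar\nu(k-1))$, and each summand is a $3$-ball by the good-intersection analysis of pairs carried out just above. I claim that these two $3$-balls lie in opposite ends of $\bar\nu(k)\cong D^2\times I$. Concretely, by the monotonicity condition built into the definition of good intersection (``$f_{\widetilde\gamma_i}$ is increasing along $\widetilde\gamma_{i+1}$''), the intersection with $\bar\nu(k-1)$ lies in the sub-cylinder near one end of $\bar\nu(k)$, while the intersection with $\bar\nu(1)$ lies in the sub-cylinder near the other end (after consistent choices of orientations of the $\widetilde\gamma_i$ around the cycle, as allowed by the definition). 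If instead both sub-balls lay in the same end of $\bar\nu(k)$, then the corresponding ``thick'' portions of $\bar\nu(1)$ and $\bar\nu(k-1)$, each filling a neighborhood of that end on its respective side, would be forced to overlap, contradicting the disjointness $\bar\nu(1)\cap\bar\nu(k-1)=\emptyset$ for $k\geq 3$.

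With this in hand, a standard collar argument lets me replace $\bar\nu(k)$ by a slightly shrunken, homeomorphic copy $\bar\nu(k)^-\cong D^2\times I$ that meets $B$ in exactly two disjoint $2$-disks in $\partial\bar\nu(k)^-$, one in each end, without changing the homeomorphism type of $\bigcup_i\bar\nu(i)$. Thus $\bigcup_i\bar\nu(i)$ is homeomorphic to a $3$-ball with a single $1$-handle attached along two disjoint $2$-disks in its boundary. Such a manifold is a handlebody of genus $1$, and, being embedded in the orientable $3$-manifold $M_n$, it must itself be orientable, hence homeomorphic to $D^2\times S^1$, i.e.\ a solid torus.

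The hardest step is the ``opposite ends'' claim; the rest is routine handle theory together with the invocation of the previous lemma. That step uses both the monotonicity built into the definition of good intersection and the non-adjacent disjointness of the $\bar\nu(i)$ that is part of the definition of a circular chain, together with the analysis of pairwise good intersections from the preceding lemmas (\ref{nuint} and \ref{nuint1}) which pin down where within $\bar\nu(k)$ each of the two intersection $3$-balls must sit.
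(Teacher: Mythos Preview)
Your approach is correct and is essentially what the paper intends by ``the same arguments establish the analogue for circular chains'': reduce to the linear case on $\{\bar\nu(1),\ldots,\bar\nu(k-1)\}$ and then close up by attaching $\bar\nu(k)$ as a $1$-handle, yielding an orientable genus-$1$ handlebody.

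One point should be tightened. Your proof that the two intersection $3$-balls lie at \emph{opposite} ends of $\bar\nu(k)$ invokes the disjointness $\bar\nu(1)\cap\bar\nu(k-1)=\emptyset$, but this fails when $k=3$ (then $\bar\nu(k-1)=\bar\nu(2)$ is adjacent to $\bar\nu(1)$). The claim is still true and follows more directly from condition~(2) in the definition of good intersection: for the consecutive pair $(\bar\nu(k-1),\bar\nu(k))$ the positive end of $\bar\nu(k)$ is disjoint from $\bar\nu(k-1)$, so $\bar\nu(k)\cap\bar\nu(k-1)$ sits at the negative end of $\bar\nu(k)$; and for the pair $(\bar\nu(k),\bar\nu(1))$ the negative end of $\bar\nu(1)$ lands in $f_{\widetilde\gamma_k}^{-1}\bigl((b_k-(.02)\ell_k,\,b_k-(.01)\ell_k)\bigr)$ while the positive end of $\bar\nu(1)$ is disjoint from $\bar\nu(k)$, so $\bar\nu(k)\cap\bar\nu(1)$ sits at the positive end of $\bar\nu(k)$. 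With this direct argument in place (and the pairwise-intersection lemma pinning down the shape of each intersection $3$-ball), the $1$-handle attachment goes through for all $k\ge 2$.
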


{\bf From now on {\em a chain of $\epsilon'$-solid cylinders} means
a chain with good intersections.}

\begin{defn}
By a {\em complete} chain of $\epsilon'$-solid cylinders we mean either:
\begin{enumerate}
\item a circular chain
contained in ${\rm int}\,W'_{n,2}$,
or
\item a linear chain with the property that each of the extremal solid cylinders in the chain
meets a $2$-sphere boundary component of $W'_{n,2}$ in a spanning disk and all of the non-extremal solid cylinders in the chain are contained in ${\rm int}\,W'_{n,2}$.
\end{enumerate}
\end{defn}

\begin{prop}
Let $A$ be a component of $M_n\setminus U'_{n,1}$ and let $\widehat
A$ be the associated compact submanifold as defined in
Section~\ref{compactext}. In the covering of $\widehat A$ given in
either Lemma~\ref{Acover2} or Lemma~\ref{Acover1} we can replace the open subset $U_{\rm cyl}$
 by a finite set of cores of $\epsilon'$-solid cylinders. The $\epsilon'$-solid cylinders
  in this collection form a disjoint union of complete chains with good intersection.
 For each boundary component of $\widehat A$ one of the following holds.
\begin{enumerate}\item The boundary component is a $2$-torus  and is  disjoint from all the
$\epsilon'$-solid cylinders.
\item The boundary component is a $2$-sphere and there are exactly two $\epsilon'$-solid cylinders in the collection meet this boundary component. Each meets it in a spanning disk for the solid cylinder.
    \item The width of each $\epsilon'$-solid cylinder in the collection is between $(.40)\xi s_1$ and $\xi s_1/2$.
\end{enumerate}
\end{prop}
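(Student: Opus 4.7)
The plan is to start from the finite covering of $\widehat A$ supplied by Lemma~\ref{Acover2} (or Lemma~\ref{Acover1}) as refined by Addendum~\ref{1add} (or Addendum~\ref{2add}), and then carry out a finite-selection and width-adjustment procedure that turns the cylinder portion of the covering into the prescribed chains. The key geometric input is that each $\epsilon'$-solid cylinder $\bar\nu_\xi(\widetilde\gamma)$ is $\hat\epsilon$-close to a geodesic $\gamma$ approximating the boundary of a nearby $2$-dimensional standard ball, so that the combinatorics of cylinders mimics the combinatorics of boundary arcs and circles in a $2$-dimensional Alexandrov space.

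First I would fix the extremal cylinders. For each $2$-sphere boundary component of $\widehat A$, Lemma~\ref{ballinterface} and Corollary~\ref{S2interface} have already produced two $\epsilon'$-solid cylinders whose spanning disks contribute the non-annular part of that sphere. These become the distinguished endpoints of whichever linear chain terminates at that $2$-sphere, and by construction each meets the sphere in a single spanning disk. Next, I would cover the part of $\widehat A$ not yet absorbed by $U_{2,\text{gen}}$, the disjoint $\epsilon'$-solid tori, the disjoint $3$-balls, or the extremal cylinders, by a finite collection of $\epsilon'$-solid cylinder cores. Such a finite subcover exists by Theorem~\ref{summary} and compactness.

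Next I would enforce good intersections. For any two cylinders $\bar\nu(\widetilde\gamma_1)$ and $\bar\nu(\widetilde\gamma_2)$ in the collection with overlapping cores, Lemma~\ref{nuint} forces $\widetilde\gamma_2\cap\nu_\xi(\widetilde\gamma_1)\subset\nu_{\xi^2}(\widetilde\gamma_1)$ and each geodesic to meet each level set of the other's $f$ in at most one point, while Lemma~\ref{nuint1} describes precisely how one cylinder cuts spanning disks in the other. The only missing ingredient is the strict separation $c_1\ell_1\rho_n(y_1)\ne c_2\ell_2\rho_n(y_2)$ (up to the $10\%$ gap) required in the definition of good intersection. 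This is achieved by varying the widths $c_i$ slightly within the allowed band $[(0.40)\xi s_1,\xi s_1/2]$: I would perturb widths inductively so that adjacent overlapping pairs acquire the strict inequality and so that, by the distance estimates in Corollary~\ref{flatint} and Lemma~\ref{nuint1}, non-adjacent pairs in the intended chain order have disjoint sides.

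Finally I would verify the chain structure. Because each $\epsilon'$-solid cylinder approximates a short segment of the $1$-manifold boundary of a nearby standard $2$-dimensional ball, and because the boundary of any such ball is a disjoint union of arcs (running into the two sides of a metric interval when $\widehat A$ has a $2$-sphere boundary) and circles (in the purely $2$-dimensional interior case), the adjacency graph of the selected cylinders decomposes into paths and cycles. Paths terminate in the anchor cylinders from step one, giving complete linear chains ending in spanning disks of $2$-sphere boundary components of $\widehat A$; cycles lie in the interior of $W'_{n,2}$, giving complete circular chains. Disjointness from any $2$-torus boundary component of $\widehat A$ is automatic: such a torus is built in Corollary~\ref{T2split} inside $U'_2$, away from the cylinder region $U_{\rm cyl}$, and the covering can be refined to avoid it. The main obstacle will be the simultaneous control of all pairwise intersections—ensuring the strict comparison $c_1\ell_1\rho_n(y_1)\ne c_2\ell_2\rho_n(y_2)$ for every adjacent pair while excluding any intersection between non-adjacent cylinders, all within the narrow width window $[(0.40)\xi s_1,\xi s_1/2]$; this will be handled by ordering the cylinders along their approximated boundary curves and perturbing widths sequentially from one end of each chain to the other.
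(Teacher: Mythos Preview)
Your overall strategy --- cover first by compactness, then organize into chains --- differs from the paper's, and it has a real gap at the organization step.

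The paper builds the chains \emph{inductively}: it starts only with the two anchor cylinders at each $2$-sphere boundary component, and then repeatedly extends any chain with a free end by appending one new cylinder, adjusting its width by at most a factor $1.1$ and truncating its end so that it has good intersection with its unique neighbor. At each stage the collection $\mathcal D$ is already a disjoint union of chains; the hard work is a case analysis showing that any free end can be extended, and in particular that a free end cannot abut an interior cone region (because the level circles of the cylinder would then lie in an $\epsilon'$-solid torus and be isotopic to its generic fiber, hence homotopically nontrivial, contradicting that they bound disks in the cylinder). Termination in a $3$-ball near a $2$-dimensional corner then forces the chain to close up on one of the pre-placed anchor cylinders.

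Your approach instead chooses an arbitrary finite subcover of the complement by cylinder cores and asserts that ``the adjacency graph of the selected cylinders decomposes into paths and cycles.'' This is exactly the point that requires argument: nothing prevents three or more cylinders from a compactness-chosen cover from overlapping near a single point, giving vertices of degree $\ge 3$. Your later remark about ``ordering the cylinders along their approximated boundary curves'' presupposes the path/cycle structure you are trying to establish. You also do not explain why the paths must terminate at the anchor cylinders rather than simply stopping; the paper handles this via the free-end case analysis (and the homotopy argument ruling out the interior-cone case), which has no counterpart in your outline. Finally, the sequential width perturbation you propose only makes sense once each cylinder has at most two neighbors, so it cannot be invoked before the pruning is done. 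The paper's one-cylinder-at-a-time construction is precisely what sidesteps all of these issues.
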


\begin{proof}
 For each $2$-sphere boundary component $S$ of $W'_{n,2}$ there are two geodesics $\widetilde \gamma_1$ and $\widetilde \gamma_2$ of length $s_1$ and associated $\epsilon'$-solid cylinders
$\bar\nu_\xi(\widetilde\gamma_1(S))$ and $\bar\nu_\xi(\widetilde\gamma_2(S))$ of length $s_1/2$
that meet $S$ in a spanning disk that itself meets the central $2$-disk of the core of the $\epsilon'$-solid cylinder.
The complement of the cores of these in $S(x_i)$, is contained in $U'_2$
and there is an annulus $A\subset S$ that is saturated under the $S^1$-fibration of $U'_2$
with the property that $S$ is the union of $A$ with the intersection of $S$ with these two $\epsilon'$-solid cylinders.
As we run over all the boundary components $S$ of $W'_{n,2}$ these $\epsilon'$-solid cylinders are disjoint.

Suppose by induction that we have a disjoint collection ${\mathcal D}$ of such
chains of $\epsilon'$-solid cylinders with good intersection
containing all the solid cylinders constructed in the last
paragraph. We also suppose that each extremal solid cylinder in each
chain which is a member of ${\mathcal D}$ either has a free end, as defined
below, or is one of the $\epsilon'$-solid cylinders meeting a boundary component of $W'_{n,2}$ constructed in the last paragraph. By a {\em free
end} of an $\epsilon'$-solid cylinder we mean an end that is contained in  a sub-cylinder  $\bar\nu_{[a,b]}$ of length at least
$s_1/6$ with the sub-cylinder being disjoint from all the other $\epsilon'$-solid cylinders in ${\mathcal D}$ and is also disjoint from the boundary components of $W'_{n,2}$.
Lastly, we assume by induction that each cylinder in ${\mathcal D}$ is of the form
$\bar\nu_{c,[a,b]}(\widetilde\gamma)$ where $(.40)\xi s_1\le c\le \xi s_1/2$.

Suppose that the complement $Y$ in $\widehat A$ of the union of
$U_{2,{\rm gen}}$, the $\epsilon'$-solid tori $B(z_i)$ and
the $3$-ball neighborhoods $B(x_i)$ near $2$-dimensional corners
in the given collection is not
contained in the union of the $\epsilon'$-solid cylinders in the
family ${\mathcal D}$. First, we consider the possibility that one of the
chains in ${\mathcal D}$ has a free end. Let $\bar\nu$ be an extremal member of a
chain of ${\mathcal D}$ with a free end. We take a point $x\in M_n$ that is
disjoint from all the chains of ${\mathcal D}$ and within $\hat\epsilon$ of the
free end of the core of $\bar\nu$. We know that $B_{g'_n(x)}(x,1)$ is
within $\hat\epsilon$ of a $2$-dimensional Alexandrov ball $\bar
B=B(\bar x,1)$. We examine the possibilities for $\bar B$ near $\bar
x$.

Suppose that $\bar B$ was interior $\mu$-good at $\bar x$ on
scale $r$. In this case $\bar\nu$ would be contained in a solid torus
neighborhood near the interior cone point. The level circles
of the end of $\bar\nu$  are almost orthogonal to the horizontal
spaces, and hence these level circles are homotopically non-trivial
in the solid torus, which is absurd since they bound disks in $\bar\nu$
and $\bar\nu$ is contained in the solid torus.

Next, suppose that the free end of $\bar\nu$ is contained in a expanded version
of a ball of
the form $B_{g'_n(x_i)}(x_i,7r(x_i)/8))$ for one of the $3$-balls in our
collection. In the annular region $B_{g'_n(x_i)}(x_i,7r(x_i)/8))\setminus
B_{g'_n(x_i)}(x_i,r(x_i)/4))$ we can form two chains of $\epsilon'$-solid cylinders
with one end of each chain being one of the solid cylinders meeting $S_{g'_n(x_i)}(x_i,r(x_i)/4)$ constructed above. Then the free end of $\bar\nu$
must meet one of these chains.
We can arrange that $\bar\nu$ has good intersection with this chain.
In this way we extend the chain until it ends in the ball, keeping the intersections good.

Next suppose that $\bar B$ is boundary $\mu$-good at a point $\bar
y$ on scale $r(\bar y)$ with $r_0\le r(\bar y)\le 10^{-3}$ and with
angle $\le \pi-\delta$ and with $\bar x\in B(\bar y,r(\bar y)/4)$.
Then there is $y\in M_n$ within $\hat \epsilon$ of $y$ and the
neighborhood $B_{g'_n(x)}(y,r(\bar y))$ that is a $3$-ball
containing the end of $\bar\nu$. But this $3$-ball is contained in one of the
extended balls in our collection, so we have already seen in this case
how to extend the chain, with good intersections, until it meets
one of the boundary components of $W'_{n,2}$.

Suppose that there is a point within $\hat\epsilon$ of the free end of $\bar\nu$
that is in the center of the core of an $\epsilon'$-solid cylinder near
the flat boundary of a $2$-dimensional Alexandrov ball. In this case we can
take such a $\epsilon'$-solid cylinder, $\bar\nu'$ of width $\xi s_1/2$. By arranging its width
(decreasing it by a factor of at most $1.1$) and cutting it off appropriately on the end contained in $\bar\nu$, we can arrange it have good intersection with $\bar\nu$. One possibility is $\bar\nu'$ it meets no other $\epsilon'$-solid cylinder in the set we have constructed so far. In this case the other end of $\bar\nu'$ is a free end, which we can assume has length at least $s_1/6$. The other possibility is that $\bar\nu'$ meets some other $\epsilon'$-solid cylinder, $\bar\nu(1)$, in the given set. Then, as we move along the geodesic $\widetilde \gamma'$ away from $\bar\nu$, there is a first such $\epsilon'$-solid cylinder $\bar\nu(1)$ that the geodesic meets. Again cutting off $\bar\nu'$ appropriately, and possibly decreasing its width by a factor of at most $1.1$, we can arrange that the intersection of $\bar\nu'$ and $\bar\nu(1)$ is good, without destroying the fact that the intersection of $\bar\nu$ and $\bar\nu'$ is good. In this case we have extended the chain so that it together with the other $3$ sets contains a neighborhood of a fixed unrescaled size of the free end of $\bar\nu$. The width of $\bar \nu'$ is between $(.40) \xi s_1$ and $\xi s_1/2$.

This shows that in all cases we can extend the chain if it has a free end.
Now let us consider the case when the chains that we have constructed have no free ends yet the cores of the chains do not cover the complement of the other three sets.
Then we simply take a point not in the union of the other $3$ open sets. It lies at the center of the core of an $\epsilon'$-solid cylinder of rescaled length $s_1/2$ near a flat boundary. We simply add this $\epsilon'$-solid cylinder to the collection. Since there are no free ends of the pre-existing chains, this solid cylinder is disjoint from all the previous ones.

After a finite number of repetitions of these two constructions, we have created chains of $\epsilon'$-cylinders as required that cover the complement of the other three open sets.

 Since at each step we need only make the thickness of the $\epsilon'$-cylinder differ by a factor of $1.1$
 from two given numbers, we can arrange that all the intersections are good by taking widths of the $\epsilon'$-solid cylinders to be between $(.40)\xi s_1$ and $\xi s_1/2$.
\end{proof}

\subsubsection{Refinements of chains}

At this point we have covered $W'_{n,2}$ by $U'_2$, by $\epsilon'$-solid tori
and by chains of $\epsilon'$-solid cylinders with good intersection. The union of $U'_2$ and the $\epsilon'$-solid tori has the structure of a Seifert fibration with the (possible) exceptional fibers along the cores of the $\epsilon'$-solid tori. A circular chain of $\epsilon'$-solid cylinder neighborhoods
is homeomorphic to a solid torus, and a linear chain is homeomorphic to $D^2\times I$  meeting the boundary of $W'_{n,2}$ in spanning disks contained in $2$-sphere boundary components. The frontiers of these chains in $W'_{n,2}$ are contained in $U'_2$. The next step is
is to perturb these chains slightly to isotopic embeddings so that their frontiers in $W'_{n,2}$ are saturated under the $S^1$-fibration structure on $U'_2$. For this we first construct slightly smaller versions of these chains, called refinements.

\begin{defn}
Let
$\bar\nu(i)=\bar\nu_{c_i,[a_i,b_i]}(\widetilde\gamma_i)$, for
$i=1,\cdots,k$ be a chain of $\epsilon'$-solid cylinder neighborhoods with good intersection.
  Consider a consecutive pair $\nu(i),\nu(i+1)$. If
Inequality~\ref{lengthcompar} holds then we set
$$\bar\nu'(i)=\bar\nu_{(c_i/2),[a_i,b_i]}(\widetilde\gamma_i)$$ and
$$\bar\nu'(i+1)=\bar\nu_{(c_{i+1}/2),[a_{i+1}+(.001)\ell_i,b_{i+1}]}(\widetilde\gamma_{i+1}).$$
If the opposite inequality holds then we set
$$\bar\nu'(i)=\bar\nu_{(c_i/2),[a_i,b_i-(.001)\ell_{i+1}]}(\widetilde\gamma_i)$$
and
$$\bar\nu'(i+1)=\bar\nu_{(c_{i+1}/2),[a_{i+1},b_{i+1}]}(\widetilde\gamma_{i+1}).$$
Thus, we halve the width of both the solid cylinders and the truncate the end of the larger one.
We perform an analogous operation for each pair of successive solid cylinders, so that it is possible that both ends of $\nu(i)$ are truncated, only one end is truncated, or neither end is truncated.
In all cases the width of $\nu(i)$ is halved so as to become $c_i/2$.
The result is called a
{\em refinement} of the chain $\{\bar\nu(1),\cdots,\bar\nu(k)\}$. The
boundary of $\bar\nu'(1)\cup\cdots\cup \bar\nu'(k)$ consists of the negative
end of $\bar\nu'(1)$ union the positive end of $\bar\nu'(k)$ union
an annulus $A'$ analogous to $A$.
\end{defn}

It is easy to establish the following by induction.

\begin{lem}\label{prodstructure}
 Suppose that $\{\bar\nu(1),\cdots,\bar\nu(k)\}$ is either a linear chain
or a circular chain of $\epsilon'$-solid cylinder neighborhoods with
good intersections where
$\bar\nu(i)=\bar\nu_{c_i,[a_i,b_i]}(\widetilde\gamma_i)$. Then there
is a refinement $\{\bar\nu'(1),\ldots,\bar\nu'(k)\}$ of this chain.
Furthermore:
\begin{enumerate}
\item If the chain is a linear chain, then
$\left(\cup_{i=1}^k\bar\nu(i)\right)$ is homeomorphic to a $3$-ball
and the $2$-sphere $\partial \left(\cup_{i=1}^k\bar\nu(i)\right)$ is
made up of the negative end, $D_-(1)$, of $\bar\nu(1)$, the positive
end, $D_+(k)$, of $\bar\nu(k)$ and an annulus $A$ with $A$ meeting each of
$D_-(1)$ and $D_+(k)$ along its boundary circle. Similarly,
$\partial \left(\cup_{i=1}^k\bar\nu'(i)\right)$ is a $2$-sphere
consisting of the negative end $D'_-(1)$ of $\nu'_1$, the positive
end $D'_+(k)$ of $\nu'(k)$ and an annulus $A'$ meeting each of
$D'_-(1)$ and $D'_+(k)$ along its boundary circle. Furthermore,
there is a homeomorphism
$$\left(\cup_{i=1}^k \bar\nu(i)\setminus
{\rm int}\,\left(\cup_{i=1}^k\nu'(i)\right),A,A'\right)\equiv
\left(A\times I,A\times \{0\},A\times \{1\}\right).$$
\item If $\{\nu(1),\cdots,\nu(k)\}$ is a circular chain, then
$$\partial \left(\cup_{i=1}^k \nu(i)\right)$$ is homeomorphic to a
$2$-torus and
$$\bigcup_{i=1}^k\bar\nu(i)\setminus{\rm int}\,\left(\bigcup_{i=1}^k\nu'(i)\right)$$
is homeomorphic to $T^2\times I$.
\end{enumerate}
Lastly, for each $i$ let $g_{n,i}$ be the multiple of $g_n$ that is
used in defining $\nu(i)$, let $\widetilde \gamma_i$ be the central
geodesic of $\nu(i)$ and let $\ell_i$ be the length of $\widetilde
\gamma_i$ in the metric $g_{n,i}$. In each case the distance,
measured in $g_{n,i}$, from any point of $A'\cap \nu(i)$ to $A$ is at
least $\xi^2\ell(\widetilde\gamma_i)$.
\end{lem}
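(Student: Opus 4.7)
The plan is to prove both the topological structure and the product structure assertions by induction on the length $k$ of the chain, using the pairwise good-intersection results already established (Lemma~\ref{nuint}, Lemma~\ref{nuint1}, and the corollary identifying the union of two cylinders with good intersections as a $3$-ball with explicit boundary). For the base case $k=1$, the statement about $\bar\nu(1)$ follows directly from Proposition~\ref{flatboundary}, which asserts $\bar\nu_{c,[a,b]}(\widetilde\gamma_1)\cong D^2\times I$, and the product structure between $\bar\nu(1)$ and its refinement $\bar\nu'(1)$ is obtained from the unit vector field $\widetilde\tau_1$ of Corollary~\ref{3dvf} together with the fact that the level circles of $h_{\widetilde\gamma_1}$ foliate each level disk of $f_{\widetilde\gamma_1}$. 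For the inductive step in the linear case, assume the result for the chain $\{\bar\nu(1),\ldots,\bar\nu(k-1)\}$. Part~3 of Lemma~\ref{nuint1} guarantees that $\bar\nu(k)$ meets $\bar\nu(k-1)$ in a $3$-ball cut off by a spanning disk in the appropriate end of one of the two cylinders, while condition~(2) in the definition of chain and the disjointness condition $|i-j|\geq 2 \Rightarrow \bar\nu(i)\cap\bar\nu(j)=\emptyset$ ensure $\bar\nu(k)$ meets no earlier $\bar\nu(j)$. Gluing along this spanning disk preserves the $3$-ball property and produces the boundary decomposition $D_-(1)\cup A\cup D_+(k)$.

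For the product structure between the chain and its refinement, I would proceed cylinder by cylinder. On each $\bar\nu(i)$ one has the vector field $\widetilde\tau_i$ from Corollary~\ref{3dvf} whose flow lines cross level disks of $f_{\widetilde\gamma_i}$ transversally and preserve (up to small error) the function $h_{\widetilde\gamma_i}$. Flowing the side of $\bar\nu'(i)$ radially outward along a smooth interpolation that combines scaling in the $h_{\widetilde\gamma_i}$-direction (to go from width $c_i/2$ to $c_i$) with translation along $\widetilde\tau_i$ (to reach the truncated ends) produces the required homeomorphism onto $A\times I$ on each piece. On the overlap regions $\bar\nu(i)\cap\bar\nu(i+1)$ the two local product structures must be reconciled; here Lemma~\ref{nuint1}(1), which gives $\|f'_{\widetilde\gamma_{i+1}}(\widetilde\tau_i)\|>1-20\xi$, means that $\widetilde\tau_i$ is almost parallel to $\widetilde\tau_{i+1}$ on the overlap, so a partition of unity argument glues the local product structures to a global one. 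The circular case is identical cylinder by cylinder, with the extra observation that the closing-up gives $\partial(\cup\bar\nu(i))\cong T^2$ (two annular boundary components glue along their boundary circles to form a torus), and the complement of the refinement is $T^2\times I$.

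The main obstacle I expect will be the global compatibility of the local product structures across the overlap regions $\bar\nu(i)\cap\bar\nu(i+1)$, rather than the topological identification of $\cup\bar\nu(i)$. Inside one cylinder the structure is dictated by a single pair $(f_{\widetilde\gamma_i}, h_{\widetilde\gamma_i})$, but on the overlap the functions $(f_{\widetilde\gamma_i}, h_{\widetilde\gamma_i})$ and $(f_{\widetilde\gamma_{i+1}}, h_{\widetilde\gamma_{i+1}})$ both control the geometry, and the refinement is defined by cutting the width in half for both cylinders simultaneously. The near-orthogonality estimates in Corollary~\ref{flatint}(3) and the transversality of $\widetilde\tau_i$ to level surfaces of $f_{\widetilde\gamma_{i+1}}$ (Lemma~\ref{nuint1}(1)) are precisely what make this gluing possible for $\xi$ small, but writing the homeomorphism down explicitly on the overlap requires care.

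Finally, the distance estimate follows from Part~4 of Lemma~\ref{goodrect} applied to each $\bar\nu(i)$: for any level set $L$ of $f_{\widetilde\gamma_i}$ and any point $p\in L\cap h_{\widetilde\gamma_i}^{-1}(c\ell_i)$ with $c\in[\xi^2,\xi]$, one has $d_{g_{n,i}}(p,L\cap\widetilde\gamma_i)\leq(1+2\xi)c\ell_i$. Since a point of $A'\cap\bar\nu(i)$ has $h_{\widetilde\gamma_i}$-value $c_i\ell_i/2$ while $A\cap\bar\nu(i)$ has $h_{\widetilde\gamma_i}$-value $c_i\ell_i$, and $c_i\geq\xi^2$ by construction of chains, the metric separation is at least $(1-O(\xi))(c_i/2)\ell_i\geq\xi^2\ell_i$ provided $\xi$ is sufficiently small, which is already assumed throughout the section.
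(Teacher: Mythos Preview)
Your proposal is correct and follows essentially the same approach as the paper, which in fact gives no proof beyond the single sentence ``It is easy to establish the following by induction.'' Your inductive scheme, your use of the pairwise good-intersection lemmas for the topological statement, and your use of the halved width (with $h_{\widetilde\gamma_i}$ being $1$-Lipschitz) for the distance estimate are exactly what the paper has in mind but does not spell out.

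One small remark on the distance estimate: your phrasing ``a point of $A'\cap\bar\nu(i)$ has $h_{\widetilde\gamma_i}$-value $c_i\ell_i/2$'' tacitly treats only the side of $\bar\nu'(i)$, whereas $A'$ also contains fragments of the (possibly truncated) ends of neighboring refined cylinders. But those fragments lie inside the width-$c_i/2$ region of $\bar\nu(i)$ as well, so the inequality $h_{\widetilde\gamma_i}\le (c_i/2)\ell_i$ still holds for all of $A'\cap\bar\nu(i)$, and your $1$-Lipschitz argument goes through unchanged.
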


\subsection{Removing solid tori and solid cylinders from $W'_{n,2}$}

At this point we have constructed the compact submanifold $V_{n,1}$
and shown that it has all the properties  required by
Theorem~\ref{1Dthm'} but we still must modify $W'_{n,2}$ in order to
product $V_{n,2}$. To produce $V_{n,2}$ as required by
Theorem~\ref{1Dthm'} we shall remove  solid tori and solid cylinders
from $W'_{n,2}$.

The compact set $W'_{n,2}$ is covered by  (i) $U'_2$, (ii) a finite
number of $\epsilon'$-solid tori $B(z_i)=B(z_i,r(z_i)/4)$ near interior
singular points, and (iii) a finite number of chains of refinements
of $\epsilon'$-solid cylinders. Our first approximation to
$V_{n,2}$, we call it $V'_{n,2}$, is to remove from $W'_{n,2}$ the
union of:
\begin{enumerate}
\item the interiors of all the refinements of $\epsilon'$-solid cylinders in the
given chains and
\item the interiors of the $\epsilon'$-solid tori $B(z_i)$ in the
collection.
\end{enumerate}

This does not produce $V_{n,2}$ because, even though the frontiers of these components are contained in $U'_2$, they are not saturated under the $S^1$-fibration on $U'_2$.
In order to define $V_{n,2}$, we must  deform the solid tori and
solid cylinders that we remove from $W'_{n,2}$ slightly so as to
arrange that their frontiers in $W'_{n,2}$ are saturated under this action.

\subsubsection{Removing solid tori near interior cone points}

 For the solid tori near interior cone
points it is clear what to do. According to
Corollary~\ref{invtorus}, for each $\epsilon'$-solid torus
$B(z_i)$ near an interior cone point the neighborhood of
the boundary of this solid torus contains a $2$-torus $T(z_i)\subset
U'_2\cap B(z_i,3r(z_i)/8)$ that is saturated under the
$S^1$-fibration and bounds a solid torus $\widehat T (z_i)$ in
$B(z_i,3r(z_i)/8)$.  Instead of removing $B(z_i,r(z_i)/4)$ from
$W'_{n,2}$ we remove the interior of $\widehat T(z_i)$. We are
removing slightly larger solid tori, but this does not change the
topological type since the region between $T(z_i)$ and the metric
sphere $S(z_i,r(z_i)/4)$ is a product region, a region homeomorphic
to $T^2\times I$. The boundary component created by removing the
interior of $\widehat T(z_i)$ is a $2$-torus saturated under
the $S^1$-fibration structure on $U'_2$.

\subsubsection{Removing perturbations of chains of $\epsilon'$-solid cylinders}

We wish to make an  analogous removals of perturbations
of the chains of $\epsilon'$-solid cylinders, perturbed so that their frontiers in $W'_{n,2}$
are saturated under the $S^1$-fibration structure on $U'_2$. To do so  requires
more argument.

Let ${\mathcal C}$ be a circular chain of $\epsilon'$-solid
cylinders contained in $W'_{n,2}$ and let ${\mathcal C'}$ be the
given refinement of it. Then by Lemma~\ref{Acover1}, the  union
$\widehat T_{\mathcal C}$ of the solid cylinders in ${\mathcal C}$
minus the union of the interiors ${\rm int}\, \widehat T_{{\mathcal
C}'}$ of the solid cylinders in ${\mathcal C}'$ is homeomorphic to
$T^2\times I$. Furthermore, $\widehat T_{\mathcal C}\setminus {\rm int}\,
\widehat T_{{\mathcal C}'}$ is contained in $U'_2$.
We consider the union of all fibers of the $S^1$-fibration on
$U'_2$ that either meet the complement of $\widehat T_{\mathcal C}$
or are closer to this complement than they are to $\widehat
T'_{\mathcal C}$. This is an open subset $\Omega$ of $U'_2$ that is
saturated under the $S^1$-fibration. Since the distance between any point
$x\in \widehat T'_{\mathcal C}\cap \nu(i)$ and the complement of $\widehat
T_{\mathcal C}$, when measured in the multiple of the metric $g_n$ used to define $\nu(i)$, is at least
$\ell(\widetilde\gamma_i)\xi^2\ge s_1\xi^2/10$, it follows from
the fact that $\hat \epsilon<10^{-3}\xi^2s_1/C$, Lemma~\ref{generic2D}
and Proposition~\ref{s1glue} that  $\Omega$ contains $\partial
\widehat T_{\mathcal C}$ and is disjoint from $\widehat T'_{\mathcal
C}$. It then follows that we can find a compact $3$-manifold $\Omega_0$ with
boundary contained in $\Omega$ that is saturated under the
$S^1$-fibration on $U'_2$. Then $\Omega_0$  contains $\partial \widehat
T_{\mathcal C}$ and of course is disjoint from $\widehat
T_{{\mathcal C}'}$. One of the boundary components, $T({\mathcal C})$,
of $\Omega_0$ must then separate $\partial \widehat T_{\mathcal C}$
from $\partial \widehat T_{{\mathcal C}'}$. Since this boundary
component  is fibered by circles and is orientable, it is
diffeomorphic to a $2$-torus. Since the region $\widehat T_{\mathcal
C}\setminus {\rm int}\, \widehat T_{{\mathcal C}'}$ is homeomorphic to
$T^2\times I$. Any $2$-torus contained in this region that separates
the boundary components, e.g., $T({\mathcal C})$, is topologically
isotopic in $\widehat T_{\mathcal C}$ to either boundary component.
It then follows that $T({\mathcal C})$ bounds a solid torus
$\tau_{\mathcal C}$ contained in $T_{\mathcal C}$.

Now let us consider a linear chain ${\mathcal C}$ of
$\epsilon'$-solid cylinders. Let $\widehat C({\mathcal C})$ be the union of the
closed $\epsilon'$-solid cylinders in this chain. By Lemma~\ref{prodstructure} the submanifold
$\widehat C({\mathcal C})$ is homeomorphic to $D^2\times I$. Let $\widehat
C'({\mathcal C})$ be the union of the solid cylinders in the
refinement. Denote by $X({\mathcal C})$ the complement $\widehat
C({\mathcal C})\setminus {\rm int}\,\widehat C'({\mathcal C})$ and
by $E(X)$ the ends of $X$, i.e., the intersection of $X$ with the
ends of $\widehat C({\mathcal C})$. According to
Lemma~\ref{prodstructure}, the pair $(X,E(X)$ is homeomorphic to
$(S^1\times I\times I, S^1\times
\partial I\times I)$.
 Also, since the distance between  any point $x\in \widehat C'({\mathcal C})\cap \nu(i)$
and the complement of   the $\widehat C({\mathcal C})$, when measured in the multiple of the metric
$g_n$ used to define $\nu(i)$,
is at least $\xi^2\ell(\widetilde\gamma_i)\ge \xi^2s_1/10$. Since $\hat \epsilon<10^{-3}\xi^2s_1/C$,
it follows from Lemma~\ref{generic2D} and Proposition~\ref{s1glue}
that any fiber of the  $S^1$-fibration on $U'_2$
that meets the side of $ \widehat C({\mathcal C})$ is disjoint from
$\widehat C'({\mathcal C})$. Thus, the an open subset
$\Omega\subset U'_2$ consisting of all $S^1$-fibers that either meet the complement of $\widehat C({\mathcal C})$ or are closer to $\widehat C({\mathcal C})$ than to $\widehat C({\mathcal C}')$
contains the side of $\widehat C({\mathcal C})$ and  is disjoint from
$\widehat C'({\mathcal C})$. We can replace $\Omega$ by a compact
$3$-manifold $\Omega_0\subset \Omega$ that is saturated under the $S^1$-fibration structure and contains the side of $\widehat C({\mathcal
C})$. There is a
boundary component $T$ of $\Omega_0$ that separates the side of
$\widehat C({\mathcal C})$ from $\widehat C'({\mathcal C})$. Being
orientable and fibered by circles $T$ is diffeomorphic to a
$2$-torus.

 Each end of the chain crosses a
$2$-sphere boundary component of $W'_{n,2}$. Let us denote the these boundary components by $S_{\pm}({\mathcal C})$. By
construction $S_\pm({\mathcal C})$ is the union of an annulus
$E_\pm({\mathcal C})$ contained in $U'_2$ and saturated under the
$S^1$-fibration on $U'_2$ and two $2$-disks contained in the
 of the extremal $\epsilon'$-solid cylinders in the chain
${\mathcal C}$. Furthermore, the annulus $E_\pm({\mathcal C})$
contains all points of $S_\pm({\mathcal C})$ contained in the
 $\widehat C({\mathcal C})\setminus\widehat C'({\mathcal C})$. Thus, the intersection of $T$
with
$$S_\pm({\mathcal C})\cap\left(\widehat C({\mathcal C})\setminus\widehat C'({\mathcal C})\right)$$ is a union of fibers of the
$S^1$-fibration on $U'_2$. Hence, there is an annulus $P({\mathcal C})$ in $T\cap \left(\widehat C({\mathcal C})\setminus\widehat C'({\mathcal C})\right)$ that
is saturated under the $S^1$-fibration on $U'_2$ that has
one boundary circle in $E_+({\mathcal C})$ and the other boundary
circle in $E_-({\mathcal C})$ and is otherwise disjoint from the $S_\pm({\mathcal C})$.
The intersection of $P({\mathcal C})$ with $E_\pm({\mathcal C})$ is a circle bounding a disk $D_\pm$ in the $\pm$ end of $\widehat C({\mathcal C})$. The union of $D_-\cup P({\mathcal C})\cup D_+$ is a $2$-sphere contained in the interior
of the $3$-ball $\widehat C({\mathcal C})$. As such, this union is the boundary of a $3$-ball $\Gamma({\mathcal C})$
in $\widehat C({\mathcal C})$. It follows that there is a homeomorphism
$$(\Gamma({\mathcal C}),P({\mathcal C}))\cong (D^2\times I,\partial D^2\times I).$$

Now we are ready to define $V_{n,2}$. We begin with the compact
$3$-manifold $W'_{n,2}=M_n\setminus {\rm int}\, V_{n,1}$. From this
we remove the interiors of solid tori and of solid cylinders to form
$V_{n,2}$. For each $\epsilon'$-solid torus $B(z_i)$ near an interior
cone point, we remove the interior of the solid torus $\widehat
T(z_i)$ as described in above. For each circular chain of
$\epsilon'$-solid tori ${\mathcal C}$ we remove the interior of the
solid torus $\tau_{\mathcal C}$ described above. For each linear
chain of $\epsilon'$-solid cylinders ${\mathcal C}$ we remove the
sub region $\Gamma({\mathcal C})$ as described above. The boundary
of $V_{n,2}$ consists of $2$-tori contained in $U'_2$ and saturated under the $S^1$-fibration.
They are of three types:
\begin{enumerate}
\item components that are disjoint from $V_{n,1}$,
\item components that are unions of annuli meeting along their boundaries, annuli saturated under the
$S^1$-fibration on $U'_2$; the annuli alternate between the annuli
$P({\mathcal C})$ associated to linear chains of $\epsilon'$-solid
cylinders whose interiors are disjoint from $V_{n,1}$, and  annuli $E$ that are  contained in the
$2$-spheres boundary components of $W'_{n,2}$, and
\item components that are boundary components of $V_{n,1}$.
\end{enumerate}

Since $V_{n,2}\subset U'_2$ is a compact submanifold and its boundary is saturated under the
 $S^1$-fibration on $U'_2$, it follows that $V_{n,2}$ is a compact $3$-manifold that is saturated
under the $S^1$-fibration on $U'_2$. Hence,
$V_{n,2}$ is the total space of a locally trivial circle bundle.
Clearly from the construction the intersection of $V_{n,2}$ with
$V_{n,1}$ consists of the union of (i) all the $2$-torus boundary
components of $V_{n,1}$ that are not boundary components of $M_n$
and (ii) an annulus in  each $2$-sphere boundary component of
$V_{n,1}$. Lastly, the complement $M_n\setminus {\rm int}\,
\left(V_{n,1}\cup V_{n,2}\right)$ consists of a finite disjoint
union  of compact solid tori and of compact solid cylinders
(manifolds homeomorphic to $D^2\times I$). The boundaries of the
solid tori are boundary components of $V_{n,2}$, and the boundary of
each solid cylinder meets $V_{n,2}$ in $\partial D^2\times I$ and
this intersection is saturated under the $S^1$-fibration  on
$V_{n,2}$. This completes the proof that the submanifolds $V_{n,1}$ and $V_{n,2}$
satisfy all the properties given in the conclusion of
Theorem~\ref{1Dthm'}. This establishes Theorem~\ref{1Dthm'} and as a consequence Theorem~\ref{7.4}.

\end{document}